\numberwithin{equation}{section}
\theoremstyle{plain}
\newtheorem*{theorem*}{Theorem}
\newtheorem{theorem}{Theorem}[section]
\newtheorem{lemma}[theorem]{Lemma}
\newtheorem{proposition}[theorem]{Proposition}
\newtheorem{corollary}[theorem]{Corollary}
\newtheorem{definition}[theorem]{Definition}
\theoremstyle{remark}
\newtheorem{remark}[theorem]{Remark}
\newtheorem{example}[theorem]{Example}
\newcommand{\R}{\mathbb R}
\newcommand{\N}{\mathbb N}
\newcommand{\E}{\mathbb E}
\newcommand{\dd}{\mathrm{d}}
\newcommand{\dx}{\,\mathrm{d}x}
\newcommand{\dista}{\mathrm{dist}}
\DeclareMathOperator{\BV}{BV}
\newcommand{\dif}{\mathrm{d}}
\renewcommand{\leq}{\leqslant}
\renewcommand{\dif}{\operatorname{d}\!}
\newcommand{\lebe}{\operatorname{L}}
\newcommand{\sobo}{\operatorname{W}}
\newcommand{\dashint}{\fint}
\newcommand{\locc}{\operatorname{loc}}
\newcommand{\hold}{\operatorname{C}}
\newcommand{\bv}{\operatorname{BV}}
\newcommand{\ball}{\operatorname{B}}
\newcommand{\di}{\operatorname{div}}
\newcommand{\A}{\mathbb{A}}
\renewcommand{\locc}{\mathrm{loc}}
\newcommand{\spt}{\operatorname{spt}}
\newcommand{\trace}{\mathrm{tr}}
\newcommand{\qc}{\mathrm{qc}}
\newcommand{\mres}{\mathbin{\vrule height 1.6ex depth 0pt width
0.13ex\vrule height 0.13ex depth 0pt width 1.3ex}}
\numberwithin{equation}{section}
\begin{document}
\keywords{Variational integral, partial regularity, non-standard growth conditions, $(p,q)$-type growth conditions, linear growth conditions, quasiconvexity, signed integrands, relaxed minimizers, coercivity, lower semicontinuity, Fubini-type theorems, functions of bounded variation.}
%\begin{frontmatter}

\title[Quasiconvexity and Relaxed minimizers]{Quasiconvex functionals of $(p,q)$-growth and \\ the partial regularity of relaxed minimizers}
\author[F.~Gmeineder]{Franz Gmeineder}
\address{University of Konstanz, Department of Mathematics and Statistics, Universit\"{a}tsstra\ss e 10, 78457 Konstanz, Germany.}
\email{franz.gmeineder@uni-konstanz.de} 
\author[J.~Kristensen]{Jan Kristensen}
\address{University of Oxford, Mathematical Institute, Radcliffe Observatory Quarter, OX2 6HG, Oxford, United Kingdom.}
\email{jan.kristensen@maths.ox.ac.uk} 
\date{\today}

\maketitle
\date\today

\begin{abstract}
We establish $\hold^{\infty}$-partial regularity results for relaxed minimizers of strongly quasiconvex functionals 
\begin{align*}
\mathscr{F}[u;\Omega]:=\int_{\Omega}F(\nabla u)\dif x,\qquad u\colon\Omega\to\R^{N},
\end{align*}
subject to a $q$-growth condition $|F(z)|\leq c(1+|z|^{q})$, $z\in\R^{N\times n}$, and natural $p$-mean coercivity conditions on $F\in\hold^{\infty}(\R^{N\times n})$ for the basically optimal exponent range $1\leq p\leq q<\min\{\frac{np}{n-1},p+1\}$. With the $p$-mean coercivity condition being stated in terms of a strong quasiconvexity condition on $F$, our results include pointwise $(p,q)$-growth conditions as special cases. Moreover, we directly allow for signed integrands which is natural in view of coercivity considerations and hence the direct method, but is novel in the study of relaxed problems. 

In the particular case of classical pointwise $(p,q)$-growth conditions, our results extend  the previously known exponent range from \textsc{Schmidt}'s foundational work~\cite{SchmidtPR} for non-negative integrands to the maximal range for which relaxations are meaningful, moreover allowing for $p=1$. As further key novelties, our results apply to the canonical class of signed integrands and do not rely in any way on measure representations \`{a} la \textsc{Fonseca \& Mal\'{y}}~\cite{FM}.  
\end{abstract}

\tableofcontents 

\section{Introduction}
\subsection{Aim and scope}\label{sec:aimandscope}
A key problem in the vectorial Calculus of Variations that has defined the development of the field in large parts, is the study of variational principles 
\begin{align}\label{eq:varprin}
\text{to minimize}\;\;\;\mathscr{F}[u;\Omega]:=\int_{\Omega}F(\nabla u)\dif x
\end{align}
subject to certain constraints on the admissible competitors $u\colon\Omega\to\R^{N}$. Here, $\Omega\subset\R^{n}$ is an open and bounded set with Lipschitz boundary $\partial\Omega$, and $F\colon\R^{N\times n}\to\R$ is a continuous integrand. When we specify suitable Dirichlet classes as side constraints for \eqref{eq:varprin}, the existence of minima in spaces of weakly differentiable functions necessitates both growth bounds and suitable lower semicontinuity properties of $\mathscr{F}[-;\Omega]$, the latter being reflected by semiconvexity assumptions on $F$. 

Routine assumptions in this direction are given by the \emph{$p$-growth integrands}, $1\leq p <\infty$, for which there exist $c_{1},c_{2},c_{3}>0$ such that 
\begin{align}\label{eq:prowth}
c_{1}|z|^{p}-c_{2}\leq F(z)\leq c_{3}(1+|z|^{p})\qquad\text{for all}\;z\in\R^{N\times n}.
\end{align}
Such integrands comprise the classical Dirichlet integrand $z\mapsto \frac{1}{2}|z|^{2}$ for $p=2$ or the area-integrand $z\mapsto\sqrt{1+|z|^{2}}$ for $p=1$ as known from the non-parametric minimal surface problem. The lower semicontinuity of $\mathscr{F}[-;\Omega]$ on spaces of weakly differentiable functions in turn is strongly intertwined with \textsc{Morrey}'s \emph{quasiconvexity}~\cite{Morrey1}, a condition that reduces to convexity for $N=1$ or $n=1$, but is a strict generalisation of convexity for all other dimensional constellations. We recall that $F\in\hold(\R^{N\times n})$ is called \emph{quasiconvex at} $z_{0}\in\R^{N\times n}$ provided 
\begin{align}
F(z_{0})\leq \dashint_{\ball_{1}(0)}F(z_{0}+\nabla\varphi)\dif x \qquad\text{for all}\;\varphi\in\hold_{c}^{\infty}(\ball_{1}(0);\R^{N}),
\end{align}
and simply \emph{quasiconvex} provided it is quasiconvex at every $z_{0}\in\R^{N\times n}$. By its link to lower semicontinuity -- see \cite{AcerbiFusco,Marcellini0,Meyers,Morrey1} --  and hence from the perspective of the direct method, it is thus particularly important to understand the regularity properties of minimizers for quasiconvex variational integrals.

Since the variational principles \eqref{eq:varprin} are genuinely vectorial problems, even in presence of suitable ellipticity and smoothness assumptions on $F$, full $\hold^{1,\alpha}$-H\"{o}lder regularity of minima \`{a} la \textsc{De Giorgi, Nash} and \textsc{Moser} \cite{DeGiorgi0,Moser,Nash} cannot be expected, see e.g.~\cite{DeGiorgi,GiustiMiranda,HaoLN,Necas1,Necas2}. The suitable substitute here is ($\hold^{1,\alpha}$-)\emph{partial regularity}, meaning that the minimizers are of class $\hold_{\locc}^{1,\alpha}$ for any $0<\alpha<1$ on a relatively open set of full Lebesgue measure. Inspired by developments in geometric measure theory following the works of \textsc{Almgren} \cite{Almgren} and \textsc{Allard} \cite{Allard}, the systematic study of partial regularity for quasiconvex problems was initiated in the 1980s by the seminal works of \textsc{Evans} \cite{Ev1}, \textsc{Fusco \& Hutchinson}~\cite{FuHo1}, \textsc{Acerbi \& Fusco}~\cite{AcFu1} and \textsc{Giaquinta \& Modica}~\cite{GiaquintaModica}. Since then, partial regularity for quasiconvex variational problems has witnessed a vast number of contributions, see \cite{CFM,CPDN,DLSV,Duzaar1,Duzaar1A,Duzaar2,Gm20,GK1,KT,KuusiMingione} for a non-exhaustive list and \textsc{Mingione}~\cite{Mingione1,Mingione2}, \textsc{Giusti}~\cite{Giusti} for surveys. 

If the growth assumption \eqref{eq:prowth} is assumed, then the functional $\mathscr{F}[-;\Omega]$ is both well-defined and coercive on Dirichlet subclasses of $\sobo^{1,p}(\Omega;\R^{N})$. This, in turn, is not the case for \emph{$(p,q)$-growth} functionals, $1\leq p<q<\infty$, where \eqref{eq:prowth} is typically replaced by 
\begin{align}\label{eq:pqgrowth}
c_{1}|z|^{p}-c_{2}\leq F(z)\leq c_{3}(1+|z|^{q})\qquad\text{for all}\;z\in\R^{N\times n}.
\end{align}
Following the foundational work of~\textsc{Marcellini} \cite{Marcellini1,Marcellini2,Marcellini3}, the study of variational integrals obeying $(p,q)$-growth conditions has faced a vast number of contributions with considerable recent interest; see \cite{BellaSchaeffner1,BellaSchaeffner2,BildFuchs1,Bild,BildFuchs2,Boegelein,BFM,CPK,DF1,DF2,EspositoLeonettiMingione,EspositoMingione1,
EspositoLeonettiMingione1,FM,FMParma,FonMar,Kristensen97,Kristensen98,Kristensen99,Mingione1,PasSie,SchmidtPR0,SchmidtPR1,SchmidtPR} for an incomplete list that only scratches the wealth of results available so far.

When assuming \eqref{eq:pqgrowth} in the context of Dirichlet variational principles~\eqref{eq:varprin}, the natural domain of definition $\sobo_{v}^{1,q}(\Omega;\R^{N}):=v+\sobo_{0}^{1,q}(\Omega;\R^{N})$ for $v\in\sobo^{1,q}(\Omega;\R^{N})$ is strictly smaller than the space $\sobo_{v}^{1,p}(\Omega;\R^{N})$ in which minimising sequences are bounded and thus (weak) compactness can be achieved. In the following, we shall then informally refer to $\sobo_{v}^{1,p}(\Omega;\R^{N})$ as a \emph{compactness space} for $\mathscr{F}[-;\Omega]$. Adopting this terminology,~\eqref{eq:pqgrowth} expresses that the natural domain of definition differs from the compactness space $\sobo_{v}^{1,p}(\Omega;\R^{N})$ for $\mathscr{F}[-;\Omega]$. This circumstance is typical for variational problems arising in elastic cavitation theories (cf.~\textsc{Ball}~\cite{Ball2}), and requires $\mathscr{F}$ to be suitably extended or \emph{relaxed}. Deferring the precise set-up to Section~\ref{sec:relaxintro} below, it is clear that for such relaxations to be meaningful, some dimensional balance between $p$ and $q$ is required. In the wake of the foundational work of \textsc{Marcellini} et al. \cite{FonMar,Marcellini1} and, in particular, \textsc{Fonseca \& Mal\'{y}} et al. \cite{BFM,FM,FMParma} (also see \cite{AcerbiDalMaso,CDM,EspositoMingione1,Maly1,Maly2}), the natural threshold condition in this context is given by
\begin{align}\label{eq:threshold1}
1\leq p\leq q <q_{p}:=\frac{np}{n-1}.
\end{align}
Due to \textsc{Mal\'{y}}~\cite{Maly1}, who established the potential triviality of the natural relaxations for $q>q_{p}$, or \textsc{Acerbi \& Dal Maso} \cite{AcerbiDalMaso}, who demonstrated the failure of subadditivity for $q=q_{p}$, condition \eqref{eq:threshold1} is the canonical exponent range for which both the existence and regularity of minima should be addressed. First conditions that link $p$ and $q$ to the regularity of minima have been identified in the convex context by \textsc{Marcellini} \cite{Marcellini2,Marcellini3}, and since then have been expanded in various directions; see e.g.  \cite{AcFu2,Breit,Boegelein,CPK,CPK1,DF1,DF2,EspositoLeonettiMingione,FuHo2,FuHo3,Koch,SchmidtPR,SchmidtPR1,SchmidtPR4}. 

Most of these regularity results are confined to the convex situation and do not apply to genuinely \emph{quasiconvex} functionals\footnote{For the intermediate polyconvexity, see~\textsc{Fusco \& Hutchinson}~\cite{FuHo2,FuHo3}.}. At present, there are only very few contributions that actually deal with the partial regularity of minimizers for the relaxations of quasiconvex variational principles~\eqref{eq:varprin} subject to the $(p,q)$-growth condition~\eqref{eq:pqgrowth}. Most notably, by the pioneering work of \textsc{Schmidt}~\cite{SchemmSchmidt,SchmidtPR0,SchmidtPR} and especially his seminal paper~\cite{SchmidtPR}, partial regularity of such \emph{relaxed} minimizers is known to hold in the exponent range
\begin{align}\label{eq:SchmidtRange1}
1<p\leq q<p+\frac{\min\{2,p\}}{2n}.
\end{align}
Based on an intricate combination of measure representations and the construction of low layer energy competitors \`{a} la \textsc{Fonseca \& Mal\'{y}}~\cite{FM}, this result still displays the landmark for partial regularity in the setting described above. 

While the exponent range~\eqref{eq:SchmidtRange1} was slightly amplified in the subquadratic case $p\leq 2$ by \textsc{Schemm \& Schmidt}~\cite{SchemmSchmidt} to $1<p\leq q<\frac{2n-1}{2n-2}p$, there is still a crucial exponent gap in view of the canonical range~\eqref{eq:threshold1}; recall that, subject to~\eqref{eq:pqgrowth}, ~\eqref{eq:threshold1} is precisely the range for which $\mathscr{F}$ can be meaningfully relaxed to $\sobo^{1,p}$.

As is by now well-known, a wealth of quasiconvex integrands can very well be unbounded below yet leading to coercive variational integrals. More precisely, since $F$ is not assumed convex but only quasiconvex, the pointwise condition~\eqref{eq:pqgrowth} implies, but is \emph{far from} being necessary for the coercivity of $\mathscr{F}[-;\Omega]$ on Dirichlet subclasses of $\sobo^{1,p}(\Omega;\R^{N})$. For such genuinely signed integrands, while being generic in the quasiconvex situation, no partial regularity results are known at all. On an even more fundamental level, there is no systematic theory for relaxations in the quasiconvex, signed context as for integrands satisfying the pointwise bounds~\eqref{eq:pqgrowth}. Most crucially, this is so because the signed case comes with substantially weaker lower semicontinuity results than those which are available in the unsigned context.

By the interplay of compactness and regularity it is clear that the borderline case $p=1$,  still being included in the natural range~\eqref{eq:threshold1}, comes with a variety of other obstructions that are invisible in the superlinear growth scenario. In fact, even for $p=q=1$, potential concentration effects here already require a suitable relaxation to $\bv(\Omega;\R^{N})$, the space of functions of bounded variation (see Section~\ref{sec:functionspaces} for more detail). In this situation, the partial regularity of minimizers has been established only recently by the authors in~\cite{GK1}. Since in the growth regime~\eqref{eq:pqgrowth} for $p=1$ concentration effects are anticipated to enter in an even more delicate manner, the underlying proofs must be sufficiently robust to work in this borderline case as well. 

Striving for the essential hypotheses on $F$ that let the direct method apply to variational principles~\eqref{eq:varprin} \emph{and} let one prove the partial regularity of minimizers, it is thus natural to include quasiconvex integrands $F$ that are potentially unbounded from below, yet yielding the requisite coerciveness of $\mathscr{F}[-;\Omega]$. As established by~\textsc{Chen} and the second author~\cite{CK} (also see the discussion in Section~\ref{sec:props} below), this requirement is equivalent to the $p$\emph{-strong quasiconvexity} of $F$ at some $z_{0}\in\R^{N\times n}$, meaning that there exists $\ell>0$ such that
\begin{align}\label{eq:pstrongQCintro}
z\mapsto F(z)-\ell V_{p}(z) := F(z)-\ell\big((1+|z|^{2})^{\frac{p}{2}}-1 \big)\;\;\text{is quasiconvex at $z_{0}$}.
\end{align}
This assumption is automatically implied by quasiconvex integrands obeying~\eqref{eq:pqgrowth} but not vice versa. As such, the natural generalisation for the pointwise $(p,q)$-growth condition~\eqref{eq:pqgrowth} is
\begin{align*}
\begin{cases} \text{(a)\; a pointwise $q$-growth condition}\;|F(\cdot)|\leq L(1+|\cdot|^{q})\;\emph{\text{and}} \\  \text{(b)\; a $p$-strong quasiconvexity condition on $F$}.\end{cases}
\end{align*}
Following the foundational works of \textsc{Evans}~\cite{Ev1} or \textsc{Acerbi \& Fusco}~\cite{AcFu1}, signed integrands that verify a slight strengthening of~\eqref{eq:pstrongQCintro} -- also see hypotheses~\ref{item:H2} and~\ref{item:H2p} below -- display the natural framework within which the key partial regularity results in the standard growth case are stated. In view of this discussion, the aim of the present paper thus is
\begin{itemize}
\item[(i)] to \emph{extend the exponent range} for partial regularity of relaxed minimizers to the natural growth range given by~\eqref{eq:threshold1}, 
\item[(ii)] especially including the slightly more involved \emph{borderline case} $p=1$, where compactness can only be achieved in spaces of functions of bounded variation, and to 
\item[(iii)] simultaneously allow for the natural class of \emph{quasiconvex, signed} integrands.
\end{itemize}
This necessitates a functional set-up which is slightly different from that considered in \cite{BFM,FM,FMParma,FonMar,Marcellini1,SchmidtPR0,SchmidtPR}. Before we embark on the description of our main results, we thus pause and explain the definition of the underlying relaxed functionals first.

\subsection{Relaxed functionals and notions of minimality}\label{sec:relaxintro} 
In view of the direct method of the Calculus of Variations, we aim to extend (or relax) $\mathscr{F}[-;\Omega]$ by lower semicontinuity. Here, lower semicontinuity must keep track of the convergence that yields compactness. Specifically, if $\sup_{j\in\mathbb{N}}\mathscr{F}[u_{j};\Omega]<\infty$ implies $\sup_{j\in\mathbb{N}}\|u_{j}\|_{\sobo^{1,p}(\Omega)}<\infty$ for a given sequence $(u_{j})\subset\sobo^{1,q}(\Omega;\R^{N})$, the Banach-Alaoglu theorem yields weak compactness of $(u_{j})$ in $\sobo^{1,p}(\Omega;\R^{N})$ provided $1<p<\infty$. For instance, this is the case if~\eqref{eq:pqgrowth} is in action and $(u_{j})\subset\sobo_{v}^{1,q}(\Omega;\R^{N})$ satisfies $\sup_{j\in\mathbb{N}}\mathscr{F}[u_{j};\Omega]<\infty$. If, however, $p=1$, potential concentration effects only allow to conclude weak*-compactness in the space $\bv(\Omega;\R^{N})$ of functions of bounded variation. 

Consequently, depending on whether $1<p<\infty$ or $p=1$, the corresponding relaxations must be taken for weak convergence in $\sobo^{1,p}(\Omega;\R^{N})$ or weak*-convergence in $\bv(\Omega;\R^{N})$. As discussed above, it is natural to include quasiconvex integrands that are not necessarily bounded below. In this situation, it is well-known that functionals of the form~\eqref{eq:varprin} need not even prove sequentially weak(*)-lower semicontinuous on $\sobo^{1,p}(\Omega;\R^{N})$ if $F$ satisfies the standard growth bound $|F(z)|\leq L(1+|z|^{p})$; also see Lemma~\ref{lem:LSC}\emph{ff.}. This obstruction, however, vanishes if one considers weak(*)-convergence in \emph{fixed Dirichlet classes} and hereafter weak(*)-convergence of sequences with fixed boundary values. In consequence, to achieve that the relaxed functionals  \emph{extend} the original functionals indeed, we are bound to incorporate Dirichlet classes into the corresponding relaxations.

To motivate our definition of relaxed functionals, note that if $1<p\leq q<\infty$, $v\in\sobo^{1,q}(\Omega;\R^{N})$, $u\in\sobo^{1,p}(\Omega;\R^{N})$ and $(u_{j})\subset\sobo_{v}^{1,q}(\Omega;\R^{N})$ are such that $u_{j}\rightharpoonup u$ weakly in $\sobo^{1,p}(\Omega;\R^{N})$, then necessarily $u\in\sobo_{v}^{1,p}(\Omega;\R^{N})$. This is so because the boundary trace operator $\trace_{\partial\Omega}$ is continuous for weak convergence in $\sobo^{1,p}(\Omega;\R^{N})$. On the contrary, continuity of the boundary trace operator on $\bv(\Omega;\R^{N})$ does \emph{not} persist when $\bv(\Omega;\R^{N})$ is endowed with weak*-convergence. As such, weak*-limits of sequences $(u_{j})\subset\sobo_{v}^{1,q}(\Omega;\R^{N})$ might have a boundary trace different from that of $v$. Still, in order to grasp the boundary behaviour of such weak*-limits and as is customary in the context of linear growth functionals (cf.~\textsc{Giaquinta, Modica \& Sou\v{c}ek}~\cite{GiaModSouc}), some control of the traces of weak*-limits can be obtained by employing  \emph{solid boundary values}. Based on this discussion, we will henceforth focus on the slightly more involved case $p=1$ and alongside discuss the superlinear growth scenario of $1<p<\infty$, for which several of the above issues do not arise at all.

Let $\Omega\Subset\Omega'\subset\R^{n}$ be two open and bounded sets with Lipschitz boundaries. Given a map $u_{0}\in\sobo^{1,q}(\Omega';\R^{N})$, we define the associated \emph{solid boundary value class} via
\begin{align}\label{eq:SolidClass}
\mathscr{A}_{u_{0}}^{q}(\Omega,\Omega'):=\{v\in\sobo^{1,q}(\Omega';\R^{N})\colon\;v=u_{0}\;\text{in}\;\Omega'\setminus\overline{\Omega}\}. 
\end{align}
We then define for $\mathbf{u}\in\bv(\Omega';\R^{N})$ the \emph{weak*-relaxed functional} or \textsc{Lebesgue-Serrin-Marcellini} \emph{extension} for solid boundary values $u_{0}$ by
\begin{align}\label{eq:bdryrelaxed1}
\overline{\mathscr{F}}_{u_{0}}^{*}[\mathbf{u};\Omega,\Omega']:=\inf\left\{\liminf_{j\to\infty}\int_{\Omega'}F(\nabla u_{j})\dif x\colon\;\begin{array}{c} (u_{j})\subset \mathscr{A}_{u_{0}}^{q}(\Omega,\Omega'), \\ u_{j}\stackrel{*}{\rightharpoonup}\mathbf{u}\;\text{in}\;\bv(\Omega';\R^{N})\end{array} \right\},
\end{align}
where we here and in the sequel adopt the convention $\inf\emptyset =\infty$. Any sequence $(u_{j})\subset\mathscr{A}_{u_{0}}^{q}(\Omega,\Omega')$ with $u_{j}\stackrel{*}{\rightharpoonup}\mathbf{u}$ and $\overline{\mathscr{F}}_{u_{0}}^{*}[\mathbf{u};\Omega,\Omega']=\lim_{j\to\infty}\mathscr{F}[u_{j};\Omega']$ then is referred to as a \emph{generating} or \emph{recovery sequence} for $\overline{\mathscr{F}}_{u_{0}}^{*}[\mathbf{u};\Omega,\Omega']$.

 To obtain a functional solely defined on maps $u\in\bv(\Omega;\R^{N})$ and thereby remove the artificial dependence on the larger domain $\Omega'$, we put for $v\in\sobo^{1,q}(\Omega;\R^{N})$
\begin{align}\label{eq:bdryrelaxed2}
\overline{\mathscr{F}}_{v}^{*}[u;\Omega]:= \overline{\mathscr{F}}_{u_{0}}^{*}[\mathbf{u};\Omega,\Omega']-\mathscr{F}[u_{0};\Omega'\setminus\overline{\Omega}]
\end{align}
where $u_{0}\in\sobo_{0}^{1,q}(\Omega';\R^{N})$ is an extension of $v$ to $\Omega'$ and $\mathbf{u}$ is the extension of $u$ to $\Omega'$ by $u_{0}$. By a routine gluing principle in $\bv$, one then has $\mathbf{u}\in\bv(\Omega';\R^{N})$ indeed and competitors as required for~\eqref{eq:bdryrelaxed1} exist, cf.~Lemma~\ref{lem:SolidLemma}. As shall be established in Section~\ref{sec:props}, this definition does not depend on the specific extension $u_{0}$ and can be canonically generalised to maps $v\in\bv(\Omega;\R^{N})$ that attain boundary traces in $\sobo^{1-1/q,q}(\partial\Omega;\R^{N})$ (with the convention of $\sobo^{0,1}:=\lebe^{1}$); see Lemma~\ref{lem:welldefinedbdryvalues} and Corollary~\ref{cor:welldefinedbdryvalues1}. Specifically, $\overline{\mathscr{F}}_{v}^{*}[u;\Omega]$ is well-defined and lower semicontinuous for weak*-convergence in $\bv(\Omega;\R^{N})$. 

It is then customary to equally define for $1<p\leq q <\infty$ and maps $v\in\sobo^{1,q}(\Omega;\R^{N})$,  $u_{0}\in\sobo^{1,q}(\Omega';\R^{N})$, $u\in\sobo^{1,p}(\Omega;\R^{N})$ such that $\trace_{\partial\Omega}(u_{0})=\trace_{\partial\Omega}(v)$ $\mathscr{H}^{n-1}$-a.e. on $\partial\Omega$
\begin{align}\label{eq:bdryrelaxed3}
\begin{split}
\overline{\mathscr{F}}_{u_{0}}[\mathbf{u};\Omega,\Omega'] &:=\inf\left\{\liminf_{j\to\infty}\int_{\Omega'}F(\nabla u_{j})\dif x\colon\;\begin{array}{c} (u_{j})\subset \mathscr{A}_{u_{0}}^{q}(\Omega,\Omega'), \\ u_{j}\rightharpoonup \mathbf{u}\;\text{in}\;\sobo^{1,p}(\Omega';\R^{N})\end{array} \right\},\\
\overline{\mathscr{F}}_{v}[u;\Omega] &:= \overline{\mathscr{F}}_{u_{0}}[\mathbf{u};\Omega,\Omega']-\mathscr{F}[u_{0};\Omega'\setminus\overline{\Omega}], 
\end{split}
\end{align}
where again $\mathbf{u}$ is the extension of $u$ to $\Omega'$ by $u_{0}$. Note that, different from~\eqref{eq:bdryrelaxed2}, the continuity of the trace operator for weak convergence in $\sobo^{1,p}(\Omega;\R^{N})$ and our convention $\inf\emptyset=\infty$  immediately yield $\overline{\mathscr{F}}_{v}[u;\Omega]=\infty$ provided $\mathscr{H}^{n-1}(\{x\in\partial\Omega\colon\;\trace_{\partial\Omega}(u)(x)\neq \trace_{\partial\Omega}(v)(x)\})>0$. Similarly as above, definition~\eqref{eq:bdryrelaxed3} then can be extended to $v\in\sobo^{1,p}(\Omega;\R^{N})$ with $\trace_{\partial\Omega}(v)\in\sobo^{1-1/q,q}(\partial\Omega;\R^{N})$. 

As key features of the relaxed functionals from above, we point out that in presence of quasiconvexity of $F$ and the bound $|F(\cdot)|\leq c(1+|\cdot|^{q})$, in the growth regime~\eqref{eq:threshold1} both coincide with $\mathscr{F}[-;\Omega]$ when restricted to Dirichlet classes $\sobo_{v}^{1,q}(\Omega;\R^{N})$ for maps $v\in\sobo^{1,q}(\Omega;\R^{N})$, cf.~Lemma~\ref{lem:consistency}. Therefore, they may be regarded as \emph{extensions} of $\mathscr{F}[-;\Omega]$ by lower semicontinuity  indeed. 

We may now proceed to the notion of minimality that will underlie our main results, Theorems~\ref{thm:main1} and \ref{thm:main2} below:
\begin{definition}[Minimality] \label{def:minimality}
Let $\Omega\subset\R^{n}$ be an open and bounded domain with Lipschitz boundary $\partial\Omega$. 
\begin{enumerate}
\item\label{item:defM1} Given $1=p\leq q < \infty$ and $v\in\bv(\Omega;\R^{N})$ such that $\trace_{\partial\Omega}(v)\in\sobo^{1-1/q,q}(\partial\Omega;\R^{N})$, we say that $u\in\bv(\Omega;\R^{N})$ is a \emph{$\bv$-minimizer of $\overline{\mathscr{F}}_{v}^{*}[-;\Omega]$  (subject to the Dirichlet constraint $v$)} if and only if $\overline{\mathscr{F}}_{v}^{*}[u;\Omega]<\infty$ and
\begin{align}\label{eq:MAINminimality1}
\overline{\mathscr{F}}_{v}^{*}[u;\Omega]\leq \overline{\mathscr{F}}_{v}^{*}[w;\Omega]\qquad\text{for all}\;w\in\bv(\Omega;\R^{N}). 
\end{align}
If the previous inequality holds for all $w\in\bv(\Omega;\R^{N})$ such that $u-w$ is compactly supported in $\Omega$, then we call $u$ a \emph{$\bv$-minimizer of $\overline{\mathscr{F}}_{v}^{*}[-;\Omega]$ for compactly supported variations}. 
\item\label{item:defM2} Given $1<p\leq q < \infty$ and $v\in\sobo^{1,p}(\Omega;\R^{N})$ such that $\trace_{\partial\Omega}(v)\in\sobo^{1-1/q,q}(\partial\Omega;\R^{N})$, we say that $u\in\sobo^{1,p}(\Omega;\R^{N})$ is a \emph{minimizer for $\overline{\mathscr{F}}_{v}[-;\Omega]$ (subject to the Dirichlet constraint $v$)} if and only if $\overline{\mathscr{F}}_{v}^{*}[u;\Omega]<\infty$ and 
\begin{align}\label{eq:MAINminimality2}
\overline{\mathscr{F}}_{v}[u;\Omega]\leq \overline{\mathscr{F}}_{v}[w;\Omega]\qquad\text{for all}\;w\in\sobo^{1,p}(\Omega;\R^{N}). 
\end{align}
If the previous inequality holds for all $w\in\sobo^{1,p}(\Omega;\R^{N})$ such that $u-w$ is compactly supported in $\Omega$, then we call $u$ a \emph{minimizer of $\overline{\mathscr{F}}_{v}[-;\Omega]$ for compactly supported variations}. 
\end{enumerate}
\end{definition}
Subject to natural growth, semiconvexity and coercivity conditions, the existence of ($\bv$-)minimizers will be established in Section~\ref{sec:props}. We wish to emphasize that, in the setting of Definition~\ref{def:minimality}~\ref{item:defM2}, inequality~\eqref{eq:MAINminimality2} could equivalently be replaced by requiring $\overline{\mathscr{F}}_{v}[u;\Omega]\leq \overline{\mathscr{F}}_{v}[w;\Omega]$ to hold for all $w\in\sobo_{v}^{1,p}(\Omega;\R^{N})$ because of $\overline{\mathscr{F}}_{v}[w;\Omega]=\infty$ if $\mathscr{H}^{n-1}(\{x\in\partial\Omega\colon\;\trace_{\partial\Omega}(w)(x)\neq\trace_{\partial\Omega}(v)(x)\})>0$. 

If $p=q=1$, the compactness space of functionals~\eqref{eq:varprin} is $\bv(\Omega;\R^{N})$ and hence already a proper superspace of the natural domain of definition $\sobo^{1,1}(\Omega;\R^{N})$. This is why this case already requires a proper relaxation for weak*-convergence. Assuming quasiconvexity of $F$, we then are in the classical setting of \textsc{Ambrosio \& Dal Maso} \cite{AD} (also see~\textsc{Fonseca \& M\"{u}ller}~\cite{FonMul0,FonMul}) for non-negative or \textsc{Rindler} and the second named author \cite{KR} for signed linear growth integrands. This provides us with an integral representation for $\overline{\mathscr{F}}_{v}^{*}[-;\Omega]$, whereby the $\bv$-minimality of $u\in\bv(\Omega;\R^{N})$ for $\overline{\mathscr{F}}_{v}^{*}[-;\Omega]$ then amounts to minimality of $u$ for the integral functional 
\begin{align}\label{eq:lingrowth}
\begin{split}
\overline{\mathscr{F}}_{v}^{*}[w;\Omega] = \int_{\Omega}F(\nabla w)\dif x & + \int_{\Omega}F^{\infty}\Big(\frac{\dif D^{s}w}{\dif|D^{s}w|}\Big)\dif|D^{s}w| \\ &  + \int_{\partial\Omega}F^{\infty}(\trace_{\partial\Omega}(w-v)\otimes\nu_{\partial\Omega})\dif\mathscr{H}^{n-1}
\end{split}
\end{align} 
over all $w\in\bv(\Omega;\R^{N})$. Here, $F^{\infty}(z)=\lim_{t\searrow 0}tF(\frac{z}{t})$ is the recession function associated with $F$ and $Dw=\nabla w\mathscr{L}^{n}\mres\Omega+\frac{\dif D^{s}w}{\dif|D^{s}w|}|D^{s}w|$ is the Lebesgue-Radon-Nikod\'{y}m decomposition of $Dw$; see Section~\ref{sec:functionspaces} for more detail. In this sense, the boundary integral in~\eqref{eq:lingrowth} penalises the deviation of $\trace_{\partial\Omega}(w)$ from the prescribed Dirichlet data. Albeit for $1=p<q<\infty$ the functional $\overline{\mathscr{F}}_{v}^{*}[u;\Omega]$ cannot be represented as a variational integral as in~\eqref{eq:lingrowth} (also see Section~\ref{sec:CompaIntro}), a similar penalisation effect is inherent in Definition~\ref{def:minimality}~\ref{item:defM1}. 

Let us note that almost by definition, the functionals $\overline{\mathscr{F}}^{*}$ or $\overline{\mathscr{F}}$ do not feature so-called Lavrentiev gaps in the following sense (also see \textsc{Buttazzo \& Mizel}~\cite{ButtazzoMizel} for a discussion of this matter and \textsc{Esposito, Leonetti \& Mingione}~\cite{EspositoLeonettiMingione1} in view of its impact on regularity): If, e.g. $1<p\leq q<\frac{np}{n-1}$ and $v\in\sobo^{1,q}(\Omega;\R^{N})$, then we have 
\begin{align}\label{eq:lavrentiev}
\begin{split}
\min_{u\in\sobo_{v}^{1,p}(\Omega;\R^{N})}\overline{\mathscr{F}}_{v}[u;\Omega] & \leq \inf_{u\in\sobo_{v}^{1,q}(\Omega;\R^{N})}\overline{\mathscr{F}}_{v}[u;\Omega] \\ & \!\!\!\!\!\stackrel{\text{Lem.~\ref{lem:consistency}}}{=} \inf_{u\in\sobo_{v}^{1,q}(\Omega;\R^{N})}\mathscr{F}[u;\Omega] \leq \min_{u\in\sobo_{v}^{1,p}(\Omega;\R^{N})}\overline{\mathscr{F}}_{v}[u;\Omega], 
\end{split}
\end{align}
where the ultimate inequality directly follows from the definition of $\overline{\mathscr{F}}_{v}[u;\Omega]$. 

Yet, the previous definition does not allow to conclude that a ($\bv$-)minimizer is local ($\bv$-)minimizer in the obvious sense; indeed, even in the unsigned case, the relaxed functionals only prove additive on \emph{certain} subsets of $\Omega$ (see Section~\ref{sec:CompaIntro}). Similarly as in \cite{SchmidtPR}, we shall therefore work with the following definition of \emph{local minimality}:
\begin{definition}[Local minimality for compactly supported variations]\label{def:locmin}
Let $\Omega\subset\R^{n}$ be open and bounded. 
\begin{enumerate}
\item\label{item:WLM2} Given $1=p\leq q <\infty$, we say that $u\in\bv_{\locc}(\Omega;\R^{N})$ is a \emph{local $\bv$-minimizer of $\overline{\mathscr{F}}^{*}$ for compactly supported variations} provided every $x_{0}\in\Omega$ has a neighbourhood $\omega\Subset\Omega$ with Lipschitz boundary $\partial\omega$ such that $u|_{\omega}$ is a BV-minimizer of $\overline{\mathscr{F}}_{u}^{*}[-;\omega]$ for compactly supported variations.
\item\label{item:WLM1} Given $1<p\leq q <\infty$, we say that $u\in\sobo_{\locc}^{1,p}(\Omega;\R^{N})$ is a \emph{local minimizer of $\overline{\mathscr{F}}$ for compactly supported variations} provided every $x_{0}\in\Omega$ has a neighbourhood $\omega\Subset\Omega$ with Lipschitz boundary $\partial\omega$ such that $u|_{\omega}$ is a minimizer of $\overline{\mathscr{F}}_{u}[-;\omega]$ for compactly supported variations.
\end{enumerate}
\end{definition}
By the definition of $\overline{\mathscr{F}}_{u}^{*}[-;\omega]$ and $\overline{\mathscr{F}}_{u}[-;\omega]$, cf.~\eqref{eq:bdryrelaxed1} and  \eqref{eq:bdryrelaxed3}, the inequality underlying the local ($\bv$-)minimality for compactly supported variations has the following implication: If, for some suitable open $\omega\ni x_{0}$ with Lipschitz boundary $\partial\omega$, $u$ is a ($\bv$-)minimizer of $\overline{\mathscr{F}}_{u}^{*}[-;\omega]$ or $\overline{\mathscr{F}}_{u}[-;\omega]$ for compactly supported variations, we necessarily have $\trace_{\partial\omega}(u)\in\sobo^{1-1/q,q}(\partial\omega;\R^{N})$ \emph{by the very definition} of $\overline{\mathscr{F}}_{u}^{*}[u;\omega]$ or $\overline{\mathscr{F}}_{u}[u;\omega]$. By a standard Fubini-type theorem for $\bv$- or Sobolev spaces (see e.g.~\cite[Lem.~2.3]{GK1}), for each $x_{0}\in\Omega$ we always find $r>0$ such that $\trace_{\partial\!\ball_{r}(x_{0})}(u)\in\bv(\partial\!\ball_{r}(x_{0});\R^{N})$ if $u\in\bv_{\locc}(\Omega;\R^{N})$ or $\trace_{\partial\!\ball_{r}(x_{0})}(u)\in\sobo^{1,p}(\partial\!\ball_{r}(x_{0});\R^{N})$ if $u\in\sobo_{\locc}^{1,p}(\Omega;\R^{N})$. Here, the weak differentiability is understood with respect to the distributional tangential derivatives, and $\mathscr{L}^{1}$-a.e. $0<r<\dista(x_{0},\partial\Omega)$ will do. Since e.g. the embedding $\sobo^{1,p}(\partial\!\ball_{r}(x_{0});\R^{N})\hookrightarrow\sobo^{1-1/q,q}(\partial\!\ball_{r}(x_{0});\R^{N})$ holds if and only if $q\leq\frac{np}{n-1}$, the local minimality condition in both cases \ref{item:WLM2} and \ref{item:WLM1} is genuinely non-vacuous for the exponent range $1\leq p \leq q \leq \frac{np}{n-1}$. In particular, the latter is satisfied by~\eqref{eq:threshold1}.
\section{Main results}\label{sec:mainresults}
We now proceed to display the main results of the present paper. To avoid overburdening technicalities and to emphasize that our focus is on the quasiconvexity rather than the minimal smoothness of the integrands, we assume $F\in\hold^{\infty}(\R^{N\times n})$ in the sequel. Moreover, we define for $1\leq p<\infty$ the auxiliary integrand $V_{p}\colon\R^{N\times n}\to\R$ via $V_{p}(z):=(1+|z|^{2})^{\frac{p}{2}}-1$ and set $V:=V_{1}$. 

Given $1\leq q <\infty$, we suppose that $F\colon\R^{N\times n}\to\R$ satisfies the following hypotheses: 
\begin{enumerate}[label={(H\arabic{*})},start=1]
\item\label{item:H1} There exists $L>0$ such that $|F(z)|\leq L(1+|z|^{q})$ holds for all $z\in\R^{N\times n}$. 
\item\label{item:H2} For any $m>0$ there exists $\ell_{m}>0$ such that, for all $z_{0}\in\R^{N\times n}$ with $|z_{0}|\leq m$, $F-\ell_{m} V$ is quasiconvex at $z_{0}$.  
\item\label{item:H3} $F\in\hold^{\infty}(\R^{N\times n})$. 
\end{enumerate}
As alluded to above, hypothesis~\ref{item:H2} is related to the coercivity of the functional $\mathscr{F}[-;\Omega]$, yet is substantially weaker than the pointwise bound~\eqref{eq:pqgrowth}; see the discussion in Section~\ref{sec:mcex}. Our first main result, being concerned with the case $p=1$, establishes the partial $\hold^{\infty}$-regularity of local $\bv$-minimizers for compactly supported variations: 
\begin{theorem}[$\varepsilon$-regularity, $p=1$]\label{thm:main1}
Let $\Omega\subset\R^{n}$ be open and bounded, and suppose that the variational integrand $F\colon\R^{N\times n}\to\R$ satisfies~\emph{\ref{item:H1}--\ref{item:H3}} with $1\leq q<\frac{n}{n-1}$. Then for any local $\bv$-minimizer $u$ of $\overline{\mathscr{F}}^{*}$ for compactly supported variations the following holds: Given $M>0$ and $x_{0}\in\Omega$, there exist $\varepsilon_{M}=\varepsilon_{M}(n,N,\ell_{M},L,M)>0$ and $R_{0}=R_{0}(n,N,F,M,x_{0},u)\in (0,\mathrm{dist}(x_{0},\partial\Omega))$ such that if $0<R<R_{0}$ satisfies
\begin{align}\label{eq:MainSmallnessA}
\left\vert\frac{Du(\ball_{R}(x_{0}))}{\mathscr{L}^{n}(\ball_{R}(x_{0}))}\right\vert<M
\end{align}
and
\begin{align}\label{eq:MainSmallnessB}
\dashint_{\ball_{R}(x_{0})}|\nabla u-(Du)_{\ball_{R}(x_{0})}|\dif x + \frac{|Du^{s}|(\ball_{R}(x_{0}))}{\mathscr{L}^{n}(\ball_{R}(x_{0}))}<\varepsilon_{M}, 
\end{align}
\emph{then $u$ is of class $\hold^{\infty}(\ball_{R/2}(x_{0});\R^{N})$}. In particular, if we let 
\begin{align*}
\mathrm{Reg}(u):=\{x_{0}\in\Omega\colon\;u\;\text{is of class $\hold^{\infty}$ in an open neighbourhood of $x_{0}$}\}, 
\end{align*} 
then we have with $\Sigma_{u}:=\Omega\setminus\mathrm{Reg}(u)$
\begin{align}
\begin{split}
\Sigma_{u} & = \Sigma_{u}^{1}\cup\Sigma_{u}^{2} \\ & := \left\{x_{0}\in\Omega\colon \liminf_{r\searrow 0}\Big(\dashint_{\ball_{r}(x_{0})}|\nabla u-(Du)_{\ball_{R}(x_{0})}|\dif x + \frac{|D^{s}u|(\ball_{r}(x_{0}))}{\mathscr{L}^{n}(\ball_{r}(x_{0}))}\Big)>0\right\} \\ 
& \;\;\cup \left\{x_{0}\in\Omega\colon\;\limsup_{r\searrow 0}\left\vert\frac{Du(\ball_{r}(x_{0}))}{\mathscr{L}^{n}(\ball_{r}(x_{0}))}\right\vert = \infty \right\}, 
\end{split}
\end{align}
whereby $\mathrm{Reg}(u)$ is relatively open in $\Omega$, $\mathscr{L}^{n}(\Sigma_{u})=0$ and thus $u$ is \emph{$\hold^{\infty}$-partially regular}.
\end{theorem}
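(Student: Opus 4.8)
The plan is to run the blow-up scheme for partial regularity going back to \textsc{Evans}~\cite{Ev1} and \textsc{Acerbi \& Fusco}~\cite{AcFu1}, reorganised so that no integral representation of $\overline{\mathscr{F}}^{*}$ is ever invoked and so that the singular part $D^{s}u$ is carried through every estimate, in five moves: (i) localise by a Fubini-type slicing; (ii) establish a Caccioppoli inequality of the second kind for $\overline{\mathscr{F}}^{*}$; (iii) derive an excess-decay estimate by comparison with $\mathcal{A}$-harmonic maps; (iv) iterate it; and (v) bootstrap to $\hold^{\infty}$. For (i), introduce the hybrid excess
\[
\Phi(x_{0},r):=\dashint_{\ball_{r}(x_{0})}|\nabla u-(Du)_{\ball_{r}(x_{0})}|\dif x+\frac{|D^{s}u|(\ball_{r}(x_{0}))}{\mathscr{L}^{n}(\ball_{r}(x_{0}))},
\]
so that hypotheses~\eqref{eq:MainSmallnessA} and~\eqref{eq:MainSmallnessB} read $|(Du)_{\ball_{R}(x_{0})}|<M$ and $\Phi(x_{0},R)<\varepsilon_{M}$, and fix $R_{0}<\dist(x_{0},\partial\Omega)$ so small that, for $\mathscr{L}^{1}$-a.e.\ $r<R_{0}$, the Fubini-type slicing of~\cite{GK1} gives $\trace_{\partial\ball_{r}(x_{0})}(u)\in\sobo^{1-1/q,q}(\partial\ball_{r}(x_{0}))$, whence by Definition~\ref{def:locmin} the restriction $u|_{\ball_{r}(x_{0})}$ is a genuine $\bv$-minimizer of $\overline{\mathscr{F}}^{*}_{u}[-;\ball_{r}(x_{0})]$ for compactly supported variations; the dependence of this radius threshold on the chosen ball and on $u$ is exactly what forces $R_{0}$ to depend on $x_{0}$ and $u$.

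The central and, I expect, hardest step is (ii): for affine $\ell_{z_{0}}(x)=z_{0}x$ with $|z_{0}|\leq M+1$ and $\rho/2\leq s<t\leq\rho<R_{0}$,
\[
\dashint_{\ball_{s}(x_{0})}|\nabla u-z_{0}|\dif x+\frac{|D^{s}u|(\ball_{s}(x_{0}))}{\mathscr{L}^{n}(\ball_{s}(x_{0}))}\;\lesssim\;\frac{1}{t-s}\dashint_{\ball_{t}(x_{0})}\left|\frac{u-\ell_{z_{0}}}{\rho}\right|\dif x+(\text{error}),
\]
with an error term of higher order in $\rho$. Since $\overline{\mathscr{F}}^{*}$ is \emph{not} an integral when $1=p<q$, one must argue through recovery sequences $(u_{j})\subset\mathscr{A}^{q}_{u}$ for $u|_{\ball_{\rho}(x_{0})}$: interpolate between $u_{j}$ and $\ell_{z_{0}}$ across the annulus $\ball_{t}(x_{0})\setminus\ball_{s}(x_{0})$ by a cut-off, test the minimality of $u$ against the resulting $\sobo^{1,q}$-competitor, and exploit the strong quasiconvexity~\ref{item:H2} (which supplies the requisite coercivity via quasiconvexity of $F-\ell_{M}V$), the $q$-growth~\ref{item:H1} together with the sub-criticality $q<\tfrac{n}{n-1}$ and Sobolev--Poincar\'e (to absorb the surplus $|\nabla u_{j}|^{q}$-terms on the annulus into its small measure), and finally pass $j\to\infty$ using lower semicontinuity of $\overline{\mathscr{F}}^{*}$, in which limit the singular term $|D^{s}u|$ appears. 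The difficulty here is precisely the absence of a measure representation \`{a} la \textsc{Fonseca \& Mal\'{y}}~\cite{FM} and the fact that signed integrands leave one with only the quasiconvexity structure, rather than the classical lower semicontinuity, as a tool.

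For (iii)--(v), assuming the excess decay fails, rescale along a contradicting sequence of balls by $\Phi$, obtain $\bv$-bounded blow-up maps $v_{k}$, extract a weak-$*$ limit $v$, and show --- using $q<\tfrac{n}{n-1}$, so that the rescaled $q$-growth terms vanish in the limit, together with the minimality of $u$ transferred to the $v_{k}$ --- that $D^{s}v=0$ and that $v$ solves the constant-coefficient system with $\mathcal{A}=D^{2}F((Du)_{\ball_{R}(x_{0})})$, which is Legendre--Hadamard elliptic by~\ref{item:H2}. Combining the $\hold^{\infty}$-estimates for $\mathcal{A}$-harmonic maps with the Caccioppoli inequality of~(ii) yields, for a suitable $\theta=\theta(n,N,\ell_{M},L,M)\in(0,\tfrac14)$ and some $\beta\in(0,1)$, a decay $\Phi(x_{0},\theta r)\leq\tfrac12\Phi(x_{0},r)+Cr^{\beta}$ as long as the averaged gradients stay below $M+1$, a condition that propagates to a neighbourhood of $x_{0}$ by continuity of $x\mapsto(Du)_{\ball_{r}(x)}$ and a slight shrinking of radii; iterating and invoking Campanato's characterisation gives $\Phi(x,r)\leq Cr^{\beta}$ near $x_{0}$, hence $\nabla u\in\hold^{0,\beta}_{\locc}$ there, and since $|D^{s}u|$ then has vanishing upper $n$-density, $D^{s}u=0$, i.e.\ $u\in\hold^{1,\beta}(\ball_{R/2}(x_{0});\R^{N})$. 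Finally, $u$ being locally Lipschitz near $x_{0}$, Lemma~\ref{lem:consistency} and minimality against compactly supported Lipschitz variations show that $u$ solves $\di\,DF(\nabla u)=0$ weakly on $\ball_{R/2}(x_{0})$; since $F\in\hold^{\infty}$ and $D^{2}F(\nabla u)$ is Legendre--Hadamard elliptic with $\hold^{0,\beta}$ coefficients, differentiating the system and applying Schauder estimates iteratively upgrades $u$ to $\hold^{\infty}(\ball_{R/2}(x_{0});\R^{N})$. The description of $\Sigma_{u}$ is then routine: $\Omega\setminus\mathrm{Reg}(u)\subseteq\Sigma_{u}^{1}\cup\Sigma_{u}^{2}$ by the $\varepsilon$-regularity criterion just established, whose complementary conditions are exactly~\eqref{eq:MainSmallnessA} and~\eqref{eq:MainSmallnessB} in the limit $r\searrow 0$; both sets are $\mathscr{L}^{n}$-negligible by Lebesgue differentiation together with $|Du|(\Omega)<\infty$, and $\mathrm{Reg}(u)$ is relatively open by construction.
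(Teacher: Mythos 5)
The proposal captures the high-level skeleton (Fubini slicing to good balls, Caccioppoli of the second kind, $\mathcal{A}$-harmonic comparison, Campanato iteration, Schauder bootstrap), but it departs from the paper's route and, more seriously, treats as routine the two steps that the paper identifies as its principal technical novelties.

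The deepest gap concerns the Euler--Lagrange system. You invoke a first variation both implicitly in step (iii), to show that the blow-up limit $v$ solves the constant-coefficient system, and explicitly in step (v), to conclude $\di DF(\nabla u)=0$ once $u$ is locally Lipschitz. But for $1=p<q$ the functional $\overline{\mathscr{F}}^{*}$ has no integral representation, so there is no direct variation to take at a BV-minimizer. The paper establishes~\eqref{eq:ELLip} only after the Mazur-type lemma (Proposition~\ref{prop:mazur}): for a $\bv$-minimizer of $\overline{\mathscr{F}}_{v}^{*}[-;\Omega]$ for compactly supported variations there exists a recovery sequence whose approximate gradients converge in $\mathscr{L}^{n}$-measure to $\nabla u$, and it is exactly this convergence boost — obtained by testing minimality against competitors built from the trace-preserving operator $\widetilde{\mathbb{E}}$ on a Vitali family of good balls — that makes $\int\langle F'(\nabla u),\nabla\varphi\rangle\dif x=0$ provable. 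Without this input, your blow-up limit has no equation to satisfy.

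Relatedly, your choice of the indirect Evans-style blow-up differs from the paper's direct comparison. The paper (Proposition~\ref{prop:improvement}) solves, on a good ball with $\mathcal{M}Du(x_{0},R)<\infty$ where Corollary~\ref{cor:Fubini} puts $\trace_{\partial\!\ball_{R}(x_{0})}(u)$ into $\sobo^{1-1/\vartheta,\vartheta}$ for $\vartheta=\frac{2n+1}{2n}$, the Legendre--Hadamard system with these Dirichlet data and derives the quantitative distance estimate~\eqref{eq:absolutemain} by a duality argument. The whole machinery of good radii and maximal conditions is forced here because for $p=1$ the generic interior trace space of BV along spheres is $L^{1}$, and the standard $\mathcal{A}$-harmonic approximation lemma cannot be run on arbitrary balls. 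A blow-up would dodge the boundary-data problem by testing only against compactly supported $\varphi$, so your alternative is in principle viable \emph{once} the Euler--Lagrange system is in hand — but it trades one obstruction for another without acknowledging either.

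Finally, your Caccioppoli sketch elides where signedness actually bites. You propose to interpolate between a recovery sequence $(u_{j})$ and an affine map across an annulus and then "pass $j\to\infty$ by lower semicontinuity of $\overline{\mathscr{F}}^{*}$". But for signed quasiconvex integrands lower semicontinuity (Lemma~\ref{lem:LSC}) holds only on fixed Dirichlet classes (cf.~Example~\ref{ex:introduction}), and producing recovery sequences with prescribed interior traces along a sphere requires the good generation theorem (Proposition~\ref{prop:goodminseqs}). Its fifth step — the lower-semicontinuity comparison with the trace-preserved map $\mathbf{u}$ on a full two-sided annulus — is exactly what excludes concentration of $F(\nabla v_{j})$ near the sphere when $F$ is unbounded below, and is what your appeal to "Sobolev--Poincar\'{e} absorption" does not provide. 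The inequality you write down is also stated with first powers of $\nabla u-z_{0}$; the paper's Theorem~\ref{thm:Cacc} is in terms of $V$, and the quadratic behaviour of $V$ near zero is what makes the iteration in Proposition~\ref{prop:dec} close.
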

Theorem~\ref{thm:main1} and Theorem~\ref{thm:main2} below display the core results of the announcement~\cite{GK2}. Deferring the specific conceptual and technical issues to Section~\ref{sec:keynov} below, we first comment on the general matters underlying the previous result. As is common in partial regularity proofs, Theorem~\ref{thm:main1} follows from an excess decay estimate in the neighbourhoods of points $x_{0}\in\mathrm{Reg}(u)$. The latter, in turn, is a consequence of the Caccioppoli inequality of the second kind and an improved distance estimate of the relaxed minimizer $u$ to suitable $\mathbb{A}$-harmonic approximations. Since we are dealing with \emph{relaxed minimizers}, the localisation scheme of \textsc{Evans}~\cite{Ev1} to arrive at the Caccioppoli inequality of the second kind requires the construction of suitable competitor maps. In order to get access to the requisite estimates, we are only allowed to use the $1$-strong quasiconvexity embodied by~\ref{item:H2}. By virtue of the growth bound~\ref{item:H1}, this entails the $\sobo^{1,q}$-quasiconvexity of $G_{m}:=F-\ell_{m}V$ at every $z_{0}$ with $|z_{0}|\leq m$, meaning that 
\begin{align}\label{eq:W1qQC}
G_{m}(z_{0})\leq \dashint_{\ball_{1}(0)}G_{m}(z_{0}+\nabla\varphi)\dif x\qquad\text{for all}\;\varphi\in\sobo_{0}^{1,q}(\ball_{1}(0);\R^{N}).
\end{align}
In general, however, ~\eqref{eq:W1qQC} does \emph{not} persist for competitor maps $\varphi\in\sobo_{0}^{1,1}(\ball_{1}(0);\R^{N})$. Thus, a particularly careful construction of $\sobo^{1,q}$- or low energy competitors is required, giving us access to~\eqref{eq:W1qQC} while still maintaining the optimal exponent range $1\leq q<\frac{n}{n-1}$, see~\eqref{eq:threshold1} for $p=1$. Further elaborating on this matter in Section~\ref{sec:keynov} below, the construction of $\mathbb{A}$-harmonic comparison maps is the second issue that is intricate in the $p=1$-growth regime. This is so because the interior trace space of $\bv$ is $\lebe^{1}$ along \emph{arbitrary spheres}  and second order boundary value problems with $\lebe^{1}$-boundary data are in general ill-posed. Similar issues in the usual quasiconvex, linear growth context have been addressed by the authors~\cite{Gm20,GK1}, but here we need to establish that the available distance estimates fit into the general line of argument; see Section~\ref{sec:proofmain} for the details. Even though we are exclusively interested in the partial regularity for smooth integrands, we wish to point out that our approach can be employed to yield $\hold_{\locc}^{1,\alpha}$-partial regularity of relaxed minimizers provided $F$ is only assumed to be of class $\hold^{2}$ together with ~\ref{item:H1} and~\ref{item:H2}; also compare with~\cite{BGIK} and~\cite{Li} in the context of functionals~\eqref{eq:lingrowth}. 

We now turn to the case $p>1$ and thus to the functionals $\overline{\mathscr{F}}$, where the natural threshold condition on $q$ is given by~\eqref{eq:threshold1}. Alongside~\ref{item:H1} and~\ref{item:H3}, we now consider the following substitute of~\ref{item:H2}:
\begin{enumerate}[label={$\text{(H\arabic{*})}_{p}$},start=2]
\item\label{item:H2p}  For any $m>0$ there exists $\ell_{m}>0$ such that for all $z_{0}\in\R^{N\times n}$ with $|z_{0}|\leq m$, $F-\ell_{m} V_{p}$ is quasiconvex at $z_{0}$.
\end{enumerate}
The corresponding analogue of Theorem~\ref{thm:main2} then is as follows:
\begin{theorem}[$\varepsilon$-regularity, $p>1$]\label{thm:main2}
Let $1<p\leq q < \min\{\frac{np}{n-1},p+1\}$. Let $\Omega\subset\R^{n}$ be open and bounded, and suppose that the variational integrand $F\colon\R^{N\times n}\to\R$ satisfies~\emph{\ref{item:H1}, \ref{item:H2p}} and \emph{\ref{item:H3}}. Then for any local minimizer $u\in\sobo_{\locc}^{1,p}(\Omega;\R^{N})$ of $\overline{\mathscr{F}}$ for compactly supported variations the following holds: Given $M>0$ and $x_{0}\in\Omega$, there exist $\varepsilon_{M}=\varepsilon_{M}(p,q,n,N,\ell_{M},L,M)>0$ and $R_{0}=R_{0}(n,N,F,M,x_{0},u)\in(0,\mathrm{dist}(x_{0},\partial\Omega)$ such that if $0<R<R_{0}$ satisfies
\begin{align}
\left\vert\dashint_{\ball_{R}(x_{0})}\nabla u\dif y\right\vert<M
\end{align}
and
\begin{align}
\dashint_{\ball_{R}(x_{0})}|\nabla u-(\nabla u)_{\ball_{R}(x_{0})}|^{p}\dif x <\varepsilon_{M}, 
\end{align}
\emph{then $u$ is of class $\hold^{\infty}(\ball_{R/2}(x_{0});\R^{N})$}. Especially, $u$ is \emph{$\hold^{\infty}$-partially regular}. 
\end{theorem}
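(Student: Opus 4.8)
\textbf{Proof plan for Theorem~\ref{thm:main2}.}

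The plan is to follow the classical blow-up/$\mathbb{A}$-harmonic approximation scheme of Evans~\cite{Ev1} and Acerbi--Fusco~\cite{AcFu1}, adapted to the relaxed functional $\overline{\mathscr{F}}$ and the $(p,q)$-growth regime $1<p\leq q<\min\{\frac{np}{n-1},p+1\}$. The backbone is a single-step excess-decay estimate: setting $\Phi(x_0,r):=\dashint_{\ball_r(x_0)}|\nabla u-(\nabla u)_{\ball_r(x_0)}|^p\dif x$, one shows that for every $M>0$ there are $\varepsilon_M>0$ and $\theta\in(0,1)$ (and $R_0$ depending also on $u$) such that the smallness of the mean $|(\nabla u)_{\ball_R(x_0)}|<M$ together with $\Phi(x_0,R)<\varepsilon_M$ forces $\Phi(x_0,\theta R)\leq\frac12\theta^{2\alpha}\,C\,\Phi(x_0,R)$ for each $\alpha\in(0,1)$, with the means staying controlled. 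Iterating this yields the Morrey-type decay $\Phi(x_0,r)\lesssim r^{2\alpha p}$ on a full ball $\ball_{R/2}(x_0)$, hence $u\in\hold^{1,\alpha}_{\locc}(\ball_{R/2}(x_0);\R^N)$ by the integral characterisation of Hölder spaces; the passage to $\hold^{\infty}$ is then a standard bootstrap using~\ref{item:H3} and linear elliptic regularity for the frozen Euler--Lagrange system, once one knows $\nabla u$ is locally bounded and the system is uniformly elliptic there.

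The two technical pillars feeding the decay step are (i) a \emph{Caccioppoli inequality of the second kind} for local minimizers of $\overline{\mathscr{F}}$, and (ii) an \emph{improved $\mathbb{A}$-harmonic approximation estimate}. For (i), the key point — and where relaxation bites — is that one may only exploit the $p$-strong quasiconvexity~\ref{item:H2p}, which via~\ref{item:H1} upgrades to $\sobo^{1,q}$-quasiconvexity of $G_m:=F-\ell_m V_p$ at points $z_0$ with $|z_0|\leq m$, i.e.\ the inequality~\eqref{eq:W1qQC} holds only against $\sobo^{1,q}_0$ test functions. Therefore, when running Evans' localisation argument one cannot simply cut off with a Lipschitz function multiplying $u-(\text{affine})$, since the resulting competitor need not lie in $\sobo^{1,q}$; instead one must interpose a careful construction of a $\sobo^{1,q}$ (or ``low-energy'') competitor that agrees with $u$ near the boundary of the ball, whose energy is controlled by the excess, and for which one can still invoke~\eqref{eq:W1qQC}. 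This is exactly the mechanism that pins the exponent range: the condition $q<p+1$ (together with $q<\frac{np}{n-1}$) is what makes the interpolation/layer estimates on the cut-off region close, controlling the $\sobo^{1,q}$-norm of the correction by a power of the $\sobo^{1,p}$-excess. Combined with the minimality inequality $\overline{\mathscr{F}}_u[u;\omega]\leq\overline{\mathscr{F}}_u[\text{competitor};\omega]$ and the absence of Lavrentiev gaps~\eqref{eq:lavrentiev}, this yields the Caccioppoli estimate $\dashint_{\ball_{r/2}}|\nabla u-(\nabla u)_{\ball_{r/2}}|^p\lesssim\dashint_{\ball_r}\big|\tfrac{u-(\text{affine})}{r}\big|^p+(\text{higher-order remainder})$.

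For (ii), one rescales: assuming the decay step fails, pick balls $\ball_{R_k}(x_k)$ with $\Phi(x_k,R_k)=:\lambda_k^2\to0$, $|(\nabla u)_{\ball_{R_k}}|\to z_0$ with $|z_0|\leq M$, and normalise $v_k(y):=\frac{u(x_k+R_ky)-(\text{affine}_k)(y)}{\lambda_k R_k}$. The Caccioppoli inequality gives a uniform $\sobo^{1,p}$-bound on $v_k$ on, say, $\ball_{1/2}$; extracting a weak limit $v$ and linearising the Euler--Lagrange system of $\overline{\mathscr{F}}$ around $z_0$ — using $\hold^2$-regularity of $F$ from~\ref{item:H3} and the strong ellipticity of $\mathbb{A}:=F''(z_0)$ guaranteed on the Legendre--Hadamard level by~\ref{item:H2p} — one shows $v$ is $\mathbb{A}$-harmonic. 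The $\mathbb{A}$-harmonic approximation lemma then produces an $\mathbb{A}$-harmonic $h$ close to $v_k$ in $\lebe^p$; interior estimates for $\mathbb{A}$-harmonic functions give $h$ the desired excess decay at scale $\theta$, and transferring back to $v_k$ (hence to $u$) yields the contradiction. The delicate points here, all inherited from relaxation, are: justifying that $u$ actually solves the Euler--Lagrange system of the \emph{relaxed} functional in the weak $\sobo^{1,p}$-sense (one needs that inner variations/outer variations with compactly supported test functions in $\sobo^{1,q}$ are admissible and that $\overline{\mathscr{F}}$ agrees with $\mathscr{F}$ on such perturbations by Lemma~\ref{lem:consistency}), and controlling the error terms in the linearisation uniformly, for which the restriction $q<p+1$ is again used to absorb the $q$-growth remainder $|\nabla v_k|^q$-type terms against $\lambda_k^{-2}$.

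\textbf{Main obstacle.} The hardest part will be step (i): constructing the $\sobo^{1,q}$-competitor and closing the Caccioppoli estimate without losing the optimal exponent range. Unlike the unsigned, pointwise-$(p,q)$ case treated by Schmidt~\cite{SchmidtPR}, we cannot fall back on Fonseca--Mal\'y measure representations, so the competitor must be built by hand — via a dyadic ``hole-filling''/layering of annuli where one trades off the $\lebe^q$-cost of the glued correction against the $\sobo^{1,p}$-smallness of the excess — and the bookkeeping that makes $q<p+1$ exactly the threshold for this trade-off to succeed is the technical core of the argument. Once the Caccioppoli inequality is in place in the form above, the $\mathbb{A}$-harmonic approximation and the $\hold^{\infty}$-bootstrap proceed along well-established lines.
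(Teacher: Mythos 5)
Your overall architecture — Caccioppoli inequality of the second kind plus $\mathbb{A}$-harmonic approximation plus Schauder bootstrap — is the right one, and you correctly identify the construction of a controllable $\sobo^{1,q}$-competitor inside Evans' hole-filling scheme as the core technical difficulty, as well as the reason $q<p+1$ and $q<\tfrac{np}{n-1}$ enter. This matches the paper's route in broad outline, and your step (ii), phrased as a blow-up/compactness argument, is a close cousin of the direct $\mathbb{A}$-harmonic approximation lemma (Duzaar et al., Schmidt~\cite{SchmidtPR1}) that the paper invokes at Remark~\ref{rem:plarger1excess}; either implementation works once the Caccioppoli inequality and the Euler--Lagrange system are in hand.

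However, there is a genuine gap in your treatment of the Euler--Lagrange system. You propose to justify the weak form of the system by arguing that "$\overline{\mathscr{F}}$ agrees with $\mathscr{F}$ on such perturbations by Lemma~\ref{lem:consistency}." That lemma gives $\overline{\mathscr{F}}_{v}[w;\Omega]=\mathscr{F}[w;\Omega]$ only for $w\in\sobo_{\overline{v}}^{1,q}(\Omega;\R^{N})$; but a local minimizer $u$ of the relaxed functional is a priori only in $\sobo_{\locc}^{1,p}$, and adding a compactly supported $\sobo^{1,\infty}$-test function to $u$ does not put it into $\sobo^{1,q}$. So you cannot directly identify $\overline{\mathscr{F}}_{u}[u+\varphi;\omega]$ with $\int_{\omega}F(\nabla u+\nabla\varphi)\dif x$, and the minimality inequality $\overline{\mathscr{F}}_{u}[u;\omega]\le\overline{\mathscr{F}}_{u}[u+\varphi;\omega]$ does not immediately differentiate into the EL system. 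What one actually has is the upper bound
\begin{align*}
\overline{\mathscr{F}}_{u}[u+\varphi;\omega]\leq \liminf_{j\to\infty}\int_{\omega}F(\nabla u_{j}+\nabla\varphi)\dif x
\end{align*}
along any generating sequence $(u_{j})$ for $\overline{\mathscr{F}}_{u}[u;\omega]$, and the difficulty is to pass from the $\liminf$ on the right to $\int_{\omega}F(\nabla u+\nabla\varphi)\dif x$: for a generic generating sequence $(\nabla u_{j})$ converges only weakly in $\lebe^{p}$, which is not enough to control the nonlinear difference $F(\nabla u_{j}+\nabla\varphi)-F(\nabla u_{j})$ in the limit. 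The paper resolves this with a Mazur-type lemma (Proposition~\ref{prop:mazur} / Corollary~\ref{cor:mazur}): minimality forces the existence of a generating sequence with $\nabla u_{j}\to\nabla u$ in $\mathscr{L}^{n}$-measure, and then uniform integrability (from the Lipschitz bound~\eqref{eq:lipschitz} and the $\lebe^{p}$-bound, precisely where $q<p+1$ is used) plus Vitali's theorem closes the argument — all without measure representations. You correctly flag that measure representations are unavailable, but you do not supply the replacement mechanism; as written, the appeal to Lemma~\ref{lem:consistency} fails, and this is the decisive missing step.

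As a secondary remark: the Caccioppoli inequality in the paper (Theorem~\ref{thm:Cacc}, Corollary~\ref{cor:Caccplargerthan1}) is closed using the good generation theorem (Proposition~\ref{prop:goodminseqs}) and the Whitney-type trace-preserving operator of Section~\ref{sec:Fubini} to fix interior traces of generating sequences on good spheres, combined with the additivity structure from Lemma~\ref{lem:additivityradii}. Your "dyadic hole-filling / layering of annuli" is the right intuition, but without the trace-fixing device one cannot split $\overline{\mathscr{F}}$ across spheres in the minimality comparison, so that machinery is not optional. Also, since the integrand here is signed, the good generation step has to rule out negative concentration along annuli — a point absent in the unsigned case and in your sketch.
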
 
In establishing the previous theorem, the difficulties are similar to those encountered for Theorem~\ref{thm:main2}. This particularly concerns the Caccioppoli inequality of the second kind and the validity of the Euler-Lagrange system, from where Theorem~\ref{thm:main2} directly follows from the by now classical $\mathbb{A}$-harmonic approximation technique, cf.~\textsc{Duzaar} et al.~\cite{Duzaar1,Duzaar1A,Duzaar2} and its implementation in the $(p,q)$-growth context by \textsc{Schmidt}~\cite{SchmidtPR0,SchmidtPR1}; also see Remark~\ref{rem:plarger1excess}. 

The growth assumptions displayed in Theorem~\ref{thm:main1} and \ref{thm:main2} are close to optimal. On the one hand, the bound $q<\frac{np}{n-1}$ is natural in view of~\eqref{eq:threshold1} and the discussion afterwards. On the other hand, the requirement $q\leq p+1$ as visible in Theorem~\ref{thm:main2} (which is automatically satisfied in the context of Theorem~\ref{thm:main1}) comes up naturally by validity of the Euler-Lagrange system: Indeed, if $\nabla u\in\lebe^{p}(\Omega;\R^{N\times n})$ and $F$ is quasiconvex with \ref{item:H1}, then $|F'(z)|\leq c(1+|z|^{q-1})$ and so $F'(\nabla u)\in\lebe^{1}(\Omega;\R^{N})$ can be only asserted provided $q-1\leq p$. For the exponent range of Theorem~\ref{thm:main2}, this is automatically satisfied in the regime $1\leq p <n-1$, and then the exponent range of Theorem~\ref{thm:main2} precisely reduces to~\eqref{eq:threshold1}. Instead, if $p>n-1$ and so $p+1<\frac{np}{n-1}$, it is clear from the proof that we can allow for the range $n-1<p\leq q \leq p+1$. 

Let us note that, in the case of strongly convex functionals, the exponent condition $p\leq q<\min\{\frac{np}{n-1},p+1\}$ for partial regularity appears in \textsc{Passarelli di Napoli \& Siepe}~\cite{PasSie} for $p\geq 2$ first, and was improved by \textsc{Bildhauer \& Fuchs}~\cite{BildFuchs1} to the range $q<\frac{n+2}{n}p$ by employing the higher gradient integrability of minimizers; also see~\cite{Breit,CPK1,EspositoLeonettiMingione,Koch}. Such higher integrability results are particularly delicate if $p=1$, where even in the standard growth case a typical $(p,q)$-behaviour is encountered on the level of second derivatives; see \textsc{Bildhauer}~\cite{Bild2002a,Bild2002,Bild}, \textsc{Beck} et al.~\cite{BeckSchmidt,BeckSchmidt1,BeckBulicekMaringova,BeckBulicekGmeineder} and the authors~\cite{Gm20a,GK1A} for results in this direction. Clearly, convex techniques are ruled out in the context of Theorems~\ref{thm:main1} and~\ref{thm:main2}. In order to display the chief difficulties and main points of the present paper in the  \emph{quasiconvex case}, we thus briefly pause and compare our relaxed functionals with previously studied relaxations first.
\subsection{Comparison with other relaxations and main points}\label{sec:CompaIntro}
The reader will notice that the functionals $\overline{\mathscr{F}}$ and $\overline{\mathscr{F}}^{*}$ as introduced in Section~\ref{sec:relaxintro} slightly differ from those considered in previous contributions \cite{FM,SchmidtPR0,SchmidtPR}. More precisely, if $1<p<\infty$, \textsc{Fonseca \& Mal\'{y}}~\cite{FM} or \textsc{Schmidt} \cite{SchmidtPR} consider the locally relaxed functionals 
\begin{align}\label{eq:FonsecaMalySchmidtRelax}
\mathscr{F}_{\locc}[u;\omega]:=\inf\left\{\liminf_{j\to\infty}\int_{\omega}F(\nabla u_{j})\dif x\colon\;\begin{array}{c} (u_{j})\subset (\sobo_{\locc}^{1,q}\cap\sobo^{1,p})(\omega;\R^{N}),\\ u_{j}\rightharpoonup u\;\;\text{in}\;\sobo^{1,p}(\omega;\R^{N})\end{array}\right\}, 
\end{align}
where $\omega\Subset\Omega$ and the quasiconvex integrand $F\in\hold(\R^{N\times n})$ satisfies~\eqref{eq:pqgrowth} with~\eqref{eq:threshold1}. As established in~\cite{FM}, this implies that $\mathscr{F}_{\locc}[u;-]$ has a measure representation: If $u\in\sobo^{1,p}(\Omega;\R^{N})$ is such that $\mathscr{F}_{\locc}[u;\Omega]<\infty$, then there exists a uniquely determined finite (outer) Radon measure $\mu_{u}$ on $\Omega$ such that 
\begin{align}\label{eq:measrepIntro}
\mathscr{F}_{\locc}[u;\omega]=\mu_{u}(\omega)\qquad\text{for all open subsets}\;\omega\subset\Omega.
\end{align}
If $p=1$, where one makes the modification $u_{j}\stackrel{*}{\rightharpoonup}u$ in $\bv(\Omega;\R^{N})$ to arrive at the functionals $\mathscr{F}_{\locc}^{*}[-;\omega]$,  results in this direction are due to \textsc{Soneji}~\cite{Soneji1,Soneji2}. In either case, introducing the notions of \emph{weak or local minimality} for $\mathscr{F}_{\locc}$ as in \cite[Defs.~6.1,~6.2]{SchmidtPR}, the measure representation~\eqref{eq:measrepIntro} immediately yields that any minimizer is a local minimizer. Moreover, the identification of $\frac{\dif\mu_{u}}{\dif\mathscr{L}^{n}}$ (i.e., the density of the absolutely continuous part of $\mu_{u}$ for $\mathscr{L}^{n}$), as $F(\nabla u)$ (cf.~\cite{BFM,FM}) then is instrumental for the derivation of the Euler-Lagrange system satisfied by minima, see~\cite[Lems.~7.1--7.3]{SchmidtPR} and Remark~\ref{rem:measuredensityEuler} below.

One of the key reasons to employ the functionals $\overline{\mathscr{F}}$ or $\overline{\mathscr{F}}^{*}$ instead of those given by~\eqref{eq:FonsecaMalySchmidtRelax} is that the latter do \emph{only extend $\mathscr{F}$ if $F$ is bounded from below}. In fact, if $F$ is quasiconvex and unbounded from below, it might happen that 
\begin{align*}
\mathscr{F}_{\locc}[u;\Omega]\neq \mathscr{F}[u;\Omega] 
\end{align*}
even for $u\in\sobo^{1,q}(\Omega;\R^{N})$ with $1<p\leq q<\frac{np}{n-1}$; see Section~\ref{sec:LSCsimple}. This, in turn, is not the case for the functionals $\overline{\mathscr{F}}_{v}[u;\Omega]$ but happens at the cost that some higher regularity of the prescribed traces is required. On the other hand, the relaxed functionals as introduced in Section~\ref{sec:relaxintro} do not feature Lavrentiev gaps, see~\eqref{eq:lavrentiev}, which is only known to hold for the functionals from~\eqref{eq:FonsecaMalySchmidtRelax} on special sets (cf.~\cite[Sec.~5]{SchmidtPR}). 

In the setting of signed quasiconvex integrands and hereafter the functionals $\overline{\mathscr{F}}$ or $\overline{\mathscr{F}}^{*}$, however, measure representations as for~\eqref{eq:FonsecaMalySchmidtRelax} are not available at present; for now, if $p=1$, one even lacks the identification of $\frac{\dif\mu_{u}}{\dif\mathscr{L}^{n}}$ in the  \emph{unsigned} context. Let us note that, in the language of Definition~\ref{def:locmin}, the conclusion of (global) minimizers being local minimizers for compactly supported variations persists, but needs to be established by independent means. Interestingly, the difficulties underlying the proofs of the basic features of the relaxed functionals shift almost completely (see Remark~\ref{rem:diffshift}). Most importantly, although $\mathscr{F}_{\locc}$ and $\overline{\mathscr{F}}$ are not equal in general, the set of local minimizers for compactly supported variations is in fact the same provided $F$ is quasiconvex and bounded below (see Proposition~\ref{prop:Equivalenceminimizers} and Remark~\ref{rem:weaklocal}). Thus, the partial regularity results stated for $\overline{\mathscr{F}}^{*}$ and $\overline{\mathscr{F}}$ in the previous paragraph in fact carry over to the so-called \emph{weak local minimizers} as considered in~\cite{SchmidtPR}; also compare with Remark~\ref{rem:weaklocal}. 

\subsection{Main points and novelties}\label{sec:keynov}
Based on the discussion in the preceding paragraphs, we now briefly address the main novelties and chief difficulties, both from a conceptual and technical perspective, of the present paper. Since several of these matters mutually depend on each other in the signed case, we start by displaying the part which is also new in the unsigned context.

\subsubsection{$(p,q)$-exponent range.}\label{sec:CaccIntro} The first advancement of Theorems~\ref{thm:main1} and~\ref{thm:main2}, even for non-negative integrands $F$, is the extension of the partial regularity of relaxed minimizers from \textsc{Schmidt}'s range $1<p\leq q<p+\frac{\min\{2,p\}}{2n}$ to $1\leq p\leq q <\min\{\frac{np}{n-1},p+1\}$. As discussed in~\cite{SchmidtPR}, in the unsigned, superlinear growth case $p>1$, the crucial point is to establish the Caccioppoli inequality of the second kind for the latter range of exponents. We here approach the requisite inequalities by exclusively using minimality and additivity properties of the relaxed functionals, systematically avoiding any sort of intermediate bounds that force exponent restrictions beyond $ q <\min\{\frac{np}{n-1},p+1\}$. This necessitates a variation of \textsc{Evans}' original proof of the Caccioppoli inequality of the second kind~\cite{Ev1} and its modification in the $(p,q)$-context by \textsc{Schmidt}~\cite{SchmidtPR0,SchmidtPR}. To run \textsc{Evans}' localisation scheme based on \textsc{Widman}'s hole-filling trick~\cite{Widman}, it is clear that accessing the ($p$-)strong quasiconvexity requires the construction of competitor maps with controllable $ \sobo^{1,q}$-energy on certain annuli. Inspired by~\cite{SchmidtPR}, this is achieved by suitably modifying certain recovery sequences on special annuli by trace-preserving operators to force fixed traces along certain spheres. Deferring the discussion of the underlying \emph{good generation theorem} in the signed case, we stress that, while the  slightly different overall set-up of the Caccioppoli inequality in Section~\ref{sec:Cacc} appears as a conceptual point, it goes hand in hand with its precise technical implementation. Here, a key device is the direct derivation of the requisite layer bounds by use of the \textsc{Whitney}-type trace-preserving operator~\cite{Whitney} as introduced in the $(p,q)$-context by~\textsc{Fonseca \& Mal\'{y}}~\cite{FMParma}. This operator, to be discussed in detail in the $\bv$-setting in Section~\ref{sec:Fubini}, allows for a flexible local handling of the underlying estimates by examining the $V$-function type energies of $\bv$- or $\sobo^{1,p}$-maps and their gradients on the respective Whitney balls. By construction, this operator admits the globalisation of local estimates of $V$-function-type energies by routine embeddings for $\ell^{p}$-sequence spaces in conjunction with suitable finiteness conditions on spherical maximal functions. Based on this approach, it then will be clear from the proof why we have to require $q<\frac{np}{n-1}$. Yet, the key point is that we solely work subject to \emph{quasiconvexity} but not $\sobo^{1,p}$-quasiconvexity conditions, in which case improved results are available (cf.~\textsc{Carozza, Passarelli di Napoli} and the second author \cite{CPK}); however, note that quasiconvexity is far apart from $\sobo^{1,p}$-quasiconvexity in general. 

\subsubsection{Signed integrands.} Other than \textsc{Schmidt} \cite{SchmidtPR} or \textsc{Fonseca} et al. \cite{BFM,FM,FonMar} and as explained in Section~\ref{sec:aimandscope} (see~\eqref{eq:pstrongQCintro}\emph{ff.}), we assume the integrands $F$ to be \emph{signed}. In order to elaborate on why the quasiconvex signed case departs from integrands being bounded below, we note that an overarching task in the study of $(p,q)$-growth problems is to control certain $\sobo^{1,r}$-energies, for some $r>p$, by the a priori available $\sobo^{1,p}$-bounds on minimizing sequences. Different from the convex case (see the discussion in Section~\ref{sec:mainresults}), in the quasiconvex case higher gradient integrability results seem out of reach at present (also see Section~\ref{sec:Mazdiscuss}(b) below) and so we may only rely on the $\lebe^{p}$-boundedness of gradients and the quasiconvexity itself.

\emph{(a) Exclusion of concentration effects.} As a key device to control the $\lebe^{q}$-concentration of gradients on certain small annular layers, \textsc{Schmidt}~\cite[\S 7.7]{SchmidtPR0,SchmidtPR} introduced a boundary regularity criterion to be able with specific recovery sequences with fixed interior traces. When constructing such sequences $(u_{j})$   from \emph{given} recovery sequences $(v_{j})$, we must ensure that no additional mass is created during this modification process. It is here where the signed case comes with aggravated concentration issues: Indeed, since $F$ is signed, it might in principle happen that on small nested annular regions $\mathcal{A}_{j}\Subset\Omega$ with $\mathscr{L}^{n}(\mathcal{A}_{j})\searrow 0$ the gradients $\nabla v_{j}$ blow up in a way such that $\mathscr{F}[v_{j};\mathcal{A}_{j}]\searrow -\infty$ whilst $\sup_{j\in\mathbb{N}}\mathscr{F}[v_{j};\Omega]<\infty$. Passing to recovery sequences $(u_{j})$ with a controllable $\sobo^{1,q}$-energy on $\mathcal{A}_{j}$ that coincide with $v_{j}$ on $\Omega\setminus\mathcal{A}_{j}$, one generically expects $\mathscr{F}[u_{j};\Omega]$ to create additional mass in the limit. It is easy to see that such a behaviour is impossible when $F$ is convex and $p\geq 1$ or $F$ is quasiconvex and minorisable by affine-linear maps\footnote{Note that if $F$ is convex with $p>1$, then $F$ is automatically bounded from below. More generally, by separation, every convex integrand with $p\geq 1$ can be minorised by an affine-linear map.}. As we will display in Section~\ref{sec:props}, there is in fact a wealth of signed, coercive, quasiconvex integrands that cannot be minorised by affine-linear maps; thus it is indeed the \emph{nonconvexity} that lets such obstructions emerge in the signed context at all. The resolution of this matter and therewith the generalisation of~\cite[Lem.~7.7]{SchmidtPR}, here referred to as \emph{good generation theorem}, is given in Section~\ref{sec:goodgeneration}. With this being exemplary, related concentration effects of this sort arise and must be ruled out throughout the course of the paper. 

\emph{(b) Fitting the concentration control to the general scheme of proof.} Potential concentration effects as displayed in the preceding item can be avoided if one imposes certain restrictions on the $(p,q)$-exponent range. However, such restrictions do not allow to cover the range $q<\min\{\frac{np}{n-1},p+1\}$ as appearing in Theorems~\ref{thm:main1} and~\ref{thm:main2}. Since the proof of the Caccioppoli inequality equally hinges on modifications of recovery sequences on small annular layers, we have to limit the $\lebe^{q}$-gradient concentration in the signed case even more carefully here\footnote{Since we directly work with linearised integrands, signed integrands naturally arise in the proof of the Caccioppoli inequality even for unsigned $1$-strongly quasiconvex integrands $F$. This, however, is only of technical nature since the linearisation terms  then can be handled by the available weak*-convergence; this is not so if $F$ is a priori signed.} while simultaneously reaching the optimal exponent range. In parallel, similar effects have to be excluded in the derivation of the Euler-Lagrange system, see Section~\ref{sec:Mazdiscuss} below.

\emph{(c) Non-availability of measure representations.} In the signed case, a substantial part of the general machinery of measure representations used e.g. in \cite{SchmidtPR} is unknown. Such measure representations enter \textsc{Schmidt}'s partial regularity proof in several ways. Generalising results of \textsc{Ball \& Murat}~\cite{BM}, an important  observation of~\textsc{Schmidt} is the $\sobo^{1,p}$-quasiconvexity of the relaxed functionals $\mathscr{F}_{\locc}$ for $p>1$ (see~\cite[Lem.~7.6]{SchmidtPR}). The proof of this result hinges on measure representations and hence so does the proof Caccioppoli inequality, in turn relying on the $\sobo^{1,p}$-quasiconvexity of $\mathscr{F}_{\locc}$. This issue is  circumvented here by directly working on the level of recovery sequences for the linearised relaxed functionals. However, even more conceptually and as discussed in Section~\ref{sec:CompaIntro}, the use of measure representations leads to validity of the Euler-Lagrange system \cite[Lem.~7.3]{SchmidtPR}. While some of the available measure representations and energy density identifications are conceivable to extend to the signed case or $p=1$, another key aspect that they are \emph{not really required}. This is the content of the following paragraph: 

\subsubsection{Mazur's lemma, the Euler-Lagrange system and independence of measure representations.}\label{sec:Mazdiscuss} Theorems~\ref{thm:main1} and \ref{thm:main2} crucially hinge on the Euler-Lagrange system satisfied by local ($\bv$-)minima. As we establish in Section~\ref{sec:MazurEuler}, there \emph{always exist} recovery sequences such that the (approximate) gradients converge in $\mathscr{L}^{n}$-measure to the approximate gradients of ($\bv$-)minimizers $u$ for compactly supported variations. In analogy with other weak-to-strong convergence boosts, we shall refer to this as \emph{Mazur's lemma}. This result allows a direct proof of the validity of the Euler-Lagrange system, see Corollaries~\ref{cor:EulerLagrange} and~\ref{cor:ELMAIN},  \emph{without} relying on measure representations. We highlight two points associated with this result: 

\emph{(a) Purely linear growth case.} If $p=q=1$ (whereby, in the terminology of Section~\ref{sec:aimandscope}, the compactness space is $\bv(\Omega;\R^{N})$ and so the functional~\eqref{eq:varprin} already needs to be relaxed from $\sobo^{1,1}(\Omega;\R^{N})$), Theorem~\ref{thm:main1} appears as a generalisation of the partial regularity result due to \textsc{Anzellotti \& Giaquinta} \cite{AnGi} in the (strongly) convex and of the precursor~\cite{GK1} by the authors in the $1$-strongly quasiconvex case. As a novelty even for this case, the preceding discussion shows that integral (and hence strong forms of measure)  representations \`{a} la \textsc{Reshetnyak}~\cite{Reshetnyak} and \textsc{Goffman \& Serrin}~\cite{GoffmanSerrin} in the convex case or \textsc{Ambrosio \& Dal Maso}~\cite{AD}, \textsc{Fonseca \& M\"{u}ller}~\cite{FonMul} and~\textsc{Rindler} and the second named author~\cite{KR} in the quasiconvex case are \emph{not required} for the aforementioned partial regularity results.

\emph{(b) Oscillation control versus higher integrability.} The convergence in $\mathscr{L}^{n}$-measure rules out a strongly oscillatory behaviour of certain recovery sequences, whereas it does not exclude concentrations. In the standard $p$-growth case with $p>1$, oscillation control is usually provided by the Caccioppoli inequality of the second kind. \textsc{Gehring}'s lemma then leads to uniform local higher gradient integrability as a somewhat quantified version of gradient concentration control. In the situation of relaxed quasiconvex $(p,q)$-growth functionals, the very structure of the Caccioppoli inequality (cf.~\cite[Lem.~7.13]{SchmidtPR} and Theorem~\ref{thm:Cacc} below) only allows for a very weak oscillation control. While still sufficient for the partial regularity proof, its structure does not allow to deduce the reverse H\"{o}lder inequalities with increasing supports as required for Gehring's lemma. If $p=1$, higher integrability results based on Gehring's lemma are in general not expected to hold; this is due to the lack of suitable sublinear Sobolev inequalities (see~\textsc{Buckley \& Koskela}~\cite{BuckleyKoskela} and the discussion in~\cite{Gm20,GK1}). Since there are linear growth scenarios indeed which the Mazur-type lemma from Section~\ref{sec:MazurEuler} applies to, but minimising sequences generically concentrate despite the availability of Caccioppoli type inequalities, the sole exclusion of certain oscillations but not concentrations seems close to optimal\footnote{At present, even in the purely linear growth, signed, quasiconvex context, the only concentration control that has been extracted from the Caccioppoli inequality of the second kind is very weak and works on the level of $\bv$-gradient Young measures; see~\cite[Rem.~4.5]{GK1}.} in the present general setting.

\subsubsection{Maximal conditions as a unifying principle and fully direct comparison method.} From a technical perspective, the paper furnishes maximal conditions (i.e., finiteness conditions in terms of radial maximal operators, cf.~Section~\ref{sec:HLW}) as an overarching principle that is visible in the proofs of all key ingredients for Theorems~\ref{thm:main1} and \ref{thm:main2}. Similarly as in \textsc{Schmidt} \cite{SchmidtPR}, such conditions also enter in the present version of the good generation theorem (cf.~Section~\ref{sec:goodgeneration}) and thus implicitely in the derivation of Mazur's lemma and the Euler-Lagrange system. However, they equally allow for a direct proof of the requisite form of Fubini-type theorems (cf.~Section~\ref{sec:Fubini}) in Sobolev-Slobodecki\u{\i} spaces on spheres. This gives a conceptually easy approach to such estimates, previously employed by the authors e.g. by using sharp embeddings for $\bv$- into Sobolev-Slobodecki\u{\i} spaces \cite{Gm20,GK1}. Specifically, such Fubini-type theorems enable us to solve certain linear comparison systems on \emph{good} balls. This proves particularly relevant for $1=p\leq q<\frac{n}{n-1}$, where the generic interior trace space of $\bv$ along spheres is $\lebe^{1}$ and solving linear elliptic system with $\lebe^{1}$-boundary is not possible in general. As will be visible from the proof, the excess decay estimate is then fully reduced to estimates on balls defined in terms of such maximal conditions. Hence, not only for the lower semicontinuity or even the definition of local minimizers for compactly supported variations but also at all stages of the partial regularity proof, maximal functions are identified as a unifying tool especially in the borderline case $p=1$.

\subsubsection{Orlicz range, differential conditions.} Theorem~\ref{thm:main1} equally includes  integrands of degenerate Orlicz-growth behaviour that have been omitted so far in the literature. This includes, for instance, integrands of $\Delta_{2}$-, non-$\nabla_{2}$-growth such as $\mathrm{L}\log^{\alpha}\lebe$-growth behaviour with $0<\alpha\leq 1$; see Section~\ref{sec:Orlicz} for the underlying terminology. On the one hand, such integrands fall outside the scope of the $\Delta_{2}\cap\nabla_{2}$-assumptions as considered in \textsc{Diening} et al. \cite{DLSV}, yet cannot be handled as a special case of \textsc{Schmidt} \cite{SchmidtPR}; recall that even unsigned integrands with this growth behaviour cannot be bounded from below by some power $|z|^{p}$ for $p>1$. On the other hand, they fall outside the realm of quasiconvex, purely linear growth integrands studied in the precursor \cite{GK1} of the present paper and it is Theorem~\ref{thm:main1} that closes this gap; see Section~\ref{sec:Orlicz} for more detail. 

Finally, motivated by problems from continuum or fluid mechanics (see e.g.~\textsc{Fuchs \& Seregin}~\cite{FuchsSeregin}) the reduction strategy introduced in~\cite{Gm20,CG} allows to  inexpensively formulate a variant of Theorem~\ref{thm:main2} in the context of general first order differential operators; see Section~\ref{sec:diffcond}.

\subsubsection{Possible extensions.} Both Theorems~\ref{thm:main1} and~\ref{thm:main2} display regularity results that deal with the relaxation of the core functionals~\eqref{eq:varprin}. Extensions to functionals with forcing terms \`{a} la \textsc{De Filippis} et al.~\cite{DF1,DF2} or degenerate scenarios \`{a} la \textsc{Schmidt}~\cite{SchmidtPR2a,SchmidtPR3} based on \textsc{Duzaar \& Mingione}'s $p$-harmonic approximation technique~\cite{DuzaarMingione0,DuzaarMingione1,DuzaarMingione2} 
are conceivable and of equal interest. Moreover, since we do not rely on measure representations, we believe that the techniques introduced in the present paper should also allow to treat the closely related functionals  
\begin{align}
\overline{\mathscr{F}}_{\mathrm{FM}}[u;\Omega]:=\inf\left\{\liminf_{j\to\infty}\int_{\Omega}F(\nabla u_{j})\dif x\colon\; \begin{array}{c} (u_{j})\subset\sobo^{1,q}(\Omega;\R^{N})\\ u_{j}\rightharpoonup u\;\text{in}\;\sobo^{1,p}(\Omega;\R^{N})\end{array}\right\}
\end{align}
considered by \textsc{Fonseca \& Mal\'{y}}~\cite{FM} (also see the discussion by \textsc{Schmidt}~\cite[Sec.~1]{SchmidtPR}). By the overall method, the proofs moreover  should be sufficiently robust to also apply to higher order scenarios or functionals depending on differential operators, cf.~Remark~\ref{rem:diffcondsp1}. Still, by the scope of the present paper, this and related questions shall be deferred to be pursued elsewhere. 
\subsection{Structure of the paper}
Apart from these introductory sections, the paper is organised as follows: Section~\ref{sec:prelims} fixes notation and gathers auxiliary material. Section~\ref{sec:Fubini} discusses a particular trace-preserving operator and Fubini-type properties which might be of independent interest. Section~\ref{sec:goodgeneration} then establishes an existence result on \emph{good} recovery sequences. Section~\ref{sec:props} is devoted to examples of integrands, the existence of minimizers and instrumental properties of the relaxed functionals. It is here where we also connect our set-up with that of \textsc{Schmidt}~\cite{SchmidtPR}. After gathering auxiliary facts on linearisations in Section~\ref{sec:linearisation}, both the aforementioned lemma of Mazur-type and the derivation of the Euler-Lagrange system are  addressed in Section~\ref{sec:MazurEuler}. As one of the main ingredients in the proofs of Theorems~\ref{thm:main1} and~\ref{thm:main2}, Section~\ref{sec:Cacc} provides the requisite form of the Caccioppoli inequality of the second kind. The main part of the paper is then concluded by the proof of Theorems~\ref{thm:main1} and~\ref{thm:main2} in Section~\ref{sec:proofmain}, where also implications for the partial regularity of functionals with Orlicz growth or depending on differential operators are discussed. Finally, the appendix in Section~\ref{sec:appendix}  provides the proofs of some minor auxiliary results utilised in the main part of the paper; specifically, this comprises a lower semicontinuity theorem for variational integrals with signed Orlicz integrands.\\

\noindent
{\small \textbf{Acknowledgment.} The authors are thankful to \textsc{Thomas Schmidt} for useful conversations on the theme of the paper. F.G. gratefully acknowledges financial support by the Hector Foundation (Project No. FP 626/21) and the stimulating atmosphere of the 2022 Oberwolfach workshop on the Calculus of Variations, during which this paper was finished.}

\section{Preliminaries}\label{sec:prelims}
\subsection{General notation}
Unless stated otherwise, $\omega$ and $\Omega$ are non-empty, open and bounded subsets of $\R^{n}$. For $x_{0}\in\R^{n}$ and $r>0$, the (euclidean) open ball of radius $r>0$ and centered at $x_{0}$ is denoted $\ball_{r}(x_{0}):=\{x\in\R^{n}\colon\;|x-x_{0}|<r\}$; specifically, we write $\mathbb{S}^{n-1}:=\partial\!\ball_{1}(0)$ for the $(n-1)$-dimensional unit sphere. To distinguish from balls in matrix space, we further denote, for $z\in\R^{N\times n}$, $\mathbb{B}_{r}(z):=\{y\in\R^{N\times n}\colon\;|y-z|<r\}$ where $|\cdot|$ then is the Hilbert-Schmidt norm induced by the Hilbert-Schmidt inner product $\langle\cdot,\cdot\rangle$. We also use $\langle\cdot,\cdot\rangle$ for the usual euclidean inner product on spaces $\R^{n}$ or $\R^{N}$, but no ambiguities will arise from this.

The $n$-dimensional Lebesgue and $(n-1)$-dimensional Hausdorff measures are denoted $\mathscr{L}^{n}$ or $\mathscr{H}^{n-1}$, respectively, and we set $\omega_{n}:=\mathscr{L}^{n}(\ball_{1}(0))$. To abbreviate notation, we shall often write $\dif^{n-1}=\dif\mathscr{H}^{n-1}$. Given a finite-dimensional inner product space $X$, we denote $\mathrm{RM}(\Omega;X)$ and $\mathrm{RM}_{\mathrm{fin}}(\Omega;X)$ the (finite) $X$-valued Radon measures on $\Omega$; for $\mu\in\mathrm{RM}(\Omega;X)$, $|\mu|$ denotes its total variation measure and hereafter $|\mu|(\Omega)$ its total variation. Similarly, if we write $\mu\in\mathrm{RM}(\Omega)$ or $\mu\in\mathrm{RM}_{\mathrm{fin}}(\Omega)$, we understand that $\mu$ is a non-negative (finite) Radon measure on $\Omega$. In each of the cases $\mu\in\mathrm{RM}_{(\mathrm{fin})}(\Omega)$ or $\mu\in\mathrm{RM}_{(\mathrm{fin})}(\Omega;X)$, if $A$ belongs to the Borel $\sigma$-algebra $\mathscr{B}(\Omega)$, we use $\mu\mres A := \mu(\cdot\cap A)$ to denote the restriction of $\mu$ to $A$. The Lebesgue-Radon-Nikod\'{y}m decomposition of $\mu\in\mathrm{RM}(\Omega;X)$ into its absolutely continuous and singular parts $\mu^{a}$, $\mu^{s}$ for $\mathscr{L}^{n}$ then is given by 
\begin{align}\label{eq:LRNdecomposition}
\mu=\mu^{a}+\mu^{s} = \frac{\dif\mu^{a}}{\dif\mathscr{L}^{n}}\mathscr{L}^{n}\mres\Omega + \frac{\dif\mu^{s}}{\dif|\mu^{s}|}|\mu^{s}|, 
\end{align}
where $\frac{\dif\mu^{a}}{\dif\mathscr{L}^{n}}$ and $\frac{\dif\mu^{s}}{\dif|\mu^{s}|}$ are the corresponding densities for $\mathscr{L}^{n}$ or $|\mu^{s}|$, respectively. Whenever $A\in\mathscr{B}(\Omega)$ satisfies $\mathscr{L}^{n}(A)>0$, we put 
\begin{align}\label{eq:meanvaluemeasure}
\dashint_{A}\mu := \frac{\mu(A)}{\mathscr{L}^{n}(A)}.
\end{align}
To avoid overburderning notation, if $A=\partial\!\ball_{r}(x_{0})$ and $f\colon \partial\!\ball_{r}(x_{0})\to\R^{N}$ is $\mathscr{H}^{n-1}$-measurable, we also use the convention
\begin{align*}
(f)_{\partial\!\ball_{r}(x_{0})}:=\dashint_{\partial\!\ball_{r}(x_{0})}f\dif\mathscr{H}^{n-1} := \frac{1}{\mathscr{H}^{n-1}(\partial\!\ball_{r}(x_{0}))}\int_{\partial\!\ball_{r}(x_{0})}f\dif\mathscr{H}^{n-1},
\end{align*}
and it will be clear from the context in which sense the dashed integrals are understood. As usual, $c,C>0$ denote generic constants that might change from line to line and are only specified if their precise values are required. Finally, we write $a\lesssim b$ for two quantities provided $a\leq cb$ for some constant $c>0$ essentially independent of $a$ and $b$. To emphasize important dependences, we also use $a\lesssim_{d}b$ provided the constant $c$ crucially depends on a quantity $d$. In the same vein, we write $a\sim b$ provided $a\lesssim b $ and $b\lesssim a$, and $a\sim_{d}b$ provided $a\lesssim_{d}b$ and $b\lesssim_{d}a$. 
\subsection{Function spaces}\label{sec:functionspaces}
In this section we gather the definitions and several  properties of function spaces that play an important role in the main part of the paper; we refer the reader to \cite{AFP,EvGa,Mazya} for more background information. Throughout, let $\Omega\subset\R^{n}$ be open and bounded. 

\subsubsection{The space $\bv$.} We recall that a measurable map $u\colon\Omega\to\R^{N}$ is said to be of \emph{bounded variation} and then denoted $u\in\bv(\Omega;\R^{N})$ provided $u\in\lebe^{1}(\Omega;\R^{N})$ and its \emph{total variation} 
\begin{align*}
|Du|(\Omega):=\sup\left\{\int_{\Omega}\langle u,\di(\varphi)\rangle\dif x\colon\;\varphi\in\hold_{c}^{\infty}(\Omega;\overline{\mathbb{B}}_{1}(0))\right\}
\end{align*}
is finite, where $\di$ is the row-wise divergence. The space $\bv_{\locc}(\Omega;\R^{N})$ then is defined in the obvious manner.

The space $\bv(\Omega;\R^{N})$ is a normed space when endowed with $\|u\|_{\bv(\Omega)}:=\|u\|_{\lebe^{1}(\Omega)}+|Du|(\Omega)$, but this norm is too restrictive for most applications. Instead, two notions of convergence prove more relevant: Given $u,u_{1},u_{2},...\in\bv(\Omega;\R^{N})$, we say that $(u_{j})$ converges in the \emph{weak*-sense} to $u$ provided $u_{j}\to u$ in $\lebe^{1}(\Omega;\R^{N})$ and $Du_{j}\stackrel{*}{\rightharpoonup}Du$ in $\mathrm{RM}_{\mathrm{fin}}(\Omega;\R^{N\times n})\cong \hold_{0}(\Omega;\R^{N\times n})'$. This convergence then yields a compactness theorem on $\bv$: If $\Omega\subset\R^{n}$ has Lipschitz boundary $\partial\Omega$, any norm-bounded sequence has a weak*-convergent subsequence. As a strengthening of weak*-convergence, we moreover say that $(u_{j})$ converges to $u$ in the \emph{strict sense} provided $u_{j}\to u$ in $\lebe^{1}(\Omega;\R^{N})$ and $|Du_{j}|(\Omega)\to|Du|(\Omega)$ as $j\to\infty$. 

Now let $u\in\bv_{\locc}(\Omega;\R^{N})$. For $\mathscr{L}^{n}$-a.e. $x\in\Omega$, $x$ is a Lebesgue point of $u$ with corresponding Lebesgue value $u(x)$ and there exists $\nabla u(x)\in\R^{N\times n}$ such that 
\begin{align}\label{eq:approxdiff}
\limsup_{r\searrow 0} \frac{1}{r}\dashint_{\ball_{r}(x)}|u(y)-u(x)-\nabla u(x)\cdot(y-x)|\dif y = 0.   
\end{align}
The matrix $\nabla u(x)$ is well-defined and referred to as the \emph{approximate gradient} of $u$ at $x$. Given $u\in\bv_{\locc}(\Omega;\R^{N})$, the Lebesgue-Radon-Nikod\'{y}m decomposition~\eqref{eq:LRNdecomposition} of $Du$ then reads as 
\begin{align}\label{eq:LRNdecomposition1}
Du = D^{a}u + D^{s}u = \nabla u\mathscr{L}^{n}\mres\Omega + \frac{\dif D^{s}u}{\dif|D^{s}u|}|D^{s}u|, 
\end{align}
where $\nabla u\in\lebe_{\locc}^{1}(\Omega;\R^{N\times n})$ is the approximate gradient of $u$ as in~\eqref{eq:approxdiff}. Note that, if $u\in\sobo_{\locc}^{1,1}(\Omega;\R^{N})$, then $\nabla u$ is just the weak gradient. We next recall that $x_{0}\in\Omega$ is a \emph{jump point} of $u$ and then write $x_{0}\in J_{u}$ provided there exist $a,b\in\R^{N}$ with $a\neq b$ and $\nu\in\mathbb{S}^{n-1}$ such that 
\begin{align*}
\lim_{r\searrow 0}\dashint_{\ball_{r}(x_{0})\cap\{x\colon\,\langle x-x_{0},\nu\rangle > 0\}}|u-a|\dif y=\lim_{r\searrow 0}\dashint_{\ball_{r}(x_{0})\cap\{x\colon\,\langle x-x_{0},\nu\rangle < 0\}}|u-b|\dif y=0.
\end{align*}
Finally, we recall that if $\Omega$ has Lipschitz boundary, there exists a surjective, bounded and linear (boundary) trace operator $\trace_{\partial\Omega}\colon\bv(\Omega;\R^{N})\to\lebe^{1}(\partial\Omega;\R^{N})$; here, the latter space is understood with respect to $\mathscr{H}^{n-1}\mres\partial\Omega$. Specifically, this operator is continuous for strict convergence on $\bv(\Omega;\R^{N})$ and can be realised for $\mathscr{H}^{n-1}$-a.e. $x_{0}\in\partial\Omega$ by integral means via  
\begin{align}\label{eq:tracedescription}
\lim_{r\searrow 0}\dashint_{\ball_{r}(x_{0})\cap\Omega}|u(y)-\trace_{\partial\Omega}(u)(x_{0})|\dif y = 0. 
\end{align}
We denote $\bv_{0}(\Omega;\R^{N})$ the nullspace of $\trace_{\partial\Omega}$. This space can be characterised as the strict closure of $\hold_{c}^{\infty}(\Omega;\R^{N})$ in $\bv(\Omega;\R^{N})$, which follows from the strict continuity of $\trace_{\partial\Omega}$ and a slightly more general area-strict approximation result, Lemma~\ref{lem:SolidLemma} below.

In the main part, we moreover require interior trace operators along spheres. Given $x_{0}\in\Omega$ and $r>0$ with $\ball_{r}(x_{0})\Subset\Omega$, we denote for $u\in\bv(\Omega;\R^{N})$
\begin{align*}
\trace_{\partial\!\ball_{r}(x_{0})}^{+}(u):=\trace_{\partial\!\ball_{r}(x_{0})}(\mathbbm{1}_{\ball_{r}(x_{0})}u)\;\text{and}\;\trace_{\partial\!\ball_{r}(x_{0})}^{-}(u):=\trace_{\partial\!\ball_{r}(x_{0})\cup\partial\Omega}(\mathbbm{1}_{\Omega\setminus\overline{\ball}_{r}(x_{0})}u)|_{\partial\!\ball_{r}(x_{0})},  
\end{align*}
where $\trace_{\partial\!\ball_{r}(x_{0})},\trace_{\partial\!\ball_{r}(x_{0})\cup\partial\Omega}$ display the boundary trace operators on $\bv(\ball_{r}(x_{0});\R^{N})$ and $\bv(\Omega\setminus\overline{\ball}_{r}(x_{0});\R^{N})$, the \emph{inner} and \emph{outer trace operators} along $\partial\!\ball_{r}(x_{0})$. We then have 
\begin{align}\label{eq:interiortraces}
|Du|(\partial\!\ball_{r}(x_{0}))=\int_{\partial\!\ball_{r}(x_{0})}|\trace_{\partial\!\ball_{r}(x_{0})}^{+}(u)-\trace_{\partial\!\ball_{r}(x_{0})}^{-}(u)|\dif\mathscr{H}^{n-1}
\end{align}
and, recalling the jump set $J_{u}$ from above, 
\begin{align}\label{eq:interiortraces1}
\mathscr{H}^{n-1}((\partial\!\ball_{r}(x_{0})\cap J_{u})\setminus\{x\in\partial\!\ball_{r}(x_{0})\colon \trace_{\partial\!\ball_{r}(x_{0})}^{+}(u)(x)\neq \trace_{\partial\!\ball_{r}(x_{0})}^{-}(u)(x)\})=0.
\end{align}
The interior trace of $u$ along $\partial\!\ball_{r}(x_{0})$ then is constructed as the arithmetic mean of the inner and outer traces of $u$ along $\partial\!\ball_{r}(x_{0})$. However, to alleviate notation, we write $\trace_{\partial\!\ball_{r}(x_{0})}(u)$ for the interior trace along $\partial\!\ball_{r}(x_{0})$ only if $\trace_{\partial\!\ball_{r}(x_{0})}^{+}(u)=\trace_{\partial\!\ball_{r}(x_{0})}^{-}(u)$ $\mathscr{H}^{n-1}$-a.e. on $\partial\!\ball_{r}(x_{0})$. 
\subsubsection{Functions of measures}
For our future purposes, we now revisit the application of functions to measures as originally due to \textsc{Goffman \& Serrin}~\cite{GoffmanSerrin} and \textsc{Reshetnyak}~\cite{Reshetnyak}; in our setting, the underlying measures will typically be gradients. Given $1\leq p<\infty$ and $m\in\mathbb{N}$, let $h\colon\R^{m}\to\R$ be a convex function satisfying the bounds
\begin{align}\label{eq:growthconvexH}
c_{1}|z|^{p}-c_{2}\leq h(z)\leq c_{3}(1+|z|^{p})\qquad\text{for all $z\in\R^{m}$}
\end{align}
and constants $c_{1},c_{2},c_{3}>0$. If $p=1$ and $\mu\in\mathrm{RM}(\Omega;\R^{m})$, we let $\mu=\frac{\dif\mu}{\dif\mathscr{L}^{n}}\mathscr{L}^{n}\mres\Omega +\frac{\dif\mu}{\dif|\mu^{s}|}|\mu^{s}|$ be its Lebesgue-Radon-Nikod\'{y}m decomposition~\eqref{eq:LRNdecomposition}, and then define the measure $h(\mu)$ by 
\begin{align}\label{eq:functionsofmeasures}
h(\mu)(A):=\int_{A}h(\mu):=\int_{A}h\Big(\frac{\dif\mu^{a}}{\dif\mathscr{L}^{n}}\Big)\dif x + \int_{A}h^{\infty}\Big(\frac{\dif \mu^{s}}{\dif|\mu^{s}|} \Big)\dif|\mu^{s}|,\;\; A\in\mathscr{B}(\Omega). 
\end{align}
Here, $h^{\infty}(z):=\lim_{t\searrow 0} th(\frac{z}{t})$ is the \emph{recession function} of $h$. Because of $p=1$ and convexity of $h$, $h^{\infty}(z)$ exists and is finite, whereby $h(\mu)$ is a well-defined measure. If $z_{0}\in\R^{m}$, we put 
\begin{align*}
h(\mu-z_{0}):=h(\mu-z_{0}\mathscr{L}^{n}\mres\Omega)
\end{align*}
and, using the convention~\eqref{eq:meanvaluemeasure}, then record from \cite[Prop.~2.4]{AnGi} Jensen's inequality in the form 
\begin{align*}
h\Big(\dashint_{\omega}\mu-z_{0} \Big) \leq \dashint_{\omega}h(\mu-z_{0})\qquad\text{for $\omega\in\mathscr{B}(\Omega)$ with $\mathscr{L}^{n}(\omega)>0$.}
\end{align*}
Definition~\eqref{eq:functionsofmeasures} immediately applies to $\mu=Du$ for $u\in\bv_{\locc}(\Omega;\R^{N})$ by virtue of~\eqref{eq:LRNdecomposition1}. As we shall concentrate on the case $p=1$ in the main part of the paper and only point out the modifications for $p>1$, it is useful to note that~\eqref{eq:functionsofmeasures} is consistent with this case too. More precisely, if $h$ satisfies \eqref{eq:growthconvexH} with $1<p<\infty$ and $u\in\sobo_{\locc}^{1,p}(\Omega;\R^{N})$, then $D^{s}u=0$ and hence
\begin{align*}
\int_{A}h(Du)=\int_{A}h(\nabla u)\dif x\qquad\text{for all}\;A\in\mathscr{B}(\Omega)
\end{align*}
despite of $h^{\infty}(z)=\infty$ for all $z\neq 0$. We proceed by recording a variant of the \textsc{Reshetnyak} lower semicontinuity theorem~\cite{Reshetnyak} in the version of \cite[Props.~2.20 and~2.21]{Bild}: 
\begin{lemma}\label{lem:reshetnyak}
Let $\Omega\subset\R^{n}$ be open and bounded, and let $u,u_{1},u_{2},...\in\bv(\Omega;\R^{N})$ be such that $u_{j}\to u$ in $\lebe_{\locc}^{1}(\Omega;\R^{N})$. Then for any convex function $h\colon\R^{N\times n}\to\R$ that satisfies \eqref{eq:growthconvexH} with $p=1$ we have 
\begin{align}\label{eq:resh}
h(Du)(\Omega)\leq \liminf_{j\to\infty}h(Du_{j})(\Omega). 
\end{align}
\end{lemma}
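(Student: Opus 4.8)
\textbf{Proof plan for Lemma~\ref{lem:reshetnyak}.}
The plan is to deduce the result from the classical Reshetnyak lower semicontinuity theorem by a standard truncation/reduction argument. First I would recall the clean statement of Reshetnyak lower semicontinuity: if $\mu_j\stackrel{*}{\rightharpoonup}\mu$ in $\mathrm{RM}_{\mathrm{fin}}(\Omega;\R^{N\times n})$ and $g\colon\R^{N\times n}\to[0,\infty)$ is convex, positively $1$-homogeneous and continuous, then $\int_\Omega g(\tfrac{\dif\mu}{\dif|\mu|})\dif|\mu|\leq\liminf_{j\to\infty}\int_\Omega g(\tfrac{\dif\mu_j}{\dif|\mu_j|})\dif|\mu_j|$. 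The standard trick to pass from a $1$-homogeneous integrand to a general convex $h$ of linear growth is to introduce the auxiliary integrand on $\R^{N\times n}\times\R$ given by $\widetilde h(z,t):=th(z/t)$ for $t>0$ with the obvious extension $\widetilde h(z,0)=h^\infty(z)$; this $\widetilde h$ is convex, positively $1$-homogeneous on the cone $\{t\geq 0\}$, and by~\eqref{eq:growthconvexH} with $p=1$ it is continuous and has (two-sided) linear growth, so after adding a suitable affine term it is nonnegative on the relevant cone.

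Next I would set up the measures to which Reshetnyak applies. On the enlarged target space one considers $\widetilde\mu_j:=(Du_j,\mathscr{L}^n\mres\Omega)\in\mathrm{RM}_{\mathrm{fin}}(\Omega;\R^{N\times n}\times\R)$ and similarly $\widetilde\mu:=(Du,\mathscr{L}^n\mres\Omega)$. Since $u_j\to u$ in $\lebe^1_{\locc}(\Omega;\R^N)$ we have $Du_j\to Du$ in the sense of distributions; combined with the uniform bound from~\eqref{eq:growthconvexH} — namely $c_1|Du_j|(\Omega')-c_2\mathscr{L}^n(\Omega')\leq\liminf_j h(Du_j)(\Omega')$, which is what we want to bound and may assume finite along a subsequence — one gets local uniform bounds on $|Du_j|$, hence (after a diagonal subsequence) $Du_j\stackrel{*}{\rightharpoonup}Du$ locally, and therefore $\widetilde\mu_j\stackrel{*}{\rightharpoonup}\widetilde\mu$ locally. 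Note $\int_A\widetilde h(\widetilde\mu_j)=\int_A h(\nabla u_j)\dif x + \int_A h^\infty(\tfrac{\dif D^s u_j}{\dif|D^s u_j|})\dif|D^s u_j| = h(Du_j)(A)$ by the very definition~\eqref{eq:functionsofmeasures}, and likewise for the limit, because the $\R$-component of $\widetilde\mu_j$ is exactly the Lebesgue measure that carries the absolutely continuous part. Applying Reshetnyak lower semicontinuity to $\widetilde h$ (shifted to be nonnegative, the affine correction term being continuous under $\stackrel{*}{\rightharpoonup}$ thanks to $\widetilde\mu_j(\Omega)\to\widetilde\mu(\Omega)$ on the $\mathscr{L}^n$-component and local weak* convergence on the $Du$-component) then yields~\eqref{eq:resh}. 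One exhausts $\Omega$ by an increasing sequence of open sets $\Omega_k\Subset\Omega$, applies the inequality on each $\Omega_k$, and lets $k\to\infty$ using monotone convergence to recover the bound on all of $\Omega$; the reduction to a subsequence realising the $\liminf$ is harmless by the usual subsequence-of-subsequence argument.

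The main obstacle, such as it is, is purely bookkeeping: one must make sure the enlarged integrand $\widetilde h$ is genuinely continuous up to $\{t=0\}$ (which is where $h^\infty$ lives and where the singular parts $D^s u_j$ are integrated), and that the additive affine correction making $\widetilde h\geq 0$ is compatible with weak* convergence of the enlarged measures — this is where the finite total mass of the $\mathscr{L}^n$-component and the local weak* convergence of $Du_j$ are used. Since the paper explicitly cites \cite[Props.~2.20 and 2.21]{Bild} for exactly this version, I would simply invoke that reference after the reduction above rather than redo the homogenisation trick in full; the only thing needing a line of justification here is that $u_j\to u$ in $\lebe^1_{\locc}$ together with $\liminf_j h(Du_j)(\Omega)<\infty$ (the only case of interest) forces the local weak* convergence $Du_j\stackrel{*}{\rightharpoonup}Du$ that those propositions take as a hypothesis.
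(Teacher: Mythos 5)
The paper gives no proof for this lemma but simply cites \cite[Props.~2.20 and~2.21]{Bild}, which is exactly what you ultimately propose after the trivial reduction to a finite-energy subsequence (where the $p=1$ coercivity in~\eqref{eq:growthconvexH} gives a global total-variation bound and hence $Du_j\stackrel{*}{\rightharpoonup}Du$ from the $\lebe_{\locc}^{1}$-convergence). Your sketch of the perspective lift $\widetilde h(z,t)=t\,h(z/t)$ reducing to the $1$-homogeneous Reshetnyak theorem is the standard and correct derivation of the cited result; the only superfluous step is the exhaustion by $\Omega_k\Subset\Omega$, since $\Omega$ being bounded already makes the total-variation bound along the relevant subsequence global, so the theorem applies on $\Omega$ directly.
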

Finally, we display an approximation result that particularly ensures the existence of sequences as required in the definition of the relaxed functionals~\eqref{eq:bdryrelaxed1}:\begin{lemma}[{\cite[Lem.~B.2]{Bild}}]\label{lem:SolidLemma}
Let $\Omega\Subset\Omega'$ be two open and bounded sets with Lipschitz boundaries and let $u_{0}\in\sobo^{1,1}(\Omega';\R^{N})$. Denoting for $v\in\bv(\Omega;\R^{N})$ by $\mathbf{v}$ its extension to $\Omega'$ by $u_{0}$, for any $u\in\bv(\Omega;\R^{N})$ there exists $(u_{j})\subset u_{0}+\hold_{c}^{\infty}(\Omega;\R^{N})$ such that $(\mathbf{u}_{j})$ converges to $\mathbf{u}$ \emph{area-strictly} in $\bv(\Omega';\R^{N})$, i.e.,  
\begin{align*}
\mathbf{u}_{j}\to \mathbf{u}\;\text{in}\;\lebe^{1}(\Omega';\R^{N})\;\;\text{and}\;\;\sqrt{1+|D\mathbf{u}_{j}|^{2}}(\Omega')\to\sqrt{1+|D\mathbf{u}|^{2}}(\Omega'). 
\end{align*}
Especially, we have $\mathbf{u}_{j}\to\mathbf{u}$ strictly and hence $\mathbf{u}_{j}\stackrel{*}{\rightharpoonup}\mathbf{u}$ in $\bv(\Omega';\R^{N})$. 
\end{lemma}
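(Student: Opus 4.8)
The plan is to deduce the lemma from the classical smooth‑approximation theory in $\bv$ with prescribed (solid) boundary values, together with a density/diagonal argument that upgrades a $\sobo^{1,1}$‑approximation to one by competitors in $u_{0}+\hold_{c}^{\infty}(\Omega;\R^{N})$. First one records that $\mathbf{u}\in\bv(\Omega';\R^{N})$ by the $\bv$‑gluing principle, and that, since $u_{0}\in\sobo^{1,1}$ carries no singular gradient, $D^{s}\mathbf{u}=D^{s}u\mres\Omega+(\trace_{\partial\Omega}(u)-\trace_{\partial\Omega}(u_{0}))\otimes\nu_{\partial\Omega}\,\mathscr{H}^{n-1}\mres\partial\Omega$. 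As $h(z):=\sqrt{1+|z|^{2}}$ is convex with $h^{\infty}(z)=|z|$, the functions‑of‑measures formalism of~\eqref{eq:functionsofmeasures} gives the target value
\[
\sqrt{1+|D\mathbf{u}|^{2}}(\Omega')=\int_{\Omega'}\sqrt{1+|\nabla\mathbf{u}|^{2}}\dif x+|D^{s}u|(\Omega)+\int_{\partial\Omega}|\trace_{\partial\Omega}(u)-\trace_{\partial\Omega}(u_{0})|\dif\mathscr{H}^{n-1}.
\]
The reduction is then: it suffices to construct $\mathbf{v}_{\delta}\in\sobo^{1,1}(\Omega';\R^{N})$ with $\mathbf{v}_{\delta}=u_{0}$ off a compact subset of $\Omega$ and $\mathbf{v}_{\delta}\to\mathbf{u}$ area‑strictly, because then $\mathbf{v}_{\delta}-u_{0}$ is a compactly supported $\sobo^{1,1}(\Omega;\R^{N})$‑map, hence a $\sobo^{1,1}$‑norm limit of maps in $\hold_{c}^{\infty}(\Omega;\R^{N})$ (mollify), and since $z\mapsto\sqrt{1+|z|^{2}}$ is $1$‑Lipschitz, $\sobo^{1,1}$‑norm convergence implies area‑strict convergence; a diagonal argument across these two approximations yields $(u_{j})\subset u_{0}+\hold_{c}^{\infty}(\Omega;\R^{N})$ with $\mathbf{u}_{j}\to\mathbf{u}$ area‑strictly. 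The remaining two assertions ``area‑strict $\Rightarrow$ strict $\Rightarrow$ weak$^{*}$'' are the standard elementary implications.

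For the construction of $\mathbf{v}_{\delta}$ I would run a boundary‑adapted mollification. Fix a Lipschitz function $d$ on $\overline{\Omega}$, positive in $\Omega$ and comparable to $\dist(\cdot,\partial\Omega)$ near $\partial\Omega$; for a.e. small $\delta>0$ set $\Omega_{\delta}:=\{d>2\delta\}$ and the collar $\mathcal{C}_{\delta}:=\{\delta<d<2\delta\}$. Mollify $u$ on $\Omega_{\delta}$ at a scale $\varepsilon\ll\delta$ to obtain $u_{\varepsilon}\in\hold^{\infty}$; by the Jensen‑type bound $\int_{A}h(\nabla u_{\varepsilon})\dif x\leq\sqrt{1+|Du|^{2}}(A+\ball_{\varepsilon}(0))$ and the Reshetnyak lower semicontinuity of Lemma~\ref{lem:reshetnyak}, one gets $\int_{\Omega_{\delta}}h(\nabla u_{\varepsilon})\dif x\to\sqrt{1+|Du|^{2}}(\Omega_{\delta})$ as $\varepsilon\to0$. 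Now glue: $\mathbf{v}_{\delta}:=u_{\varepsilon}$ on $\Omega_{\delta}$, $\mathbf{v}_{\delta}:=\chi_{\delta}(d)u_{\varepsilon}+(1-\chi_{\delta}(d))u_{0}$ on $\mathcal{C}_{\delta}$ with a profile $\chi_{\delta}$ equal to $0$ at $d=\delta$ and $1$ at $d=2\delta$, and $\mathbf{v}_{\delta}:=u_{0}$ on $\{d\leq\delta\}\cup(\Omega'\setminus\overline{\Omega})$. This $\mathbf{v}_{\delta}$ lies in $\sobo^{1,1}(\Omega';\R^{N})$, equals $u_{0}$ off $\{d\geq\delta\}\Subset\Omega$, and converges to $\mathbf{u}$ in $\lebe^{1}(\Omega';\R^{N})$. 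Splitting $\int_{\Omega'}h(\nabla\mathbf{v}_{\delta})\dif x$ into the inner part (which tends to $\sqrt{1+|Du|^{2}}(\Omega)$ as $\delta\to0$ by the above), the part over $(\Omega'\setminus\overline{\Omega})\cup\{d<\delta\}$ (which tends to $\int_{\Omega'\setminus\overline{\Omega}}\sqrt{1+|\nabla u_{0}|^{2}}\dif x$, the thin strip $\{d<\delta\}$ contributing nothing in the limit), and the collar part, bounded by $\int_{\mathcal{C}_{\delta}}\bigl(1+|\nabla u_{\varepsilon}|+|\nabla u_{0}|+\tfrac{C}{\delta}|u_{\varepsilon}-u_{0}|\bigr)\dif x$, and letting $\varepsilon\to0$ first and then $\delta\to0$, all terms but the last vanish while the last has $\limsup$ controlled (via the trace characterisation~\eqref{eq:tracedescription} and a suitable calibration of $d$ and $\chi_{\delta}$) by $\int_{\partial\Omega}|\trace_{\partial\Omega}(u)-\trace_{\partial\Omega}(u_{0})|\dif\mathscr{H}^{n-1}$. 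Hence $\limsup_{\delta\to0}\int_{\Omega'}h(\nabla\mathbf{v}_{\delta})\dif x\leq\sqrt{1+|D\mathbf{u}|^{2}}(\Omega')$, and the matching $\liminf$ is again Lemma~\ref{lem:reshetnyak}, so $\mathbf{v}_{\delta}\to\mathbf{u}$ area‑strictly.

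The main obstacle is the collar estimate: to get the \emph{sharp} bound $\limsup_{\delta\to0}\tfrac{1}{\delta}\int_{\mathcal{C}_{\delta}}|u-u_{0}|\dif x\leq\int_{\partial\Omega}|\trace_{\partial\Omega}(u)-\trace_{\partial\Omega}(u_{0})|\dif\mathscr{H}^{n-1}$ — not merely a non‑optimal multiple of the right‑hand side — one must take the transversal coordinate $d$ and the profile $\chi_{\delta}$ adapted to a collar neighbourhood so that the coarea slicing of $\tfrac{1}{\delta}\int_{\mathcal{C}_{\delta}}|u-u_{0}|$ limits onto the boundary integral, which is where~\eqref{eq:tracedescription} enters. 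Since $\partial\Omega$ is only Lipschitz, this is carried out in bi‑Lipschitz flattening charts with a subordinate partition of unity, the bounded geometric distortion of the charts being absorbed into the estimates; this is exactly the technical heart of the classical smooth‑approximation theorems in $\bv$ with solid boundary data (à la Anzellotti–Giaquinta; cf. the cited \cite[Lem.~B.2]{Bild}), while the reduction of the first paragraph and the energy bookkeeping of the second are routine. One should note in passing that all maps are built within fixed solid boundary classes relative to $\Omega'$, so no delicate behaviour at $\partial\Omega'$ is ever involved.
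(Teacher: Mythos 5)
The paper does not prove this lemma; it is stated as a citation to \cite[Lem.~B.2]{Bild}, so there is no in‑paper argument to compare against. Your construction is the classical Anzellotti--Giaquinta/Bildhauer mollify‑and‑glue scheme that the reference is built on, and the overall strategy — identify the target area‑strict energy including the jump across $\partial\Omega$; reduce, via the $1$‑Lipschitz dependence of $z\mapsto\sqrt{1+|z|^{2}}$ on $\sobo^{1,1}$‑norm perturbations and a diagonal argument, to producing $\sobo^{1,1}$ competitors equal to $u_{0}$ off a compact subset of $\Omega$; then mollify on shrinking interior sets, interpolate in a collar, and bound the collar energy by the trace integral via \eqref{eq:tracedescription} — is sound.

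One point is not stated with the right precision and, as phrased, would not close the argument. You need the \emph{exact} bound
\[
\limsup_{\delta\searrow 0}\int_{\mathcal{C}_{\delta}}|\chi_{\delta}'(\cdot)|\,|\nabla d|\,|u-u_{0}|\,\dif x\leq\int_{\partial\Omega}|\trace_{\partial\Omega}(u)-\trace_{\partial\Omega}(u_{0})|\,\dif\mathscr{H}^{n-1},
\]
not an estimate up to a multiplicative constant: otherwise the Reshetnyak $\liminf$ and your $\limsup$ would not match and area‑strict convergence would fail. ``Bounded geometric distortion of the charts being absorbed'' only controls the constant, and a generic Lipschitz $d$ comparable to $\dist(\cdot,\partial\Omega)$ has $\mathrm{Lip}(d)>1$, which is precisely the obstruction. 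The correct mechanism is an exact cancellation, best seen in a local graph chart $\partial\Omega=\{x_{n}=\gamma(x')\}$ with the cut‑off taken as a function of $x_{n}-\gamma(x')$: then $|\nabla(\chi_{\delta}(x_{n}-\gamma(x')))|=\frac{1}{\delta}\sqrt{1+|\nabla'\gamma(x')|^{2}}$, and this is the \emph{same} factor $\sqrt{1+|\nabla'\gamma|^{2}}$ that turns $\mathscr{L}^{n-1}$ into $\mathscr{H}^{n-1}\mres\partial\Omega$. Writing the collar term by Fubini in these coordinates, for a.e.\ $x'$ the inner one‑dimensional average tends to $|\trace_{\partial\Omega}(u)(x',\gamma(x'))-\trace_{\partial\Omega}(u_{0})(x',\gamma(x'))|$ (this is where \eqref{eq:tracedescription} enters), and integrating in $x'$ against $\sqrt{1+|\nabla'\gamma|^{2}}$ gives exactly the boundary integral. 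With this replacing ``bounded distortion absorbed'' — and a partition of unity to patch the local charts — the rest of your estimate closes as you describe, and the remaining implications (area‑strict $\Rightarrow$ strict $\Rightarrow$ weak$^{*}$) are indeed elementary.
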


\subsubsection{The spaces $\sobo^{s,p}$.} Let $0<s<1$ and $1\leq p <\infty$. For a Lipschitz hypersurface $\Sigma\subset\R^{n}$, the space $\sobo^{s,p}(\Sigma;\R^{N})$ is the linear space of all $\mathscr{H}^{n-1}$-measurable maps $v\colon\Sigma\to\R^{N}$ for which 
\begin{align*}
\|v\|_{\sobo^{s,p}(\Sigma)} & := (\|v\|_{\lebe^{p}(\Sigma)}^{p} + [v]_{\sobo^{s,p}(\Sigma)}^{p})^{\frac{1}{p}} \\ & :=\Big(\int_{\Sigma}|v(x)|^{p}\dif^{n-1}x + \iint_{\Sigma\times\Sigma}\frac{|v(x)-v(y)|^{p}}{|x-y|^{n-1+sp}}\dif^{n-1}x\dif^{n-1}y\Big)^{\frac{1}{p}}
\end{align*}
is finite. As a straightforward consequence of Jensen's inequality, there exists a constant $c=c(n,s,p)>0$ such that for all $x_{0}\in\R^{n}$ and $r>0$ the \emph{fractional Poincar\'{e}-type inequality}
\begin{align}\label{eq:fracPoinc}
\begin{split}
\Big(\dashint_{\partial\!\ball_{r}(x_{0})}|u(x)& -(u)_{\partial\!\ball_{r}(x_{0})}|^{p}\dif^{n-1}x\Big)^{\frac{1}{p}}\\ & \leq c\,r^{s}\Big(\dashint_{\partial\!\ball_{r}(x_{0})}\int_{\partial\!\ball_{r}(x_{0})}\frac{|u(x)-u(y)|^{p}}{|x-y|^{n-1+sp}}\dif^{n-1}x\dif^{n-1}y \Big)^{\frac{1}{p}}
\end{split}
\end{align}
holds for all $u\in\sobo^{s,p}(\partial\!\ball_{r}(x_{0});\R^{N})$; clearly, in \eqref{eq:fracPoinc}, the dash is understood with respect to $\mathscr{H}^{n-1}\mres\partial\!\ball_{r}(x_{0})$. It is well-known that for any open and bounded set $\Omega\subset\R^{n}$ with Lipschitz boundary $\partial\Omega$ and $1<p<\infty$, there exists a bounded, linear and
\begin{align}
\text{surjective trace operator $\trace_{\partial\Omega}\colon\sobo^{1,p}(\Omega;\R^{N})\to\sobo^{1-\frac{1}{p},p}(\partial\Omega;\R^{N})$}.
\end{align}
For future reference, we record the following scaling behaviour of the underlying inequality: If $\Omega=\ball_{r}(x_{0})$ is an open ball and $\mathrm{tr}=\mathrm{tr}_{\partial\!\ball_{r}(x_{0})}\colon\sobo^{1,p}(\ball_{r}(x_{0});\R^{N})\to\sobo^{1-\frac{1}{p},p}(\partial\!\ball_{r}(x_{0});\R^{N})$ the corresponding trace operator, then there exists $c=c(n,N,p)>0$ such that for all Sobolev maps $u\in\sobo^{1,p}(\ball_{r}(x_{0});\R^{N})$ there holds 
\begin{align}\label{eq:tracescale}
\begin{split}
\Big(\dashint_{\partial\!\ball_{r}(x_{0})}&|\trace (u)(x)|^{p}\dif^{n-1}x\Big)^{\frac{1}{p}}\\ &  +r^{1-\frac{1}{p}}\Big(\dashint_{\partial\!\ball_{r}(x_{0})}\int_{\partial\!\ball_{r}(x_{0})}\frac{|\trace (u)(x)-\trace(u)(y)|^{p}}{|x-y|^{n-2+p}}\dif^{n-1}x\dif^{n-1}y\Big)^{\frac{1}{p}}\\ & \leq c\Big(\Big(\dashint_{\ball_{r}(x_{0})}|u(x)|^{p}\dif x\Big)^{\frac{1}{p}} + r\Big(\dashint_{\ball_{r}(x_{0})}|\nabla u(x)|^{p}\dif x \Big)^{\frac{1}{p}}\Big).
\end{split}
\end{align}
We conclude this section with the following simple observation:
\begin{lemma}\label{lem:DirClasses}
Let $1\leq p\leq q<\infty$ and let $\Omega\subset\R^{n}$ be an open and bounded domain with Lipschitz boundary. If $u\in\sobo^{1,p}(\Omega;\R^{N})$ is such that $\trace_{\partial\Omega}(u)\in\sobo^{1-1/q,q}(\partial\Omega;\R^{N})$ and $v\in\sobo^{1,q}(\Omega;\R^{N})$ satisfies $\trace_{\partial\Omega}(u)=\trace_{\partial\Omega}(v)$ $\mathscr{H}^{n-1}$-a.e. on $\partial\Omega$, then we have 
\begin{align}\label{eq:trivialidentity}
(\sobo^{1,q}\cap\sobo_{u}^{1,p})(\Omega;\R^{N})=\sobo_{v}^{1,q}(\Omega;\R^{N}). 
\end{align}
\end{lemma}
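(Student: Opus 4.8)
The statement is a set equality between two subsets of $\sobo^{1,q}(\Omega;\R^{N})$ (once we observe that both are indeed subsets of $\sobo^{1,q}$), so I would prove the two inclusions separately. The only subtlety is that membership in $\sobo_{u}^{1,p}(\Omega;\R^{N})=u+\sobo_{0}^{1,p}(\Omega;\R^{N})$ is a statement about the $\sobo^{1,p}$-trace of the difference vanishing, whereas membership in $\sobo_{v}^{1,q}(\Omega;\R^{N})=v+\sobo_{0}^{1,q}(\Omega;\R^{N})$ is a statement about the $\sobo^{1,q}$-trace of the difference vanishing; one therefore has to keep track of which trace operator is being used and exploit their compatibility on the overlap of their domains. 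The hypotheses $\trace_{\partial\Omega}(u)\in\sobo^{1-1/q,q}(\partial\Omega;\R^{N})$ and $\trace_{\partial\Omega}(u)=\trace_{\partial\Omega}(v)$ $\mathscr{H}^{n-1}$-a.e.\ are exactly what make the right-hand side nonempty and the two trace conditions equivalent.

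\emph{Inclusion ``$\subseteq$''.} Let $w\in(\sobo^{1,q}\cap\sobo_{u}^{1,p})(\Omega;\R^{N})$. Then $w\in\sobo^{1,q}(\Omega;\R^{N})$, so $w-v\in\sobo^{1,q}(\Omega;\R^{N})$ and by the $\sobo^{1,q}$-trace theorem $\trace_{\partial\Omega}(w-v)\in\sobo^{1-1/q,q}(\partial\Omega;\R^{N})$ is well-defined. Since $w\in\sobo_{u}^{1,p}(\Omega;\R^{N})$ we have $w-u\in\sobo_{0}^{1,p}(\Omega;\R^{N})$, hence $\trace_{\partial\Omega}(w-u)=0$ $\mathscr{H}^{n-1}$-a.e.\ on $\partial\Omega$ in the $\sobo^{1,p}$-sense; but the $\sobo^{1,p}$- and $\sobo^{1,q}$-trace operators agree on $\sobo^{1,q}(\Omega;\R^{N})\subset\sobo^{1,p}(\Omega;\R^{N})$ (both are the restriction of the $\bv$-trace, or equivalently both are continuous extensions of the classical restriction of smooth maps, which is dense for the stronger topology), so in fact $\trace_{\partial\Omega}(w)=\trace_{\partial\Omega}(u)$ $\mathscr{H}^{n-1}$-a.e.\ Combining with $\trace_{\partial\Omega}(u)=\trace_{\partial\Omega}(v)$ gives $\trace_{\partial\Omega}(w-v)=0$, and since $w-v\in\sobo^{1,q}(\Omega;\R^{N})$ has vanishing $\sobo^{1,q}$-trace it lies in $\sobo_{0}^{1,q}(\Omega;\R^{N})$; thus $w\in\sobo_{v}^{1,q}(\Omega;\R^{N})$.

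\emph{Inclusion ``$\supseteq$''.} Conversely, let $w\in\sobo_{v}^{1,q}(\Omega;\R^{N})$, so $w\in\sobo^{1,q}(\Omega;\R^{N})\subseteq\sobo^{1,p}(\Omega;\R^{N})$ and $w-v\in\sobo_{0}^{1,q}(\Omega;\R^{N})\subseteq\sobo_{0}^{1,p}(\Omega;\R^{N})$. Then $w-u=(w-v)+(v-u)$, and $v-u\in\sobo^{1,p}(\Omega;\R^{N})$ has $\trace_{\partial\Omega}(v-u)=0$ by hypothesis, whence $v-u\in\sobo_{0}^{1,p}(\Omega;\R^{N})$ (again using that vanishing $\sobo^{1,p}$-trace characterises $\sobo_{0}^{1,p}$ on a Lipschitz domain); adding the two $\sobo_{0}^{1,p}$-contributions yields $w-u\in\sobo_{0}^{1,p}(\Omega;\R^{N})$, i.e.\ $w\in\sobo_{u}^{1,p}(\Omega;\R^{N})$. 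Since also $w\in\sobo^{1,q}(\Omega;\R^{N})$, we conclude $w\in(\sobo^{1,q}\cap\sobo_{u}^{1,p})(\Omega;\R^{N})$. The only genuinely non-formal ingredient is the compatibility of the $\sobo^{1,p}$- and $\sobo^{1,q}$-trace operators on $\sobo^{1,q}(\Omega;\R^{N})$ together with the characterisation of $\sobo_{0}^{1,r}$ as the kernel of $\trace_{\partial\Omega}$ for $1\le r<\infty$ on Lipschitz domains; both are standard trace-theory facts, so I expect no real obstacle here.
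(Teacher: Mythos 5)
Your proof is correct and follows essentially the same route as the paper's: both directions reduce to the compatibility of the $\sobo^{1,p}$- and $\sobo^{1,q}$-trace operators together with the characterisation of $\sobo_0^{1,r}$ on a Lipschitz domain as the kernel of $\trace_{\partial\Omega}$, which the paper invokes in the form of the identity $(\sobo_0^{1,p}\cap\sobo^{1,q})(\Omega;\R^N)=\sobo_0^{1,q}(\Omega;\R^N)$ via the Lebesgue-point description \eqref{eq:tracedescription} of the trace.
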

\begin{proof}
If $w$ belongs to the right-hand side of~\eqref{eq:trivialidentity}, $w\in\sobo^{1,q}(\Omega;\R^{N})$ and we have $w-u\in(v-u)+\sobo_{0}^{1,q}(\Omega;\R^{N})\subset \sobo_{0}^{1,p}(\Omega;\R^{N})$. Hence $w$ also belongs to the left-hand side. Next note that $u+\sobo_{0}^{1,p}(\Omega;\R^{N})=v+\sobo_{0}^{1,p}(\Omega;\R^{N})$ as $u-v\in\sobo_{0}^{1,p}(\Omega;\R^{N})$. Thus, if $w$ belongs to the left-hand side of~\eqref{eq:trivialidentity}, we may write $w=v+\varphi\in\sobo^{1,q}(\Omega;\R^{N})$ with $\varphi\in(\sobo_{0}^{1,p}\cap\sobo^{1,q})(\Omega;\R^{N})$. But $(\sobo_{0}^{1,p}\cap\sobo^{1,q})(\Omega;\R^{N})=\sobo_{0}^{1,q}(\Omega;\R^{N})$, which can be seen directly by~\eqref{eq:tracedescription}, and so~\eqref{eq:trivialidentity} follows. The proof is complete. 
\end{proof} 
 
\subsection{Averaged Taylor polynomials}
For our applications in Sections~\ref{sec:Fubini} and~\ref{sec:MazurEuler}, we require some background facts on averaged Taylor polynomials of degree $1$; see~\textsc{Maz'ya}~\cite[\S 1.1.10]{Mazya} for more detail. Let $\ball=\ball_{r}(x)$ be an open ball of radius $r>0$ and let $w_{\ball}\in\hold_{c}^{\infty}(\ball;[0,1])$ be a radially symmetric weight function with $\int_{\ball}w_{\ball}(x)\dif x =1$. This weight function can be obtained from a corresponding weight function $w=w_{\ball_{1}(0)}$ by $w_{\ball}(x):=\frac{1}{r^{n}}w(\tfrac{x-x_{0}}{r})$. For $u\in\lebe_{\locc}^{1}(\ball;\R^{N})$, we then define for $l\in\{0,1\}$ the \emph{averaged Taylor polynomial} of degree $l$ by 
\begin{align}\label{eq:ATP0}
\Pi_{\ball}^{l}u(x):=\sum_{|\alpha|\leq l}\frac{(-1)^{|\alpha|}}{\alpha!}\int_{\ball}u(y)\partial_{y}^{\alpha}(w_{\ball}(y)(x-y)^{\alpha})\dif y.
\end{align}
We record the following lemma, which directly follows from the definition of $\Pi_{\ball}^{l}u$, integration by parts and the scaling properties of $w_{\ball}$:
\begin{lemma}
The map $\Pi_{\ball}^{1}$ is the identity on the affine-linear maps. Moreover, there exists a constant $c=c(n,N,w)>0$ such that the following hold for all $u\in\bv_{\locc}(\R^{n};\R^{N})$, all open balls $\ball=\ball_{r}(x_{0})$ and $l\in\{0,1\}$:
\begin{align}\label{eq:ATP}
\begin{split}
&\frac{1}{c}\|\Pi_{\ball}^{l}u\|_{\lebe^{\infty}(\ball)}\leq \dashint_{\ball}|\Pi_{\ball}^{l}u|\dif x\leq c\dashint_{\ball}|u|\dif x, \\ 
&\|\nabla\Pi_{\ball}^{l}u\|_{\lebe^{\infty}(\ball)}\leq c \frac{|Du|(\ball)}{r^{n}}, \\ 
&\frac{1}{c}\|\Pi_{\ball}^{1}u-\Pi_{\ball}^{0}u\|_{\lebe^{\infty}(\ball)}\leq \dashint_{\ball}|\Pi_{\ball}^{1}u-\Pi_{\ball}^{0}u|\dif x \leq c\frac{|Du|(\ball)}{r^{n-1}}.
\end{split}
\end{align}
Moreover, we have \emph{Poincar\'{e}'s inequality}
\begin{align}\label{eq:ATPPoinc}
\dashint_{\ball}|u-\Pi_{\ball}^{0}u|\dif x \leq c \frac{|Du|(\ball)}{r^{n-1}}. 
\end{align}
\end{lemma}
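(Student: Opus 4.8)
The plan is to reduce everything to the unit ball and then dispatch the individual estimates, all of which are classical (see Maz'ya~\cite[\S1.1.10]{Mazya}). First I would record that \eqref{eq:ATP0} is precisely what integration by parts in $y$ produces from the \emph{averaged Taylor formula} $\Pi_{\ball}^{l}u(x)=\int_{\ball}w_{\ball}(y)\big(\sum_{|\alpha|\leq l}\tfrac{\partial^{\alpha}u(y)}{\alpha!}(x-y)^{\alpha}\big)\dif y$, which is legitimate for $u\in\hold^{\infty}$ and then for $u\in\bv_{\locc}$ on reading the derivatives in the distributional sense, the coefficients $w_{\ball}(y)(x-y)^{\alpha}$ lying in $\hold_{c}^{\infty}(\ball)$ for each fixed $x$. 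Since the bracketed sum equals $u(x)$ whenever $u$ is a polynomial of degree $\leq l$ and $\int_{\ball}w_{\ball}=1$, it follows at once that $\Pi_{\ball}^{l}$ reproduces such polynomials; in particular $\Pi_{\ball}^{1}$ is the identity on the affine-linear maps.

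Next I would pass to $\ball=\ball_{1}(0)$: writing $\tilde u(z):=u(x_{0}+rz)$, the scaling $w_{\ball}(x)=r^{-n}w(\tfrac{x-x_{0}}{r})$ built into \eqref{eq:ATP0} gives $\Pi_{\ball}^{l}u(x)=\Pi_{\ball_{1}(0)}^{l}\tilde u(\tfrac{x-x_{0}}{r})$, together with $|D\tilde u|(\ball_{1}(0))=r^{1-n}|Du|(\ball)$ and $\int_{\ball_{1}(0)}|\tilde u|\dif z=r^{-n}\int_{\ball}|u|\dif x$. Hence it suffices to prove all the bounds on $\ball_{1}(0)$ with constants depending only on $n,N,w$; the precise powers of $r$ in \eqref{eq:ATP} and \eqref{eq:ATPPoinc} then reappear automatically under this change of variables, which is exactly what removes any $r$- or $x_{0}$-dependence from $c$.

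On $\ball_{1}(0)$ the left-hand ``inverse'' inequalities in \eqref{eq:ATP} are just the equivalence of $\|\cdot\|_{\lebe^{\infty}(\ball_{1}(0))}$ and $\dashint_{\ball_{1}(0)}|\cdot|\dif x$ on the finite-dimensional space of affine $\R^{N}$-valued maps, to which both $\Pi_{\ball_{1}(0)}^{l}\tilde u$ and $\Pi_{\ball_{1}(0)}^{1}\tilde u-\Pi_{\ball_{1}(0)}^{0}\tilde u$ belong, with a constant depending only on $n,N$. For the upper bounds I would write $\Pi_{\ball_{1}(0)}^{l}\tilde u(x)=\int_{\ball_{1}(0)}\tilde u(y)\,\Phi_{x}^{l}(y)\dif y$ with $\Phi_{x}^{l}(y):=\sum_{|\alpha|\leq l}\tfrac{(-1)^{|\alpha|}}{\alpha!}\partial_{y}^{\alpha}(w(y)(x-y)^{\alpha})$; since $w\in\hold_{c}^{\infty}(\ball_{1}(0))$ and $|x-y|\leq 2$ on $\ball_{1}(0)$, one gets $\sup_{x}\|\Phi_{x}^{l}\|_{\lebe^{\infty}}\leq c(n,N,w)$, which controls $\dashint_{\ball_{1}(0)}|\Pi_{\ball_{1}(0)}^{l}\tilde u|\dif x$. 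The gradient bound follows by differentiating the integral representation of the affine map $\Pi_{\ball_{1}(0)}^{1}\tilde u$ in $x$: the $x$-independent degree-$0$ part drops out and one is left with $\partial_{x_{j}}\Pi_{\ball_{1}(0)}^{1}\tilde u=-\int_{\ball_{1}(0)}\tilde u(y)\partial_{y_{j}}w(y)\dif y=\int_{\ball_{1}(0)}w(y)\dif(\partial_{j}\tilde u)(y)$ by the very definition of the distributional derivative of $\tilde u\in\bv$ tested against $w\in\hold_{c}^{\infty}(\ball_{1}(0))$, whence $\|\nabla\Pi_{\ball_{1}(0)}^{1}\tilde u\|_{\lebe^{\infty}}\leq\|w\|_{\lebe^{\infty}}|D\tilde u|(\ball_{1}(0))$ (and $\nabla\Pi_{\ball_{1}(0)}^{0}\tilde u\equiv 0$). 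The same manipulation, now keeping one factor $(x_{i}-y_{i})$ in the kernel, gives $\Pi_{\ball_{1}(0)}^{1}\tilde u(x)-\Pi_{\ball_{1}(0)}^{0}\tilde u(x)=\sum_{i}\int_{\ball_{1}(0)}w(y)(x_{i}-y_{i})\dif(\partial_{i}\tilde u)(y)$, and $|w(y)(x_{i}-y_{i})|\leq 2\|w\|_{\lebe^{\infty}}$ yields the remaining bound in \eqref{eq:ATP}.

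Finally, for Poincar\'{e}'s inequality \eqref{eq:ATPPoinc} I would argue on $\ball_{1}(0)$ first for $\tilde u\in\hold^{\infty}$, starting from $\tilde u(x)-\Pi_{\ball_{1}(0)}^{0}\tilde u=\int_{\ball_{1}(0)}(\tilde u(x)-\tilde u(y))\,w(y)\dif y$ and $\tilde u(x)-\tilde u(y)=\int_{0}^{1}\nabla\tilde u(y+t(x-y))\cdot(x-y)\dif t$; inserting this, applying Fubini and the elementary estimate $\int_{\ball_{1}(0)}|x-y|^{1-n}\dif x\leq c(n)$ gives $\dashint_{\ball_{1}(0)}|\tilde u-\Pi_{\ball_{1}(0)}^{0}\tilde u|\dif x\leq c(n,N,w)\,\|\nabla\tilde u\|_{\lebe^{1}(\ball_{1}(0))}$, and the $\bv$-case follows by the strict density of smooth maps (standard mollification, or Lemma~\ref{lem:SolidLemma}), after which rescaling concludes. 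There is no genuinely hard step in this lemma; the only place calling for a word of care is this last passage from $\hold^{\infty}$ to $\bv$ in the presence of the singular part $D^{s}\tilde u$, together with the bookkeeping of the scaling factors, which the reduction to $\ball_{1}(0)$ renders routine.
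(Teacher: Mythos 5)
Your proof is correct and fleshes out precisely the route the paper indicates (the paper gives no written proof, stating only that the lemma ``directly follows from the definition of $\Pi_{\ball}^{l}u$, integration by parts and the scaling properties of $w_{\ball}$''), with the kernel representation, rescaling to $\ball_{1}(0)$, finite-dimensional norm equivalence, testing $w$ and $w(\cdot)(x_{i}-\cdot\,)$ against $Du$, and the Riesz-potential estimate being exactly those ingredients spelled out. The only slight imprecision is in the approximation step for \eqref{eq:ATPPoinc}: neither plain mollification (which a priori yields $|Du|(\overline{\ball})$ rather than $|Du|(\ball)$) nor Lemma~\ref{lem:SolidLemma} (which controls the strict total variation on a strictly larger set and fixes solid boundary values) is quite the right tool, and the cleaner citation is the classical strict-density theorem furnishing $(u_{j})\subset(\hold^{\infty}\cap\sobo^{1,1})(\ball_{1}(0);\R^{N})$ with $u_{j}\to u$ in $\lebe^{1}(\ball_{1}(0);\R^{N})$ and $\int_{\ball_{1}(0)}|\nabla u_{j}|\dif x\to|Du|(\ball_{1}(0))$.
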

\subsection{Spherical maximal functions and a refined selection lemma}\label{sec:HLW}
In the main part of the paper we will frequently employ constructions that hinge on \emph{good radii}, a notion that will be made precise in Section~\ref{sec:goodgeneration}. To ensure the existence of sufficiently many such good radii, we now introduce the main tools for their construction. Given an open set $\Omega\subset\R^{n}$ and a finite dimensional inner product space $X$, let $\mu\in\mathrm{RM}(\Omega;X)$ and $x_{0}\in\Omega$. We define for $0<t<d:=\mathrm{dist}(x_{0},\partial\Omega)$ the (spherical) maximal operator
\begin{align}\label{eq:maxoperator}
\mathcal{M}\mu(x_{0},t):=\sup\left\{\frac{|\mu|(\ball_{t+\varepsilon}(x_{0})\setminus\overline{\ball}_{t-\varepsilon}(x_{0}))}{2\varepsilon}\colon\;0<\varepsilon<\min\{t,d-t\}\right\}. 
\end{align}
By routine means, one obtains that $\mathcal{M}\mu(x_{0},t)<\infty$ for $\mathscr{L}^{1}$-a.e. $t\in (0,d)$. This is a consequence of the weak-$(1,1)$-bound 
\begin{align*}
\mathscr{L}^{1}(\{t\in (0,R)\colon\;\mathcal{M}\mu(x_{0},t)>\lambda\})\leq \frac{c}{\lambda}|\mu|(\ball_{R}(x_{0})),\qquad \lambda>0,\;0<R\leq d,
\end{align*}
where $c>0$ is a constant independent of $x_{0},R$ and $\mu$. The other ingredient that we require is a lemma of Hardy-Littlewood type in the spirit of~\cite[Lem.~2.3]{FM},~\cite[Lem.~4.6]{SchmidtPR}, now allowing for double-sided control of difference quotients and exceptional sets of non-zero measure.
\begin{lemma}\label{lem:HLW}
Let $0<r<s<\infty$, $\theta\in [0,\tfrac{1}{8})$ and let $f\colon [r,s]\to\R$ is a non-decreasing and right-continuous function. Then for any measurable subset $E\subset[r,s]$ with $\mathscr{L}^{1}(E)<\theta(s-r)$ there exist $r<\widetilde{r}< \widetilde{s}<s$, with $\widetilde{r},\widetilde{s}\notin E$ and the following properties: We have 
\begin{align}\label{eq:constructssr}
\frac{f(a)-f(\tau)}{a-\tau}\leq \frac{800}{1-8\theta} \frac{f(s)-f(r)}{s-r}&\;\text{for all}\;\tau<a, \\ 
\frac{f(\tau)-f(a)}{\tau-a}\leq \frac{800}{1-8\theta} \frac{f(s)-f(r)}{s-r}&\;\text{for all}\;\tau>a
\end{align}
for $a\in\{\widetilde{r},\widetilde{s}\}$ and 
\begin{align}\label{eq:comparessr}
(\widetilde{s}-\widetilde{r})\leq (s-r) \leq 8(\widetilde{s}-\widetilde{r}). 
\end{align}
\end{lemma}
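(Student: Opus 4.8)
The statement is a Vitali-covering / stopping-time argument on the interval $[r,s]$ that produces two points $\widetilde r<\widetilde s$ outside the bad set $E$ at which a two-sided difference-quotient bound for $f$ holds, while keeping $\widetilde s-\widetilde r$ comparable to $s-r$. I would argue as follows. Set $Q:=\tfrac{f(s)-f(r)}{s-r}$, the average slope, and fix the large numerical constant $C_\theta:=\tfrac{800}{1-8\theta}$. Call a point $a\in[r,s]$ \emph{good} if both one-sided bounds
\begin{align*}
\frac{f(a)-f(\tau)}{a-\tau}\leq C_\theta Q\quad(\tau<a),\qquad \frac{f(\tau)-f(a)}{\tau-a}\leq C_\theta Q\quad(\tau>a)
\end{align*}
hold. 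The first task is to show the set $B$ of \emph{bad} points has small Lebesgue measure. For a bad point $a$ there exists an interval with one endpoint $a$ on which the increment of $f$ exceeds $C_\theta Q$ times the length; by the Vitali covering lemma (in the Besicovitch $5r$-form, or the simple version for intervals on $\mathbb R$) one extracts a disjoint subfamily of such intervals covering at least a fixed fraction of $B$, and since $f$ is monotone the total $f$-increment over disjoint intervals is at most $f(s)-f(r)=Q(s-r)$. Comparing gives $\mathscr L^1(B)\leq \tfrac{c_0}{C_\theta}(s-r)$ for an absolute constant $c_0$ (the factor $5$ from Vitali and the one-sided bookkeeping are what force the explicit $800$). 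Choosing the constant so that $\tfrac{c_0}{C_\theta}<\tfrac{1-8\theta}{8}$ — which is exactly what $C_\theta=\tfrac{800}{1-8\theta}$ buys — we get $\mathscr L^1(B)<\tfrac{1-8\theta}{8}(s-r)$, hence $\mathscr L^1(B\cup E)<\tfrac14(s-r)$ because $\mathscr L^1(E)<\theta(s-r)\leq\tfrac18(s-r)$ and $\tfrac18+\tfrac{1-8\theta}{8}=\tfrac{2-8\theta}{8}<\tfrac14$ (actually $=\tfrac14$ when $\theta=0$, so I would be slightly more careful with strict inequalities here, possibly sharpening $C_\theta$ marginally or using that $E$ is strictly below $\theta(s-r)$).

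The second task is the geometric selection. Split $[r,s]$ into the left third $I_-:=[r,r+\tfrac{s-r}{3}]$ and the right third $I_+:=[s-\tfrac{s-r}{3},s]$; each has length $\tfrac{s-r}{3}>\tfrac14(s-r)\geq\mathscr L^1(B\cup E)$ when… wait, $\tfrac13>\tfrac14$, so each third meets the complement of $B\cup E$ in a set of positive measure. Pick $\widetilde r\in I_-\setminus(B\cup E)$ and $\widetilde s\in I_+\setminus(B\cup E)$. Then $\widetilde r,\widetilde s\notin E$, both are good so \eqref{eq:constructssr} holds, and $\widetilde s-\widetilde r\geq (s-\tfrac{s-r}{3})-(r+\tfrac{s-r}{3})=\tfrac13(s-r)$, giving $s-r\leq 3(\widetilde s-\widetilde r)\leq 8(\widetilde s-\widetilde r)$, while trivially $\widetilde s-\widetilde r\leq s-r$; this is \eqref{eq:comparessr}. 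Right-continuity of $f$ enters to ensure the difference quotients are controlled at the chosen endpoints (a bad point in the closure of a bad configuration is still bad), and it also guarantees $B$ is measurable. One small point: the bad-point intervals may poke slightly outside $[r,s]$ at the extreme ends, but since we only need $\widetilde r,\widetilde s$ in the open middle region $(r,s)$ this causes no trouble, and the monotonicity bound $f(s)-f(r)$ still dominates all disjoint increments.

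\textbf{Main obstacle.} The delicate part is the bookkeeping that produces the \emph{explicit} constant $\tfrac{800}{1-8\theta}$ while simultaneously keeping $\mathscr L^1(B\cup E)$ strictly below the length of a third of $[r,s]$; one must track the Vitali overlap constant, the fact that each bad point contributes a one-sided (not two-sided) interval so the two directions are handled separately and the measures added, and the interplay with the $\theta$-dependence of $E$. Everything else — the covering lemma, monotonicity estimate, and the pigeonhole choice of $\widetilde r,\widetilde s$ in the outer thirds — is routine. If the naive constant from a clean Vitali argument comes out slightly worse than $800$, I would either invoke the sharper one-dimensional covering lemma (overlap constant $2$ instead of $5$) or absorb the slack into the $\tfrac{1}{1-8\theta}$ factor; the value $800$ is comfortably generous, so no sharp optimization is needed.
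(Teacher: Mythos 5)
Your proposal follows essentially the same route as the paper: a weak-$(1,1)$ estimate for the one-sided difference-quotient maximal function (via Vitali and the monotonicity of $f$), union with $E$, and a pigeonhole choice of $\widetilde r,\widetilde s$ in two fixed subintervals near the endpoints. The only cosmetic differences are that the paper phrases the Vitali argument in terms of the Lebesgue--Stieltjes measure $f'$ (which is where right-continuity is invoked, to make $f'$ well-defined and to handle atoms cleanly, picking up a factor $10$ rather than $5$ in the weak-$(1,1)$ bound), and it locates $\widetilde r,\widetilde s$ in two windows of width $\tfrac18(s-r)$ centered at $\tfrac34 r+\tfrac14 s$ and $\tfrac14 r+\tfrac34 s$ rather than in the outer thirds. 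Your version with thirds is slightly more generous and would in fact deliver the stronger bound $s-r\leq 3(\widetilde s-\widetilde r)$. Your self-doubt about the strict inequality is unnecessary: $\mathscr L^1(B\cup E)<\tfrac{1-8\theta}{8}(s-r)+\theta(s-r)=\tfrac18(s-r)<\tfrac13(s-r)$, using the strict hypothesis on $\mathscr L^1(E)$ directly rather than the cruder bound $\tfrac18(s-r)$; and since $\tfrac18<\tfrac13$ there is no tightness issue even at $\theta=0$.
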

The elementary proof of this lemma is provided in the appendix, Section~\ref{sec:appendix}. The condition of right-continuity only enters by the use of the Lebesgue-Stieltjes measure in the proof of Lemma~\ref{lem:HLW}. 
%We moreover record a variant of a slightly weakened version of the preceding corollary: 
%\begin{corollary}\label{cor:HLW}
%Let $0<r<s<\infty$ and let $f,g\colon [r,s]\to\R$ be two non-decreasing and right-continuous functions. Then for any $\mathscr{L}^{1}$-nullset $E\subset [r,s]$ there exist $r<\widetilde{r}<\widetilde{s}<s$ with $\widetilde{r},\widetilde{s}\notin E$ and the following properties: 
%\begin{align}\label{eq:constructssr1}
%\frac{h(a)-h(\tau)}{a-\tau}\leq 800 \frac{h(s)-h(r)}{s-r}&\;\text{for all}\;\tau<a, \\ 
%\frac{h(\tau)-h(a)}{\tau-a}\leq 800 \frac{h(s)-h(r)}{s-r}&\;\text{for all}\;\tau>a,
%\end{align}
%for all $a\in\{\widetilde{r},\widetilde{s}\}$ and $h\in\{f,g\}$, and 
% \begin{align}\label{eq:comparessr1}
%(\widetilde{s}-\widetilde{r})\leq (s-r) \leq 3(\widetilde{s}-\widetilde{r}). 
%\end{align}
%\end{corollary}
%The proof of the preceding corollary is equally provided in the appendix. 
For future reference, we note that whenever $\mu\in\mathrm{RM}(\Omega;X)$, $x_{0}\in\Omega$ and $0<r<s<\mathrm{dist}(x_{0},\partial\Omega)$, then 
\begin{align*}
f(t):=|\mu|(\overline{\ball_{t}(x_{0})}),\qquad t\in[r,s], 
\end{align*}
is non-decreasing and right-continuous: Since $|\mu|$ is a positive Radon measure, we have for all $t\in[r,s)$ and  $t<t_{j}<t'_{j}<t_{j}+2^{-j}$ with $t_{j}\searrow t$ and $t_{j}+2^{-j}<s$
\begin{align*}
f(t) & \leq \lim_{j\to\infty}f(t_{j}) \leq |\mu|(\overline{\ball_{t}(x_{0})}) + \lim_{j\to\infty} |\mu|(\ball_{t'_{j}}(x_{0})\setminus\overline{\ball_{t}(x_{0})}) =|\mu|(\overline{\ball_{t}(x_{0})}) = f(t). 
\end{align*}
Moreover, if $u\in\bv_{\locc}(\Omega;\R^{N})$ and $x_{0}\in\Omega$, $r>0$ are such that $\mathcal{M}Du(x_{0},r)<\infty$, then 
\begin{align}\label{eq:traceequal}
\trace_{\partial\!\ball_{r}(x_{0})}^{+}(u)=\trace_{\partial\!\ball_{r}(x_{0})}^{-}(u)\qquad\text{$\mathscr{H}^{n-1}$-a.e. on $\partial\!\ball_{r}(x_{0})$},
\end{align}
as can be seen from the estimate
\begin{align*}
\int_{\partial\!\ball_{r}(x_{0})}|\trace_{\partial\!\ball_{r}(x_{0})}^{+}(u)-\trace_{\partial\!\ball_{r}(x_{0})}^{-}(u)|\dif\mathscr{H}^{n-1} &  \stackrel{\eqref{eq:interiortraces}}{=} |Du|(\partial\!\ball_{r}(x_{0})) \\ & \;\leq 2\varepsilon \Big(\frac{1}{2\varepsilon}|Du|(\ball_{r+\varepsilon}(x_{0})\setminus\overline{\ball}_{r-\varepsilon}(x_{0}))\Big) \\ & \;\leq 2\varepsilon\mathcal{M}Du(x_{0},r)
\end{align*}
for all $\varepsilon>0$ sufficiently small and then sending $\varepsilon\searrow 0$. 
%\subsection{A truncation lemma}
%When proving the closeness of relaxed minima to suitable $\A$-harmonic approximations in Section~\ref{sec:improveddistance}, certain exponent regimes  will necessitate Lipschitz truncations that preserve zero boundary values. The following lemma can be generalised (cf.~\textsc{Diening}~\cite[Thm.~1.13]{DieningPaseky}), but we state it directly in the form that shall be required later on. Given $v\in\lebe^{1}(\R^{n};\R^{N\times n})$, we recall the centered Hardy-Littlewood maximal operator 
%\begin{align*}
%\mathrm{M}v(x):=\sup_{r>0}\dashint_{\ball_{r}(x)}|v(y)|\dif y,\qquad x\in\R^{n}.
%\end{align*}
%\begin{lemma}\label{lem:trunc1}
%Let $1<p<\infty$. Given $u\in\sobo_{0}^{1,p}(\ball_{1}(0);\R^{N})$, denote $\overline{u}$ its trivial extension to $\R^{n}$. Then for each $\lambda>0$ there exists $u_{\lambda}\in\sobo_{0}^{1,\infty}(\ball_{1}(0);\R^{N})$ such that the following hold: 
%\begin{enumerate} 
%\item\label{item:Lip1} There exists a constant $c=c(n,N)>0$ such that $\|\nabla u_{\lambda}\|_{\lebe^{\infty}(\Omega)}\leq c\lambda$. 
%\item\label{item:Lip2} $\{x\in\Omega\colon\;u(x)\neq u_{\lambda}(x)\}\subset \{x\in\R^{n}\colon\;\mathrm{M}(\nabla \overline{u})(x)>\lambda\}\cap\Omega$. 
%\item\label{item:Lip3} There exists a constant $c=c(p,n,N)>0$ such that $\|u_{\lambda}\|_{\lebe^{p}(\Omega)}\leq c\|u\|_{\lebe^{p}(\Omega)}$. 
%\item\label{item:Lip4} There exists a constant $c=c(p,n,N)>0$ such that $\|\nabla u_{\lambda}\|_{\lebe^{p}(\Omega)}\leq c\|\nabla u\|_{\lebe^{p}(\Omega)}$.
%\end{enumerate}
%\end{lemma}

\subsection{Reference integrand estimates} 
We now record some estimates on the reference integrands $V_{p}$, which we recall to be defined for $1\leq p<\infty$  by 
\begin{align*}
V_{p}(z):=\langle z\rangle^{p}-1:=(1+|z|^{2})^{\frac{p}{2}}-1,\qquad z\in X,
\end{align*}
for any finite dimensional inner product space $X$; for brevity, we put $V:=V_{1}$. The following lemma is routine and gathers estimates from \cite[Prop.~2.5]{AnGi} and \cite[Sec.~2.5,~Lem.~4.1]{GK1}:
\begin{lemma}\label{lem:Efunction}
Let $X$ be a finite dimensional real inner product space. Then for all $z,w\in X$ and $\lambda\geq 1$ the following hold:
\begin{enumerate}
\item \label{item:EfctA} $(\sqrt{2}-1)\min\{|z|,|z|^{2}\}\leq V(z)\leq \min\{|z|,|z|^{2}\}$. In general, for any $1\leq p<\infty$ there exist $c_{p},C_{p}>0$ such that $\frac{1}{c_{p}}|z|^{2}\leq V_{p}(z) \leq c_{p}|z|^{2}$ for $|z|\leq 1$ and $\frac{1}{C_{p}}|z|^{p}\leq V_{p}(z)\leq C_{p}|z|^{p}$ for $|z|\geq 1$. 
\item \label{item:VpCompa2} For any $\mathbf{m}>0$ there exists a constant $c=c(\mathbf{m})>0$ such that $\frac{1}{c}|z|^{2}\leq V(z) \leq c|z|^{2}$ for all $z\in X$ with $|z|\leq \mathbf{m}$. 
\item \label{item:EfctB} $V(\lambda z) \leq \lambda^{2}V(z)$. 
\item \label{item:EfctC} $V(z+w) \leq 2(V(z)+V(w))$.
\item \label{item:EfctE} $|V(z)-V(w)|\leq |z-w|$.
\item \label{item:VpCompa1} $V(z)/(16 (1+|w|^{2})^{\frac{3}{2}})\leq V(z+w)-V(w)-\langle V'(w),z\rangle$.
\end{enumerate}
\end{lemma}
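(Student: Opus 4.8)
The plan is to reduce every item to two elementary facts about $V=V_{1}$: the algebraic identity $V(z)=\langle z\rangle-1=|z|^{2}/(\langle z\rangle+1)$ with $\langle z\rangle:=(1+|z|^{2})^{1/2}$, and the pointwise Hessian bound $\langle V''(z)\xi,\xi\rangle=|\xi|^{2}/\langle z\rangle-\langle z,\xi\rangle^{2}/\langle z\rangle^{3}\geq|\xi|^{2}/\langle z\rangle^{3}$, which follows from $\langle z\rangle^{2}-|z|^{2}=1$ and in particular shows that $V$ is smooth and convex. For \ref{item:EfctA}, the identity combined with $\langle z\rangle+1\geq\max\{2,|z|\}$ (the second bound because $\langle z\rangle\geq|z|$) yields $V(z)\leq\min\{|z|,|z|^{2}\}$, while $\langle z\rangle+1\leq\sqrt{2}+1$ for $|z|\leq1$ and $\langle z\rangle+1\leq(\sqrt{2}+1)|z|$ for $|z|\geq1$ yield the matching lower bound $(\sqrt{2}-1)\min\{|z|,|z|^{2}\}$. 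The $V_{p}$-estimates follow from the same manipulations applied to $V_{p}(z)=g(|z|^{2})$ with $g(t):=(1+t)^{p/2}-1$: on $[0,1]$ one has $g(t)\sim_{p}t$ since $t\mapsto g(t)/t$ extends continuously and positively to $t=0$ with value $p/2$ and so is bounded above and below on the compact interval $[0,1]$, whereas for $t\geq1$ one uses $(1+t)^{p/2}\leq 2^{p/2}t^{p/2}$ and $g(t)\geq(1-2^{-p/2})(1+t)^{p/2}\geq(1-2^{-p/2})t^{p/2}$. Item \ref{item:VpCompa2} is then immediate from \ref{item:EfctA}: $V(z)\leq|z|^{2}$ for every $z$, and $\min\{|z|,|z|^{2}\}\geq|z|^{2}/\max\{1,\mathbf{m}\}$ whenever $|z|\leq\mathbf{m}$.

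For \ref{item:EfctB} the identity suffices once more: since $\langle\lambda z\rangle\geq\langle z\rangle$ for $\lambda\geq1$, one has $V(\lambda z)=\lambda^{2}|z|^{2}/(\langle\lambda z\rangle+1)\leq\lambda^{2}|z|^{2}/(\langle z\rangle+1)=\lambda^{2}V(z)$. Item \ref{item:EfctC} then follows by convexity of $V$ and \ref{item:EfctB} with $\lambda=2$, as $V(z+w)\leq\tfrac{1}{2}V(2z)+\tfrac{1}{2}V(2w)\leq2V(z)+2V(w)$. Item \ref{item:EfctE} is just the $1$-Lipschitz property of $z\mapsto\langle z\rangle$, which holds because $|\nabla\langle z\rangle|=|z|/\langle z\rangle<1$.

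The only item that requires genuine care is \ref{item:VpCompa1}. By Taylor's formula with integral remainder and the Hessian bound above,
\begin{align*}
V(w+z)-V(w)-\langle V'(w),z\rangle=\int_{0}^{1}(1-s)\langle V''(w+sz)z,z\rangle\,\dif s\geq|z|^{2}\int_{0}^{1}\frac{1-s}{\langle w+sz\rangle^{3}}\,\dif s.
\end{align*}
One then discards all but the contribution of $s\in[0,\sigma]$, where $\sigma:=\min\{\tfrac{1}{2},\tfrac{1}{2|z|}\}$; on this interval $|w+sz|\leq|w|+\tfrac{1}{2}$, hence $\langle w+sz\rangle^{2}\leq1+(|w|+\tfrac12)^{2}\leq2\langle w\rangle^{2}$ because $|w|^{2}-|w|+\tfrac{3}{4}>0$, and $1-s\geq\tfrac{1}{2}$, so the right-hand side is bounded below by $2^{-5/2}\sigma|z|^{2}\langle w\rangle^{-3}$. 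When $|z|\leq1$ this equals $2^{-7/2}|z|^{2}\langle w\rangle^{-3}$ and $|z|^{2}\geq V(z)$ by \ref{item:EfctA}; when $|z|\geq1$ it equals $2^{-7/2}|z|\langle w\rangle^{-3}$ and $|z|\geq V(z)$ again by \ref{item:EfctA}; since $2^{7/2}\leq16$, the claimed inequality follows in both cases. The (mild) obstacle here is exactly this truncation of the $s$-integral: because $V$ grows only linearly at infinity, the second-order remainder must be evaluated on an $|z|$-dependent short interval to recover $V(z)$, rather than $|z|^{2}$, on the right-hand side.
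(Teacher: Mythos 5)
Your proof is correct, and every step I checked holds up: the identity $V(z)=|z|^{2}/(\langle z\rangle+1)$ does give the two-sided bounds in \ref{item:EfctA} with the stated constants, the Hessian lower bound $\langle V''(z)\xi,\xi\rangle\geq|\xi|^{2}/\langle z\rangle^{3}$ is accurate (via $\langle z\rangle^{2}-|z|^{2}=1$ and Cauchy--Schwarz), and the truncation of the Taylor remainder at $\sigma=\min\{\tfrac12,\tfrac{1}{2|z|}\}$ in \ref{item:VpCompa1} produces $2^{-7/2}\min\{|z|,|z|^{2}\}\langle w\rangle^{-3}$, which dominates $V(z)/(16\langle w\rangle^{3})$ since $2^{7/2}<16$. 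Note, however, that the paper does not prove this lemma at all; it records it as routine and cites Anzellotti--Giaquinta and the authors' earlier work for the estimates. So you have supplied a self-contained argument where the paper delegates to references, and your observation that the crux of \ref{item:VpCompa1} is the $|z|$-dependent truncation of the $s$-integral (forced by the merely linear growth of $V$ at infinity) is exactly the point that makes that item nontrivial.
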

We conclude this section by recording an estimate that will be required in Section~\ref{sec:props}:
\begin{lemma}[{\cite[Lem.~2.4]{DFLM}}]\label{lem:compaDal}
For any $1<p<\infty$ there exists a constant $c=c(p)>0$ such that 
\begin{align*}
\frac{1}{c}(1+|z|^{2}+|z'|^{2})^{\frac{p-2}{2}}|z'|^{2} \leq V_{p}(z+z')-V_{p}(z)-\langle V'_{p}(z),z'\rangle \leq c(1+|z|^{2}+|z'|^{2})^{\frac{p-2}{2}}|z'|^{2}
\end{align*}
holds for all $z,z'\in\R^{N\times n}$.
\end{lemma}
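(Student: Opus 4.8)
The plan is to deduce the two‑sided bound from a pointwise estimate on the Hessian of $V_{p}$ together with an elementary one‑dimensional comparison. First I would use the second‑order Taylor formula with integral remainder: since $V_{p}\in\hold^{\infty}(\R^{N\times n})$, for fixed $z,z'\in\R^{N\times n}$ the map $t\mapsto V_{p}(z+tz')$ is smooth on $[0,1]$ and
\begin{align}\label{eq:sk-taylor}
V_{p}(z+z')-V_{p}(z)-\langle V_{p}'(z),z'\rangle=\int_{0}^{1}(1-t)\,\langle V_{p}''(z+tz')z',z'\rangle\dt .
\end{align}
Hence it suffices to establish, with constants depending only on $p$, the Hessian comparison
\begin{align}\label{eq:sk-hess}
\langle V_{p}''(w)\xi,\xi\rangle\sim_{p}\langle w\rangle^{p-2}|\xi|^{2}\qquad(w,\xi\in\R^{N\times n})
\end{align}
and the scalar estimate
\begin{align}\label{eq:sk-scalar}
\int_{0}^{1}(1-t)\langle z+tz'\rangle^{p-2}\dt\sim_{p}\big(1+|z|^{2}+|z'|^{2}\big)^{\frac{p-2}{2}} ,
\end{align}
since inserting \eqref{eq:sk-hess} into \eqref{eq:sk-taylor} and then using \eqref{eq:sk-scalar} gives $V_{p}(z+z')-V_{p}(z)-\langle V_{p}'(z),z'\rangle\sim_{p}(1+|z|^{2}+|z'|^{2})^{\frac{p-2}{2}}|z'|^{2}$, which is precisely the asserted chain of inequalities.

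For \eqref{eq:sk-hess} I would differentiate directly: $V_{p}'(w)=p\langle w\rangle^{p-2}w$ and $\langle V_{p}''(w)\xi,\xi\rangle=p\langle w\rangle^{p-4}\big(\langle w\rangle^{2}|\xi|^{2}+(p-2)\langle w,\xi\rangle^{2}\big)$, where $\langle\cdot,\cdot\rangle$ is the Hilbert--Schmidt inner product. By Cauchy--Schwarz $0\leq\langle w,\xi\rangle^{2}\leq|w|^{2}|\xi|^{2}\leq\langle w\rangle^{2}|\xi|^{2}$. For $p\geq 2$ one then drops the nonnegative term $(p-2)\langle w,\xi\rangle^{2}$ for the lower bound and estimates it by $(p-2)\langle w\rangle^{2}|\xi|^{2}$ for the upper bound; for $1<p<2$ one drops the now nonpositive term for the upper bound and, for the lower bound, uses $\langle w\rangle^{2}|\xi|^{2}+(p-2)\langle w,\xi\rangle^{2}\geq\big(1+(p-1)|w|^{2}\big)|\xi|^{2}\geq(p-1)\langle w\rangle^{2}|\xi|^{2}$, the last step because $0<p-1\leq 1$. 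In either case \eqref{eq:sk-hess} follows with constants depending only on $p$.

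The core of the argument is the scalar estimate \eqref{eq:sk-scalar}. Set $A^{2}:=1+|z|^{2}+|z'|^{2}\geq 1$. Since $1\leq\langle z+tz'\rangle^{2}\leq 1+2|z|^{2}+2|z'|^{2}\leq 2A^{2}$ for every $t\in[0,1]$, and $\int_{0}^{1}(1-t)\dt=\tfrac12$, these crude bounds already yield the upper bound in \eqref{eq:sk-scalar} when $p\geq 2$ and the lower bound when $1<p<2$ (the negative exponent reverses $\langle z+tz'\rangle^{2}\leq 2A^{2}$). For the two remaining directions I would exploit that $t\mapsto|z+tz'|^{2}$ is a convex parabola: assuming $z'\neq 0$ (otherwise the claim is trivial), let $t_{\ast}:=-\langle z,z'\rangle/|z'|^{2}$ be its vertex, so that $|z+tz'|^{2}=|z+t_{\ast}z'|^{2}+|z'|^{2}(t-t_{\ast})^{2}\geq|z'|^{2}(t-t_{\ast})^{2}$. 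For the upper bound when $1<p<2$ this gives $\langle z+tz'\rangle^{p-2}\leq|z'|^{p-2}|t-t_{\ast}|^{p-2}$, and the substitution $s=|z'|(t-t_{\ast})$ reduces matters to $\int|s|^{p-2}\,\mathrm{d}s$ over a bounded interval --- finite \emph{precisely because} $p-2>-1$, i.e.\ $p>1$ --- which, after a short case split according to the relative sizes of $|z|$ and $|z'|$ (equivalently the position of $t_{\ast}$ with respect to $[0,1]$: when $|z|\gtrsim|z'|$ one instead uses that $|z+tz'|\gtrsim|z|$ keeps $\langle z+tz'\rangle^{2}\gtrsim A^{2}$ on all of $[0,1]$), delivers the required bound. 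Dually, for the lower bound when $p\geq 2$ I would bound $\langle z+tz'\rangle^{p-2}$ from below in terms of $\langle z+tz'\rangle^{2}$ --- using $\langle z+tz'\rangle^{2}\leq 2A^{2}$ when $2\leq p\leq 4$, and convexity of $s\mapsto s^{(p-2)/2}$ (Jensen with the probability measure $2(1-t)\dt$) when $p\geq 4$ --- and invoke the elementary identity $\int_{0}^{1}(1-t)\langle z+tz'\rangle^{2}\dt=\tfrac12\big(1+\tfrac13|z|^{2}+\tfrac16|z'+2z|^{2}\big)\gtrsim A^{2}$.

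The step I expect to be the main obstacle is exactly \eqref{eq:sk-scalar}, i.e.\ its two non‑trivial directions and the attendant case analysis near the vertex $t_{\ast}$; this is also where the hypothesis $p>1$ genuinely enters (for $p=1$ the estimate fails by a logarithmic factor, consistent with the exclusion $p>1$ in the statement). Everything else is routine. Alternatively, \eqref{eq:sk-scalar}, and hence the lemma, is a standard consequence of known estimates for the reference integrands $V_{p}$; the statement as quoted is \cite[Lem.~2.4]{DFLM}.
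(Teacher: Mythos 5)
The paper does not prove this lemma: it is quoted verbatim, without argument, from \cite[Lem.~2.4]{DFLM}, so there is no in-paper proof against which to compare. Taken on its own merits, your proposal is a correct and self-contained proof along the standard route. The Taylor reduction to the Hessian and the explicit formula $\langle V_{p}''(w)\xi,\xi\rangle=p\langle w\rangle^{p-4}\bigl(\langle w\rangle^{2}|\xi|^{2}+(p-2)\langle w,\xi\rangle^{2}\bigr)$ are correct, and your two-sided comparison $\langle V_{p}''(w)\xi,\xi\rangle\sim_{p}\langle w\rangle^{p-2}|\xi|^{2}$ follows from Cauchy--Schwarz exactly as you describe (for $1<p<2$ the lower bound uses $1+(p-1)|w|^{2}\geq(p-1)\langle w\rangle^{2}$, valid because $p-1\leq 1$). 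For the scalar estimate $\int_{0}^{1}(1-t)\langle z+tz'\rangle^{p-2}\dt\sim_{p}(1+|z|^{2}+|z'|^{2})^{\frac{p-2}{2}}$, your ``easy'' directions and your treatment of the two non-trivial ones are sound: for $1<p<2$ the change of variables $s=|z'|(t-t_{\ast})$ about the vertex of $t\mapsto|z+tz'|^{2}$ and the finiteness of $\int|s|^{p-2}\,\mathrm{d}s$ (since $p-2>-1$) deliver the upper bound after the case split on $|z|$ versus $|z'|$; for $p\geq 2$ I checked your identity
\begin{align*}
\int_{0}^{1}(1-t)\langle z+tz'\rangle^{2}\dt=\tfrac12\bigl(1+\tfrac13|z|^{2}+\tfrac16|z'+2z|^{2}\bigr),
\end{align*}
and this is indeed bounded below by $c\,(1+|z|^{2}+|z'|^{2})$, after which the split $2\leq p\leq 4$ (pull out $\langle z+tz'\rangle^{p-4}\geq(2A^{2})^{(p-4)/2}$) versus $p\geq 4$ (Jensen with the probability measure $2(1-t)\dt$) closes the lower bound. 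No gap.
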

\subsection{Semiconvexity and Legendre-Hadamard elliptic systems} 
We conclude this preliminary section by recording auxiliary facts on semiconvex problems. First, the \emph{quasiconvex envelope} $F^{\qc}$ of an integrand $F\in\hold(\R^{N\times n})$ is the largest quasiconvex function below $F$: 
\begin{align}\label{eq:defQCenvelope}
F^{\qc}(z):=\sup\{G(z)\colon\;G\;\text{is quasiconvex with}\;G\leq F\},\qquad z\in\R^{N\times n}. 
\end{align}
Since $F$ is assumed real-valued (and not extended real-valued), we have either $F^{\qc}\equiv-\infty$ or $F^{\qc}>-\infty$ everywhere (see \textsc{Ball} et al.~\cite[\S 2]{BKK}). By \textsc{Dacorogna}'s formula~\cite{Dac82} (also see \textsc{Kinderlehrer \& Pedregal}~\cite[\S 8]{KinderlehrerPedregal} in the present context of integrands that are potentially unbounded below), we then have the representation formula
\begin{align}\label{eq:QCenvelope}
F^{\qc}(z)=\inf\left\{\dashint_{\ball_{1}(0)}F(z+\nabla\varphi)\dif x\colon\;\varphi\in\sobo_{0}^{1,\infty}(\ball_{1}(0);\R^{N})\right\},\qquad z\in\R^{N\times n}.
\end{align}
By a routine scaling and covering argument, we note that one might equally replace the class of competitors in~\eqref{eq:QCenvelope} by those $\varphi\in\sobo_{0}^{1,\infty}(\ball_{1}(0);\R^{N})$ that satisfy $\|\varphi\|_{\lebe^{\infty}(\ball_{1}(0))}\leq 1$. 

Now let $F\in\hold(\R^{N\times n})$ be a quasiconvex integrand. Then $F$ in particular is rank-one convex, that is, the function $t\mapsto F(z+ta\otimes b)$ is convex for all $z\in\R^{N\times n}$, $a\in\R^{N}$ and $b\in\R^{n}$. If $F$ moreover satisfies the growth bound
\begin{align}\label{eq:Fgrowthbound}
|F(z)|\leq L(\lambda+|z|^{q})\qquad\text{for all}\;z\in\R^{N\times n}
\end{align}
 for some $1\leq q<\infty$, $L\geq 0$ and $\lambda\geq 0$, the rank-one convexity and~\eqref{eq:Fgrowthbound} (which in particular is satisfied for $\lambda=1$ by~\ref{item:H1}) moreover combine in a standard way to the estimate
\begin{align}\label{eq:lipschitz}
  \bigl| F(z)-F(w) \bigr| \leq c\bigl(\lambda+|w|^{q-1}+|z-w|^{q-1} \bigr) |z-w| \quad \text{for all}\, z, \, w \in \R^{N\times n},
\end{align}
where $c=c(q,n,N,L)>0$ is a constant. If, moreover, $F\in\hold^{1}(\R^{N\times n})$, then \eqref{eq:lipschitz} implies 
\begin{align}\label{eq:DERIVBOUND}
|F'(z)| \leq c(\lambda+|z|^{q-1})\qquad\text{for all}\;z\in\R^{N\times n}. 
\end{align}
This inequality extends to $\mathscr{L}^{Nn}$-almost all $z\in\R^{N\times n}$ provided $F$ is only assumed Lipschitz. In view of verifying growth bounds of the type~\eqref{eq:Fgrowthbound}, we point out that that for a rank-one-convex function $F\colon\R^{N\times n}\to\R$ and $1\leq q<\infty$ one has the implication (see~\cite[Lem.~2.5]{Kristensen99})
\begin{align}\label{eq:KristImplication}
\limsup_{|z|\to\infty}\frac{F(z)}{|z|^{q}}<\infty \Longrightarrow \limsup_{|z|\to\infty}\frac{|F(z)|}{|z|^{q}}<\infty. 
\end{align}
Next recall that a symmetric bilinear form $\mathbb{A}$ on $\R^{N\times n}$, to be tacitly identified with its matrix representative $\mathbb{A}\in\R^{Nn\times Nn}$, is said to be \emph{strongly} or \emph{Legendre-Hadamard elliptic} if there exist $0<\lambda\leq \Lambda<\infty$ such that 
\begin{align}\label{eq:LHE}
\lambda |a|^{2}|b|^{2} \leq \mathbb{A}[a\otimes b,a\otimes b] \leq \Lambda |a|^{2}|b|^{2}\qquad\text{for all}\;a\in\R^{N},b\in\R^{n}. 
\end{align}
By routine means, one finds that if $F\in\hold^{2}(\R^{N\times n})$ satisfies~\ref{item:H1} and~\ref{item:H2} or~\ref{item:H2p}, respectively, then $\mathbb{A}=F''(w)$ satisfies~\eqref{eq:LHE} for all $w\in\R^{N\times n}$ with $|w|\leq m$, where $\lambda,\Lambda>0$ both solely depend on $p,q,m,\ell_{m},L,n,$ and $N$. We conclude this preliminary section by recording an auxiliary theorem on the solvability and regularity for Legendre-Hadamard elliptic systems:
\begin{lemma}[{\cite[Lem.~15.2.1]{MazSha},~\cite[Prop.~2.10]{CFM}}]\label{lem:linearsystems}
Let $1<p<\infty$, $x_{0}\in\R^{N}$ and $R>0$. Furthermore, assume that $v\in\sobo^{1-\frac{1}{p},p}(\partial\!\ball_{R}(x_{0});\R^{N})$ and $T\in\lebe^{p}(\ball_{R}(x_{0});\R^{N})$. Given a bilinear form $\mathbb{A}$ on $\R^{N\times n}$ satisfying~\eqref{eq:LHE} for some $0<\lambda\leq\Lambda<\infty$, the following hold:
\begin{enumerate}
\item\label{item:linsys1} There exists a unique weak solution $h\in(\sobo_{v}^{1,p}\cap\hold^{\infty})(\ball_{R}(x_{0});\R^{N})$ of the system 
\begin{align*}
\begin{cases}
-\di(\A\nabla h) = 0&\;\text{in}\;\ball_{R}(x_{0}),\\
h= v&\;\text{on}\;\partial\!\ball_{R}(x_{0}), 
\end{cases}
\end{align*}
and there exist $c=c(p,\lambda,\Lambda,n,N)>0$ and $C=C(p,\lambda,\Lambda,n,N)>0$ such that we have 
\begin{align*}
\sum_{0\leq i \leq 2}R^{i}\Big(\dashint_{\ball_{R}(x_{0})}|\nabla^{i}h|^{p}& \dif x\Big)^{\frac{1}{p}}\leq c \Big(\Big(\dashint_{\partial\!\ball_{R}(x_{0})}|v|^{p}\dif^{n-1} x\Big)^{\frac{1}{p}}\Big. \\ & \Big.+ R^{1-\frac{1}{p}}\Big(\dashint_{\partial\!\ball_{R}(x_{0})}\int_{\partial\!\ball_{R}(x_{0})}\frac{|v(x)-v(y)|^{p}}{|x-y|^{n+p-2}}\dif^{n-1}x\dif^{n-1}y \Big)^{\frac{1}{p}}\Big)
\end{align*}
and, for all $\ball_{r}(x)\Subset\ball_{R}(x_{0})$,
\begin{align}\label{eq:inverseest}
\sup_{\ball_{r/2}(x)}|Dh| + r\sup_{\ball_{r/2}(x)}|D^{2}h| \leq C \dashint_{\ball_{3r/4}(x)}|Dh|\dif y.
\end{align}
\item\label{item:linsys2} There exists a unique weak solution $h\in(\sobo_{0}^{1,p}\cap\sobo^{2,p})(\ball_{R}(x_{0});\R^{N})$ of the system 
\begin{align*}
\begin{cases}
-\di(\A\nabla h) = T&\;\text{in}\;\ball_{R}(x_{0}),\\
h= 0&\;\text{on}\;\partial\!\ball_{R}(x_{0}),
\end{cases}
\end{align*}
and there exists $c=c(p,\lambda,\Lambda,n,N)>0$ such that we have
\begin{align*}
\sum_{0\leq i \leq 2}R^{i-2}\Big(\dashint_{\ball_{R}(x_{0})}|\nabla^{i}h|^{p}\dif x\Big)^{\frac{1}{p}}\leq c \Big(\dashint_{\ball_{R}(x_{0})}|T|^{p}\dif x\Big)^{\frac{1}{p}}.
\end{align*}
\end{enumerate}
\end{lemma}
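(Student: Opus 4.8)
The plan is to reduce everything to the unit ball by translation and dilation --- each of the three displayed estimates as well as \eqref{eq:inverseest} is invariant under $h(\cdot)\rightsquigarrow h(x_{0}+R\,\cdot\,)$ with the data rescaled correspondingly --- and then to assemble the statement from three classical facts about the constant-coefficient Legendre--Hadamard operator $Lu:=-\di(\A\nabla u)$: (i) $\lebe^{2}$-well-posedness via G\aa rding's inequality and the Lax--Milgram lemma; (ii) interior hypoellipticity, giving $\hold^{\infty}$-regularity of weak solutions of $Lh=0$; and (iii) the Calder\'{o}n--Zygmund $\lebe^{p}$-estimates, $1<p<\infty$, for $L$.

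For (i) I would work on $\sobo_{0}^{1,2}(\ball_{1}(0);\R^{N})$ with the bounded bilinear form $a(u,\varphi):=\int_{\ball_{1}(0)}\A[\nabla u,\nabla\varphi]\dif x$; extending $\sobo_{0}^{1,2}$-maps by zero to $\R^{n}$ and using Plancherel, \eqref{eq:LHE} yields $a(u,u)=\int_{\R^{n}}\A[\widehat{u}(\xi)\otimes\xi,\overline{\widehat{u}(\xi)\otimes\xi}]\dif\xi\geq\lambda\|\nabla u\|_{\lebe^{2}}^{2}$, so $a$ is coercive and Lax--Milgram produces, for every $T\in\lebe^{2}$, a unique weak solution $h\in\sobo_{0}^{1,2}(\ball_{1}(0);\R^{N})$ of $Lh=T$ with $\|\nabla h\|_{\lebe^{2}}\leq c\|T\|_{\lebe^{2}}$. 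For (ii), \eqref{eq:LHE} forces the principal symbol $\xi\mapsto(\A_{i\alpha j\beta}\xi_{\alpha}\xi_{\beta})_{ij}$ to be positive definite for $\xi\neq 0$, so $L$ is a constant-coefficient elliptic operator, hence hypoelliptic (indeed analytically hypoelliptic, by the classical Morrey--Nirenberg interior theory); thus any distributional solution of $Lh=0$ on $\ball_{R}(x_{0})$ lies in $\hold^{\infty}(\ball_{R}(x_{0});\R^{N})$, which is the claimed interior regularity in both \ref{item:linsys1} and \ref{item:linsys2}. For (iii) I would represent the solution through the fundamental solution $\Gamma$ of $L$ on $\R^{n}$ (homogeneous of degree $2-n$ for $n\geq 3$, with $D^{2}\Gamma$ a Calder\'{o}n--Zygmund convolution kernel) and the Green's operator on $\ball_{1}(0)$, identify $D^{2}h$ as a singular integral of $T$ plus a bounded remainder, and invoke the Calder\'{o}n--Zygmund theorem to get $\|D^{2}h\|_{\lebe^{p}}\leq c\|T\|_{\lebe^{p}}$ --- and, for a divergence-form datum $\di F$ with $F\in\lebe^{p}$, the bound $\|\nabla h\|_{\lebe^{p}}\leq c\|F\|_{\lebe^{p}}$; lower-order terms are absorbed by Poincar\'{e}'s inequality, $\lebe^{p}$-uniqueness follows by duality against the (self-adjoint) adjoint problem, and for $p<2$ the solution is obtained by approximating $T$ in $\lebe^{p}$ by $\lebe^{2}\cap\lebe^{p}$ data and passing to the limit with the $\lebe^{p}$-estimate. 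With (i)--(iii) in hand, part \ref{item:linsys2} is complete.

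For part \ref{item:linsys1} I would first extend the boundary datum to $V\in\sobo^{1,p}(\ball_{1}(0);\R^{N})$ with $\|V\|_{\lebe^{p}(\ball_{1}(0))}+\|\nabla V\|_{\lebe^{p}(\ball_{1}(0))}$ controlled by $\|v\|_{\lebe^{p}(\partial\ball_{1}(0))}+[v]_{\sobo^{1-1/p,p}(\partial\ball_{1}(0))}$ via the (scaled) trace/extension theorem, cf.~\eqref{eq:tracescale}, then solve $Lw=-LV=\di(\A\nabla V)$ for $w\in\sobo_{0}^{1,p}(\ball_{1}(0);\R^{N})$ using the divergence-form estimate from (iii), and set $h:=V+w\in\sobo_{v}^{1,p}(\ball_{1}(0);\R^{N})$; this gives existence together with the asserted $\lebe^{p}$-bounds, while interior smoothness is (ii) and uniqueness is again the $\lebe^{p}$-theory. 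Finally, for \eqref{eq:inverseest} I would use that, by (ii), every derivative $D^{k}h$ again solves $L(D^{k}h)=0$, so iterating the Caccioppoli inequality for $L$ (test with $\eta^{2}(D^{k}h-(D^{k}h)_{\ball})$ for a cut-off $\eta$) over shrinking balls yields $\|D^{1+k}h\|_{\lebe^{2}(\ball_{\rho})}\leq c\,\rho^{-k}\|Dh\|_{\lebe^{2}(\ball_{2\rho})}$ whenever $\ball_{2\rho}\Subset\ball_{r}(x)$; Sobolev embedding then bounds $\sup_{\ball_{r/2}(x)}(|Dh|+r|D^{2}h|)$ by $c\,(\dashint_{\ball_{2r/3}(x)}|Dh|^{2}\dif y)^{1/2}$, and the passage to the $\lebe^{1}$-average on the right of \eqref{eq:inverseest} is the customary one: insert $(\dashint|Dh|^{2})^{1/2}\leq(\sup|Dh|)^{1/2}(\dashint|Dh|)^{1/2}$, apply Young's inequality, and absorb along a finite chain of intermediate radii.

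The main obstacle is item (iii): the sharp $\lebe^{p}$-estimates for the constant-coefficient Legendre--Hadamard system --- equivalently, the $\lebe^{p}$-boundedness of the associated second-order Calder\'{o}n--Zygmund singular integral operators --- together with the bookkeeping that makes all constants depend only on $p,\lambda,\Lambda,n,N$ and all estimates scale-invariant. Everything else is comparatively soft: Hilbert-space methods handle (i), constant-coefficient ellipticity handles (ii), and iterated Caccioppoli estimates handle \eqref{eq:inverseest}.
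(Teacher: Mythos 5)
The paper does not prove this lemma; it is stated as a citation to Maz'ya--Shaposhnikova and Carozza--Fusco--Mingione, and no argument is given in the text. Your proposal is therefore a blind reconstruction rather than a comparison, and as such it is a plausible and essentially standard one: rescaling to the unit ball, Lax--Milgram with G\aa rding's inequality obtained from the Legendre--Hadamard condition via Plancherel (correct, after observing that for the real symmetric form $\A$ one has $\A[z,\bar{z}]=\A[\mathrm{Re}\,z,\mathrm{Re}\,z]+\A[\mathrm{Im}\,z,\mathrm{Im}\,z]$ and both arguments are rank one), constant-coefficient hypoellipticity for interior smoothness, the Calder\'{o}n--Zygmund / ADN machinery for the scale-invariant $\lebe^{p}$-bounds, and iterated Caccioppoli estimates on $\A$-harmonic derivatives followed by Sobolev embedding and the $\lebe^{2}$-to-$\lebe^{1}$ absorption trick for \eqref{eq:inverseest}. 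This is the route the cited references take as well; you correctly identify the $\lebe^{p}$-theory as the only genuinely hard input.

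One point worth flagging. For part~\ref{item:linsys1} your construction $h=V+w$ with $V$ a $\sobo^{1,p}$-extension of $v$ and $w\in\sobo_{0}^{1,p}$ solving the divergence-form problem yields only $\nabla h\in\lebe^{p}(\ball_{R}(x_{0}))$ up to the boundary, not $\nabla^{2}h\in\lebe^{p}(\ball_{R}(x_{0}))$ --- and indeed no such global second-order bound can hold for $v$ merely in $\sobo^{1-1/p,p}(\partial\!\ball_{R}(x_{0}))$ (already for the Laplacian). So the displayed estimate in part~\ref{item:linsys1} with the sum running to $i=2$ cannot be proved by your argument nor, in fact, by any argument; the intended range is $0\leq i\leq 1$, the $i=2$ contribution being specific to part~\ref{item:linsys2} where the right-hand side $T\in\lebe^{p}$ furnishes the extra derivative. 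This is a typographical slip in the statement as quoted (and the paper never actually invokes the global $i=2$ bound from part~\ref{item:linsys1}); with that correction your sketch covers exactly what is claimed. Apart from this, the only place where you are relying on unstated heavy machinery is the boundary $\sobo^{2,p}$-estimate in part~\ref{item:linsys2}, which requires the half-space or ball ADN estimates (via Poisson kernel bounds or perturbation from the Laplacian), not merely interior Calder\'{o}n--Zygmund bounds for the fundamental solution on $\R^{n}$; you acknowledge this as the main obstacle, which is fair, but it is worth making explicit that the boundary regularity is a separate input beyond the convolution-kernel estimate.
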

\section{Trace-preserving operators and Fubini-type theorems}\label{sec:Fubini}
For our future purposes we now provide a construction of a trace-preserving operator on $\bv$. This appears as a slight modification of the operator introduced by \textsc{Fonseca \& Mal\'{y}} \cite{FMParma} in the higher order Sobolev case and is reminiscent of the usual \textsc{Whitney}~ smoothing procedure \cite{Whitney}. Since this operator and modifications thereof take a central role in the paper, both in the quest of good minimising sequences, Fubini-type properties, the Euler-Lagrange system through the Mazur-type lemma and in the proof of the Caccioppoli inequality, we give here the detailled construction. As a technical novelty, our approach mostly only uses properties of finite dimensional function spaces, which should also prove useful in  more general settings.

 Let $\mathfrak{U}\subset\R^{n}$ be open and let $(\ball^{i})=(\ball_{r_{i}}(x_{i}))$ be a Whitney covering of $\mathfrak{U}$. By this we understand that there exist constants $c>0$, $\Lambda>0$ and $\mathtt{N}\in\mathbb{N}$ such that 
\begin{enumerate}[label={$(\mathrm{W}\arabic{*})$},start=1] 
\item\label{item:Whitney1} $\bigcup_{i\in\mathbb{N}}\ball^{i}=\mathfrak{U}$, 
\item\label{item:Whitney2} for each $i_{0}\in\mathbb{N}$ there are at most $\mathtt{N}$ indices $i\in\mathbb{N}$ such that $\ball^{i_{0}}\cap\,\ball^{i}\neq\emptyset$ and $\Lambda\!\ball^{i_{0}}\cap\,\Lambda\!\ball^{i}\neq\emptyset$. Moreover, denoting $\mathcal{N}(i_{0}):=\{i\in\mathbb{N}\colon\ball^{i}\cap\ball^{i_{0}}\neq\emptyset\}$, we have $\bigcup_{i\in\mathcal{N}(i_{0})}\ball^{i}\subset\Lambda\!\ball^{i_{0}}\Subset\mathfrak{U}$. 
\item\label{item:Whitney2a} whenever $i,j\in\mathbb{N}$ are such that $\ball^{i}\cap\ball^{j}\neq\emptyset$, then we have 
\begin{align*}
\mathscr{L}^{n}(\ball^{i}\cap\ball^{j})\geq \frac{1}{c}\max\{\mathscr{L}^{n}(\ball^{i}),\mathscr{L}^{n}(\ball^{j})\}. 
\end{align*} 
In particular, if $\ball^{i}\cap\ball^{j}\neq\emptyset$, then $\frac{1}{c}r_{i}\leq r_{j}\leq cr_{i}$. 
\item\label{item:Whitney3} $\frac{1}{c}r_{i}\leq\dista(\Lambda\!\ball^{i},\mathfrak{U}^{\complement})<cr_{i}$ for all $i\in\mathbb{N}$. 
\end{enumerate}
Consequently, we may choose a partition of unity $(\rho_{i})\subset\hold_{c}^{\infty}(\mathfrak{U};[0,1])$ with
\begin{enumerate}[label={$(\mathrm{W}\arabic{*})$},start=6] 
\item\label{item:Whitney4} $\spt(\rho_{i})\subset\ball^{i}$ for all $i\in\mathbb{N}$, 
\item\label{item:Whitney5} $\sum_{i\in\mathbb{N}}\rho_{i}\equiv 1$ on $\mathfrak{U}$.  
\item\label{item:Whitney6} $|\nabla\rho_{i}|\leq \frac{c}{r_{i}}$ for all $i\in\mathbb{N}$. 
\end{enumerate}
Given $u\in\lebe_{\locc}^{1}(\R^{n};\R^{N})$, we lastly define $\mathbb{E}_{\mathfrak{U}}u\colon \mathfrak{U}\to\R^{N}$ and $\widetilde{\mathbb{E}}_{\mathfrak{U}}u\colon \mathfrak{U}\to\R^{N}$ by 
\begin{align}\label{eq:extoperator}
\begin{split}
&\mathbb{E}_{\mathfrak{U}}u := \sum_{i\in\mathbb{N}}\rho_{i}u_{i} := \sum_{i\in\mathbb{N}}\rho_{i}\dashint_{\ball^{i}}u\dif x, \\
&\widetilde{\mathbb{E}}_{\mathfrak{U}}u := \sum_{i\in\mathbb{N}}\rho_{i}\Pi_{\ball^{i}}^{1}u
\end{split}
\end{align}
with the first order averaged Taylor polynomial $\Pi_{\Lambda\!\ball^{i}}^{1}u$, cf.~\eqref{eq:ATP0}. The most instrumental features of these operators are summarised in the following lemma: 
\begin{lemma}[Boundedness properties of $\mathbb{E}$ and $\widetilde{\mathbb{E}}$]\label{lem:extensionoperator}
Let $\Omega\subset\R^{n}$ be open and $\mathfrak{U}\subset\Omega$ be an open ball or annulus, respectively, and let $u\in\bv_{\locc}(\Omega;\R^{N})$.  For each $1\leq q <\infty$ there exists a constant $c=c(n,N,q)>0$ such that the following hold for $\mathbb{G}\in\{\mathbb{E}_{\mathfrak{U}},\widetilde{\mathbb{E}}_{\mathfrak{U}}\}$:
\begin{enumerate}
\item\label{item:extension1} For all $i_{0}\in\mathbb{N}$, $x\in\ball^{i_{0}}$ and $j\in\{0,1\}$ we have 
\begin{align}\label{eq:extensionpointwise}
\begin{split}
&|\nabla^{j}\mathbb{G}u(x)|\leq c\,\dashint_{\Lambda\!\ball^{i_{0}}}|D^{j}u|,\\ 
&\left(\dashint_{\ball^{i_{0}}}|\nabla^{j}\mathbb{G}u|^{q}\dif y \right)^{\frac{1}{q}} \leq c\,\dashint_{\Lambda\!\ball^{i_{0}}}|D^{j}u|. 
\end{split}
\end{align}
\item\label{item:extension2} 
\emph{$\lebe^{q}$-(gradient) stability:} We have 
\begin{align}\label{eq:gradientstab}
\int_{\mathfrak{U}}  |\nabla^{j}\mathbb{G}u|^{q}\dif x \leq c \int_{\mathfrak{U}}  |D^{j}u|^{q}, 
\end{align}
where the right-hand side might be infinite. In consequence, for each $1\leq q <\infty$, $\mathbb{G}$ is a bounded linear operator
\begin{align*}
&\mathbb{G}\colon\bv(\mathfrak{U};\R^{N}) \to \bv(\mathfrak{U};\R^{N}),\\ 
&\mathbb{G}\colon\sobo^{1,q}(\mathfrak{U};\R^{N}) \to \sobo^{1,q}(\mathfrak{U};\R^{N})
\end{align*}
and is \emph{trace-preserving} in the sense that 
 $\trace_{\partial\mathfrak{U}}(u-\mathbb{G}u)=0$ holds $\mathscr{H}^{n-1}$-a.e. on $\partial\mathfrak{U}$. 
\item\label{item:extension3a} In case $\mathbb{G}=\widetilde{\mathbb{E}}_{\mathfrak{U}}$, $\mathbb{G}$ is the identity on the affine-linear maps. 
\item\label{item:extension4} Let $x_{0}\in\Omega$ and $0<r<s<\dista(x_{0},\partial\Omega)$. Given  $1\leq q < \frac{n}{n-1}$, there exist constants $c=c(n,N,q)>0$ with $c(n,N,q)\nearrow\infty$ as $q\nearrow\frac{n}{n-1}$ and $\lambda=\lambda(n)>1$ such that the following hold: For all $0<\varepsilon<s-r$ we have with $\delta_{0}:=\delta_{0}(\varepsilon):=\min\{s-r,\lambda\varepsilon\}$ for $j\in\{0,1\}$
\begin{align}\label{eq:fundamental}
\begin{split}
 \int_{(\ball_{s}(x_{0})\setminus\overline{\ball}_{r}(x_{0}))_{\varepsilon}^{\complement}}  |\nabla^{j}\mathbb{G}u|\dif x & \leq c\,(s-r)\sup_{0<\delta<\delta_{0}}\Big(\frac{1}{\delta} \int_{(\ball_{s}(x_{0})\setminus\overline{\ball}_{r}(x_{0}))_{\delta}^{\complement}} |D^{j}u|      \Big), \\ 
 \Big(\int_{(\ball_{s}(x_{0})\setminus\overline{\ball}_{r}(x_{0}))_{\varepsilon}^{\complement}}  |\nabla^{j}\mathbb{G}u|^{q}\dif x\Big)^{\frac{1}{q}} & \leq c\,(s-r)^{\frac{n}{q}-n+1}\times \\ & \times \sup_{0<\delta<\delta_{0}}\Big(\frac{1}{\delta} \int_{(\ball_{s}(x_{0})\setminus\overline{\ball}_{r}(x_{0}))_{\delta}^{\complement}} |D^{j}u|      \Big).
\end{split}
\end{align} 
\item\label{item:extension5} Let $x_{0}\in\Omega$ and $0<r<\dista(x_{0},\partial\Omega)$. Given  $1\leq q < \frac{n}{n-1}$, there exist constants  $c=c(n,N,q)>0$ with $c(n,N,q)\nearrow\infty$ as $q\nearrow\frac{n}{n-1}$ and $0<\varepsilon<\frac{r}{2}$ we have for $j\in\{0,1\}$
\begin{align}\label{eq:fundamental1}
\begin{split}
 & \int_{\ball_{r}(x_{0})}  |\nabla^{j}\mathbb{G}u|\dif x \leq c\frac{r}{\varepsilon}\int_{\ball_{r}(x_{0})}|D^{j}u| \\ & \;\;\;\;\;\;\;\;\;\;\;\;\;\;\;\;\;\;\;\;\;\;\;\;\;\;\;\;\;\;\;\;\;\;\;\;+c\,r \sup_{0<\delta<\varepsilon}\Big(\frac{1}{\delta} \int_{(\ball_{r}(x_{0}))_{\delta}^{\complement}} |D^{j}u|      \Big), \\ 
 & \Big(\int_{\ball_{r}(x_{0})}  |\nabla^{j}\mathbb{G}u|^{q}\dif x\Big)^{\frac{1}{q}} \leq c\frac{r^{n\frac{1-q}{q}+1}}{\varepsilon}\Big(\int_{\ball_{r}(x_{0})}|D^{j}u|\Big)\\ & \;\;\;\;\;\;\;\;\;\;\;\;\;\;\;\;\;\;\;\;\;\;\;\;\;\;\;\;\;\;\;\;\;\;\;\;+c\,r^{\frac{n}{q}-n+1} \sup_{0<\delta<\varepsilon}\Big(\frac{1}{\delta} \int_{(\ball_{r}(x_{0}))_{\delta}^{\complement}} |D^{j}u|      \Big).
\end{split}
\end{align} 
\end{enumerate}
Here we have used the convention $U_{\delta}:=\{x\in U\colon\;\mathrm{dist}(x,\partial U)>\delta\}$ and $U_{\delta}^{\complement}:=U\setminus U_{\delta}$ for any set $U\subset\R^{n}$.
\end{lemma}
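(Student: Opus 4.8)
The plan is to establish the pointwise bounds \ref{item:extension1} first and then bootstrap all remaining estimates from them, exploiting the Whitney structure \ref{item:Whitney1}--\ref{item:Whitney6} and the averaged Taylor polynomial estimates \eqref{eq:ATP}--\eqref{eq:ATPPoinc}. For \ref{item:extension1}: fix $i_{0}$ and $x\in\ball^{i_{0}}$. Since $\sum_{i}\rho_{i}\equiv 1$ and $\nabla\big(\sum_{i}\rho_{i}\big)\equiv 0$ on $\mathfrak{U}$, for any constant vector $\xi\in\R^{N}$ one has $\mathbb{G}u(x)=\xi+\sum_{i\in\mathcal{N}(i_{0})}\rho_{i}(x)(u_{i}-\xi)$ and $\nabla\mathbb{G}u(x)=\sum_{i\in\mathcal{N}(i_{0})}\nabla\rho_{i}(x)\otimes(u_{i}-\xi)+\sum_{i\in\mathcal{N}(i_{0})}\rho_{i}(x)\nabla(\Pi^{1}_{\ball^{i}}u)(x)$ (the second sum absent for $\mathbb{G}=\mathbb{E}_{\mathfrak{U}}$). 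Choose $\xi=\dashint_{\Lambda\!\ball^{i_{0}}}u$; by \ref{item:Whitney2}, \ref{item:Whitney2a}, \ref{item:Whitney3} all the $\ball^{i}$, $i\in\mathcal{N}(i_{0})$, are comparable in radius to $r_{i_{0}}$ and contained in $\Lambda\!\ball^{i_{0}}$, so $|u_{i}-\xi|\le c\,\dashint_{\Lambda\!\ball^{i_{0}}}|u-\xi|\le c\,r_{i_{0}}\dashint_{\Lambda\!\ball^{i_{0}}}|Du|$ by Poincaré \eqref{eq:ATPPoinc}; combined with $|\nabla\rho_{i}|\le c/r_{i}$, the bounded overlap $\mathtt{N}$, and the second line of \eqref{eq:ATP} for $\|\nabla\Pi^{1}_{\ball^{i}}u\|_{\lebe^{\infty}}$, this gives the $j=1$ bound; the $j=0$ bound is the same computation with $\xi$ pulled out directly and \eqref{eq:ATP} first line. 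The $\lebe^{q}$-average bound follows since $\mathbb{G}u$ is smooth on $\ball^{i_{0}}$ and the pointwise bound is uniform over $\ball^{i_{0}}$.

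Part \ref{item:extension2} then follows by raising \eqref{eq:extensionpointwise} to the $q$-th power, integrating over $\ball^{i_{0}}$, and summing over $i_{0}$: each $\Lambda\!\ball^{i_{0}}$ is counted at most $\mathtt{N}$ times by \ref{item:Whitney2}, so $\int_{\mathfrak{U}}|\nabla^{j}\mathbb{G}u|^{q}\le c\sum_{i_{0}}\mathscr{L}^{n}(\ball^{i_{0}})\big(\dashint_{\Lambda\!\ball^{i_{0}}}|D^{j}u|\big)^{q}\le c\sum_{i_{0}}\int_{\Lambda\!\ball^{i_{0}}}|D^{j}u|^{q}\le c\int_{\mathfrak{U}}|D^{j}u|^{q}$ by Jensen and finite overlap (with the obvious interpretation that $\int_{\mathfrak{U}}|Du|$ replaces $\int_{\mathfrak{U}}|Du|^{q}$ in the $\bv$-statement, using that $\mathfrak{U}$ is a ball or annulus so $Du$ is a finite measure on compact subsets). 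Linearity and the mapping properties are immediate; trace-preservation follows from \ref{item:Whitney3} since $|u(x)-\mathbb{G}u(x)|$ is controlled by $r_{i_{0}}\dashint_{\Lambda\!\ball^{i_{0}}}|Du|$ on $\ball^{i_{0}}$ (Poincaré again) and $r_{i_{0}}\to 0$ as $x\to\partial\mathfrak{U}$, so the difference has vanishing trace. Part \ref{item:extension3a} is contained in the first assertion of the averaged-Taylor-polynomial lemma together with $\sum_{i}\rho_{i}\equiv 1$.

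The substantive work is in \ref{item:extension4} and \ref{item:extension5}, and I expect the comparison of Whitney balls with annular boundary layers to be the main obstacle. Here $\mathfrak{U}$ is the annulus $\ball_{s}(x_{0})\setminus\overline{\ball}_{r}(x_{0})$ (resp. the ball $\ball_{r}(x_{0})$). The key geometric fact, forced by \ref{item:Whitney3}, is that a Whitney ball $\ball^{i}$ meeting the layer $(\mathfrak{U})^{\complement}_{\varepsilon}$ has radius $r_{i}\sim\dista(\ball^{i},\partial\mathfrak{U})\lesssim\varepsilon$, hence is contained in a dyadic-type layer $\{x:\dista(x,\partial\mathfrak{U})\sim 2^{-k}\}$ for some $2^{-k}\lesssim\varepsilon$; organizing the $i_{0}$ with $\ball^{i_{0}}\cap(\mathfrak{U})^{\complement}_{\varepsilon}\neq\emptyset$ into these dyadic layers, using \eqref{eq:extensionpointwise} on each, and summing the geometric series in $k$, one converts $\sum_{i_{0}}\int_{\ball^{i_{0}}}|\nabla^{j}\mathbb{G}u|^{q}$ into $c\sum_{k}2^{-k(n - n q + q)}\big(\text{layer-}2^{-k}\text{ mass of }D^{j}u\big)^{q}\big/(\text{appropriate power})$, and each layer mass is bounded by $2^{-k}\sup_{0<\delta<\delta_{0}}\tfrac1\delta\int_{(\mathfrak{U})^{\complement}_{\delta}}|D^{j}u|$; the restriction $q<\tfrac{n}{n-1}$ is exactly what makes the exponent $n-nq+q>0$ so the sum over $k$ converges, and lets one factor out the worst layer as $\sup_{\delta}$, yielding the factor $(s-r)^{n/q-n+1}$ after accounting for the total radial extent. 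For \ref{item:extension5} the ball $\ball_{r}(x_{0})$ splits into the inner core $\ball_{r-\varepsilon}(x_{0})$, where only the crude $\lebe^{q}$-stability \eqref{eq:gradientstab} applied on $\Lambda\!\ball^{i_{0}}\subset\ball_{r}(x_{0})$ is available and produces the first summand with the $\tfrac{r}{\varepsilon}$ (resp. $r^{n(1-q)/q+1}/\varepsilon$) loss coming from Hölder on the core, and the boundary layer $(\ball_{r}(x_{0}))^{\complement}_{\varepsilon}$, treated exactly as in \ref{item:extension4}; adding the two gives \eqref{eq:fundamental1}. The blow-up $c(n,N,q)\nearrow\infty$ as $q\nearrow\tfrac{n}{n-1}$ is visible as the divergence of $\sum_{k}2^{-k(n-nq+q)}$.
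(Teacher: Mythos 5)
Your overall architecture matches the paper's: pointwise Whitney-ball bounds first, then $\lebe^{q}$-stability by finite overlap, then the dyadic-shell organization with the geometric series converging iff $q<\tfrac{n}{n-1}$ for \ref{item:extension4}--\ref{item:extension5}. Two points need attention, one cosmetic and one substantive.

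The cosmetic one: for $\mathbb{G}=\widetilde{\mathbb{E}}_{\mathfrak{U}}$ your formula $\nabla\mathbb{G}u=\sum_{i}\nabla\rho_{i}\otimes(u_{i}-\xi)+\sum_{i}\rho_{i}\nabla\Pi^{1}_{\ball^{i}}u$ is not correct: the first sum should carry $\Pi^{1}_{\ball^{i}}u-\xi$ (a non-constant polynomial), not $u_{i}-\xi$. The estimate you want still goes through after a triangle inequality $\|\Pi^{1}_{\ball^{i}}u-\xi\|_{\lebe^{\infty}(\ball^{i})}\leq\|\Pi^{1}_{\ball^{i}}u-\Pi^{0}_{\ball^{i}}u\|_{\lebe^{\infty}(\ball^{i})}+|u_{i}-\xi|$ combined with $\eqref{eq:ATP}_{3}$ and Poincar\'e, which is exactly the split the paper makes (with $\xi=\Pi^{0}_{\ball^{i_{0}}}u$ as reference constant); but as written your decomposition is false for $\widetilde{\mathbb{E}}$.

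The substantive gap is trace-preservation in \ref{item:extension2}. You assert that ``$|u(x)-\mathbb{G}u(x)|$ is controlled by $r_{i_{0}}\dashint_{\Lambda\!\ball^{i_{0}}}|Du|$ on $\ball^{i_{0}}$ (Poincar\'e again) and $r_{i_{0}}\to 0$ as $x\to\partial\mathfrak{U}$, so the difference has vanishing trace.'' For $u\in\bv_{\locc}$ there is no pointwise bound; Poincar\'e only gives $\dashint_{\ball^{i_{0}}}|u-\mathbb{G}u|\leq c\,r_{i_{0}}\dashint_{\Lambda\!\ball^{i_{0}}}|Du|$, an integral estimate, and $r_{i_{0}}\to 0$ alone does not directly yield the $\mathscr{H}^{n-1}$-a.e.\ vanishing required by \eqref{eq:tracedescription}. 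The paper instead estimates both $\int_{\bigcup_{m\geq m_{0}}\ball^{m}}|u-\widetilde{\mathbb{E}}u|\dif x$ \emph{and} $|D(u-\widetilde{\mathbb{E}}u)|(\bigcup_{m\geq m_{0}}\ball^{m})$, shows both $\to 0$ as $m_{0}\to\infty$, and concludes via the continuity of $\trace_{\partial\mathfrak{U}}$ on $\bv$ (equivalently, the characterization of $\bv_{0}(\mathfrak{U})$ as the strict closure of $\hold_{c}^{\infty}(\mathfrak{U};\R^{N})$, using that the truncated sums $\sum_{i<m_{0}}\rho_{i}\Pi^{1}_{\ball^{i}}u$ are compactly supported). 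You never touch the variation $|D(u-\mathbb{G}u)|$ near $\partial\mathfrak{U}$, and that estimate is the nontrivial part — it involves the same $\nabla\rho_{i}$ terms as in part~(a) and a dyadic summation over the boundary layers. Without it, the trace-preservation claim is unsupported.
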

\begin{proof}
We focus on $\mathbb{G}=\widetilde{\mathbb{E}}_{\mathfrak{U}}$, the estimates for $\mathbb{E}_{\mathfrak{U}}$ being similar but easier. Ad~\ref{item:extension1}. Let $x\in\ball^{i_{0}}$. By~\ref{item:Whitney2a}, if $i\in\mathcal{N}(i_{0})$, then $r_{i}$ and $r_{i_{0}}$ are uniformly proportional, $r_{i}\sim r_{0}$. Then we have
\begin{align*}
|\widetilde{\mathbb{E}}_{\mathfrak{U}}u(x)| & \leq \sum_{i\in\mathcal{N}(i_{0})}\rho_{i}(x)\|\Pi_{\ball^{i}}^{1}u\|_{\lebe^{\infty}(\ball^{i})} \stackrel{\eqref{eq:ATP}_{1}}{\leq} c \sum_{i\in\mathcal{N}(i_{0})}\rho_{i}(x)\dashint_{\ball^{i}}|u|\dif y \\ 
& \leq c\sum_{i\in\mathcal{N}(i_{0})}\rho_{i}(x)\dashint_{\Lambda\!\ball^{i_{0}}}|u|\dif y = c\dashint_{\Lambda\!\ball^{i_{0}}}|u|\dif y,
\end{align*}
since the radii of two balls $\ball^{i}$ and $\ball^{i_{0}}$ with $i\in\mathcal{N}(i_{0})$ are uniformly comparable by~\ref{item:Whitney2a}, and $|\mathcal{N}(i_{0})|\leq\mathtt{N}$, $\bigcup_{i\in\mathcal{N}(i_{0})}\ball^{i}\subset\Lambda\!\ball^{i_{0}}$ by \ref{item:Whitney2}. This settles~$\eqref{eq:extensionpointwise}_{1}$ for $j=0$;~$\eqref{eq:extensionpointwise}_{2}$ then follows by integrating~$\eqref{eq:extensionpointwise}_{1}$ over $\ball^{i_{0}}$ and Jensen's inequality. For the case $j=1$, observe that since $\sum_{i}\rho_{i}=1$ in $\mathfrak{U}$, we have by the finite overlap of the Whitney balls that $\nabla(\sum_{i}\rho_{i}\Pi_{\ball^{i_{0}}}^{0}u)=0$ in $\mathfrak{U}$. In consequence, 
\begin{align*}
|\nabla\widetilde{\mathbb{E}}_{\mathfrak{U}}u(x)| & \leq \left\vert\nabla\Big(\sum_{i\in\mathcal{N}(i_{0})}\rho_{i}(x)(\Pi_{\ball^{i_{0}}}^{0}u(x)-\Pi_{\ball^{i}}^{1}u(x)) \Big)\right\vert \\ 
& \!\!\!\!\!\!\!\!\!\!\!\!\!\!\!\!\!\!\!\!\stackrel{\text{\ref{item:Whitney6}}}{\leq}c\sum_{i\in\mathcal{N}(i_{0})}\Big(\frac{\|\Pi_{\ball^{i_{0}}}^{0}u-\Pi_{\ball^{i}}^{1}u\|_{\lebe^{\infty}(\ball^{i_{0}}\cap\ball^{i})}}{r_{i}}+\|\nabla\Pi_{\ball^{i}}^{1}u\|_{\lebe^{\infty}(\ball^{i_{0}}\cap\ball^{i})}\Big) \\ 
&\!\!\!\!\!\!\!\!\!\!\!\!\!\!\!\! =: c\sum_{i\in\mathcal{N}(i_{0})}(\mathrm{I}_{i}^{(1)}+\mathrm{I}_{i}^{(2)}). 
\end{align*}
For the estimation of $\mathrm{I}_{i}^{(1)}$, first note that by~\ref{item:Whitney2a} and the equivalence of all norms on finite dimensional spaces, we have  
\begin{align}\label{eq:InverseTracePreserve}
\|\pi\|_{\lebe^{\infty}(\ball^{i_{0}}\cap\ball^{i})}\leq c\dashint_{\ball^{i_{0}}\cap\ball^{i}}|\pi|\dif y
\end{align}
for all polynomials $\pi\colon\R^{n}\to\R^{N}$ of degree one, with $c>0$ solely depending on $n$ and $N$. Therefore, whenever $i\in\mathcal{N}(i_{0})$, 
\begin{align*}
\frac{\|\Pi_{\ball^{i_{0}}}^{0}u-\Pi_{\ball^{i}}^{1}u\|_{\lebe^{\infty}(\ball^{i_{0}}\cap\ball^{i})}}{r_{i}} & \leq \frac{\|\Pi_{\ball^{i_{0}}}^{0}u-\Pi_{\ball^{i}}^{0}u\|_{\lebe^{\infty}(\ball^{i_{0}}\cap\ball^{i})}}{r_{i}}+\frac{\|\Pi_{\ball^{i}}^{1}u-\Pi_{\ball^{i}}^{0}u\|_{\lebe^{\infty}(\ball^{i})}}{r_{i}} \\ 
& \!\!\!\!\!\!\!\!\!\stackrel{\eqref{eq:InverseTracePreserve},\,\eqref{eq:ATP}_{3}}{\leq} \frac{c}{r_{i}}\dashint_{\ball^{i_{0}}\cap\ball^{i}}|\Pi_{\ball^{i_{0}}}^{0}u-\Pi_{\ball^{i}}^{0}u|\dif y + c\frac{|Du|(\ball^{i})}{r_{i}^{n}}\\ 
& \leq \frac{c}{r_{i}}\dashint_{\ball^{i_{0}}\cap\ball^{i}}|\Pi_{\ball^{i_{0}}}^{0}u-u|\dif y + \frac{c}{r_{i}}\dashint_{\ball^{i_{0}}\cap\ball^{i}}|\Pi_{\ball^{i}}^{0}u-u|\dif y\\ & \;\;\;\;\;\;\;\;\;\;\;\;\;\;\;\;+ c\frac{|Du|(\ball^{i})}{r_{i}^{n}}\\
& \!\!\!\stackrel{\text{\ref{item:Whitney2a}}}{\leq} \frac{c}{r_{i}}\dashint_{\ball^{i_{0}}}|\Pi_{\ball^{i_{0}}}^{0}u-u|\dif y + \frac{c}{r_{i}}\dashint_{\ball^{i}}|\Pi_{\ball^{i}}^{0}u-u|\dif y\\ & \;\;\;\;\;\;\;\;\;\;\;\;\;\;\;\;+ c\frac{|Du|(\ball^{i})}{r_{i}^{n}}\\ 
& \!\!\!\!\!\!\!\!\stackrel{\text{\ref{item:Whitney2a},\,\eqref{eq:ATPPoinc}}}{\leq}c\dashint_{\Lambda\!\ball^{i_{0}}}|Du|, 
\end{align*}
whereby~\ref{item:Whitney2} implies 
\begin{align}\label{eq:sumup1}
\sum_{i\in\mathcal{N}(i_{0})}\mathrm{I}_{i}^{(1)} \leq c\dashint_{\Lambda\!\ball^{i_{0}}}|Du|.
\end{align}
For $\mathrm{I}_{i}^{(2)}$, we use~$\eqref{eq:ATP}_{2}$, \ref{item:Whitney2} and~\ref{item:Whitney3} to infer 
\begin{align}\label{eq:sumup2}
\sum_{i\in\mathcal{N}(i_{0})}\mathrm{I}_{i}^{(2)} \stackrel{\eqref{eq:ATP}_{2}}{\leq} c\sum_{i\in\mathcal{N}(i_{0})}\dashint_{\ball^{i}}|Du| \stackrel{\text{\ref{item:Whitney2},\,\ref{item:Whitney3}}}{\leq}c\dashint_{\Lambda\!\ball^{i_{0}}}|Du|, 
\end{align}
and combining~\eqref{eq:sumup1} and~\eqref{eq:sumup2} yields $\eqref{eq:extensionpointwise}_{1}$ for $j=1$, and then $\eqref{eq:extensionpointwise}_{2}$ for $j=1$ follows by integrating over $\ball^{i_{0}}$ and Jensen's inequality. Ad~\ref{item:extension2}. The boundedness follows at once from $\eqref{eq:extensionpointwise}_{1}$ by integrating over $\mathfrak{U}$ and using the finite overlap of the Whitney balls $\ball^{i}$, cf.~\ref{item:Whitney2}.  On the other hand, for any finite index set $\mathcal{I}\subset\mathbb{N}$, $x\mapsto \sum_{i\in\mathcal{I}}\rho_{i}\Pi_{\ball^{i}}^{1}u$ belongs to $\hold_{c}^{\infty}(\mathfrak{U};\R^{N})$, and~\ref{item:Whitney3} implies that $i\to\infty$ is equivalent to $\dista(\Lambda\!\ball^{i},\mathfrak{U}^{\complement})\to 0$. For each $m_{0}\in\mathbb{N}$, we thus find $\varepsilon>0$ such that $\bigcup_{m=m_{0}}^{\infty}\Lambda\!\ball^{m}\subset\mathfrak{U}_{\varepsilon}^{\complement}$, and $\varepsilon\to 0$ as $m_{0}\to\infty$. In consequence, for an arbitrary $m_{0}\in\mathbb{N}$, we find by $u=\sum_{i\in\mathbb{N}}\rho_{i}u$ that
\begin{align*}
\int_{\bigcup_{m=m_{0}}^{\infty}\ball^{m}}|u-\widetilde{\mathbb{E}}_{\mathfrak{U}}u|\dif x &\leq c\sum_{m=m_{0}}^{\infty}\sum_{i\in\mathbb{N}}\int_{\ball^{m}}|\rho_{i}(u-\Pi_{\ball^{i}}^{1}u)|\dif x \\ 
& \!\!\!\!\!\!\!\!\!\!\stackrel{\text{\ref{item:Whitney2},~\ref{item:Whitney4}}}{\leq} c\sum_{m=m_{0}}^{\infty}\sum_{i\in\mathcal{N}(m)}\int_{\ball^{i}}|\rho_{i}(u-\Pi_{\ball^{i}}^{1}u)|\dif x \\ 
& \!\!\!\!\!\!\!\!\!\!\!\!\!\!\!\!\!\stackrel{\eqref{eq:ATP}_{1},~\text{\ref{item:Whitney2},\,\ref{item:Whitney4}}}{\leq} c\sum_{m=m_{0}}^{\infty}\|u\|_{\lebe^{1}(\Lambda\!\ball^{m})} \leq c\int_{\mathfrak{U}_{\varepsilon}^{\complement}}|u|\dif x \to 0
\end{align*}
as $m_{0}\to\infty$. Moreover, using a similar argument as in the estimation of $\mathrm{I}^{(i)}$ from above,  
\begin{align*}
|&D(u-\widetilde{\mathbb{E}}_{\mathfrak{U}}u)|\Big(\bigcup_{m=m_{0}}^{\infty}\ball^{m} \Big) \leq c\sum_{m=m_{0}}^{\infty}\sum_{i\in\mathbb{N}}\int_{\ball^{m}}|D(\rho_{i}(u-\Pi_{\ball^{i}}^{1}u))| \\ 
& \!\!\!\!\!\!\!\!\!\!\stackrel{\text{\ref{item:Whitney2},~\ref{item:Whitney4}}}{\leq} c \sum_{m=m_{0}}^{\infty}\sum_{i\in\mathcal{N}(m)} \Big(\int_{\ball^{i}} \left\vert\frac{u-\Pi_{\ball^{i}}^{1}u}{r_{i}}\right\vert \dif x + \int_{\ball^{i}}|Du| + r_{i}^{n}\|\nabla\Pi_{\ball^{i}}^{1}u\|_{\lebe^{\infty}(\ball^{i})}\Big) \\ 
& \!\!\!\!\stackrel{\eqref{eq:ATP}_{2}}{\leq} c \sum_{m=m_{0}}^{\infty}\sum_{i\in\mathcal{N}(m)} \Big(\int_{\ball^{i}} \left\vert\frac{u-\Pi_{\ball^{i}}^{0}u}{r_{i}}\right\vert \dif x+\int_{\ball^{i}} \left\vert\frac{\Pi_{\ball^{i}}^{0}u-\Pi_{\ball^{i}}^{1}u}{r_{i}}\right\vert \dif x + \int_{\ball^{i}}|Du|\Big) \\ 
& \!\!\!\!\!\!\!\!\!\!\stackrel{\text{\eqref{eq:ATPPoinc}},~\eqref{eq:ATP}_{3}}{\leq} c \sum_{m=m_{0}}^{\infty}\sum_{i\in\mathcal{N}(m)}\int_{\ball^{i}}|Du|\\
& \!\!\!\stackrel{\text{\ref{item:Whitney2}}}{\leq} c \sum_{m=m_{0}}^{\infty}\int_{\Lambda\!\ball^{m}}|Du|\leq c \int_{\mathfrak{U}_{\varepsilon}^{\complement}}|Du| \to 0
\end{align*}
for some $\varepsilon=\varepsilon(m_{0})>0$ with $\varepsilon\to 0$ as $m_{0}\to\infty$. By the continuity properties of the boundary trace operator on $\bv$, this implies that $\trace_{\partial\mathfrak{U}}(u-\widetilde{\mathbb{E}}_{\mathfrak{U}}u)=0$ $\mathscr{H}^{n-1}$-a.e. on $\partial\mathfrak{U}$. This establishes \ref{item:extension2}. Ad~\ref{item:extension3a}. This follows from $\Pi_{\ball}^{1}a=a$ for any affine-linear map $a$, which is a direct consequence of the definition of the averaged Taylor polynomials. Ad~\ref{item:extension4}. Let $\mathfrak{U}:=\ball_{s}(x_{0})\setminus\overline{\ball}_{r}(x_{0})$. Our focus is on $\eqref{eq:fundamental}_{2}$, $\eqref{eq:fundamental}_{1}$ following by analogous means. We define for $m\in\mathbb{N}_{0}$
\begin{align}\label{eq:Imdef}
\mathcal{I}^{m}:=\big\{i\in\mathbb{N}\colon\;2^{-m-1}(s-r)\leq r_{i}<2^{-m}(s-r) \big\}, 
\end{align}
and note that by~\ref{item:Whitney2} and \ref{item:Whitney3}, there exists $c'=c'(n)>1$ such that $
\bigcup_{i\in\mathcal{I}^{m}}\Lambda\!\ball^{i}\subset S^{m}$, where the annulus
\begin{align*}
S^{m}:=\{x\in\mathfrak{U}\colon\;\frac{1}{c'}2^{-m}(s-r)<\dista(x,\partial\mathfrak{U})\leq c'2^{-m}(s-r)\}
\end{align*}
has width uniformly proportional to its distance to $\partial\mathfrak{U}$. Given $0<\varepsilon<s-r$, we choose the maximal $m_{0}\in\mathbb{N}_{0}$ such that $\mathfrak{U}_{\varepsilon}^{\complement}\subset\bigcup_{m=m_{0}}^{\infty}\bigcup_{i\in\mathcal{I}^{m}}\ball^{i}$. By construction, we thus find $\lambda=\lambda(n)>1$ such that $S	 ^{m}\subset \mathfrak{U}_{\lambda\varepsilon}^{\complement}$ for all $m\geq m_{0}$. By the uniformly finite overlap of the Whitney balls and the width of $S^{m}$ being uniformly proportional to its distance to $\partial\mathfrak{U}$, we have with $\delta_{0}:=\min\{s-r,\lambda\varepsilon\}$
\begin{align*}
\int_{(\ball_{s}(x_{0})\setminus\overline{\ball}_{r}(x_{0}))_{\varepsilon}^{\complement}}|\nabla&\widetilde{\mathbb{E}}_{\ball_{s}(x_{0})\setminus\overline{\ball}_{r}(x_{0})}u|^{q} \dif x  \stackrel{\eqref{eq:extensionpointwise}_{2}}{\leq} 
c\,\sum_{m=m_{0}}^{\infty}\sum_{i\in\mathcal{I}^{m}}r_{i}^{n(1-q)}\Big(\int_{\Lambda\!\ball^{i}}|Du|\Big)^{q} \\ & \!\!\stackrel{\ell^{1}\hookrightarrow\ell^{q}}{\leq} c\Big(\sum_{m=m_{0}}^{\infty}\sum_{i\in\mathcal{I}^{m}}r_{i}^{n(\frac{1}{q}-1)}\int_{\Lambda\!\ball^{i}}|Du|\Big)^{q} \\ 
& \;\,\leq c\Big(\sum_{m=m_{0}}^{\infty}\Big(\,\frac{s-r}{2^{m}}\,\Big)^{n(\frac{1}{q}-1+\frac{1}{n})}\frac{2^{m}}{s-r}\sum_{i\in\mathcal{I}^{m}}\int_{\Lambda\!\ball^{i}}|Du|\Big)^{q}\\
& \;\,\leq c\Big(\sum_{m=m_{0}}^{\infty}\Big(\,\frac{s-r}{2^{m}}\,\Big)^{n(\frac{1}{q}-1+\frac{1}{n})}\frac{2^{m}}{s-r}\int_{S^{m}}|Du|\Big)^{q}\\
& \;\,\leq c (s-r)^{n(1-q+\frac{q}{n})} \Big( \sup_{0<\delta<\delta_{0}}\frac{1}{\delta}\int_{\ball_{s}(x_{0})\setminus\overline{\ball}_{s-\delta}(x_{0})}|Du|+\Big. \\ & \;\, \Big. \;\;\;\;\;\;\;\;\;\;\;\;\;\;\;\;\;\;\;\;\;\;\;\;\;\;\;\;\;+ \sup_{0<\delta<\delta_{0}}\frac{1}{\delta}\int_{\ball_{r+\delta}(x_{0})\setminus\overline{\ball}_{r}(x_{0})}|Du|\Big)^{q}, 
\end{align*}
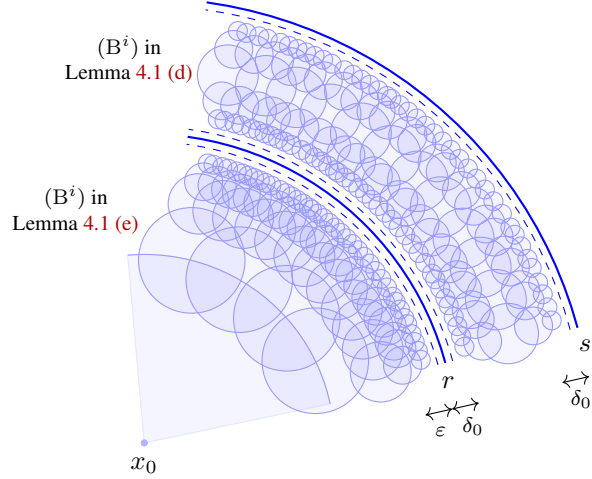
\begin{figure}
\begin{tikzpicture}[scale=1]
%\draw[step=.5cm,gray,very thin] (-1.4,-1.4) grid (1.4,1.4);
% \foreach \x in {0.1,0.2,0.3}
% \draw (5*\x,{5*sqrt(1-\x*\x)}) circle (0.4cm);
 \foreach \x in {0.22,0.32,0.4,0.49,0.57,0.648,0.72,0.78,0.84,0.885,0.925,0.957}
\draw[fill=blue!60!white, blue!40!white, fill opacity=0.1] (5*\x,{5*sqrt(1-\x*\x)}) circle (0.4cm);
\draw[blue,thick,domain=15:82] plot ({4.1*cos(\x)},{4.1*sin(\x)});
\draw[blue,thick,domain=15:82] plot ({5.9*cos(\x)},{5.9*sin(\x)});
\draw[blue,dashed,domain=15:82] plot ({4.2*cos(\x)},{4.2*sin(\x)});
\draw[blue,dashed,domain=15:82] plot ({5.8*cos(\x)},{5.8*sin(\x)});
\draw (4,0.8) node {$r$};
\draw (5.8,1.3) node {$s$};
  \foreach \x in {0.22,0.27,0.32,0.36,0.4,0.44,0.49,0.53,0.57,0.6,0.648,0.67,0.72,0.74,0.78,0.81, 0.84,0.86,0.885,0.9,0.925,0.936,0.957}
\draw[fill=blue!40!white, blue!40!white, fill opacity=0.1] (5.35*\x,{5.35*sqrt(1-\x*\x)}) circle (0.25cm);
 \foreach \x in {0.22,0.27,0.32,0.36,0.4,0.44,0.49,0.53,0.57,0.6,0.648,0.67,0.72,0.74,0.78,0.81, 0.84,0.86,0.885,0.9,0.925,0.936,0.957}
\draw[fill=blue!40!white, blue!40!white, fill opacity=0.1] (4.65*\x,{4.65*sqrt(1-\x*\x)}) circle (0.25cm);
\foreach \x in {0.22,0.24,0.27,0.29,0.32,0.34,0.36,0.38,0.4,0.42,0.44,0.47,0.49,0.51, 0.53,0.55,0.57,0.58, 0.60,0.62,0.64,0.66,0.68,0.7,0.72,0.74,0.76,0.78,0.8,0.82,0.84,0.86,0.87,0.885,0.9,0.91,0.925,0.936,0.947,0.957}
\draw[fill=blue!40!white, blue!40!white, fill opacity=0.1] (5.6*\x,{5.6*sqrt(1-\x*\x)}) circle (0.125cm);
\foreach \x in {0.22,0.24,0.27,0.29,0.32,0.34,0.36,0.38,0.4,0.42,0.44,0.47,0.49,0.51, 0.53,0.55,0.57,0.58, 0.60,0.62,0.64,0.66,0.68,0.7,0.72,0.74,0.76,0.78,0.8,0.82,0.84,0.86,0.87,0.885,0.9,0.91,0.925,0.936,0.947,0.957}
\draw[fill=blue!40!white, blue!40!white, fill opacity=0.1] (4.4*\x,{4.4*sqrt(1-\x*\x)}) circle (0.125cm);
\draw [blue!10!white, fill=blue!40!white, fill opacity=0.1](95:2.5)--(0,0)--(12:2.5) arc (12:95:2.5)--cycle;
\draw[blue!40!white,domain=12:95] plot ({2.5*cos(\x)},{2.5*sin(\x)});
 \foreach \x in {0.25,0.5,0.75,0.9}
 \draw[fill=blue!40!white, blue!40!white, fill opacity=0.1] (2.5*\x,{2.5*sqrt(1-\x*\x)}) circle (0.7cm);
 \foreach \x in {0.22,0.32,0.4,0.49,0.57,0.648,0.72,0.78,0.84,0.885,0.925,0.957}
\draw[fill=blue!60!white, blue!40!white, fill opacity=0.1] (3.25*\x,{3.25*sqrt(1-\x*\x)}) circle (0.4cm);
\foreach \x in {0.22,0.27,0.32,0.36,0.4,0.44,0.49,0.53,0.57,0.6,0.648,0.67,0.72,0.74,0.78,0.81, 0.84,0.86,0.885,0.9,0.925,0.936,0.957}
\draw[fill=blue!40!white, blue!40!white, fill opacity=0.1] (3.55*\x,{3.55*sqrt(1-\x*\x)}) circle (0.25cm);
\foreach \x in {0.22,0.24,0.27,0.29,0.32,0.34,0.36,0.38,0.4,0.42,0.44,0.47,0.49,0.51, 0.53,0.55,0.57,0.58, 0.60,0.62,0.64,0.66,0.68,0.7,0.72,0.74,0.76,0.78,0.8,0.82,0.84,0.86,0.87,0.885,0.9,0.91,0.925,0.936,0.947,0.957}
\draw[fill=blue!40!white, blue!40!white, fill opacity=0.1] (3.8*\x,{3.8*sqrt(1-\x*\x)}) circle (0.125cm);
\draw[blue,dashed,domain=15:82] plot ({4*cos(\x)},{4*sin(\x)});
\draw[fill=blue!30!white,blue!30!white] (0,0) circle (0.04cm);
\draw (0,-0.3) node {$x_{0}$};
\draw[<->] (3.7,0.37) -- (4.05,0.45);
\draw[<->] (4.05,0.45) -- (4.4,0.53);
\draw[<->] (5.5,0.8) -- (5.85,0.88);
\node at (3.9,0.15) {{\footnotesize{$\varepsilon$}}};
\node at (4.325,0.22) {{\footnotesize{$\delta_{0}$}}};
\node at (5.75,0.58) {{\footnotesize{$\delta_{0}$}}};
\node at (-0.2,5.25) {{\footnotesize{$(\ball^{i})$ \text{in}}}};
\node at (-0.2,4.9) {{\footnotesize{\text{Lemma~\ref{lem:extensionoperator}~\ref{item:extension4}}}}};
\node at (-0.9,3.25) {{\footnotesize{$(\ball^{i})$ \text{in}}}};
\node at (-0.9,2.9) {{\footnotesize{\text{Lemma~\ref{lem:extensionoperator}~\ref{item:extension5}}}}};
\end{tikzpicture}
\caption{Schematic display of the Whitney coverings underlying the $\lebe^{1}$- or $\lebe^{q}$-(gradient) bounds of Lemma~\ref{lem:extensionoperator}~\ref{item:extension4} (annular region) and~\ref{item:extension5} (inner ball around $x_{0}$). Balls close to $\partial\!\ball_{r}(x_{0})$ (at distance $\delta_{0}$ or $\varepsilon$, respectively), are controlled by the spherical maximal condition on $|Du|$ whereas balls far away from the boundary are controlled by stability estimates.} 
\end{figure}
where $c(n,N,\mathtt{N},q,\Lambda)>0$ is a constant. Note that it is precisely at the ultimate chain of inequalities where we require the assumption $1\leq q <\frac{n}{n-1}$ as otherwise the geometric series in the penultimate line does not converge. Passing to the $q$-th root on both sides, $\eqref{eq:fundamental}_{2}$ follows. Ad~\ref{item:extension5}. Here, $\mathfrak{U}=\ball_{r}(x_{0})$. Similarly as for~\ref{item:extension4}, we define for $m\in\mathbb{N}_{0}$ 
\begin{align*}
\mathcal{I}_{m}:=\{i\in\mathbb{N}\colon\;2^{-m-1}r\leq r_{i}< 2^{-m}r\}
\end{align*} 
so that by~\ref{item:Whitney2} and~\ref{item:Whitney3} there exists $c''=c''(n)>1$ such that with 
\begin{align*}
S_{m}:=\{x\in\mathfrak{U}\colon\;\frac{1}{c''}2^{-m}r<\dista(x,\partial\mathfrak{U})\leq c''2^{-m}r\}
\end{align*}
we have $\bigcup_{i\in\mathcal{I}_{m}}\Lambda\!\ball^{i}\subset S_{m}$ for all $m\in\mathbb{N}_{0}$. We choose the minimal $m_{0}\in\mathbb{N}$ such that $\bigcup_{m=m_{0}+1}^{\infty}\bigcup_{i\in\mathcal{I}_{m}}\Lambda\!\ball^{i}\subset\mathfrak{U}_{\varepsilon}^{\complement}$, so that $2^{-m_{0}}r$ is uniformly proportional to $\varepsilon$, so $\frac{1}{c}\varepsilon\leq 2^{-m_{0}}r\leq c\varepsilon$ for some $c=c(n)>1$. We then split 
\begin{align*}
\int_{\ball_{r}(x_{0})}|\nabla\widetilde{\mathbb{E}}_{\mathfrak{U}}u|^{q}\dif x \leq c\Big(\sum_{m=0}^{m_{0}}\sum_{i\in\mathcal{I}_{m}}+\sum_{m=m_{0}+1}^{\infty}\sum_{i\in\mathcal{I}_{m}}\Big)\int_{\ball^{i}}|\nabla\widetilde{\mathbb{E}}_{\mathfrak{U}}u|^{q}\dif x =: \mathrm{I}+\mathrm{II}. 
\end{align*}
For both terms we may adapt the estimation in the proof of~\ref{item:extension4}, and we only indicate the modifications. For term $\mathrm{I}$, we obtain similarly as above 
\begin{align*}
\mathrm{I} & \leq c\Big(\sum_{m=0}^{m_{0}}(2^{-m}r)^{n\frac{1-q}{q}}(2^{-m}r)\Big(\frac{2^{m}}{r}\sum_{i\in\mathcal{I}_{m}}\int_{\Lambda\!\ball^{i}}|Du| \Big) \Big)^{q} \\ 
& \!\!\!\!\!\!\! \!\!\!\!\!\!\!\!\!\stackrel{m\leq m_{0},\,2^{-m_{0}}r\sim\varepsilon}{\leq}c\Big(\sum_{m=0}^{m_{0}}(2^{-m})^{n\frac{1-q}{q}+1}\Big)^{q}\Big(\int_{\ball_{r}(x_{0})}|Du|\Big)^{q}\frac{r^{n(1-q)+q}}{\varepsilon^{q}}, 
\end{align*}
and since the final sum converges, we obtain the first term on the right hand side of~\eqref{eq:fundamental1}. For the second term, we may follow the same argument as in~\ref{item:extension4}, now invoking $S_{m}$ instead of $S^{m}$. Adding both estimates consequently yields~\ref{item:extension5}. The proof is complete. 
\end{proof} 
To set up the comparison argument for the partial regularity proof later on, we require a Fubini-type property of $\bv$-maps. In \cite{GK1} we approached this matter by the fact that for $u\in\bv_{\locc}(\R^{n};\R^{N})$ the restrictions $u|_{\partial\!\ball_{r}(x_{0})}$ belong to $\bv(\partial\!\ball_{r}(x_{0});\R^{N})$ for $\mathscr{L}^{1}$-a.e. $r>0$; if $n=2$, the failure of the sharp fractional Sobolev embedding of $\bv(\partial\!\ball_{r}(x_{0});\R^{N})$ into $\sobo^{s,p}(\partial\!\ball_{r}(x_{0});\R^{N})$ for $sp=1$ then necessitates an argument via intermediate Nikolski\u{\i} estimates and re-embedding the Nikolski\u{\i}- into fractional Sobolev spaces. We thus pause and give a self-contained proof of the following scaled Fubini-type theorem, which solely hinges on the trace-preserving extension operator from Lemma~\ref{lem:extensionoperator} and is applicable to all $n\geq 2$:
\begin{corollary}[of Fubini-type]\label{cor:Fubini}
Let $n\geq 2$ and $x_{0}\in\R^{n}$. Then the following hold: 
\begin{enumerate}
\item\label{item:Fubini1} Let $1<\vartheta<\frac{n}{n-1}$. There exists a constant $C=C(n,N,\vartheta)>0$ such that the following holds for all $u\in\bv_{\locc}(\R^{n};\R^{N})$: For every $R_{0}>0$ and every $N\subset(\frac{9}{10}R_{0},R_{0})$ with $\mathscr{L}^{1}(N)=0$ there exists $R\in (\frac{9}{10}R_{0},R_{0})\setminus N$ such that $\mathcal{M}Du(x_{0},R)<\infty$, $\trace_{\partial\!\ball_{R}(x_{0})}u\in\sobo^{1-1/\vartheta,\vartheta}(\partial\!\ball_{R}(x_{0});\R^{N})$, and we have  
\begin{align}\label{eq:scalebound2}
\begin{split}
\Big(\dashint_{\partial\!\ball_{R}(x_{0})}\int_{\partial\!\ball_{R}(x_{0})}\frac{|\trace_{\partial\!\ball_{R}(x_{0})}(u)(x)-\trace_{\partial\!\ball_{R}(x_{0})}(u)(y)|^{\vartheta}}{|x-y|^{n-2+\vartheta}}\dif^{n-1}x\dif^{n-1}y\Big)^{\frac{1}{\vartheta}} \\
\;\;\;\;\;\;\;\;\;\;\;\;\;\;\;\;\;\;\;\;\;\;\;\;\;\;\;\;\;\;\;\;\;\;\;\;\;\;\;\;\;\;\;\;\;\;\;\;\;\;\;\;\;\;\;\;\;\;\;\;\;\;\;\;\;\leq CR_{0}^{\frac{1}{\vartheta}}\dashint_{\overline{\ball}_{R_{0}}(x_{0})}|Du|.
\end{split}
\end{align}
\item\label{item:Fubini2} Let $1<p<\infty$ and $p<\vartheta_{p}<\frac{np}{n-1}$. There exists a constant $C=C(n,N,p,\vartheta_{p})>0$ such that the following hold for all $u\in\sobo_{\locc}^{1,p}(\R^{n};\R^{N})$: For every $R_{0}>0$ and every $N\subset(\frac{9}{10}R_{0},R_{0})$ with $\mathscr{L}^{1}(N)=0$ there exists $R\in (\frac{9}{10}R_{0},R_{0})\setminus N$ such that $\mathcal{M}(|\nabla u|^{p}\mathscr{L}^{n})(x_{0},R)<\infty$, $\trace_{\partial\!\ball_{R}(x_{0})}u\in\sobo^{1-1/\vartheta_{p},\vartheta_{p}}(\partial\!\ball_{R}(x_{0});\R^{N})$, and we have  
\begin{align}\label{eq:scalebound2A}
\begin{split}
\Big(\dashint_{\partial\!\ball_{R}(x_{0})}\int_{\partial\!\ball_{R}(x_{0})}\frac{|\trace_{\partial\!\ball_{R}(x_{0})}(u)(x)-\trace_{\partial\!\ball_{R}(x_{0})}(u)(y)|^{\vartheta_{p}}}{|x-y|^{n-2+\vartheta_{p}}}\dif^{n-1}x\dif^{n-1}y\Big)^{\frac{1}{\vartheta_{p}}} \\
\;\;\;\;\;\;\;\;\;\;\;\;\;\;\;\;\;\;\;\;\;\;\;\;\;\;\;\;\;\;\;\;\;\;\;\;\;\;\;\;\;\;\;\;\;\;\;\;\;\;\;\;\;\;\;\;\;\;\;\;\;\;\;\;\;\leq CR_{0}^{\frac{1}{\vartheta_{p}}}\Big(\dashint_{\ball_{R_{0}}(x_{0})}|\nabla u|^{p}\dif x\Big)^{\frac{1}{p}}.
\end{split}
\end{align}
\end{enumerate}
Specifically, whenever we have $\mathcal{M}Du(x_{0},r)<\infty$ in the setting of~\ref{item:Fubini1} or $\mathcal{M}(|\nabla u|^{p}\mathscr{L}^{n})(x_{0},r)<\infty$ in the setting of~\ref{item:Fubini2}, then $\trace_{\partial\!\ball_{r}(x_{0})}(u)\in\sobo^{1-1/q,q}(\partial\!\ball_{r}(x_{0});\R^{N})$. 
\end{corollary}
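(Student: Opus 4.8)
The plan is to represent the interior trace of $u$ along a carefully chosen sphere $\partial\!\ball_{R}(x_{0})$ as the \emph{boundary} trace of a genuine Sobolev map obtained from $u$ by the trace-preserving Whitney-type operator of Lemma~\ref{lem:extensionoperator}, and then to read off both the asserted membership $\trace_{\partial\!\ball_{R}(x_{0})}(u)\in\sobo^{1-1/\vartheta,\vartheta}(\partial\!\ball_{R}(x_{0});\R^{N})$ and the quantitative estimate~\eqref{eq:scalebound2} from the trace scaling inequality~\eqref{eq:tracescale} combined with a Poincar\'{e}--Wirtinger inequality on $\ball_{R}(x_{0})$. I carry this out for part~\ref{item:Fubini1}; part~\ref{item:Fubini2} is entirely parallel modulo one extra ingredient noted below.

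For the selection of $R$ I would use the weak-$(1,1)$ bound for the spherical maximal operator from Section~\ref{sec:HLW}: it shows that $\{t\in(\tfrac{9}{10}R_{0},R_{0})\colon\mathcal{M}Du(x_{0},t)=\infty\}$ is $\mathscr{L}^{1}$-null, and that, putting $\lambda:=\tfrac{C_{0}}{R_{0}}|Du|(\overline{\ball}_{R_{0}}(x_{0}))$ for a suitable dimensional $C_{0}$, the set $\{t\in(\tfrac{9}{10}R_{0},R_{0})\colon\mathcal{M}Du(x_{0},t)>\lambda\}$ has $\mathscr{L}^{1}$-measure $<\tfrac{1}{20}R_{0}$. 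Since $\mathscr{L}^{1}(N)=0$ and the window $(\tfrac{9}{10}R_{0},R_{0})$ has length $\tfrac{1}{10}R_{0}$, I may pick $R$ in that window, outside $N$, with $\mathcal{M}Du(x_{0},R)\leq\lambda$ (the degenerate case $|Du|(\overline{\ball}_{R_{0}})=0$ being trivial). In particular $\mathcal{M}Du(x_{0},R)<\infty$, so by~\eqref{eq:traceequal} the inner and outer traces of $u$ along $\partial\!\ball_{R}(x_{0})$ coincide and $\trace_{\partial\!\ball_{R}(x_{0})}(u)$ is well-defined.

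Next, with $\mathfrak{U}:=\ball_{R}(x_{0})$, $\varepsilon:=R_{0}/10$ (so that $\varepsilon<R/2$) and $v:=\mathbb{E}_{\mathfrak{U}}u$, Lemma~\ref{lem:extensionoperator}\ref{item:extension2} gives $v\in\sobo^{1,\vartheta}(\mathfrak{U};\R^{N})$ and $\trace_{\partial\mathfrak{U}}(u-v)=0$, hence $\trace_{\partial\!\ball_{R}(x_{0})}(v)=\trace_{\partial\!\ball_{R}(x_{0})}(u)$. Applying Lemma~\ref{lem:extensionoperator}\ref{item:extension5} with $j=1$ (so $D^{1}u=Du$), bounding the supremum term in~\eqref{eq:fundamental1} via $\tfrac{1}{\delta}|Du|(\ball_{R}(x_{0})\setminus\ball_{R-\delta}(x_{0}))\leq\tfrac{1}{\delta}|Du|(\ball_{R+\delta}(x_{0})\setminus\overline{\ball}_{R-\delta}(x_{0}))\leq 2\mathcal{M}Du(x_{0},R)$, and using $R\sim R_{0}\sim\varepsilon$, $\mathcal{M}Du(x_{0},R)\leq\lambda$ and $|Du|(\ball_{R})\leq|Du|(\overline{\ball}_{R_{0}})$, the powers of $R$ in~\eqref{eq:fundamental1} combine and I obtain $\bigl(\dashint_{\ball_{R}(x_{0})}|\nabla v|^{\vartheta}\dx\bigr)^{1/\vartheta}\leq C\,R_{0}^{-n}|Du|(\overline{\ball}_{R_{0}}(x_{0}))$. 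Since the seminorm on the left of~\eqref{eq:scalebound2} is unchanged when $v$ is replaced by $v-(v)_{\ball_{R}(x_{0})}$, the estimate~\eqref{eq:tracescale} for $v-(v)_{\ball_{R}(x_{0})}$ together with $\dashint_{\ball_{R}}|v-(v)_{\ball_{R}}|^{\vartheta}\leq cR^{\vartheta}\dashint_{\ball_{R}}|\nabla v|^{\vartheta}$ shows this seminorm is $\leq cR^{1/\vartheta}\bigl(\dashint_{\ball_{R}(x_{0})}|\nabla v|^{\vartheta}\dx\bigr)^{1/\vartheta}<\infty$; this yields both $\trace_{\partial\!\ball_{R}(x_{0})}(u)=\trace_{\partial\!\ball_{R}(x_{0})}(v)\in\sobo^{1-1/\vartheta,\vartheta}$ and, by the previous bound and $R\sim R_{0}$, domination by $CR_{0}^{1/\vartheta}R_{0}^{-n}|Du|(\overline{\ball}_{R_{0}})=C'R_{0}^{1/\vartheta}\dashint_{\overline{\ball}_{R_{0}}(x_{0})}|Du|$, i.e.~\eqref{eq:scalebound2}. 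Part~\ref{item:Fubini2} runs identically with $|Du|$ replaced by $|\nabla u|^{p}\mathscr{L}^{n}$ and the $\bv$-Poincar\'{e} inequality by its $\sobo^{1,p}$-version, except that one now needs the $\sobo^{1,p}$-analogue of Lemma~\ref{lem:extensionoperator}\ref{item:extension5} valid for $\vartheta_{p}<\tfrac{np}{n-1}$; this follows by the same Whitney-summation upon replacing the embedding $\ell^{1}\hookrightarrow\ell^{\vartheta}$ used there by $\ell^{1}\hookrightarrow\ell^{\vartheta_{p}/p}$, and it is the convergence of the resulting geometric series that dictates the threshold $\tfrac{np}{n-1}$. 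Finally, the closing assertion is the qualitative shadow of this scheme: if $\mathcal{M}Du(x_{0},r)<\infty$ (resp.\ $\mathcal{M}(|\nabla u|^{p}\mathscr{L}^{n})(x_{0},r)<\infty$), then $\mathbb{E}_{\ball_{r}(x_{0})}u\in\sobo^{1,q}(\ball_{r}(x_{0});\R^{N})$ by Lemma~\ref{lem:extensionoperator}\ref{item:extension5} (resp.\ its $\sobo^{1,p}$-counterpart), whence $\trace_{\partial\!\ball_{r}(x_{0})}(u)\in\sobo^{1-1/q,q}(\partial\!\ball_{r}(x_{0});\R^{N})$, the value $q=1$ being covered trivially since a $\bv$-map always has an $\lebe^{1}$ interior trace along $\partial\!\ball_{r}(x_{0})$ once $\mathcal{M}Du(x_{0},r)<\infty$.

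The hard part will be the radius selection: the quantity $\mathcal{M}Du(x_{0},R)$ entering~\eqref{eq:fundamental1} is a priori uncontrolled, so the weak-$(1,1)$ bound must be balanced against the length of the admissible window $(\tfrac{9}{10}R_{0},R_{0})\setminus N$ precisely so as to force $\mathcal{M}Du(x_{0},R)\lesssim R_{0}^{-1}|Du|(\overline{\ball}_{R_{0}})$, which is what makes the scaling exponents match up cleanly; the second delicate point, for part~\ref{item:Fubini2}, is establishing the $\sobo^{1,p}$-version of Lemma~\ref{lem:extensionoperator}\ref{item:extension5} up to the sharp exponent $\tfrac{np}{n-1}$.
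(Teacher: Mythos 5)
Your proposal takes a genuinely different route from the paper's. Where the paper subtracts the mean, applies the Hardy--Littlewood-type selection lemma (Lemma~\ref{lem:HLW}) to $\Theta(t)=\tfrac{1}{t}\|w\|_{\lebe^{1}(\ball_{t})}+|Dw|(\overline{\ball}_{t})$ to pin down a nested pair $\widetilde{r}<\widetilde{s}$ with double-sided difference-quotient control, and then builds the comparison map $(1-\rho)\mathbb{E}_{\ball_{\widetilde{s}}\setminus\overline{\ball}_{\widetilde{r}}}w$ from the annulus version of the extension operator (Lemma~\ref{lem:extensionoperator}\ref{item:extension4}) together with a cutoff, you select a single radius $R$ via a weak-$(1,1)$ bound and apply the full-ball version (Lemma~\ref{lem:extensionoperator}\ref{item:extension5}) directly to $\mathbb{E}_{\ball_{R}(x_{0})}u$. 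Your scheme is shorter and avoids the cutoff bookkeeping, and the exponents indeed combine to the same $R_{0}^{1/\vartheta}\dashint_{\overline{\ball}_{R_{0}}}|Du|$ scaling.

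There is, however, one real gap in the radius selection, precisely where you yourself flag the hard part. You want $R\in(\tfrac{9}{10}R_{0},R_{0})$ with $\mathcal{M}Du(x_{0},R)\leq\lambda:=C_{0}R_{0}^{-1}|Du|(\overline{\ball}_{R_{0}})$, and you appeal to the weak-$(1,1)$ estimate for $\mathcal{M}$. But for $u\in\bv_{\locc}(\R^{n};\R^{N})$ the distance $d=\dist(x_{0},\partial\R^{n})=\infty$ in the definition~\eqref{eq:maxoperator}, so for $t<R_{0}$ the two-sided maximal function probes annuli $\ball_{t+\varepsilon}\setminus\overline{\ball}_{t-\varepsilon}$ with $\varepsilon<t$, hence reaching out to radius $2t<2R_{0}$. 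The resulting weak-$(1,1)$ bound therefore controls the superlevel set by $\tfrac{c}{\lambda}|Du|(\ball_{2R_{0}})$, not by $\tfrac{c}{\lambda}|Du|(\overline{\ball}_{R_{0}})$, and since $|Du|(\ball_{2R_{0}})$ may be arbitrarily large relative to $|Du|(\overline{\ball}_{R_{0}})$, your choice of $\lambda$ need not leave room inside the window $(\tfrac{9}{10}R_{0},R_{0})\setminus N$. (The paper's displayed weak-$(1,1)$ with $|\mu|(\ball_{R})$ on the right implicitly treats $\mathcal{M}$ relative to $\Omega=\ball_{R}(x_{0})$, where $\varepsilon<R-t$; the paper sidesteps the issue entirely because it extracts the quantitative difference-quotient bound from Lemma~\ref{lem:HLW}, which manifestly only depends on $\Theta$ over $[\tfrac{9}{10}R_{0},R_{0}]$.) The fix is small but necessary: the quantity you actually feed into~\eqref{eq:fundamental1} is the \emph{one-sided inner} layer average $\sup_{0<\delta<R_{0}/10}\delta^{-1}|Du|(\ball_{R}\setminus\overline{\ball}_{R-\delta})$, which only sees $\ball_{R_{0}}$ and does enjoy a weak-$(1,1)$ bound with $|Du|(\overline{\ball}_{R_{0}})$ on the right. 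Select $R$ so that this one-sided quantity is $\leq\lambda$; the separate, purely qualitative requirement $\mathcal{M}Du(x_{0},R)<\infty$ that you need for trace coincidence via~\eqref{eq:traceequal} holds for $\mathscr{L}^{1}$-a.e.\ $R$ anyway and can be imposed simultaneously. With that adjustment, and the analogous adjustment in part~\ref{item:Fubini2}, your proof goes through.
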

\begin{proof}
Ad~\ref{item:Fubini1}. Let $x_{0}\in\R^{n}$. We write $\ball_{r}:=\ball_{r}(x_{0})$ in the sequel and put $w:=u-(u)_{\ball_{R_{0}}}$. Fix an exponent $1<\vartheta<\frac{n}{n-1}$. Because of $Dw\in\mathrm{RM}(\R^{n};\R^{N\times n})$, the set $E:=\{t\in (\frac{9}{10}R_{0},R_{0})\colon \mathcal{M}Dw(x_{0},t)=\infty\}\cup N$ is negligible for $\mathscr{L}^{1}$. For any $t\in (\frac{9}{10}R_{0},R_{0})\setminus E$,~\eqref{eq:traceequal} and~\eqref{eq:interiortraces1} imply that $\mathscr{H}^{n-1}$-a.e. $x\in\partial\!\ball_{t}$ is an $\mathscr{L}^{n}$-Lebesgue point for $w$ and the single-sided traces $w^{+}:=\trace_{\partial\!\ball_{t}}^{+}(w)$ and $w^{-}:=\trace_{\partial\!\ball_{t}}^{-}(w)$ along $\partial\!\ball_{t}$ coincide and equal $u^{+}-(u)_{\ball_{R_{0}}}$; thus it suffices to establish estimate \eqref{eq:scalebound2} for $w^{+}$ along $\partial\!\ball_{R}$ for some suitable $R$. 

We apply Lemma~\ref{lem:HLW} to $r=\frac{9}{10}R_{0}$, $s=R_{0}$ and $E$ as above. We then find $\frac{9}{10}R_{0}<\widetilde{r}<\widetilde{s}<R_{0}$ such that $\widetilde{r},\widetilde{s}\notin E$ and, with $\Theta(t):=\tfrac{1}{t}\|w\|_{\lebe^{1}(\ball_{t})}+|Dw|(\overline{\ball}_{t})$ for $t>0$, 
\begin{align}\label{eq:Fubinigoodradius}
\begin{split}
&\frac{\Theta(\widetilde{s})-\Theta(\tau)}{\widetilde{s}-\tau} \leq 8000 \frac{\Theta(R_{0})-\Theta(\tfrac{9}{10}R_{0})}{R_{0}}\qquad\text{for all}\; \tfrac{9}{10}R_{0}<\tau<\widetilde{s},\\
&\frac{\Theta(\tau)-\Theta(\widetilde{r})}{\tau-\widetilde{r}} \leq 8000 \frac{\Theta(R_{0})-\Theta(\tfrac{9}{10}R_{0})}{R_{0}}\qquad\text{for all}\; \widetilde{r}<\tau<R_{0},\\
&\frac{1}{10}R_{0}=(s-r)\leq 8(\widetilde{s}-\widetilde{r}),\;\;\;\text{so}\;\;\;\frac{R_{0}}{80}\leq (\widetilde{s}-\widetilde{r}). 
\end{split}
\end{align}
Next note that $\eqref{eq:Fubinigoodradius}_{3}$ yields
\begin{align*}
\left\vert\Big(\widetilde{s}-\frac{R_{0}}{1000}\Big)-\Big(\widetilde{r}+\frac{R_{0}}{1000}\Big)\right\vert\geq \frac{21}{2000}R_{0}\geq \frac{R_{0}}{1000}, 
\end{align*}
and so we may choose $\rho\in\hold_{c}^{\infty}(\ball_{R_{0}};[0,1])$ with 
\begin{align}\label{eq:fubinicutoff}
\mathbbm{1}_{\ball_{\widetilde{r}+\frac{R_{0}}{1000}}}\leq \rho \leq \mathbbm{1}_{\ball_{\widetilde{s}-\frac{R_{0}}{1000}}}\;\;\;\text{and}\;\;\;|\nabla\rho|\leq \frac{2000}{R_{0}}.
\end{align}
We then put $v:=(1-\rho)\mathbb{E}w$, defined on $\ball_{\widetilde{s}}$, where $\mathbb{E}:=\mathbb{E}_{\ball_{\widetilde{s}}\setminus\overline{\ball}_{\widetilde{r}}}$ in the notation of Lemma~\ref{lem:extensionoperator}. By our above discussion and since $\E$ is trace-preserving, the interior trace of $w$ along $\partial\!\ball_{\widetilde{s}}$  then coincides with the boundary trace of $v$ on $\partial\!\ball_{\widetilde{s}}$. Next note that for any $0<\delta<\widetilde{s}-\widetilde{r}$ we have 
\begin{align}\label{eq:pinacolada}
\begin{split}
\frac{1}{\delta}\|w\|_{\lebe^{1}(\ball_{\widetilde{s}}\setminus\ball_{\widetilde{s}-\delta})} & = \frac{\widetilde{s}}{\delta}\Big(\frac{\|w\|_{\lebe^{1}(\ball_{\widetilde{s}})}}{\widetilde{s}}-\frac{\|w\|_{\lebe^{1}(\ball_{\widetilde{s}-\delta})}}{\widetilde{s}-\delta}\Big)+\frac{\|w\|_{\lebe^{1}(\ball_{\widetilde{s}-\delta})}}{\widetilde{s}-\delta}\\ 
& \!\!\!\!\!\!\!\!\!\!\stackrel{\tfrac{9}{10}R_{0}<\widetilde{s}-\delta}{\leq} \widetilde{s}\frac{\Theta(\widetilde{s})-\Theta(\widetilde{s}-\delta)}{\delta}+\frac{10}{9}\frac{\|w\|_{\lebe^{1}(\ball_{R_{0}})}}{R_{0}} \\ 
& \!\!\!\!\!\!\!\!\!\!\!\stackrel{\widetilde{s}\leq R_{0},~\eqref{eq:Fubinigoodradius}_{1}}{\leq} 8000 R_{0}\frac{\Theta(R_{0})-\Theta(\tfrac{9}{10}R_{0})}{R_{0}}+\frac{10}{9}\frac{\|w\|_{\lebe^{1}(\ball_{R_{0}})}}{R_{0}}\\
& \leq 9000\,\Theta(R_{0})
\end{split}
\end{align}
and similarly 
\begin{align}\label{eq:pinacolada1}
\frac{1}{\delta}\|w\|_{\lebe^{1}(\ball_{\widetilde{r}+\delta}\setminus\ball_{\widetilde{r}})} \leq 9000\,\Theta(R_{0}).
\end{align}
Then, by definition of $v$ and $\frac{9}{10}R_{0}<\widetilde{s}<R_{0}$,  
\begin{align*}
\Big(&\dashint_{\ball_{\widetilde{s}}}|v|^{\vartheta}\dif x\Big)^{\frac{1}{\vartheta}} \leq c(n,\vartheta)R_{0}^{-\frac{n}{\vartheta}}\Big(\int_{\ball_{\widetilde{s}}\setminus\ball_{\widetilde{r}}}|\E w|^{\vartheta}\dif x\Big)^{\frac{1}{\vartheta}}\\ 
& \!\!\!\!\!\!\!\!\!\stackrel{\text{Lem.}~\ref{lem:extensionoperator}\ref{item:extension4}}{\leq}c(n,\vartheta)R_{0}^{-\frac{n}{\vartheta}}R_{0}^{\frac{n}{\vartheta}-n+1}\Big(\sup_{0<\delta<\widetilde{s}-\widetilde{r}}\frac{1}{\delta}\int_{\ball_{\widetilde{s}}\setminus\ball_{\widetilde{s}-\delta}}|w|\dif x + \frac{1}{\delta}\int_{\ball_{\widetilde{r}+\delta}\setminus\ball_{\widetilde{r}}}|w|\dif x \Big)\\
& \!\!\!\!\!\!\!\!\!\stackrel{\eqref{eq:pinacolada},\,\eqref{eq:pinacolada1}}{\leq}c(n,\vartheta)R_{0}^{-n+1}\times\Theta(R_{0})\leq c(n,\vartheta)\Big(\dashint_{\ball_{R_{0}}}|w|\dif x + R_{0}\dashint_{\overline{\ball}_{R_{0}}}|Dw|\Big). 
\end{align*}
Similarly, now invoking Lemma~\ref{lem:extensionoperator}~\ref{item:extension4} on the gradient level and recalling \eqref{eq:fubinicutoff}, $\frac{9}{10}R_{0}<\widetilde{s}<R_{0}$ gives
\begin{align*}
\widetilde{s}\Big(\dashint_{\ball_{\widetilde{s}}}|\nabla v|^{\vartheta}\dif x\Big)^{\frac{1}{\vartheta}} \leq c(n,\vartheta)\Big(\dashint_{\ball_{R_{0}}}|w|\dif x + R_{0}\dashint_{\overline{\ball}_{R_{0}}}|Dw|\Big). 
\end{align*}
Put $R:=\widetilde{s}$. Then the previous two inequalities combine to $v\in\sobo^{1,\vartheta}(\ball_{\widetilde{s}};\R^{N})$, and since $v$ attains the same traces along $\partial\!\ball_{\widetilde{s}}(x_{0})$ as $w=u-(u)_{\ball_{R_{0}}}$, $u^{+}\in\sobo^{1-\frac{1}{\vartheta},\vartheta}(\partial\!\ball_{R};\R^{N})$. Moreover, the scaled trace inequality \eqref{eq:tracescale} and the preceding two inequalities yield 
\begin{align*}
R_{0}^{1-\frac{1}{\vartheta}}\Big(\dashint_{\partial\!\ball_{R}}\int_{\partial\!\ball_{R}}\frac{|u^{+}(x)-u^{+}(y)|^{\vartheta}}{|x-y|^{n-2+\vartheta}}\dif^{n-1}x\dif^{n-1}y\Big)^{\frac{1}{\vartheta}}\leq c(n,\vartheta)R_{0}\dashint_{\overline{\ball}_{R_{0}}}|Du|
\end{align*}
by virtue of $\frac{9}{10}R_{0}<R<R_{0}$ and Poincar\'{e}'s inequality on $\bv(\ball_{R_{0}};\R^{N})$. This is \eqref{eq:scalebound2}, and the proof of the first part is complete. Lastly, if one only aims at the ultimate, purely qualitative statement of the corollary in the setting of~\ref{item:Fubini1}, one chooses any $0<r'<r$ with $\mathcal{M}Du(x_{0},r')<\infty$ and considers $\mathbb{E}_{\ball_{r}\setminus\overline{\ball}_{r'}}u$. As above, we conclude that $\mathbb{E}_{\ball_{r}\setminus\overline{\ball}_{r'}}u\in\sobo^{1,q}(\ball_{r}\setminus\overline{\ball}_{r'};\R^{N})$ and since $\mathbb{E}$ is trace-preserving, $\trace_{\partial\!\ball_{r}}(u)\in\sobo^{1-1/q,q}(\partial\!\ball_{r};\R^{N})$ follows at once. Ad~\ref{item:Fubini2}. For the proof in the $\sobo^{1,p}$-setting, one may argue analogously, now using the function $\Theta(t):=\tfrac{1}{t^{p}}\|w\|_{\lebe^{p}(\ball_{t})}^{p}+\|\nabla w\|_{\lebe^{p}(\ball_{t})}^{p}$. This completes the proof. 
\end{proof}
\section{The good generation theorem}\label{sec:goodgeneration}
In this section, we address the existence of suitable generating sequences that have fixed traces along certain spheres. As one of the main differences to \cite{SchmidtPR}, we directly deal with the signed situation. This particularly forces us to rule out certain concentration effects that are a priori excluded in the unsigned case, also see Remark~\ref{rem:signed}. 
\subsection{A lower semicontinuity result}\label{sec:LSCsimple} We begin by recording the following lower semicontinuity result which appears as a special case of \cite[Thm.~5.1]{CK} by \textsc{Chen} and the second author: 
\begin{lemma}\label{lem:LSC}
Let $1\leq p \leq q <\frac{np}{n-1}$ and suppose that $F\in\hold(\R^{N\times n})$ is \emph{quasiconvex} and satisfies \emph{\ref{item:H1}}. Given any open and bounded subset $\Omega$ of $\R^{n}$ with Lipschitz boundary and $u_{0}\in\sobo^{1,q}(\Omega;\R^{N})$, the following holds: If $u,u_{1},u_{2},...\in\sobo_{u_{0}}^{1,q}(\Omega;\R^{N})$ satisfy $u_{j}\rightharpoonup u$ in $\sobo^{1,p}(\Omega;\R^{N})$ if $p>1$ or $u_{j}\stackrel{*}{\rightharpoonup} u$ in $\BV(\Omega;\R^{N})$ if $p=1$ (or $u_{j}\to u$ strongly in $\lebe^{p}(\Omega;\R^{N})$ and $\sup_{j\in\mathbb{N}}\|\nabla u_{j}\|_{\lebe^{p}(\Omega)}<\infty$), then 
\begin{align*}
\int_{\Omega}F(\nabla u)\dif x \leq \liminf_{j\to\infty}\int_{\Omega}F(\nabla u_{j})\dif x.
\end{align*}
\end{lemma}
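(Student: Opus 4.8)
The plan is to establish Lemma~\ref{lem:LSC} by combining a truncation/blow-up argument in the spirit of \textsc{Acerbi \& Fusco}~\cite{AcerbiFusco} with the crucial observation that the relevant weak limit stays \emph{inside the fixed Dirichlet class} $\sobo_{u_{0}}^{1,q}$, which is precisely what rescues lower semicontinuity for signed integrands. First I would reduce everything to the case of a purely weak$^{*}$ (or $\lebe^{p}$-strong + $\lebe^{p}$-gradient-bounded) setup, noting that the three convergence hypotheses in the statement all imply $u_{j}\to u$ in $\lebe^{p}(\Omega;\R^{N})$ with $\sup_{j}\|\nabla u_{j}\|_{\lebe^{p}(\Omega)}<\infty$, and that when $p=1$ the $\bv$-part of $Du$ is handled by noting that all competitors and the limit lie in $\sobo_{u_{0}}^{1,q}\subset\sobo^{1,1}$, so in fact $u\in\sobo^{1,1}(\Omega;\R^{N})$ has no singular part along the sequence in the relevant sense; the additional mass is exactly what must be controlled.

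The core mechanism is the following. Passing to a subsequence realizing the $\liminf$, consider the nonnegative measures $\mu_{j}:=F(\nabla u_{j})\,\mathscr{L}^{n}\mres\Omega$ after subtracting an affine minorant is \emph{not} available (this is the signed difficulty), so instead I would localize: fix $\delta>0$ and, using the $p$-equiintegrability that one extracts from the $\lebe^{p}$-gradient bound via a \textsc{Chacon}/biting lemma together with a \textsc{Fonseca--Mal\'{y}} decomposition $\nabla u_{j}=\nabla w_{j}+\nabla z_{j}$ into a $p$-equiintegrable part $\nabla w_{j}$ (with $w_{j}$ still in the Dirichlet class, achieved by the \textsc{Lipschitz truncation} adapted so as not to leave $\sobo_{u_{0}}^{1,q}$ — here one truncates on the level set $\{M(|\nabla u_{j}-\nabla u|)>\lambda\}$ and corrects boundary values using that $u_{j}-u\in\sobo_{0}^{1,q}$) and a part $\nabla z_{j}$ supported on a set of vanishing measure. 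On the equiintegrable part, the growth bound \ref{item:H1} gives $|F(\nabla w_{j})|\lesssim 1+|\nabla w_{j}|^{q}$ but one only has an $\lebe^{p}$-bound; the point is that $q<\frac{np}{n-1}$ makes the truncated gradients lie in $\lebe^{q}$ with controlled norm on the good set after rescaling the truncation parameter $\lambda\sim\lambda_{j}\to\infty$ at a rate dictated by the Sobolev exponent, so that $\int F(\nabla w_{j})$ is lower semicontinuous by the standard quasiconvex $\sobo^{1,q}$-lower semicontinuity theorem (\textsc{Acerbi--Fusco}, \textsc{Marcellini}), while the contribution of $\nabla z_{j}$ is negligible because $F(0)$-type terms on small-measure sets vanish and the $q$-power contributions on the bad set are precisely the ``additional mass'' that the Dirichlet constraint plus the exponent bound $q<\frac{np}{n-1}$ forces to zero. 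Letting $\delta\to0$ yields $\int_{\Omega}F(\nabla u)\dif x\le\liminf_{j}\int_{\Omega}F(\nabla u_{j})\dif x$.

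The main obstacle is exactly the signed nature of $F$ combined with $q>p$: one cannot simply bound $F(\nabla u_{j})$ below by an integrable function and invoke Fatou or de~la~Vallée-Poussin, because $F(\nabla z_{j})$ on the bad set $\{|\nabla u_{j}|$ large$\}$ can be very negative and of magnitude $|\nabla z_{j}|^{q}$, which is \emph{not} controlled by the $\lebe^{p}$-gradient bound. The resolution — and the technical heart of the argument — is that the competitors share the boundary datum $u_{0}\in\sobo^{1,q}$, so by a \textsc{Fonseca--Mal\'{y}}-type argument (exactly the mechanism behind the threshold $q<\frac{np}{n-1}$ discussed after~\eqref{eq:threshold1}) the $\lebe^{q}$-energy on shrinking layers near $\partial\Omega$ and on the bad set cannot concentrate; one chooses good radii via the spherical maximal function of $|\nabla u_{j}|^{p}$ (Section~\ref{sec:HLW}) to localize, applies the trace-preserving operator of Lemma~\ref{lem:extensionoperator} to glue the truncated map back to $u_{0}$ without creating mass, and then the quantitative estimates \eqref{eq:fundamental}, \eqref{eq:fundamental1} give the vanishing of the excess $q$-energy. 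I would then cite \cite[Thm.~5.1]{CK} directly for the precise packaging of these steps, since the lemma is stated as a special case thereof, and only sketch the reduction; the self-contained proof of the quantitative concentration control is what the rest of the paper (Sections~\ref{sec:Fubini} and~\ref{sec:goodgeneration}) is built to supply.
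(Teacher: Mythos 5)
The paper itself supplies no proof of Lemma~\ref{lem:LSC}: it records the result as a special case of Chen--Kristensen \cite[Thm.~5.1]{CK} and leaves it there, and since your proposal likewise ends by invoking precisely that citation, the two treatments coincide at the only level the paper operates. A caution on your preceding sketch, though: for $q>p$, Lipschitz truncation at height $\lambda$ yields only $\int|\nabla w_{j}|^{q}\dif x\lesssim \lambda^{q-p}\int|\nabla u_{j}|^{p}\dif x$ on the good set and worse on the bad set, so the claim that one can ``rescale the truncation parameter at a Sobolev rate'' to get controlled $\lebe^{q}$-energy is not correct as stated; for nonnegative $F$ the bad set is discarded via Fatou, but for signed $F$ one cannot, and the actual resolution in \cite{CK} (as in the Fonseca--Mal\'{y} machinery the rest of the paper builds) is the annular-layer modification through trace-preserving operators on well-chosen spheres, with the fixed $\sobo^{1,q}$ Dirichlet datum $u_{0}$ entering exactly to rule out the boundary concentration illustrated by Example~\ref{ex:introduction}. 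Your sketch gestures at this but conflates it with Lipschitz truncation, and read as a self-contained argument it would have a genuine gap at precisely the signed/bad-set step; taking the citation to \cite{CK} at face value, as the paper does, is therefore the correct move.
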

As can be seen from the classical example~$F(z)=\det(z)$ for $z\in\R^{2\times 2}$, which satisfies the hypotheses of the preceding lemma with  $p=q=2$ (cf. \textsc{Ball \& Murat}~\cite{BM} and \textsc{Dacorogna}~\cite[Rem.~8.5~(c),~Ex.~8.6]{Dacorogna}), the assumption $u,u_{1},u_{2},...\in\sobo_{u_{0}}^{1,q}(\Omega;\R^{N})$ is essential and \emph{cannot} be improved to $u,u_{1},u_{2},...\in\sobo^{1,q}(\Omega;\R^{N})$ as in the unsigned case. If $p=1$, a corresponding example can be obtained by easier means:
\begin{example}[Concentration and semicontinuity]\label{ex:introduction}
Let $1\leq q<\infty$ and define the convex $(1,q)$-growth integrand $F=F_{q}\colon\R\to\R$ by 
\begin{align}
F_{q}(t):=\begin{cases}
\frac{1}{q}(t+1)^{q}-\frac{1}{q}&\;\;\;\text{for}\;t\geq 0,\\ 
t&\;\;\;\;\text{for}\;t<0. 
\end{cases}
\end{align}
For $\Omega=(0,1)$, the sequence $(v_{j})\subset\sobo^{1,q}(\Omega)$ which is defined by $v_{j}(x):=-jx+1$ for $0<x<\frac{1}{j}$ and $v_{j}(x)=0$ otherwise, satisfies $v_{j}\stackrel{*}{\rightharpoonup} u \equiv 0\in\sobo^{1,q}(\Omega)$ in $\bv(\Omega)$ but 
\begin{align*}
\liminf_{j\to\infty}\int_{0}^{1}F(\nabla v_{j})\dif x = -1 < 0 = \int_{0}^{1}F(\nabla u)\dif x.
\end{align*}
\end{example}
\subsection{The good generation theorem}\label{sec:goodgeneration1}
For our subsequent arguments it is necessary to be able to work with \emph{good} generating sequences. By good sequences we here understand that each member attains fixed interior traces along the boundaries of suitable balls. Our approach is strongly inspired by that of \cite{SchmidtPR}, however, now facing the difficulty that we allow for signed integrands and thereby entailing a different overall set-up of the proof (also see Remark~\ref{rem:signed} below). The following proposition ensures the existence of such sequences:
\begin{proposition}\label{prop:goodminseqs} Let $1\leq q <\frac{n}{n-1}$ and let $F\in\hold(\R^{N\times n})$ be quasiconvex with \ref{item:H1}. Let $\omega\Subset\omega'\subset\R^{n}$ be two open and bounded sets with Lipschitz boundaries and let $u_{0}\in\sobo^{1,q}(\omega';\R^{N})$ be given. Finally, let $u\in\bv(\omega';\R^{N})$ satisfy $\overline{\mathscr{F}}_{u_{0}}^{*}[u;\omega,\omega']<\infty$.

Whenever $(v_{j})\subset\mathscr{A}_{u_{0}}^{q}(\omega,\omega')$ is a generating sequence for $\overline{\mathscr{F}}_{u_{0}}^{*}[u;\omega,\omega']$ and $x_{0}\in\omega$, then for $\mathscr{L}^{1}$-a.e. $r\in (0,\mathrm{dist}(x_{0},\partial\omega))$ satisfying 
\begin{align}\label{eq:maxcond}
\mathcal{M}Du(x_{0},r)<\infty
\end{align}
with the maximal operator $\mathcal{M}$ from \eqref{eq:maxoperator}, there exists a subsequence $(v_{j_{k}})$ and another \emph{generating sequence} $(u_{j_{k}})\subset\mathscr{A}_{u_{0}}^{q}(\omega,\omega')$ such that the following hold: 
\begin{enumerate}
\item\label{item:FixRadius1} $\mathscr{H}^{n-1}(J_{u}\cap\partial\!\ball_{r}(x_{0}))=0$. 
\item\label{item:FixRadius2} For all $k\in\mathbb{N}$ we have $\trace_{\partial\!\ball_{r}(x_{0})}(u_{j_{k}})=\trace_{\partial\!\ball_{r}(x_{0})}(u)$ $\mathscr{H}^{n-1}$-a.e. on $\partial\!\ball_{r}(x_{0})$.
\item\label{item:FixRadius3} $\nabla v_{j_{k}} - \nabla u_{j_{k}}\to 0$ in $\lebe^{1}(\omega;\R^{N\times n})$ as $k\to\infty$. 
\end{enumerate}
\end{proposition}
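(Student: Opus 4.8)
The plan is to take the given generating sequence $(v_j)\subset\mathscr{A}_{u_0}^q(\omega,\omega')$ and modify it on a thin annulus adjacent to $\partial\!\ball_r(x_0)$ so as to force the interior trace of each modified competitor to equal $\trace_{\partial\!\ball_r(x_0)}(u)$, while controlling the error in the energy. First I would record the selection of a good radius: since $Du\in\mathrm{RM}(\omega';\R^{N\times n})$ and $\nabla v_j\to\nabla u$ only weakly, one uses the weak-$(1,1)$ bound for the spherical maximal operator $\mathcal{M}$ from Section~\ref{sec:HLW} together with the fact that $|Dv_j|(\omega)$ is bounded (because $\liminf_j\int_{\omega'}F(\nabla v_j)\dif x<\infty$ and \ref{item:H1} combined with \ref{item:H2}/coercivity gives an $\lebe^1$-gradient bound) to see that for $\mathscr{L}^1$-a.e.\ $r$ one simultaneously has $\mathcal{M}Du(x_0,r)<\infty$, $\mathcal{M}Dv_j(x_0,r)<\infty$ along a suitable (diagonal) subsequence, and — by \eqref{eq:traceequal} and \eqref{eq:interiortraces1} — that $\mathscr{H}^{n-1}(J_u\cap\partial\!\ball_r(x_0))=0$, giving \ref{item:FixRadius1}. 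Here the right object to run the Hardy–Littlewood selection Lemma~\ref{lem:HLW} on is the monotone right-continuous function $t\mapsto|D(u-v_{j_k})|(\overline{\ball_t(x_0)})+\tfrac1t\|u-v_{j_k}\|_{\lebe^1(\ball_t)}$, diagonalised in $k$, so that for the chosen $r$ one controls $|D(u-v_{j_k})|$ on annuli $\ball_{r}\setminus\ball_{r-\delta}$ and $\ball_{r+\delta}\setminus\ball_r$ by the full mass on a fixed ball, uniformly in $k$ and $\delta$.

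Next I would carry out the trace correction. Fix a small annular shell $A_\eta:=\ball_{r+\eta}(x_0)\setminus\overline{\ball}_{r-\eta}(x_0)\Subset\omega$ and, using the trace-preserving Whitney operator $\mathbb{E}_{A_\eta}$ (or $\widetilde{\mathbb{E}}_{A_\eta}$) of Lemma~\ref{lem:extensionoperator}, glue: define $u_{j_k}$ to equal $v_{j_k}$ outside $A_\eta$, to equal $u$ on $\partial\!\ball_r(x_0)$ in the trace sense, and to interpolate across $A_\eta$ by combining $\mathbb{E}_{A_\eta}(v_{j_k}-u)$ with a cutoff supported in $A_\eta$, exactly as in Corollary~\ref{cor:Fubini}'s construction. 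Because $\mathbb{E}_{A_\eta}$ is trace-preserving on the outer boundary $\partial\!\ball_{r+\eta}(x_0)$ and on $\partial\!\ball_{r-\eta}(x_0)$, the modified map still lies in $\mathscr{A}_{u_0}^q(\omega,\omega')$ (it agrees with $u_0$ on $\omega'\setminus\overline{\omega}$ since $A_\eta\Subset\omega$), and by construction $\trace_{\partial\!\ball_r(x_0)}(u_{j_k})=\trace_{\partial\!\ball_r(x_0)}(u)$, giving \ref{item:FixRadius2}; one must check the interior trace is well-defined, which holds because $\mathcal{M}Du(x_0,r)<\infty$ via \eqref{eq:traceequal}. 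The key quantitative point, supplying \ref{item:FixRadius3}, is that $\nabla u_{j_k}-\nabla v_{j_k}$ is supported in $A_\eta$ and, by the estimates \eqref{eq:fundamental} of Lemma~\ref{lem:extensionoperator}~\ref{item:extension4} applied to $w=v_{j_k}-u$, its $\lebe^1(A_\eta)$-norm is bounded by $c\,\eta\cdot\sup_{0<\delta<\lambda\eta}\tfrac1\delta|D(v_{j_k}-u)|(\ball_r\setminus\ball_{r-\delta})+\dots$ plus the near-boundary $\lebe^1$-mass of $v_{j_k}-u$ itself; after the diagonal selection this is $\leq c\,|D(v_{j_k}-u)|(A_{\lambda\eta})+o(1)$ uniformly, and since $v_{j_k}\to u$ in $\lebe^1$ and $|Dv_{j_k}|$ is bounded, sending first $k\to\infty$ and then $\eta\to0$ (with a further diagonalisation pairing $\eta=\eta_k\searrow0$) drives this to zero.

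It remains to verify that $(u_{j_k})$ is still a \emph{generating} sequence for $\overline{\mathscr{F}}_{u_0}^*[u;\omega,\omega']$, i.e.\ that $u_{j_k}\stackrel{*}{\rightharpoonup}\mathbf{u}$ in $\bv(\omega';\R^{N})$ and $\lim_k\mathscr{F}[u_{j_k};\omega']=\overline{\mathscr{F}}_{u_0}^*[u;\omega,\omega']$. Weak*-convergence is immediate: $u_{j_k}-v_{j_k}\to0$ in $\lebe^1$ and its gradients are bounded in $\lebe^1$ (hence the measures are equibounded), so the weak*-limit is unchanged. For the energy, we already have $\liminf_k\mathscr{F}[u_{j_k};\omega']\geq\overline{\mathscr{F}}_{u_0}^*[u;\omega,\omega']$ by definition of the relaxed functional, so the content is the reverse, $\limsup_k\mathscr{F}[u_{j_k};\omega']\leq\overline{\mathscr{F}}_{u_0}^*[u;\omega,\omega']=\lim_k\mathscr{F}[v_{j_k};\omega']$. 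One writes $\mathscr{F}[u_{j_k};\omega']-\mathscr{F}[v_{j_k};\omega']=\int_{A_\eta}\big(F(\nabla u_{j_k})-F(\nabla v_{j_k})\big)\dif x$, and here lies \textbf{the main obstacle}: because $F$ is \emph{signed}, one cannot simply discard a favorable term, and the gradients $\nabla v_{j_k}$ may concentrate in $\lebe^q$ on the shrinking shell, so $F(\nabla v_{j_k})$ could tend to $-\infty$ on $A_\eta$ while $F(\nabla u_{j_k})$, built from $\mathbb{E}_{A_\eta}(v_{j_k}-u)$ which has \emph{controlled} $\sobo^{1,q}$-energy on $A_\eta$ via \eqref{eq:fundamental}, stays bounded — creating spurious mass. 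To defeat this I would exploit \ref{item:H1} to bound $|F(\nabla u_{j_k})|$ on $A_\eta$ by $c(1+|\nabla u_{j_k}|^q)$ and then use the $\lebe^q$-estimate $\big(\int_{A_\eta}|\nabla u_{j_k}|^q\big)^{1/q}\leq c(r)\,\eta^{n/q-n+1}\sup_{0<\delta<\lambda\eta}\tfrac1\delta|D(v_{j_k}-u)|(\ball_r\setminus\ball_{r-\delta})+\dots$ from \eqref{eq:fundamental}$_2$, which after the diagonal selection is $o(1)$ as $k\to\infty$ then $\eta\to0$ \emph{provided} $q<\tfrac{n}{n-1}$ (this is exactly where that hypothesis is used), so $\int_{A_\eta}F(\nabla u_{j_k})\dif x\to0$; and for $\int_{A_\eta}F(\nabla v_{j_k})\dif x$ one uses that along the good radius $r$ the sequence $\mathscr{F}[v_{j_k};\ball_{r+\eta}(x_0)]$ and $\mathscr{F}[v_{j_k};\ball_{r-\eta}(x_0)]$ are each $\liminf$-bounded below by the corresponding relaxed functionals, which are additive across $\partial\!\ball_r(x_0)$ on good radii, so $\limsup_k\int_{A_\eta}F(\nabla v_{j_k})\dif x\leq\overline{\mathscr{F}}_{u_0}^*[u;A_\eta]\to0$ as $\eta\to0$ by absolute continuity of the finite measure $\overline{\mathscr{F}}_{u_0}^*[u;\cdot]$; alternatively, and more robustly in the signed case, one chooses the radius and subsequence so that $\int_{A_{\lambda\eta}}F(\nabla v_{j_k})\dif x$ itself is small, using that $\sup_k\mathscr{F}[v_{j_k};\omega']<\infty$ together with a further Hardy–Littlewood selection on the nonnegative measures $(F(\nabla v_{j_k})+c)\mathscr{L}^n$ to trap the problematic shell where these equibounded measures put little mass. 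Combining, $\limsup_k\mathscr{F}[u_{j_k};\omega']\leq\lim_k\mathscr{F}[v_{j_k};\omega']=\overline{\mathscr{F}}_{u_0}^*[u;\omega,\omega']$, so equality holds and $(u_{j_k})$ is a generating sequence, completing the proof.
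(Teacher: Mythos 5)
Your setup (good radius via the spherical maximal operator, trace-preserving Whitney modification on thin annuli, \ref{item:FixRadius1}--\ref{item:FixRadius3}, and the $\lebe^q$-control of $\nabla u_{j_k}$ on the shells via Lemma~\ref{lem:extensionoperator}~\ref{item:extension4} to get $\int_{A_\eta}|F(\nabla u_{j_k})|\dif x\to 0$) matches the paper's steps~1--4. You have also correctly identified that the crux is to show
$\liminf_{k}\int_{A_\eta}F(\nabla v_{j_k})\dif x\geq 0$ when $F$ is genuinely signed. However, \emph{both} of your proposed ways to close this gap fail, and the gap is exactly where the new idea in the paper's proof lives.

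Your first route invokes additivity of $\overline{\mathscr{F}}_{u_0}^{*}[u;\cdot]$ across $\partial\!\ball_r(x_0)$ and ``absolute continuity of the finite measure $\overline{\mathscr{F}}_{u_0}^{*}[u;\cdot]$''. Both of these are unavailable at this stage. Additivity (Lemma~\ref{lem:additivityradii}~\ref{item:fin2}) is proved \emph{downstream} of Proposition~\ref{prop:goodminseqs} and in fact uses it, so invoking it here is circular. More fundamentally, $\overline{\mathscr{F}}_{u_0}^{*}[u;\cdot]$ is not known to be the restriction of a finite Radon measure on open subsets: the paper stresses repeatedly (Sections~\ref{sec:CompaIntro},~\ref{sec:keynov}(c)) that Fonseca--Mal\'{y}-type measure representations are \emph{not} available for signed integrands, and a central point of this paper is to avoid them. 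The step ``$\overline{\mathscr{F}}_{u_0}^{*}[u;A_\eta]\to 0$ as $\eta\to 0$'' is therefore not justified. (Also, $\mathscr{F}[v_{j_k};\ball_{r\pm\eta}]$ is not $\liminf$-bounded below by $\overline{\mathscr{F}}_{u_0}^{*}[u;\ball_{r\pm\eta},\omega']$, because $v_{j_k}\notin\mathscr{A}_{u_0}^{q}(\ball_{r\pm\eta},\omega')$.)

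Your second, ``more robust'' route — running a Hardy--Littlewood selection on ``the nonnegative measures $(F(\nabla v_{j_k})+c)\mathscr{L}^n$'' — presupposes exactly what fails in the genuinely signed setting. Hypothesis~\ref{item:H1} gives only $F(z)\geq -L(1+|z|^{q})$, so for $(F(\nabla v_{j_k})+c)\mathscr{L}^n$ to be nonnegative \emph{and} of equibounded total mass you need $c$ to majorise $L(1+|\nabla v_{j_k}|^{q})$, i.e.\ you need a uniform $\lebe^{q}$-bound on $\nabla v_{j_k}$; but one only has $\lebe^{p}$-bounds with $p<q$, which is the whole point of the $(p,q)$-regime. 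If $F$ really were bounded below by a constant, the selection argument would work — but that is precisely Remark~\ref{rem:signed}, where the paper notes that Step~5 ``trivialises''. Example~\ref{ex:ALminorisation} shows there are admissible $F$ that cannot even be minorised by an affine map, so no such $c$ exists.

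The paper's actual resolution (Step~5) is genuinely different from anything in your proposal: one fixes a \emph{macroscopic} annulus $\ball_{r+\ell}\setminus\overline{\ball}_{r-\ell}$ with $\mathcal{M}Du(x_0,r\pm\ell)<\infty$, builds auxiliary competitors $\mathbf{v}_{j_k}$ (eq.~\eqref{eq:fatvdefine}) that equal $v_{j_k}$ on the shrinking inner shells $\ball_{r+a_k}\setminus\overline{\ball}_{r-a_k^{-}}$ and transition to the fixed $\sobo^{1,q}$-map $\mathbf{u}=\E u$ near $\partial\!\ball_{r\pm\ell}$, and then applies the lower semicontinuity theorem (Lemma~\ref{lem:LSC}) \emph{on the fixed annulus with fixed Dirichlet data} $\mathbf{v}_{j_k}\in\sobo_{\mathbf{u}}^{1,q}$. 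Splitting the resulting inequality and letting the auxiliary transition layers and $\int_{\ball_{r+d_k}\setminus\ball_{r-d_k^-}}F(\nabla\mathbf{u})\dif x$ vanish gives $\liminf_k\int_{\ball_{r+a_k}\setminus\ball_{r-a_k^-}}F(\nabla v_{j_k})\dif x\geq 0$. The need to retreat to a fixed annulus with fixed boundary values (rather than shrinking the whole domain of comparison) is exactly what makes the signed case nontrivial, and this is the piece of the argument that is missing from your proof.
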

We shall refer to the previous proposition as \emph{good generation theorem}. As will become visible from the proof, whenever $(v_{j})\subset\mathscr{A}_{u_{0}}^{q}(\omega,\omega')$ is a generating sequence such that $(|Dv_{j}|)$ converges in the weak*-sense to some $\lambda\in\mathrm{RM}_{\mathrm{fin}}(\omega')$, the previous proposition is applicable to any $x_{0}\in\omega$ and $r>0$ such that $0<r<\mathrm{dist}(x_{0},\partial\omega)$ and $\mathcal{M}\lambda(x_{0},r)+\mathcal{M}Du(x_{0},r)<\infty$. Any such $r$ shall be referred to as \emph{good radius (of $Du$ for $\overline{\mathscr{F}}_{u_{0}}^{*}[u;\omega,\omega']$) at $x_{0}$}. 
\begin{proof}
We split the proof into five steps. 

\emph{Step 1. Set-up.} Since $(|Dv_{j}|(\omega'))$ is bounded, weak*-compactness in $\mathrm{RM}_{\mathrm{fin}}(\omega')$ implies the existence of a (here non-relabeled) subsequence and $\lambda\in\mathrm{RM}_{\mathrm{fin}}(\omega')$ such that $|Dv_{j}|\stackrel{*}{\rightharpoonup}\lambda$. Moreover, it is no loss of generality to assume that 
\begin{align}\label{eq:liminfislim}
\lim_{j\to\infty}\int_{\omega'}F(\nabla v_{j})\dif x = \overline{\mathscr{F}}_{u_{0}}^{*}[u;\omega,\omega']. 
\end{align}
For $\mathscr{L}^{1}$-a.e. $0<r<\mathrm{dist}(x_{0},\partial\omega)$, we have
\begin{align}\label{eq:maxcondlambda1}
\mathcal{M}\lambda(x_{0},r)<\infty
\end{align}
by the results from Section~\ref{sec:HLW}, and so $\mathscr{L}^{1}$-a.e. such radius satisfies $\mathcal{M}Du(x_{0},r)<\infty$ by lower semicontinuity of the total variation with respect to $\lebe_{\locc}^{1}$-convergence (recall that $u_{j}\to u$ strongly in $\lebe^{1}(\omega';\R^{N})$).  

Fix such a radius $r>0$; for notational brevity, we write $\ball_{r}:=\ball_{r}(x_{0})$ in the following. We fix $\ell>0$ so small such that the annulus $\ball_{r+\ell}\setminus\overline{\ball}_{r-\ell}$ is compactly contained in $\omega$ and 
\begin{align}\label{eq:maxcondo1}
\mathcal{M}Du(x_{0},r+\ell),\mathcal{M}Du(x_{0},r-\ell)<\infty,
\end{align}
which is possible by the results from Section~\ref{sec:HLW}. We then choose $j_{0}\in\mathbb{N}$ so large such that 
\begin{align*}
\ball_{r+2^{-j_{0}}}\setminus\overline{\ball}_{r-2^{-j_{0}}}\Subset \ball_{r+\ell}\setminus\overline{\ball}_{r-\ell}.
\end{align*}
We have $\lim_{j\to\infty}|Dv_{j}|(K)=\lambda(K)$ for each compact subset $K$ of $\omega$. By the Rellich-Kondrachov theorem, we thus find by iteration that for each $k\in\mathbb{N}_{\geq 2}$ there exists $j_{k}\in\mathbb{N}$ such that 
\begin{align}\label{eq:RecoveryClosedness}
\begin{split}
& \left\vert\int_{\overline{\ball}_{r+2^{-j_{0}-k+1}}\setminus{\ball}_{r-2^{-j_{0}-k+1}}}|\nabla v_{j_{k}}|\dif x  - \int_{\overline{\ball}_{r+2^{-j_{0}-k+1}}\setminus{\ball}_{r-2^{-j_{0}-k+1}}}\dif\lambda\right\vert \\ &  \;\;\;\;\;\;\;\;\;\;\;\;\;\;\;\;\;\;\;\;\;\;\;\;\;\;\;\;\;\;\;\;\;\;\;\;+ \int_{\overline{\ball}_{r+2^{-j_{0}-k+1}}\setminus{\ball}_{r-2^{-j_{0}-k+1}}}|v_{j_{k}}-u|\dif x < 2^{-2k},
\end{split}
\end{align}
and clearly we may assume that $(j_{k})$ is increasing. Therefore, we obtain for all $k\in\mathbb{N}$: 
\begin{align}\label{eq:MaximalBoundA}
2^{j_{0}+k-2}\int_{\overline{\ball}_{r+2^{-j_{0}-k+1}}\setminus\ball_{r-2^{-j_{0}-k+1}}}|\nabla v_{j_{k}}|\dif x \leq 2\mathcal{M}\lambda(x_{0},r) + 2^{j_{0}-k-2}.
\end{align}
Since $(v_{j_{k}})$ is generating as well, for the statement of the proposition it suffices to consider this subsequence.

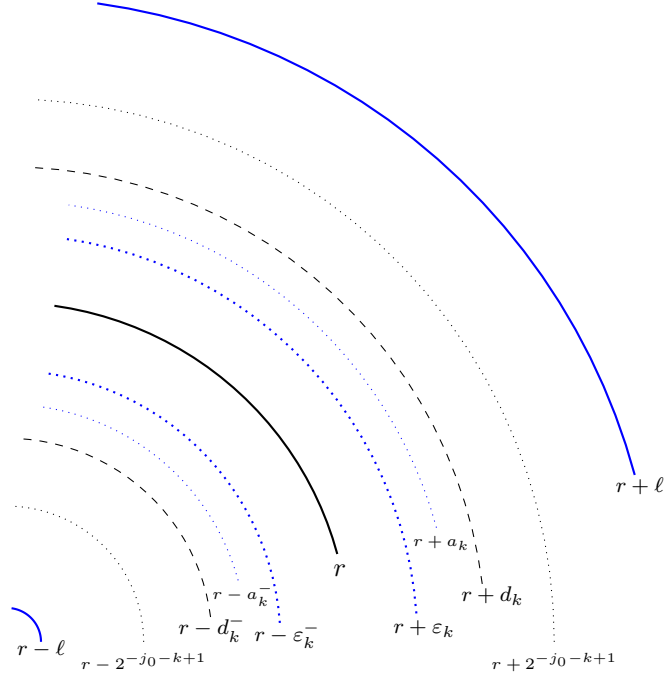
\begin{figure}
\begin{center}
\begin{tikzpicture}[scale=0.9]
%\draw[step=.5cm,gray,very thin] (-1.4,-1.4) grid (1.4,1.4);
% \foreach \x in {0.1,0.2,0.3}
% \draw (5*\x,{5*sqrt(1-\x*\x)}) circle (0.4cm);
\draw[black,dashed,domain=7:87] plot ({3*cos(\x)},{3*sin(\x)});
\draw[black,dotted,domain=0:87] plot ({2*cos(\x)},{2*sin(\x)});
\draw[black,dashed,domain=7:87] plot ({7*cos(\x)},{7*sin(\x)});
\draw[black,dotted,domain=0:87] plot ({8*cos(\x)},{8*sin(\x)});
\draw[black,thick,domain=15:82] plot ({5*cos(\x)},{5*sin(\x)});
\draw[blue,thick,domain=-0:82] plot ({0.5*cos(\x)},{0.5*sin(\x)});
\draw[blue,dotted,domain=15:82] plot ({3.5*cos(\x)},{3.5*sin(\x)});
\draw[blue,thick,dotted,domain=4:82] plot ({4*cos(\x)},{4*sin(\x)});
\draw[blue,thick,dotted,domain=4:82] plot ({6*cos(\x)},{6*sin(\x)});
\draw[blue,dotted,domain=15:82] plot ({6.5*cos(\x)},{6.5*sin(\x)});
\draw[blue,thick,domain=15:82] plot ({9.5*cos(\x)},{9.5*sin(\x)});
\draw (3,0.2) node {\footnotesize $r-d_{k}^{-}$};
\draw (2,-0.3) node {\tiny $r-2^{-j_{0}-k+1}$};
\draw (4.1,0.1) node {\footnotesize $r-\varepsilon_{k}^{-}$};
\draw (6.1,0.2) node {\footnotesize $r+\varepsilon_{k}$};
\draw (3.45,0.7) node {{\tiny $r-a_{k}^{-}$}};
\draw (6.35,1.46) node {\tiny $r+a_{k}$};
\draw (7.1,0.68) node {\footnotesize $r+d_{k}$};
\draw (8,-0.3) node {\tiny $r+2^{-j_{0}-k+1}$};
\draw (9.25,2.3) node {\footnotesize $r+\ell$};
\draw (4.87,1.05) node {$r$};
\draw (0.5,-0.1) node {\footnotesize $r-\ell$};
\end{tikzpicture}
\caption{Radius notation and locations in the proof of Proposition~\ref{prop:goodminseqs}.}\label{fig:radiilocations1}
\end{center}
\end{figure}

By \eqref{eq:maxcondo1} and since $\mathcal{M}Du(x_{0},r)<\infty$, we first conclude by Lemma~\ref{lem:extensionoperator} that $\mathbb{E}_{r-\ell,r}u\in\sobo^{1,q}(\ball_{r}\setminus\overline{\ball}_{r-\ell};\R^{N})$ and $\mathbb{E}_{r,r+\ell}u\in\sobo^{1,q}(\ball_{r+\ell}\setminus\overline{\ball}_{r};\R^{N})$. Moreover, by Lemma~\ref{lem:extensionoperator}, the traces of $\mathbb{E}_{r-\ell,r}u$ and $\mathbb{E}_{r,r+\ell}u$ along $\partial\!\ball_{r}(x_{0})$ coincide with $\trace_{\partial\!\ball_{r}}^{+}(u)$ or $\trace_{\partial\!\ball_{r}}^{-}(u)$, respectively, $\mathscr{H}^{n-1}$-a.e. on $\partial\!\ball_{r}(x_{0})$. Because of~$\mathcal{M}Du(x_{0},r)<\infty$, however,~\eqref{eq:traceequal} implies that $\trace_{\partial\!\ball_{r}}^{+}(u)$ and $\trace_{\partial\!\ball_{r}}^{-}(u)$ coincide $\mathscr{H}^{n-1}$-a.e. on $\partial\!\ball_{r}(x_{0})$, directly yielding~\ref{item:FixRadius1} by~\eqref{eq:interiortraces1}, and this implies that the map defined by
\begin{align}\label{eq:mathbfudef}
\mathbf{u} :=\begin{cases} 
\mathbb{E}_{r-\ell,r}u&\;\text{in}\;\ball_{r}\setminus\overline{\ball}_{r-\ell}, \\ 
\mathbb{E}_{r,r+\ell}u&\;\text{in}\;\ball_{r+\ell}\setminus\overline{\ball}_{r}
\end{cases}
\end{align}
belongs to $\sobo^{1,q}(\ball_{r+\ell}\setminus\overline{\ball}_{r-\ell};\R^{N})$.

 We recall that the set 
\begin{align*}
N:=\{s>0\colon\;\ball_{s}\subset\omega,\;\mathcal{M}Du(x_{0},s)=\infty\}
\end{align*}
satisfies $\mathscr{L}^{1}(N)=0$. In the sequel, we will employ the auxiliary functions
\begin{align}\label{eq:ThetajdefChapter30}
\widetilde{\Theta}_{k}(t) := \int_{\overline{\ball}_{t}}|Du| + \int_{\overline{\ball}_{t}}\left\vert\frac{v_{j_{k}}-\mathbf{u}}{2^{-k}}\right\vert\dif x + \int_{\overline{\ball}_{t}}|\nabla (v_{j_{k}}-\mathbf{u})|\dif x. 
\end{align}
Within the framework of Lemma~\ref{lem:HLW}, which we apply to the interval $\mathcal{I}_{k}=(r,r+2^{-j_{0}-k+1})$ and $E=(N\cap\mathcal{I}_{k})\cup (r,r+2^{-j_{0}-k-4})\cup (r+31\times 2^{-j_{0}-k-4},r+2^{-j_{0}-k+1})$ (so that $\mathscr{L}^{1}(E)=\frac{1}{16}\mathscr{L}^{1}(\mathcal{I}_{k})<\theta\mathscr{L}^{1}(\mathcal{I}_{k})$ with $\theta=\frac{3}{32}$), we find $d_{k}>0$ such that $r+d_{k}\in \mathcal{I}_{k}\setminus E$, 
\begin{align}
\label{eq:unifchooseD}
&\;\;\;\;\;\;\;\;\;\;\;\;\;\;\;\;\;\;\;\;\;\;\;\;\;\;\,\;\,d_{k}\sim_{j_{0}} 2^{-k}\;\text{and}\\ \begin{split} \label{eq:Choose1D}
&\frac{\widetilde{\Theta}_{k}(r+d_{k})-\widetilde{\Theta}_{k}(\tau)}{r+d_{k}-\tau} \leq 3200\frac{\widetilde{\Theta}_{k}(r+2^{-j_{0}-k+1})-\widetilde{\Theta}_{k}(r)}{2^{-j_{0}-k+1}} \\ &\;\;\;\;\;\;\;\;\;\;\;\;\;\;\;\;\;\;\;\;\;\;\;\;\;\;\;\;\;\;\;\;\;\;\;\;\;\;\;\;\;\;\;\;\;\;\;\;\text{for all}\; \tau\in (r,r+d_{k}).
\end{split}
\end{align}
Now put for $t\in(r,r+d_{k})$
\begin{align}\label{eq:ThetajdefChapter3}
\begin{split}
\Theta_{k}(t) & :=\int_{\overline{\ball}_{t}}|\nabla v_{j_{k}}|\dif x + \int_{\overline{\ball}_{t}}\left\vert\frac{v_{j_{k}}-\mathbf{u}}{2^{-k}}\right\vert\dif x + \int_{\overline{\ball}_{t}}|\nabla (v_{j_{k}}-\mathbf{u})|\dif x\\ & + \int_{\overline{\ball}_{t}}\left\vert \frac{\mathbb{E}_{r,r+d_{k}}u-v_{j_{k}}}{2^{-k}}\right\vert\dif x + \int_{\overline{\ball}_{t}}|\nabla(\mathbb{E}_{r,r+d_{k}}u-v_{j_{k}})|\dif x .
\end{split}
\end{align}
By Lemma~\ref{lem:HLW}, which we apply to the interval $\mathcal{J}_{k}=(r,r+d_{k})$ and $E=(N\cap\mathcal{J}_{k})\cup(r,r+\frac{1}{32}d_{k})\cup(r+\frac{31}{32}d_{k},r)$, we find $0<\varepsilon_{k}<a_{k}<d_{k}$ such that $r+\varepsilon_{k},r+a_{k}\in\mathcal{J}_{k}\setminus E$ and 
\begin{align}
\begin{split}\label{eq:Choose1A}
&\frac{\Theta_{k}(r+a_{k})-\Theta_{k}(\tau)}{r+a_{k}-\tau}\leq 3200 \frac{\Theta_{k}(r+d_{k})-\Theta_{k}(r)}{d_{k}}\\ &\;\;\;\;\;\;\;\;\;\;\;\;\;\;\;\;\;\;\;\;\;\;\;\;\;\;\;\;\;\;\;\;\;\;\;\;\;\;\;\;\;\;\;\;\;\;\;\;\text{for all}\; \tau \in (r,r+a_{k}),\end{split}\\ \begin{split}\label{eq:Choose1B}
&\frac{\Theta_{k}(\tau)-\Theta_{k}(r+a_{k})}{\tau-r-a_{k}}\leq 3200 \frac{\Theta_{k}(r+d_{k})-\Theta_{k}(r)}{d_{k}}\\ &\;\;\;\;\;\;\;\;\;\;\;\;\;\;\;\;\;\;\;\;\;\;\;\;\;\;\;\;\;\;\;\;\;\;\;\;\;\;\;\;\;\;\;\;\;\;\;\;\text{for all}\; \tau \in (r+a_{k},r+d_{k}),\end{split}\\ \begin{split}\label{eq:Choose1C}
&\frac{\Theta_{k}(\tau)-\Theta_{k}(r+\varepsilon_{k})}{\tau-r-\varepsilon_{k}} \leq 3200 \frac{\Theta_{k}(r+d_{k})-\Theta_{k}(r)}{d_{k}} \\ &\;\;\;\;\;\;\;\;\;\;\;\;\;\;\;\;\;\;\;\;\;\;\;\;\;\;\;\;\;\;\;\;\;\;\;\;\;\;\;\;\;\;\;\;\;\;\;\;\text{for all}\; \tau\in (r+\varepsilon_{k},r+d_{k}),
\end{split}
\end{align}
and, uniformly in $k$,  
\begin{align}\label{eq:unichoose}
\varepsilon_{k}\sim_{j_{0}}2^{-k},\;\;\;(a_{k}-\varepsilon_{k})\sim_{j_{0}}2^{-k},\;\;\;(d_{k}-a_{k})\sim_{j_{0}}2^{-k}.
\end{align}
See Figure~\ref{fig:radiilocations1} for the locations of these radii. For future reference, we note that 
\begin{align}\label{eq:ThetaboundChap3}
\begin{split}
&\frac{\widetilde{\Theta}_{k}(r+2^{-j_{0}-k+1})-\widetilde{\Theta}_{k}(r)}{2^{-j_{0}-k+1}}\leq c(u,j_{0},x_{0},r)<\infty,\\
&\frac{\Theta_{k}(r+d_{k})-\Theta_{k}(r)}{d_{k}}\leq c(u,j_{0},x_{0},r)<\infty, 
\end{split}
\end{align}
as can be seen as follows: For~$\eqref{eq:ThetaboundChap3}_{1}$, the first term in the definition of $\widetilde{\Theta}_{k}$ is controlled by~\eqref{eq:maxcond}. For the second term in the definition of $\widetilde{\Theta}_{k}$, we estimate 
\begin{align*}
2^{k}\int_{\ball_{r+2^{-j_{0}-k-1}}\setminus\overline{\ball}_{r}}\left\vert\frac{v_{j_{k}}-\mathbf{u}}{2^{-k}}\right\vert\dif x & \leq 2^{k}\int_{\ball_{r+2^{-j_{0}-k-1}}\setminus\overline{\ball}_{r}}\left\vert \frac{v_{j_{k}}-u}{2^{-k}}\right\vert\dif x \\ & + 2^{k}\int_{\ball_{r+2^{-j_{0}-k-1}}\setminus\overline{\ball}_{r}}\left\vert\frac{u-\mathbf{u}}{2^{-k}}\right\vert\dif x =: \mathrm{J}_{k}+\mathrm{JJ}_{k},  
\end{align*}
so that $\mathrm{J}_{k}$ is uniformly bounded by~\eqref{eq:RecoveryClosedness}. To bound $\mathrm{JJ}_{k}$, we directly adopt the notation employed in the construction of the trace-preserving operator $\E_{r,r+\ell}$ with the Whitney ball cover $(\ball^{i})$ of $\ball_{r+\ell}\setminus\overline{\ball}_{r}$ and the corresponding partition of unity $(\eta_{i})$ subject to $(\ball^{i})$. Then, by Poincar\'{e}'s inequality on $\bv$, the uniformly finite overlap of the Whitney balls~\ref{item:Whitney2} and recalling that, for $k$ sufficiently large, the radius of any $\ball^{i}$ with $\ball^{i}\cap(\ball_{r+2^{-j_{0}-k+1}}\setminus\overline{\ball}_{r})\neq\emptyset$ is bounded by a constant multiple of $2^{-j_{0}-k}$ by~\ref{item:Whitney3}, we conclude
\begin{align}\label{eq:butterchicken}
\begin{split}
\int_{\ball_{r+2^{-j_{0}-k+1}}\setminus\overline{\ball}_{r}}\left\vert \frac{u-\mathbf{u}}{2^{-k}}\right\vert\dif x & \leq c\sum_{\substack{i\colon\\ \ball^{i}\cap(\ball_{r+2^{-j_{0}-k+1}}\setminus\overline{\ball}_{r})\neq\emptyset}}\int_{\ball^{i}}\left\vert \frac{\eta_{i}(u-(u)_{\ball^{i}})}{2^{-k}}\right\vert\dif x \\ 
& \leq c \int_{\ball_{r+2^{-j_{0}-k+1}}\setminus\overline{\ball}_{r}}|Du| \\ 
& \leq c2^{-k}\mathcal{M}Du(x_{0},r)  
\end{split}
\end{align}
for all sufficiently large $k$, and so $\sup_{k\in\mathbb{N}}\mathrm{JJ}_{k}<\infty$. Finally, the third term in the definition of $\widetilde{\Theta}_{k}$ is controlled by splitting 
\begin{align*}
2^{k}\int_{\ball_{r+2^{-j_{0}-k+1}}\setminus\overline{\ball}_{r}}|\nabla(v_{j_{k}}-\mathbf{u})|\dif x & \leq 2^{k}\int_{\ball_{r+2^{-j_{0}-k+1}}\setminus\overline{\ball}_{r}}|\nabla v_{j_{k}}|\dif x \\ & + 2^{k}\int_{\ball_{r+2^{-j_{0}-k+1}}\setminus\overline{\ball}_{r}}|\nabla\mathbf{u}|\dif x =: \mathrm{J}'_{k}+\mathrm{JJ}'_{k}
\end{align*}
and using the maximal condition $\mathcal{M}\lambda(x_{0},r)<\infty$ in conjunction with~\eqref{eq:MaximalBoundA} for $\mathrm{J}'_{k}$, whereas $\mathrm{JJ}'_{k}$ is bounded by using~$\eqref{eq:extensionpointwise}_{1}$: 
\begin{align*}
\mathrm{JJ}'_{k} & \stackrel{\eqref{eq:extensionpointwise}_{1}}{\leq} c2^{k}\sum_{i\colon\,\ball^{i}\cap(\ball_{r+2^{-j_{0}-k+1}}\setminus\overline{\ball}_{r})\neq\emptyset}\int_{\Lambda\!\ball^{i}}|Du| \leq c\,\mathcal{M}Du(x_{0},r)<\infty. 
\end{align*} 
Towards the justification of~$\eqref{eq:ThetaboundChap3}_{2}$, the first term in the definition of $\Theta_{k}$ is controlled by $\mathcal{M}\lambda(x_{0},r)<\infty$ in conjunction with~\eqref{eq:MaximalBoundA} and~\eqref{eq:unifchooseD}. The bounds on the second and third term in the definition of $\Theta_{k}$ are achieved as for $\widetilde{\Theta}_{k}$, now realising that $d_{k}\sim_{j_{0}}2^{-k}$ by~\eqref{eq:unifchooseD}. To bound the fourth term in the definition of $\Theta_{k}$, we imitate~\eqref{eq:butterchicken} and use~\eqref{eq:RecoveryClosedness} as follows:
\begin{align*}
\frac{1}{d_{k}}\int_{\ball_{r+d_{k}}\setminus\overline{\ball}_{r}}\left\vert\frac{\mathbb{E}_{r,r+d_{k}}u-v_{j_{k}}}{2^{-k}}\right\vert\dif x & \leq \frac{1}{d_{k}}\int_{\ball_{r+d_{k}}\setminus\overline{\ball}_{r}}\left\vert\frac{\mathbb{E}_{r,r+d_{k}}u-u}{2^{-k}}\right\vert\dif x \\ 
& + \frac{1}{d_{k}}\int_{\ball_{r+d_{k}}\setminus\overline{\ball}_{r}}\left\vert\frac{u-v_{j_{k}}}{2^{-k}}\right\vert\dif x \\ 
& \leq c(\mathcal{M}Du(x_{0},r)+1)<\infty,
\end{align*}
where we used that $d_{k}\sim_{j_{0}}2^{-k}$. Lastly, the fifth term in the definition of $\Theta_{k}$ is bounded by the $\lebe^{1}$-stability of the trace-preserving operators via 
\begin{align*}
\frac{1}{d_{k}}\int_{\ball_{r+d_{k}}\setminus\overline{\ball}_{r}}|\nabla\mathbb{E}_{r,r+d_{k}}u-v_{j_{k}}|\dif x & \leq \frac{1}{d_{k}}\int_{\ball_{r+d_{k}}\setminus\overline{\ball}_{r}}|\nabla\mathbb{E}_{r,r+d_{k}}u|\dif x \\ & + \frac{1}{d_{k}}\int_{\ball_{r+d_{k}}\setminus\overline{\ball}_{r}}|\nabla v_{j_{k}}|\dif x \\ 
& \leq \frac{c}{d_{k}}\int_{\ball_{r+d_{k}}\setminus\overline{\ball}_{r}}|Du| \\ & + \frac{c}{2^{-k}}\int_{\ball_{r+2^{-j_{0}-k+1}}\setminus\overline{\ball}_{r}}|\nabla v_{j_{k}}|\dif x
\end{align*}
and recalling $d_{k}\sim_{j_{0}}2^{-k}$ as well as the maximal conditions $\mathcal{M}Du(x_{0},r),\mathcal{M}\lambda(x_{0},r)<\infty$ together with~\eqref{eq:MaximalBoundA}. This establishes~$\eqref{eq:ThetaboundChap3}_{2}$. On $\ball_{r}\setminus\overline{\ball}_{r-\ell}$ we similarly find $0<\varepsilon_{k}^{-}<a_{k}^{-}<d_{k}^{-}<2^{-j_{0}-k+1}$ with the analogous properties, tacitly assumed to be picked in the sequel. 

\emph{Step 2. Construction of the sequence $(u_{j_{k}})$.} To construct $u_{j}$, we initially modify $v_{j}$ by applying the operator $\E$ from Section~\ref{sec:Fubini} as follows:
\begin{align}\label{eq:vDEF}
\widetilde{v}_{j_{k}}:= \begin{cases} 
v_{j_{k}}&\;\text{in}\;\ball_{r-a_{k}^{-}},\\
\mathbb{E}_{r-a_{k}^{-},r-\varepsilon_{k}^{-}}v_{j_{k}}&\;\text{in}\;\ball_{r-\varepsilon_{k}^{-}}\setminus\overline{\ball}_{r-a_{k}^{-}},\\
v_{j_{k}} &\;\text{in}\;\ball_{r+\varepsilon_{k}}\setminus\overline{\ball}_{r-\varepsilon_{k}^{-}}, \\
\mathbb{E}_{r+\varepsilon_{k},r+a_{k}}v_{j_{k}}&\;\text{in}\;\ball_{r+a_{k}}\setminus\overline{\ball}_{r+\varepsilon_{k}},\\
v_{j_{k}}&\;\text{in}\;\omega\setminus\overline{\ball}_{r+a_{k}},
\end{cases} 
\end{align}
and moreover define
\begin{align}\label{eq:wDEF}
\widetilde{w}_{j_{k}}:=\begin{cases} 
u & \;\text{in}\;\ball_{r-d_{k}^{-}},\\
\E_{r-a_{k}^{-},r-\varepsilon_{k}^{-}}(\E_{r-d_{k}^{-},r}u) &\;\text{in}\;\ball_{r-\varepsilon_{k}^{-}}\setminus\overline{\ball}_{r-a_{k}^{-}} \\ 
\E_{r-d_{k}^{-},r}u&\;\text{in}\;(\ball_{r}\setminus\overline{\ball}_{r-d_{k}^{-}})\setminus(\ball_{r-\varepsilon_{k}^{-}}\setminus\overline{\ball}_{r-a_{k}^{-}}), \\ 
\mathbb{E}_{r,r+d_{k}}u&\;\text{in}\;(\ball_{r+d_{k}}\setminus\overline{\ball}_{r})\setminus (\ball_{r+a_{k}}\setminus\overline{\ball}_{r+\varepsilon_{k}}),\\ 
\mathbb{E}_{r+\varepsilon_{k},r+a_{k}}(\mathbb{E}_{r,r+d_{k}}u)&\;\text{in}\;\ball_{r+a_{k}}\setminus\overline{\ball}_{r+\varepsilon_{k}},\\
u&\;\text{in}\;\omega\setminus\overline{\ball}_{r+d_{k}}.
\end{cases} 
\end{align}
Next choose a cut-off function $\rho_{k}\in\hold_{c}^{\infty}(\omega;[0,1])$ such that 
\begin{align}\label{eq:cutoffprop1}
\begin{split}
&\mathbbm{1}_{\ball_{r+\varepsilon_{k}}\setminus\overline{\ball}_{r}}\leq\rho_{k}|_{\ball_{r+\ell}\setminus\overline{\ball}_{r}}\leq \mathbbm{1}_{\ball_{r+a_{k}}\setminus\overline{\ball}_{r}}\;\;\;\;\,\,\text{and}\;\;\;|\nabla\rho_{k}|\leq \frac{4}{a_{k}-\varepsilon_{k}}\;\;\;\text{on}\;\ball_{r+\ell}\setminus\overline{\ball}_{r}, \\  
&\mathbbm{1}_{\ball_{r}\setminus\ball_{r-\varepsilon_{k}^{-}}}\leq\rho_{k}|_{\ball_{r}\setminus\overline{\ball}_{r-\ell}}\leq \mathbbm{1}_{\ball_{r}\setminus\ball_{r-a_{k}^{-}}}\;\;\;\text{and}\;\;\;|\nabla\rho_{k}|\leq \frac{4}{a_{k}^{-}-\varepsilon_{k}^{-}}\;\;\;\text{on}\;\ball_{r}\setminus\overline{\ball}_{r-\ell}.
\end{split}
\end{align}
The requisite map $u_{j_{k}}\in\mathscr{A}_{u_{0}}^{q}(\omega;\omega')$ as claimed in the lemma then is defined by 
\begin{align}\label{eq:ujkdefine}
u_{j_{k}}:=\rho_{k}\widetilde{w}_{j_{k}} + (1-\rho_{k})\widetilde{v}_{j_{k}}. 
\end{align}
The claimed underlying regularity of $u_{j_{k}}$ immediately follows from the definition of the trace-preserving operators upon realising that, similar to the argument preceding~\eqref{eq:mathbfudef}, $\widetilde{w}_{j_{k}}$ has no jumps along $\partial\!\ball_{r}$. By Lemma~\ref{lem:extensionoperator} and the definition of $\widetilde{w}_{j_{k}}$, the traces of $u_{j_{k}}$ and $u$ along $\partial\!\ball_{r}$ coincide $\mathscr{H}^{n-1}$-a.e. on $\partial\!\ball_{r}$ for each $k\in\mathbb{N}$. Hence \ref{item:FixRadius2} holds, and so it remains to establish validity of  \ref{item:FixRadius3} and that $(u_{j_{k}})$ is generating for $\overline{\mathscr{F}}_{u_{0}}^{*}[u;\omega,\omega']$ indeed. We split the remaining proof into steps 3--5; for steps 3 and 4, it will be sufficient to exclusively consider the upper annuli $\ball_{r+\cdot}\setminus\overline{\ball}_{r}$ since the arguments are the same for the lower annuli $\ball_{r}\setminus\ball_{r-\cdot}$. It is only at step 5 where we need to consider the full annulus $\ball_{r+\ell}\setminus\overline{\ball}_{r-\ell}$. 

\emph{Step 3. $\nabla u_{j_{k}}-\nabla v_{j_{k}}\to 0$ in $\lebe^{1}$.}
In view of \ref{item:FixRadius3} and since $u_{j_{k}}(x)=v_{j_{k}}(x)$ for $x\in\omega\setminus\ball_{r}(x_{0})$ with $|x-x_{0}|\geq r+a_{k}$, the support properties of $\rho_{k}$ imply
\begin{align*}
\|\nabla(u_{j_{k}}-v_{j_{k}})\|_{\lebe^{1}(\omega\setminus\overline{\ball}_{r})} & \leq \|\rho_{k}\nabla (\widetilde{w}_{j_{k}}-v_{j_{k}})\|_{\lebe^{1}(\ball_{r+a_{k}}\setminus\overline{\ball}_{r})} \\ & + \|(\widetilde{w}_{j_{k}}-v_{j_{k}})\otimes\nabla\rho_{k}\|_{\lebe^{1}(\ball_{r+a_{k}}\setminus\overline{\ball}_{r+\varepsilon_{k}})} \\ 
& + \|(\nabla v_{j_{k}}-\nabla \widetilde{v}_{j_{k}})\|_{\lebe^{1}(\ball_{r+a_{k}}\setminus\overline{\ball}_{r+\varepsilon_{k}})}\\
& + \|(v_{j_{k}}-\widetilde{v}_{j_{k}})\otimes\nabla\rho_{k}\|_{\lebe^{1}(\ball_{r+a_{k}}\setminus\overline{\ball}_{r+\varepsilon_{k}})} \\ & =: \mathrm{I}_{k}+...+\mathrm{IV}_{k}.
\end{align*} 
Ad~$\mathrm{I}_{k}$. By the $\lebe^{1}$-stability of the trace-preserving operator $\mathbb{E}$, we obtain:
\begin{align*}
\mathrm{I}_{k} & \leq \|\nabla\mathbb{E}_{r,r+d_{k}}u\|_{\lebe^{1}(\ball_{r+d_{k}}\setminus\overline{\ball}_{r})} + \|\nabla\mathbb{E}_{r+\varepsilon_{k},r+a_{k}}\mathbb{E}_{r,r+d_{k}}u\|_{\lebe^{1}(\ball_{r+a_{k}}\setminus\overline{\ball}_{r+\varepsilon_{k}})}  \\ 
& + \|\nabla v_{j_{k}}\|_{\lebe^{1}(\ball_{r+a_{k}}\setminus\overline{\ball}_{r})} \\
& \leq c\|\nabla\mathbb{E}_{r,r+d_{k}}u\|_{\lebe^{1}(\ball_{r+d_{k}}\setminus\overline{\ball}_{r})} + \|\nabla v_{j_{k}}\|_{\lebe^{1}(\ball_{r+a_{k}}\setminus\overline{\ball}_{r})}\\ 
& \leq c|Du|(\ball_{r+d_{k}}\setminus\overline{\ball}_{r}) + \|\nabla v_{j_{k}}\|_{\lebe^{1}(\ball_{r+a_{k}}\setminus\overline{\ball}_{r})}\\ 
& \leq c2^{-k}\mathcal{M}Du(x_{0},r) + c2^{-k}\mathcal{M}\lambda(x_{0},r) + c2^{-2k} \to 0\;\;\;(\text{by \eqref{eq:maxcond},~\eqref{eq:maxcondlambda1}\;\text{and}\;\eqref{eq:MaximalBoundA}}). 
\end{align*}
Ad~$\mathrm{II}_{k}$. We now directly adopt the notation employed in the construction for the trace-preserving operator $\mathbb{E}_{r+\varepsilon_{k},r+a_{k}}$, with a corresponding Whitney ball cover $(\ball^{i})$ of the annulus $\ball_{r+a_{k}}\setminus\overline{\ball}_{r+\varepsilon_{k}}$ and the partition of unity $(\eta_{i})$ subject to $(\ball^{i})$. In consequence,
\begin{align*}
\mathrm{II}_{k} & \!\!\!\stackrel{\eqref{eq:cutoffprop1}}{\leq} c\int_{\ball_{r+a_{k}}\setminus\overline{\ball}_{r+\varepsilon_{k}}}\frac{|\widetilde{w}_{j_{k}}-v_{j_{k}}|}{a_{k}-\varepsilon_{k}}\dif x \\ 
& = c\int_{\ball_{r+a_{k}}\setminus\overline{\ball}_{r+\varepsilon_{k}}}\frac{|\mathbb{E}_{r+\varepsilon_{k},r+a_{k}}(\mathbb{E}_{r,r+d_{k}}u)-v_{j_{k}}|}{a_{k}-\varepsilon_{k}}\dif x \\
& =  \frac{c}{a_{k}-\varepsilon_{k}}\sum_{i\in\mathbb{N}}\int_{\ball_{r+a_{k}}\setminus\overline{\ball}_{r+\varepsilon_{k}}}|\eta_{i}(\mathbb{E}_{r,r+d_{k}}u)_{\ball^{i}}-\eta_{i}v_{j_{k}}|\dif x \\ 
& \leq \frac{c}{a_{k}-\varepsilon_{k}}\sum_{i\in\mathbb{N}}\int_{\ball^{i}}|(\mathbb{E}_{r,r+d_{k}}u)_{\ball^{i}}-v_{j_{k}}|\dif x \\
& \leq \frac{c}{a_{k}-\varepsilon_{k}}\sum_{i\in\mathbb{N}}\int_{\ball^{i}}|(\mathbb{E}_{r,r+d_{k}}u)_{\ball^{i}}-\mathbb{E}_{r,r+d_{k}}u| + |\mathbb{E}_{r,r+d_{k}}u - u| + |u -v_{j_{k}}|\dif x \\ & =: \mathrm{II}_{k}^{(1)}.
\end{align*}
By Poincar\'{e}'s inequality, $r(\ball^{i})\leq (a_{k}-\varepsilon_{k})$ for all $i\in\mathbb{N}$  and the uniformly finite overlap of the Whitney balls $\ball^{i}$, we then obtain by use of the $\lebe^{1}$-gradient stability of the trace-preserving operator $\mathbb{E}$: 
\begin{align*}
\mathrm{II}_{k}^{(1)} & \leq c\int_{\ball_{r+d_{k}}\setminus\overline{\ball}_{r}}|\nabla\mathbb{E}_{r,r+d_{k}}u|\dif x \\ & + \frac{c}{a_{k}-\varepsilon_{k}}\int_{\ball_{r+d_{k}}\setminus\overline{\ball}_{r}}|\mathbb{E}_{r,r+d_{k}}u - u|\dif x + \frac{c}{a_{k}-\varepsilon_{k}}\int_{\ball_{r+d_{k}}\setminus\overline{\ball}_{r}}|u-v_{j_{k}}|\dif x \\
& \stackrel{(*)}{\leq} c|Du|(\ball_{r+d_{k}}\setminus\overline{\ball}_{r}) + c2^{k}\int_{\ball_{r+d_{k}}\setminus\overline{\ball}_{r}}|u-v_{j_{k}}|\dif x \to 0, 
\end{align*}
as $k\to\infty$; also see~\eqref{eq:RecoveryClosedness}, \eqref{eq:MaximalBoundA} and \eqref{eq:unichoose}. Here, $(*)$ can be seen similarly as~\eqref{eq:butterchicken}.  

Ad~$\mathrm{III}_{k}$. Here we have by the $\lebe^{1}$-gradient stability of $\mathbb{E}$
\begin{align*}
\mathrm{III}_{k} & \leq \int_{\ball_{r+a_{k}}\setminus\overline{\ball}_{r+\varepsilon_{k}}}|\nabla (v_{j_{k}}-\mathbb{E}_{r+\varepsilon_{k},r+a_{k}}v_{j_{k}})|\dif x \\ 
& \leq c \int_{\ball_{r+a_{k}}\setminus\overline{\ball}_{r+\varepsilon_{k}}}|\nabla v_{j_{k}}|\dif x \leq c \int_{\ball_{r+d_{k}}\setminus\overline{\ball}_{r}}|\nabla v_{j_{k}}|\dif x \to 0
\end{align*}
by \eqref{eq:maxcondlambda1} and \eqref{eq:MaximalBoundA}. 

Ad~$\mathrm{IV}_{k}$. We directly employ the Whitney balls $\ball^{i}$ underlying the trace-preserving operator $\mathbb{E}_{r+\varepsilon_{k},r+a_{k}}$ and the corresponding partition of unity $(\eta_{i})$ to find by use of \eqref{eq:cutoffprop1}
\begin{align*}
\mathrm{IV}_{k} & \leq \frac{c}{a_{k}-\varepsilon_{k}}\sum_{i\in\mathbb{N}}\int_{\ball^{i}}|\eta_{i}(v_{j_{k}}-(v_{j_{k}})_{\ball^{i}})|\dif x \leq c\int_{\ball_{r+a_{k}}\setminus\overline{\ball}_{r}}|\nabla v_{j_{k}}|\dif x \to 0,\;\;\;k\to\infty, 
\end{align*}
again by \eqref{eq:maxcondlambda1} and \eqref{eq:MaximalBoundA}.

The same argument applies to the lower annuli $\ball_{r}\setminus\overline{\ball}_{r-a_{k}^{-}}$. In consequence, gathering estimates, $\nabla (u_{j_{k}}-v_{j_{k}})\to 0$ in $\lebe^{1}(\omega';\R^{N})$ as $k\to\infty$ and the proof of \ref{item:FixRadius3} is complete. 

Imitating the estimate for $\mathrm{II}_{k}$ and $\mathrm{IV}_{k}$, one similarly establishes that $u_{j_{k}}\to u$ in $\lebe^{1}(\omega';\R^{N})$. In consequence, by~\ref{item:FixRadius3}, $\nabla v_{j_{k}}\mathscr{L}^{n}\stackrel{*}{\rightharpoonup} Du$ in $\mathrm{RM}_{\mathrm{fin}}(\omega';\R^{N\times n})$ and $u_{j_{k}}\to u$ in $\lebe^{1}(\omega';\R^{N})$, $u_{j_{k}}\stackrel{*}{\rightharpoonup}u$ in $\bv(\omega';\R^{N})$ too. Because of $(u_{j_{k}})\subset\mathscr{A}_{u_{0}}^{q}(\omega,\omega')$ and $u_{j_{k}}\stackrel{*}{\rightharpoonup}u$ in $\bv(\omega';\R^{N})$, the inequality $\overline{\mathscr{F}}_{u_{0}}^{*}[u;\omega,\omega']\leq \liminf_{k\to\infty}\mathscr{F}[u_{j_{k}};\omega']$ holds trivially. We may therefore conclude the proof by showing 
\begin{align}\label{eq:CentralGeneration}
\liminf_{k\to\infty}\int_{\ball_{r+a_{k}}\setminus\overline{\ball}_{r-a_{k}^{-}}}F(\nabla u_{j_{k}})-F(\nabla v_{j_{k}})\dif x \leq 0
\end{align}
as we may then infer (recall that $u_{j_{k}}=v_{j_{k}}$ outside $\ball_{r+a_{k}}\setminus\overline{\ball}_{r-a_{k}^{-}}$)
\begin{align*}
\liminf_{k\to\infty}\mathscr{F}[& u_{j_{k}};\omega'] \leq \liminf_{k\to\infty}\Big(\int_{\omega'}F(\nabla v_{j_{k}})\dif x + \int_{\omega'}F(\nabla u_{j_{k}})-F(\nabla v_{j_{k}})\dif x\Big) \\
& = \liminf_{k\to\infty}\Big(\int_{\omega'}F(\nabla v_{j_{k}})\dif x + \int_{\ball_{r+a_{k}}\setminus\overline{\ball}_{r-a_{k}^{-}}}F(\nabla u_{j_{k}})-F(\nabla v_{j_{k}})\dif x\Big)\\
& \!\!\!\!\!\!\!\!\stackrel{\eqref{eq:liminfislim},\,\eqref{eq:CentralGeneration}}{\leq} \lim_{k\to\infty}\int_{\omega'}F(\nabla v_{j_{k}})\dif x = \overline{\mathscr{F}}_{u_{0}}^{*}[u;\omega,\omega'].  
\end{align*}
Our plan is to establish 
\begin{align}\label{eq:generation1}
&\lim_{k\to\infty}\int_{\ball_{r+a_{k}}\setminus\overline{\ball}_{r-a_{k}^{-}}}|F(\nabla u_{j_{k}})|\dif x = 0,\\\label{eq:generation2} & \liminf_{k\to\infty}\int_{\ball_{r+a_{k}}\setminus\overline{\ball}_{r-a_{k}^{-}}}F(\nabla v_{j_{k}})\dif x\geq 0, 
\end{align}
from where \eqref{eq:CentralGeneration} follows at once. 

\emph{Step 4. Generation: Inequality~\eqref{eq:generation1}.} For \eqref{eq:generation1}, it suffices to argue on the upper annuli; the argument for the lower annuli is analogous. By the growth bound \ref{item:H1} on $F$ and the definition of $u_{j_{k}}$, cf.~\eqref{eq:ujkdefine}, we have 
\begin{align*}
\int_{\ball_{r+a_{k}}\setminus\overline{\ball}_{r}}|& F(\nabla u_{j_{k}})|\dif x \leq L\int_{\ball_{r+a_{k}}\setminus\overline{\ball}_{r}}(1+|\nabla u_{j_{k}}|^{q})\dif x \\ 
& \leq c\Big( \mathscr{L}^{n}(\ball_{r+a_{k}}\setminus\overline{\ball}_{r}) + \int_{\ball_{r+\varepsilon_{k}}\setminus\overline{\ball}_{r}}|\nabla \mathbb{E}_{r,r+d_{k}}u|^{q}\dif x\Big. \\ 
& \Big. + \int_{\ball_{r+a_{k}}\setminus\overline{\ball}_{r+\varepsilon_{k}}}|\nabla (\rho_{k}(\mathbb{E}_{r+\varepsilon_{k},r+a_{k}}(\mathbb{E}_{r,r+d_{k}}u)-\mathbb{E}_{r+\varepsilon_{k},r+a_{k}}v_{j_{k}}))|^{q}\dif x \Big. \\ & \Big. + \int_{\ball_{r+a_{k}}\setminus\overline{\ball}_{r+\varepsilon_{k}}}|\nabla\mathbb{E}_{r+\varepsilon_{k},r+a_{k}}v_{j_{k}}|^{q}\dif x \Big) \\ 
& =: \mathrm{V}_{k} + ... + \mathrm{VIII}_{k}. 
\end{align*}
Trivially, $\mathrm{V}_{k}\to 0$. 

Ad~$\mathrm{VI}_{k}$. Combining Lemma~\ref{lem:extensionoperator}~\ref{item:extension4},~\eqref{eq:maxcond}, \eqref{eq:Choose1D} and \eqref{eq:ThetaboundChap3},  
\begin{align*}
\mathrm{VI}_{k} & \stackrel{\varepsilon_{k}<d_{k}}{\leq} \int_{\ball_{r+d_{k}}\setminus\overline{\ball}_{r}}|\nabla \mathbb{E}_{r,r+d_{k}}u|^{q}\dif x \\ & \!\!\!\!\!\!\!\!\!\!\stackrel{\text{Lem.~\ref{lem:extensionoperator}\ref{item:extension4}}}{\leq} cd_{k}^{n-nq+q}\Big(\sup_{0<\delta\ll d_{k}}\frac{1}{\delta}\int_{\ball_{r+d_{k}}\setminus\overline{\ball}_{r+d_{k}-\delta}}|Du| + \sup_{0<\delta\ll d_{k}}\frac{1}{\delta}\int_{\ball_{r+\delta}\setminus\overline{\ball}_{r}}|Du| \Big)^{q} \\ & \!\!\!\!\!\!\stackrel{\eqref{eq:Choose1D}}{\leq} cd_{k}^{n-nq+q}\Big(\frac{\widetilde{\Theta}_{k}(r+2^{-j_{0}-k+1})-\widetilde{\Theta}_{k}(r)}{2^{-j_{0}-k+1}} +\mathcal{M}Du(x_{0},r)\Big)^{q}\\ & \!\!\!\!\!\!\!\stackrel{\eqref{eq:ThetaboundChap3}_{1}}{\leq} c d_{k}^{n-nq+q}
\end{align*}
which tends to zero as $k\to\infty$ by $d_{k}\sim_{j_{0}}2^{-k}$ (cf.~\eqref{eq:unifchooseD}) and $q<\frac{n}{n-1}$.

Ad~$\mathrm{VII}_{k}$. Recalling \eqref{eq:cutoffprop1}, we have as in the estimation of $\mathrm{VI}_{k}$:
\begin{align*}
\mathrm{VII}_{k} & \leq c\int_{\ball_{r+a_{k}}\setminus\overline{\ball}_{r+\varepsilon_{k}}}|\nabla (\mathbb{E}_{r+\varepsilon_{k},r+a_{k}}(\mathbb{E}_{r,r+d_{k}}u-v_{j_{k}}))|^{q}\dif x \\ 
& + c\int_{\ball_{r+a_{k}}\setminus\overline{\ball}_{r+\varepsilon_{k}}}\left\vert \frac{\mathbb{E}_{r+\varepsilon_{k},r+a_{k}}(\mathbb{E}_{r,r+d_{k}}u-v_{j_{k}})}{a_{k}-\varepsilon_{k}}\right\vert^{q}\dif x \\ 
& \!\!\!\!\!\!\!\!\!\!\!\!\stackrel{\text{Lem.}~\ref{lem:extensionoperator}~\ref{item:extension4}}{\leq} c (a_{k}-\varepsilon_{k})^{n-nq+q}\Big(\sup_{0<\delta\ll(a_{k}-\varepsilon_{k})}\frac{1}{\delta}\int_{\ball_{r+\varepsilon_{k}+\delta}\setminus\overline{\ball}_{r+\varepsilon_{k}}} |\nabla(\mathbb{E}_{r,r+d_{k}}u-v_{j_{k}})|\dif x\Big)^{q} \\ 
& +  c (a_{k}-\varepsilon_{k})^{n-nq+q}\Big(\sup_{0<\delta\ll(a_{k}-\varepsilon_{k})}\frac{1}{\delta}\int_{\ball_{r+a_{k}}\setminus\overline{\ball}_{r+a_{k}-\delta}} |\nabla(\mathbb{E}_{r,r+d_{k}}u-v_{j_{k}})|\dif x\Big)^{q}\\
& + c (a_{k}-\varepsilon_{k})^{n-nq+q}\Big(\sup_{0<\delta\ll(a_{k}-\varepsilon_{k})}\frac{1}{\delta}\int_{\ball_{r+\varepsilon_{k}+\delta}\setminus\overline{\ball}_{r+\varepsilon_{k}}} \left\vert \frac{\mathbb{E}_{r,r+d_{k}}u-v_{j_{k}}}{a_{k}-\varepsilon_{k}}\right\vert 
\dif x\Big)^{q} \\ 
& + c (a_{k}-\varepsilon_{k})^{n-nq+q}\Big(\sup_{0<\delta\ll(a_{k}-\varepsilon_{k})}\frac{1}{\delta}\int_{\ball_{r+a_{k}}\setminus\overline{\ball}_{r+a_{k}-\delta}} \left\vert \frac{\mathbb{E}_{r,r+d_{k}}u-v_{j_{k}}}{a_{k}-\varepsilon_{k}}\right\vert 
\dif x\Big)^{q}\\
& \leq c (a_{k}-\varepsilon_{k})^{n-nq+q}\Big(\frac{\Theta_{k}(r+d_{k})-\Theta_{k}(r)}{d_{k}} \Big)^{q}\\
& \leq c2^{-k(n-nq+q)}\to 0
\end{align*}
by \eqref{eq:ThetajdefChapter3}, \eqref{eq:Choose1A}, \eqref{eq:Choose1C}, \eqref{eq:unichoose} and \eqref{eq:ThetaboundChap3}.

Ad $\mathrm{VIII}_{k}$. Equally by \eqref{eq:ThetajdefChapter3}, \eqref{eq:Choose1A}, \eqref{eq:Choose1C}, \eqref{eq:unichoose} and \eqref{eq:ThetaboundChap3}, 
\begin{align*}
\mathrm{VIII}_{k} & \leq c (a_{k}-\varepsilon_{k})^{n-nq+q}\Big(\sup_{0<\delta\ll(a_{k}-\varepsilon_{k})}\frac{1}{\delta}\int_{\ball_{r+\varepsilon_{k}+\delta}\setminus\overline{\ball}_{r+\varepsilon_{k}}} |\nabla v_{j_{k}}|\dif x\Big)^{q} \\ 
& +  c (a_{k}-\varepsilon_{k})^{n-nq+q}\Big(\sup_{0<\delta\ll(a_{k}-\varepsilon_{k})}\frac{1}{\delta}\int_{\ball_{r+a_{k}}\setminus\overline{\ball}_{r+a_{k}-\delta}} |\nabla v_{j_{k}}|\dif x\Big)^{q} \\ & \leq c(a_{k}-\varepsilon_{k})^{n-nq+q} \Big(\frac{\Theta_{k}(r+d_{k})-\Theta_{k}(r)}{d_{k}} \Big)^{q}\\ 
& \leq c2^{-k(n-nq+q)}\to 0.
\end{align*}
The estimates for $\mathrm{V}_{k},...,\mathrm{VIII}_{k}$ and their analogues for the lower annuli now combine to \eqref{eq:generation1}.

\begin{figure}
\begin{tikzpicture}[scale=1]
\draw[<->] (-0.125,1.75) -- (-0.125,2.25);
\node at (-0.75,2) {$\sim 2^{-k}$};
\filldraw[blue!20!white, fill=blue!20!white] (1,0) arc [radius=1, start angle=0, delta angle=90]                  -- (0,3) arc [radius=3, start angle=90, delta angle=-90]
                  -- cycle;
\node at (2,-0.2) {{\small $r$}}; 
\node at (1,-0.2) {{\small $r-\ell$}};  
\node at (3,-0.2) {{\small $r+\ell$}};     
\filldraw[blue!20!white, opacity=0.9, fill=blue!20!white] (1,0) arc [radius=1, start angle=0, delta angle=90]                  -- (0,3) arc [radius=3, start angle=90, delta angle=-90]
                  -- cycle;                  
                  \filldraw[opacity=0.9,blue!10!white, fill=blue!10!white] (1.75,0) arc [radius=1.75, start angle=0, delta angle=90]
                  -- (0,2.25) arc [radius=2.25, start angle=90, delta angle=-90]
                  -- cycle;      
                  \draw[blue,dotted,thick] (2,0) arc [radius=2, start angle=0, delta angle=90]; 
                  \draw[black,dotted] (3,0) arc [radius=3, start angle=0, delta angle=90];
                  \draw[black,dotted] (1,0) arc [radius=1, start angle=0, delta angle=90];       
                  \node[blue!80!black] at (2.25,1.4) {$v_{j_{k}}$};
                  \node[blue!80!black] at (1.25,0.7) {$v_{j_{k}}$};
\end{tikzpicture}
\begin{tikzpicture}[scale=1]
\draw[<->] (-0.125,1.65) -- (-0.125,2.35);
\node at (-0.75,2) {$\sim 2^{-k}$};
\filldraw[blue!20!white, fill=blue!20!white] (1,0) arc [radius=1, start angle=0, delta angle=90]                  -- (0,3) arc [radius=3, start angle=90, delta angle=-90]
                  -- cycle;
\node at (2,-0.2) {{\small $r$}}; 
\node at (1,-0.2) {{\small $r-\ell$}};  
\node at (3,-0.2) {{\small $r+\ell$}};     
\filldraw[blue!20!white, opacity=0.9, fill=blue!20!white] (1,0) arc [radius=1, start angle=0, delta angle=90]                  -- (0,3) arc [radius=3, start angle=90, delta angle=-90]
                  -- cycle;                  
                  \filldraw[opacity=0.9,blue!10!white, fill=blue!10!white] (1.65,0) arc [radius=1.65, start angle=0, delta angle=90]
                  -- (0,2.35) arc [radius=2.35, start angle=90, delta angle=-90]
                  -- cycle;      
                  \draw[blue,dotted,thick] (2,0) arc [radius=2, start angle=0, delta angle=90]; 
                  \draw[black,dotted] (3,0) arc [radius=3, start angle=0, delta angle=90];
                  \draw[black,dotted] (1,0) arc [radius=1, start angle=0, delta angle=90];       
                  \node[blue!80!black] at (2.3,1.4) {$\mathbf{u}$};
                  \node[blue!80!black] at (1.1,0.7) {$\mathbf{u}$};
\end{tikzpicture}
\caption{The conceptual difference between step 4 and 5. Whereas in step 4 the sequence $(v_{j_{k}})$ for the proof of~\eqref{eq:generation1} is modified to coincide with $u$ along $\partial\!\ball_{r}$, in step 5 the key sequence for the proof of~\eqref{eq:generation2} is constructed to coincide with $\mathbf{u}$ \emph{away} from $\partial\!\ball_{r}$ in order to have access to the lower semicontinuity result of Lemma~\ref{lem:LSC}; for this, fixed boundary values are required.} \label{fig:conceptual}
\end{figure}
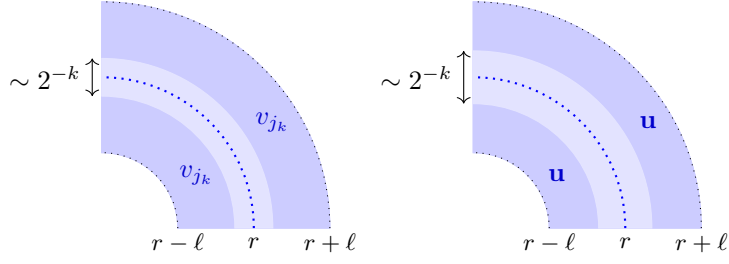

\emph{Step 5. Generation: Inequality~\eqref{eq:generation2}.} We now come to the final part of the proof. With $d_{k}$ as in \eqref{eq:Choose1D} (and analogously $d_{k}^{-}$), choose a cut-off function $\varrho_{k}\in\hold_{c}^{\infty}(\ball_{r+\ell}\setminus\overline{\ball}_{r-\ell};[0,1])$ such that $\varrho_{k}=1$ on $\ball_{r+a_{k}}\setminus\overline{\ball}_{r-a_{k}^{-}}$, $\rho_{k}=0$ outside $\ball_{r+d_{k}}\setminus\overline{\ball}_{r-d_{k}^{-}}$ and 
\begin{align}\label{eq:gradientboundetta}
|\nabla\varrho_{k}|\leq 4\max\left\{\frac{1}{d_{k}-a_{k}},\frac{1}{d_{k}^{-}-a_{k}^{-}}\right\}\sim_{j_{0}}2^{k}, 
\end{align}
which is possible by \eqref{eq:unichoose} and its corresponding analogue for $a_{k}^{-},d_{k}^{-}$. We then define
\begin{align}\label{eq:fatvdefine}
\mathbf{v}_{j_{k}}:=\begin{cases} \mathbb{E}_{r,r+\ell}u\;\;\;\;\;\;\;\;\;\;\;\;\;\text{in}\; \ball_{r+\ell}\setminus\overline{\ball}_{r+d_{k}} \\ 
\varrho_{k}\mathbb{E}_{r+a_{k},r+d_{k}}v_{j_{k}} + (1-\varrho_{j})\mathbb{E}_{r+a_{k},r+d_{k}}\mathbb{E}_{r,r+\ell}u&\; \\ \,\;\;\;\;\;\;\;\;\;\;\;\;\;\;\;\;\;\;\;\;\;\;\;\;\text{in}\;\ball_{r+d_{k}}\setminus\overline{\ball}_{r+a_{k}} \\
v_{j_{k}}\;\;\;\;\;\;\;\;\;\;\;\;\;\;\;\;\;\;\;\;\text{in}\;\ball_{r+a_{k}}\setminus\overline{\ball}_{r-a_{k}^{-}}, \\  
\varrho_{k}\mathbb{E}_{r-d_{k}^{-},r-a_{k}^{-}}v_{j_{k}} + (1-\varrho_{k})\mathbb{E}_{r-d_{k}^{-},r-a_{k}^{-}}\mathbb{E}_{r-\ell,r}u&\; \\ \,\;\;\;\;\;\;\;\;\;\;\;\;\;\;\;\;\;\;\;\;\;\;\;\;\text{in}\;\ball_{r-a_{k}^{-}}\setminus\overline{\ball}_{r-d_{k}^{-}}, \\
\mathbb{E}_{r-\ell,r}u\;\;\;\;\;\;\;\;\;\;\;\;\;\text{in}\; \ball_{r-d_{k}^{-}}\setminus\overline{\ball}_{r-\ell}.
\end{cases} 
\end{align}
Our first claim is that, with $\mathbf{u}$ being defined in \eqref{eq:mathbfudef}, 
\begin{align}\label{eq:L1conv1}
\mathbf{v}_{j_{k}}\to \mathbf{u}\qquad\text{in}\;\lebe^{1}(\ball_{r+\ell}\setminus\overline{\ball}_{r-\ell};\R^{N})\;\;\text{as}\;\;k\to\infty. 
\end{align}
To see \eqref{eq:L1conv1}, we consider the single terms in the definition of $\mathbf{v}_{j_{k}}$ separately. We have 
\begin{align*}
\int_{\ball_{r+a_{k}}\setminus\overline{\ball}_{r-a_{k}^{-}}}|\mathbf{v}_{j_{k}}-\mathbf{u}|\dif x & \leq \int_{\ball_{r+a_{k}}\setminus\overline{\ball}_{r-a_{k}}}|u-v_{j_{k}}|\dif x + \int_{\ball_{r+a_{k}}\setminus\overline{\ball}_{r-a_{k}^{-}}}|u|\dif x \\ & + \int_{\ball_{r+a_{k}}\setminus\overline{\ball}_{r}}|\mathbb{E}_{r,r+\ell}u|\dif x + \int_{\ball_{r}\setminus\overline{\ball}_{r-a_{k}^{-}}}|\mathbb{E}_{r-\ell,r}u|\dif x \to 0
\end{align*}
as $k\to\infty$ because of \eqref{eq:RecoveryClosedness}, $u\in\lebe^{1}(\ball_{r+\ell}\setminus\ball_{r-\ell};\R^{N})$, $\E_{r,r+\ell}u\in\sobo^{1,q}(\ball_{r+\ell}\setminus\overline{\ball}_{r};\R^{N})$, $\E_{r-\ell,r}u\in\sobo^{1,q}(\ball_{r}\setminus\overline{\ball}_{r-\ell};\R^{N})$ and $\mathscr{L}^{1}(\ball_{r+a_{k}}\setminus\overline{\ball}_{r-a_{k}})\to 0$. We only treat the layer term in the upper annulus, the corresponding lower one being analogous. Successively applying the $\lebe^{1}$-stability of $\E$, 
\begin{align*}
\int_{\ball_{r+d_{k}}\setminus\overline{\ball}_{r+a_{k}}}|\mathbf{v}_{j_{k}}-\mathbf{u}|\dif x \leq c\int_{\ball_{r+d_{k}}\setminus\overline{\ball}_{r+a_{k}}}|v_{j_{k}}-u|+|u|+|\E_{r,r+\ell}u|\dif x \to 0,
\end{align*}
again by \eqref{eq:RecoveryClosedness}, $u,\E_{r,r+\ell}u\in\lebe^{1}(\ball_{r+\ell}\setminus\overline{\ball}_{r};\R^{N})$ and $\mathscr{L}^{n}(\ball_{r+d_{k}}\setminus\overline{\ball}_{r+a_{k}})\to 0$ as $k\to\infty$. In view of the definition of $\mathbf{u}$, this concludes the proof of \eqref{eq:L1conv1}. In order to apply Lemma~\ref{lem:LSC}, our second claim is that 
\begin{align}\label{eq:theredone1}
\sup_{k\in\mathbb{N}}\|\nabla\mathbf{v}_{j_{k}}\|_{\lebe^{1}(\ball_{r+\ell}\setminus\overline{\ball}_{r-\ell})}<\infty. 
\end{align}
To see this, it still suffices to argue for the upper annulus and once again split, using~\eqref{eq:gradientboundetta},
\begin{align*}
\int_{\ball_{r+\ell}\setminus\overline{\ball}_{r}}|\nabla\mathbf{v}_{j_{k}}|\dif x & \leq \int_{\ball_{r+\ell}\setminus\overline{\ball}_{r+d_{k}}}|\nabla\mathbb{E}_{r,r+\ell}u|\dif x \\ & \!\!\!\! \!\!\!\!+ c\int_{\ball_{r+d_{k}}\setminus\overline{\ball}_{r+a_{k}}}\left\vert \nabla(\varrho_{k}\mathbb{E}_{r+a_{k},r+d_{k}}v_{j_{k}}+(1-\varrho_{k})\mathbb{E}_{r+a_{k},r+d_{k}}\mathbb{E}_{r,r+\ell}u )\right\vert\dif x \\ 
& \!\!\!\!\!\!\!\!+ \int_{\ball_{r+a_{k}}\setminus\overline{\ball}_{r}}|\nabla v_{j_{k}}|\dif x =: \mathrm{IX}_{k}^{(1)}+\mathrm{IX}_{k}^{(2)}+\mathrm{IX}_{k}^{(3)}. 
\end{align*}
Since $\mathrm{E}_{r,r+\ell}u\in\sobo^{1,q}(\ball_{r+\ell}\setminus\overline{\ball}_{r};\R^{N})$, $\mathrm{IX}_{k}^{(1)}$ stays bounded in $k$, and by~\eqref{eq:MaximalBoundA} we see that $\mathrm{IX}_{k}^{(3)}$ stays bounded in $k$ (in fact, tends to zero as $k\to\infty$) too. Successively applying the $\lebe^{1}$-gradient stability of the trace preserving operator $\mathbb{E}$ in combination with~\eqref{eq:MaximalBoundA}, one can also see the boundedness of $\mathrm{IX}_{k}^{(2)}$, but for future reference we record an estimate that is stronger and entails the requisite estimate for~$\mathrm{IX}_{k}^{(2)}$. 
Namely, we now estimate 
\begin{align*}
\mathrm{X}_{k}:=\int_{\ball_{r+d_{k}}\setminus\overline{\ball}_{r+a_{k}}}&|\nabla (\varrho_{k}\mathbb{E}_{r+a_{k},r+d_{k}}v_{j_{k}} + (1-\varrho_{k})\mathbb{E}_{r+a_{k},r+d_{k}}\mathbb{E}_{r,r+\ell}u) |^{q}\dif x \\ 
& \leq c\int_{\ball_{r+d_{k}}\setminus\overline{\ball}_{r+a_{k}}}|\nabla (\varrho_{k}(\mathbb{E}_{r+a_{k},r+d_{k}}v_{j_{k}} -\mathbb{E}_{r+a_{k},r+d_{k}}\mathbb{E}_{r,r+\ell}u)) |^{q}\dif x \\ 
& + c\int_{\ball_{r+d_{k}}\setminus\overline{\ball}_{r+a_{k}}}|\nabla\mathbb{E}_{r+a_{k},r+d_{k}}\mathbb{E}_{r,r+\ell}u|^{q}\dif x =: \mathrm{X}_{k}^{(1)} + \mathrm{X}_{k}^{(2)}.
\end{align*}
The term $\mathrm{X}_{k}^{(2)}$ vanishes in the limit by the $\lebe^{q}$-gradient stability of $\mathbb{E}$ and the fact that $\mathbf{u}\in\sobo^{1,q}(\ball_{r+\ell}\setminus\overline{\ball}_{r-\ell};\R^{N})$. Toward the estimation of the term $\mathrm{X}_{k}^{(1)}$, we note that by linearity of the trace-preserving operator $\mathbb{E}_{r+a_{k},r+d_{k}}$ and~\eqref{eq:gradientboundetta}, 
\begin{align*}
\mathrm{X}_{k}^{(1)} & \leq c\int\limits_{\ball_{r+d_{k}}\setminus\overline{\ball}_{r+a_{k}}}\left\vert \frac{\mathbb{E}_{r+a_{k},r+d_{k}}(v_{j_{k}} -\mathbf{u})}{d_{k}-a_{k}}\right\vert^{q}\dif x \\ & + c\int\limits_{\ball_{r+d_{k}}\setminus\overline{\ball}_{r+a_{k}}}\left\vert \nabla(\mathbb{E}_{r+a_{k},r+d_{k}}(v_{j_{k}} -\mathbf{u}))\right\vert^{q}\dif x =: \mathrm{XI}_{k}^{(1)} + \mathrm{XI}_{k}^{(2)}.
\end{align*}
By linearity and the properties of the trace-preserving operators and our choice of $q$, the uniform comparability $(d_{k}-a_{k})\sim_{j_{0}}2^{-k}$ (cf.~\eqref{eq:unichoose}) implies 
\begin{align*}
\mathrm{XI}_{k}^{(1)} & \stackrel{\text{Lem.~\ref{lem:extensionoperator}~\ref{item:extension4}}}{\leq} c(d_{k}-a_{k})^{n-nq+q}\Big(\sup_{0<\delta\ll (d_{k}-a_{k})}\frac{1}{\delta}\int_{\ball_{r+a_{k}+\delta}\setminus\overline{\ball}_{r+a_{k}}}\left\vert \frac{v_{j_{k}} -\mathbf{u}}{d_{k}-a_{k}}\right\vert \Big)^{q}\\ 
& \;\;\;\;\;+ c(d_{k}-a_{k})^{n-nq+q}\Big(\sup_{0<\delta\ll (d_{k}-a_{k})}\frac{1}{\delta}\int_{\ball_{r+d_{k}}\setminus\overline{\ball}_{r+d_{k}-\delta}}\left\vert \frac{v_{j_{k}} -\mathbf{u}}{d_{k}-a_{k}}\right\vert \Big)^{q} \\ 
& \!\stackrel{\eqref{eq:Choose1D},\eqref{eq:Choose1B}}{\leq} c(d_{k}-a_{k})^{n-nq+q}\Big(\frac{\Theta_{k}(r+d_{k})-\Theta_{k}(r)}{d_{k}}\Big) \\
& \;\;\;\;\;+ c(d_{k}-a_{k})^{n-nq+q}\Big(\frac{\widetilde{\Theta}_{k}(r+2^{-j_{0}-k+1})-\widetilde{\Theta}_{k}(r)}{2^{-k}} \Big)\\
& \stackrel{\eqref{eq:ThetaboundChap3}}{\leq} c(d_{k}-a_{k})^{n-nq+q}\to 0
\end{align*}
as $k\to\infty$. Similarly, 
\begin{align*}
\mathrm{XI}_{k}^{(2)} & \stackrel{\text{Lem.~\ref{lem:extensionoperator}~\ref{item:extension4}}}{\leq} c(d_{k}-a_{k})^{n-nq+q}\Big(\sup_{0<\delta\ll (d_{k}-a_{k})}\frac{1}{\delta}\int_{\ball_{r+a_{k}+\delta}\setminus\overline{\ball}_{r+a_{k}}}\left\vert \nabla (v_{j_{k}} -\mathbf{u})\right\vert \dif x\Big)^{q}\\ 
& + c(d_{k}-a_{k})^{n-nq+q}\Big(\sup_{0<\delta\ll (d_{k}-a_{k})}\frac{1}{\delta}\int_{\ball_{r+d_{k}}\setminus\overline{\ball}_{r+d_{k}-\delta}}\left\vert \nabla (v_{j_{k}} -\mathbf{u})\right\vert \Big)^{q} \\ 
& \!\!\!\!\!\!\!\!\!\stackrel{\eqref{eq:Choose1D},\eqref{eq:Choose1B}}{\leq} c(d_{k}-a_{k})^{n-nq+q}\Big(\frac{\Theta_{k}(r+d_{k})-\Theta_{k}(r)}{d_{k}}\Big) \\
& \;\;\;\;\;+ c(d_{k}-a_{k})^{n-nq+q}\Big(\frac{\widetilde{\Theta}_{k}(r+2^{-j_{0}-k+1})-\widetilde{\Theta}_{k}(r)}{2^{-k}} \Big)\\
& \!\!\!\stackrel{\eqref{eq:ThetaboundChap3}}{\leq} c(d_{k}-a_{k})^{n-nq+q}\to 0
\end{align*}
as $k\to\infty$. Since an analogous argument applies to the lower annuli, we have 
\begin{align}\label{eq:jeffbuckley}
\limsup_{k\to\infty}\mathrm{XII}_{k}:= \limsup_{k\to\infty} \int_{(\ball_{r+d_{k}}\setminus\overline{\ball}_{r+a_{k}})\cup(\ball_{r-a_{k}^{-}}\setminus\overline{\ball}_{r-d_{k}^{-}})}|\nabla\mathbf{v}_{j_{k}}|\dif x = 0,
\end{align}
and this directly entails that $\lim_{k\to\infty}\mathrm{IX}_{k}^{(2)}=0$, whereby in particular the proof of ~\eqref{eq:L1conv1} and~\eqref{eq:theredone1} is complete. 

It is at this stage that quasiconvexity of $F$ enters  via lower semicontinuity: Since $\mathbf{u}\in\sobo^{1,q}(\ball_{r+\ell}\setminus\overline{\ball}_{r-\ell};\R^{N})$,~\eqref{eq:fatvdefine} implies that $\mathbf{v}_{j_{k}}\in\sobo_{\mathbf{u}}^{1,q}(\ball_{r+\ell}\setminus\overline{\ball}_{r-\ell};\R^{N})$. Therefore,~\eqref{eq:L1conv1} and~\eqref{eq:theredone1} imply by virtue of Lemma~\ref{lem:LSC}
\begin{align}\label{eq:LscL1}
\mathscr{F}[\mathbf{u};\ball_{r+\ell}\setminus\overline{\ball}_{r-\ell}] \leq \liminf_{k\to\infty}\mathscr{F}[\mathbf{v}_{j_{k}};\ball_{r+\ell}\setminus\overline{\ball}_{r-\ell}].
\end{align}
It is precisely here where we need to work the full annulus $\ball_{r+\ell}\setminus\overline{\ball}_{r-\ell}$; indeed, as the argument below necessitates to leave $v_{j}$ unchanged close to $\partial\!\ball_{r}$, the use of the lower semicontinuity theorem (and thus the need of fixed boundary values in the signed situation, see Figure~\ref{fig:conceptual}) forces us to work with a two-sided approximation by the very geometry of the annuli. 

We now conclude \eqref{eq:generation2}. By~\eqref{eq:LscL1} and the growth bound~\ref{item:H1}, we estimate with $\mathrm{XII}_{k}$ as in~\eqref{eq:jeffbuckley} and $\mathrm{XIII}_{k}:=\mathscr{L}^{n}((\ball_{r+d_{k}}\setminus\overline{\ball}_{r+a_{k}})\cup(\ball_{r-a_{k}^{-}}\setminus\overline{\ball}_{r-d_{k}^{-}}))$
\begin{align}\label{eq:almostdonefinal}
\begin{split}
\int_{\ball_{r+\ell}\setminus\overline{\ball}_{r-\ell}}F(\nabla\mathbf{u})\dif x &  \leq \liminf_{k\to\infty}\Big(\int_{\ball_{r+a_{k}}\setminus\overline{\ball}_{r-a_{k}}}F(\nabla v_{j_{k}})\dif x  \Big. \\ 
& \!\!\!\!\!\!\!\!\Big. + L(\mathrm{XII}_{k}+\mathrm{XIII}_{k})\Big. \\ & \!\!\!\!\!\!\!\!\Big. + \int_{(\ball_{r+\ell}\setminus\overline{\ball}_{r-\ell})\setminus (\ball_{r+d_{k}}\setminus\overline{\ball}_{r-d_{k}})} F(\nabla\mathbf{u})\dif x\Big) \\ 
& \!\!\!\!\!\!\!\!\leq \liminf_{k\to\infty}\Big(\int_{\ball_{r+a_{k}}\setminus\overline{\ball}_{r-a_{k}}}F(\nabla v_{j_{k}})\dif x\Big) \\ 
& \!\!\!\!\!\!\!\!+ \limsup_{k\to\infty}\Big(L(\mathrm{XII}_{k}+\mathrm{XIII}_{k}) - \int_{\ball_{r+d_{k}}\setminus\overline{\ball}_{r-d_{k}}}F(\nabla\mathbf{u})\dif x\Big) \\ 
& \!\!\!\!\!\!\!\!+ \int_{\ball_{r+\ell}\setminus\overline{\ball}_{r-\ell}}F(\nabla\mathbf{u})\dif x =: \mathrm{XIV}+\mathrm{XV}+\mathrm{XVI}. 
\end{split}
\end{align}
By~\eqref{eq:jeffbuckley}, $\limsup_{k\to\infty}\mathrm{XII}_{k}+\mathrm{XIII}_{k}=0$, and by the growth bound~\ref{item:H1} on $F$, we use $\mathbf{u}\in\sobo^{1,q}(\ball_{r+\ell}\setminus\overline{\ball}_{r-\ell};\R^{N})$ to infer
\begin{align*}
\limsup_{k\to\infty}\int_{\ball_{r+d_{k}}\setminus\overline{\ball}_{r-d_{k}}}|F(\nabla\mathbf{u})|\dif x  \leq \lim_{k\to\infty}L\int_{\ball_{r+d_{k}}\setminus\overline{\ball}_{r-d_{k}}}1+|\nabla\mathbf{u}|^{q}\dif x = 0.  
\end{align*}
Hence, $\mathrm{XV}=0$, and so~\eqref{eq:almostdonefinal} gives 
\begin{align*}
\liminf_{k\to\infty}\int_{\ball_{r+a_{k}}\setminus\ball_{r-a_{k}}}F(\nabla v_{j_{k}})\dif x \geq 0. 
\end{align*}
This is \eqref{eq:generation2}, and the proof is complete. 
\end{proof}
The following elementary observation, obtained by iterating the construction from Proposition~\ref{prop:goodminseqs}, will be required in Section~\ref{sec:MazurEuler}: 
\begin{corollary}\label{cor:allalong}
In the situation of Proposition~\ref{prop:goodminseqs}, let $\lambda\in\mathrm{RM}_{\mathrm{fin}}(\omega')$ be a weak*-limit of a suitable subsequence of $(|Dv_{j_{k}}|)$. Moreover, let $(\overline{\ball}_{r_{i}}(x_{i}))$ be a sequence of mutually disjoint closed balls in $\omega$ such that both 
\begin{align*}
\mathcal{M}Du(x_{i},r_{i})<\infty\;\;\;\text{and}\;\;\;\mathcal{M}\lambda(x_{i},r_{i})<\infty
\end{align*} hold for all $i\in\mathbb{N}$. Then there exists another generating sequence $(u_{l})\subset\mathscr{A}_{u_{0}}^{q}(\omega;\omega')$ for $\overline{\mathscr{F}}_{u_{0}}^{*}[u;\omega,\omega']$ such that for any $i\in\mathbb{N}$  
\begin{align}\label{eq:finallytraceincluded}
\text{there 
exists $l_{i}\in\mathbb{N}$ such that $\trace_{\partial\!\ball_{r_{i}}(x_{i})}(u_{l})=\trace_{\partial\!\ball_{r_{i}}(x_{i})}(u)$ holds for all $l\geq l_{i}$. }
\end{align}
\end{corollary}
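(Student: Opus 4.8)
\textbf{Proof plan for Corollary~\ref{cor:allalong}.}
The plan is to iterate the construction of Proposition~\ref{prop:goodminseqs} over the countably many balls $\overline{\ball}_{r_i}(x_i)$, using a diagonal argument to produce a single generating sequence that eventually has the prescribed trace along each sphere. First I would fix a generating sequence $(v_j^{(0)}):=(v_{j_k})$ for $\overline{\mathscr{F}}_{u_0}^{*}[u;\omega,\omega']$ together with the weak*-limit $\lambda\in\mathrm{RM}_{\mathrm{fin}}(\omega')$ of a subsequence of $(|Dv_{j_k}|)$, so that by hypothesis both $\mathcal{M}Du(x_i,r_i)<\infty$ and $\mathcal{M}\lambda(x_i,r_i)<\infty$ for every $i$; in the language introduced after Proposition~\ref{prop:goodminseqs}, each $r_i$ is a good radius of $Du$ for $\overline{\mathscr{F}}_{u_0}^{*}[u;\omega,\omega']$ at $x_i$. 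Applying Proposition~\ref{prop:goodminseqs} at $(x_1,r_1)$ yields a subsequence $(v_{j_k}^{(0)})$ and a new generating sequence $(v_j^{(1)})\subset\mathscr{A}_{u_0}^{q}(\omega,\omega')$ with $\trace_{\partial\!\ball_{r_1}(x_1)}(v_j^{(1)})=\trace_{\partial\!\ball_{r_1}(x_1)}(u)$ for all $j$, and with $\nabla v_{j_k}^{(0)}-\nabla v_j^{(1)}\to 0$ in $\lebe^{1}$.

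The key point is that the construction in the proof of Proposition~\ref{prop:goodminseqs} is \emph{local} around $\partial\!\ball_{r}(x_0)$: the modification $u_{j_k}=\rho_k\widetilde{w}_{j_k}+(1-\rho_k)\widetilde{v}_{j_k}$ only alters $v_{j_k}$ inside a thin annulus $\ball_{r+a_k}(x_0)\setminus\overline{\ball}_{r-a_k^{-}}(x_0)$ whose thickness tends to $0$, and on that annulus the new sequence is built from $u$ via the trace-preserving operators $\mathbb{E}$. Since the balls $\overline{\ball}_{r_i}(x_i)$ are mutually disjoint, once we reach stage $i$ we may choose the annular layer around $\partial\!\ball_{r_i}(x_i)$ thin enough (shrinking the radii $d_k,a_k,\varepsilon_k$ in that proof further if necessary, which only makes the estimates there easier) that it stays disjoint from each of $\overline{\ball}_{r_1}(x_1),\dots,\overline{\ball}_{r_{i-1}}(x_{i-1})$; consequently the modification at stage $i$ does not destroy the traces already fixed along $\partial\!\ball_{r_1}(x_1),\dots,\partial\!\ball_{r_{i-1}}(x_{i-1})$. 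Applying Proposition~\ref{prop:goodminseqs} successively at $(x_i,r_i)$ — noting that at each stage the relevant maximal function conditions $\mathcal{M}Du(x_i,r_i)<\infty$ hold by hypothesis and $\mathcal{M}\lambda^{(i)}(x_i,r_i)<\infty$ for a weak*-limit $\lambda^{(i)}$ of a subsequence of $(|Dv_j^{(i-1)}|)$ can be arranged by passing to a further subsequence and using the $\mathscr{L}^1$-a.e.\ finiteness of spherical maximal functions — we obtain a nested family of generating sequences $(v_j^{(i)})_{j}$, each a subsequence-modification of the previous one, with $\trace_{\partial\!\ball_{r_m}(x_m)}(v_j^{(i)})=\trace_{\partial\!\ball_{r_m}(x_m)}(u)$ for all $j$ and all $m\le i$, and with $\nabla v^{(i)}-\nabla v^{(i-1)}\to 0$ in $\lebe^1(\omega;\R^{N\times n})$ along the relevant subsequences.

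Finally I would extract the diagonal sequence $u_l:=v_l^{(l)}$. By construction, for each fixed $i$ and each $l\ge i$ the map $u_l=v_l^{(l)}$ was obtained from $v_l^{(i)}$ by modifications located in annuli disjoint from $\overline{\ball}_{r_i}(x_i)$, so $\trace_{\partial\!\ball_{r_i}(x_i)}(u_l)=\trace_{\partial\!\ball_{r_i}(x_i)}(u)$; hence \eqref{eq:finallytraceincluded} holds with $l_i:=i$. It remains to verify that $(u_l)$ is still a generating sequence for $\overline{\mathscr{F}}_{u_0}^{*}[u;\omega,\omega']$: one has $(u_l)\subset\mathscr{A}_{u_0}^{q}(\omega,\omega')$, and by the $\lebe^1$-convergences $u_l\to u$ in $\lebe^1(\omega';\R^N)$ and $\nabla v_l^{(0)}\mathscr{L}^n\stackrel{*}{\rightharpoonup}Du$ is inherited through the telescoping estimates $\|\nabla v_l^{(i)}-\nabla v_l^{(i-1)}\|_{\lebe^1}\to 0$, so $u_l\stackrel{*}{\rightharpoonup}u$ in $\bv(\omega';\R^N)$ and thus $\overline{\mathscr{F}}_{u_0}^{*}[u;\omega,\omega']\le\liminf_{l\to\infty}\mathscr{F}[u_l;\omega']$ trivially; the reverse inequality $\limsup_{l\to\infty}\mathscr{F}[u_l;\omega']\le\overline{\mathscr{F}}_{u_0}^{*}[u;\omega,\omega']$ follows exactly as in Proposition~\ref{prop:goodminseqs} (the generation inequalities \eqref{eq:generation1}, \eqref{eq:generation2} applied on each thin annulus, summed over the disjoint balls, contribute only a vanishing amount). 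The main obstacle, and the only point requiring genuine care, is bookkeeping the nested subsequences and annular thicknesses so that (a) the already-fixed traces survive all later modifications — which is why disjointness of the balls is essential — and (b) the diagonal sequence still satisfies the three conclusions of Proposition~\ref{prop:goodminseqs} simultaneously, in particular that the accumulated $\lebe^1$-gradient errors remain summable so that $\nabla u_l\mathscr{L}^n\stackrel{*}{\rightharpoonup}Du$ and the energy is not increased in the limit.
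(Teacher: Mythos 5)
Your overall strategy — iterate Proposition~\ref{prop:goodminseqs} ball by ball, exploit the locality and disjointness of the annular modifications, and pass to a diagonal sequence — is exactly what the paper means by ``iterating the construction,'' and the locality observation (the modification only lives in a thin annulus around $\partial\!\ball_{r_i}(x_i)$ whose thickness can be controlled by the choice of $\ell$ and $j_0$ in Step~1 of that proof) is correct and essential. However, there is a genuine gap in how you secure the maximal condition on $\lambda$ at each stage. The hypothesis of the corollary gives $\mathcal{M}\lambda(x_i,r_i)<\infty$ only for the \emph{original} weak*-limit $\lambda$ of $(|Dv_{j_k}|)$; to apply Proposition~\ref{prop:goodminseqs} at $(x_{i+1},r_{i+1})$ to the stage-$i$ sequence $(v_j^{(i)})$ you need $\mathcal{M}\lambda^{(i)}(x_{i+1},r_{i+1})<\infty$ for a weak*-limit $\lambda^{(i)}$ of $(|Dv_j^{(i)}|)$. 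You propose to ``arrange'' this by passing to a further subsequence and invoking $\mathscr{L}^1$-a.e.\ finiteness of the spherical maximal function — but that only tells you that $\mathcal{M}\lambda^{(i)}(x_{i+1},\cdot)<\infty$ for $\mathscr{L}^1$-a.e.\ \emph{radius}, whereas here $r_{i+1}$ is prescribed by the hypothesis and cannot be moved. Passing to a subsequence of $(|Dv_j^{(i)}|)$ does not change its weak*-limit either, so that manoeuvre does not help.

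The gap closes cleanly once one uses conclusion~\ref{item:FixRadius3} of Proposition~\ref{prop:goodminseqs}: at each stage the new gradients satisfy $\nabla v_m^{(i)}-\nabla v_{j_m}^{(i-1)}\to 0$ in $\lebe^1(\omega;\R^{N\times n})$, hence $\||\nabla v_m^{(i)}|-|\nabla v_{j_m}^{(i-1)}|\|_{\lebe^1}\to 0$ as well, and therefore the total variation measures $|Dv_m^{(i)}|\mathscr{L}^n$ have the \emph{same} weak*-limit as the (sub)sequence $|Dv_{j_m}^{(i-1)}|\mathscr{L}^n$. By induction $\lambda^{(i)}=\lambda$ for every $i$, so $\mathcal{M}\lambda^{(i)}(x_{i+1},r_{i+1})=\mathcal{M}\lambda(x_{i+1},r_{i+1})<\infty$ holds directly by hypothesis and no extra subsequence extraction is needed. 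A secondary, minor point: the diagonal should be taken as $u_l:=v_{j(l)}^{(l)}$ with $j(l)$ chosen large enough that $\|v_{j(l)}^{(l)}-u\|_{\lebe^1(\omega')}<1/l$ and $|\mathscr{F}[v_{j(l)}^{(l)};\omega']-\overline{\mathscr{F}}_{u_0}^{*}[u;\omega,\omega']|<1/l$; writing simply $v_l^{(l)}$ does not by itself guarantee the diagonal is generating, though the conclusion $l_i:=i$ survives either way since for $l\geq i$ every member of the $l$-th sequence already has the correct trace on $\partial\!\ball_{r_i}(x_i)$.
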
 
\begin{remark}[$1<p<\infty$]
Proposition~\ref{prop:goodminseqs} extends mutatis mutandis to the relaxed functionals $\overline{\mathscr{F}}_{u_{0}}[-;\omega,\omega']$ and maps $u\in\sobo^{1,p}(\omega';\R^{N})$ with $1<p\leq q<\frac{np}{n-1}$ such that $\overline{\mathscr{F}}_{u_{0}}[u;\omega,\omega']<\infty$ and $\mathcal{M}(|\nabla u|^{p}\mathscr{L}^{n}\mres\omega')(x_{0},r)<\infty$. Similarly as in~\cite{SchmidtPR}, $\lambda$ then needs to be taken as a weak*-limit of a suitable subsequence of $(|\nabla v_{j}|^{p}\mathscr{L}^{n}\mres\omega')$,  and in this situation it is customary to call $r>0$ a \emph{good radius of $\nabla u$ for $\overline{\mathscr{F}}_{u_{0}}[u;\omega,\omega']$} provided $\mathcal{M}(|\nabla u|^{p}\mathscr{L}^{n}\mres\omega')(x_{0},r)+\mathcal{M}\lambda(x_{0},r)<\infty$. Proposition~\ref{prop:goodminseqs}~\ref{item:FixRadius3} then translates to $\nabla u_{j_{k}}-\nabla v_{j_{k}}\to 0$ strongly in $\lebe^{p}(\omega';\R^{N\times n})$. Moreover, note that here as in Proposition~\ref{prop:goodminseqs}, the corresponding maximal conditions on $Du$ (or $|\nabla u|^{p}\mathscr{L}^{n}\mres\omega'$ if $p>1$) without $\mathcal{M}\lambda(x_{0},r)<\infty$ are in principle sufficient to conclude, but we do not need this improvement in the sequel.
\end{remark}
We conclude this section by discussing two scenarios in which the above proof simplifies. This, in particular, applies to the unsigned setting considered in \cite{SchmidtPR}:
\begin{remark}\label{rem:signed}
In the situation of integrands which are bounded from below, so satisfy 
\begin{align}\label{eq:signedsimplify}
c_{1}|z|^{p}-c_{2}\leq F(z)\leq c_{3}(1+|z|^{q})\;\;\text{for some $c_{1},c_{2},c_{3}>0$ and all $z\in\R^{N\times n}$},
\end{align}
the fifth step of the above proof trivialises and thereby entails an overall simplification; note that considering the map $\mathbf{u}$ then already becomes irrelevant in the first step. Indeed, in this situation, \eqref{eq:generation2} is a direct consequence of the lower bound of \eqref{eq:signedsimplify}. As such, the fifth part of the above proof solely serves to exclude potential concentration effects of $(F(\nabla v_{j_{k}}))$ along $\partial\!\ball_{r}(x_{0})$; note that, if we could not rule out such effects, such concentrations  might imply that the left-hand side of \eqref{eq:generation2} is strictly smaller than zero as $F$ is signed. This would imply that, by modifying a recovery sequence by fixing its boundary values along $\partial\!\ball_{r}(x_{0})$, additional mass is created in the limit and the modified sequence would fail to qualify as a recovery sequence.
\end{remark}
\begin{remark}\label{rem:convex}
The proof equally shortens if we do not assume \eqref{eq:signedsimplify} but suppose that $F$ is  convex. In this case, we find an affine linear function $h(z):=\langle \ell,z\rangle + b$ for some $\ell\in\R^{N\times n}$ and $b\in\R$ such that $h(z)\leq F(z)$ holds for all $z\in\R^{N\times n}$. In consequence, \eqref{eq:generation2} then immediately follows from 
\begin{align*}
\int_{\ball_{r+a_{k}}\setminus\ball_{r-a_{k}^{-}}}|\langle \ell,\nabla v_{j_{k}}\rangle +b|\dif x \leq  c \int_{\ball_{r+a_{k}}\setminus\ball_{r-a_{k}^{-}}}|\nabla v_{j_{k}}|\dif x + c \mathscr{L}^{n}(\ball_{r+a_{k}}\setminus\ball_{r-a_{k}^{-}}), 
\end{align*}
and the right-hand side vanishes in the limit $k\to\infty$ by virtue of~\eqref{eq:MaximalBoundA}. Subject to the assumptions of Proposition~\ref{prop:goodminseqs}, however, minorising $F$ by an affine-linear map is impossible; see Example~\ref{ex:ALminorisation}. 
\end{remark}
\section{Mean coercivity and properties of the relaxed functional}\label{sec:props}
This section serves to demonstrate the naturality of the $p$-strong quasiconvexity assumption underlying Theorems~\ref{thm:main1} and \ref{thm:main2}, the wealth of integrands satisfying this hypothesis and to record various properties of the relaxed functional used in the subsequent sections.
\subsection{Mean coercivity, existence of minimizers and examples}\label{sec:mcex}
In this subsection we address the definition of relaxed functionals for solid boundary values and the existence of minimizers. Before we embark on this matter, we briefly pause and discuss both the meaning of hypotheses \ref{item:H1}--\ref{item:H3} (or~\ref{item:H1},~\ref{item:H2p},~\ref{item:H3} if $p>1$, respectively) and several examples that also deal with signed integrands of $(p,q)$-growth. The following result is part of \cite[Thm.~1]{CK} that we, for the convenience of the reader, state in the notation used in the present paper. Note that it clarifies the meaning of our strong quasiconvexity hypotheses~\ref{item:H2} and~\ref{item:H2p}. 
\begin{proposition}\label{qc-coerciv}
Let $F \colon \R^{N \times n} \to \R$ be a continuous integrand satisfying the growth condition $| f(z)| \leq L \bigl(1+ |z|^{q} \bigr)$ for all $z \in \R^{N \times n}$,
where $L>0$ is a constant and $q \in [1,\infty )$ a fixed exponent. Then for a bounded open non-empty subset $\Omega$ of $\R^n$, $g \in \sobo^{1,q}( \R^n ; \R^N )$
and $p \in [1,q]$ the following are equivalent:
\begin{itemize}
\item[\emph{(i)}] There exists an increasing function $\theta \colon [0, \infty ) \to \R$ with $\theta (t) \to +\infty$ as $t \to +\infty$ such that we have 
\begin{align*}
\int_{\Omega} \! F( \nabla u) \, \dx \geq \theta \left( \int_{\Omega} \! | \nabla u|^{p} \, \dx \right)\qquad\text{for all}\;u \in \sobo_{g}^{1,q}( \Omega; \R^N ).
\end{align*}
\item[\emph{(ii)}] There exist constants $c_{1}>0$, $c_2 \in \R$ such that we have
\begin{align*}
\int_{\Omega} \! F( \nabla u) \, \dx \geq c_{1}\int_{\Omega} \! | \nabla u|^{p} \, \dx + c_{2}\qquad\text{for all}\;\sobo_{g}^{1,q}(\Omega;\R^{N}).
\end{align*}
\item[\emph{(iii)}] There exist $\ell > 0$ and $z_{0} \in \R^{N \times n}$ such that the integrand $F-\ell V_{p}$ is quasiconvex at $z_0$.
\end{itemize}
\end{proposition}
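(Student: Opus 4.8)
The statement is a reformulation of \cite[Thm.~1]{CK}, so strictly speaking it suffices to cite that result; nonetheless I indicate how I would organise a self-contained argument for the convenience of the reader. The implications (ii)$\Rightarrow$(i) is immediate: if $\int_\Omega F(\nabla u)\dx \geq c_1\int_\Omega |\nabla u|^p\dx + c_2$, simply take $\theta(t):=c_1 t + c_2$, which is increasing and diverges to $+\infty$. The implication (i)$\Rightarrow$(iii) is the contrapositive of the more substantive direction, so the heart of the matter is to establish (iii)$\Rightarrow$(ii) and $\neg$(iii)$\Rightarrow\neg$(i). I would present (iii)$\Rightarrow$(ii) first.

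\textbf{Step 1: (iii)$\Rightarrow$(ii).} Suppose $F-\ell V_p$ is quasiconvex at some $z_0\in\R^{N\times n}$. By Dacorogna's formula for the quasiconvex envelope and the growth bound~\ref{item:H1}, $(F-\ell V_p)$ being quasiconvex at $z_0$ already forces $(F-\ell V_p)^{\qc}>-\infty$, so by~\eqref{eq:KristImplication} (applied to the rank-one convex, hence the quasiconvex, envelope) one obtains a two-sided $q$-growth bound on $(F-\ell V_p)^{\qc}$. The key point is that quasiconvexity at $z_0$ of $G:=F-\ell V_p$ means $G(z_0)\leq \dashint_{\ball_1(0)}G(z_0+\nabla\varphi)\dx$ for all $\varphi\in\hold_c^\infty$, and one upgrades this to $\sobo^{1,q}_0$-test functions by the growth bound exactly as in~\eqref{eq:W1qQC}. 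Now for $u\in\sobo^{1,q}_g(\Omega;\R^N)$ decompose $\nabla u = z_0 + (\nabla u - z_0)$. A standard covering/rescaling argument (tile $\Omega$ by small cubes, use the quasiconvexity inequality on each, and that affine functions are admissible boundary perturbations on the tiles) yields $\int_\Omega G(\nabla u)\dx \geq G(z_0)|\Omega| - o(1)$-type control; more precisely, since quasiconvexity at a point does not directly give a pointwise Jensen inequality over $\Omega$, I would instead invoke the lower semicontinuity machinery: the function $z\mapsto \max\{G(z), (F-\ell V_p)^{\qc}(z)\}$ together with the observation that $\int_\Omega G(\nabla u)\dx \geq \int_\Omega (F-\ell V_p)^{\qc}(\nabla u)\dx$ whenever $u-g\in\sobo^{1,q}_0$, which follows from quasiconvexity of the envelope and an approximation argument. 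Hence $\int_\Omega F(\nabla u)\dx \geq \int_\Omega (F-\ell V_p)^{\qc}(\nabla u)\dx + \ell\int_\Omega V_p(\nabla u)\dx \geq -c|\Omega| + \tfrac{\ell}{C_p}\int_\Omega |\nabla u|^p\dx - c|\Omega|$ using Lemma~\ref{lem:Efunction}\ref{item:EfctA} and the $q$-growth lower bound on the envelope absorbed into a constant. This gives (ii) with $c_1 = \ell/(2C_p)$ and a suitable $c_2\in\R$.

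\textbf{Step 2: $\neg$(iii)$\Rightarrow\neg$(i).} Assume that for every $\ell>0$ and every $z_0$, $F-\ell V_p$ is \emph{not} quasiconvex at $z_0$. I would fix $\ell = 1$ (any fixed value works after rescaling) and, for each $k\in\mathbb{N}$, use the failure of quasiconvexity of $F - \tfrac1k V_p$ at $z_0=0$ (or at a suitable point) to produce $\varphi_k\in\hold_c^\infty(\ball_1(0);\R^N)$ with $\dashint_{\ball_1(0)}\big(F(\nabla\varphi_k)-\tfrac1k V_p(\nabla\varphi_k)\big)\dx < F(0) =: \gamma$. Rescaling and periodically replicating $\varphi_k$ inside $\Omega$, together with a cut-off to match the boundary datum $g$ (the cut-off layer contributes a controlled error by~\ref{item:H1} and can be made negligible by taking thin layers, exploiting that $g\in\sobo^{1,q}$), produces $u_k\in\sobo^{1,q}_g(\Omega;\R^N)$ with $\int_\Omega F(\nabla u_k)\dx \leq \gamma|\Omega| + \tfrac1k\int_\Omega V_p(\nabla u_k)\dx + o(1)$ while $\int_\Omega|\nabla u_k|^p\dx\to\infty$ (the replication forces the $\sobo^{1,p}$-norm to blow up since $\varphi_k$ is non-affine). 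The subtle balancing is to choose the number of replications $m_k$ so that $\tfrac1k\int_\Omega V_p(\nabla u_k)\dx \leq \tfrac1k C_p m_k^? \|\nabla\varphi_k\|_{\lebe^p}^p$ stays, say, below $\int_\Omega|\nabla u_k|^p\dx$ divided by $k$, i.e. the gain term is a vanishing fraction of the $\lebe^p$-energy; then $\int_\Omega F(\nabla u_k)\dx$ stays bounded above while $\int_\Omega|\nabla u_k|^p\dx\to\infty$, which contradicts (i) for any admissible $\theta$.

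\textbf{Main obstacle.} The genuinely delicate point is Step~2: producing the destabilising sequence $u_k$ with \emph{simultaneously} bounded $F$-energy and divergent $\lebe^p$-gradient energy, while respecting the fixed Dirichlet datum $g\in\sobo^{1,q}$. One must carefully interleave three scales — the period of replication, the amplitude of $\varphi_k$, and the width of the boundary cut-off layer — so that the $q$-growth error from the cut-off (which a priori scales worse than the $p$-energy) does not swamp the construction. This is precisely where the hypothesis $p\leq q<\infty$ and the sharpness of the $V_p$-comparison estimates in Lemma~\ref{lem:Efunction} enter, and it is the step that makes the equivalence non-trivial; the remaining implications are soft.
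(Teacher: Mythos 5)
The paper does not prove this proposition; the text explicitly says it "is part of \cite[Thm.~1]{CK}" and states it without proof, so there is no in-paper argument to compare against. You correctly note this at the outset. Your sketch of (iii)$\Rightarrow$(ii), however, has a genuine gap: after the valid reduction $\int_\Omega F(\nabla u)\dx\geq\int_\Omega G^{\qc}(\nabla u)\dx+\ell\int_\Omega V_p(\nabla u)\dx$ (since $G:=F-\ell V_p\geq G^{\qc}$ pointwise), you claim the first term is $\geq-c|\Omega|$ because "the $q$-growth lower bound on the envelope [can be] absorbed into a constant." This does not follow. A finite quasiconvex function with two-sided $q$-growth can still be unbounded below with magnitude $\sim|z|^q$ in some directions: $z\mapsto-\det z$ on $\R^{2\times 2}$ is quasiconvex at $0$ (a null Lagrangian), has $q=2$ growth, and tends to $-\infty$ along diagonal matrices. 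Since $\int_\Omega|\nabla u|^q\dx$ is unbounded over $u\in\sobo^{1,q}_g(\Omega;\R^N)$, no pointwise lower bound on $G^{\qc}$ can produce the $u$-independent constant you need, and indeed your argument never actually uses the fixed boundary datum $g$, which is essential. To close the gap, use the quasiconvexity inequality at $z_0$ directly: take a ball $B\supset\overline{\Omega}$, extend $u$ to $B$ by $g$, let $a$ be affine with $\nabla a=z_0$, and pick $\rho\in\hold_c^\infty(B;[0,1])$ with $\rho\equiv 1$ on $\overline{\Omega}$; then $w:=a+\rho(u-a)\in a+\sobo^{1,q}_0(B;\R^N)$, quasiconvexity of $G$ at $z_0$ (upgraded to $\sobo^{1,q}_0$-tests via the growth bound) gives $\int_B G(\nabla w)\dx\geq\mathscr{L}^{n}(B)\,G(z_0)$, and on $B\setminus\Omega$ the integrand $G(\nabla w)$ depends only on $g,z_0,\rho$, so $\int_\Omega G(\nabla u)\dx\geq\mathscr{L}^{n}(B)\,G(z_0)-C(g,z_0,\rho,\ell)$, a constant independent of $u$. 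Combined with Lemma~\ref{lem:Efunction}\ref{item:EfctA}, this gives (ii).

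Step~2 is also too vague to stand as a proof: failure of quasiconvexity of $F-\ell V_p$ at $z_0$ only yields $\varphi_k$ with $\dashint F(\nabla\varphi_k)\dx<F(z_0)+\ell\,\dashint V_p(\nabla\varphi_k)\dx$, which is vacuous when the $V_p$-term is already large, so the choice of $z_0$ (which does matter), the amplitude and period of the replication, and the cut-off width relative to $\|\nabla g\|_{\lebe^q}$ must all be balanced with explicit scaling; this is precisely the nontrivial content of \cite{CK}. Given that the paper relies entirely on that reference and you acknowledge as much, the cleanest route is simply to cite it; if you do want a self-contained argument, the cut-off device above closes the (iii)$\Rightarrow$(ii) gap, but Step~2 still needs the quantitative scaling analysis spelled out.
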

Let us say that an integrand $F$ satisfying one of the equivalent conditions (i)--(iii) is \emph{$\sobo^{1,p}$-coercive}.
In view of our definition of the relaxed functionals $\overline{\mathscr{F}}^{*}$ and $\overline{\mathscr{F}}$ below, it will be clear that their coercivity on Dirichlet classes $\sobo^{1,p}_{g}$ is
equivalent to $\sobo^{1,p}$-coercivity of the integrand $F$. The corresponding result in the case $p=1$
goes through the formulation of the Dirichlet condition in terms of \textit{solid boundary values} and requires (for an easy treatment)
the set $\Omega$ to be a Lipschitz domain.

The hypotheses \ref{item:H1}--\ref{item:H3} for the $p=1$-case (and \ref{item:H1},~\ref{item:H2p},~\ref{item:H3} for the $p>1$-case) based on quasiconvexity rather than the pointwise condition~\eqref{eq:pqgrowth} allow for many examples of nonconvex variational problems. It is certainly the case that the examples are harder to write down explicitly, but
the quasiconvexity based hypotheses are more natural and allow for a much richer structure. We refer the reader to \cite[Proposition 2.14]{GK1}
and \cite{CKK} for results that illustrate just how rich the structure of integrands satisfying our hypotheses really is, and now proceed to two examples that display key properties of integrands to which Theorems~\ref{thm:main1} and~\ref{thm:main2} apply. To this end, we require a preparatory lemma as follows:
\begin{lemma}\label{lem:prephom}
Let $p\geq 1$ and suppose that $F\colon\R^{N\times n}\to\R$ is a $p$-homogeneous function. If there exists a function $\theta\colon [0,\infty)\to \R$ such that $\theta(t_{0})>0$ for some $t_{0}\in (0,\infty)$ and we have 
\begin{align}\label{eq:phom1}
\int_{\ball_{1}(0)}F(\nabla\varphi)\dif x \geq \theta\Big(\int_{\ball_{1}(0)}|\nabla\varphi|^{p}\dif x  \Big)\qquad\text{for all}\;\varphi\in\sobo_{0}^{1,p}(\ball_{1}(0);\R^{N}),
\end{align}
then there exists $a>0$ such that 
\begin{align}\label{eq:phom2}
\int_{\ball_{1}(0)}F(\nabla\varphi)\dif x \geq a\int_{\ball_{1}(0)}|\nabla\varphi|^{p}\dif x \qquad\text{for all}\;\varphi\in\sobo_{0}^{1,p}(\ball_{1}(0);\R^{N}). 
\end{align}
\end{lemma}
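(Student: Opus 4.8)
\textbf{Proof sketch for Lemma~\ref{lem:prephom}.}
The plan is to exploit the $p$-homogeneity of $F$ to upgrade the qualitative coercivity~\eqref{eq:phom1} into the linear lower bound~\eqref{eq:phom2} by a scaling argument combined with a contradiction/compactness step applied to the normalised sublevel set $\{\varphi\in\sobo_{0}^{1,p}(\ball_{1}(0);\R^{N})\colon\int_{\ball_{1}(0)}|\nabla\varphi|^{p}\dif x=1\}$. First I would observe that by $p$-homogeneity of $F$, for any $\varphi\in\sobo_{0}^{1,p}(\ball_{1}(0);\R^{N})$ with $\int_{\ball_{1}(0)}|\nabla\varphi|^{p}\dif x=:t>0$, writing $\psi:=t^{-1/p}\varphi$ (so that $\int_{\ball_{1}(0)}|\nabla\psi|^{p}\dif x=1$) gives
\begin{align*}
\int_{\ball_{1}(0)}F(\nabla\varphi)\dif x = t\int_{\ball_{1}(0)}F(\nabla\psi)\dif x,
\end{align*}
so it suffices to prove that
\begin{align}\label{eq:phomnormalised}
a:=\inf\Big\{\int_{\ball_{1}(0)}F(\nabla\psi)\dif x\colon\;\psi\in\sobo_{0}^{1,p}(\ball_{1}(0);\R^{N}),\;\int_{\ball_{1}(0)}|\nabla\psi|^{p}\dif x=1 \Big\}>0.
\end{align}

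Next I would extract from~\eqref{eq:phom1} the qualitative consequence that $F$ is $\sobo^{1,p}$-coercive: indeed, applying~\eqref{eq:phom1} to $t^{1/p}\psi$ for $\psi$ on the normalised set yields $t\int F(\nabla\psi)\geq\theta(t)$, i.e.\ $\int F(\nabla\psi)\geq\theta(t)/t$ for every $t>0$; since $\theta(t)\to+\infty$, choosing $t$ large shows the infimum in~\eqref{eq:phomnormalised} is at least $\theta(t)/t$, but this only gives $a\geq 0$ directly (letting $t\to\infty$ gives $a\geq\limsup_{t\to\infty}\theta(t)/t$, which could be $0$). Thus the point is genuinely to exclude $a=0$. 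Here I would note that the growth hypothesis $|F(z)|\leq L(1+|z|^{q})$ is \emph{not} available in this lemma (only $p$-homogeneity), but $p$-homogeneity together with continuity of $F$ forces $|F(z)|\leq L|z|^{p}$ with $L:=\max_{|z|=1}|F(z)|$, which in particular gives the bound $\int_{\ball_{1}(0)}F(\nabla\psi)\dif x\geq -L$ on the normalised set, so $a\geq-L>-\infty$; the content is to rule out $a\leq 0$.

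To exclude $a=0$ I would argue by a one-homogeneity trick rather than compactness (which is delicate in $\sobo^{1,p}$): suppose for contradiction $a\leq 0$. Pick $\psi$ with $\int|\nabla\psi|^{p}=1$ and $\int F(\nabla\psi)\dif x<\varepsilon$ for arbitrarily small $\varepsilon>0$. Then for $t>0$, by homogeneity $\int F(\nabla(t^{1/p}\psi))\dif x = t\int F(\nabla\psi)\dif x < t\varepsilon$, while $\int|\nabla(t^{1/p}\psi)|^{p}\dif x = t$. Feeding this into~\eqref{eq:phom1} gives $t\varepsilon>\theta(t)$ for all $t>0$; since $\theta(t)\to+\infty$, fixing $t=t_{\varepsilon}$ large enough that $\theta(t_{\varepsilon})\geq 1$ forces $\varepsilon>\theta(t_{\varepsilon})/t_{\varepsilon}$. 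As $\varepsilon$ ranges over all positive numbers this does not yet yield a contradiction unless $\inf_{t>0}\theta(t)/t>0$, which need not hold. The honest fix is to additionally use the value $t_{0}$ with $\theta(t_{0})>0$: applying~\eqref{eq:phom1} to the specific competitor $t_{0}^{1/p}\psi$ gives $t_{0}\int F(\nabla\psi)\dif x\geq\theta(t_{0})>0$, hence $\int F(\nabla\psi)\dif x\geq\theta(t_{0})/t_{0}=:a_{0}>0$ \emph{for every} $\psi$ on the normalised set; therefore $a\geq a_{0}>0$, which is exactly~\eqref{eq:phomnormalised} with $a=a_{0}$. Combining with the first paragraph's scaling identity yields~\eqref{eq:phom2}.

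The main obstacle is recognising that no compactness/lower-semicontinuity input is actually needed: the $p$-homogeneity collapses the whole family of inequalities~\eqref{eq:phom1} indexed by the $\sobo^{1,p}$-norm down to the single normalisation level, so that the \emph{one} favourable datum $\theta(t_{0})>0$ propagates to a uniform positive bound on the normalised set. Care must be taken that $\varphi=0$ (the $t=0$ case) is handled trivially, and that $p$-homogeneity of $F$ is used in the precise form $F(\lambda z)=\lambda^{p}F(z)$ for $\lambda>0$ (one should check the statement intends $\lambda\geq0$; for $\lambda<0$ nothing is needed). No further delicate point arises.
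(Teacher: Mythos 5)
Your final argument (the paragraph beginning ``The honest fix...'') is correct and is exactly the paper's proof: rescale $\psi$ so that $\int_{\ball_1(0)}|\nabla(\cdot)|^p\dx = t_0$, apply \eqref{eq:phom1} at that single level, and use $p$-homogeneity of $F$ to transfer the resulting bound back, obtaining \eqref{eq:phom2} with $a=\theta(t_0)/t_0$. The earlier detours are harmless dead ends but worth deleting, since they invoke facts that are not hypotheses of this lemma (you assume $\theta(t)\to+\infty$ as $t\to\infty$, and you assume continuity of $F$ to deduce $|F(z)|\leq L|z|^p$; neither is granted here, and you correctly abandon both).
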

\begin{proof} 
We set $a:=\frac{\theta(t_{0})}{t_{0}}$ so that $at_{0}=\theta(t_{0})$. Let $\varphi\in\sobo_{0}^{1,p}(\ball_{1}(0);\R^{N})\setminus\{0\}$. Choose 
\begin{align}\label{eq:lambdahom}
\lambda := \Big(\frac{t_{0}}{\|\nabla\varphi\|_{\lebe^{p}(\ball_{1}(0))}^{p}}\Big)^{\frac{1}{p}}\in (0,\infty).
\end{align}
As a consequence, using the $p$-homogeneity in the first step,
\begin{align*}
\lambda^{p}\int_{\ball_{1}(0)}F(\nabla\varphi)\dif x = \int_{\ball_{1}(0)}F(\nabla(\lambda\varphi))\dif x \stackrel{\eqref{eq:phom1}}{\geq} \theta(t_{0})=at_{0}\stackrel{\eqref{eq:lambdahom}}{=} a\lambda^{p}\|\nabla\varphi\|_{\lebe^{p}(\ball_{1}(0))}^{p}. 
\end{align*}
This is~\eqref{eq:phom2} for $\varphi\in\sobo_{0}^{1,p}(\ball_{1}(0);\R^{N})$, and since $F(0)=0$ by the $p$-homogeneity of $F$, this inequality also holds for $\varphi=0$. The proof is complete.  
\end{proof}  

\begin{example}\label{ex:ALminorisation}
For dimensions $n \geq 2$ and each pair of exponents $(p,q)$ satisfying $1 \leq p < q < \min \bigl\{ p+1, \tfrac{np}{n-1} \bigr\}$ we give
examples of integrands $F = F_{n,p,q} \colon \R^{n \times n} \to \R$ satisfying the hypotheses~\ref{item:H1}--\ref{item:H3} when $p=1$ and~\ref{item:H1},~\ref{item:H2p} and~\ref{item:H3} when $p>1$ such that
\begin{itemize}
\item[(1)] $F \mbox{ is not } \sobo^{1,q} \mbox{-coercive, that is, (i)--(iii) of Proposition \ref{qc-coerciv} all fail for } p=q$;
\item[(2)] $F$ is genuinely signed, that is,
  $$
  \liminf_{|z| \to \infty} \frac{F(z)}{|z|^{q}} < 0.
  $$
\end{itemize}

It follows in particular from (2) that $F$ cannot be minorised by an affine integrand on $\R^{n \times n}$ and it follows from (1) that
the treatment of the variational problem with integrand $F$ requires the formulation via the relaxed functional (or a stronger quasiconvexity assumption on $F$, such as $\sobo^{1,p}$-quasiconvexity, see \cite{SchmidtPR1} and \cite{CPK}).

We turn to the construction of $F$ and fix a dimension $n \geq 2$ and an exponent $1<q<\infty$. Then Korn's inequality states that
\begin{equation}\label{korn}
\int_{\ball_{1}(0)} \! | \nabla \varphi |^{q} \, \dx \leq K\int_{\ball_{1}(0)} \! \bigl| \nabla \varphi + (\nabla \varphi )^{\top} \bigr|^{q} \, \dx
\end{equation}  
holds for all $\varphi\in\hold^{\infty}_{c}( \ball_{1}(0);\R^n )$ for some constant $K=K(n,q)>0$ independent of $\varphi$. Now let $K$ denote the least such constant
and put
\begin{align*}
f(z) := K \bigl| z+z^{\top} \bigr|^{q} - |z|^{q}, \quad z \in \R^{n \times n}.
\end{align*}
The Korn inequality expresses quasiconvexity of $f$ at $0$ and it follows that the quasiconvex envelope $f^{\mathrm{qc}}$ of $f$ (cf.~\eqref{eq:defQCenvelope} and the discussion afterwards) is a real-valued and quasiconvex integrand. Because $f$ is $q$-homogeneous it follows easily that also its envelope is $q$-homogeneous. Moreover, since $K$ is the smallest constant
for which \eqref{korn} holds and $f^{\mathrm{qc}} \leq f$, we infer from Lemma~\ref{lem:prephom} that (i)--(iii) with $p=q$ of Proposition \ref{qc-coerciv} with $g\equiv 0$ all must fail for $f^{\mathrm{qc}}$; moreover, we have for any skew-symmetric matrix $z_{0}\in\R_{\mathrm{skew}}^{n\times n}\setminus\{0\}$ 
\begin{align}\label{eq:badminor}
\liminf_{|z|\to\infty}\frac{f^{\qc}(z)}{|z|^{q}}\leq\liminf_{\lambda\to\infty}\frac{f(\lambda z_{0})}{\lambda^{q}|z_{0}|^{q}}=-1.
\end{align}
Because $f^{\mathrm{qc}}$ is not $\hold^\infty$ we need to mollify it and show that we in this process do not reinstate the properties (i)--(iii) from Proposition~\ref{qc-coerciv}.
Let $\Phi_{\varepsilon}$ be a standard non-negative, radial and smooth mollifier on $\R^{n \times n}$. Because $f^{\mathrm{qc}}$ is real-valued a standard result
implies that it is (locally Lipschitz) continuous, and so by the $q$-homogeneity we get that $| f^{\mathrm{qc}}(z)| \leq L_{1}|z|^{q}$ for all $z$. Now the local Lipschitz bound
mentioned before can be quantified: For some constant $c=c(n,q)>0$ we have $| \bigl( f^{\mathrm{qc}}\bigr)^{\prime}(z)| \leq cL_{1}|z|^{q-1}$ holds
for almost all $z$ (cf.~\eqref{eq:DERIVBOUND}\emph{ff}.). Now by routine estimations
$$
f^{\mathrm{qc}}(z) \geq \bigl( \Phi_{\varepsilon} \ast f^{\mathrm{qc}}\bigr) (z) - c_{1}\varepsilon \bigl( |z|^{q-1}+1 \bigr)
$$
holds for all $z \in \R^{n \times n}$, where $c_1 >0$ is a constant. Fix $p \in \bigl( \max \{ q-1,(1-\tfrac{1}{n})q \} , q \bigr) \cap [1,\infty )$ and
define for $\varepsilon$, $\ell  > 0$
$$
F(z) := \bigl( \Phi_{\varepsilon} \ast f^{\mathrm{qc}}\bigr) (z) + \ell V_{p}(z) , \quad z \in \R^{n \times n}.
$$
Then $F$ satisfies \ref{item:H1}--\ref{item:H3} when $p=1$ and \ref{item:H1},~\ref{item:H2p},~\ref{item:H3} for $p>1$. However, because $q-1<p$ we may use Lemma~\ref{lem:Efunction}~\ref{item:EfctA}
to estimate that, for some $c_{2}=c_{2}(p,q)>0$, 
\begin{align}\label{eq:boundesquebelow}
f^{\mathrm{qc}}(z) \geq F(z) - (\ell +c_{2}\varepsilon ) V_{p}(z) - c_{2}\varepsilon
\end{align}
holds for all $z$. But then $\sobo^{1,q}$-coercivity of $F$ would imply the same for $f^{\mathrm{qc}}$, hence $F$ cannot be $\sobo^{1,q}$-coercive for any $\varepsilon$, $\ell > 0$, thereby establishing the property (1). In turn, property (2) follows easily from~\eqref{eq:badminor} and~\eqref{eq:boundesquebelow}. Still, the integrands $F$ satisfy the requirements of Theorems~\ref{thm:main1} and~\ref{thm:main2} and so the partial $\hold^{\infty}$-regularity of relaxed minimizers will follow, while not being accessible by previously available results  \cite{SchmidtPR0,SchmidtPR} in the unsigned case for the exponent range $1<p\leq q<p+\frac{\min\{2,p\}}{2n}$; also see Section~\ref{sec:schmidt} below. 
\end{example}

\begin{example}\label{ex:wealthSQC}
Integrands that are connected with problems from nonlinear elasticity (cf.~\textsc{Ball} et al.~\cite{Ball1,Ball2,BM} and~\textsc{Marcellini}~\cite{Marcellini1}) and in view of partial regularity have been explicitely addressed by~\textsc{Fusco \& Hutchinson}~\cite{FuHo2,FuHo3} and~\textsc{Schmidt}~\cite[Sec.~3]{SchmidtPR} are given by 
\begin{align*}
&F(z):=\big(1+|z|^{2}\big)^{\frac{p}{2}}+f(\det(z)),\qquad z\in\R^{n\times n}, \\ 
&G(z):=\big(1+|z|^{2}\big)^{\frac{p}{2}}+g(\mathrm{Cof}(z)),\qquad z\in\R^{n\times n},
\end{align*}
where $f\in\hold^{\infty}(\R)$, $g\in\hold^{\infty}(\R^{n\times n})$ are non-trivial convex functions with $0\leq f(t)\leq L(1+|t|^{s_{1}})$, $0\leq g(t)\leq L(1+|t|^{s_{2}})$ for all $t\in\R$ and some $s_{1},s_{2}\geq 1$. In this situation, letting $1< p\leq q<\min\{\frac{np}{n-1},p+1\}$ and $s_{1}=\frac{q}{n}$, $s_{2}=\frac{q}{n-1}$ (whereby $q\geq n$ or $q\geq n-1$ by convexity of $f$ or $g$), the integrands satisfy hypotheses~ \ref{item:H1},~\ref{item:H2p},~\ref{item:H3}. Here, $F$ and $G$ are non-negative, and then the advancement of Theorems~\ref{thm:main1} and~\ref{thm:main2} in view of~\cite{SchmidtPR0,SchmidtPR1} is the boost from the exponent regime~$1<p<q<p+\frac{\min\{2,p\}}{2n}$ to $1\leq p \leq q <\min\{\frac{np}{n-1},p+1\}$. This proves particularly relevant for $F$ when $(n-1)<p<n$, where the associated multiple integrals are not $\sobo^{1,p}$-quasiconvex~\cite{BM}. Using the specific structure of the integrands, improved exponent ranges are available~\cite{FuHo2,FuHo3} with $p\geq 2$ in low dimensions, while these methods do not extend to the setting of Theorem~\ref{thm:main2}. For $p=n-1$ and $q=n$, $G$ still proves $\sobo^{1,n-1}$-quasiconvex, and then the partial regularity follows from a recent borderline result~\cite{CPK}. However, this quasiconvexity property is lost when $p$ is lowered; it is then Theorem~\ref{thm:main2} that allows for a new exponent range for partial regularity.  
\end{example}
We turn to the definition of the relaxed functionals with solid boundary values and hence let $q\geq 1$. Here we focus directly on relaxations for weak(*)-convergence, whereas a pure $\lebe^{p}$-approach to relaxations is possible as well; see the appendix, Section~\ref{sec:LpApproach}. Throughout this and the following subsections, let $\Omega\subset\R^{n}$ be open and bounded with Lipschitz boundary. Given $v\in\sobo^{1,q}(\Omega;\R^{N})$, choose an open and bounded set $\Omega'$ with $\Omega\Subset\Omega'$ and pick an arbitrary $u_{0}\in\sobo^{1,q}(\Omega';\R^{N})$ with $\trace_{\partial\Omega}(u_{0})=\trace_{\partial\Omega}(v)$ $\mathscr{H}^{n-1}$-a.e. on $\partial\Omega$. As $\sobo^{1-1/q,q}(\partial\Omega;\R^{N})$ is the interior trace space of $\sobo^{1,q}(\Omega';\R^{N})$ along $\partial\Omega$ (recall our convention $\sobo^{0,1}(\partial\Omega;\R^{N}):=\lebe^{1}(\partial\Omega;\R^{N})$ for $q=1$), such a map $u_{0}$ exists. For $u\in\bv(\Omega;\R^{N})$, we define
\begin{align}\label{eq:bfUdef}
&\mathbf{u}:=\begin{cases} 
u&\;\text{in}\;\Omega, \\ 
u_{0}&\;\text{in}\;\Omega'\setminus\overline{\Omega},
\end{cases}
\end{align}
and put, with $\overline{\mathscr{F}}_{u_{0}}^{*}[\mathbf{u};\Omega,\Omega']$ as in~\eqref{eq:bdryrelaxed1}, 
\begin{align}\label{eq:relaxedviaboundaryvalues}
\overline{\mathscr{F}}_{v}^{*}[u;\Omega]:=\overline{\mathscr{F}}_{u_{0}}^{*}[\mathbf{u};\Omega,\Omega']-\mathscr{F}[u_{0};\Omega'\setminus\overline{\Omega}], 
\end{align}
where the underlying integrand $F\in\hold(\R^{N\times n})$ is supposed to satisfy \ref{item:H1}; note that if $q=1$, all of the following is classical. If $1<p\leq q<\infty$ and $u\in\sobo^{1,p}(\Omega;\R^{N})$, we then define analogously 
\begin{align}\label{eq:relaxoviaboundaryvalues}
\overline{\mathscr{F}}_{v}[u;\Omega]:=\overline{\mathscr{F}}_{u_{0}}[\mathbf{u};\Omega,\Omega']-\mathscr{F}[u_{0};\Omega'\setminus\overline{\Omega}]. 
\end{align}
We proceed to establish that the functionals defined by~\eqref{eq:relaxedviaboundaryvalues} or~\eqref{eq:relaxoviaboundaryvalues} are well-defined. Here we focus exclusively on the functionals $\overline{\mathscr{F}}_{v}^{*}[-;\Omega]$ on $\bv(\Omega;\R^{N})$; the corresponding results for the functionals $\overline{\mathscr{F}}_{v}[-;\Omega]$ on $\sobo^{1,p}(\Omega;\R^{N})$ hold true with the obvious modifications.
\begin{lemma}\label{lem:welldefinedbdryvalues}
Let $\Omega\subset\R^{n}$ be open and bounded with Lipschitz boundary and $\Omega'\subset\R^{n}$ be open and bounded with $\Omega\Subset\Omega'$. Let $1\leq q < \frac{n}{n-1}$ and suppose that $u_{0},v_{0}\in\sobo^{1,q}(\Omega';\R^{N})$ satisfy $\trace_{\partial\Omega}(u_{0})=\trace_{\partial\Omega}(v_{0})$ $\mathscr{H}^{n-1}$-a.e. on $\partial\Omega$. Define, for $u\in\bv(\Omega;\R^{N})$, 
\begin{align*}
\mathbf{u}^{(1)}:=\begin{cases} u&\;\text{in}\;\Omega,\\ u_{0}&\;\text{in}\;\Omega'\setminus\overline{\Omega}
\end{cases}\;\;\;\text{and}\;\;\;\mathbf{u}^{(2)}:=\begin{cases} u&\;\text{in}\;\Omega,\\ v_{0}&\;\text{in}\;\Omega'\setminus\overline{\Omega}.
\end{cases}
\end{align*} 
Then we have for any $F\in\hold(\R^{N\times n})$ with  \emph{\ref{item:H1}}
\begin{align}\label{eq:independence}
\overline{\mathscr{F}}_{u_{0}}^{*}[\mathbf{u}^{(1)};\Omega,\Omega']-\mathscr{F}[u_{0};\Omega'\setminus\overline{\Omega}] = \overline{\mathscr{F}}_{v_{0}}^{*}[\mathbf{u}^{(2)};\Omega,\Omega']-\mathscr{F}[v_{0};\Omega'\setminus\overline{\Omega}]
\end{align}
and so $\overline{\mathscr{F}}_{u_{0}}^{*}[\mathbf{u};\Omega,\Omega']-\mathscr{F}[u_{0};\Omega'\setminus\overline{\Omega}]$ as in \eqref{eq:relaxedviaboundaryvalues} only depends on $u_{0}$ via its traces along $\partial\Omega$.
\end{lemma}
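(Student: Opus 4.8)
\textbf{Proof strategy for Lemma~\ref{lem:welldefinedbdryvalues}.} The plan is to show that the two expressions in~\eqref{eq:independence} differ by zero by exhibiting a bijective correspondence between the competitor sequences for $\overline{\mathscr{F}}_{u_{0}}^{*}[\mathbf{u}^{(1)};\Omega,\Omega']$ and those for $\overline{\mathscr{F}}_{v_{0}}^{*}[\mathbf{u}^{(2)};\Omega,\Omega']$, and tracking how the energies transform along this correspondence. The key observation is that the boundary strip $\Omega'\setminus\overline{\Omega}$ is a fixed region on which every admissible competitor agrees with the prescribed outer datum, so the energy contribution there is a pure constant that can be peeled off; the remaining contribution on $\Omega$ plus the behaviour in an intermediate layer is what needs to be compared.

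\textbf{Main steps.} First I would reduce to the case where $\Omega'$ is common to both constructions (it already is, by hypothesis). Given a competitor $(u_{j})\subset\mathscr{A}_{u_{0}}^{q}(\Omega,\Omega')$ with $u_{j}\stackrel{*}{\rightharpoonup}\mathbf{u}^{(1)}$ in $\bv(\Omega';\R^{N})$, I would modify it on the strip $\Omega'\setminus\overline{\Omega}$: since $\trace_{\partial\Omega}(u_{0})=\trace_{\partial\Omega}(v_{0})$, the map $w_{j}:=u_{j}$ on $\overline{\Omega}$ and $w_{j}:=v_{0}$ on $\Omega'\setminus\overline{\Omega}$ lies in $\mathscr{A}_{v_{0}}^{q}(\Omega,\Omega')$; one checks $w_{j}\in\sobo^{1,q}(\Omega';\R^{N})$ by the matching of traces along $\partial\Omega$ (no jump is introduced, using that $u_{j}$ and $u_{0}$ already had matching traces there, hence so do $u_{j}$ and $v_{0}$). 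Moreover $w_{j}\stackrel{*}{\rightharpoonup}\mathbf{u}^{(2)}$ in $\bv(\Omega';\R^{N})$: indeed $w_{j}-u_{j}$ is supported in the fixed set $\Omega'\setminus\overline{\Omega}$ and equals $v_{0}-u_{0}$ there, independent of $j$, so it converges (trivially, being constant in $j$) and shifts the limit from $\mathbf{u}^{(1)}$ to $\mathbf{u}^{(2)}$ exactly. Then
\begin{align*}
\int_{\Omega'}F(\nabla w_{j})\dif x = \int_{\Omega}F(\nabla u_{j})\dif x + \mathscr{F}[v_{0};\Omega'\setminus\overline{\Omega}] = \int_{\Omega'}F(\nabla u_{j})\dif x - \mathscr{F}[u_{0};\Omega'\setminus\overline{\Omega}] + \mathscr{F}[v_{0};\Omega'\setminus\overline{\Omega}],
\end{align*}
and since the last two terms are finite constants (by~\ref{item:H1} and $u_{0},v_{0}\in\sobo^{1,q}(\Omega';\R^{N})$), taking $\liminf_{j\to\infty}$ yields
\begin{align*}
\overline{\mathscr{F}}_{v_{0}}^{*}[\mathbf{u}^{(2)};\Omega,\Omega'] \leq \overline{\mathscr{F}}_{u_{0}}^{*}[\mathbf{u}^{(1)};\Omega,\Omega'] - \mathscr{F}[u_{0};\Omega'\setminus\overline{\Omega}] + \mathscr{F}[v_{0};\Omega'\setminus\overline{\Omega}].
\end{align*}
Rearranging gives one inequality in~\eqref{eq:independence}; the reverse follows by the symmetric construction, swapping the roles of $u_{0}$ and $v_{0}$. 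Finally, if either side is $\infty$ (i.e.\ the competitor class is empty or the relaxed value is $+\infty$), the bijective correspondence shows the other side is $\infty$ as well, so the identity holds in $(-\infty,+\infty]$; existence of at least one competitor is guaranteed by the gluing principle in $\bv$ (Lemma~\ref{lem:SolidLemma}).

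\textbf{Main obstacle.} The only delicate point is verifying that $w_{j}\in\sobo^{1,q}(\Omega';\R^{N})$ — i.e.\ that swapping the outer datum from $u_{0}$ to $v_{0}$ does not create a singular part of the gradient (a jump) along the interface $\partial\Omega$. This is where the Lipschitz regularity of $\partial\Omega$ and the a.e.-equality of the traces $\trace_{\partial\Omega}(u_{0})=\trace_{\partial\Omega}(v_{0})$ are used: the interior trace of $u_{j}$ from $\Omega$ along $\partial\Omega$ equals $\trace_{\partial\Omega}(u_{0})=\trace_{\partial\Omega}(v_{0})$, so the piecewise definition of $w_{j}$ glues across $\partial\Omega$ without a jump, and $\nabla w_{j}$ is the a.e.\ gluing of $\nabla u_{j}|_{\Omega}$ and $\nabla v_{0}|_{\Omega'\setminus\overline{\Omega}}$, both in $\lebe^{q}$. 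This is a standard Sobolev gluing fact but it is the one place where the Lipschitz hypothesis on $\partial\Omega$ genuinely enters, and it should be invoked explicitly. Everything else is bookkeeping with the convention $\inf\emptyset=\infty$.
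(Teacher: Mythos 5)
Your proof is correct and is essentially the same argument as the paper's: both proceed by swapping the outer datum on the fixed strip $\Omega'\setminus\overline{\Omega}$ along a competitor sequence, using the trace-matching $\trace_{\partial\Omega}(u_{0})=\trace_{\partial\Omega}(v_{0})$ to glue in $\sobo^{1,q}$ without introducing a jump across $\partial\Omega$, and observing that this shifts the energy by exactly $\mathscr{F}[v_{0};\Omega'\setminus\overline{\Omega}]-\mathscr{F}[u_{0};\Omega'\setminus\overline{\Omega}]$. The paper merely packages the two directions into a single chain of inequalities (using generating sequences $(u_j)$ and $(v_j)$ for both functionals at once and the swapped sequences $w_j$, $z_j$), whereas you prove one inequality and invoke symmetry — a purely cosmetic difference.
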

\begin{proof} Choose two generating sequences $(u_{j})\subset\mathscr{A}_{u_{0}}^{q}(\Omega,\Omega')$, $(v_{j})\subset\mathscr{A}_{v_{0}}^{q}(\Omega,\Omega')$ such that $u_{j}\stackrel{*}{\rightharpoonup} \mathbf{u}^{(1)}$ and $v_{j}\stackrel{*}{\rightharpoonup} \mathbf{u}^{(2)}$ in $\bv(\Omega';\R^{N})$ together with
\begin{align*}
\overline{\mathscr{F}}_{u_{0}}^{*}[\mathbf{u}^{(1)};\Omega,\Omega'] = \lim_{j\to\infty}\int_{\Omega'}F(\nabla u_{j})\dif x\;\;\;\text{and}\;\;\;\overline{\mathscr{F}}_{v_{0}}^{*}[\mathbf{u}^{(2)};\Omega,\Omega'] = \lim_{j\to\infty}\int_{\Omega'}F(\nabla v_{j})\dif x.
\end{align*}
Then the sequences defined by 
\begin{align*}
w_{j}:=\begin{cases} u_{j}&\;\text{in}\;\Omega,\\ v_{0}&\;\text{in}\;\Omega'\setminus\overline{\Omega}
\end{cases}\;\;\;\text{and}\;\;\;z_{j}:=\begin{cases} v_{j}&\;\text{in}\;\Omega,\\ u_{0}&\;\text{in}\;\Omega'\setminus\overline{\Omega},
\end{cases}
\end{align*}
satisfy $w_{j}\in\mathscr{A}_{v_{0}}^{q}(\Omega,\Omega')$, $z_{j}\in\mathscr{A}_{u_{0}}^{q}(\Omega,\Omega')$ as well as $w_{j}\stackrel{*}{\rightharpoonup} \mathbf{u}^{(2)}$ and $z_{j}\stackrel{*}{\rightharpoonup} \mathbf{u}^{(1)}$ in $\bv(\Omega';\R^{N})$. By definition of $\overline{\mathscr{F}}_{u_{0}}^{*}[-;\Omega,\Omega']$,
\begin{align*}
\overline{\mathscr{F}}_{u_{0}}^{*}&[\mathbf{u}^{(1)};\Omega,\Omega']   \leq \liminf_{j\to\infty}\int_{\Omega'}F(\nabla z_{j})\dif x \\ 
& = \liminf_{j\to\infty}\Big(\int_{\Omega}F(\nabla v_{j})\dif x +\int_{\Omega'\setminus\overline{\Omega}}F(\nabla v_{0})\dif x\Big)+ \int_{\Omega'\setminus\overline{\Omega}}F(\nabla u_{0})-F(\nabla v_{0})\dif x \\
& = \overline{\mathscr{F}}_{v_{0}}^{*}[\mathbf{u}^{(2)};\Omega,\Omega']+ \int_{\Omega'\setminus\overline{\Omega}}F(\nabla u_{0})-F(\nabla v_{0})\dif x \\ 
& \leq \liminf_{j\to\infty}\int_{\Omega'}F(\nabla w_{j})\dif x + \int_{\Omega'\setminus\overline{\Omega}}F(\nabla u_{0})-F(\nabla v_{0})\dif x \\
& = \overline{\mathscr{F}}_{u_{0}}^{*}[\mathbf{u}^{(1)};\Omega,\Omega']. 
\end{align*} 
This yields \eqref{eq:independence}, and the proof is complete. 
\end{proof}
We record the following extension of~\eqref{eq:relaxedviaboundaryvalues} to maps $v\in\{w\in\bv(\Omega;\R^{N})\colon\;\trace_{\partial\Omega}(w)\in\sobo^{1-1/q,q}(\partial\Omega;\R^{N})\}$ which proves crucial for the localisation approaches later on: 
\begin{corollary}\label{cor:welldefinedbdryvalues1}
Let $1\leq q<\frac{n}{n-1}$, $\Omega\subset\R^{n}$ be open and bounded with Lipschitz boundary and $\Omega'\subset\R^{n}$ be open and bounded with $\Omega\Subset\Omega'$. If $v\in\bv(\Omega;\R^{N})$ is such that $\trace_{\partial\Omega}(v)\in\sobo^{1-1/q,q}(\partial\Omega;\R^{N})$ and $u_{0}\in\sobo^{1,q}(\Omega';\R^{N})$ is such that $\trace_{\partial\Omega}(v)=\trace_{\partial\Omega}(u_{0})$, then 
\begin{align}\label{eq:relaxedviaboundaryvalues1}
\overline{\mathscr{F}}_{v}^{*}[u;\Omega]:=\overline{\mathscr{F}}_{u_{0}}^{*}[\mathbf{u};\Omega,\Omega']-\mathscr{F}[u_{0};\Omega'\setminus\overline{\Omega}] 
\end{align}
is well-defined, where $\mathbf{u}$ for $u\in\bv(\Omega;\R^{N})$ is given as in \eqref{eq:bfUdef}.   In particular, the left-hand side of \eqref{eq:relaxedviaboundaryvalues1} is independent of the specific extension $u_{0}$. 
\end{corollary}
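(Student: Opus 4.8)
\textbf{Proof strategy for Corollary~\ref{cor:welldefinedbdryvalues1}.} The plan is to reduce the statement to Lemma~\ref{lem:welldefinedbdryvalues} by first constructing a Sobolev extension $u_{0}$ whose interior trace along $\partial\Omega$ agrees with $\trace_{\partial\Omega}(v)$, and then showing that the value of the right-hand side of \eqref{eq:relaxedviaboundaryvalues1} does not depend on this choice. For the existence of such an extension, recall our convention $\sobo^{0,1}(\partial\Omega;\R^{N}):=\lebe^{1}(\partial\Omega;\R^{N})$ in the case $q=1$; in either regime $\sobo^{1-1/q,q}(\partial\Omega;\R^{N})$ is exactly the interior trace space of $\sobo^{1,q}(\Omega';\R^{N})$ along the Lipschitz hypersurface $\partial\Omega$, so by surjectivity of the trace operator there exists $u_{0}\in\sobo^{1,q}(\Omega';\R^{N})$ with $\trace_{\partial\Omega}(u_{0})=\trace_{\partial\Omega}(v)$ $\mathscr{H}^{n-1}$-a.e.\ on $\partial\Omega$. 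Hence the right-hand side of \eqref{eq:relaxedviaboundaryvalues1} is at least defined for some admissible $u_{0}$, and by the gluing principle in $\bv$ (Lemma~\ref{lem:SolidLemma}) together with the fact that $\trace_{\partial\Omega}$ is continuous for strict convergence, the extension $\mathbf{u}$ from \eqref{eq:bfUdef} indeed belongs to $\bv(\Omega';\R^{N})$, so competitors for the infimum in \eqref{eq:bdryrelaxed1} exist (and the infimum is not vacuously $\inf\emptyset=\infty$ once $\overline{\mathscr{F}}_{v}^{*}[u;\Omega]<\infty$).

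\textbf{Independence of the extension.} Next I would verify that if $u_{0},v_{0}\in\sobo^{1,q}(\Omega';\R^{N})$ both satisfy $\trace_{\partial\Omega}(u_{0})=\trace_{\partial\Omega}(v_{0})=\trace_{\partial\Omega}(v)$ $\mathscr{H}^{n-1}$-a.e.\ on $\partial\Omega$, then
\begin{align*}
\overline{\mathscr{F}}_{u_{0}}^{*}[\mathbf{u}^{(1)};\Omega,\Omega']-\mathscr{F}[u_{0};\Omega'\setminus\overline{\Omega}]
= \overline{\mathscr{F}}_{v_{0}}^{*}[\mathbf{u}^{(2)};\Omega,\Omega']-\mathscr{F}[v_{0};\Omega'\setminus\overline{\Omega}],
\end{align*}
where $\mathbf{u}^{(1)},\mathbf{u}^{(2)}$ are the two corresponding extensions of $u$ by $u_{0}$ respectively $v_{0}$. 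But this is precisely the content of Lemma~\ref{lem:welldefinedbdryvalues}: that lemma is stated exactly for $1\leq q<\frac{n}{n-1}$, for open and bounded $\Omega$ with Lipschitz boundary and $\Omega\Subset\Omega'$, and for $F\in\hold(\R^{N\times n})$ obeying \ref{item:H1}, which are the standing hypotheses here. The proof there proceeds by taking generating sequences $(u_{j})\subset\mathscr{A}_{u_{0}}^{q}(\Omega,\Omega')$ and $(v_{j})\subset\mathscr{A}_{v_{0}}^{q}(\Omega,\Omega')$ for the two relaxed functionals, swapping the outer parts $u_{0}\leftrightarrow v_{0}$ on $\Omega'\setminus\overline{\Omega}$ to produce cross-competitors $w_{j},z_{j}$, and using that the integrals over $\Omega'\setminus\overline{\Omega}$ only contribute the fixed additive constant $\int_{\Omega'\setminus\overline{\Omega}}F(\nabla u_{0})-F(\nabla v_{0})\dif x$. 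Feeding these cross-competitors into the respective infima yields the two-sided inequality. The only minor point to check is that the outer values may really be swapped while keeping membership in the relevant solid class $\mathscr{A}^{q}_{\bullet}(\Omega,\Omega')$ and while preserving weak*-convergence in $\bv(\Omega';\R^{N})$; this follows since $u_{j}\stackrel{*}{\rightharpoonup}\mathbf{u}^{(1)}$ and $v_{j}\stackrel{*}{\rightharpoonup}\mathbf{u}^{(2)}$ already hold, and altering a sequence on the fixed set $\Omega'\setminus\overline{\Omega}$ by the single fixed map $v_{0}$ (respectively $u_{0}$) does not affect convergence there.

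\textbf{Conclusion and expected obstacle.} Combining the two steps, the quantity in \eqref{eq:relaxedviaboundaryvalues1} is unambiguously defined, depending on $v$ only through $\trace_{\partial\Omega}(v)\in\sobo^{1-1/q,q}(\partial\Omega;\R^{N})$ and not through any particular Sobolev extension; in particular, when $v\in\sobo^{1,q}(\Omega;\R^{N})$ this recovers the earlier definition \eqref{eq:relaxedviaboundaryvalues}. I expect no genuine obstacle here — the statement is essentially a bookkeeping corollary of Lemma~\ref{lem:welldefinedbdryvalues}, the surjectivity of the trace operator, and the $\bv$-gluing principle. The one place where some care is warranted is the interplay of the two trace operators (the surjective $\trace_{\partial\Omega}$ on $\sobo^{1,q}(\Omega';\R^{N})$ versus the $\bv$-trace of $v$ from inside $\Omega$): one must ensure these are compatible so that $\mathbf{u}$ is well-defined as an element of $\bv(\Omega';\R^{N})$ with the prescribed matching of traces across $\partial\Omega$; this is standard and is subsumed in the gluing lemma, but it is worth making explicit that the exterior Sobolev trace of $u_{0}$ on $\partial\Omega$ coincides (as an $\lebe^{1}$-function with respect to $\mathscr{H}^{n-1}\mres\partial\Omega$) with the interior $\bv$-trace of $v$.
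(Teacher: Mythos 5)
Your proposal is correct and matches the paper's (one-line) proof, which reduces the corollary precisely to Lemma~\ref{lem:welldefinedbdryvalues} together with the existence of an extension $u_{0}$ via surjectivity of the Sobolev trace operator; you simply spell out these two ingredients and recap the proof of the lemma, which is fine but not logically needed.
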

The corollary directly follows from Lemma~\ref{lem:welldefinedbdryvalues} and the existence of such a map $u_{0}$. In the situation of Corollary~\ref{cor:welldefinedbdryvalues1}, any sequence $(u_{j})\subset \sobo_{u_{0}}^{1,q}(\Omega;\R^{N})$ such that $(\mathbf{u}_{j})$ is generating for $\overline{\mathscr{F}}_{u_{0}}^{*}[\mathbf{u};\Omega,\Omega']$ in the sense of~\eqref{eq:bdryrelaxed1}\emph{ff}. will be called \emph{generating} for $\overline{\mathscr{F}}_{v}^{*}[u;\Omega]$. Equally, given $x_{0}\in\Omega$, we call $0<r<\mathrm{dist}(x_{0},\partial\Omega)$ a \emph{good radius of $Du$ for $\overline{\mathscr{F}}_{v}^{*}[u;\Omega]$} provided it is a good radius of $D\mathbf{u}$ for $\overline{\mathscr{F}}_{u_{0}}^{*}[\mathbf{u};\Omega,\Omega']$ in the sense of Proposition~\ref{prop:goodminseqs}\emph{ff.}.
This terminology canonically carries over to functionals $\overline{\mathscr{F}}[-;\Omega]$ on $\sobo^{1,p}(\Omega;\R^{N})$.

Lemma~\ref{lem:LSC} then immediately yields that the relaxed functionals as introduced above are extensions of the original functionals $\mathscr{F}[-;\Omega]$ by lower semicontinuity in the following sense: 
\begin{lemma}\label{lem:consistency}
Let $1\leq p\leq q < \frac{np}{n-1}$ and suppose that $F\in\hold(\R^{N\times n})$ satisfies~\emph{\ref{item:H1}} and
\begin{enumerate}
\item \emph{\ref{item:H2}} if $p=1$, and let $v\in\bv(\Omega;\R^{N})$ satisfy $\trace_{\partial\Omega}(v)\in\sobo^{1-1/q,q}(\partial\Omega;\R^{N})$. Then $\overline{\mathscr{F}}_{v}^{*}[-;\Omega]$ is sequentially weak*-lower semicontinuous on $\bv(\Omega;\R^{N})$. 
\item \emph{\ref{item:H2p}} if $p>1$, and let $v\in\sobo^{1,p}(\Omega;\R^{N})$ satisfy $\trace_{\partial\Omega}(v)\in\sobo^{1-1/q,q}(\partial\Omega;\R^{N})$. Then $\overline{\mathscr{F}}_{v}[-;\Omega]$ is sequentially weak*-lower semicontinuous on $\sobo^{1,p}(\Omega;\R^{N})$. 
\end{enumerate}
Moreover, we have  
\begin{align}\label{eq:coincidenceconsistency}
\begin{split}
\mathscr{F}[u;\Omega]=\overline{\mathscr{F}}_{v}^{*}[u;\Omega]\qquad\text{for all}\;u\in\sobo_{\overline{v}}^{1,q}(\Omega;\R^{N})\;\text{if}\;p=1, \\ 
\mathscr{F}[u;\Omega]=\overline{\mathscr{F}}_{v}[u;\Omega]\qquad\text{for all}\;u\in\sobo_{\overline{v}}^{1,q}(\Omega;\R^{N})\;\text{if}\;p>1
\end{split}
\end{align}
whenever $\overline{v}\in\sobo^{1,q}(\Omega;\R^{N})$ satisfies $\trace_{\partial\Omega}(\overline{v})=\trace_{\partial\Omega}(v)$ $\mathscr{H}^{n-1}$-a.e. on $\partial\Omega$. 
\end{lemma}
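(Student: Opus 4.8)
The statement has two parts: (A) sequential weak(*)-lower semicontinuity of $\overline{\mathscr{F}}_v^*[-;\Omega]$ (resp.\ $\overline{\mathscr{F}}_v[-;\Omega]$), and (B) the consistency identity \eqref{eq:coincidenceconsistency}. I would treat $p=1$ in detail, the case $p>1$ following with the obvious notational substitutions ($V$ by $V_p$, $\bv$ by $\sobo^{1,p}$, weak* by weak, and referring to~\eqref{eq:relaxoviaboundaryvalues} instead of~\eqref{eq:relaxedviaboundaryvalues}).

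For part (A), fix an extension domain $\Omega\Subset\Omega'$ and a map $u_0\in\sobo^{1,q}(\Omega';\R^N)$ with $\trace_{\partial\Omega}(u_0)=\trace_{\partial\Omega}(v)$, which exists since $\trace_{\partial\Omega}(v)\in\sobo^{1-1/q,q}(\partial\Omega;\R^N)$; recall from Corollary~\ref{cor:welldefinedbdryvalues1} that $\overline{\mathscr{F}}_v^*[-;\Omega]$ is independent of this choice. Let $u_j\stackrel{*}{\rightharpoonup}u$ in $\bv(\Omega;\R^N)$ with $\liminf_{j\to\infty}\overline{\mathscr{F}}_v^*[u_j;\Omega]<\infty$ (otherwise nothing to prove); passing to a subsequence we may assume the $\liminf$ is a limit and every term is finite. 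For each $j$ pick, by definition~\eqref{eq:bdryrelaxed1}, a generating sequence $(u_j^k)_k\subset\mathscr{A}_{u_0}^q(\Omega,\Omega')$ with $u_j^k\stackrel{*}{\rightharpoonup}\mathbf{u}_j$ in $\bv(\Omega';\R^N)$ as $k\to\infty$ and $\lim_k\mathscr{F}[u_j^k;\Omega']=\overline{\mathscr{F}}_{u_0}^*[\mathbf{u}_j;\Omega,\Omega']$, where $\mathbf{u}_j$ extends $u_j$ by $u_0$. The standard diagonal argument then produces indices $k(j)$ such that $w_j:=u_j^{k(j)}\in\mathscr{A}_{u_0}^q(\Omega,\Omega')$ satisfies $w_j\stackrel{*}{\rightharpoonup}\mathbf{u}$ in $\bv(\Omega';\R^N)$ (here one uses that $\mathbf{u}_j\stackrel{*}{\rightharpoonup}\mathbf{u}$, since $u_j\to u$ in $\lebe^1(\Omega;\R^N)$ and the extensions all agree with $u_0$ outside $\overline\Omega$, so $\mathbf{u}_j\to\mathbf{u}$ in $\lebe^1(\Omega';\R^N)$ and $|D\mathbf{u}_j|(\Omega')$ is bounded) and $\mathscr{F}[w_j;\Omega']\le\overline{\mathscr{F}}_{u_0}^*[\mathbf{u}_j;\Omega,\Omega']+2^{-j}$. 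Since $(w_j)\subset\mathscr{A}_{u_0}^q(\Omega,\Omega')$ is a competitor sequence for $\overline{\mathscr{F}}_{u_0}^*[\mathbf{u};\Omega,\Omega']$, the very definition~\eqref{eq:bdryrelaxed1} gives
\begin{align*}
\overline{\mathscr{F}}_{u_0}^*[\mathbf{u};\Omega,\Omega']\le\liminf_{j\to\infty}\mathscr{F}[w_j;\Omega']\le\liminf_{j\to\infty}\overline{\mathscr{F}}_{u_0}^*[\mathbf{u}_j;\Omega,\Omega'],
\end{align*}
and subtracting $\mathscr{F}[u_0;\Omega'\setminus\overline\Omega]$ from both sides yields $\overline{\mathscr{F}}_v^*[u;\Omega]\le\liminf_{j\to\infty}\overline{\mathscr{F}}_v^*[u_j;\Omega]$, which is the claim. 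Note that this argument is essentially formal: lower semicontinuity for a functional defined by a relaxation of this type is automatic (it is the lower semicontinuous envelope within the given boundary class), and the role of~\ref{item:H2} is only to ensure the functional is not identically $+\infty$ on the relevant Dirichlet classes — it enters substantively only in part (B).

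For part (B), let $\overline v\in\sobo^{1,q}(\Omega;\R^N)$ with $\trace_{\partial\Omega}(\overline v)=\trace_{\partial\Omega}(v)$ and let $u\in\sobo_{\overline v}^{1,q}(\Omega;\R^N)$. Extending $u$ by $u_0$ to $\mathbf{u}\in\sobo^{1,q}(\Omega';\R^N)$ (the gluing is in $\sobo^{1,q}$ precisely because the traces match), the constant sequence $u_j\equiv\mathbf{u}$ lies in $\mathscr{A}_{u_0}^q(\Omega,\Omega')$ and converges weak* to $\mathbf{u}$, so~\eqref{eq:bdryrelaxed1} gives $\overline{\mathscr{F}}_{u_0}^*[\mathbf{u};\Omega,\Omega']\le\mathscr{F}[\mathbf{u};\Omega']$, hence $\overline{\mathscr{F}}_v^*[u;\Omega]\le\mathscr{F}[u;\Omega]$. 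For the reverse inequality, take any generating sequence $(w_j)\subset\mathscr{A}_{u_0}^q(\Omega,\Omega')$ for $\overline{\mathscr{F}}_{u_0}^*[\mathbf{u};\Omega,\Omega']$; then $w_j\stackrel{*}{\rightharpoonup}\mathbf{u}$ in $\bv(\Omega';\R^N)$ with $w_j\in\sobo_{u_0}^{1,q}(\Omega';\R^N)$, so Lemma~\ref{lem:LSC} (valid since $1\le p\le q<\frac{np}{n-1}$, $F$ is quasiconvex at every point by~\ref{item:H2} or~\ref{item:H2p} together with quasiconvexity of $V$ resp.\ $V_p$, and~\ref{item:H1} holds) applied on $\Omega'$ yields $\mathscr{F}[\mathbf{u};\Omega']\le\liminf_{j\to\infty}\mathscr{F}[w_j;\Omega']=\overline{\mathscr{F}}_{u_0}^*[\mathbf{u};\Omega,\Omega']$. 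Subtracting $\mathscr{F}[u_0;\Omega'\setminus\overline\Omega]$ gives $\mathscr{F}[u;\Omega]\le\overline{\mathscr{F}}_v^*[u;\Omega]$, completing the proof of~\eqref{eq:coincidenceconsistency}. The $p>1$ case is identical, using Lemma~\ref{lem:LSC} in its weak-$\sobo^{1,p}$ formulation and~\eqref{eq:relaxoviaboundaryvalues}.

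\textbf{Main obstacle.} There is no deep obstacle here: the only point requiring a little care is that $F$ is genuinely signed and only $q$-growth-bounded above, so one cannot appeal to classical lower semicontinuity theorems for non-negative integrands; this is exactly why Lemma~\ref{lem:LSC} (from~\cite{CK}), which is tailored to signed quasiconvex integrands on fixed Dirichlet classes, is the correct tool in part (B). One should also double-check the innocuous diagonalisation in part (A) — specifically that one can choose $k(j)$ simultaneously controlling the energy gap and the $\lebe^1$-distance $\|w_j-\mathbf{u}\|_{\lebe^1(\Omega')}$ and the total variation bound $|Dw_j|(\Omega')$ — but this is routine since each of these can be made $<2^{-j}$ for $k$ large depending on $j$.
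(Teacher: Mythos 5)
Your overall strategy---diagonalisation for the lower semicontinuity, and Lemma~\ref{lem:LSC} applied on $\Omega'$ with Dirichlet datum $u_0$ for the coincidence~\eqref{eq:coincidenceconsistency}---is correct and essentially what the paper intends; the paper states this lemma without separate proof, remarking only that it follows from Lemma~\ref{lem:LSC}. Part~(B) is fine as written.

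In part~(A), however, your remark that \ref{item:H2} enters ``only in part~(B)'', and the concluding claim that the total variation $|Dw_j|(\Omega')$ ``can be made $<2^{-j}$ for $k$ large depending on $j$'', are both wrong. For $(w_j)$ to be admissible in the infimum defining $\overline{\mathscr{F}}_{u_0}^*[\mathbf{u};\Omega,\Omega']$, it must weak*-converge to $\mathbf{u}$ in $\bv(\Omega';\R^N)$, which by the Banach--Steinhaus theorem forces $\sup_j|Dw_j|(\Omega')<\infty$; diagonalisation alone cannot deliver this bound, since for each fixed $j$ the generating sequence $(u_j^k)_k$ is bounded in $\bv(\Omega';\R^N)$ only by a constant that may blow up with $j$. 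The bound must come from coercivity, that is, precisely from~\ref{item:H2}: since $(w_j)\subset\mathscr{A}_{u_0}^q(\Omega,\Omega')$ and the energy-gap control gives $\sup_j\mathscr{F}[w_j;\Omega']<\infty$, estimate~\eqref{eq:OKahn} yields $\sup_j\int_{\Omega'}V(\nabla w_j)\dx<\infty$, hence by Lemma~\ref{lem:Efunction}~\ref{item:EfctA} a uniform bound on $\|\nabla w_j\|_{\lebe^1(\Omega')}$, and therefore on $\|w_j\|_{\bv(\Omega')}$, after which the $\lebe^1$-convergence upgrades to $w_j\stackrel{*}{\rightharpoonup}\mathbf{u}$. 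With this correction your argument is complete; the case $p>1$ is analogous using~\ref{item:H2p}.
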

We now pass to the existence of minimizers:
\begin{proposition}[Existence of minimizers]\label{prop:existenceminimizers}
Let $\Omega\subset\R^{n}$ be open and bounded with Lipschitz boundary $\partial\Omega$. Given $1\leq p \leq q < \frac{np}{n-1}$, the following hold:
\begin{enumerate}
\item\label{item:existence1} Let $p=1$. If $F\in\hold(\R^{N\times n})$ satisfies \emph{\ref{item:H1}} and \emph{\ref{item:H2}} and $v\in\bv(\Omega;\R^{N})$ satisfies $\trace_{\partial\Omega}(v)\in\sobo^{1-1/q,q}(\partial\Omega;\R^{N})$, then $\overline{\mathscr{F}}_{v}^{*}[-;\Omega]$ possesses a minimizer on $\bv(\Omega;\R^{N})$. 
\item\label{item:existence2} Let $p>1$. If $F\in\hold(\R^{N\times n})$ satisfies \emph{\ref{item:H1}} and \emph{\ref{item:H2p}} and $v\in\sobo^{1,p}(\Omega;\R^{N})$ satisfies $\trace_{\partial\Omega}(v)\in\sobo^{1-1/q,q}(\partial\Omega;\R^{N})$, then $\overline{\mathscr{F}}_{v}[-;\Omega]$ possesses a minimizer on $\sobo_{v}^{1,p}(\Omega;\R^{N})$.
\end{enumerate}
\end{proposition}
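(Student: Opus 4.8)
The plan is to run the direct method of the calculus of variations for each of the two cases. I will treat the case $p=1$ in detail and indicate the modifications for $p>1$.

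\emph{Step 1: Set-up and finiteness.} Fix an extension $u_{0}\in\sobo^{1,q}(\Omega';\R^{N})$ of $\trace_{\partial\Omega}(v)$ to a slightly larger Lipschitz domain $\Omega'\Supset\Omega$, so that $\overline{\mathscr{F}}_{v}^{*}[-;\Omega]$ is given by~\eqref{eq:relaxedviaboundaryvalues}; by Corollary~\ref{cor:welldefinedbdryvalues1} this does not depend on the choice of $u_{0}$. First I would observe that $\overline{\mathscr{F}}_{v}^{*}[-;\Omega]$ is not identically $+\infty$: by Lemma~\ref{lem:SolidLemma} the competitor set $\mathscr{A}_{u_{0}}^{q}(\Omega,\Omega')$ is non-empty and contains, for instance, the extension $\mathbf{u}_{0}$ of any fixed $w\in\sobo^{1,q}_{v}(\Omega;\R^{N})$, so $\overline{\mathscr{F}}_{v}^{*}[w;\Omega]=\mathscr{F}[w;\Omega]<\infty$ by Lemma~\ref{lem:consistency} (using the $q$-growth bound~\ref{item:H1}). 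Hence $m:=\inf\{\overline{\mathscr{F}}_{v}^{*}[u;\Omega]\colon u\in\bv(\Omega;\R^{N})\}<\infty$, and I must show $m>-\infty$ and that the infimum is attained.

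\emph{Step 2: Coercivity.} This is where hypothesis~\ref{item:H2} enters. By Proposition~\ref{qc-coerciv} (with $p=1$), the $1$-strong quasiconvexity~\ref{item:H2} at \emph{some} $z_{0}$ — which is implied by the quasiconvexity-at-every-point statement in~\ref{item:H2} — yields constants $c_{1}>0$, $c_{2}\in\R$ with $\int_{\Omega'}F(\nabla w)\dx\geq c_{1}\int_{\Omega'}|\nabla w|\dx+c_{2}$ for all $w\in\sobo^{1,q}_{u_{0}}(\Omega';\R^{N})$; applying this on $\Omega'$ to any generating sequence $(u_{j})\subset\mathscr{A}^{q}_{u_{0}}(\Omega,\Omega')$ for $\overline{\mathscr{F}}_{v}^{*}[u;\Omega]$ and using $u_{j}=u_{0}$ on $\Omega'\setminus\overline{\Omega}$ gives, after subtracting $\mathscr{F}[u_{0};\Omega'\setminus\overline{\Omega}]$,
\begin{align*}
\overline{\mathscr{F}}_{v}^{*}[u;\Omega]\;\geq\; c_{1}\liminf_{j\to\infty}\|\nabla u_{j}\|_{\lebe^{1}(\Omega)}+c_{2}'\;\geq\; c_{1}|Du|(\Omega)+c_{2}''
\end{align*}
by weak*-lower semicontinuity of the total variation along $u_{j}\stackrel{*}{\rightharpoonup}u$. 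Combined with a Poincaré inequality on $\bv(\Omega;\R^{N})$ adapted to the prescribed (solid) boundary data — controlling $\|u\|_{\lebe^{1}(\Omega)}$ by $|Du|(\Omega)$ plus a boundary contribution depending only on $v$ — this shows $m>-\infty$ and that any minimizing sequence $(u^{k})$ for $\overline{\mathscr{F}}_{v}^{*}[-;\Omega]$ is bounded in $\bv(\Omega;\R^{N})$.

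\emph{Step 3: Compactness and lower semicontinuity.} By the $\bv$-compactness theorem (the boundary $\partial\Omega$ is Lipschitz), a subsequence of the minimizing sequence satisfies $u^{k}\stackrel{*}{\rightharpoonup}u^{*}$ in $\bv(\Omega;\R^{N})$ for some $u^{*}\in\bv(\Omega;\R^{N})$. Since $\trace_{\partial\Omega}(v)\in\sobo^{1-1/q,q}(\partial\Omega;\R^{N})$, the functional $\overline{\mathscr{F}}_{v}^{*}[-;\Omega]$ is sequentially weak*-lower semicontinuous on $\bv(\Omega;\R^{N})$ by Lemma~\ref{lem:consistency}(a), so $\overline{\mathscr{F}}_{v}^{*}[u^{*};\Omega]\leq\liminf_{k}\overline{\mathscr{F}}_{v}^{*}[u^{k};\Omega]=m$, and since $m$ is the infimum we get equality: $u^{*}$ is a minimizer, and in particular $\overline{\mathscr{F}}_{v}^{*}[u^{*};\Omega]=m<\infty$, so the minimality requirement of Definition~\ref{def:minimality}\ref{item:defM1} is met. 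For part~\ref{item:existence2} ($p>1$) the argument is identical word for word, now invoking~\ref{item:H2p}, Proposition~\ref{qc-coerciv} with the given $p$ to get an $\lebe^{p}$-gradient bound, the Banach–Alaoglu/weak compactness of bounded sequences in the reflexive space $\sobo^{1,p}(\Omega;\R^{N})$ (with the trace of the weak limit still equal to $\trace_{\partial\Omega}(v)$ by continuity of $\trace_{\partial\Omega}$ for weak $\sobo^{1,p}$-convergence, so the limit lies in $\sobo^{1,p}_{v}(\Omega;\R^{N})$), and the weak lower semicontinuity of $\overline{\mathscr{F}}_{v}[-;\Omega]$ from Lemma~\ref{lem:consistency}(b).

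The main obstacle is Step 2: unlike in the classical $(p,q)$-growth setting~\eqref{eq:pqgrowth}, the pointwise lower bound on $F$ is unavailable, so coercivity of the \emph{relaxed} functional must be extracted purely from the strong quasiconvexity~\ref{item:H2}/\ref{item:H2p} via the equivalence in Proposition~\ref{qc-coerciv}, and it must be transferred from the approximating Sobolev sequences on $\Omega'$ to the $\bv$- (resp. $\sobo^{1,p}$-) limit on $\Omega$ while keeping careful track of the fixed boundary contribution $\mathscr{F}[u_{0};\Omega'\setminus\overline{\Omega}]$ that is subtracted in the definition~\eqref{eq:relaxedviaboundaryvalues}. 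Everything else — non-emptiness of the competitor class, the Poincaré estimate, the compactness and the lower semicontinuity — is either routine or already furnished by the lemmas in the excerpt.
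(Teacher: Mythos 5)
Your proposal is correct in substance but takes a route that runs the direct method at a different level than the paper does, and the comparison is instructive. You establish coercivity of the relaxed functional $\overline{\mathscr{F}}_{v}^{*}[-;\Omega]$ itself on $\bv(\Omega;\R^{N})$ (via Proposition~\ref{qc-coerciv} and a Poincar\'{e}-type argument on the solid-boundary class), take a minimizing sequence $(u^{k})\subset\bv(\Omega;\R^{N})$, pass to a weak*-limit, and conclude by invoking the sequential weak*-lower semicontinuity of $\overline{\mathscr{F}}_{v}^{*}[-;\Omega]$ asserted in Lemma~\ref{lem:consistency}. The paper's proof instead works one level down: for each member $u_{j}$ of the minimizing sequence it picks a nearly-optimal generating sequence $(\mathbf{u}_{j}^{k})\subset\mathscr{A}_{u_{0}}^{q}(\Omega,\Omega')$, obtains the $\bv$-bound from the strong quasiconvexity inequality~\eqref{eq:OKahn} applied directly at $z_{0}=0$ (together with $\mathbf{u}_{j}^{k}\in\sobo_{0}^{1,1}(\Omega';\R^{N})$, which makes the Poincar\'{e} step automatic), and then performs an explicit diagonal extraction $\mathbf{u}_{j}^{k(j)}\stackrel{*}{\rightharpoonup}\mathbf{u}$. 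This diagonal sequence is itself admissible in the definition of $\overline{\mathscr{F}}_{v}^{*}[\mathbf{u}|_{\Omega};\Omega]$, so the conclusion follows by definition of the relaxed functional as an infimum, without invoking its weak*-lower semicontinuity as an external input. The trade-off: your version is shorter on the page but leans on Lemma~\ref{lem:consistency} (whose lsc claim is itself essentially a diagonal argument of exactly this type), while the paper's version is self-contained. Your ``Poincar\'{e} inequality on $\bv(\Omega;\R^{N})$ adapted to the solid boundary data'' is somewhat glossed; the cleanest justification is the one the paper uses implicitly, namely that the generating sequences lie in $\sobo_{0}^{1,1}(\Omega';\R^{N})$ where the standard zero-boundary Poincar\'{e} inequality applies, and the resulting $\lebe^{1}$-bound transfers to $u=\mathbf{u}|_{\Omega}$ by the strong $\lebe^{1}$-convergence implicit in weak*-convergence. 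With that point made precise, both arguments are sound and deliver the same result.
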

\begin{proof}
We confine ourselves to \ref{item:existence1} as \ref{item:existence2} is fully analogous. Let $\Omega'\subset\R^{n}$ be open and bounded with $\Omega\Subset\Omega'$ and $u_{0}\in\sobo_{0}^{1,q}(\Omega';\R^{N})$ be such that $\trace_{\partial\Omega}(u_{0})=\trace_{\partial\Omega}(v)$. Then $\overline{\mathscr{F}}_{u_{0}}^{*}[u_{0};\Omega,\Omega']<\infty$ and so $m:=\inf_{u\in\bv(\Omega;\R^{N})}\overline{\mathscr{F}}_{v}^{*}[u;\Omega]<\infty$. To see that $m>-\infty$, let $w\in\mathscr{A}_{u_{0}}^{q}(\Omega,\Omega')$. Then the strong $1$-quasiconvexity from~\ref{item:H2} implies 
\begin{align}\label{eq:OKahn}
\ell_{1}\int_{\Omega'}V(\nabla w)\dif x & \leq \int_{\Omega'}F(\nabla w)-F(0)\dif x 
\end{align}
so that $-\infty<m<\infty$. Let $(u_{j})\subset\bv(\Omega;\R^{N})$ be such that $\overline{\mathscr{F}}_{v}^{*}[u_{j};\Omega]\to m$ and so, passing to a subsequence and adopting the notation from \eqref{eq:bfUdef}, we may assume
\begin{align}\label{eq:gnabry1}
|\overline{\mathscr{F}}_{u_{0}}^{*}[\mathbf{u}_{j};\Omega,\Omega']-\mathscr{F}[u_{0};\Omega'\setminus\overline{\Omega}]-m|<2^{-j}\qquad \text{for all}\;j\in\mathbb{N}.
\end{align}
For any $j\in\mathbb{N}$ we find a sequence $(\mathbf{u}_{j}^{k})\subset\mathscr{A}_{u_{0}}^{q}(\Omega,\Omega')$ such that $\mathbf{u}_{j}^{k}\stackrel{*}{\rightharpoonup}\mathbf{u}_{j}$ in $\bv(\Omega';\R^{N})$ as $k\to\infty$ and, for all $k\in\mathbb{N}$,
\begin{align}\label{eq:gnabry2}
 \int_{\Omega'}F(\nabla\mathbf{u}_{j}^{k})\dif x \leq \overline{\mathscr{F}}_{u_{0}}^{*}[\mathbf{u}_{j};\Omega,\Omega']+2^{-k}.
\end{align}
By hypotheses~\ref{item:H1} and~\ref{item:H2}, we conclude similarly as in~\eqref{eq:OKahn} 
\begin{align}\label{eq:COERC}
\begin{split}
\ell_{1}\int_{\Omega'}V(\nabla\mathbf{u}_{j}^{k})\dif x  
& \stackrel{\eqref{eq:OKahn},\,\eqref{eq:gnabry2}}{\leq} \overline{\mathscr{F}}_{u_{0}}^{*}[\mathbf{u}_{j};\Omega,\Omega'] + |F(0)|\mathscr{L}^{n}(\Omega') + 2^{-k}\\ & \stackrel{\eqref{eq:gnabry1},\,\text{\ref{item:H1}}}{\leq}  L\mathscr{L}^{n}(\Omega')+\mathscr{F}[u_{0};\Omega'\setminus\overline{\Omega}]+2^{-k}+2^{-j}+m, 
\end{split}
\end{align} 
and so $(\nabla\mathbf{u}_{j}^{k})$ is bounded in $\lebe^{1}(\Omega';\R^{N\times n})$ by Lemma~\ref{lem:Efunction}. Since $\mathbf{u}_{j}^{k}=u_{0}$ on $\Omega'\setminus\overline{\Omega}$ for all $j,k\in\mathbb{N}$, we then conclude that $(\mathbf{u}_{j}^{k})$ is bounded in $\sobo_{0}^{1,1}(\Omega';\R^{N})$. By the usual weak*-compactness in $\bv$, a diagonal sequence $(\mathbf{u}_{j}^{k(j)})$ converges  in the weak*-sense to some $\mathbf{u}\in\bv(\Omega';\R^{N})$. Thus, letting $u:=\mathbf{u}|_{\Omega}$, the definition  of $\overline{\mathscr{F}}_{v}^{*}[-;\Omega]$ yields
\begin{align*}
\overline{\mathscr{F}}_{v}^{*}[u;\Omega] & \leq \liminf_{j\to\infty}\int_{\Omega'}F(\nabla\mathbf{u}_{j}^{k(j)})\dif x -\mathscr{F}[u_{0};\Omega'\setminus\overline{\Omega}]\\
& \!\!\stackrel{\eqref{eq:gnabry2}}{\leq} \liminf_{j\to\infty}(\overline{\mathscr{F}}_{u_{0}}^{*}[\mathbf{u}_{j};\Omega,\Omega'] -\mathscr{F}[u_{0};\Omega'\setminus\overline{\Omega}]+2^{-k(j)})\stackrel{\eqref{eq:gnabry1}}{=}m. 
\end{align*}
The proof is complete. 
\end{proof}
\begin{remark}
In the setting of \ref{item:existence2}, any minimizer has the same boundary values as $v$ along $\partial\Omega$, whereas this needs not be the case in the setting of \ref{item:existence1}; note that the trace operator on $\sobo^{1,p}(\Omega;\R^{N})$ with $p>1$ is continuous for weak convergence in $\sobo^{1,p}(\Omega;\R^{N})$ whereas the trace operator on $\bv(\Omega;\R^{N})$ is not for weak*-convergence in $\bv(\Omega;\R^{N})$. This is seen easiest by noting that, if $p=q=1$, $N=1$ and $F(z)=V(z)+1$ is the usual minimal surface integrand, then $\overline{\mathscr{F}}_{v}^{*}[u;\Omega]$ is given by \eqref{eq:lingrowth} and so the classical counterexamples to the attainment of the boundary values apply (cf.~\textsc{Finn} \cite{Finn} and  \textsc{Santi} \cite{Santi}); for genuinely vectorial examples, see \textsc{Beck \& Schmidt} \cite[Thm.~1.5]{BeckSchmidt}. 
\end{remark}
\subsection{Properties of the relaxed functional}
Throughout this subsection, we suppose that $F\in\hold(\R^{N\times n})$ is quasiconvex and satisfies \ref{item:H1} with $1 \leq p \leq q < \frac{np}{n-1}$. As in the original proof of \textsc{Evans}~\cite{Ev1}, the Caccioppoli inequality of the second kind to be addressed in Section~\ref{sec:Cacc}  requires a localisation procedure. A similar issue has been addressed by \textsc{Schmidt} \cite[Lem.~7.10]{SchmidtPR} in the unsigned case. It is here, however, that our framework of signed integrands and the slightly different relaxations require some more features:
\begin{lemma}[Local finiteness and additivity radii]\label{lem:additivityradii}
Let $\Omega\subset\R^{n}$ be open and bounded with Lipschitz boundary $\partial\Omega$. Let $1=p\leq q<\frac{n}{n-1}$ and suppose that $u\in\bv(\Omega;\R^{N})$ and $v\in\bv(\Omega;\R^{N})$ are such that $\trace_{\partial\Omega}(v)\in\sobo^{1-\frac{1}{q},q}(\partial\Omega;\R^{N})$ and $\overline{\mathscr{F}}_{v}^{*}[u;\Omega]<\infty$. Then we already have $\overline{\mathscr{F}}_{v}^{*}[u;\Omega]\in (-\infty,\infty)$. Moreover, whenever $x_{0}\in\Omega$, then for $\mathscr{L}^{1}$-a.e. radius $r\in(0,\mathrm{dist}(x_{0},\partial\Omega))$ with $\mathcal{M}Du(x_{0},r)<\infty$ the following hold: 
\begin{enumerate}
\item\label{item:fin1} The \emph{relaxed functional with its own boundary values} is finite on $\ball_{r}(x_{0})$: 
\begin{align}\label{eq:twosidedfiniteness}
-\infty<\overline{\mathscr{F}}_{u}^{*}[u;\ball_{r}(x_{0})]<\infty. 
\end{align}
\item\label{item:fin2} The relaxed functional satisfies the following \emph{additivity property}: If $w\in\bv(\Omega\setminus\overline{\ball}_{r}(x_{0});\R^{N})$ satisfies $\trace_{\partial\!\ball_{r}(x_{0})}^{-}(w)=\trace_{\partial\!\ball_{r}(x_{0})}^{+}(u)$ and $\trace_{\partial\Omega}(w)=\trace_{\partial\Omega}(v)$ $\mathscr{H}^{n-1}$-a.e. on $\partial\!\ball_{r}(x_{0})$ or $\partial\Omega$, respectively, then 
\begin{align}\label{eq:additivity}
\overline{\mathscr{F}}_{v}^{*}[u;\Omega] =\overline{\mathscr{F}}_{u}^{*}[u;\ball_{r}(x_{0})] +\overline{\mathscr{F}}_{w}^{*}[u;\Omega\setminus\overline{\ball}_{r}(x_{0})].
\end{align}
Any radius $r$ with \eqref{eq:additivity} will be referred to as an \emph{additivity radius} for $\overline{\mathscr{F}}^{*}[u;-]$.
\item\label{item:fin3} Every $\bv$-minimizer $u\in\bv(\Omega;\R^{N})$ of $\overline{\mathscr{F}}_{v}^{*}[-;\Omega]$ is a local $\bv$-minimizer of $\overline{\mathscr{F}}^{*}$ for compactly supported variations. Specifically, for any $x_{0}\in\Omega$ and $\mathscr{L}^{1}$-a.e. $r\in(0,\mathrm{dist}(x_{0},\partial\Omega))$, $u$ is a $\bv$-minimizer of $\overline{\mathscr{F}}_{u}^{*}[-;\ball_{r}(x_{0})]$ for compactly supported variations.
%\item\label{item:fin4} \textcolor{blue}{F.: Not needed !?} If $u\in\bv_{\locc}(\Omega;\R^{N})$ is a local $\bv$-minimizer of $\overline{\mathscr{F}}^{*}$ for compactly supported variations, then for each $x_{0}\in\Omega$ there exists $0<R<\mathrm{dist}(x_{0},\partial\Omega)$ such that $u$ is a $\bv$-minimizer of $\overline{\mathscr{F}}_{u}^{*}[-;\ball_{R}(x_{0})]$ for compactly supported variations in $\ball_{R}(x_{0})$. 
\end{enumerate}
The same holds true with the obvious modifications for the functionals $\overline{\mathscr{F}}$ if $1<p\leq q<\frac{np}{n-1}$.
\end{lemma}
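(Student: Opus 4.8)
The plan is to establish the three statements essentially together, with the good generation theorem (Proposition~\ref{prop:goodminseqs}) and the auxiliary results of Section~\ref{sec:Fubini} as the principal tools. First I would fix $x_{0}\in\Omega$ and note that by Corollary~\ref{cor:Fubini}~\ref{item:Fubini1} (and the discussion of spherical maximal functions in Section~\ref{sec:HLW}) the set of radii $r\in(0,\mathrm{dist}(x_{0},\partial\Omega))$ for which simultaneously $\mathcal{M}Du(x_{0},r)<\infty$, $\trace_{\partial\!\ball_{r}(x_{0})}(u)\in\sobo^{1-1/q,q}(\partial\!\ball_{r}(x_{0});\R^{N})$, $\trace_{\partial\!\ball_{r}(x_{0})}^{+}(u)=\trace_{\partial\!\ball_{r}(x_{0})}^{-}(u)$ (by \eqref{eq:traceequal}), $\mathscr{H}^{n-1}(J_{u}\cap\partial\!\ball_{r}(x_{0}))=0$ (by \eqref{eq:interiortraces1}) and $\mathcal{M}\lambda(x_{0},r)<\infty$ for a weak*-limit $\lambda$ of $(|Dv_{j}|)$ along a fixed generating sequence $(v_{j})$ for $\overline{\mathscr{F}}_{v}^{*}[u;\Omega]$ — is of full $\mathscr{L}^{1}$-measure in $(0,\mathrm{dist}(x_{0},\partial\Omega))$. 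For such a \emph{good radius} $r$, the notion ``$\overline{\mathscr{F}}_{u}^{*}[-;\ball_{r}(x_{0})]$'' makes sense because the interior trace of $u$ along $\partial\!\ball_{r}(x_{0})$ lies in $\sobo^{1-1/q,q}$, as required by Corollary~\ref{cor:welldefinedbdryvalues1}.

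For \ref{item:fin1}, the two-sided finiteness, I would use that $\overline{\mathscr{F}}_{v}^{*}[u;\Omega]<\infty$ together with hypothesis~\ref{item:H2}: since $F-\ell_{m}V$ is quasiconvex at $0$ with $m=1$ (or, more carefully, one picks $m\geq|z_{0}|$ for the reference point $z_{0}=0$), Lemma~\ref{lem:LSC}-type lower semicontinuity applied on $\ball_{r}(x_{0})$ to a recovery sequence gives $\ell_{1}\int_{\ball_{r}(x_{0})}V(\nabla w)\,\dif x\leq \int_{\ball_{r}(x_{0})}F(\nabla w)-F(0)\,\dif x$ for admissible competitors $w$, exactly as in \eqref{eq:OKahn}. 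The upper bound $\overline{\mathscr{F}}_{u}^{*}[u;\ball_{r}(x_{0})]<\infty$ comes from building an explicit competitor sequence on $\ball_{r}(x_{0})$: apply the good generation theorem to obtain a generating sequence $(u_{j_{k}})$ for $\overline{\mathscr{F}}_{v}^{*}[u;\Omega]$ with $\trace_{\partial\!\ball_{r}(x_{0})}(u_{j_{k}})=\trace_{\partial\!\ball_{r}(x_{0})}(u)$; the restrictions $u_{j_{k}}|_{\ball_{r}(x_{0})}$ then lie in the correct solid boundary value class and, since $\sup_{k}\int_{\ball_{r}(x_{0})}F(\nabla u_{j_{k}})\,\dif x\leq\sup_{k}\mathscr{F}[u_{j_{k}};\Omega]+L\mathscr{L}^{n}(\Omega)<\infty$ (using \ref{item:H1} on the complement and that $(u_{j_{k}})$ is generating), we get a finite upper bound; the lower bound then upgrades finiteness to two-sidedness. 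For the matching lower bound on $\Omega\setminus\overline{\ball}_{r}(x_{0})$, the same coercivity estimate \eqref{eq:OKahn} applies there.

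For \ref{item:fin2}, the additivity, the key point is that a good radius $r$ decouples the variational problem. Given $w$ as in the statement, I would prove ``$\geq$'' in \eqref{eq:additivity} by taking near-optimal recovery sequences $(a_{k})$ for $\overline{\mathscr{F}}_{u}^{*}[u;\ball_{r}(x_{0})]$ and $(b_{k})$ for $\overline{\mathscr{F}}_{w}^{*}[u;\Omega\setminus\overline{\ball}_{r}(x_{0})]$; since the first has interior trace $\trace_{\partial\!\ball_{r}(x_{0})}(u)$ and the second has outer trace $\trace_{\partial\!\ball_{r}(x_{0})}^{+}(u)=\trace_{\partial\!\ball_{r}(x_{0})}(u)$, and since $r\notin J_{u}$-type exceptional set, the glued maps $c_{k}:=a_{k}\mathbbm{1}_{\ball_{r}(x_{0})}+b_{k}\mathbbm{1}_{\Omega\setminus\overline{\ball}_{r}(x_{0})}$ are admissible for $\overline{\mathscr{F}}_{v}^{*}[u;\Omega]$ — here one uses a routine $\bv$-gluing lemma and the fact that $\mathcal{M}$-finiteness of the radius kills any spurious jump contribution along $\partial\!\ball_{r}(x_{0})$, so $\int_{\Omega}F(\nabla c_{k})=\int_{\ball_{r}(x_{0})}F(\nabla a_{k})+\int_{\Omega\setminus\overline{\ball}_{r}(x_{0})}F(\nabla b_{k})$. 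For ``$\leq$'' I would use the good generation theorem in the form of Proposition~\ref{prop:goodminseqs}: a generating sequence for $\overline{\mathscr{F}}_{v}^{*}[u;\Omega]$ can be modified (without changing its energy in the limit, by \ref{item:FixRadius3} and the growth bound \ref{item:H1}) to have fixed trace $\trace_{\partial\!\ball_{r}(x_{0})}(u)$ along $\partial\!\ball_{r}(x_{0})$; restricting the modified sequence to $\ball_{r}(x_{0})$ and to $\Omega\setminus\overline{\ball}_{r}(x_{0})$ produces competitors for the two pieces, giving $\overline{\mathscr{F}}_{u}^{*}[u;\ball_{r}(x_{0})]+\overline{\mathscr{F}}_{w}^{*}[u;\Omega\setminus\overline{\ball}_{r}(x_{0})]\leq\liminf_{k}\int_{\Omega}F(\nabla u_{j_{k}})=\overline{\mathscr{F}}_{v}^{*}[u;\Omega]$. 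Both pieces are finite by \ref{item:fin1}, so there is no $\infty-\infty$ issue.

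Finally, \ref{item:fin3} follows by combining \ref{item:fin1} and \ref{item:fin2}: if $u$ minimises $\overline{\mathscr{F}}_{v}^{*}[-;\Omega]$ and $\widetilde{u}\in\bv(\Omega;\R^{N})$ agrees with $u$ outside a compact subset of $\ball_{r}(x_{0})$ (so in particular outside $\overline{\ball}_{r}(x_{0})$, and with the same interior trace along $\partial\!\ball_{r}(x_{0})$ and same $\trace_{\partial\Omega}$), then writing $w:=u|_{\Omega\setminus\overline{\ball}_{r}(x_{0})}$ and applying \eqref{eq:additivity} to both $u$ and $\widetilde{u}$ with the common outer piece $w$ yields $\overline{\mathscr{F}}_{u}^{*}[u;\ball_{r}(x_{0})]=\overline{\mathscr{F}}_{v}^{*}[u;\Omega]-\overline{\mathscr{F}}_{w}^{*}[u;\Omega\setminus\overline{\ball}_{r}(x_{0})]\leq\overline{\mathscr{F}}_{v}^{*}[\widetilde{u};\Omega]-\overline{\mathscr{F}}_{w}^{*}[u;\Omega\setminus\overline{\ball}_{r}(x_{0})]=\overline{\mathscr{F}}_{u}^{*}[\widetilde{u};\ball_{r}(x_{0})]$, which is precisely $\bv$-minimality for compactly supported variations on $\ball_{r}(x_{0})$; since $\mathscr{L}^{1}$-a.e.\ such $r$ works and every point of $\Omega$ has arbitrarily small such neighbourhoods, Definition~\ref{def:locmin}~\ref{item:WLM2} is satisfied. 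The $p>1$ case is identical after replacing $\bv$ by $\sobo^{1,p}$, $Du$ by $|\nabla u|^{p}\mathscr{L}^{n}$, $V$ by $V_{p}$, Corollary~\ref{cor:Fubini}~\ref{item:Fubini1} by~\ref{item:Fubini2}, and using hypothesis~\ref{item:H2p} in place of~\ref{item:H2}. The main obstacle I anticipate is the ``$\leq$'' direction of additivity in the genuinely signed case: one must be sure that fixing boundary values along $\partial\!\ball_{r}(x_{0})$ in a recovery sequence does not create additional mass in the limit — but this is exactly what the good generation theorem (Proposition~\ref{prop:goodminseqs}, steps~4--5 of its proof, see also Remark~\ref{rem:signed}) was designed to guarantee, so the work has essentially been front-loaded into Section~\ref{sec:goodgeneration}.
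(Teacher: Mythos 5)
Your overall route — good generation theorem (Proposition~\ref{prop:goodminseqs}) to obtain generating sequences with fixed traces along $\partial\!\ball_{r}(x_{0})$, quasiconvexity/coercivity for lower bounds, gluing of near-optimal recovery sequences for one direction of additivity and good generation for the other, and then local minimality as a corollary of additivity — matches the paper's proof, and your treatments of~\ref{item:fin2} and~\ref{item:fin3} are essentially the ones the paper performs.

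However, there is a genuine gap in your argument for the upper bound $\overline{\mathscr{F}}_{u}^{*}[u;\ball_{r}(x_{0})]<\infty$ in~\ref{item:fin1}. You claim $\int_{\ball_{r}(x_{0})}F(\nabla u_{j_{k}})\,\dif x\leq\mathscr{F}[u_{j_{k}};\Omega]+L\mathscr{L}^{n}(\Omega)$ ``using~\ref{item:H1} on the complement''. This requires a \emph{lower} bound on $\int_{\Omega\setminus\overline{\ball}_{r}(x_{0})}F(\nabla u_{j_{k}})\,\dif x$, and~\ref{item:H1} only gives $F(z)\geq -L(1+|z|^{q})$; since there is no a priori $\lebe^{q}$-bound on $\nabla u_{j_{k}}$ over the annulus, the complement integral is not controlled from below. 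This is exactly the signed-integrand concentration scenario highlighted in Section~\ref{sec:keynov}(b): for genuinely signed quasiconvex integrands, $\int_{\Omega\setminus\ball_{r}(x_{0})}F(\nabla u_{j_{k}})\,\dif x$ can in principle tend to $-\infty$ even though $\mathscr{F}[u_{j_{k}};\Omega]$ is bounded. The paper sidesteps this by first establishing the one-sided bounds $\overline{\mathscr{F}}_{u}^{*}[u;\ball_{r}]>-\infty$ and $\overline{\mathscr{F}}_{w}^{*}[u;\Omega\setminus\overline{\ball}_{r}]>-\infty$ through the cut-off and quasiconvexity-at-$0$ construction~\eqref{eq:QCimplication}, and then ruling out $\overline{\mathscr{F}}_{u}^{*}[u;\ball_{r}]=\infty$ by contradiction: if some subsequence had $\int_{\ball_{r}}F(\nabla w_{j(i)})\,\dif x\to+\infty$ while $(w_{j(i)})$ remains generating, then $\liminf_{i}\int_{\Omega'\setminus\overline{\ball}_{r}}F(\nabla w_{j(i)})\,\dif x=-\infty$, contradicting the lower bound~\eqref{eq:twosidedfiniteness2} on the outer piece. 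You could alternatively repair your direct argument by invoking the strong quasiconvexity~\ref{item:H2} — equivalently, mean coercivity as in Proposition~\ref{qc-coerciv} — on $\Omega\setminus\overline{\ball}_{r}(x_{0})$, since the $u_{j_{k}}|_{\Omega\setminus\overline{\ball}_{r}(x_{0})}$ lie in a fixed Dirichlet class by good generation; but this has to be~\ref{item:H2}, not~\ref{item:H1}.

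A minor imprecision in~\ref{item:fin3}: the final step $\overline{\mathscr{F}}_{v}^{*}[\widetilde{u};\Omega]-\overline{\mathscr{F}}_{w}^{*}[u;\Omega\setminus\overline{\ball}_{r}(x_{0})]=\overline{\mathscr{F}}_{u}^{*}[\widetilde{u};\ball_{r}(x_{0})]$ should be an inequality ``$\leq$''. The full additivity~\eqref{eq:additivity} is established for the minimizer $u$; for the competitor $\widetilde{u}$ one only has, and only needs, the one-sided gluing estimate as in the paper's~\eqref{eq:rdisher}.
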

\begin{proof}
In the following, we assume all balls to be centered at $x_{0}$. Since $r\in(0,\mathrm{dist}(x_{0},\partial\Omega))$ is supposed to satisfy $\mathcal{M}Du(x_{0},r)<\infty$, we have $\trace_{\partial\!\ball_{r}}^{-}(u)=\trace_{\partial\!\ball_{r}}^{+}(u)\in\sobo^{1-1/q,q}(\partial\!\ball_{r};\R^{N})$ by~\eqref{eq:traceequal} and Corollary~\ref{cor:Fubini}, and so there exists an open and bounded domain $\Omega'\subset\R^{n}$, $\Omega\Subset\Omega'$ and $u_{0}\in\sobo_{0}^{1,q}(\Omega';\R^{N})$ such that $\trace_{\partial\!\ball_{r}}(u-u_{0})=0$ $\mathscr{H}^{n-1}$-a.e. on $\partial\!\ball_{r}$ and $\trace_{\partial\Omega}(v-u_{0})=0$ $\mathscr{H}^{n-1}$-a.e. on $\partial\Omega$. Letting $\mathbf{u}$ be as in~\eqref{eq:bfUdef} and letting $\lambda\in\mathrm{RM}_{\mathrm{fin}}(\Omega')$ be a weak*-limit of a suitable subsequence of $(|D\psi_{j}|)$ for a generating sequence $(\psi_{j})$ for $\overline{\mathscr{F}}_{u_{0}}^{*}[\mathbf{u};\Omega,\Omega']$, we may moreover may assume $\mathcal{M}\lambda(x_{0},r)+\mathcal{M}Du(x_{0},r)<\infty$; $\mathscr{L}^{1}$-a.e. $r\in(0,\mathrm{dist}(x_{0},\partial\Omega))$ satisfies this property.

Ad~\ref{item:fin1}. First suppose that $\overline{\mathscr{F}}_{u}^{*}[u;\ball_{r}]=\infty$. Then we trivially have that $-\infty<\overline{\mathscr{F}}_{u}^{*}[u;\ball_{r}(x_{0})]$. If instead $\overline{\mathscr{F}}_{u}^{*}[u;\ball_{r}]<\infty$, we then let  $(u_{j})\subset\mathscr{A}_{u_{0}}^{q}(\ball_{r},\Omega')$ be a generating sequence for $\overline{\mathscr{F}}_{u_{0}}^{*}[\mathbf{u}^{(0)};\ball_{r},\Omega'](<\infty)$, where 
\begin{align*}
\mathbf{u}^{(0)}:=\begin{cases} 
u&\;\text{in}\;\ball_{r},\\
u_{0}&\;\text{in}\;\Omega'\setminus\overline{\ball}_{r}. 
\end{cases}
\end{align*}
We pick $r<R<\mathrm{dist}(x_{0},\partial\Omega)$ and a cut-off function $\rho\in\hold_{c}^{\infty}(\Omega;[0,1])$ with $\mathbbm{1}_{\ball_{r}}\leq \rho\leq\mathbbm{1}_{\ball_{R}}$. Define 
\begin{align*}
v_{j}:=\begin{cases}
u_{j}&\;\text{in}\;\ball_{r},\\
\rho u_{0}&\;\text{in}\;\Omega'\setminus\overline{\ball}_{r}, 
\end{cases}
\end{align*}
so that $v_{j}|_{\Omega}\in\sobo_{0}^{1,q}(\Omega;\R^{N})$. Then by the quasiconvexity of $F$ and~\ref{item:H1}, 
\begin{align}\label{eq:QCimplication}
\begin{split}
F(0)\mathscr{L}^{n}(\ball_{R}) &\leq \int_{\ball_{R}}F(\nabla v_{j})\dif x \leq \int_{\ball_{R}\setminus\overline{\ball}_{r}}|F(\nabla (\rho u_{0}))|\dif x + \int_{\ball_{r}}F(\nabla u_{j})\dif x\\ 
& \!\!\stackrel{\text{\ref{item:H1}}}{\leq} L \Big(\mathscr{L}^{n}(\ball_{R}\setminus\overline{\ball}_{r}) + \int_{\ball_{R}\setminus\overline{\ball}_{r}}|\rho\nabla u_{0} + u_{0}\otimes\nabla\rho|^{q}\dif x\Big),\\ 
& + \int_{\Omega'}F(\nabla u_{j})\dif x-\int_{\Omega'\setminus\overline{\ball}_{r}}F(\nabla u_{0})\dif x
\end{split}
\end{align}
so that the lower bound in \eqref{eq:twosidedfiniteness} follows by virtue of \eqref{eq:QCimplication}, Corollary~\ref{cor:welldefinedbdryvalues1} and since $(u_{j})$ is generating for $\overline{\mathscr{F}}_{u_{0}}^{*}[\mathbf{u}^{(0)};\ball_{r},\Omega']$. Thus, $-\infty<\overline{\mathscr{F}}_{u}^{*}[u;\ball_{r}]$, and a similar argument establishes 
\begin{align}\label{eq:twosidedfiniteness2}
-\infty<\overline{\mathscr{F}}_{w}^{*}[u;\Omega\setminus\overline{\ball}_{r}]
\end{align}
whenever $w\in\bv(\Omega\setminus\overline{\ball}_{r};\R^{N})$ satisfies $\trace_{\partial\Omega}(w-v)=0$ $\mathscr{H}^{n-1}$-a.e. on $\partial\Omega$ and $\trace_{\partial\!\ball_{r}}^{-}(w)=\trace_{\partial\!\ball_{r}}(u)$ $\mathscr{H}^{n-1}$-a.e. on $\partial\!\ball_{r}$. By the same line of argument we then already deduce that $\overline{\mathscr{F}}_{v}^{*}[u;\Omega]\in(-\infty,\infty)$ too since $\overline{\mathscr{F}}_{v}^{*}[u;\Omega]<\infty$ holds by assumption.

We proceed to show that $\overline{\mathscr{F}}_{u}^{*}[u;\ball_{r}]=\infty$ cannot happen. By our assumption on $r$ and since $\overline{\mathscr{F}}_{v}^{*}[u;\Omega]<\infty$, we may invoke Proposition~\ref{prop:goodminseqs} to obtain the existence of a generating sequence $(w_{j})\subset\mathscr{A}_{u_{0}}^{q}(\Omega,\Omega')$ for $\overline{\mathscr{F}}_{u_{0}}^{*}[\mathbf{u};\Omega,\Omega']$  such that $\trace_{\partial\!\ball_{r}}(w_{j})=\trace_{\partial\!\ball_{r}}(u)$ for all $j\in\mathbb{N}$. Suppose that, for some suitable subsequence $(w_{j(i)})\subset (w_{j})$, there holds $\lim_{i\to\infty}\int_{\ball_{r}}F(\nabla w_{j(i)})\dif x=+\infty$. Then, since $(w_{j(i)})$ is still generating for $\overline{\mathscr{F}}_{u_{0}}^{*}[\mathbf{u};\Omega,\Omega']$, 
\begin{align*}
\lim_{i\to\infty}\Big(\int_{\ball_{r}}F(\nabla w_{j(i)})\dif x + \int_{\Omega'\setminus\overline{\ball}_{r}}F(\nabla w_{j(i)})\dif x \Big)<\infty
\end{align*}
implies that necessarily 
\begin{align}\label{eq:contras}
\liminf_{i\to\infty}\int_{\Omega'\setminus\overline{\ball}_{r}}F(\nabla w_{j(i)})\dif x = -\infty.
\end{align}
On the other hand, setting
\begin{align*}
z_{j(i)}:=\begin{cases} 
w_{j(i)}&\;\text{in}\;\Omega\setminus\overline{\ball}_{r},\\
u_{0}&\;\text{in}\;\Omega'\setminus \overline{(\Omega\setminus\overline{\ball}_{r})}\end{cases}\;\;\;\text{and}\;\;\;
\mathbf{z}:=\begin{cases} 
u&\;\text{in}\;\Omega\setminus\overline{\ball}_{r},\\
u_{0}&\;\text{in}\;\Omega'\setminus \overline{(\Omega\setminus\overline{\ball}_{r})},\end{cases}
\end{align*}
we have $z_{j(i)}\stackrel{*}{\rightharpoonup}\mathbf{z}$ in $\bv(\Omega';\R^{N})$, and then~\eqref{eq:contras} leads to a contradiction in view of~\eqref{eq:twosidedfiniteness2}. Hence~\ref{item:fin1} follows. Ad~\ref{item:fin2}. Let $u_{0}$ and $w$ be defined as above, and  let $(u_{j}^{(1)})\subset \mathscr{A}_{u_{0}}^{q}(\ball_{r},\Omega')$ and $(u_{j}^{(2)})\subset \mathscr{A}_{u_{0}}^{q}(\Omega\setminus\overline{\ball}_{r};\Omega')$ be two generating sequences for the functionals $\overline{\mathscr{F}}_{u_{0}}^{*}[\mathbf{u}^{(1)};\ball_{r},\Omega']$ and $\overline{\mathscr{F}}_{u_{0}}^{*}[\mathbf{u}^{(2)};\Omega\setminus\overline{\ball}_{r},\Omega']$, respectively, where 
\begin{align}\label{eq:drkroger}
\mathbf{u}^{(1)}:=\begin{cases} u&\;\text{in}\;\ball_{r},\\ 
u_{0}&\;\text{in}\;\Omega'\setminus\overline{\ball}_{r},
\end{cases}\;\;\;\text{and}\;\;\;\mathbf{u}^{(2)}:=\begin{cases} u&\;\text{in}\;\Omega\setminus\overline{\ball}_{r},\\ 
u_{0}&\;\text{in}\;\Omega'\setminus\overline{(\Omega\setminus\overline{\ball}_{r})}. 
\end{cases}
\end{align}
Defining the glued maps $v_{j}\in\mathscr{A}_{u_{0}}^{q}(\Omega,\Omega')$ by 
\begin{align}\label{eq:gluecreate}
v_{j}:=\begin{cases} 
u_{j}^{(1)}&\;\text{in}\;\ball_{r},\\
u_{j}^{(2)}&\;\text{in}\;\Omega\setminus\overline{\ball}_{r}, \\ 
u_{0}&\;\text{in}\;\Omega'\setminus\overline{\Omega},
\end{cases}
\end{align}
we have $v_{j}\stackrel{*}{\rightharpoonup}\mathbf{u}$ in $\bv(\Omega';\R^{N})$.  Therefore, 
\begin{align}\label{eq:ineqSplitDir1}
\begin{split}
\overline{\mathscr{F}}_{v}^{*}[u;\Omega] &\leq \lim_{j\to\infty} \Big(\int_{\ball_{r}}F(\nabla u_{j}^{(1)})\dif x+\int_{\Omega\setminus\overline{\ball}_{r}}F(\nabla u_{j}^{(2)})\dif x\Big) \\ & = \overline{\mathscr{F}}_{u}^{*}[u;\ball_{r}] + \overline{\mathscr{F}}_{w}^{*}[u;\Omega\setminus\overline{\ball}_{r}].
\end{split}
\end{align}
On the other hand, by assumption we may invoke Proposition~\ref{prop:goodminseqs} to find a generating sequence $(u_{j})\subset\mathscr{A}_{u_{0}}^{q}(\Omega,\Omega')$ for $\overline{\mathscr{F}}_{u_{0}}^{*}[\mathbf{u};\Omega,\Omega']$ such that $\trace_{\partial\!\ball_{r}(x_{0})}(u)=\trace_{\partial\!\ball_{r}(x_{0})}(u_{j})$ for all $j\in\mathbb{N}$. In consequence, 
\begin{align}\label{eq:ineqSplitDir2}
\begin{split}
\overline{\mathscr{F}}_{u}^{*}[u;\ball_{r}]& + \overline{\mathscr{F}}_{w}^{*}[u;\Omega\setminus\overline{\ball}_{r}] \\ & \leq \liminf_{j\to\infty}\int_{\ball_{r}}F(\nabla u_{j})\dif x + \liminf_{j\to\infty}\int_{\Omega\setminus\overline{\ball}_{r}}F(\nabla u_{j})\dif x\\
& \leq \liminf_{j\to\infty}\int_{\Omega}F(\nabla u_{j})\dif x = \overline{\mathscr{F}}_{v}^{*}[u;\Omega], 
\end{split}
\end{align} 
which is~\ref{item:fin2}. Ad~\ref{item:fin3}. Let $r$ be as above and let $\varphi\in\bv_{c}(\ball_{r};\R^{N})$ satisfy $\overline{\mathscr{F}}_{u}^{*}[u+\varphi;\ball_{r}]<\infty$. Let $(\vartheta_{j})\subset\mathscr{A}_{u_{0}}^{q}(\ball_{r},\Omega')$ and $(\theta_{j})\subset\mathscr{A}_{u_{0}}^{q}(\Omega\setminus\overline{\ball}_{r},\Omega')$ be generating sequences for $\overline{\mathscr{F}}_{u_{0}}^{*}[\mathbf{v};\ball_{r},\Omega']$ or $\overline{\mathscr{F}}_{u_{0}}^{*}[\mathbf{u}^{(2)};\Omega\setminus\overline{\ball}_{r};\Omega']$, respectively, where $\mathbf{v}$ is given by 
\begin{align*}
\mathbf{v}:=\begin{cases} u+\varphi&\;\text{in}\;\ball_{r},\\
u_{0}&\;\text{in}\;\Omega'\setminus\overline{\ball}_{r} \end{cases}
\end{align*}
and $\mathbf{u}^{(2)}$ is as in~\eqref{eq:drkroger}. Gluing $\vartheta_{j}$ and $\theta_{j}$ similarly as in~\eqref{eq:gluecreate}, we obtain a sequence $(\Theta_{j})\subset\mathscr{A}_{u_{0}}^{q}(\Omega,\Omega')$ such that $\Theta_{j}\stackrel{*}{\rightharpoonup}\mathbf{v}$ in $\bv(\Omega';\R^{N})$. By definition of $\overline{\mathscr{F}}^{*}$, we then obtain 
\begin{align}\label{eq:rdisher}
\overline{\mathscr{F}}_{v}^{*}[u+\varphi;\Omega] & \leq \liminf_{j\to\infty}\int_{\Omega}F(\nabla\Theta_{j})\dif x \leq \overline{\mathscr{F}}_{u}^{*}[u+\varphi;\ball_{r}] + \overline{\mathscr{F}}_{w}^{*}[u;\Omega\setminus\overline{\ball}_{r}]
\end{align}
similarly as in~\eqref{eq:ineqSplitDir1}. We then conclude by use of the $\bv$-minimality of $u$ for compactly supported variations in the second step that 
\begin{align*}
\overline{\mathscr{F}}_{u}^{*}[u;\ball_{r}] & = \overline{\mathscr{F}}_{v}^{*}[u;\Omega]-\overline{\mathscr{F}}_{w}^{*}[u;\Omega\setminus\overline{\ball}_{r}]\;\;\;\;\;(\text{as $r$ is an additivity radius for $\overline{\mathscr{F}}^{*}[u;-]$ by~\ref{item:fin2})} \\ 
& \leq \overline{\mathscr{F}}_{v}^{*}[u+\varphi;\Omega]-\overline{\mathscr{F}}_{w}^{*}[u;\Omega\setminus\overline{\ball}_{r}]  \\ & \!\!\!\stackrel{\eqref{eq:rdisher}}{\leq} \overline{\mathscr{F}}_{u}^{*}[u+\varphi;\ball_{r}],
\end{align*}
and hence $u$ is a $\bv$-minimizer of $\overline{\mathscr{F}}_{u}^{*}[-;\ball_{r}]$ for compactly supported variations. This is~\ref{item:fin3}, and the proof is complete. 
\end{proof} 
The following two remarks equally apply mutatis mutandis to the functionals $\overline{\mathscr{F}}$:
\begin{remark}\label{rem:Columbo}
In the situation of the previous lemma, if $u_{0}\in\sobo^{1,q}(\Omega';\R^{N})$ is as in~\eqref{eq:relaxedviaboundaryvalues}, $(u_{j})\subset\mathscr{A}_{u_{0}}^{q}(\Omega,\Omega')$ a generating sequence for $\overline{\mathscr{F}}_{u_{0}}^{*}[\mathbf{u};\Omega,\Omega']$  and $\lambda\in\mathrm{RM}_{\mathrm{fin}}(\Omega')$ a weak*-limit of a suitable subsequence of $(|Du_{j}|)$, the above proof shows that Lemma~\ref{lem:additivityradii}~\ref{item:fin1} and~\ref{item:fin2} hold provided $\mathcal{M}Du(x_{0},r)+\mathcal{M}\lambda(x_{0},r)<\infty$.

By the construction from the proof of Lemma~\ref{lem:additivityradii}~\ref{item:fin2}, we then deduce validity of 
\begin{align}\label{eq:AdrianMonkSheronaFleming}
\overline{\mathscr{F}}_{v}^{*}[u;\Omega]=\overline{\mathscr{F}}_{u}^{*}[u;\ball_{r}(x_{0})] + \overline{\mathscr{F}}_{u}^{*}[u;\ball_{s}(x_{0})\setminus\overline{\ball}_{r}(x_{0})] + \overline{\mathscr{F}}_{w}^{*}[u;\Omega\setminus\overline{\ball}_{s}(x_{0})]
\end{align}
for all $0<r<s<\dista(x_{0},\partial\Omega)$ with $\mathcal{M}Du(x_{0},t)+\mathcal{M}\lambda(x_{0},t)<\infty$ for $t\in\{r,s\}$, where $w\in\bv(\Omega;\R^{N})$ satisfies $\trace_{\partial\Omega}(v-w)=0$ and $\trace_{\partial\!\ball_{s}(x_{0})}^{-}(w)=\trace_{\partial\!\ball_{s}(x_{0})}^{+}(u)$ $\mathscr{H}^{n-1}$-a.e. on $\partial\Omega$ or $\partial\!\ball_{s}(x_{0})$, respectively.
\end{remark}
\begin{remark}\label{rem:diffshift}
Let $1<p<\infty$. If one considers the functionals $\mathscr{F}_{\locc}$ as in \cite{SchmidtPR} (cf.~\eqref{eq:FonsecaMalySchmidtRelax}), then the proof of the analogue of \eqref{eq:ineqSplitDir2} trivialises by the very definition of $\mathscr{F}_{\locc}$. This is so because in the definition of $\mathscr{F}_{\locc}$ not even locally membership of generating sequences in certain Dirichlet classes is required. On the other hand, it is at the analogue of~\eqref{eq:ineqSplitDir1} for $\mathscr{F}_{\locc}$ where a variant of the good generation theorem is required. In contrast, inequality~\eqref{eq:ineqSplitDir1} in the above proof only uses $\mathcal{M}Du(x_{0},r)<\infty$ (whereby $\trace_{\partial\!\ball_{r}(x_{0})}(u)\in\sobo^{1-1/q,q}(\partial\!\ball_{r}(x_{0});\R^{N})$) and the very definition of relaxations with solid boundary values. As such, depending on the definition of the underlying functionals, the difficulties might shift. Yet, \emph{independently} of the corresponding relaxation, the concepts of local minimizers coincide -- see the following Section~\ref{sec:schmidt}.
\end{remark}
%\begin{remark}
%The proof of part~\ref{item:fin3} of the previous lemma essentially only serves to establish that every additivity radius $r$ for $\overline{\mathscr{F}}^{*}[u;-]$ is an additivity radius for, and hence can be chosen independently of, $\overline{\mathscr{F}}^{*}[u+\varphi;-]$ too provided $\varphi\in\bv_{c}(\ball_{r};\R^{N})$. This would follow at once if we showed that every radius $r$ with $\mathcal{M}Du(x_{0},r)<\infty$ is an additivity radius. In principle, this can be achieved but would extend the proof of Proposition~\ref{prop:goodminseqs} more than it would shorten that of Lemma~\ref{lem:additivityradii}. 
%\end{remark}
\subsection{Connections with Schmidt's notions of minimality}\label{sec:schmidt}
Let $1<p\leq q<\frac{np}{n-1}$. In this section, we connect the definitions of the relaxed functionals and local minimality for compactly supported variations with those considered by \textsc{Schmidt} \cite{SchmidtPR}. Let us note that \cite{SchmidtPR} deals with unsigned integrands or integrands that are bounded below exclusively, and so we assume throughout this subsection that  the integrand $F$ satisfies the following hypotheses as in~\cite{SchmidtPR}:
\begin{enumerate}[label={(H\arabic{*}')},start=1]
\item\label{item:H1S} There exist $\ell,L>0$ such that $\ell|z|^{p}\leq F(z)\leq L(1+|z|^{q})$ holds for all $z\in\R^{N\times n}$. 
\item\label{item:H2S} For every $m>0$, there exists $\lambda_{m}>0$ such that for all $z\in\R^{N\times n}$ with $|z|\leq m$ and $\varphi\in\hold_{c}^{\infty}(\ball_{1}(0);\R^{N})$ we have 
\begin{align*}
\lambda_{m}\dashint_{\ball_{1}(0)}(1+|\nabla\varphi|^{2})^{\frac{p-2}{2}}|\nabla\varphi|^{2}\dif x \leq \dashint_{\ball_{1}(0)}F(z+\nabla\varphi)-F(z)\dif x.
\end{align*}
\item\label{item:H3S} $F\in\hold(\R^{N\times n};\R_{\geq 0})$. 
\end{enumerate}
Note that~\ref{item:H2S} and~\ref{item:H2p} are equivalent. This follows from the estimate
\begin{align*}
(1+|z'|^{2})^{\frac{1}{2}}\leq (1+|z|^{2}+|z'|^{2})^{\frac{1}{2}}\leq \sqrt{2}\max\{1,m\}(1+|z'|^{2})^{\frac{1}{2}}
\end{align*} 
for all $z,z'\in\R^{N\times n}$ with $|z|\leq m$ and Lemma~\ref{lem:compaDal}.

Subject to \ref{item:H1S}--\ref{item:H3S}, we then recall from~\eqref{eq:FonsecaMalySchmidtRelax} the definition of the \emph{locally relaxed functional}: Given an open subset $\omega\subset\Omega$, define for $u\in\sobo^{1,p}(\omega;\R^{N})$
\begin{align}\label{eq:SchmidtRelax}
\mathscr{F}_{\locc}[u;\omega] := \inf\left\{\liminf_{j\to\infty}\int_{\omega}F(\nabla u_{j})\dif x\colon\; \begin{array}{c} (u_{j})\subset (\sobo_{\locc}^{1,q}\cap\sobo^{1,p})(\omega;\R^{N}), \\ u_{j} \rightharpoonup u\;\text{in}\;\sobo^{1,p}(\omega;\R^{N})\end{array} \right\}.
\end{align}
\begin{proposition}\label{prop:Equivalenceminimizers}
Let $1<p\leq q <\frac{np}{n-1}$ and suppose that $F$ satisfies \emph{\ref{item:H1S}--\ref{item:H3S}}. Then the following are equivalent for $u\in\sobo_{\locc}^{1,p}(\Omega;\R^{N})$: 
\begin{enumerate}
\item\label{item:MINequiv1} $u$ is a \emph{local minimizer of $\mathscr{F}_{\locc}$ for compactly supported variations on $\Omega$} in the sense that every $x_{0}\in\Omega$ has an open neighbourhood $\omega\Subset\Omega$ such that $\mathscr{F}_{\locc}[u;\omega]<\infty$ and 
\begin{align*}
\mathscr{F}_{\locc}[u;\omega]\leq \mathscr{F}_{\locc}[u+\varphi;\omega]\qquad\text{for all}\;\varphi\in\sobo_{c}^{1,p}(\omega;\R^{N}). 
\end{align*}
\item\label{item:MINequiv2} $u$ is a \emph{local minimizer} of $\overline{\mathscr{F}}$ for compactly supported variations on $\Omega$ in the sense of Definition~\ref{def:locmin}~\ref{item:WLM1}.
\end{enumerate} 
\end{proposition}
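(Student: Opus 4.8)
The plan is to prove both implications \ref{item:MINequiv1} $\Leftrightarrow$ \ref{item:MINequiv2} by exploiting the fact that, although $\mathscr{F}_{\locc}$ and $\overline{\mathscr{F}}$ need not agree, they differ only through the boundary penalisation: on any ball $\ball_{r}(x_{0})$ with $\mathcal{M}(|\nabla u|^{p}\mathscr{L}^{n})(x_{0},r)<\infty$ (so that $\trace_{\partial\!\ball_{r}(x_{0})}(u)\in\sobo^{1-1/q,q}$ by Corollary~\ref{cor:Fubini}), one has $\mathscr{F}_{\locc}[u;\ball_{r}(x_{0})]\leq \overline{\mathscr{F}}_{u}[u;\ball_{r}(x_{0})]$ simply because the admissible competitors for $\overline{\mathscr{F}}$ (fixed solid boundary values) form a subclass of those admissible for $\mathscr{F}_{\locc}$. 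First I would establish the reverse inequality for \emph{minimizers}: if $u$ is a local $\mathscr{F}_{\locc}$-minimizer and $(u_{j})$ is a recovery sequence for $\mathscr{F}_{\locc}[u;\ball_{r}(x_{0})]$ on a good ball, then a trace-preserving correction via the operator $\E_{\ball_{r}\setminus\overline{\ball}_{r-\delta}}$ from Lemma~\ref{lem:extensionoperator} (applied on a thin annulus whose width is controlled by the maximal condition and chosen as an additivity radius in the sense of Lemma~\ref{lem:additivityradii}) produces a competitor $(\widetilde{u}_{j})\subset\mathscr{A}_{u}^{q}(\ball_{r-\delta},\ball_{r})$ with the same trace as $u$ on $\partial\!\ball_{r-\delta}$ and $\nabla\widetilde{u}_{j}-\nabla u_{j}\to 0$ in $\lebe^{p}$ on the annulus; by~\ref{item:H1S} the extra layer energy vanishes, so $\overline{\mathscr{F}}_{u}[u;\ball_{r-\delta}(x_{0})]\leq \mathscr{F}_{\locc}[u;\ball_{r}(x_{0})]+o(1)$. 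Here non-negativity of $F$ is essential and makes the layer estimate one-sided and trivial, which is why no good generation theorem is needed in this direction.

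Next I would run the localisation/additivity machinery. Given $x_{0}\in\Omega$, pick by the Fubini-type and maximal-function results of Sections~\ref{sec:HLW} and~\ref{sec:Fubini} a radius $r$ with $\ball_{r}(x_{0})\Subset\Omega$ such that $r$ is simultaneously an additivity radius for $\overline{\mathscr{F}}[u;-]$ (Lemma~\ref{lem:additivityradii}\ref{item:fin2}, Remark~\ref{rem:Columbo}) and a radius along which $\mathscr{F}_{\locc}[u;-]$ is additive — the latter being automatic from the measure representation~\eqref{eq:measrepIntro} of~\textsc{Fonseca \& Mal\'{y}}, valid under~\ref{item:H1S}, since $\mu_{u}(\partial\!\ball_{r}(x_{0}))=0$ for $\mathscr{L}^{1}$-a.e. $r$. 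For \ref{item:MINequiv1}$\Rightarrow$\ref{item:MINequiv2}: given $\varphi\in\bv_{c}(\ball_{r}(x_{0});\R^{N})$ (equivalently $\sobo^{1,p}_{c}$, since $\overline{\mathscr{F}}_{u}[u+\varphi;\ball_{r}(x_{0})]=\infty$ otherwise) with $\overline{\mathscr{F}}_{u}[u+\varphi;\ball_{r}(x_{0})]<\infty$, the inequality $\mathscr{F}_{\locc}[u+\varphi;\ball_{r}(x_{0})]\leq \overline{\mathscr{F}}_{u}[u+\varphi;\ball_{r}(x_{0})]$, the $\mathscr{F}_{\locc}$-minimality $\mathscr{F}_{\locc}[u;\ball_{r}(x_{0})]\leq\mathscr{F}_{\locc}[u+\varphi;\ball_{r}(x_{0})]$, and the reverse bound $\overline{\mathscr{F}}_{u}[u;\ball_{r-\delta}(x_{0})]\leq\mathscr{F}_{\locc}[u;\ball_{r}(x_{0})]$ from the first paragraph chain together; using additivity of $\overline{\mathscr{F}}$ to pass from $\ball_{r-\delta}$ back to $\ball_{r}$ (the annular contribution $\overline{\mathscr{F}}_{u}[u;\ball_{r}\setminus\overline{\ball}_{r-\delta}]$ cancels on both sides) yields $\overline{\mathscr{F}}_{u}[u;\ball_{r}(x_{0})]\leq\overline{\mathscr{F}}_{u}[u+\varphi;\ball_{r}(x_{0})]$, i.e. local $\overline{\mathscr{F}}$-minimality for compactly supported variations on $\ball_{r}(x_{0})\ni x_{0}$. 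For the converse \ref{item:MINequiv2}$\Rightarrow$\ref{item:MINequiv1}, I would argue symmetrically: a competitor $u+\varphi$ for $\mathscr{F}_{\locc}$ on $\ball_{r}(x_{0})$ can be modified on a thin annulus (again trace-preserving, energy error $o(1)$ by~\ref{item:H1S}) into a competitor admissible for $\overline{\mathscr{F}}_{u}[-;\ball_{r-\delta}(x_{0})]$, so that $\overline{\mathscr{F}}_{u}[u+\varphi;\ball_{r-\delta}(x_{0})]\leq\mathscr{F}_{\locc}[u+\varphi;\ball_{r}(x_{0})]+o(1)$; combined with $\overline{\mathscr{F}}$-minimality and $\mathscr{F}_{\locc}[u;\ball_{r}(x_{0})]=\mathscr{F}_{\locc}[u;\ball_{r-\delta}(x_{0})]+\mathscr{F}_{\locc}[u;\ball_{r}\setminus\overline{\ball}_{r-\delta}]\leq\overline{\mathscr{F}}_{u}[u;\ball_{r-\delta}(x_{0})]+o(1)+\mathscr{F}_{\locc}[u;\ball_{r}\setminus\overline{\ball}_{r-\delta}]$ and letting $\delta\searrow 0$ (the thin-shell $\mathscr{F}_{\locc}$-energy tends to $0$ by $\mu_{u}(\partial\!\ball_{r}(x_{0}))=0$), one recovers $\mathscr{F}_{\locc}[u;\ball_{r}(x_{0})]\leq\mathscr{F}_{\locc}[u+\varphi;\ball_{r}(x_{0})]$.

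The main obstacle I anticipate is the \emph{annular layer bookkeeping}: to pass cleanly between $\ball_{r}$ and $\ball_{r-\delta}$ I need the extra $\sobo^{1,q}$-energy introduced by the trace-preserving operator on the shell $\ball_{r}\setminus\overline{\ball}_{r-\delta}$ to be controllable uniformly in the recovery sequence and to vanish as $\delta\searrow 0$ — this is exactly the mechanism quantified in Lemma~\ref{lem:extensionoperator}\ref{item:extension4}, requiring $q<\tfrac{n}{n-1}\leq\tfrac{np}{n-1}$ and a good radius from the maximal conditions — together with the fact that on a good/additivity radius the various relaxed energies split additively and the boundary integral in $\overline{\mathscr{F}}_{u}[u;\ball_{r}(x_{0})]$ vanishes (since $\trace(u)$ is attained from inside). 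Once the shell contributions are seen to be $o(1)$ and to cancel across the two sides of each chain of inequalities, the equivalence is formal. A secondary point to handle carefully is that in \ref{item:MINequiv1} the competitors are $\sobo^{1,p}_{c}$ while in Definition~\ref{def:locmin}\ref{item:WLM1} they are $\sobo^{1,p}$ with $u-w$ compactly supported; these are the same class, and I would note this at the outset. Finiteness $\mathscr{F}_{\locc}[u;\omega]<\infty\Leftrightarrow\overline{\mathscr{F}}_{u}[u;\omega]<\infty$ on good balls follows from the two-sided comparison plus Lemma~\ref{lem:additivityradii}\ref{item:fin1}, so no separate argument is needed.
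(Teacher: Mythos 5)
Your overall strategy is aligned with the paper's: on a good ball $\ball_{r}(x_{0})$ the two relaxations are squeezed via a chain of inequalities, the "easy" direction $\mathscr{F}_{\locc}\leq\overline{\mathscr{F}}_{u}$ coming from inclusion of competitor classes, and the "hard" direction from producing a recovery sequence for $\mathscr{F}_{\locc}[u;\ball_{r}(x_{0})]$ that is also admissible for $\overline{\mathscr{F}}_{u}[-;\ball_{r}(x_{0})]$. The paper gets the hard direction in one stroke by citing Schmidt's Lemma~7.7 (and Lemmas 7.8, 7.10 for the additivity), which delivers a generating sequence $(v_{j})\subset(\sobo^{1,q}\cap\sobo_{u}^{1,p})(\ball_{r}(x_{0});\R^{N})$ with traces matching $\trace_{\partial\!\ball_{r}(x_{0})}(u)$ on the fixed sphere $\partial\!\ball_{r}(x_{0})$; combined with Lemma~\ref{lem:DirClasses} this sequence lies in $\sobo_{\widetilde v}^{1,q}(\ball_{r}(x_{0});\R^{N})$ and the chain $\overline{\mathscr{F}}_{u}[u;\ball_{r}]\leq\mathscr{F}_{\locc}[u;\ball_{r}]\leq\mathscr{F}_{\locc}[u+\varphi;\ball_{r}]\leq\overline{\mathscr{F}}_{u}[u+\varphi;\ball_{r}]$ closes without any $\delta$-shell ever appearing. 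You instead propose to re-prove the required trace-fixing using the paper's operator $\E$, matching traces on a slightly smaller sphere $\partial\!\ball_{r-\delta}(x_{0})$ and afterwards sending $\delta\searrow 0$ while cancelling the annular contribution across the chain. That route is in the spirit of the paper's Proposition~\ref{prop:goodminseqs} (which is the signed BV analogue of Schmidt's Lemma 7.7), so the idea is sound, and you are right that non-negativity of $F$ from~\ref{item:H1S} makes the one-sided bounds simpler here than in the signed case.

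The concrete gap is in the sentence "by~\ref{item:H1S} the extra layer energy vanishes, so $\overline{\mathscr{F}}_{u}[u;\ball_{r-\delta}(x_{0})]\leq\mathscr{F}_{\locc}[u;\ball_{r}(x_{0})]+o(1)$." The recovery sequence $(u_{j})$ for $\mathscr{F}_{\locc}[u;\ball_{r}(x_{0})]$ is only $\sobo_{\locc}^{1,q}$, and the blended competitor $\widetilde u_{j}$ on a shell of fixed width $\delta$ carries an $\lebe^{q}$-gradient cost that is uniformly $O(\delta^{\alpha})$ in $j$, not $o(1)$ in $j$; for the contribution of $\nabla u_{j}$ inside the blend you need the maximal-function control from Lemma~\ref{lem:extensionoperator}\ref{item:extension4} applied with the width of the modification region shrinking in $j$ (the $2^{-k}$ layers in Proposition~\ref{prop:goodminseqs}, or the choice of slabs in Schmidt's Lemma 7.7), precisely so that the trace is ultimately fixed on the fixed sphere $\partial\!\ball_{r}(x_{0})$. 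If instead you keep $\delta$ fixed and modify there, the term $\limsup_{j}\int_{\text{shell}}F(\nabla\widetilde u_{j})\,\dif x$ is $O(\delta^{\alpha})$ but not zero at finite $\delta$, so the subsequent cancellation of the annular contribution of $\overline{\mathscr{F}}_{u}[u;\ball_{r}\setminus\overline{\ball}_{r-\delta}]$ across both sides of the chain does not cleanly give $\overline{\mathscr{F}}_{u}[u;\ball_{r}]\leq\overline{\mathscr{F}}_{u}[u+\varphi;\ball_{r}]$ without an additional argument that $\overline{\mathscr{F}}_{u}[u;\ball_{r}\setminus\overline{\ball}_{r-\delta}]\to 0$. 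The measure-representation observation $\mu_{u}(\partial\!\ball_{r}(x_{0}))=0$ controls $\mathscr{F}_{\locc}$ on the shell but, since $\mathscr{F}_{\locc}\leq\overline{\mathscr{F}}_{u}$, this is the wrong-way inequality for what you need. The fix is to abandon the "fixed $\delta$, then $\delta\searrow 0$" scheme and instead run the shrinking-layer construction exactly once, as the paper implicitly does by invoking Schmidt's Lemma~7.7; after that the chain of inequalities on the single ball $\ball_{r}(x_{0})$ finishes the proof without any cancellation.
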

\begin{proof} Ad~'\ref{item:MINequiv1}$\Rightarrow$\ref{item:MINequiv2}'. It is no loss of generality to assume that the set $\omega$ from~\ref{item:MINequiv1} is an open ball. In fact, let $x_{0}\in\Omega$ and $\omega\Subset\Omega$ be as in~\ref{item:MINequiv1}. As is established in \cite[Lem.~7.10]{SchmidtPR}, if $w\in\sobo^{1,p}(\omega;\R^{N})$ is such that $\mathscr{F}_{\locc}[w;\omega]<\infty$ and $r>0$ is such that both $\ball_{r}(x_{0})\Subset\omega$ and 
\begin{align}\label{eq:SchmidtCondAdd}
\limsup_{\varepsilon\searrow 0}\frac{1}{\varepsilon}\int_{\ball_{r+\varepsilon}(x_{0})\setminus\overline{\ball}_{r-\varepsilon}(x_{0})}|\nabla w|^{p}\dif x <\infty
\end{align}
hold, then we have the additivity property 
\begin{align}\label{eq:TearsForFears}
\mathscr{F}_{\locc}[w;\omega]=\mathscr{F}_{\locc}[w;\ball_{r}(x_{0})]+\mathscr{F}_{\locc}[w;\omega\setminus\overline{\ball}_{r}(x_{0})]. 
\end{align}
Let $r$ be a radius that satisfies~\eqref{eq:SchmidtCondAdd} for $w=u$. Whenever $\varphi\in\sobo^{1,p}(\omega;\R^{N})$ is compactly supported within $\ball_{r}(x_{0})$ and satisfies~$\mathscr{F}_{\locc}[u+\varphi;\ball_{r}(x_{0})]<\infty$, then both $w=u$, $w=u+\varphi$ satisfy~\eqref{eq:SchmidtCondAdd} and hereafter~\eqref{eq:TearsForFears} together with $\mathscr{F}_{\locc}[w;\omega]<\infty$\footnote{Since $u+\varphi$ satisfies~\eqref{eq:SchmidtCondAdd} and $\mathscr{F}_{\locc}[u+\varphi;\ball_{r}(x_{0})]<\infty$, there exists $(z_{j}^{(1)})\subset (\sobo^{1,q}\cap\sobo_{u}^{1,p})(\ball_{r}(x_{0});\R^{N})$ such that $\lim_{j\to\infty}\mathscr{F}[z_{j}^{(1)};\ball_{r}(x_{0})]=\mathscr{F}_{\locc}[u+\varphi;\ball_{r}(x_{0})]<\infty$ by Lemma~\cite[Lem.~7.7]{SchmidtPR}. By \cite[Lem.~7.8]{SchmidtPR}, we equally find a sequence $(z_{j}^{(2)})\subset \sobo^{1,q}(\Omega\setminus\overline{\ball}_{r}(x_{0});\R^{N})$ that attains the same boundary values along $\partial\!\ball_{r}(x_{0})$ as $u$ such that $\lim_{j\to\infty}\mathscr{F}[z_{j}^{(2)};\omega\setminus\overline{\ball}_{r}(x_{0})]=\mathscr{F}_{\locc}[u;\omega\setminus\overline{\ball}_{r}(x_{0})]<\infty$; now it suffices to glue the two sequences along $\partial\!\ball_{r}(x_{0})$.}. Combining the resulting identities and using the minimality condition from~\ref{item:MINequiv1} yields 
\begin{align}\label{eq:Shout}
\mathscr{F}_{\locc}[u;\ball_{r}(x_{0})]\leq\mathscr{F}_{\locc}[u+\varphi;\ball_{r}(x_{0})]\qquad\text{for all}\;\varphi\in\sobo_{c}^{1,p}(\ball_{r}(x_{0});\R^{N}). 
\end{align}
By definition of $\mathscr{F}_{\locc}$ and the non-negativity of $F$, we have $\mathscr{F}_{\locc}[u;\ball_{r}(x_{0})]\leq\mathscr{F}_{\locc}[u;\omega]<\infty$. For this choice of $r$ (cf.~\eqref{eq:SchmidtCondAdd}), ~\cite[Lem.~7.7]{SchmidtPR} provides us with a sequence $(v_{j})$ contained in $(\sobo^{1,q}\cap\sobo_{u}^{1,p})(\ball_{r}(x_{0});\R^{N})$ such that $v_{j}\rightharpoonup u$ in $\sobo^{1,p}(\ball_{r}(x_{0});\R^{N})$ and $\mathscr{F}_{\locc}[u;\ball_{r}(x_{0})]=\lim_{j\to\infty}\int_{\ball_{r}(x_{0})}F(\nabla v_{j})\dif x$. We then deduce from Corollary~\ref{cor:Fubini} that $\trace_{\partial\!\ball_{r}(x_{0})}(u)$ belongs to $\sobo^{1-1/q,q}(\partial\!\ball_{r}(x_{0});\R^{N})$, whereby $\overline{\mathscr{F}}_{u}[-;\ball_{r}(x_{0})]$ is well-defined in view of Corollary~\ref{cor:welldefinedbdryvalues1}. Especially, there exists $\widetilde{v}\in\sobo^{1,q}(\ball_{r}(x_{0});\R^{N})$ such that $\trace_{\partial\!\ball_{r}(x_{0})}(\widetilde{v})=\trace_{\partial\!\ball_{r}(x_{0})}(u)$ $\mathscr{H}^{n-1}$-a.e. on $\partial\!\ball_{r}(x_{0})$, and so Lemma~\ref{lem:DirClasses} yields $(v_{j})\subset \sobo_{\widetilde{v}}^{1,q}(\ball_{r}(x_{0});\R^{N})$. 

Let $\varphi\in\sobo_{c}^{1,p}(\ball_{r}(x_{0});\R^{N})$ be such that $\overline{\mathscr{F}}_{u}[u+\varphi;\ball_{r}(x_{0})]<\infty$ and take a generating sequence $(w_{j})\subset\sobo_{\widetilde{v}}^{1,q}(\ball_{r}(x_{0});\R^{N})$ for $\overline{\mathscr{F}}_{u}[u+\varphi;\ball_{r}(x_{0})]$. Using that the sequence $(v_{j})\subset\sobo_{\widetilde{v}}^{1,q}(\ball_{r}(x_{0});\R^{N})$ satisfies $v_{j}\rightharpoonup u$ in $\sobo^{1,p}(\ball_{r}(x_{0});\R^{N})$ in the first step, we thereby find 
\begin{align*}
\overline{\mathscr{F}}_{u}[u;\ball_{r}(x_{0})] & \leq \liminf_{j\to\infty}\int_{\ball_{r}(x_{0})}F(\nabla v_{j})\dif x  \\ 
& = \mathscr{F}_{\locc}[u;\ball_{r}(x_{0})] \stackrel{\eqref{eq:Shout}}{\leq} \mathscr{F}_{\locc}[u+\varphi;\ball_{r}(x_{0})] \\ 
& \leq \liminf_{j\to\infty}\int_{\ball_{r}(x_{0})}F(\nabla w_{j})\dif x \;\;\;\;\;\;\;\;\;\;\;\Big(\!\!\!\begin{array}{c} \text{by definition of $\mathscr{F}_{\locc}$ and} \\ \text{ $w_{j}\rightharpoonup u+\varphi$ in $\sobo^{1,p}(\ball_{r}(x_{0});\R^{N})$}\end{array}\!\!\!\Big) \\ 
& = \overline{\mathscr{F}}_{u}[u+\varphi;\ball_{r}(x_{0})]. 
\end{align*}
Hence,~\ref{item:MINequiv2} follows. For the remaining implication '\ref{item:MINequiv2}$\Rightarrow$\ref{item:MINequiv1}' we may essentially revert the argument of the previously established direction, and we leave the details to the reader. 
\end{proof}
We conclude this section with a remark on the underlying terminology: 
\begin{remark} \label{rem:weaklocal}
If $u\in\sobo^{1,p}(\omega;\R^{N})$  satisfies the minimality condition from Proposition~\ref{prop:Equivalenceminimizers} for some $\omega\Subset\Omega$, then $u$ is also referred to as a \emph{weak local minimizer} in the terminology of \textsc{Schmidt}~\cite[Def.~6.2]{SchmidtPR}. Here we work with the notions as displayed in Proposition~\ref{prop:Equivalenceminimizers} to avoid confusion with the common usage of the terminology of weak local minimizers; by the classical interpretation of weak local minimality (e.g. \`a la~\cite{KT} by \textsc{Taheri} and the second named author) one would call $u\in\sobo^{1,p}(\Omega;\R^{N})$ a \emph{weak local minimizer} provided there exists $\delta>0$ such that $\mathscr{F}_{\locc}[u;\Omega]\leq \mathscr{F}_{\locc}[\psi;\Omega]$ holds whenever $\psi\in \sobo_{u}^{1,p}(\Omega;\R^{N})$ satisfies $\|\nabla u - \nabla\psi\|_{\lebe^{\infty}(\Omega)}\leq\delta$, and it is easily seen that this notion is not equivalent to the one displayed above.  
\end{remark} 
\section{Estimates for linearisations}\label{sec:linearisation}
In this intermediate section, we now record some auxiliary results on shifted integrands that prove instrumental for the linearisation strategy below. For $F\in\hold^{1}(\R^{N\times n})$ and $w\in\R^{N\times n}$, we define the \emph{shifted integrand} $F_{w}\colon\R^{N\times n}\to\R$ by
\begin{align}\label{eq:shifted}
F_{w}(z):=F(w+z)-F(w)-\langle F'(w),z\rangle,\qquad z\in\R^{N\times n}. 
\end{align}
The following is a straightforward variant of~\cite[Lem.~4.1]{GK1}; also see \cite[Lem.~2.2]{BGIK}.
\begin{lemma}\label{lem:shifted}
Let $1\leq p \leq q < \infty$ and let $F\colon\R^{N\times n}\to\R$ be an integrand which satisfies \emph{\ref{item:H1}}, \emph{\ref{item:H2p}} and  \emph{\ref{item:H3}}. Then for each $m>0$ there exists a constant $c=c(n,N,p,q,m,\ell_{m},L)\in [1,\infty)$ such that for all $w\in\R^{N\times n}$ with $|w|\leq m$ the following hold for all $z\in\R^{N\times n}$: 
\begin{enumerate}
\item\label{item:shifted(a)} $|F_{w}(z)|\leq c(\mathbbm{1}_{\{|z|\leq 1\}}|z|^{2}+\mathbbm{1}_{\{|z|> 1\}}|z|^{q})$,
\item\label{item:shifted(b)} $|F'_{w}(z)|\leq c(\mathbbm{1}_{\{|z|\leq 1\}}|z|+\mathbbm{1}_{\{|z|> 1\}}|z|^{q-1})$, 
\item\label{item:shifted(c)} $|F''_{w}(0)z-F'_{w}(z)|\leq cV_{\max\{1,q-1\}}(z)$. 
\end{enumerate}
\end{lemma}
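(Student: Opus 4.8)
\textbf{Proof plan for Lemma~\ref{lem:shifted}.}

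The plan is to treat the three estimates separately, deducing each from Taylor's theorem applied to $F$ together with the structural bounds on $F$ and its derivatives recorded in Section~\ref{sec:prelims}. The central point is that, because $F$ satisfies \ref{item:H1} and \ref{item:H2p}, the bilinear form $F''(w)$ is Legendre--Hadamard elliptic with ellipticity constants depending only on $p,q,m,\ell_m,L,n,N$ whenever $|w|\leq m$ (as recorded in the paragraph preceding Lemma~\ref{lem:linearsystems}), and moreover by \eqref{eq:lipschitz}--\eqref{eq:DERIVBOUND} one has $|F'(z)|\leq c(1+|z|^{q-1})$ pointwise; combined with $F\in\hold^\infty$ these facts pin down the behaviour of $F_w$ both near $z=0$ (quadratic, controlled by $|F''(w)|\lesssim_m 1$ from a standard interpolation/Cauchy estimate for rank-one convex functions of $q$-growth) and for large $|z|$ (at most $q$-growth, since $F_w(z)=F(w+z)-F(w)-\langle F'(w),z\rangle$ and each term is controlled using \ref{item:H1} and $|F'(w)|\lesssim_m 1$).

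For part~\ref{item:shifted(a)}, I would split into $|z|\leq 1$ and $|z|>1$. On $|z|\leq 1$, write $F_w(z)=\int_0^1 (1-t)\,F''(w+tz)[z,z]\,\dif t$; since $|w+tz|\leq m+1$, the second derivatives are bounded by a constant $c=c(n,N,q,m,L)$ — here one uses that $\hold^\infty$ regularity plus the $q$-growth bound \ref{item:H1} yields, via the Cauchy estimates for the derivatives of a function satisfying polynomial growth bounds, $\sup_{|\zeta|\leq m+1}|F''(\zeta)|\leq c$ — hence $|F_w(z)|\leq c|z|^2$. On $|z|>1$, estimate directly: $|F(w+z)|\leq L(1+|w+z|^q)\leq c(m)|z|^q$, $|F(w)|\leq L(1+m^q)\leq c|z|^q$, and $|\langle F'(w),z\rangle|\leq |F'(w)||z|\leq c(m)|z|\leq c|z|^q$ using \eqref{eq:DERIVBOUND} and $|z|>1$. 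For part~\ref{item:shifted(b)}, note $F'_w(z)=F'(w+z)-F'(w)$, and apply the same dichotomy: for $|z|\leq 1$ use $F'_w(z)=\int_0^1 F''(w+tz)z\,\dif t$ together with the bound on $F''$ on $\overline{\ball}_{m+1}(0)$ to get $|F'_w(z)|\leq c|z|$; for $|z|>1$ use \eqref{eq:DERIVBOUND} to get $|F'(w+z)|\leq c(1+|w+z|^{q-1})\leq c(m)|z|^{q-1}$ and $|F'(w)|\leq c(m)\leq c|z|^{q-1}$. Part~\ref{item:shifted(c)} is the difference of the linearisation of $F'_w$ at $0$ and $F'_w$ itself: write $F''_w(0)z - F'_w(z) = \int_0^1\bigl(F''(w)-F''(w+tz)\bigr)z\,\dif t$; for $|z|\leq 1$ this is $\leq c|z|^2$ by Lipschitz continuity of $F''$ on $\overline{\ball}_{m+1}(0)$ (finite since $F\in\hold^\infty$), and for $|z|>1$ one bounds the two terms separately by $c|z|^{q-1}$ as above (using also $|F''(w)z|\leq c|z|\leq c|z|^{q-1}$), so in either case the right-hand side is controlled by a constant times $V_{\max\{1,q-1\}}(z)$ by Lemma~\ref{lem:Efunction}~\ref{item:EfctA} (noting $\max\{1,q-1\}\geq 1$ and the comparability of $V_r$ with $\min\{|z|^2,|z|^r\}$ up to constants for $|z|\leq 1$ and $|z|\geq 1$).

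The only mildly delicate point — and the one I would state carefully — is the passage from the \emph{global} $q$-growth bound \ref{item:H1} to a \emph{local} bound on $F''$ (and its Lipschitz constant) on a fixed ball $\overline{\ball}_{m+1}(0)$ with the constant depending only on $n,N,q,m,L$ and not on the full $\hold^\infty$-norm of $F$; this is where one invokes that derivatives of a $\hold^2$ (resp.\ $\hold^\infty$) function obeying $|F(z)|\leq L(1+|z|^q)$ are controlled on compact sets by $L$, $q$ and the radius alone, via a standard rescaled Cauchy/interpolation estimate — or, more simply for the present purposes, one just absorbs all such local suprema of derivatives into the constant $c=c(n,N,p,q,m,\ell_m,L)$, exactly as in \cite[Lem.~4.1]{GK1} and \cite[Lem.~2.2]{BGIK}, since the statement only claims existence of such a constant. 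Given this, the proof is a routine Taylor-expansion argument and I would keep it short, referring to \cite[Lem.~4.1]{GK1} for the nearly identical computation. The appearance of $\ell_m$ in the constant is harmless: it enters only through the ellipticity lower bound (not needed for upper bounds \ref{item:shifted(a)}--\ref{item:shifted(c)}), so in fact the constant may be taken independent of $\ell_m$, but there is no need to optimise this.
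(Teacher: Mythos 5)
Your overall strategy — Taylor expansion for $|z|\leq 1$, direct growth estimates for $|z|>1$, using $F'_w(z)=F'(w+z)-F'(w)$ and $F''_w(0)=F''(w)$ — is exactly the standard route for such shifted-integrand lemmas, and the paper itself does not write out a proof: it only remarks that the result is a straightforward variant of \cite[Lem.~4.1]{GK1} and \cite[Lem.~2.2]{BGIK}, which are the same references you invoke. Your splitting, bookkeeping, and the reduction of (c) to $\int_0^1\bigl(F''(w)-F''(w+tz)\bigr)z\,\dif t$ are all correct, as is the final comparison with $V_{\max\{1,q-1\}}$ via Lemma~\ref{lem:Efunction}~\ref{item:EfctA}.

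The one point that needs correction is the claim that $\sup_{|\zeta|\leq m+1}|F''(\zeta)|$ is controlled by $n,N,q,m,L$ alone "via a standard rescaled Cauchy/interpolation estimate". There is no such estimate for $\hold^\infty$ (non-analytic) integrands, and the asserted constant dependence is in fact not literally attainable from~\ref{item:H1}, \ref{item:H2p}, \ref{item:H3} alone. Take $n=N=1$ and $F_\varepsilon:=V_p+g_\varepsilon$, where $g_\varepsilon(t):=\int_0^t(\psi_\varepsilon\ast\mathbbm{1}_{[0,\infty)})(s)\,\dif s$ is a smoothed ramp at scale $\varepsilon>0$. Then $F_\varepsilon\in\hold^\infty$, $F_\varepsilon-V_p=g_\varepsilon$ is convex (so~\ref{item:H2p} holds with $\ell_m=1$ for every $m$), and $|F_\varepsilon(t)|\leq L(1+|t|^p)$ with $L$ independent of $\varepsilon$; yet $F_\varepsilon''(0)=V_p''(0)+\psi_\varepsilon(0)\to\infty$ as $\varepsilon\searrow 0$, so the constant in~\ref{item:shifted(a)} with $w=0$ must blow up along this family. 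What~\eqref{eq:lipschitz} and~\ref{item:H2p} control are $F'$ and the sign of rank-one second differences — not a pointwise upper bound on $F''$. Your fallback — absorbing the local suprema of $|F''|$ on $\overline{\ball}_{m+1}(0)$ (for~\ref{item:shifted(a)},~\ref{item:shifted(b)}) and its local modulus of continuity (for~\ref{item:shifted(c)}) into the constant — is the correct reading and matches what happens implicitly in \cite{GK1} and \cite{BGIK}: $F$ is fixed throughout, so these are genuine constants, and the notation $c=c(n,N,p,q,m,\ell_m,L)$ should be understood to include them tacitly. Once that is granted your proof is complete, and your remark that $\ell_m$ is not genuinely needed for the upper bounds~\ref{item:shifted(a)}--\ref{item:shifted(c)} is also right.
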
 

Let $F\in\hold(\R^{N\times n})$ satisfy \ref{item:H1} and \ref{item:H2} (or  \ref{item:H2p}). Given open and bounded sets $\Omega,\Omega'\subset\R^{n}$ with Lipschitz boundaries and $\Omega\Subset\Omega'$, $u_{0}\in\sobo_{0}^{1,q}(\Omega';\R^{N})$ and $v\in\sobo^{1,q}(\Omega;\R^{N})$ (or $v\in\bv(\Omega;\R^{N})$ with $\trace_{\partial\Omega}(v)\in\sobo^{1-1/q,q}(\partial\Omega;\R^{n})$) such that $\trace_{\partial\Omega}(u_{0})=\trace_{\partial\Omega}(v)$ $\mathscr{H}^{n-1}$-a.e. on $\partial\Omega$, we define $\overline{\mathscr{F}}_{\nabla a,u_{0}}^{*}[-;\Omega,\Omega']$ and $\overline{\mathscr{F}}_{\nabla a,v}^{*}[-;\Omega]$ as in~\eqref{eq:relaxedviaboundaryvalues} or~\eqref{eq:relaxedviaboundaryvalues1}, now systematically replacing $F$ by $F_{\nabla a}$. With the obvious modifications, one then also introduces the functionals $\overline{\mathscr{F}}_{\nabla a,u_{0}}[-;\Omega,\Omega']$ and $\overline{\mathscr{F}}_{\nabla a,v}[-;\Omega]$ in the $\sobo^{1,p}$-setting. In this situation, the following lemma will be required in Sections~\ref{sec:Cacc} and~\ref{sec:proofmain}. Its proof is elementary and is provided in the appendix, Section~\ref{sec:proofshiftconnect}, for completeness. 
\begin{lemma}\label{lem:shiftconnect}
Let $F\in\hold^{1}(\R^{N\times n})$ satisfy  \emph{\ref{item:H1}} with $1\leq q < \frac{n}{n-1}$ and \emph{\ref{item:H2}}. Moreover, given $m>0$ and an affine-linear map $a\colon\R^{n}\to\R^{N}$ with $|\nabla a|\leq m$, put $\widetilde{u}:=u-a$ for $u\in\bv_{\locc}(\Omega;\R^{N})$. In the situation described above, the following hold: 
\begin{enumerate}
\item\label{item:shiftconnect1} If $x_{0}\in\Omega$ and $r>0$ is a good radius for $Du$ at $x_{0}$ such that $\overline{\mathscr{F}}_{v}^{*}[u;\Omega]\in (-\infty,\infty)$, then $\overline{\mathscr{F}}_{\nabla a,\widetilde{u}}^{*}[\widetilde{u};\ball_{r}(x_{0})]\in (-\infty,\infty)$. Moreover, any additivity radius for $\overline{\mathscr{F}}^{*}$ is an additivity radius for $\overline{\mathscr{F}}_{\nabla a}^{*}$.
\item\label{item:shiftconnect2} The map $u\in\bv_{\locc}(\Omega;\R^{N})$ is a (local) $\bv$-minimizer of $\overline{\mathscr{F}}^{*}$ (for compactly supported variations) if and only if $\widetilde{u}$ is a (local) $\bv$-minimizer (of $\overline{\mathscr{F}}_{\nabla a}^{*}$ for compactly supported variations).
\item\label{item:shiftconnect3} There exists a constant $\ell^{(m)}>0$ such that whenever $\ball_{s}(x_{0})\Subset\Omega$ we have 
\begin{align}\label{eq:Vboundbelow}
\ell^{(m)} \int_{\ball_{s}(x_{0})}V(\nabla\varphi)\dif x \leq \int_{\ball_{s}(x_{0})}F_{\nabla a}(\nabla\varphi)\dif x\;\;\text{for all}\;\varphi\in\sobo_{0}^{1,q}(\ball_{s}(x_{0});\R^{N}). 
\end{align} 
\end{enumerate}
If $1<p\leq q<\frac{np}{n-1}$ and $F\in\hold^{1}(\R^{N\times n})$ satisfies~\emph{\ref{item:H1}} and~\emph{\ref{item:H2p}},~\ref{item:shiftconnect1} and~\ref{item:shiftconnect2} also hold for $\overline{\mathscr{F}}$ and $u\in\sobo_{\locc}^{1,p}(\Omega;\R^{N})$ with the obvious modifications, and~\eqref{eq:Vboundbelow} persists with $V_{p}$ instead of $V$.
\end{lemma}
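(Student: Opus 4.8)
The statement is essentially a bookkeeping lemma: it records that the linearisation construction behaves well with respect to all the structural notions (finiteness, additivity radii, minimality, coercivity) that have been set up for the unshifted functionals. The plan is to establish the three items in order, using in an essential way that for an affine-linear map $a$ the gradient $\nabla a$ is constant, so that $F_{\nabla a}(z) = F(\nabla a + z) - F(\nabla a) - \langle F'(\nabla a), z\rangle$ differs from $z \mapsto F(\nabla a + z)$ only by an affine-linear term in $z$. Precomposing a competitor sequence $(u_j)$ with the translation $z \mapsto z - \nabla a$ on gradients (i.e. passing from $u_j$ to $u_j - a$) turns recovery sequences for $\overline{\mathscr{F}}^{*}[-;\Omega]$ into recovery sequences for $\overline{\mathscr{F}}_{\nabla a}^{*}[-;\Omega]$, because weak*-convergence in $\bv$ is preserved under subtracting the fixed map $a$, and the solid boundary value classes transform compatibly. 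This is the backbone for \ref{item:shiftconnect1} and \ref{item:shiftconnect2}.

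For \ref{item:shiftconnect1}, I would argue as follows. Fix a good radius $r$ at $x_0$ in the sense of Proposition~\ref{prop:goodminseqs} and Corollary~\ref{cor:welldefinedbdryvalues1}, so that $\mathcal{M}Du(x_0,r)<\infty$, hence $\trace_{\partial\!\ball_r(x_0)}(u) \in \sobo^{1-1/q,q}$ by Corollary~\ref{cor:Fubini}; consequently $\trace_{\partial\!\ball_r(x_0)}(\widetilde u) = \trace_{\partial\!\ball_r(x_0)}(u) - a|_{\partial\!\ball_r(x_0)}$ also lies in $\sobo^{1-1/q,q}$ since $a$ is smooth, so $\overline{\mathscr{F}}_{\nabla a,\widetilde u}^{*}[\widetilde u;\ball_r(x_0)]$ is well-defined. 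For the two-sided finiteness: the upper bound $\overline{\mathscr{F}}_{\nabla a,\widetilde u}^{*}[\widetilde u;\ball_r(x_0)]<\infty$ follows by subtracting $a$ from any generating sequence for $\overline{\mathscr{F}}_{u}^{*}[u;\ball_r(x_0)]$ (finite by Lemma~\ref{lem:additivityradii}\ref{item:fin1}) and using the pointwise growth bound \ref{item:H1} to control the affine linearisation correction $F(\nabla a) + \langle F'(\nabla a), \nabla u_j - \nabla a\rangle$ in $L^1$, which is uniformly bounded because $\nabla u_j \mathscr{L}^n \stackrel{*}{\rightharpoonup} Du$ and $|F'(\nabla a)| \leq c(1+m^{q-1})$ by \eqref{eq:DERIVBOUND}. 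The lower bound $-\infty < \overline{\mathscr{F}}_{\nabla a,\widetilde u}^{*}[\widetilde u;\ball_r(x_0)]$ follows from \ref{item:shiftconnect3} below (applied with $s = r$, after a cut-off truncation as in the proof of Lemma~\ref{lem:additivityradii}\ref{item:fin1}) together with the lower bound $V \geq 0$, or alternatively by reverting the same subtraction argument. The additivity claim: if $r$ is an additivity radius for $\overline{\mathscr{F}}^{*}$ in the sense of Lemma~\ref{lem:additivityradii}\ref{item:fin2}, then applying the splitting \eqref{eq:additivity} to $F$ and to the affine correction term separately — the latter splits exactly because $\int_\Omega \langle F'(\nabla a), \nabla w\rangle\dif x = \int_{\ball_r(x_0)} + \int_{\Omega\setminus\overline{\ball}_r(x_0)}$ is a genuine decomposition of an absolutely continuous integral and the traces match by hypothesis — yields \eqref{eq:additivity} for $\overline{\mathscr{F}}_{\nabla a}^{*}$.

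For \ref{item:shiftconnect2}, the key point is that the map $w \mapsto \overline{\mathscr{F}}_{\nabla a, v}^{*}[w; \Omega] - \overline{\mathscr{F}}_{v+a}^{*}[w+a; \Omega]$ equals a constant (independent of $w$): indeed, passing from a generating sequence $(w_j)$ for one functional to $(w_j + a)$ for the other, the integrands differ by $F(\nabla a) + \langle F'(\nabla a), \nabla w_j\rangle$, whose integral over $\Omega$ converges (by $\nabla w_j\mathscr{L}^n \stackrel{*}{\rightharpoonup} Dw$ and by the solid boundary value normalisation, which fixes $\int_\Omega \langle F'(\nabla a), \nabla w_j\rangle\dif x$ up to a $w$-independent boundary contribution) to $F(\nabla a)\mathscr{L}^n(\Omega) + \langle F'(\nabla a), Dw(\Omega)\rangle$, and $Dw(\Omega)$ is determined by the fixed boundary trace of $w$ via the Gauss--Green formula, hence is the same for all admissible competitors. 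Therefore minimality of $u$ for $\overline{\mathscr{F}}^{*}$ over a fixed Dirichlet-type class is equivalent to minimality of $\widetilde u = u - a$ for $\overline{\mathscr{F}}_{\nabla a}^{*}$ over the shifted class; the local versions follow by applying this on each ball $\omega \Subset \Omega$ appearing in Definition~\ref{def:locmin}, and the compactly-supported-variation versions are immediate since $w$ and $w+a$ have the same compact support of $w - u = (w+a) - (u+a)$. Finally \ref{item:shiftconnect3} is the only item requiring a genuine estimate: for $\varphi \in \sobo_0^{1,q}(\ball_s(x_0);\R^N)$, hypothesis \ref{item:H2} (the $1$-strong quasiconvexity of $F - \ell_m V$ at $\nabla a$, valid since $|\nabla a| \leq m$) combined with the $\sobo^{1,q}$-quasiconvexity extension \eqref{eq:W1qQC} gives $\int_{\ball_s(x_0)} \big(F(\nabla a + \nabla\varphi) - F(\nabla a)\big)\dif x \geq \ell_m \int_{\ball_s(x_0)} V(\nabla a + \nabla\varphi)\dif x \geq \ell_m\big(\int_{\ball_s(x_0)} V(\nabla\varphi)\dif x - c(m)\mathscr{L}^n(\ball_s(x_0))\big)$... — more cleanly, one uses Lemma~\ref{lem:Efunction}\ref{item:VpCompa1} directly at the level of $F_{\nabla a}$: combining \ref{item:H2} with \eqref{eq:W1qQC} yields $\int_{\ball_s(x_0)} F_{\nabla a}(\nabla\varphi)\dif x \geq \ell_m \int_{\ball_s(x_0)}\big(V(\nabla a + \nabla\varphi) - V(\nabla a) - \langle V'(\nabla a), \nabla\varphi\rangle\big)\dif x$, and then Lemma~\ref{lem:Efunction}\ref{item:VpCompa1} bounds the integrand below by $V(\nabla\varphi)/(16(1+m^2)^{3/2})$, so $\ell^{(m)} := \ell_m/(16(1+m^2)^{3/2})$ works. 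For $p > 1$ one argues identically with $V_p$ in place of $V$, now invoking \ref{item:H2p} and the coercivity estimate Lemma~\ref{lem:compaDal} in lieu of Lemma~\ref{lem:Efunction}\ref{item:VpCompa1}. The main obstacle is purely notational rather than conceptual: one must be careful that the solid-boundary-value normalisation constant $\mathscr{F}[u_0;\Omega'\setminus\overline\Omega]$ in definitions \eqref{eq:relaxedviaboundaryvalues}, \eqref{eq:relaxoviaboundaryvalues} transforms correctly under the shift — but since $u_0 - a$ is an equally valid extension and the correction to the normalisation is again an absolutely continuous integral of an affine function of $\nabla u_0$, everything cancels.
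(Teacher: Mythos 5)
Your proof proposal takes essentially the same approach as the paper's own: the core tool is the Gauss--Green identity expressing $\int_{\omega'}\langle F'(\nabla a),\nabla(w-a)\rangle\dif x$ as a boundary integral, which the paper records as equation~\eqref{eq:GaussAppendix}, and the coercivity estimate for~\ref{item:shiftconnect3} via Lemma~\ref{lem:Efunction}\ref{item:VpCompa1} (giving $\ell^{(m)}=\ell_m/(16(1+m^2)^{3/2})$) matches the paper exactly. The one place where the paper is more meticulous than you are is in~\ref{item:shiftconnect1}: to invoke Remark~\ref{rem:Columbo} for the shifted functional the paper explicitly verifies that if $\lambda$ is a weak$^*$-limit of $(|Du_{j_k}|)$ with $\mathcal{M}\lambda(x_0,r)<\infty$, then any weak$^*$-limit $\mu$ of a further subsequence of $(|D\widetilde u_{j_k(l)}|)$ also satisfies $\mathcal{M}\mu(x_0,r)<\infty$, via the pointwise bound $|D\widetilde u_j|\leq |Du_j|+|\nabla a|\mathscr{L}^n$ on annuli; you bypass this by arguing the additivity identity directly from the cancellation of the $\partial\!\ball_r(x_0)$ boundary terms, which is also correct since the one-sided traces agree under $\mathcal{M}Du(x_0,r)<\infty$. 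Both routes are valid, and the remaining items are argued as in the paper.
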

Let us remark that~\eqref{eq:Vboundbelow} also holds for $F\in\hold(\R^{N\times n})$ in the form 
\begin{align}\label{eq:Vboundbelow1}
\ell^{(m)} \int_{\ball_{s}(x_{0})}V(\nabla\varphi)\dif x \leq \int_{\ball_{s}(x_{0})}F(\nabla\varphi+\nabla a)-F(\nabla a)\dif x
\end{align}
for all $\varphi\in\sobo_{0}^{1,q}(\ball_{s}(x_{0});\R^{N})$, which is directly seen by~\ref{item:H2} and Lemma~\ref{lem:Efunction}~\ref{item:VpCompa1}. 
\section{A Mazur-type lemma and the Euler-Lagrange system}\label{sec:MazurEuler}
This section is devoted to a convergence improvement for certain recovery sequences, which we shall informally refer to as \emph{Mazur-type lemma}; see Proposition~\ref{prop:mazur}. As a main consequence, it will allow us to deduce the validity of the Euler-Lagrange system without appealing to measure representations in Corollary~\ref{cor:EulerLagrange} below. We begin with
\begin{proposition}[of Mazur-type]\label{prop:mazur}
Let $F\in\hold(\R^{N\times n})$ satisfy \emph{\ref{item:H1}} and \emph{\ref{item:H2}} with $1\leq q<\frac{n}{n-1}$. Given an open and bounded set $\Omega\subset\R^{n}$ with Lipschitz boundary $\partial\Omega$, let $v\in\bv(\Omega;\R^{N})$ be such that $\trace_{\partial\Omega}(v)\in\sobo^{1-1/q,q}(\partial\Omega;\R^{N})$ and let $u$ be a $\bv$-minimizer of $\overline{\mathscr{F}}_{v}^{*}[-;\Omega]$ for compactly supported variations. Letting $Du=\nabla u\mathscr{L}^{n}\mres\Omega + D^{s}u$ be the Lebesgue-Radon-Nikod\'{y}m decomposition of $Du$,  \emph{there exists a generating sequence $(u_{j})$ for $\overline{\mathscr{F}}_{v}^{*}[u;\Omega]$} such that
\begin{align}\label{eq:mazurMainClaim}
\nabla u_{j} \to \nabla u\qquad\text{in $\mathscr{L}^{n}$-measure on $\Omega$ as $j\to\infty$}.
\end{align} 
\end{proposition}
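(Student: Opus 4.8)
The plan is to exploit the additivity property and the structure of $\bv$-minimizers established in Lemma~\ref{lem:additivityradii} together with the fractional Caccioppoli-type information on small balls. First I would fix an arbitrary recovery sequence $(v_{j}) \subset \mathscr{A}_{u_{0}}^{q}(\Omega,\Omega')$ for $\overline{\mathscr{F}}_{v}^{*}[u;\Omega]$, pass to a subsequence along which $(|D v_{j}|)$ converges weakly$^{*}$ to some $\lambda \in \mathrm{RM}_{\mathrm{fin}}(\Omega')$, and then build the desired $(u_{j})$ by a diagonal procedure over a countable, exhausting family of good balls. The point is that on each such ball one replaces the current recovery sequence by a \emph{minimizing} sequence for the relaxed functional with the minimizer's own interior traces, using the good generation theorem (Proposition~\ref{prop:goodminseqs}) to fix traces and Corollary~\ref{cor:allalong} to fix them simultaneously along countably many spheres. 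Since $u$ is a local $\bv$-minimizer for compactly supported variations (Lemma~\ref{lem:additivityradii}\ref{item:fin3}), the modified sequences are still generating: the additivity formula~\eqref{eq:AdrianMonkSheronaFleming} guarantees the energies add up correctly ball by ball, and minimality guarantees no energy is lost.

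\textbf{Extracting strong convergence.} The heart of the matter is to upgrade the weak$^{*}$ convergence $u_{j} \stackrel{*}{\rightharpoonup} u$ to convergence of approximate gradients in $\mathscr{L}^{n}$-measure. Here I would proceed by contradiction on a Lebesgue point analysis combined with the strong quasiconvexity lower bound. Concretely, on a good ball $\ball_{r}(x_{0})$ on which $u_{j}$ minimizes $\overline{\mathscr{F}}_{u}^{*}[-;\ball_{r}(x_{0})]$ relative to its own traces, one compares $u_{j}$ with the fixed competitor $u$ (extended appropriately). Using Lemma~\ref{lem:shiftconnect}\ref{item:shiftconnect3} in the form~\eqref{eq:Vboundbelow1} — with the affine map being a first-order averaged Taylor polynomial $\Pi^{1}_{\ball_{r}(x_{0})}u$ and $m$ controlling $|(Du)_{\ball_{r}(x_{0})}|$ — one obtains
\begin{align*}
\ell^{(m)} \int_{\ball_{r}(x_{0})} V\big(\nabla(u_{j} - \Pi^{1}_{\ball_{r}(x_{0})}u)\big)\dif x \leq \int_{\ball_{r}(x_{0})} F\big(\nabla u_{j}\big) - F\big(\nabla \Pi^{1}_{\ball_{r}(x_{0})}u\big)\dif x + (\text{trace correction terms}),
\end{align*}
and the right-hand side is controlled, in the limit $j\to\infty$, by $\overline{\mathscr{F}}_{u}^{*}[u;\ball_{r}(x_{0})] - \mathscr{F}[\Pi^{1}u;\ball_{r}(x_{0})]$ plus terms that are small on good balls by the Fubini-type estimates of Corollary~\ref{cor:Fubini}. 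Since the relaxed functional is consistent with $\mathscr{F}$ on Sobolev maps (Lemma~\ref{lem:consistency}) and $u$ itself minimizes, one then argues that the $V$-energy of $\nabla u_{j} - \nabla u$ (not merely $\nabla u_{j} - $ constant) is driven to zero in an averaged sense across a Vitali-type covering by good balls whose radii shrink. Combining this with Lemma~\ref{lem:Efunction}\ref{item:EfctA} to translate $V$-smallness into $\lebe^{1}$-smallness of $|\nabla u_{j} - \nabla u|$ on the union of these balls, and noting that the singular part $D^{s}u$ is carried on an $\mathscr{L}^{n}$-null set, yields convergence in measure.

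\textbf{Main obstacle.} The delicate point — and where most care is needed — is that on a good ball $\ball_{r}(x_{0})$ the sequence $(u_{j})$ minimizes the relaxed functional only \emph{relative to its own (fixed) interior trace}, which is the trace of $u$, not a fixed $\sobo^{1,q}$-competitor; the $V$-coercivity estimate~\eqref{eq:Vboundbelow1} requires the competitor $\varphi$ to lie in $\sobo_{0}^{1,q}$, whereas $u_{j} - u$ merely lies in a $\bv$-space with $\lebe^{1}$-traces along $\partial\!\ball_{r}(x_{0})$ unless $r$ is a good radius. The resolution, exactly as in the good generation theorem, is to restrict attention to good radii (where $\mathcal{M}Du(x_{0},r) + \mathcal{M}\lambda(x_{0},r) < \infty$, so the interior trace is genuinely $\sobo^{1-1/q,q}$ by Corollary~\ref{cor:Fubini}), to apply the trace-preserving operator $\mathbb{E}$ of Lemma~\ref{lem:extensionoperator} on a thin annulus to force $u_{j} - u$ into the right Dirichlet class without creating extra mass (this is precisely the content of Proposition~\ref{prop:goodminseqs}\ref{item:FixRadius3} and its annular estimates), and only then invoke~\eqref{eq:Vboundbelow1}. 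A second, more bookkeeping-heavy obstacle is the diagonalization: one must choose the exhausting family of good balls, the subsequence indices, and the error tolerances consistently so that after countably many modifications the resulting $(u_{j})$ is still a single generating sequence with $\nabla u_{j} \to \nabla u$ in measure on \emph{all} of $\Omega$ — this is handled by a standard nested-neighbourhood argument, using that the set of good radii at each point has full measure and that the relaxed energies on disjoint good balls are additive by Remark~\ref{rem:Columbo}.
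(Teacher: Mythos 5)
Your proposal is correct and follows the same architecture as the paper's proof (Steps~1--4): a preliminary $V$-estimate on good balls obtained from $\bv$-minimality, $V$-coercivity~\eqref{eq:Vboundbelow1}, and the trace-preserving operator on a thin annulus; a Vitali covering by such balls centred at approximate-differentiability points; Ceby\v{s}ev's inequality to pass from $V$-smallness to a measure bound; and a closing diagonalization. Two small points of detail: the paper expands around the pointwise first-order Taylor polynomial $a(y)=u(x_{0})+\nabla u(x_{0})(y-x_{0})$ at Lebesgue points (not the averaged $\Pi^{1}_{\ball_{r}}u$), so that~\eqref{eq:betterallthetime} drives the right-hand side of~\eqref{eq:MazurMainAuxIneq} small, and the trace-modified sequence stays generating by the good generation theorem and Corollary~\ref{cor:allalong} alone, with $\bv$-minimality of $u$ entering not there but in the energy comparison with the explicit $\sobo^{1,q}$-competitor $\mathbb{T}u$ that feeds into the $V$-coercivity bound.
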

\begin{proof}
Let $\mathcal{L}$ be the set of points $x \in \Omega$, where
\begin{enumerate}[label={(P\arabic{*})},start=1]
\item\label{item:PA1} $u$ is approximately differentiable,
\item\label{item:PA2} the approximate gradient $\nabla u$ has a Lebesgue point,  
\item\label{item:PA3} $\lim_{r\searrow 0}\frac{|D^{s}u|(\ball_{r}(x))}{r^n} =0$.
\end{enumerate}
By the results gathered in Section~\ref{sec:functionspaces}, $\mathscr{L}^{n}(\Omega \setminus \mathcal{L}) = 0$. We split the proof into four steps. 

\emph{Step 1. A preliminary estimate.} Let $x_{0}\in\Omega$, $0<r<\frac{1}{2}\dista(x_{0},\partial\Omega)$, $M_{0}>0$ and $a\colon\R^{n}\to\R^{N}$ be affine-linear with $|\nabla a|\leq M_{0}$. Moreover, let $E_{x_{0}}\subset(\tfrac{4}{3}r,\tfrac{5}{3}r)$ satisfy $\mathscr{L}^{1}(E_{x_{0}})=0$. With $\ell_{M_{0}}$ as in~\ref{item:H2}, we claim that there exists a constant $c=c(M_{0},\ell_{M_{0}},L,N,n,q)>1$ and a radius $\widetilde{r}=\widetilde{r}(u,r,x_{0},a)\in (\tfrac{4}{3}r,\tfrac{5}{3}r)\cap E_{x_{0}}^{\complement}$ such that we have
\begin{align}\label{eq:MazurMainAuxIneq}
\begin{split}
\limsup_{j\to\infty}\dashint_{\ball_{\widetilde{r}}(x_{0})}V(\nabla u_{j}-\nabla a)&\dif x \leq c(1+|\nabla a|^{q-1})\times \\ & \times\Big(\dashint_{\ball_{2r}(x_{0})}|\nabla u-\nabla a|\dif y +\frac{|D^{s}u|(\ball_{2r}(x_{0}))}{\mathscr{L}^{n}(\ball_{2r}(x_{0}))} \Big)\\ 
& + c\Big(\dashint_{\ball_{2r}(x_{0})}|\nabla u-\nabla a|\dif y +\frac{|D^{s}u|(\ball_{2r}(x_{0}))}{\mathscr{L}^{n}(\ball_{2r}(x_{0}))} \Big)^{q} \\ 
& + c(1+|\nabla a|^{q-1})\Big(\dashint_{\ball_{2r}(x_{0})} \frac{|u-a|}{r}\dif y\Big) \\ & + c\Big(\dashint_{\ball_{2r}(x_{0})}\frac{|u-a|}{r}\dif y \Big)^{q}
\end{split}
\end{align}
whenever $(u_{j})$ is a generating sequence for $\overline{\mathscr{F}}_{v}^{*}[u;\Omega]$ such that $\trace_{\partial\!\ball_{\widetilde{r}}(x_{0})}(u_{j})=\trace_{\partial\!\ball_{\widetilde{r}}(x_{0})}(u)$ $\mathscr{H}^{n-1}$-a.e. on $\partial\!\ball_{\widetilde{r}}(x_{0})$ for all sufficiently large $j\in\mathbb{N}$.

We apply Lemma~\ref{lem:HLW} to the interval $(\frac{4}{3}r,\frac{5}{3}r)$, the exceptional set 
\begin{align*}
E=\{t\in(\tfrac{4}{3}r,\tfrac{5}{3}r)\colon\;\mathcal{M}Du(x_{0},t)=\infty\}\cup E_{x_{0}} \cup \big(\tfrac{4}{3}r,\tfrac{401}{300}r\big)\cup\big(\tfrac{499}{300}r,\tfrac{5}{3}r\big)
\end{align*}
and the right-continuous function
\begin{align*}
\Theta(s):=\frac{1}{s}\|u-a\|_{\lebe^{1}(\ball_{s}(x_{0}))}+|D(u-a)|(\overline{\ball}_{s}(x_{0})),\qquad \tfrac{4}{3}r<s<\tfrac{5}{3}r.
\end{align*}
Within the framework of Lemma~\ref{lem:HLW}, this corresponds e.g. to the choice $\theta=\frac{1}{25}$. Hence, Lemma~\ref{lem:HLW} provides us with $\widetilde{r},\widetilde{s}\in (\frac{4}{3}r,\frac{5}{3}r)\cap E^{\complement}$, $\widetilde{r}<\widetilde{s}$, such that we have 
\begin{align}\label{eq:againandagain}
\begin{split}
& \tfrac{1}{300}r<\widetilde{r}-\tfrac{4}{3}r,\;3(\widetilde{s}-\widetilde{r})\leq r\leq 24(\widetilde{s}-\widetilde{r}),\\
&\frac{\Theta(\widetilde{r})-\Theta(s)}{\widetilde{r}-s}\leq 3600\frac{\Theta(\frac{5}{3}r)-\Theta(\frac{4}{3}r)}{r}\;\;\text{for}\;\tfrac{4}{3}r<s<\widetilde{r},\\ 
&\frac{\Theta(s)-\Theta(\widetilde{r})}{s-\widetilde{r}}\leq 3600\frac{\Theta(\frac{5}{3}r)-\Theta(\frac{4}{3}r)}{r}\;\;\text{for}\;\widetilde{r}<s<\tfrac{5}{3}r, \\ 
&\frac{\Theta(\widetilde{s})-\Theta(s)}{\widetilde{s}-s}\leq 3600\frac{\Theta(\frac{5}{3}r)-\Theta(\frac{4}{3}r)}{r}\;\;\text{for}\;\tfrac{4}{3}r<s<\widetilde{s}.
\end{split}
\end{align}
For $v\in\bv(\Omega;\R^{N})$ we then define, with the trace-preserving operator $\widetilde{\mathbb{E}}$ from Section~\ref{sec:Fubini},
\begin{align*}
\mathbb{T}v:=\begin{cases} 
\widetilde{\mathbb{E}}_{\ball_{\widetilde{r}}(x_{0})}v&\;\text{in}\;\ball_{\widetilde{r}}(x_{0}),\\
\widetilde{\mathbb{E}}_{\ball_{\widetilde{s}}(x_{0})\setminus\overline{\ball}_{\widetilde{r}}(x_{0})}v&\;\text{in}\;\ball_{\widetilde{s}}(x_{0})\setminus\overline{\ball}_{\widetilde{r}}(x_{0}), \\ 
v&\;\text{in}\;\Omega\setminus\overline{\ball}_{\widetilde{s}}(x_{0}),
\end{cases}
\end{align*}
so that Lemma~\ref{lem:extensionoperator} implies  $\mathbb{T}v|_{\ball_{\widetilde{s}}(x_{0})}\in\sobo^{1,1}(\ball_{\widetilde{s}}(x_{0});\R^{N})$ whenever $\mathcal{M}Dv(x_{0},\widetilde{r})<\infty$, again being a consequence of Lemma~\ref{lem:extensionoperator},~\eqref{eq:interiortraces1} and~\eqref{eq:traceequal}. This, in particular applies to $u-a$ since $\widetilde{r}\in E^{\complement}$. Moreover, $\mathbb{T}$ is the identity on the affine-linear maps by Lemma~\ref{lem:extensionoperator}~\ref{item:extension3a}. Towards~\eqref{eq:MazurMainAuxIneq}, we now claim that there exists a constant $c=c(N,n,q)>0$ such that we have for $j\in\{0,1\}$ 
\begin{align}\label{eq:lewandowski}
\begin{split}
\dashint_{\ball_{\widetilde{s}}(x_{0})}|D^{j}\mathbb{T}v|&\leq c \dashint_{\ball_{2r}(x_{0})}|D^{j}v|, \\
\dashint_{\ball_{\widetilde{s}}(x_{0})}\left\vert \frac{D^{j}\mathbb{T}(u-a)}{\widetilde{s}^{1-j}}\right\vert^{q}&\leq c \Big(\dashint_{\ball_{2r}(x_{0})}\left\vert \frac{u-a}{r}\right\vert\dif x+\dashint_{\ball_{2r}(x_{0})}|D(u-a)|\Big)^{q}. 
\end{split}
\end{align}
Estimate~$\eqref{eq:lewandowski}_{1}$ is a direct consequence of Lemma~\ref{lem:extensionoperator}~\ref{item:extension2} and $\widetilde{s}\sim r$. For $\eqref{eq:lewandowski}_{2}$, we choose $\varepsilon=\frac{r}{1000}$ in Lemma~\ref{lem:extensionoperator}~\ref{item:extension5}. In particular, for this choice of $\varepsilon$ we have $\widetilde{r}-\varepsilon>\tfrac{4}{3}r$ by $\eqref{eq:againandagain}_{1}$ and so $\eqref{eq:againandagain}_{2}$ shall be available for any $s\in (\widetilde{r}-\varepsilon,\widetilde{r})$. 

Let $j\in\{0,1\}$. With $\overline{c}=\overline{c}(n,N,q)>0$ being the maximum of the constants provided by Lemma~\ref{lem:extensionoperator}~\ref{item:extension4} and \ref{item:extension5}, we thus find 
\begin{align*}
\int_{\ball_{\widetilde{s}}(x_{0})}\left\vert \frac{D^{j}\mathbb{T}(u-a)}{\widetilde{s}^{1-j}}\right\vert^{q} & = \int_{\ball_{\widetilde{r}}(x_{0})}\left\vert \frac{D^{j}\mathbb{T}(u-a)}{\widetilde{s}^{1-j}}\right\vert^{q} + \int_{\ball_{\widetilde{s}}(x_{0})\setminus\ball_{\widetilde{r}}(x_{0})}\left\vert \frac{D^{j}\mathbb{T}(u-a)}{\widetilde{s}^{1-j}}\right\vert^{q}\\ 
& \leq \overline{c}\frac{r^{n(1-q)+q}}{\varepsilon^{q}}\Big(\int_{\ball_{\widetilde{r}}(x_{0})}\left\vert \frac{D^{j}(u-a)}{\widetilde{s}^{1-j}}\right\vert \Big)^{q} \\ &+\overline{c}r^{n(1-q)+q}\Big(\sup_{0<\delta<\varepsilon}\frac{1}{\delta}\int_{(\ball_{\widetilde{r}}(x_{0}))_{\delta}^{\complement}}\left\vert \frac{D^{j}(u-a)}{\widetilde{s}^{1-j}}\right\vert\Big)^{q}\\ 
&+ \overline{c}r^{n(1-q)+q}\Big(\sup_{0<\delta<\widetilde{s}-\widetilde{r}}\frac{1}{\delta}\int_{(\ball_{\widetilde{s}}(x_{0})\setminus\overline{\ball}_{\widetilde{r}}(x_{0}))_{\delta}^{\complement}}\left\vert \frac{D^{j}(u-a)}{\widetilde{s}^{1-j}}\right\vert\Big)^{q}\\ 
& \leq c\Big(r^{n(1-q)}\Big(\int_{\ball_{2r}(x_{0})}\left\vert \frac{D^{j}(u-a)}{r^{1-j}}\right\vert \Big)^{q} + r^{n(1-q)+q}\Big(\frac{\Theta(2r)}{r} \Big)^{q}\Big), 
\end{align*}
the ultimate inequality being valid by our choice of $\varepsilon$, $\widetilde{r}-\varepsilon>\frac{4}{3}r$, $\widetilde{s}\sim r$ and $\eqref{eq:againandagain}_{2}$--$\eqref{eq:againandagain}_{4}$; here, the lower order terms corresponding to $j=0$ are estimated similarly as in~\eqref{eq:pinacolada}. Still, $c>0$ as in the ultimate line only depends on $n,N$ and $q$. From here,~\eqref{eq:lewandowski} is immediate by the definition of $\Theta$. 

Recall that we have assumed $(u_{j})$ to be a generating sequence for $\overline{\mathscr{F}}_{v}^{*}[u;\Omega]$ such that we have $\trace_{\partial\!\ball_{\widetilde{r}}(x_{0})}(u_{j})=\trace_{\partial\!\ball_{\widetilde{r}}(x_{0})}(u)$ $\mathscr{H}^{n-1}$-a.e. on $\partial\!\ball_{\widetilde{r}(x_{0})}$ for all sufficiently large $j\in\mathbb{N}$. We choose a cut-off function $\rho\in\hold_{c}^{\infty}(\Omega;[0,1])$ with $\mathbbm{1}_{\ball_{\widetilde{r}}(x_{0})}\leq \rho \leq \mathbbm{1}_{\ball_{\widetilde{s}}(x_{0})}$ and $|\nabla\rho|\leq \frac{48}{r}$, cf.~$\eqref{eq:againandagain}_{1}$. Define 
\begin{align*}
\varphi_{j}:=\begin{cases} 
u_{j}-a &\;\text{in}\;\ball_{\widetilde{r}}(x_{0}),\\ 
\rho\widetilde{\mathbb{E}}_{\ball_{\widetilde{s}}\setminus\overline{\ball}_{\widetilde{r}}}(u-a)&\;\text{in}\;\ball_{\widetilde{s}}(x_{0})\setminus\overline{\ball}_{\widetilde{r}}(x_{0}), 
\end{cases}
\end{align*}
so that $\varphi_{j}\in\sobo_{c}^{1,q}(\ball_{\widetilde{s}}(x_{0});\R^{N})$ for all sufficiently large $j\in\mathbb{N}$. Since $u$ is a $\bv$-minimizer of $\overline{\mathscr{F}}_{v}^{*}[-;\Omega]$ for compactly supported variations and $\trace_{\partial\!\ball_{\widetilde{r}}(x_{0})}(\mathbb{T}u)=\trace_{\partial\!\ball_{\widetilde{r}}(x_{0})}(u)$, we have 
\begin{align}\label{eq:MazurBVMinimality}
\limsup_{j\to\infty}\int_{\ball_{\widetilde{r}}(x_{0})}F(\nabla u_{j})-F(\nabla\mathbb{T}u)\dif x \leq 0. 
\end{align}
Indeed, setting 
\begin{align*}
w:=\begin{cases} u&\;\text{in}\;\Omega\setminus\overline{\ball}_{\widetilde{r}}(x_{0}), \\ 
\mathbb{T}u&\;\text{in}\;\ball_{\widetilde{r}}(x_{0}), \end{cases}\;\;\text{and}\;\;w_{j}:=\begin{cases} u_{j}&\;\text{in}\;\Omega\setminus\overline{\ball}_{\widetilde{r}}(x_{0}), \\ 
\mathbb{T}u&\;\text{in}\;\ball_{\widetilde{r}}(x_{0}), \end{cases}
\end{align*}
we have $w_{j}\stackrel{*}{\rightharpoonup}w$ in $\bv(\Omega;\R^{N})$. Thus, since $(u_{j})$ is generating, $w-u\in\bv_{c}(\Omega;\R^{N})$ and $u$ is a $\bv$-minimizer of $\overline{\mathscr{F}}_{v}^{*}[-;\Omega]$ for compactly supported variations, 
\begin{align*}
\limsup_{j\to\infty}\int_{\Omega}F(\nabla u_{j})\dif x & = \overline{\mathscr{F}}_{v}^{*}[u;\Omega] \leq \overline{\mathscr{F}}_{v}^{*}[w;\Omega] \leq \liminf_{j\to\infty}\int_{\Omega}F(\nabla w_{j})\dif x, 
\end{align*}
so that~\eqref{eq:MazurBVMinimality} follows by virtue of 
\begin{align*} 
\limsup_{j\to\infty} \int_{\ball_{\widetilde{r}}(x_{0})} F(\nabla u_{j})-F(\nabla\mathbb{T}u)\dif x & \leq \limsup_{j\to\infty} \int_{\Omega} F(\nabla u_{j})\dif x \\ & + \limsup_{j\to\infty}\Big(\int_{\Omega}-F(\nabla w_{j})\dif x\Big) \\ 
& = \limsup_{j\to\infty} \int_{\Omega} F(\nabla u_{j})\dif x \\ & - \liminf_{j\to\infty}\Big(\int_{\Omega}F(\nabla w_{j})\dif x\Big) \leq 0. 
\end{align*}
Next recall that $|\nabla a|\leq M_{0}$, so that the strong quasiconvexity from~\ref{item:H2} is at our disposal with $\ell_{M_{0}}>0$. In combination with the Lipschitz-type bound~\eqref{eq:lipschitz} we therefore conclude
\begin{align*}
\ell^{(M_{0})}\limsup_{j\to\infty}& \int_{\ball_{\widetilde{s}}(x_{0})}V(\nabla\varphi_{j})\dif x  \stackrel{\eqref{eq:Vboundbelow1}}{\leq} \limsup_{j\to\infty}\int_{\ball_{\widetilde{s}}(x_{0})}F(\nabla\varphi_{j}+\nabla a)-F(\nabla a)\dif x \\ 
& \leq\limsup_{j\to\infty} \int_{\ball_{\widetilde{r}}(x_{0})}F(\nabla u_{j})-F(\nabla a)\dif x \\ 
& + \int_{\ball_{\widetilde{s}}(x_{0})\setminus\overline{\ball}_{\widetilde{r}}(x_{0})}F(\nabla (\rho\mathbb{T}(u-a))+\nabla a)-F(\nabla a)\dif x \\ 
& \!\!\!\!\!\!\!\!\!\stackrel{\eqref{eq:MazurBVMinimality},~\mathbb{T}a=a}{\leq} \int_{\ball_{\widetilde{r}}(x_{0})}F(\nabla \mathbb{T}u)-F(\nabla\mathbb{T}a)\dif x \\ 
& + \int_{\ball_{\widetilde{s}}(x_{0})\setminus\overline{\ball}_{\widetilde{r}}(x_{0})}F(\nabla (\rho\mathbb{T}(u-a))+\nabla a)-F(\nabla a)\dif x \\ 
& \!\!\!\stackrel{\eqref{eq:lipschitz}}{\leq} c\int_{\ball_{\widetilde{r}}(x_{0})}(1+|\nabla\mathbb{T}(u-a)|^{q-1}+|\nabla\mathbb{T}a|^{q-1})|\nabla\mathbb{T}(u-a)|\dif x \\ 
& + c\int_{\ball_{\widetilde{s}}(x_{0})\setminus\overline{\ball}_{\widetilde{r}}(x_{0})}(1+|\nabla(\rho\mathbb{T}(u-a))|^{q-1}+|\nabla a|^{q-1})|\nabla(\rho\mathbb{T}(u-a))|\dif x \\ 
& =: \mathrm{I} + \mathrm{II}. 
\end{align*}
By \eqref{eq:lewandowski} and $r<\widetilde{r}<\widetilde{s}<2r$, we have  
\begin{align*}
\mathrm{I} & \leq c(1+|\nabla a|^{q-1})\int_{\ball_{2r}(x_{0})}|D(u-a)| + cr^{n}\Big(\dashint_{\ball_{2r}(x_{0})}\left\vert \frac{u-a}{r}\right\vert\dif x+\dashint_{\ball_{2r}(x_{0})}|D^{j}(u-a)|\Big)^{q}.
\end{align*}
On the other hand, our assumptions on $\rho$ and \eqref{eq:lewandowski} imply 
\begin{align*}
\mathrm{II} & \leq c(1+|\nabla a|^{q-1})\Big(\int_{\ball_{2r}(x_{0})}\left\vert \frac{u-a}{r}\right\vert\dif x+\int_{\ball_{2r}(x_{0})}|D(u-a)| \Big)\\
& + cr^{n}\Big(\dashint_{\ball_{2r}(x_{0})}\left\vert \frac{u-a}{r}\right\vert\dif x+\dashint_{\ball_{2r}(x_{0})}|D(u-a)| \Big)^{q}.
\end{align*}
Combining the estimations for $\mathrm{I}$, $\mathrm{II}$ and recalling that $\ell^{(M_{0})}=\ell_{M_{0}}/(16(1+|M_{0}|^{2})^{\frac{3}{2}})$  consequently yields~\eqref{eq:MazurMainAuxIneq}. 

\emph{Step 2. Construction of a suitable generating sequence.} Let $0<\varepsilon<1$ be given. For each $x\in\mathcal{L}$, we set $M_{0,x}:=|\nabla u(x)|$. This fixes the constant $c=c(M_{0,x},\ell_{M_{0,x}},L,N,n,q)>0$ from step 1 underlying inequality~\eqref{eq:MazurMainAuxIneq}; this constant explicitely depends on $x$. Properties \ref{item:PA1}--\ref{item:PA3} let us conclude the existence of some $0<r_{x}<\tfrac{1}{2}\dista(x,\partial\Omega)$ such that we have for all $0<r<r_{x}$
\begin{align}\label{eq:betterallthetime}
\begin{split}
&(1+|\nabla u(x)|^{q-1})\Big(\dashint_{\ball_{2r}(x)}|\nabla u-\nabla u(x)|\dif y + \frac{|D^{s}u|(\overline{\ball}_{2r}(x))}{\mathscr{L}^{n}(\ball_{2r}(x))} \Big)<\frac{2^{n}\varepsilon}{3^{n+2}c},\\
& (1+|\nabla u(x)|^{q-1})\Big(\dashint_{\ball_{2r}(x)}\frac{|u(y)-\nabla u(x)(y-x)-u(x)|}{r}\dif y\Big) < \frac{2^{n}\varepsilon}{3^{n+2}c}.
\end{split}
\end{align}
Let $\Omega'$ be an open and bounded set with Lipschitz boundary such that $\Omega\Subset\Omega'$. As usual, let $u_{0}\in\sobo_{0}^{1,q}(\Omega';\R^{N})$ be such that $u_{0}$ attains the same traces along $\partial\Omega$ as $v$, and let $(v_{j})\subset\mathscr{A}_{u_{0}}^{q}(\Omega,\Omega')$ be a generating sequence for $\overline{\mathscr{F}}_{u_{0}}^{*}[\mathbf{u};\Omega,\Omega']$. Here, as in~\eqref{eq:bfUdef}, $\mathbf{u}$ denotes the extension of $u$ to $\Omega'$ by $u_{0}$. Next let $\lambda'\in\mathrm{RM}_{\mathrm{fin}}(\Omega')$ be a weak*-limit of a suitable subsequence of $(|Dv_{j}|)$. 

For each $x\in\mathcal{L}$ and $0<r<r_{x}$, we apply step 1 to the particular choice $M_{0}=M_{0,x}$, $E_{x}:=\{t\in(\tfrac{4}{3}r,\tfrac{5}{3}r)\colon\;\mathcal{M}Du(x,t)=\infty\;\text{or}\;\mathcal{M}\lambda'(x,t)=\infty\}$ and $a(y)=\nabla u(x)\cdot(y-x) + u(x)$. We thus obtain the existence of some\footnote{In the terminology of step 1, we have $\widetilde{r}=\widetilde{r}(u,r,x,\nabla u(x)(\cdot-x)+u(x))$, but since $\nabla u(x)(\cdot-x)+u(x)$ is termined by $u$ and $x$ and required to belong to $E_{x}^{\complement}$, the latter being defined in terms of $|Du|$, hence $u$, and $\lambda'$, we write $\widetilde{r}=\widetilde{r}(u,r,x,\lambda')$.} $\widetilde{r}=\widetilde{r}(u,r,x,\lambda')\in(\tfrac{4}{3}r,\tfrac{5}{3}r)\cap E_{x}^{\complement}$ such that~\eqref{eq:MazurMainAuxIneq} with $x_{0}$ replaced by $x$ holds for any generating sequence $(u_{j})$ for $\overline{\mathscr{F}}_{v}^{*}[u;\Omega]$ that eventually attains the same traces along $\partial\!\ball_{\widetilde{r}}(x)$ as $u$. Now consider the family $\mathscr{V}$ of closed balls given by
\begin{align}
\mathscr{V}:=\Big\{\overline{\ball}_{\widetilde{r}(u,r,x,\lambda')}(x)\colon\;x\in\mathcal{L},\;0<r<r_{x} \Big\}.
\end{align}
Then $\mathscr{V}$ is a Vitali covering for $\mathcal{L}$ and so, by the Vitali covering theorem~\cite[Thm.~2.19]{AFP}, there exists $\mathbb{K}\subseteq\mathbb{N}$ and a sequence $(\overline{\ball}_{\widetilde{r}(u,r_{k},x_{k},\lambda')}(x_{k}))_{k\in\mathbb{K}}\subset\mathscr{V}$ of pairwise disjoint closed balls such that 
\begin{align}\label{eq:exhaustion}
\mathscr{L}^{n}\Big(\mathcal{L}\setminus\bigcup_{k\in\mathbb{K}}\overline{\ball}_{\widetilde{r}(u,r_{k},x_{k},\lambda')}(x_{k}) \Big)=0\;\;\text{and so}\;\;\mathscr{L}^{n}\Big(\Omega\setminus\bigcup_{k\in\mathbb{K}}\overline{\ball}_{\widetilde{r}(u,r_{k},x_{k},\lambda')}(x_{k}) \Big)=0
\end{align}
because of $\mathscr{L}^{n}(\Omega\setminus\mathcal{L})=0$. Corollary~\ref{cor:allalong} then allows us to modify $(v_{j})$ to a sequence $(u_{j})$ which is still generating for $\overline{\mathscr{F}}_{v}^{*}[u;\Omega]$ and, for each $k\in\mathbb{K}$, there exists $j_{k}\in\mathbb{N}$ such that $\trace_{\partial\!\ball_{\widetilde{r}(u,r_{k},x_{k},\lambda')}(x_{k})}(u_{j})=\trace_{\partial\!\ball_{\widetilde{r}(u,r_{k},x_{k},\lambda')}(x_{k})}(u)$ $\mathscr{H}^{n-1}$-a.e. on $\partial\!\ball_{\widetilde{r}(u,r_{k},x_{k},\lambda')}(x_{k})$ for all $j\geq j_{k}$. For brevity, we now write $\ball^{(k)}:=\ball_{\widetilde{r}(u,r_{k},x_{k},\lambda')}(x_{k})$. 

\emph{Step 3. Bounds for the sequence from step 2.} By construction, the sequence $(u_{j})$ satisfies the requirements from step 1 for any ball $\ball^{(k)}$, $k\in\mathbb{K}$. Rewriting~\eqref{eq:betterallthetime} for $x=x_{k}$ and $r=r_{k}$, we have for all $k\in\mathbb{K}$
\begin{align}\label{eq:betterallthetime1}
\begin{split}
&(1+|\nabla u(x_{k})|^{q-1})\Big(\dashint_{\ball_{2r_{k}}(x_{k})}|\nabla u-\nabla u(x_{k})|\dif y + \frac{|D^{s}u|(\overline{\ball}_{2r_{k}}(x_{k}))}{\mathscr{L}^{n}(\ball_{2r_{k}}(x_{k}))} \Big)<\frac{2^{n}\varepsilon}{3^{n+2}c_{k}},\\
&(1+|\nabla u(x_{k})|^{q-1})\Big(\dashint_{\ball_{2r_{k}}(x_{k})}\frac{|u(y)-\nabla u(x_{k})(y-x_{k})-u(x_{k})|}{r_{k}}\dif y\Big) < \frac{2^{n}\varepsilon}{3^{n+2}c_{k}}
\end{split}
\end{align}
where we denote the corresponding constants from~\eqref{eq:betterallthetime} by $c_{k}$ to indicate the additional dependence on $x_{k}$ and $r_{k}$. Thus, estimate~\eqref{eq:MazurMainAuxIneq} in conjunction with $q\geq 1$ and $0<\varepsilon<1$ implies  
\begin{align}\label{eq:MazurMainAuxIneq1}
\limsup_{j\to\infty}\int_{\ball^{(k)}}V(\nabla u_{j}-\nabla u(x_{k}))\dif y \leq \frac{\varepsilon}{2}\mathscr{L}^{n}(\ball^{(k)})
\end{align}
for all $k\in\mathbb{K}$. Since $V$ is $1$-Lipschitz by Lemma~\ref{lem:Efunction}~\ref{item:EfctE}, we moreover have for all $j\in\mathbb{N}$
\begin{align}\label{eq:MazurLipschitz}
\begin{split}
\int_{\ball^{(k)}}V(\nabla u_{j}-\nabla u)\dif x & \leq \int_{\ball^{(k)}}V(\nabla u_{j}-\nabla u(x_{k}))\dif x \\ & + \int_{\ball^{(k)}}|\nabla u-\nabla u(x_{k})|\dif x 
\end{split}
\end{align}
and from $\frac{4}{3}r_{k}<\widetilde{r}(u,r_{k},x_{k},\lambda')$ we deduce by $c_{k}>1$ that 
\begin{align}\label{eq:sgtpepper}
\frac{\mathscr{L}^{n}(\ball_{2r_{k}}(x_{k}))}{\mathscr{L}^{n}(\ball^{(k)})}\leq\Big(\frac{3}{2}\Big)^{n},\;\;\text{so}\;\;\; \int_{\ball^{(k)}}|\nabla u-\nabla u(x_{k})|\dif x \stackrel{\eqref{eq:betterallthetime1}_{1}}{\leq}\frac{\varepsilon}{2}\mathscr{L}^{n}(\ball^{(k)}).
\end{align}
Now define, for $t>0$, 
\begin{align}
E_{j}^{t}:=\{x\in\Omega\colon\;|\nabla u_{j}(x)-\nabla u(x)|>t\}.
\end{align}
Then, by Ceby\v{s}ev's inequality, 
\begin{align}\label{eq:MazurCebysev}
\begin{split}
\limsup_{j\to\infty}\mathscr{L}^{n}(\ball^{(k)}&\cap E_{j}^{t})\leq \limsup_{j\to\infty}\frac{1}{V(t)}\int_{\ball^{(k)}}V(\nabla u_{j}-\nabla u)\dif x\\ 
& \!\!\!\!\!\!\stackrel{\eqref{eq:MazurLipschitz}}{\leq} \limsup_{j\to\infty}\frac{1}{V(t)}\int_{\ball^{(k)}}V(\nabla u_{j}-\nabla u(x_{k}))\dif x \\ & \!\!\!\!\,+ \frac{1}{V(t)}\int_{\ball^{(k)}}|\nabla u-\nabla u(x_{k})|\dif x \\ 
& \!\!\!\!\!\!\!\!\!\!\!\!\!\stackrel{\eqref{eq:MazurMainAuxIneq1},~\eqref{eq:sgtpepper}}{\leq} \frac{\varepsilon}{V(t)}\mathscr{L}^{n}(\ball^{(k)}). 
\end{split}
\end{align}
By \eqref{eq:exhaustion}, we may pick a finite subset $\mathbb{K}'\subset\mathbb{K}$ such that $\mathscr{L}^{n}(\Omega\setminus\bigcup_{k\in\mathbb{K}'}\ball^{(k)})<\varepsilon$. We then conclude 
\begin{align*}
\limsup_{j\to\infty}\mathscr{L}^{n}(E_{j}^{t}) & = \limsup_{j\to\infty}\Big(\mathscr{L}^{n}\Big(E_{j}^{t}\setminus\bigcup_{k\in\mathbb{K}'}\ball^{(k)}\Big)+ \mathscr{L}^{n}\Big(E_{j}^{t}\cap\bigcup_{k\in\mathbb{K}'}\ball^{(k)} \Big)\Big)\\ 
& \leq \varepsilon + \limsup_{j\to\infty}\sum_{k\in\mathbb{K}'}\mathscr{L}^{n}(E_{j}^{t}\cap\ball^{(k)}) \\ 
& \!\!\!\stackrel{\eqref{eq:MazurCebysev}}{\leq} \varepsilon\Big(1+\frac{1}{V(t)}\sum_{k\in\mathbb{K}}\mathscr{L}^{n}(\ball^{(k)}) \Big)\\
& \leq \varepsilon\Big(1+\frac{\mathscr{L}^{n}(\Omega)}{V(t)}\Big)
\end{align*}
since the $\ball^{(k)}$'s are pairwise disjoint and contained in $\Omega$.

\emph{Step 4. Diagonal sequence and conclusion.} By step 3, for every $\ell\in\mathbb{N}_{\geq 2}$  there exists a generating sequence $(u_{j}^{(\ell)})$ such that 
\begin{align*}
\limsup_{j\to\infty}\mathscr{L}^{n}(\{x\in\Omega\colon\;|\nabla u_{j}^{(\ell)}(x)-\nabla u(x)|>\tfrac{1}{k}\})\leq \frac{1}{\ell}\Big(1+\frac{\mathscr{L}^{n}(\Omega)}{V(\tfrac{1}{k})}\Big)
\end{align*}
holds for all $k\in\mathbb{N}_{\geq 2}$. Thus, for any $\ell\in\mathbb{N}_{\geq 2}$ and $k\in\mathbb{N}_{\geq 2}$ there exists a subsequence $(u_{j_{k,\ell}(i)}^{(\ell)})\subset (u_{j}^{(\ell)})$ such that 
\begin{align}\label{eq:AgainAlmostThere}
\begin{split}
&\mathscr{L}^{n}(\{x\in\Omega\colon\;|\nabla u_{j_{k,\ell}(i)}^{(\ell)}(x)-\nabla u(x)|>\tfrac{1}{k}\})\leq \frac{1}{\ell}\Big(1+\frac{\mathscr{L}^{n}(\Omega)}{V(\tfrac{1}{k})}\Big)+2^{-i},\\
&\;\;\;\;\;\;\;\;\;\;\;\;\;\;\;\;\;\left\vert\overline{\mathscr{F}}_{v}^{*}[u;\Omega]-\int_{\Omega}F(\nabla u_{j_{k,\ell}(i)}^{(\ell)})\dif x\right\vert \leq 2^{-i}
\end{split}
\end{align}
hold for all $i\in\mathbb{N}$. Given $k\in\mathbb{N}_{\geq 2}$, we set $\ell=k^{3}$ and $i=k$, giving us the sequence $(u_{k})$ defined by $u_{k}:=u_{j_{k,k^{3}}(k)}^{(k^{3})}$. Now, for an arbitrary $t>0$, pick $k_{0}\in\mathbb{N}_{\geq 2}$ such that $\frac{1}{k}\leq t$ holds for all $k\geq k_{0}$. Recalling that $(\sqrt{2}-1)|z|^{2}\leq V(z)$ for all $|z|\leq 1$ (cf.~Lemma~\ref{lem:Efunction}~\ref{item:EfctA}), we have for $k\geq k_{0}$
\begin{align*}
\mathscr{L}^{n}(\{x\in\Omega\colon\;|\nabla u_{k}(x)-\nabla u(x)|>t\}) & \leq \mathscr{L}^{n}(\{x\in\Omega\colon\;|\nabla u_{k}(x)-\nabla u(x)|>\tfrac{1}{k}\})\\
& \!\!\!\!\stackrel{\eqref{eq:AgainAlmostThere}_{1}}{\leq} \frac{1}{k^{3}}\Big(1+k^{2}\frac{\mathscr{L}^{n}(\Omega)}{\sqrt{2}-1}\Big)+2^{-k}\to 0
\end{align*}
as $k\to\infty$. Lastly, $\eqref{eq:AgainAlmostThere}_{2}$ implies that $(u_{k})$ is generating for $\overline{\mathscr{F}}_{v}^{*}[u;\Omega]$, and the proof is complete. 
\end{proof}
While the proof is essentially the same, we explicitely state the analogous result for $p>1$: 
\begin{corollary}[of Mazur-type]\label{cor:mazur}
Let $F\in\hold(\R^{N\times n})$ satisfy \emph{\ref{item:H1}} and \emph{\ref{item:H2p}} with $1<p\leq q<\frac{np}{n-1}$. Given an open and bounded set $\Omega\subset\R^{n}$ with Lipschitz boundary $\partial\Omega$, let $v\in\sobo^{1,p}(\Omega;\R^{N})$ be such that $\trace_{\partial\Omega}(v)\in\sobo^{1-1/q,q}(\partial\Omega;\R^{N})$ and let $u$ be a minimizer of $\overline{\mathscr{F}}_{v}[-;\Omega]$ for compactly supported variations on $\Omega$. Then \emph{there exists a generating sequence $(u_{j})$ for $\overline{\mathscr{F}}_{v}[u;\Omega]$} such that
\begin{align}\label{eq:mazurMainClaim1}
\nabla u_{j} \to \nabla u\qquad\text{in $\mathscr{L}^{n}$-measure on $\Omega$ as $j\to\infty$}.
\end{align}
\end{corollary}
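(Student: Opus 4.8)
The plan is to mirror the four-step argument in the proof of Proposition~\ref{prop:mazur}, replacing the reference integrand $V$ by $V_{p}$ throughout and systematically substituting $\bv$-quantities by their Sobolev counterparts. Since $u\in\sobo_{\locc}^{1,p}(\Omega;\R^{N})$, we now have $D^{s}u=0$, so the singular parts disappear and the relevant set $\mathcal{L}$ of good points consists of the Lebesgue points of $\nabla u$; this already has full measure. First I would record the preliminary estimate of Step~1 in the form
\begin{align*}
\limsup_{j\to\infty}\dashint_{\ball_{\widetilde{r}}(x_{0})}V_{p}(\nabla u_{j}-\nabla a)\dif x & \leq c(1+|\nabla a|^{q-1})\,\dashint_{\ball_{2r}(x_{0})}|\nabla u-\nabla a|\dif y \\ & \quad + c\Big(\dashint_{\ball_{2r}(x_{0})}|\nabla u-\nabla a|\dif y\Big)^{q} \\ & \quad + c(1+|\nabla a|^{q-1})\dashint_{\ball_{2r}(x_{0})}\frac{|u-a|}{r}\dif y \\ & \quad + c\Big(\dashint_{\ball_{2r}(x_{0})}\frac{|u-a|}{r}\dif y\Big)^{q},
\end{align*}
for a suitable good radius $\widetilde{r}\in(\tfrac{4}{3}r,\tfrac{5}{3}r)$, where now $\mathcal{M}(|\nabla u|^{p}\mathscr{L}^{n})(x_{0},\widetilde{r})<\infty$ replaces $\mathcal{M}Du(x_{0},\widetilde{r})<\infty$ (cf.~Corollary~\ref{cor:Fubini}~\ref{item:Fubini2} and the Remark following Corollary~\ref{cor:allalong}). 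The derivation is identical: one applies Lemma~\ref{lem:HLW} to $\Theta(s):=s^{-p}\|u-a\|_{\lebe^{p}(\ball_{s}(x_{0}))}^{p}+\|\nabla(u-a)\|_{\lebe^{p}(\ball_{s}(x_{0}))}^{p}$, builds the competitor $\varphi_{j}$ from $u_{j}$ inside $\ball_{\widetilde{r}}(x_{0})$ and the trace-preserving extension $\widetilde{\mathbb{E}}$ on the annulus, invokes the $\sobo^{1,p}$-version of the minimality inequality~\eqref{eq:MazurBVMinimality}, and closes using the lower bound~\eqref{eq:Vboundbelow} with $V_{p}$ from Lemma~\ref{lem:shiftconnect} together with the Lipschitz-type bound~\eqref{eq:lipschitz} and the $\lebe^{q}$-stability estimates of Lemma~\ref{lem:extensionoperator}.

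Next I would run Step~2 verbatim: for $x\in\mathcal{L}$ set $M_{0,x}:=|\nabla u(x)|$, pick $r_{x}$ so small that the two inequalities in~\eqref{eq:betterallthetime} (now with no singular term) hold for the affine map $a(y)=\nabla u(x)(y-x)+u(x)$, extract a Vitali cover of $\mathcal{L}$ by balls $\overline{\ball}_{\widetilde{r}(u,r_{k},x_{k},\lambda')}(x_{k})$ with $\lambda'$ a weak*-limit of a subsequence of $(|\nabla v_{j}|^{p}\mathscr{L}^{n})$, and apply the $\sobo^{1,p}$-analogue of Corollary~\ref{cor:allalong} to obtain a generating sequence $(u_{j})$ for $\overline{\mathscr{F}}_{v}[u;\Omega]$ whose members eventually attain the trace of $u$ along each $\partial\!\ball^{(k)}$. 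Step~3 then gives $\limsup_{j\to\infty}\int_{\ball^{(k)}}V_{p}(\nabla u_{j}-\nabla u)\dif x\leq\varepsilon\,\mathscr{L}^{n}(\ball^{(k)})$ via the $1$-Lipschitz bound $|V_{p}(z)-V_{p}(w)|\leq c|z-w|$ on bounded sets (Lemma~\ref{lem:Efunction}~\ref{item:EfctA},~\ref{item:VpCompa2}) applied with the uniform bound $|\nabla u(x_{k})|\leq$ the relevant level, and a Chebyshev argument using $|z|^{2}\lesssim V_{p}(z)$ for $|z|\leq1$; summing over a finite subcover controls $\limsup_{j}\mathscr{L}^{n}(E_{j}^{t})$. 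Step~4 is the same diagonalisation.

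The one genuinely new point to verify — and what I expect to be the only real obstacle — is the $\sobo^{1,p}$-version of the good generation theorem and of Corollary~\ref{cor:allalong}; but this is exactly the content of the Remark immediately following Corollary~\ref{cor:allalong}, which asserts that Proposition~\ref{prop:goodminseqs} extends mutatis mutandis to $\overline{\mathscr{F}}_{u_{0}}[-;\omega,\omega']$ with $\lambda$ taken as a weak*-limit of $(|\nabla v_{j}|^{p}\mathscr{L}^{n}\mres\omega')$ and with Proposition~\ref{prop:goodminseqs}~\ref{item:FixRadius3} upgraded to $\nabla u_{j_{k}}-\nabla v_{j_{k}}\to 0$ in $\lebe^{p}(\omega';\R^{N\times n})$. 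With that in hand every step goes through as above, the only extra care being that when converting the annular layer estimates in Step~1 and Step~3 one uses the $\lebe^{p}$-gradient stability of $\mathbb{E}$ and $\widetilde{\mathbb{E}}$ from Lemma~\ref{lem:extensionoperator}~\ref{item:extension2} rather than the $\lebe^{1}$-version, which is admissible precisely because of the $\lebe^{p}$-closeness of $(u_{j})$ to the given generating sequence. Thus $(u_{j})$ is generating for $\overline{\mathscr{F}}_{v}[u;\Omega]$ and satisfies $\nabla u_{j}\to\nabla u$ in $\mathscr{L}^{n}$-measure, which is~\eqref{eq:mazurMainClaim1}.
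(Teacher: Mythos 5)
The paper offers no written proof of Corollary~\ref{cor:mazur} beyond the remark that ``the proof is essentially the same'' as that of Proposition~\ref{prop:mazur}, and your proposal is a faithful expansion of exactly that intention: replace $V$ by $V_p$, replace the maximal condition on $Du$ by the one on $|\nabla u|^p\mathscr{L}^{n}$, invoke the $p>1$ variants of Corollary~\ref{cor:Fubini}, the good generation theorem, and Lemma~\ref{lem:shiftconnect}, and run the same Vitali and diagonalisation scheme. Steps~1,~2 and~4 go through as you describe.

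There is, however, one spot in Step~3 where the justification you give does not hold as stated. In the $p=1$ proof the passage from $\limsup_j\int_{\ball^{(k)}}V(\nabla u_j-\nabla u(x_k))\dif x$ to $\limsup_j\int_{\ball^{(k)}}V(\nabla u_j-\nabla u)\dif x$ uses the \emph{global} $1$-Lipschitz bound $|V(z)-V(w)|\leq|z-w|$ of Lemma~\ref{lem:Efunction}~\ref{item:EfctE}; for $p>1$ the function $V_p$ is not globally Lipschitz, and your ``$1$-Lipschitz on bounded sets'' workaround (citing~\ref{item:EfctA},~\ref{item:VpCompa2}, neither of which is a Lipschitz estimate) cannot be applied because the arguments $\nabla u_j-\nabla u(x_k)$ and $\nabla u_j-\nabla u$ are \emph{not} uniformly bounded — it is only $|\nabla u(x_k)|$ you control, not $\nabla u_j$. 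To close Step~3 in the $p>1$ regime you should instead use the quasi-subadditivity $V_p(z+w)\leq c_p\bigl(V_p(z)+V_p(w)\bigr)$ (the $V_p$-analogue of Lemma~\ref{lem:Efunction}~\ref{item:EfctC}) together with the $\lebe^p$-Lebesgue-point property of $\nabla u$, shrinking $r_x$ so that $\dashint_{\ball_{2r}(x)}V_p(\nabla u-\nabla u(x))\dif y$ is small as well; equivalently, split the Chebyshev bad set $E_j^t\cap\ball^{(k)}$ via the triangle inequality into $\{|\nabla u_j-\nabla u(x_k)|>t/2\}$ and $\{|\nabla u-\nabla u(x_k)|>t/2\}$ and apply Chebyshev to each piece separately, so that no Lipschitz control of $V_p$ is needed at all. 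With either fix the argument is complete and matches the paper's intention.
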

\begin{remark}[Trivialisation for $1<p=q$]
Proposition~\ref{prop:mazur} and Corollary~\ref{cor:mazur} can be interpreted in the way that the \textsc{Lebesgue-Serrin-Marcellini} extension always selects \emph{good} generating sequences, where \emph{good} refers to a less oscillatory behaviour. Note that, when $p=q\in (1,\infty)$ and so no proper relaxation is required, Proposition~\ref{prop:mazur} is in line with the constant sequence $(u)$ already being generating for $\int_{\Omega}F(\nabla u)\dif x$, $u\in\sobo^{1,p}(\Omega;\R^{N})$. 
\end{remark}

\begin{remark}\label{rem:MazurDiscuss}
Establishing the convergence $\nabla u_{j}\to\nabla u$ in $\mathscr{L}^{n}$-measure for local $\bv$-minimi- zers $u\in\bv_{\locc}(\Omega;\R^{N})$, Proposition~\ref{prop:mazur} limits the oscillatory behaviour of suitable generating sequences by use of minimality. Yet, it does not rule out concentration effects, which can be seen via the classical one-dimensional example $F=|\cdot|$. This choice of $F\in\hold(\R)$ satisfies \ref{item:H1} and \ref{item:H2}, and $u=\mathrm{sgn}$ is a local minimizer. 

In the above proof, minimality for compactly supported variations enters in step 1. We wish to point out that, in general, even if $q=1$ and $u,u_{j}\in\bv(\Omega;\R^{N})$, the slightly weaker estimate 
\begin{align*}
\limsup_{j\to\infty}\dashint_{\ball_{r}(x_{0})}V(\nabla u_{j}-\nabla a)\dif x \leq (\text{right-hand side of~\eqref{eq:MazurMainAuxIneq}})
\end{align*}
is not a plain consequence of the convergence $u_{j}\stackrel{*}{\rightharpoonup}u$ in $\bv(\Omega;\R^{N})$. To see this, consider the following scenario with $n=N=1$: Set $\Omega=(-1,1)$, $v\equiv 0$ and let $h$ be the $1$-periodic extension of $[0,1]\ni x\mapsto (\frac{1}{2}-|x-\frac{1}{2}|)$ to $\R$. Letting $u_{j}$ be the restriction of $h_{j}(x):=\frac{1}{j}h(jx)$ to $(-1,1)$, we have $(u_{j})\subset\sobo_{0}^{1,1}(\Omega)$ and $u_{j}\stackrel{*}{\rightharpoonup}u\equiv 0$ in $\bv(\Omega)$, but with $a\equiv 0$ we have for all $0<r<1$
\begin{align*}
(\sqrt{2}-1) = \limsup_{j\to\infty}\dashint_{(-r,r)}V(u'_{j})\dif x,\;\;\;\text{whereas}\;\|u\|_{\bv((-1,1))}=0. 
\end{align*}
\end{remark}
As a consequence of the previous proposition, we obtain the validity of the corresponding Euler-Lagrange system: 
\begin{corollary}[Euler-Lagrange system I]\label{cor:EulerLagrange}
Let $1\leq p \leq q <\min\{\frac{np}{n-1},p+1\}$, $\Omega\subset\R^{n}$ be open and bounded with Lipschitz boundary $\partial\Omega$. Moreover, 
\begin{enumerate}
\item\label{item:EL1} if $p=1$, let $F\in\hold(\R^{N\times n})$ be an integrand that satisfies \emph{\ref{item:H1} and~\ref{item:H2}}, let $v\in\bv(\Omega;\R^{N})$ satisfy $\trace_{\partial\Omega}(v)\in\sobo^{1-1/q,q}(\partial\Omega;\R^{N})$, and suppose that $u$ is a $\bv$-minimi- zer of $\overline{\mathscr{F}}_{v}^{*}[-;\Omega]$ for compactly supported variations. 
\item\label{item:EL2} if $p>1$, let $F\in\hold(\R^{N\times n})$ be an integrand that satisfies \emph{\ref{item:H1} and~\ref{item:H2p}}, let $v\in\sobo^{1,p}(\Omega;\R^{N})$ satisfy $\trace_{\partial\Omega}(v)\in\sobo^{1-1/q,q}(\partial\Omega;\R^{N})$ and suppose that $u$ is a minimizer of $\overline{\mathscr{F}}_{v}[-;\Omega]$ for compactly supported variations. 
\end{enumerate}
Denoting $Du=\nabla u\mathscr{L}^{n}\mres\Omega + D^{s}u$ the Lebesgue-Radon-Nikod\'{y}m decomposition of $Du$, in each of the cases~\ref{item:EL1} and~\ref{item:EL2} we have 
\begin{align}\label{eq:absmin}
\int_{\Omega} \! F(\nabla u) \, \dif x \leq \int_{\Omega} \! F(\nabla u +\nabla \varphi ) \, \dif x \qquad\text{for all}\;\varphi\in\sobo_{0}^{1,s}(\Omega;\R^{N})
\end{align}
whenever $\frac{p}{p-q+1}\leq s \leq\infty$. In each of the cases~\ref{item:EL1} and~\ref{item:EL2} it follows that if the integrand $F \colon \R^{N\times n} \to \R$ moreover is of class $\hold^{1}(\R^{N\times n})$, then we have validity of the  \emph{Euler-Lagrange system}
\begin{align}\label{eq:ELLip}
\int_{\Omega}\langle F'(\nabla u),\nabla\varphi\rangle\dif x = 0\qquad\text{for all}\;\varphi\in\sobo_{0}^{1,s}(\Omega;\R^{N})
\end{align}
provided $\frac{p}{p-q+1}\leq s \leq\infty$. In particular, $F^{\prime}( \nabla u)$ is \emph{row-wise divergence free in} $\Omega$, so satisfies 
\begin{equation}\label{EL}
\mathrm{div}(F^{\prime}(\nabla u))= 0 
\end{equation}
in the sense of distributions on $\Omega$.
\end{corollary}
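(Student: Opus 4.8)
\textbf{Proof plan for Corollary~\ref{cor:EulerLagrange}.} The argument rests entirely on the Mazur-type Proposition~\ref{prop:mazur} (if $p=1$) or Corollary~\ref{cor:mazur} (if $p>1$), together with the lower semicontinuity Lemma~\ref{lem:LSC}. First I would prove the \emph{absolute minimality} estimate~\eqref{eq:absmin}. Fix $\varphi\in\sobo_{0}^{1,s}(\Omega;\R^{N})$ with $\frac{p}{p-q+1}\leq s\leq\infty$ (note $q\leq p+1$ guarantees $\frac{p}{p-q+1}>0$, and $q<\frac{np}{n-1}$ keeps this exponent below what is needed; I should check that $s\geq p$ so that $u+\varphi\in\sobo^{1,p}$ in the case $p>1$, which holds since $\frac{p}{p-q+1}\geq p\iff q\geq 1$). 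By Proposition~\ref{prop:mazur}/Corollary~\ref{cor:mazur} choose a generating sequence $(u_{j})$ for $\overline{\mathscr{F}}_{v}^{*}[u;\Omega]$ (resp.\ $\overline{\mathscr{F}}_{v}[u;\Omega]$) with $\nabla u_{j}\to\nabla u$ in $\mathscr{L}^{n}$-measure. Then $u_{j}+\varphi\in\mathscr{A}_{u_{0}}^{q}(\Omega,\Omega')$ after the usual gluing (extending $\varphi$ by $0$), and $u_{j}+\varphi\wstar u+\varphi$ in $\bv$ (resp.\ $\sobo^{1,p}$). By the definition of the relaxed functional and the minimality of $u$ for compactly supported variations,
\begin{align*}
\overline{\mathscr{F}}_{v}^{*}[u;\Omega]\leq\overline{\mathscr{F}}_{v}^{*}[u+\varphi;\Omega]\leq\liminf_{j\to\infty}\int_{\Omega}F(\nabla u_{j}+\nabla\varphi)\dif x.
\end{align*}
The key point is to identify both sides with honest integrals of $F(\nabla u)$ and $F(\nabla u+\nabla\varphi)$. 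For the left side, $\overline{\mathscr{F}}_{v}^{*}[u;\Omega]=\lim_{j}\int_{\Omega}F(\nabla u_{j})\dif x$ since $(u_{j})$ is generating; the convergence $\nabla u_{j}\to\nabla u$ in measure together with the $q$-growth bound~\ref{item:H1} and the coercivity bound~\eqref{eq:Vboundbelow1} (applied on $\Omega'$, giving a uniform $\lebe^{q}$-type bound on $\nabla u_{j}$ through $V$, since $q<\frac{np}{n-1}$ and the $V$-integrals are uniformly bounded) yields equi-integrability of $F(\nabla u_{j})$, so by Vitali's convergence theorem $\lim_{j}\int_{\Omega}F(\nabla u_{j})\dif x=\int_{\Omega}F(\nabla u)\dif x$. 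The same reasoning applied to $\nabla u_{j}+\nabla\varphi\to\nabla u+\nabla\varphi$ in measure (here I use $\varphi\in\sobo^{1,s}$ with $s$ large enough that $|\nabla\varphi|^{q}\in\lebe^{1}$, i.e.\ $s\geq q$, which follows from $\frac{p}{p-q+1}\geq q\iff (q-1)(p-q)\geq 0$ — wait, this needs care; more robustly, combine the uniform bound on $\nabla u_{j}$ with $\nabla\varphi\in\lebe^{s}$ and convexity-free Young's inequality to get equi-integrability of $F(\nabla u_{j}+\nabla\varphi)$ when $s\geq\frac{p}{p-q+1}\cdot$something) gives $\liminf_{j}\int_{\Omega}F(\nabla u_{j}+\nabla\varphi)\dif x=\int_{\Omega}F(\nabla u+\nabla\varphi)\dif x$. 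Combining the displays yields~\eqref{eq:absmin}.

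Next, the passage from~\eqref{eq:absmin} to the Euler-Lagrange system~\eqref{eq:ELLip} is the standard inner-variation argument: for $F\in\hold^{1}$, fixed $\varphi\in\sobo_{0}^{1,s}(\Omega;\R^{N})$ and $t\in\R$, the function $t\mapsto\int_{\Omega}F(\nabla u+t\nabla\varphi)\dif x$ has a minimum at $t=0$ by~\eqref{eq:absmin}. Differentiating under the integral sign at $t=0$ gives $\int_{\Omega}\langle F'(\nabla u),\nabla\varphi\rangle\dif x=0$. To justify the differentiation I use the bound~\eqref{eq:DERIVBOUND}, $|F'(z)|\leq c(1+|z|^{q-1})$, so that the difference quotients $\frac{1}{t}(F(\nabla u+t\nabla\varphi)-F(\nabla u))$ are dominated by $c(1+|\nabla u|^{q-1}+|\nabla\varphi|^{q-1})|\nabla\varphi|$; this is in $\lebe^{1}(\Omega)$ precisely when $q-1\leq p$ (so $|\nabla u|^{q-1}\in\lebe^{p/(q-1)}\subset\lebe^{1}_{\locc}$ paired against $|\nabla\varphi|$ via Hölder with the exponent $s\geq\frac{p}{p-q+1}$, which is exactly the conjugate exponent of $\frac{p}{q-1}$) — this is where the hypothesis $q<p+1$ and the precise threshold $s\geq\frac{p}{p-q+1}$ enter. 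Dominated convergence then legitimises passing the derivative inside, and~\eqref{eq:ELLip} follows. Taking $\varphi\in\hold_{c}^{\infty}(\Omega;\R^{N})$ as a special case immediately gives~\eqref{EL}: $F'(\nabla u)\in\lebe_{\locc}^{1}(\Omega;\R^{N\times n})$ by~\eqref{eq:DERIVBOUND} and $\nabla u\in\lebe^{p}$, and~\eqref{eq:ELLip} says exactly that its row-wise distributional divergence vanishes.

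\textbf{Main obstacle.} The delicate point is the equi-integrability needed to upgrade the weak lower-semicontinuity inequality into the \emph{equality} $\lim_{j}\int_{\Omega}F(\nabla u_{j})\dif x=\int_{\Omega}F(\nabla u)\dif x$ and its shifted analogue — convergence in measure alone is not enough, and $F$ is only $q$-growth above while the $u_{j}$ are only bounded in $\lebe^{p}$ of the gradient via the $V$-function coercivity. What saves the day is that generating sequences for the relaxed functional with solid boundary values come with the uniform bound $\int_{\Omega'}V_{p}(\nabla u_{j})\dif x\lesssim\overline{\mathscr{F}}_{v}^{*}[u;\Omega]+C<\infty$ coming from~\eqref{eq:Vboundbelow1} / the strong quasiconvexity~\ref{item:H2}, \ref{item:H2p} — but this only controls the $\lebe^{p}$-norm of $\nabla u_{j}$, not the $\lebe^{q}$-norm, so $\{|\nabla u_{j}|^{q}\}$ need \emph{not} be equi-integrable in general. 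The resolution is that one does not need equi-integrability of $|\nabla u_{j}|^{q}$ but of $F(\nabla u_{j})$ \emph{as modified by the good generating sequence}: the point of Proposition~\ref{prop:mazur} is precisely that the selected generating sequence has no oscillation, and one should split $\int_{\Omega}F(\nabla u_{j})\dif x$ into the region $\{|\nabla u_{j}|\leq K\}$, where convergence in measure plus boundedness gives convergence, and $\{|\nabla u_{j}|>K\}$, where one invokes that $\overline{\mathscr{F}}_{v}^{*}[u;\Omega]=\lim_{j}\int_{\Omega}F(\nabla u_{j})\dif x$ exactly, combined with Fatou applied to $F-cV_{p}\geq -c'$ (a consequence of \ref{item:H2}/\ref{item:H2p}) to force $\limsup_{j}\int_{\{|\nabla u_{j}|>K\}}F(\nabla u_{j})\dif x\to 0$ as $K\to\infty$ — a De la Vallée-Poussin / Chacon biting-type argument. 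This step, rather than the inner variation, is where the real work lies; everything after it is routine.
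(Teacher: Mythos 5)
There is a genuine gap in your proposal, and it is exactly at the step you flag yourself. Your plan requires proving
\begin{align*}
\lim_{j\to\infty}\int_{\Omega}F(\nabla u_{j})\,\dif x=\int_{\Omega}F(\nabla u)\,\dif x
\end{align*}
separately, so that $\overline{\mathscr{F}}_{v}^{*}[u;\Omega]=\int_{\Omega}F(\nabla u)\,\dif x$. This identity is \emph{false in general}: already for $p=q=1$ the integral representation~\eqref{eq:lingrowth} shows that the relaxed functional carries singular and boundary terms beyond $\int_{\Omega}F(\nabla u)\,\dif x$, and in the $(p,q)$-regime the absence of a measure representation (Remark~\ref{rem:measuredensityEuler}) is precisely the issue the paper is working around. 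If equi-integrability of $(F(\nabla u_{j}))$ held, the singular part of the relaxation would vanish, which is not a consequence of the hypotheses. So the ``Chacon biting'' repair you propose cannot close the gap: you would be proving a false statement, not just an unavailable one. The point of Proposition~\ref{prop:mazur} is weaker than you are using it for — it controls oscillation (convergence in measure of $\nabla u_{j}$), not concentration of $F(\nabla u_{j})$.

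The correct route — which is also the paper's — is to never try to identify $\lim_{j}\int_{\Omega}F(\nabla u_{j})\,\dif x$ with $\int_{\Omega}F(\nabla u)\,\dif x$, and instead Vitali the \emph{difference} $F(\nabla u_{j}+\nabla\varphi)-F(\nabla u_{j})$ only. Since $(u_{j}+\varphi)$ is admissible for $u+\varphi$ and $(u_{j})$ is generating for $u$, one has
\begin{align*}
0\;\leq\;\overline{\mathscr{F}}_{v}^{*}[u+\varphi;\Omega]-\overline{\mathscr{F}}_{v}^{*}[u;\Omega]\;\leq\;\liminf_{j\to\infty}\int_{\Omega}\bigl(F(\nabla u_{j}+\nabla\varphi)-F(\nabla u_{j})\bigr)\dif x.
\end{align*}
The difference is pointwise dominated, via~\eqref{eq:lipschitz}, by $c(1+|\nabla u_{j}|^{q-1}+|\nabla\varphi|^{q-1})|\nabla\varphi|$, which is equi-integrable from the uniform $\sobo^{1,p}$-bound on $(u_{j})$ together with $q-1\leq p$ and $\varphi\in\sobo_{c}^{1,\infty}$ (and H\"{o}lder with exponent $p/(q-1)$ once you pass to $\varphi\in\sobo_{0}^{1,s}$). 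This is genuinely weaker than equi-integrability of $F(\nabla u_{j})$: the singular concentration lives symmetrically in both terms and cancels. Vitali applied to the difference then yields~\eqref{eq:absmin}, interpreted as nonnegativity of the integral of the difference, exactly as in~\eqref{eq:MazurEqui}\emph{ff.} Your inner-variation argument for~\eqref{eq:ELLip} and~\eqref{EL} is then correct, and the exponent arithmetic you sketch for it ($s\geq p/(p-q+1)$ being conjugate to $p/(q-1)$) is the right one.
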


\begin{proof}
Let $\varphi \in \sobo_{c}^{1,\infty}( \Omega ; \R^{N} )$.  By Proposition~\ref{prop:mazur} and passing to a subsequence, there exists a generating sequence $(u_{j})$ for $\overline{\mathscr{F}}_{v}^{*}[u;\Omega]$ if $p=1$ (or, by Corollary~\ref{cor:mazur}, for $\overline{\mathscr{F}}_{v}[u;\Omega]$ if $1<p<\infty$) that satisfies $\nabla u_{j}\to\nabla u$ $\mathscr{L}^{n}$-a.e. in $\Omega$; by continuity of $F$, we also have $F(\nabla u_{j}+\nabla\varphi)-F(\nabla u_{j})\to F(\nabla u+\nabla\varphi)-F(\nabla u)$ $\mathscr{L}^{n}$-a.e. in $\Omega$. By~\ref{item:H2} for $p=1$ or~\ref{item:H2p} for $p>1$, respectively, $(u_{j})$ is uniformly bounded in $\sobo^{1,p}(\Omega;\R^{N})$ by the same argument as in the proof of Proposition~\ref{prop:existenceminimizers}. Since $q<p+1$, the Lipschitz-type estimate~\eqref{eq:lipschitz} yields for all $\omega\subset\Omega$
\begin{align}\label{eq:MazurEqui}
\begin{split}
\int_{\omega}&|F(\nabla u_{j}+\nabla\varphi)-F(\nabla u_{j})|\dif x \leq c\int_{\omega}(1+|\nabla u_{j}|^{q-1}+|\nabla\varphi|^{q-1})|\nabla\varphi|\dif x \\ 
& \leq c\int_{\omega}|\nabla\varphi|+|\nabla\varphi|^{q}\dif x + c\Big(\int_{\Omega}|\nabla u_{j}|^{p}\Big)^{\frac{q-1}{p}}\Big(\int_{\omega}|\nabla\varphi|^{\frac{p}{p-q+1}}\dif x\Big)^{\frac{p-q+1}{p}},
\end{split}
\end{align}
where $c=c(n,N,L,q)>0$. From~\eqref{eq:MazurEqui} and because of $\sup_{j\in\mathbb{N}}\|\nabla u_{j}\|_{\lebe^{p}(\Omega)}<\infty$, we conclude that 
 $(F(\nabla u_{j}+\nabla\varphi)-F(\nabla u_{j}))$ is uniformly integrable. Now, by Vitali's convergence theorem and since the sequence $(u_{j}+\varphi )$ is admissible for
$u+\varphi$ we get
\begin{align*}
  0 & \leq \overline{\mathscr{F}}_{v}^{*}[u+\varphi ;\Omega]-\overline{\mathscr{F}}_{v}^{*}[u ;\Omega] \\ & \leq \liminf_{j \to \infty} \int_{\Omega} \! \bigl( F(\nabla u_{j}+\nabla \varphi ) - F(\nabla u_{j}) \bigr) \, \dif x = \int_{\Omega} \! \bigl( F(\nabla u + \nabla \varphi ) -F(\nabla u) \bigr) \, \dif x.
\end{align*}
Thus \eqref{eq:absmin} holds for $\varphi \in \sobo^{1,\infty}_{c}( \Omega ; \R^{N} )$, and then extends to $\varphi\in\sobo_{0}^{1,s}(\Omega;\R^{N})$ by \eqref{eq:lipschitz} and validity of \eqref{eq:absmin} for $\varphi\in\sobo_{c}^{1,\infty}(\Omega;\R^{N})$. Finally, when $F$ is $\hold^1$, then the Euler-Lagrange system \eqref{EL} is a standard consequence of \eqref{eq:lipschitz} and \eqref{eq:absmin}, recalling that 
\begin{align*}
(-1,1)\setminus\{0\}\ni \varepsilon \mapsto \frac{F(\nabla u+\varepsilon\nabla\varphi)-F(\nabla u)}{\varepsilon}
\end{align*}
is dominated by a multiple of $(1+|\nabla u|^{q-1}+|\nabla\varphi|^{q-1})|\nabla\varphi|\in\lebe^{1}$. The proof is complete.
\end{proof}
\begin{remark}\label{rem:measuredensityEuler}
In the case $p=1$ and even for $1\leq q<\frac{n}{n-1}$, the preceding corollary yields that the \emph{approximate gradients} $\nabla u$ (and not only the full gradients $Du$) of $\bv$-minimizers $u$ (for compactly supported variations) satisfy the weaker minimality property~\eqref{eq:absmin} and hereafter the Euler-Lagrange system~\eqref{eq:ELLip}. This might seem surprising since the approximate gradients of $\bv$-functions do not have gradient structure in general; by a result of \textsc{Alberti} \cite{Alberti1}, any $w\in\lebe^{1}(\Omega;\R^{N\times n})$ arises as the approximate gradient of a $\bv$-function. In the case $p=q=1$, the Euler-Lagrange system  can be seen by employing the integral representation of the relaxed functional (cf.~\eqref{eq:lingrowth}) and testing  the corresponding Euler-Lagrange system with $\sobo^{1,1}$-maps, but Proposition~\ref{prop:mazur} and Corollary~\ref{cor:EulerLagrange} show that integral representations are not even required for this conclusion. 

In the context of non-negative integrands and $p>1$, where the measure representations of \textsc{Fonseca \& Mal\'{y}}~\cite{FM,BFM} are available, \textsc{Schmidt}~\cite[Lems.~7.1--7.3]{SchmidtPR} derives the Euler-Lagrange system for the functionals $\mathscr{F}_{\locc}$ (cf.~\eqref{eq:SchmidtRelax}) as a consequence of the representation 
\begin{align}\label{eq:WaterColors} 
\mathscr{F}_{\locc}[u+\varphi;\omega]-\mathscr{F}_{\locc}[u;\omega] = \int_{\omega}F(\nabla u+\nabla\varphi)-F(\nabla u)\dif x 
\end{align}
for open $\omega\Subset\Omega$ and $\varphi\in\sobo^{1,p/(p-q+1)}(\Omega;\R^{N})$ provided  $\mathscr{F}_{\locc}[u;\Omega]<\infty$. This may be interpreted in the sense that if $u$ is perturbed by a sufficiently regular $\varphi$, then the singular part of the measure representing $\mathscr{F}_{\locc}[u;-]$ does not increase when passing to $\mathscr{F}_{\locc}[u+\varphi;-]$ on open sets $\omega\Subset\Omega$. We expect that~\eqref{eq:WaterColors} persists for the functionals $\overline{\mathscr{F}}^{*}$ or $\overline{\mathscr{F}}$ despite of the present lack of measure representations. However, we refrain from delving more into this matter here as~\eqref{eq:WaterColors} is not required for the partial regularity proof below, and shall pursue validity of~\eqref{eq:WaterColors} for generic maps $u$ elsewhere.
\end{remark}
We conclude this section by stating a variant of Corollary~\ref{cor:mazur} for local ($\bv$-)minimizers for compactly supported variations:
\begin{corollary}[Euler-Lagrange system II]\label{cor:ELMAIN}
In the situation of 
\begin{enumerate}
\item Corollary~\ref{cor:mazur}~\ref{item:EL1}, let $u$ be a local $\bv$-minimizer of $\overline{\mathscr{F}}^{*}$ for compactly supported variations. 
\item Corollary~\ref{cor:mazur}~\ref{item:EL2}, let $u$ be a local minimizer of $\overline{\mathscr{F}}$ for compactly supported variations. 
\end{enumerate}
Given $\frac{p}{p-q+1}\leq s\leq \infty$, we then have 
\begin{align}\label{eq:ELNEW}
\int_{\Omega}\langle F'(\nabla u),\nabla\varphi\rangle\dif x = 0\qquad\text{for all}\;\varphi\in\sobo_{c}^{1,s}(\Omega;\R^{N}). 
\end{align}
\end{corollary}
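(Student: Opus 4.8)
The plan is to reduce the statement to the already established Corollary~\ref{cor:EulerLagrange} by a standard localisation argument, exploiting that local ($\bv$-)minimality for compactly supported variations is designed precisely to make such a reduction transparent. Fix $\varphi\in\sobo_{c}^{1,s}(\Omega;\R^{N})$ with $\frac{p}{p-q+1}\leq s\leq\infty$ and let $K:=\spt(\varphi)\Subset\Omega$. Since $\mathrm{Reg}$-type covering arguments are not needed here, I would instead cover $K$ by finitely many of the neighbourhoods provided by Definition~\ref{def:locmin}: for each $x_{0}\in K$ there is an open $\omega_{x_{0}}\Subset\Omega$ with Lipschitz boundary such that $u|_{\omega_{x_{0}}}$ is a ($\bv$-)minimizer of $\overline{\mathscr{F}}_{u}^{*}[-;\omega_{x_{0}}]$ (resp.\ $\overline{\mathscr{F}}_{u}[-;\omega_{x_{0}}]$) for compactly supported variations. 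By compactness of $K$, finitely many $\omega_{1},\dots,\omega_{m}$ cover $K$.

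The issue is that $\varphi$ need not be supported in a single $\omega_{i}$. To handle this I would take a partition of unity: choose $\chi_{1},\dots,\chi_{m}\in\hold_{c}^{\infty}(\R^{n};[0,1])$ with $\spt(\chi_{i})\Subset\omega_{i}$ and $\sum_{i=1}^{m}\chi_{i}\equiv 1$ on a neighbourhood of $K$, and write $\varphi=\sum_{i=1}^{m}\chi_{i}\varphi=:\sum_{i=1}^{m}\varphi_{i}$, where $\varphi_{i}\in\sobo_{c}^{1,s}(\omega_{i};\R^{N})$ because the product of a smooth compactly supported function and a $\sobo^{1,s}$-function with compact support is again in $\sobo^{1,s}$ with compact support (this uses $s\geq 1$). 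Now apply Corollary~\ref{cor:EulerLagrange} on each $\omega_{i}$ with Dirichlet datum $v=u$: since $u|_{\omega_{i}}$ is a ($\bv$-)minimizer of the relaxed functional with its own boundary values for compactly supported variations, and $F\in\hold^{1}(\R^{N\times n})$ satisfies \ref{item:H1} and \ref{item:H2} (resp.\ \ref{item:H2p}) with the prescribed exponent range $p\leq q<\min\{\frac{np}{n-1},p+1\}$, that corollary yields
\begin{align*}
\int_{\omega_{i}}\langle F'(\nabla u),\nabla\varphi_{i}\rangle\dif x = 0\qquad\text{for all}\;i\in\{1,\dots,m\},
\end{align*}
using that $\frac{p}{p-q+1}\leq s\leq\infty$ so that $\varphi_{i}$ is an admissible test map. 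Summing over $i$ and using $\sum_{i}\varphi_{i}=\varphi$ on a neighbourhood of $\spt(\varphi)$ (and $\varphi\equiv 0$ outside), linearity of the integral gives $\int_{\Omega}\langle F'(\nabla u),\nabla\varphi\rangle\dif x=0$, which is~\eqref{eq:ELNEW}.

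The only point that needs a little care — and is the main (mild) obstacle — is verifying that Corollary~\ref{cor:EulerLagrange} genuinely applies on each $\omega_{i}$: one must check that the trace condition $\trace_{\partial\omega_{i}}(u)\in\sobo^{1-1/q,q}(\partial\omega_{i};\R^{N})$ holds, but this is automatic from the very definition of $\overline{\mathscr{F}}_{u}^{*}[u;\omega_{i}]$ (resp.\ $\overline{\mathscr{F}}_{u}[u;\omega_{i}]$) being finite, as observed in the discussion following Definition~\ref{def:locmin}; indeed local ($\bv$-)minimality presupposes $\overline{\mathscr{F}}_{u}^{*}[u;\omega_{i}]<\infty$, which forces membership of $u|_{\omega_{i}}$ in $\mathscr{A}$-type classes and hence the required trace regularity. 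With this in hand the reduction is complete, and the distributional divergence-free statement follows as before by density of $\hold_{c}^{\infty}(\Omega;\R^{N})$ in the relevant test space together with~\eqref{eq:lipschitz}. The proof is complete.
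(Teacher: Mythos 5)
Your proposal is correct and follows essentially the same route as the paper: cover $\spt(\varphi)$ by finitely many of the neighbourhoods from Definition~\ref{def:locmin}, split $\varphi$ via a subordinate partition of unity, apply Corollary~\ref{cor:EulerLagrange} on each piece, and sum. The added remark on verifying $\trace_{\partial\omega_{i}}(u)\in\sobo^{1-1/q,q}(\partial\omega_{i};\R^{N})$ is a welcome clarification (the paper leaves it implicit), and the final sentence about the distributional divergence-free formulation is superfluous for the stated identity~\eqref{eq:ELNEW} but harmless.
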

\begin{proof} Let $\varphi\in\sobo_{c}^{1,s}(\Omega;\R^{N})$ be arbitrary. For any $x_{0}\in\spt(\varphi)$, there exists an open neighbourhood $\omega^{x_{0}}\Subset\Omega$ such that $u$ is a ($\bv$-)minimizer on $\omega^{x_{0}}$ for compactly supported variations. By compactness of $\spt(\varphi)$, there exist finitely many $x_{0}^{1},...,x_{0}^{m}\in\spt(\varphi)$ such that $\spt(\varphi)\subset\bigcup_{j=1}^{m}\omega^{x_{0}^{j}}$. Let $(\rho_{j})$ be a partition of unity of $\spt(\varphi)$ subject to $(\omega^{x_{0}^{j}})_{j=1,...,m}$ such that $\spt(\rho_{j})\subset\omega^{x_{0}^{j}}$ for all $j\in\{1,...,m\}$. We then conclude by Corollary~\ref{cor:mazur}
\begin{align*}
\int_{\Omega}\langle F'(\nabla u),\nabla\varphi\rangle\dif x = \sum_{j=1}^{m}\int_{\omega^{x_{0}^{j}}}\langle F'(\nabla u),\nabla(\rho_{j}\varphi)\rangle\dif x =0, 
\end{align*}
which is~\eqref{eq:ELNEW}. The proof is complete. 
\end{proof}

\section{The Caccioppoli inequality of the second kind}\label{sec:Cacc}
\subsection{The Caccioppoli inequality for $(1,q)$-growth}
In this section we establish one of the core results of the paper, namely the Caccioppoli inequality of the second kind. We state the theorem for $\bv$-minimizers with respect to compactly supported variations and hereafter the $(1,q)$-growth regime; the $(p,q)$-case for $1<p<q<\frac{np}{n-1}$ is addressed in Section~\ref{sec:Caccpq}.
\begin{theorem}[Caccioppoli inequality of the second kind]\label{thm:Cacc}
Let $1<q<\frac{n}{n-1}$. Assume that $F\colon\R^{N\times n}\to\R$ satisfies \emph{\ref{item:H1}--\ref{item:H3}} and suppose that $x_{0}\in\Omega$ and $R>0$ are such that $\ball_{2R}(x_{0})\Subset\Omega$. Let $u\in\bv(\ball_{2R}(x_{0});\R^{N})$ be a \emph{$\bv$-minimizer} of $\overline{\mathscr{F}}_{u}^{*}[-;\ball_{2R}(x_{0})]$ for compactly supported variations; in particular, $\trace_{\partial\!\ball_{2R}(x_{0})}(u)\in\sobo^{1-1/q,q}(\partial\!\ball_{R}(x_{0});\R^{N})$. Moreover, given $m>0$, let $a\colon\R^{n}\to\R^{N}$ be an affine-linear map with $|\nabla a|\leq m$. 

Then there exists a constant $c=c(n,N,m,L,\ell_{m})>0$ such that 
\begin{align}\label{eq:Caccmain}
\begin{split}
\dashint_{\ball_{R}(x_{0})}V(D(u-a)) & \leq c\left[ \dashint_{\ball_{2R}(x_{0})}V \left(\frac{u-a}{R}\right) \right. \\ & \;\;\;\;\;\;\;\;\;\;\; \left.+ \sum_{j\in\{0,1\}}\left(\dashint_{\ball_{2R}(x_{0})}V\left(\frac{D^{j}(u-a)}{R^{1-j}}\right) \right)^{q}\right].
\end{split}
\end{align}
\end{theorem}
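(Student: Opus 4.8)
The proof follows Evans' localisation scheme via Widman's hole-filling trick, but — as stressed in Sections~\ref{sec:CaccIntro} and~\ref{sec:goodgeneration} — every invocation of the strong quasiconvexity~\ref{item:H2} must be made accessible through competitor maps of \emph{controllable $\sobo^{1,q}$-energy}. By Lemma~\ref{lem:shiftconnect}, after replacing $F$ by $F_{\nabla a}$, $u$ by $\widetilde u := u-a$, we may assume $a\equiv 0$ at the cost of working with the shifted integrand $G:=F_{\nabla a}$, which still obeys the requisite growth bounds (Lemma~\ref{lem:shifted}) and the lower bound $\ell^{(m)}\int V(D\varphi)\leq\int G(\nabla\varphi)$ for $\varphi\in\sobo_0^{1,q}$ (cf.~\eqref{eq:Vboundbelow}). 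So the task reduces to estimating $\dashint_{\ball_R}V(Du)$ for a $\bv$-minimizer of $\overline{\mathscr G}_{u}^{*}[-;\ball_{2R}]$ for compactly supported variations, with $G$ signed, $G(0)=0$, $G'(0)=0$.

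\textbf{Main steps.} First I would fix good radii: for $\mathscr L^1$-a.e.~$r,s$ with $R<r<s<2R$ we have, by the results of Section~\ref{sec:HLW} and Corollary~\ref{cor:Fubini}, $\mathcal M Du(x_0,\cdot)<\infty$ and $\trace_{\partial\!\ball_\cdot(x_0)}(u)\in\sobo^{1-1/q,q}$, and by Lemma~\ref{lem:additivityradii} (and Remark~\ref{rem:Columbo}) these are additivity radii, so $\overline{\mathscr G}_{u}^{*}[u;\ball_{2R}]=\overline{\mathscr G}_{u}^{*}[u;\ball_r]+\overline{\mathscr G}_{u}^{*}[u;\ball_s\setminus\overline{\ball}_r]+\overline{\mathscr G}_{w}^{*}[u;\ball_{2R}\setminus\overline{\ball}_s]$. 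Second, using Proposition~\ref{prop:goodminseqs} (the good generation theorem), I would produce a generating sequence $(u_j)$ for $\overline{\mathscr G}_{u}^{*}[u;\ball_{2R}]$ with $\trace_{\partial\!\ball_r}(u_j)=\trace_{\partial\!\ball_r}(u)$ and $\trace_{\partial\!\ball_s}(u_j)=\trace_{\partial\!\ball_s}(u)$ — this is precisely what makes the Dirichlet-class (hence the strong quasiconvexity) usable on the annulus $\ball_s\setminus\overline{\ball}_r$. Third, on that annulus I would splice $u_j$ against the trace-preserving Whitney operator $\mathbb E$ (or $\widetilde{\mathbb E}$) applied to $u$ over a suitable sub-annulus, with a cutoff $\rho$ concentrated on a layer of width $\sim (s-r)$: set $v_j := \rho\, u_j + (1-\rho)\mathbb E u$ where $\mathbb E u$ lives on the layer and $v_j=u_j$ near $\partial\!\ball_r$. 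Because $v_j-u_j$ is compactly supported in $\ball_s\setminus\overline{\ball}_r$ and has zero trace there, minimality of $u$ (passed through the additivity and the generating property of $(u_j)$, exactly as in the proof of Proposition~\ref{prop:mazur}, Step~1) yields $\limsup_j\int_{\ball_r}G(\nabla u_j)\,\dx \le \limsup_j\int_{\ball_r}G(\nabla v_j)\,\dx$ up to layer error terms. Fourth, feed this into the hole-filling estimate: from $\ell^{(m)}\int_{\ball_r}V(\nabla u_j)\le\int_{\ball_r}G(\nabla u_j)\le\int_{\ball_r}G(\nabla v_j)$, expand $\nabla v_j$ on the layer using the Lipschitz bound~\eqref{eq:lipschitz} for $G$ and the $\lebe^q$-gradient stability estimates~\eqref{eq:fundamental}, \eqref{eq:fundamental1} of Lemma~\ref{lem:extensionoperator}, and absorb the $\int$ over the layer $\ball_s\setminus\overline{\ball}_r$ of $V(\nabla u_j)$ by choosing $r,s$ so that the layer term is dominated (Widman's trick with the iteration lemma on the radii, using that $q<\frac{n}{n-1}$ to make the Whitney geometric series converge). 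Fifth, pass to the limit $j\to\infty$: by lower semicontinuity (Lemma~\ref{lem:reshetnyak}) $\int_{\ball_R}V(Du)\le\liminf_j\int_{\ball_R}V(\nabla u_j)$, while the right-hand side layer contributions — controlled by $\mathcal M Du(x_0,\cdot)$ and, via the fractional Poincar\'e inequality~\eqref{eq:fracPoinc} and the trace-scaling~\eqref{eq:tracescale}, by $\dashint_{\ball_{2R}}V(\tfrac{u}{R})$ and $\big(\dashint_{\ball_{2R}}V(\tfrac{D^ju}{R^{1-j}})\big)^q$ — give the asserted bound~\eqref{eq:Caccmain}. A final Fubini/selection step over the good radii converts the $\mathscr L^1$-a.e.~choice into the clean statement with $\ball_R$ and $\ball_{2R}$.

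\textbf{The main obstacle.} The delicate point is the same one isolated in Remark~\ref{rem:signed}: because $G$ is signed, when we replace $u_j$ by the spliced competitor $v_j$ on the thin annular layer we cannot a priori exclude that $\int_{\text{layer}}G(\nabla u_j)\to-\infty$ while $\int_{\text{layer}}G(\nabla v_j)$ stays bounded, which would corrupt the comparison and, worse, mean $v_j$ fails to be a recovery sequence so that the minimality inequality is unavailable. This is exactly why the good generation theorem (Proposition~\ref{prop:goodminseqs}, Steps~4--5) is invoked with fixed traces on \emph{both} $\partial\!\ball_r$ and $\partial\!\ball_s$ and why the layer estimates must be phrased entirely through the spherical maximal operator $\mathcal M$: the maximal finiteness $\mathcal M Du(x_0,r),\mathcal M\lambda(x_0,r)<\infty$ at the good radii is what quantitatively rules out mass concentration of $|\nabla u_j|$ on the layer, forcing the spurious negative mass to vanish. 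Threading these maximal conditions through the hole-filling iteration — so that the exceptional radius sets stay null and the constants remain of the form $c(n,N,m,L,\ell_m)$, in particular \emph{without} blowing up as $q\nearrow\frac{n}{n-1}$ only through the harmless Whitney constant — is the technically heaviest part; the rest is a careful but routine bookkeeping of $V$-function energies on Whitney balls.
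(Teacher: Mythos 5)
Your high-level outline matches the paper's proof: shift via $F_{\nabla a}$ to reduce to $a\equiv 0$, select good radii with the Hardy-Littlewood-type selection Lemma~\ref{lem:HLW}, build competitors from the trace-preserving Whitney operator of Section~\ref{sec:Fubini}, use the $\sobo^{1,q}$-layer estimates that converge because $q<\tfrac{n}{n-1}$, and close with Widman's hole-filling and the iteration Lemma~\ref{lem:iteration}. The ``main obstacle'' you isolate -- sign control in the presence of a signed integrand -- is also the right obstacle. But the mechanism you propose for overcoming it is not the one that actually closes the gap, and your sketch would not go through as stated.

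The crux is the middle of Step~3. The paper does \emph{not} compare $\int_{\ball_r}G(\nabla u_j)$ against $\int_{\ball_r}G(\nabla v_j)$ for a spliced sequence $v_j$. Instead it bounds $\ell^{(m)}\int_{\overline{\ball}_r}V(D\widetilde u)$ from above by $\liminf_j\int_{\ball_{\widetilde s}}F_{\nabla a}(\nabla\varphi_j)\,\dx$ with $\varphi_j:=\rho\widetilde u_j$ on $\ball_{\widetilde r}$ (where $(\widetilde u_j)$ is a generating sequence for the \emph{restricted} functional $\overline{\mathscr F}^*_{\nabla a,\widetilde u}[\widetilde u;\ball_{\widetilde r}]$, not for $\ball_{2R}$), splits via additivity, and then invokes $\bv$-minimality in the form $\overline{\mathscr F}^*_{\nabla a,\widetilde u}[\widetilde u;\ball_{2R}]\leq\overline{\mathscr F}^*_{\nabla a,\widetilde u}[\widetilde u+\psi;\ball_{2R}]$. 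After the dust settles, one is left with the term
\begin{align*}
\mathrm{IV}=\overline{\mathscr{F}}^{*}_{\nabla a,\widetilde u}[\widetilde u;\ball_{2R}\setminus\overline{\ball}_{\widetilde s}]-\overline{\mathscr{F}}^{*}_{\nabla a,\widetilde u}[\widetilde u;\ball_{2R}\setminus\overline{\ball}_{\widetilde r}]=-\overline{\mathscr{F}}^{*}_{\nabla a,\widetilde u}[\widetilde u;\ball_{\widetilde s}\setminus\overline{\ball}_{\widetilde r}],
\end{align*}
and this is the term your proposal leaves uncontrolled. Because $F_{\nabla a}$ is signed, the right-hand side is \emph{not} automatically bounded above by a layer term: the relaxed functional on the annulus could a priori be very negative, and no amount of maximal-function finiteness or good-generation bookkeeping rules that out directly (those devices control $\lebe^1$- or $\lebe^q$-mass of gradients, not the sign of the nonlinear energy). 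The paper's resolution is a second, dedicated competitor $v\in\bv_c(\ball_{2R};\R^{N})$, supported essentially in a slightly enlarged annulus and splicing in Whitney extensions of $\widetilde u$, for which one has both the comparison $\mathrm{IV}\leq-\overline{\mathscr{F}}^{*}_{\nabla a,v}[v;\ball_s]+\text{(layer terms)}$ and the crucial one-line sign estimate from the strong quasiconvexity~\eqref{eq:Vboundbelow}: $-\overline{\mathscr{F}}^{*}_{\nabla a,v}[v;\ball_s]\leq-\ell^{(m)}\int_{\ball_s}V(Dv)\leq 0$. This sign trick -- not the good generation theorem -- is what kills the potentially unbounded negative mass of $F_{\nabla a}(\nabla\cdot)$ on the annulus. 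Your obstacle paragraph conflates it with the distinct concentration issue in the proof of Proposition~\ref{prop:goodminseqs}, Step~5: that issue concerns whether a modified sequence remains generating, and it is indeed handled there, but it does not by itself control the negative term $\mathrm{IV}$ here.

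A second, more minor imprecision: your competitor $v_j=\rho u_j+(1-\rho)\mathbb{E}u$ as written does not glue back to the generating sequence on $\ball_{2R}\setminus\overline{\ball}_s$, so it is not admissible for the minimality condition on $\ball_{2R}$ without further modification; the paper circumvents this by constructing $\psi=(1-\rho)\mathbb{T}\widetilde u-\widetilde u\in\bv_c(\ball_{2R};\R^N)$ directly at the level of $\widetilde u$ itself (not at the level of a sequence), establishing finiteness and additivity for $\overline{\mathscr{F}}^{*}_{\nabla a,\widetilde u}[\widetilde u+\psi;\ball_{2R}]$ in a dedicated Step~3a, and only then running the quasiconvexity argument on $\varphi_j$. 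Once you incorporate the second competitor $v$, the sign estimate, and the precise competitor structure for $\psi$, your outline would indeed recover the paper's argument.
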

For the proof of Theorem~\ref{thm:Cacc} and its variant for $p>1$, cf.~Corollary~\ref{cor:Caccplargerthan1} below, we require the following iteration lemma that appears as a reformulation of \cite[Lem.~6.6]{SchmidtPR1} for a different choice of the auxiliary $V$-functions. It  can be obtained by a slight variation of the proof of \cite[Lem.~3.1, \S 5]{Giaquinta} by routine means and so is omitted here:
\begin{lemma}\label{lem:iteration}
Let $1\leq p<\infty$, $x_{0}\in\R^{n}$ and $R>0$. Suppose that $v\in\lebe^{p}(\ball_{2R}(x_{0});\R^{N})$ and that $h\colon [R,2R]\to\R_{>0}$ is a bounded function. Let $\lambda_{1},\lambda_{2},\lambda_{3},\gamma_{1},\gamma_{2},\gamma_{3}\geq 0$ and $0\leq\theta<1$ be constants such that for all $R\leq r < s \leq 2R$ there holds
\begin{align}\label{eq:therock1}
\begin{split}
h(r) & \leq \theta h(s) + \lambda_{1}(s-r)^{\gamma_{1}} \\ & + \lambda_{2}\int_{\ball_{2R}(x_{0})}V_{p}\Big(\frac{v}{s-r}\Big) + \lambda_{3}(s-r)^{\gamma_{2}}\Big(\int_{\ball_{2R}(x_{0})}V_{p}\Big(\frac{v}{s-r}\Big)\Big)^{\gamma_{3}}. 
\end{split}
\end{align} 
Then we have, with a constant $c=c(\gamma_{1},\gamma_{2},\gamma_{3},\theta,p)>0$, 
\begin{align}\label{eq:therock2}
h(R) \leq c\Big(\lambda_{1}R^{\gamma_{1}} & + \lambda_{2}\int_{\ball_{2R}(x_{0})}V_{p}\Big(\frac{v}{R}\Big) +\lambda_{3}R^{\gamma_{2}}\Big(\int_{\ball_{2R}(x_{0})}V_{p}\Big(\frac{v}{R}\Big)\Big)^{\gamma_{3}}\Big). 
\end{align}
\end{lemma}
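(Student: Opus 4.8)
The plan is to reduce the hypothesis \eqref{eq:therock1} to the classical ``hole-filling'' iteration inequality, in which only a single (or, after a trivial combination, finitely many) inverse powers of $s-r$ appear, and then to quote the standard iteration lemma in the form of \cite[Lem.~3.1, \S 5]{Giaquinta}. The point is that in \eqref{eq:therock1} the radii $r,s$ enter the right-hand side, apart from the term $\theta h(s)$, only through the scalar $s-r$ inside the auxiliary $V_{p}$-functions and in the powers $(s-r)^{\gamma_{1}}$, $(s-r)^{\gamma_{2}}$; since all integrals are taken over the \emph{fixed} ball $\ball_{2R}(x_{0})$, this dependence can be made completely explicit.

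First I would record the elementary scaling estimate for the reference integrand: for every $\mu\geq 1$ and every $z\in\R^{N\times n}$,
\begin{align}\label{eq:Vpscaleiter}
V_{p}(\mu z)\leq c_{p}\,\mu^{\max\{2,p\}}\,V_{p}(z),
\end{align}
with $c_{p}>0$ depending only on $p$. This follows by distinguishing the regimes $|z|\leq 1/\mu$, $1/\mu\leq|z|\leq 1$ and $|z|\geq 1$ and using the two-sided bounds $V_{p}(z)\sim_{p}|z|^{2}$ for $|z|\leq 1$ and $V_{p}(z)\sim_{p}|z|^{p}$ for $|z|\geq 1$ from Lemma~\ref{lem:Efunction}~\ref{item:EfctA}. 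Since $R\leq r<s\leq 2R$ forces $0<s-r\leq R$, we may apply \eqref{eq:Vpscaleiter} with $\mu=R/(s-r)\geq 1$ and $z=v(x)/R$ and integrate over $\ball_{2R}(x_{0})$ to obtain, writing $\kappa:=\max\{2,p\}$ and $\Phi:=\int_{\ball_{2R}(x_{0})}V_{p}(v/R)$,
\begin{align}\label{eq:Vpintscaleiter}
\int_{\ball_{2R}(x_{0})}V_{p}\Big(\frac{v}{s-r}\Big)\leq c_{p}\Big(\frac{R}{s-r}\Big)^{\kappa}\Phi.
\end{align}
Substituting \eqref{eq:Vpintscaleiter} into \eqref{eq:therock1} and using $(s-r)^{\gamma_{1}}\leq R^{\gamma_{1}}$ together with $(s-r)^{\gamma_{2}}\big(R/(s-r)\big)^{\kappa\gamma_{3}}=R^{\gamma_{2}}\big(R/(s-r)\big)^{\kappa\gamma_{3}-\gamma_{2}}$ — the latter factor being $\leq 1$ when $\kappa\gamma_{3}-\gamma_{2}\leq 0$ — we arrive at an inequality of the form
\begin{align}\label{eq:reducediter}
h(r)\leq\theta h(s)+\frac{A_{1}}{(s-r)^{\alpha_{1}}}+\frac{A_{2}}{(s-r)^{\alpha_{2}}}+B\qquad(R\leq r<s\leq 2R),
\end{align}
with $0\leq\theta<1$, $\alpha_{1}=\kappa$, $A_{1}=c_{p}\lambda_{2}R^{\kappa}\Phi$, $\alpha_{2}=\max\{0,\kappa\gamma_{3}-\gamma_{2}\}\geq 0$, $A_{2}=c_{p}^{\gamma_{3}}\lambda_{3}R^{\max\{\kappa\gamma_{3},\gamma_{2}\}}\Phi^{\gamma_{3}}$ and $B=\lambda_{1}R^{\gamma_{1}}$, all nonnegative.

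Inequality \eqref{eq:reducediter} is exactly the input of the classical iteration lemma \cite[Lem.~3.1, \S 5]{Giaquinta}, applied on the interval $[R,2R]$ (two power terms being combined in the usual way, bounding both by $(A_{1}+A_{2})(s-r)^{-\max\{\alpha_{1},\alpha_{2}\}}$ up to a fixed power of $R$); since $h$ is bounded on $[R,2R]$, it yields $h(R)\leq c(\alpha_{1},\alpha_{2},\theta)\big(A_{1}R^{-\alpha_{1}}+A_{2}R^{-\alpha_{2}}+B\big)$. Substituting back, $A_{1}R^{-\alpha_{1}}=c_{p}\lambda_{2}\Phi$, $B=\lambda_{1}R^{\gamma_{1}}$, and, using the elementary identity $\max\{\kappa\gamma_{3},\gamma_{2}\}-\max\{0,\kappa\gamma_{3}-\gamma_{2}\}=\gamma_{2}$ (valid in both cases $\kappa\gamma_{3}\gtrless\gamma_{2}$), $A_{2}R^{-\alpha_{2}}=c_{p}^{\gamma_{3}}\lambda_{3}R^{\gamma_{2}}\Phi^{\gamma_{3}}$. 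Absorbing $c_{p}$ into the constant and recalling the definition of $\Phi$ gives precisely \eqref{eq:therock2}, with $c$ depending only on $\gamma_{1},\gamma_{2},\gamma_{3},\theta$ and $p$ (the latter only through $\kappa$ and $c_{p}$). The argument is entirely routine; the only mild care needed is in (i) proving the scaling \eqref{eq:Vpscaleiter} uniformly for all $p\geq 1$ — the intermediate regime $1/\mu\leq|z|\leq 1$ must be treated separately, where the ratio is controlled by $\mu^{2}$ rather than $\mu^{p}$ — and (ii) verifying that after the reduction the exponents $\alpha_{1},\alpha_{2}$ are genuinely nonnegative so that the classical lemma applies verbatim, the possibly negative exponent $\kappa\gamma_{3}-\gamma_{2}$ being harmlessly absorbed into the constant term $B$.
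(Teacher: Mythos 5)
Your proof is correct. The paper omits the argument and merely cites Giaquinta's hole-filling lemma as the template; your modular route — establish the scaling bound $V_{p}(\mu z)\leq c_{p}\mu^{\max\{2,p\}}V_{p}(z)$ for $\mu\geq 1$, reduce the hypothesis to the classical form with inverse powers of $s-r$, then invoke Giaquinta's lemma verbatim and unwind the exponents — is exactly the ``routine variation'' the authors allude to, and your bookkeeping (in particular the identity $\max\{\kappa\gamma_{3},\gamma_{2}\}-\max\{0,\kappa\gamma_{3}-\gamma_{2}\}=\gamma_{2}$ and the observation that $\alpha_{1}=\kappa\geq 2>0$ guarantees a positive combined exponent) is all sound.
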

\begin{proof}[Proof of Theorem~\ref{thm:Cacc}]
Let $x_{0}\in\Omega$ and $0<R\leq r<s\leq 2R<\dista(x_{0},\partial\Omega)$. Moreover, let $m>0$ and $a\colon\R^{n}\to\R^{N}$ be an affine-linear map with $|\nabla a|\leq m$. For ease of notation, we abbreviate $\widetilde{u}:=u-a$ and $\ball_{t}:=\ball_{t}(x_{0})$ for $0<t<2R$ in the sequel. As above, we let $\lambda\in\mathrm{RM}_{\mathrm{fin}}(\Omega')$ be a weak*-limit of the total variation sequence of a suitable generating sequence for $\overline{\mathscr{F}}_{\nabla a,v_{0}}^{*}[\widetilde{\mathbf{u}};\ball_{2R}(x_{0}),\Omega']$, where $\Omega'$ is open and bounded with $\ball_{R}(x_{0})\Subset\Omega'$ and $\widetilde{\mathbf{u}}$ is the extension of $\widetilde{u}$ to $\Omega'$ by some $v_{0}\in\sobo^{1,q}(\Omega';\R^{N})$ such that $\trace_{\partial\!\ball_{2R}}(v_{0})=\trace_{\partial\!\ball_{2R}}(\widetilde{u})$ $\mathscr{H}^{n-1}$-a.e. on $\partial\!\ball_{2R}$. 

\emph{Step 1. Choosing good radii.} As above, $\mathscr{L}^{1}$-almost all $t\in(r,s)$ satisfy $\mathcal{M}Du(x_{0},t)<\infty$ and are additivity radii for $\overline{\mathscr{F}}_{\nabla a,\widetilde{u}}^{*}[\widetilde{u};-]$. We then denote the exceptional set $\mathscr{N}(r,s):=\{t\in(r,s)\colon\;\mathcal{M}Du(x_{0},t)=\infty\;\text{or}\;\mathcal{M}\lambda(x_{0},t)=\infty\}$ and define
\begin{align}\label{eq:ThetaDefCacc}
\Theta(t):=\int_{\overline{\ball}_{t}}V\Big(\frac{\widetilde{u}}{s-r}\Big)\dif x +\int_{\overline{\ball}_{t}}V(D\widetilde{u}),\qquad t\in [r,s]. 
\end{align}
Since $V(D\widetilde{u})$ is a Radon measure on $\ball_{2R}$, $\Theta\colon [r,s]\to [0,\infty)$  still is non-decreasing and right-continuous. Thus we may apply  Lemma \ref{lem:HLW} to the particular choice 
\begin{align}\label{eq:CaccEdef}
E:=\mathscr{N}(r,s)\cup (r,\tfrac{19}{20}r+\tfrac{1}{20}s)\cup (\tfrac{19}{20}s+\tfrac{1}{20}r,s)
\end{align}
and $f=\Theta$. Then we have $\mathscr{L}^{1}(E)=\frac{1}{10}(s-r)<\theta(s-r)$ with $\theta=\frac{9}{80}$. By definition of $E$ and in light of Lemma~\ref{lem:additivityradii}, Lemma~\ref{lem:HLW} provides us with additivity radii $\widetilde{r},\widetilde{s}\in (r,s)\cap E^{\complement}$ for $\overline{\mathscr{F}}_{\nabla a,\widetilde{u}}^{*}[\widetilde{u};-]$ satisfying $\widetilde{r}<\widetilde{s}$, $\tfrac{1}{8}(s-r)\leq \widetilde{s}-\widetilde{r}\leq s-r$, $\mathcal{M}Du(x_{0},\widetilde{r}),\mathcal{M}Du(x_{0},\widetilde{s})<\infty$ together with
\begin{align}\label{eq:uniformlycomparable}
\begin{split}
&\frac{\Theta(\tau)-\Theta(\widetilde{r})}{\tau-\widetilde{r}}\leq 8000 \frac{\Theta(s)-\Theta(r)}{s-r} \qquad\text{for all}\;\tau\in (\widetilde{r},s),\\
&\frac{\Theta(\widetilde{r})-\Theta(\tau)}{\widetilde{r}-\tau}\leq 8000 \frac{\Theta(s)-\Theta(r)}{s-r} \qquad\text{for all}\;\tau\in (r,\widetilde{r}),\\
& \frac{\Theta(\widetilde{s})-\Theta(\tau)}{\widetilde{s}-\tau}\leq 8000 \frac{\Theta(s)-\Theta(r)}{s-r} \qquad\text{for all}\;\tau\in (r,\widetilde{s}),\\
& \frac{\Theta(\tau)-\Theta(\widetilde{s})}{\tau-\widetilde{s}}\leq 8000 \frac{\Theta(s)-\Theta(r)}{s-r} \qquad\text{for all}\;\tau\in (\widetilde{s},s).
\end{split}
\end{align}
\emph{Step 2. Estimating layer terms.} For later usage, we now address the estimation of layer terms which arise in the construction of certain competitor maps. Let $r<\varrho_{1}<\varrho_{2}<s$ be such that 
\begin{align}\label{eq:uniformchoose}
\sup_{t\in (\varrho_{1},\varrho_{2})}\frac{\Theta(t)-\Theta(\varrho_{1})}{t-\varrho_{1}} + \sup_{t\in (\varrho_{1},\varrho_{2})}\frac{\Theta(\varrho_{2})-\Theta(t)}{\varrho_{2}-t}&\leq \kappa\frac{\Theta(s)-\Theta(r)}{s-r},
\end{align}
for some $0<\kappa<\infty$. Let $(\ball^{i})$ be a Whitney covering for $\ball_{\varrho_{2}}\setminus\overline{\ball}_{\varrho_{1}}$ satisfying \ref{item:Whitney1}--\ref{item:Whitney3} from Section~\ref{sec:Fubini}.  Define index sets $\mathcal{I}_{\leq},\mathcal{J}_{\leq}$ and $\mathcal{I}_{>},\mathcal{J}_{>}$ by 
\begin{align*}
& \mathcal{I}_{\leq}:=\big\{i\in\mathbb{N}\colon\; \dashint_{\Lambda\!\ball^{i}}|D\widetilde{u}|\leq 1 \big\},\;\;\;\mathcal{I}_{>}:=\mathbb{N}\setminus\mathcal{I}_{\leq},\\
& \mathcal{J}_{\leq}:=\big\{i\in\mathbb{N}\colon\; \dashint_{\Lambda\!\ball^{i}}\left\vert\frac{\widetilde{u}}{s-r}\right\vert\dif x \leq 1 \big\},\;\;\;\mathcal{J}_{>}:=\mathbb{N}\setminus\mathcal{J}_{\leq}
\end{align*}
and introduce a convex and differentiable function $m_{q}\colon \R\to[0,\infty)$ by 
\begin{align}\label{eq:mqdef}
m_{q}(z):=\begin{cases} 
|z|^{2} &\;\text{for}\;|z|\leq 1,\\ 
\frac{2}{q}|z|^{q} + 1 - \frac{2}{q}&\;\text{for}\;|z|>1. 
\end{cases} 
\end{align}
An elementary computation establishes that for any finite dimensional, real normed space $(X,\|\cdot\|_{X})$ the uniform equivalences
\begin{align}\label{eq:auxMq}
\begin{split}
m_{q}(\|z\|_{X})&\sim_{q}\min\{\|z\|_{X}^{2},\|z\|_{X}^{q}\}\;\;\text{for all $z\in X$}\qquad\text{if $1\leq q\leq 2$},\\ 
m_{q}(\|z\|_{X})&\sim_{q}\max\{\|z\|_{X}^{2},\|z\|_{X}^{q}\}\;\;\text{for all $z\in X$}\qquad\!\text{if $2\leq q< \infty$}
\end{split}
\end{align}
hold. Here and in what follows, we put $\mathbb{E}_{\varrho_{1},\varrho_{2}}:=\mathbb{E}_{\ball_{\varrho_{2}}\setminus\overline{\ball}_{\varrho_{1}}}$ with the trace-preserving operator from Section~\ref{sec:Fubini}. Given $x\in \ball_{\varrho_{2}}\setminus\overline{\ball}_{\varrho_{1}}$, we distinguish two cases: If $x\in\ball^{i}$ with $i\in\mathcal{I}_{\leq}$, then by Lemma~\ref{lem:extensionoperator}~\ref{item:extension1} we have because of $1\leq q <\frac{n}{n-1}$
\begin{align}\label{eq:Caccest1}
\begin{split}
m_{q}(|\nabla\mathbb{E}_{\varrho_{1},\varrho_{2}}\widetilde{u}(x)|) & \leq c m_{q}\Big(\dashint_{\Lambda\!\ball^{i}}|D\widetilde{u}| \Big) \,\;\;\;\;\;\;\;\;\text{(by Lemma~\ref{lem:extensionoperator}~\ref{item:extension1})}\\ 
&\leq c \Big(\dashint_{\Lambda\!\ball^{i}}|D\widetilde{u}| \Big)^{2} \,\;\;\;\;\;\;\;\;\;\;\;\text{(by ~ $\eqref{eq:auxMq}_{1}$ and $i\in\mathcal{I}_{\leq}$)}\\ & \leq c \dashint_{\Lambda\!\ball^{i}}V(D\widetilde{u})\;\;\;\;\;\;\;\;\;\;\;\;\;\;\text{(by Lemma~\ref{lem:Efunction}, $i\in\mathcal{I}_{\leq}$ and Jensen)}.
\end{split}
\end{align}
On the other hand, if $x\in\ball^{i}$ is such that $i\in\mathcal{I}_{>}$, then we have 
\begin{align}\label{eq:Caccest2}
\begin{split}
m_{q}(|\nabla\mathbb{E}_{\varrho_{1},\varrho_{2}}\widetilde{u}(x)|) & \leq c m_{q}\Big(\dashint_{\Lambda\!\ball^{i}}|D\widetilde{u}| \Big) \,\;\;\;\;\;\;\;\;\text{(by Lemma~\ref{lem:extensionoperator}~\ref{item:extension1})}\\ &\leq c \Big(\dashint_{\Lambda\!\ball^{i}}|D\widetilde{u}| \Big)^{q}\;\;\;\;\;\;\;\;\;\;\;\;\text{(by~$\eqref{eq:auxMq}_{1}$ and $i\in\mathcal{I}_{>}$)}\\ & \leq c \Big(\dashint_{\Lambda\!\ball^{i}}V(D\widetilde{u}) \Big)^{q}\,\;\;\;\;\;\;\;\,\text{(by Lemma~\ref{lem:Efunction} and Jensen)}.
\end{split}
\end{align}
We now combine the previous estimates to arrive at 
\begin{align}\label{eq:I0bound}
\begin{split}
\int_{\ball_{\varrho_{2}}\setminus\overline{\ball}_{\varrho_{1}}}m_{q}& (|\nabla\mathbb{E}_{\varrho_{1},\varrho_{2}} \widetilde{u}|)\dif x \leq \sum_{i\in\mathbb{N}}\int_{\ball^{i}}m_{q}(|\nabla\mathbb{E}_{\varrho_{1},\varrho_{2}}\widetilde{u}|)\dif x \\
& = \sum_{i\in\mathcal{I}_{\leq}}\int_{\ball^{i}}m_{q}(|\nabla\mathbb{E}_{\varrho_{1},\varrho_{2}}\widetilde{u}|)\dif x + \sum_{i\in\mathcal{I}_{>}}\int_{\ball^{i}}m_{q}(|\nabla\mathbb{E}_{\varrho_{1},\varrho_{2}}\widetilde{u}|)\dif x \\
& \!\!\!\!\!\!\!\stackrel{\eqref{eq:Caccest1},\eqref{eq:Caccest2}}{\leq} c\Big(\sum_{i\in\mathcal{I}_{\leq}}\int_{\Lambda\!\ball^{i}}V(D\widetilde{u}) + \sum_{i\in\mathcal{I}_{>}}\int_{\ball^{i}}\Big(\dashint_{\Lambda\!\ball^{i}}V(D\widetilde{u}) \Big)^{q}\Big)\dif x \\
& \!\!\!\!\!\!\!\!\!\!\!\!\stackrel{\text{\ref{item:Whitney2}},\,\ell^{1}\hookrightarrow\ell^{q}}{\leq} c\Big(\int_{\ball_{\varrho_{2}}\setminus\overline{\ball}_{\varrho_{1}}}V(D\widetilde{u}) + \Big(\sum_{i\in\mathcal{I}_{>}}r_{i}^{n(\frac{1}{q}-1)}\int_{\Lambda\!\ball^{i}}V(D\widetilde{u})\Big)^{q}\Big)\\
& \!\!=: c(\mathrm{I}_{a} + \mathrm{I}_{b}).
\end{split}
\end{align}
As to $\mathrm{I}_{b}$, we introduce as in~\eqref{eq:Imdef} for $m\in\mathbb{N}_{0}$
\begin{align*}
\mathcal{I}_{>}^{m}:=\big\{i\in\mathcal{I}_{>}\colon\;2^{-m-1}(\varrho_{2}-\varrho_{1})\leq r_{i}<2^{-m}(\varrho_{2}-\varrho_{1}) \big\} 
\end{align*}
so that, with a constant $c'>0$ independent of of $i,m$, $\varrho_{1},\varrho_{2}$ and $\widetilde{u}$, $\bigcup_{i\in\mathcal{I}_{>}^{m}}\Lambda\!\ball^{i}\subset S^{m}$, where we put as in the proof of Lemma~\ref{lem:extensionoperator}
\begin{align*}
S^{m}:=\Big\{x\in\ball_{\varrho_{2}}\setminus\overline{\ball}_{\varrho_{1}}\colon\;\frac{1}{c'}2^{-m}(\varrho_{2}-\varrho_{1})<\mathrm{dist}(x,\partial(\ball_{\varrho_{2}}\setminus\overline{\ball}_{\varrho_{1}}))<c'2^{-m}(\varrho_{2}-\varrho_{1})\Big\}.
\end{align*}
As in the proof of Lemma~\ref{lem:extensionoperator}, we then arrive at the following string of inequalities:
\begin{align}\label{eq:Ibest}
\begin{split}
\mathrm{I}_{b} & \leq c\Big(\sum_{m=0}^{\infty}\sum_{i\in\mathcal{I}_{>}^{m}}r_{i}^{n(\frac{1}{q}-1)}\int_{\Lambda\!\ball_{i}}V(D\widetilde{u}) \Big)^{q} \\
& \leq  c\Big(\sum_{m=0}^{\infty}\Big(\Big(2^{-m}(\varrho_{2}-\varrho_{1})\Big)^{n(\frac{1}{q}-1)}\frac{(\varrho_{2}-\varrho_{1})}{2^{m}}\Big)\times \frac{2^{m}}{(\varrho_{2}-\varrho_{1})}\int_{S^{m}}V(D\widetilde{u})\Big)\Big)^{q} \\
& \leq c\Big((\varrho_{2}-\varrho_{1})^{n(\frac{1}{q}-1)+1}\times \Big. \\ & \Big. \times\sum_{m=0}^{\infty} 2^{m (n(1-\frac{1}{q})-1)} \Big(\sup_{t\in (\varrho_{1},\varrho_{2})}\frac{\Theta(t)-\Theta(\varrho_{1})}{t-\varrho_{1}} + \sup_{t\in (\varrho_{1},\varrho_{2})}\frac{\Theta(\varrho_{2})-\Theta(t)}{\varrho_{2}-t}\Big)\Big)^{q} \\ & =: \mathrm{II}. 
\end{split}
\end{align} 
Again, due to our assumption $q<\frac{n}{n-1}$, $n(\frac{1}{q}-1)+1 >0$. Thus by convergence of the geometric series and \eqref{eq:uniformchoose},
\begin{align}\label{eq:IIest}
\begin{split}
\mathrm{II} & \!\!\stackrel{\eqref{eq:uniformchoose}}{\leq} c\Big(\kappa(\varrho_{2}-\varrho_{1})^{n(\frac{1}{q}-1)+1}\frac{\Theta(s)-\Theta(r)}{s-r}\Big)^{q} \\
& \!\!\!\!\!\!\!\!\!\!\stackrel{\varrho_{2}-\varrho_{1}\leq s-r}{\leq} c\kappa^{q}\Big((s-r)^{n(\frac{1}{q}-1)}\Big(\int_{\overline{\ball}_{s}\setminus\overline{\ball}_{r}} V \Big(\frac{\widetilde{u}}{s-r}\Big)\dif x + V(D\widetilde{u}) \Big)\Big)^{q} \\
& = c\kappa^{q}(s-r)^{n}\Big((s-r)^{-n}\int_{\overline{\ball}_{s}\setminus\overline{\ball}_{r}} V \Big(\frac{\widetilde{u}}{s-r}\Big)\dif x + V(D\widetilde{u})  \Big)^{q}.
\end{split}
\end{align}
For the bounds of lower order terms, we replace $\nabla\mathbb{E}_{\varrho_{1},\varrho_{2}}\widetilde{u}$ in \eqref{eq:Caccest1},~\eqref{eq:Caccest2} and~\eqref{eq:I0bound}\emph{ff}.  systematically by $\frac{\mathbb{E}_{\varrho_{1},\varrho_{2}}\widetilde{u}}{s-r}$ and then use Lemma~\ref{lem:extensionoperator}~\ref{item:extension1} with $j=0$. In consequence, 
\begin{align}\label{eq:mainstepCacc}
\begin{split}
\sum_{j\in\{0,1\}}\int_{\ball_{\varrho_{2}}\setminus\overline{\ball}_{\varrho_{1}}}&m_{q}\Big(\frac{|\nabla^{j}\mathbb{E}_{\varrho_{1},\varrho_{2}}\widetilde{u}|}{(s-r)^{1-j}}\Big)\dif x \leq c\Big(\int_{\overline{\ball}_{s}\setminus \overline{\ball}_{r}}\Big(V \Big(\frac{\widetilde{u}}{s-r}\Big)\dif x + V(D\widetilde{u})\Big) \Big. \\
& \Big. + (s-r)^{n}\Big((s-r)^{-n}\int_{\overline{\ball}_{s}\setminus\overline{\ball}_{r}} \Big(V \Big(\frac{\widetilde{u}}{s-r}\Big)\dif x + V(D\widetilde{u})\Big) \Big)^{q}\Big) 
\end{split}
\end{align}
with a constant $c=c(q,N,n,\kappa)>0$.

\emph{Step 3. Minimality and derivation of \eqref{eq:Caccmain}.} Put $\sigma:=\frac{\widetilde{s}-\widetilde{r}}{10}$. We pick a smooth cut-off function $\rho\in\hold_{c}^{\infty}(\ball_{\widetilde{s}};[0,1])$ with 
\begin{align}\label{eq:rhoprops}
\mathbbm{1}_{\ball_{\widetilde{r}+\sigma}}\leq\rho\leq\mathbbm{1}_{\ball_{\widetilde{s}-\sigma}}\;\;\;\text{and}\;\;\;|\nabla\rho|\leq \frac{10}{\widetilde{s}-\widetilde{r}}.
\end{align}
We then put
\begin{align}\label{eq:defT}
\mathbb{T}\widetilde{u} :=\begin{cases} 
\widetilde{u}&\;\text{in}\;\ball_{\widetilde{r}}\\
\mathbb{E}_{\widetilde{r},\widetilde{s}}\widetilde{u}&\;\text{in}\;\ball_{\widetilde{s}}\setminus\overline{\ball}_{\widetilde{r}},\\
\widetilde{u}&\;\text{in}\;\ball_{2R}\setminus\overline{\ball}_{\widetilde{s}},  
\end{cases} 
\end{align}
where $\mathbb{E}_{\widetilde{r},\widetilde{s}}:=\mathbb{E}_{\ball_{\widetilde{s}}\setminus\overline{\ball}_{\widetilde{r}}}$ is given as in \eqref{eq:extoperator}, and define the future competitor map 
\begin{align*}
\psi:=(1-\rho)\mathbb{T}\widetilde{u}-\widetilde{u}\in\bv_{c}(\ball_{2R};\R^{N}).
\end{align*}
By Lemma~\ref{lem:shiftconnect} and as $u$ is a $\bv$-minimizer of $\overline{\mathscr{F}}^{*}[-;\ball_{2R}]$ for compactly supported variations,  
\begin{align}\label{eq:modBVminimality}
\overline{\mathscr{F}}_{\nabla a,\widetilde{u}}^{*}[\widetilde{u};\ball_{2R}]\leq \overline{\mathscr{F}}_{\nabla a,\widetilde{u}}^{*}[\widetilde{u}+\psi;\ball_{2R}].
\end{align}
\emph{Step 3a. Finiteness.}
For~\eqref{eq:modBVminimality} to be non-vacuous, we require $\overline{\mathscr{F}}_{\nabla a,\widetilde{u}}^{*}[\widetilde{u}+\psi;\ball_{2R}]<\infty$. This is seen as follows: Since $\overline{\mathscr{F}}_{\nabla a,\widetilde{u}}^{*}[\widetilde{u};\ball_{2R}\setminus\overline{\ball}_{\widetilde{s}}]<\infty$ by Lemmas~\ref{lem:additivityradii},~\ref{lem:shiftconnect} and Remark~\ref{rem:Columbo}, 
there exists $w_{0}\in \sobo_{0}^{1,q}(\Omega';\R^{N})$ such that $\trace_{\partial\!\ball_{\widetilde{r}}}(w_{0})=\trace_{\partial\!\ball_{\widetilde{r}}}(\widetilde{u})$, $\trace_{\partial\!\ball_{\widetilde{s}}}(w_{0})=\trace_{\partial\!\ball_{\widetilde{s}}}(\widetilde{u})$ and $\trace_{\partial\!\ball_{2R}}(w_{0})=\trace_{\partial\!\ball_{2R}}(\widetilde{u})$ $\mathscr{H}^{n-1}$-a.e. on $\partial\!\ball_{\widetilde{r}},\partial\!\ball_{\widetilde{s}}$ or $\partial\!\ball_{2R}$, respectively. We then find a generating sequence $(v_{j})\subset\mathscr{A}_{w_{0}}^{q}(\ball_{2R}\setminus\overline{\ball}_{\widetilde{s}},\Omega')$ for $\overline{\mathscr{F}}_{\nabla a,\widetilde{u}}^{*}[\widetilde{u};\ball_{2R}\setminus\overline{\ball}_{\widetilde{s}}]$. 
Now define, for $j\in\mathbb{N}$, 
\begin{align*}
 w_{j}:=\begin{cases} 
(1-\rho)\mathbb{E}_{\widetilde{r},\widetilde{s}}\widetilde{u}&\;\text{in}\;\ball_{\widetilde{s}},\\
v_{j}&\;\text{in}\;\ball_{2R}\setminus\overline{\ball}_{\widetilde{s}}, \\ 
w_{0}&\;\text{in}\;\Omega'\setminus\overline{\ball}_{2R},\end{cases}\;\;\;\text{and}\;\;\;w:=\begin{cases} 
(1-\rho)\mathbb{E}_{\widetilde{r},\widetilde{s}}\widetilde{u}&\;\text{in}\;\ball_{\widetilde{s}},\\
\widetilde{u}&\;\text{in}\;\ball_{2R}\setminus\overline{\ball}_{\widetilde{s}},\\
w_{0}&\;\text{in}\;\Omega'\setminus\overline{\ball}_{2R}.
\end{cases}
\end{align*}
Then we have $(w_{j})\subset\mathscr{A}_{w_{0}}^{q}(\ball_{2R},\Omega')$ and $w_{j}\stackrel{*}{\rightharpoonup}w$ in $\bv(\Omega';\R^{N})$. On the other hand, we have $\mathbb{E}_{\widetilde{r},\widetilde{s}}\widetilde{u}\in\sobo^{1,q}(\ball_{\widetilde{s}}\setminus\overline{\ball}_{\widetilde{r}};\R^{N})$; this is a consequence of step 2 applied to $\varrho_{1}=\widetilde{r}$, $\varrho_{2}=\widetilde{s}$ and the definition of $m_{q}$.  By the support properties of $\rho$, we thus have $(1-\rho)\mathbb{T}u\in\sobo^{1,q}(\ball_{\widetilde{s}};\R^{N})$, and since $(1-\rho)\mathbb{T}u$ attains the same boundary values along $\partial\!\ball_{\widetilde{s}}$ as $\widetilde{u}$, Lemma~\ref{lem:consistency} gives 
\begin{align}\label{eq:justonemorething}
\overline{\mathscr{F}}_{\nabla a,\widetilde{u}}^{*}[(1-\rho)\mathbb{T}\widetilde{u};\ball_{\widetilde{s}}] = \int_{\ball_{\widetilde{s}}}F_{\nabla a}(\nabla((1-\rho)\mathbb{T}u))\dif x.
\end{align}
In consequence,  
\begin{align}\label{eq:ukraine}
\begin{split}
\overline{\mathscr{F}}_{\nabla a,\widetilde{u}}^{*}[\widetilde{u}+\psi;\ball_{2R}] & \stackrel{\substack{(w_{j})\subset\mathscr{A}_{w_{0}}^{q}(\ball_{2R},\Omega'),\\\,w_{j}\stackrel{*}{\rightharpoonup}w}}{\leq} \liminf_{j\to\infty}\Big(\int_{\ball_{\widetilde{s}}\setminus\overline{\ball}_{\widetilde{r}}}F_{\nabla a}(\nabla ((1-\rho)\mathbb{E}_{\widetilde{r},\widetilde{s}}\widetilde{u}))\dif x \Big. \\ & \Big. \;\;\;\;\;\;\;\;\;\;\;\;\;\;\;\;\;\;\;\;\;\;\;\;\;\;\;\; + \int_{\ball_{2R}\setminus\overline{\ball}_{\widetilde{s}}}F_{\nabla a}(\nabla  v_{j})\dif x\Big) \\  
& \;\;\;\;\; \;\;\;\;\;\;= \overline{\mathscr{F}}_{\nabla a,\widetilde{u}}^{*}[(1-\rho)\mathbb{T}\widetilde{u};\ball_{\widetilde{s}}] + \overline{\mathscr{F}}_{\nabla a,\widetilde{u}}^{*}[\widetilde{u};\ball_{2R}\setminus\overline{\ball}_{\widetilde{s}}], 
\end{split}
\end{align}
where we additionally used $F_{\nabla a}(0)=0$. In particular, $\overline{\mathscr{F}}_{\nabla a,\widetilde{u}}^{*}[u+\psi;\ball_{2R}]<\infty$.

\emph{Step 3b. Derivation of~\eqref{eq:Caccmain}.} With $w_{0}$ as in step 3a, we let $(\widetilde{u}_{j})\subset\mathscr{A}_{w_{0}}^{q}(\ball_{\widetilde{r}},\Omega')$ be a generating sequence for $\overline{\mathscr{F}}_{\nabla a,\widetilde{u}}^{*}[\widetilde{u};\ball_{\widetilde{r}}]$. This value is finite and $\widetilde{r}$ is an additivity radius for $\overline{\mathscr{F}}^{*}[u;-]$, which can be seen by Lemma~\ref{lem:additivityradii}~\ref{item:fin1},~\ref{item:fin2}, Remark~\ref{rem:Columbo} and $\widetilde{r}\in E^{\complement}$. With the cut-off function $\rho$ from above, we then define 
\begin{align*}
\varphi_{j}:=\begin{cases} \rho\widetilde{u}_{j}&\;\text{in}\;\ball_{\widetilde{r}},\\
\rho\mathbb{E}_{\widetilde{r},\widetilde{s}}\widetilde{u}&\;\text{in}\;\ball_{\widetilde{s}}\setminus\overline{\ball}_{\widetilde{r}},\\
0&\;\text{in}\;\ball_{2R}\setminus\overline{\ball}_{\widetilde{s}}, 
\end{cases}
\end{align*}
so that $\varphi_{j}\in\sobo_{c}^{1,q}(\ball_{\widetilde{s}};\R^{N})$ and $\varphi_{j}\to\rho\mathbb{T}\widetilde{u}$ in $\lebe_{\locc}^{1}(\ball_{\widetilde{s}};\R^{N})$. Moreover, since $|\nabla a|\leq m$ by assumption, we have Lemma~\ref{lem:shiftconnect}~\ref{item:shiftconnect3} and hence~\eqref{eq:Vboundbelow} at our disposal. We thus conclude 
\begin{align*}
\ell^{(m)}\int_{\overline{\ball}_{r}}V(D\widetilde{u}) &  \leq \ell^{(m)}\liminf_{j\to\infty}\int_{\ball_{\widetilde{s}}}V(\nabla \varphi_{j})\dif x \;\;\;\;\;\;\;\;\;\;\;\;\;\;\;\;(\text{by Lemma~\ref{lem:reshetnyak}})\\ & \!\!\stackrel{\eqref{eq:Vboundbelow}}{\leq} \liminf_{j\to\infty}\int_{\ball_{\widetilde{s}}}F_{\nabla a}(\nabla\varphi_{j})\dif x \\ 
& = \Big(\liminf_{j\to\infty}\int_{\ball_{\widetilde{r}}}F_{\nabla  a}(\nabla \widetilde{u}_{j})\dif x\Big) + \int_{\ball_{\widetilde{s}}\setminus\overline{\ball}_{\widetilde{r}}}F_{\nabla a}(\nabla(\rho\mathbb{T}\widetilde{u}))\dif x\\ 
& \!\stackrel{(*)}{=} \overline{\mathscr{F}}_{\nabla a,\widetilde{u}}^{*}[\widetilde{u};\ball_{2R}] - \overline{\mathscr{F}}_{\nabla a,\widetilde{u}}^{*}[\widetilde{u};\ball_{2R}\setminus\overline{\ball}_{\widetilde{r}}] + \int_{\ball_{\widetilde{s}}\setminus\overline{\ball}_{\widetilde{r}}}F_{\nabla a}(\nabla(\rho\mathbb{T}\widetilde{u}))\dif x \\ 
& \!\!\!\stackrel{\eqref{eq:modBVminimality}}{\leq}  \overline{\mathscr{F}}_{\nabla a,\widetilde{u}}^{*}[\widetilde{u}+\psi;\ball_{2R}] - \overline{\mathscr{F}}_{\nabla a,\widetilde{u}}^{*}[\widetilde{u};\ball_{2R}\setminus\overline{\ball}_{\widetilde{r}}] + \int_{\ball_{\widetilde{s}}\setminus\overline{\ball}_{\widetilde{r}}}F_{\nabla a}(\nabla(\rho\mathbb{T}\widetilde{u}))\dif x \\
& \!\!\!\stackrel{\eqref{eq:ukraine}}{\leq}   \overline{\mathscr{F}}_{\nabla a,\widetilde{u}}^{*}[(1-\rho)\mathbb{T}\widetilde{u};\ball_{\widetilde{s}}] + (\overline{\mathscr{F}}_{\nabla a,\widetilde{u}}^{*}[\widetilde{u};\ball_{2R}\setminus\overline{\ball}_{\widetilde{s}}] - \overline{\mathscr{F}}_{\nabla a,\widetilde{u}}^{*}[\widetilde{u};\ball_{2R}\setminus\overline{\ball}_{\widetilde{r}}]) \\ 
& + \int_{\ball_{\widetilde{s}}\setminus\overline{\ball}_{\widetilde{r}}}F_{\nabla a}(\nabla(\rho\mathbb{T}\widetilde{u}))\dif x \\ 
& =: \mathrm{III}+\mathrm{IV}+\mathrm{V}, 
\end{align*}
where we used at $(*)$ that $\widetilde{r}$ is an additivity radius for $\overline{\mathscr{F}}_{\nabla a ,\widetilde{u}}^{*}[u;-]$ and $(\widetilde{u}_{j})$ is generating for $\overline{\mathscr{F}}_{\nabla a,\widetilde{u}}^{*}[\widetilde{u};\ball_{\widetilde{r}}]$. We now turn to the estimation of $\mathrm{III}, \mathrm{IV}$ and $\mathrm{V}$. 

Ad $\mathrm{III}$ and $\mathrm{V}$. Note that $(1-\rho)\mathbb{T}\widetilde{u}$ is supported away from $\overline{\ball}_{\widetilde{r}}$ and its restriction to $\ball_{\widetilde{s}}\setminus\overline{\ball}_{\widetilde{r}}$ belongs to $\sobo^{1,q}(\ball_{\widetilde{s}}\setminus\overline{\ball}_{\widetilde{r}};\R^{N})$. Hence, by Lemma~\ref{lem:consistency} and the definition of $\mathbb{T}$, 
\begin{align}\label{eq:Cacc1}
\begin{split}
\mathrm{III} & \leq \int_{\ball_{\widetilde{s}}\setminus\overline{\ball}_{\widetilde{r}}}F_{\nabla a}(\nabla ((1-\rho)\mathbb{E}_{\widetilde{r},\widetilde{s}}\widetilde{u}))\dif x \\ 
& \!\!\!\!\!\!\!\!\!\stackrel{\text{Lem.}~\ref{lem:shifted}\ref{item:shifted(a)}}{\leq} c\int_{\ball_{\widetilde{s}}\setminus\overline{\ball}_{\widetilde{r}}}m_{q}((1-\rho)\nabla\mathbb{E}_{\widetilde{r},\widetilde{s}}\widetilde{u}-\mathbb{E}_{\widetilde{r},\widetilde{s}}\widetilde{u}\otimes\nabla\rho)\dif x \\
& \leq c\int_{\ball_{\widetilde{s}}\setminus\overline{\ball}_{\widetilde{r}}}m_{q}(\nabla\mathbb{E}_{\widetilde{r},\widetilde{s}}\widetilde{u})\dif x + c\int_{\ball_{\widetilde{s}}\setminus\overline{\ball}_{\widetilde{r}}}m_{q}\left(\frac{\mathbb{E}_{\widetilde{r},\widetilde{s}}\widetilde{u}}{s-r}\right)\dif x
\end{split}
\end{align}
by~\eqref{eq:rhoprops} and $(\widetilde{s}-\widetilde{r})\sim(s-r)$ by~\eqref{eq:CaccEdef}\emph{ff.}. Here, $c=c(n,N,L,\ell^{(m)},m,q)>0$. In a similar vein, we obtain 
\begin{align}\label{eq:Cacc1A}
\mathrm{V} \leq c\int_{\ball_{\widetilde{s}}\setminus\overline{\ball}_{\widetilde{r}}}m_{q}(\nabla\mathbb{E}_{\widetilde{r},\widetilde{s}}\widetilde{u})\dif x + c\int_{\ball_{\widetilde{s}}\setminus\overline{\ball}_{\widetilde{r}}}m_{q}\left(\frac{\mathbb{E}_{\widetilde{r},\widetilde{s}}\widetilde{u}}{s-r}\right)\dif x. 
\end{align}
We may then combine~\eqref{eq:Cacc1} and~\eqref{eq:Cacc1A}. To this end, we recall that by ~\eqref{eq:uniformlycomparable},~\eqref{eq:uniformchoose} is satisfied with $\varrho_{1}=\widetilde{r}$, $\varrho_{2}=\widetilde{s}$ and $\kappa=16000$. By step 2,~\eqref{eq:mainstepCacc} is applicable and thus yields
\begin{align}\label{eq:TabeaTscherpelTababy}
\begin{split}
\mathrm{III}+\mathrm{V} & \leq c\Big(\int_{\overline{\ball}_{s}\setminus \overline{\ball}_{r}}\Big(V \Big(\frac{\widetilde{u}}{s-r}\Big)\dif x + V(D\widetilde{u})\Big) \Big. \\
& \Big. + (s-r)^{n}\Big((s-r)^{-n}\int_{\overline{\ball}_{s}\setminus\overline{\ball}_{r}} \Big(V \Big(\frac{\widetilde{u}}{s-r}\Big)\dif x + V(D\widetilde{u})\Big) \Big)^{q}\Big). 
\end{split}
\end{align} 
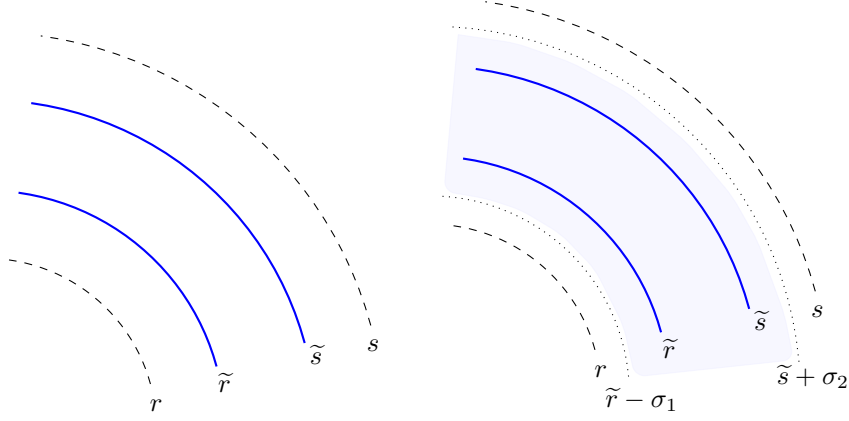
\begin{figure}
\begin{tikzpicture}[scale=0.75]
%\draw[step=.5cm,gray,very thin] (-1.4,-1.4) grid (1.4,1.4);
% \foreach \x in {0.1,0.2,0.3}
% \draw (5*\x,{5*sqrt(1-\x*\x)}) circle (0.4cm);
\draw[black,dashed,domain=15:82] plot ({3*cos(\x)},{3*sin(\x)});
\draw[black,dashed,domain=15:82] plot ({7*cos(\x)},{7*sin(\x)});
\draw[blue,thick,domain=15:82] plot ({4.2*cos(\x)},{4.2*sin(\x)});
\draw[blue,thick,domain=15:82] plot ({5.8*cos(\x)},{5.8*sin(\x)});
\draw (3,0.4) node {$r$};
\draw (4.2,0.8) node {$\widetilde{r}$};
\draw (5.8,1.3) node {$\widetilde{s}$};
\draw (6.8,1.46) node {$s$};
\end{tikzpicture}
\hspace{0.5cm}
\begin{tikzpicture}[scale=0.75]
%\draw[step=.5cm,gray,very thin] (-1.4,-1.4) grid (1.4,1.4);
% \foreach \x in {0.1,0.2,0.3}
% \draw (5*\x,{5*sqrt(1-\x*\x)}) circle (0.4cm);
\draw[blue!10!white, rounded corners, fill=blue!10!white,   opacity=0.3] (0.5,6.35)--(1.5,6.2)--(2.5,5.85)--(3.5,5.3)--(4.5,4.45)--(5.3,3.5)-- (5.75,2.7)--(6.22,1.4)--(6.38,0.6)--(3.6,0.3)--(3.4,1.2)--(2.9,2.2)--(2.3,2.825)--(1.6,3.25)--(1.0,3.45)--(0.25,3.58)--(0.5,6.35);
\draw[black,dashed,domain=15:82] plot ({3*cos(\x)},{3*sin(\x)});
\draw[black,dashed,domain=15:82] plot ({7*cos(\x)},{7*sin(\x)});
\draw[blue,thick,domain=15:82] plot ({4.2*cos(\x)},{4.2*sin(\x)});
\draw[blue,thick,domain=15:82] plot ({5.8*cos(\x)},{5.8*sin(\x)});
\draw[black,dotted,domain=5:87] plot ({3.5*cos(\x)},{3.5*sin(\x)});
\draw[black,dotted,domain=5:87] plot ({6.5*cos(\x)},{6.5*sin(\x)});
\draw (3.72,-0.1) node {$\widetilde{r}-\sigma_{1}$};
\draw (6.72,0.3) node {$\widetilde{s}+\sigma_{2}$};
\draw (3,0.4) node {$r$};
\draw (4.2,0.8) node {$\widetilde{r}$};
\draw (5.8,1.3) node {$\widetilde{s}$};
\draw (6.8,1.46) node {$s$};
\end{tikzpicture}
\caption{Radius notation and construction of competitor maps in the proof of Theorem~\ref{thm:Cacc}.}
\end{figure}
Ad $\mathrm{IV}$. We proceed similarly as in step 1 and invoke Lemma~\ref{lem:HLW} to find $\sigma_{1},\sigma_{2}>0$ with the following properties: 
\begin{align}\label{eq:secondTF}
\begin{split}
&\sigma_{1} \leq \widetilde{r}-r \leq 8\sigma_{1},\;\;\;\mathcal{M}Du(x_{0},\widetilde{r}-\sigma_{1})<\infty,\\
&\frac{\Theta(\tau)-\Theta(\widetilde{r}-\sigma_{1})}{\tau-r+\sigma_{1}}\leq 800 \frac{\Theta(\widetilde{r})-\Theta(r)}{\widetilde{r}-r}\qquad\text{for all}\;\tau\in (\widetilde{r}-\sigma_{1},\widetilde{r}),\\
&\sigma_{2} \leq (s-\widetilde{s}) \leq 8\sigma_{2},\;\;\;\mathcal{M}Du(x_{0},\widetilde{s}+\sigma_{2})<\infty,\\ 
& \frac{\Theta(\widetilde{s}+\sigma_{2})-\Theta(\tau)}{\widetilde{s}+\sigma_{2}-\tau}\leq 800\frac{\Theta(s)-\Theta(\widetilde{s})}{s-\widetilde{s}}\qquad\text{for all}\;\tau\in (\widetilde{s},\widetilde{s}+\sigma_{2}).
\end{split}
\end{align}
Before we continue, we collect some consequences of \eqref{eq:secondTF}: Since $\widetilde{r}\notin E$ with $E$ as in \eqref{eq:CaccEdef}, we have $
\frac{1}{20}(s-r)\leq \widetilde{r}-r\leq s-r$. Thus, 
by $\eqref{eq:secondTF}_{2}$ combined with $\Theta(\widetilde{r})\leq\Theta(s)$:  
\begin{align}\label{eq:secondTF1}
\frac{\Theta(\tau)-\Theta(\widetilde{r}-\sigma_{1})}{\tau-\widetilde{r}+\sigma_{1}}\leq 800 \frac{\Theta(\widetilde{r})-\Theta(r)}{\widetilde{r}-r} \leq 16000 \frac{\Theta(s)-\Theta(r)}{s-r}
\end{align}
for all $\tau\in (\widetilde{r}-\sigma_{1},\widetilde{r})$. Similarly, $\frac{1}{20}(s-r)\leq s-\widetilde{s}\leq s-r$ and so we have 
\begin{align}\label{eq:secondTF2}
\frac{\Theta(\widetilde{s}+\sigma_{2})-\Theta(\tau)}{\widetilde{s}+\sigma_{2}-\tau}\leq 16000\frac{\Theta(s)-\Theta(r)}{s-r}\qquad\text{for all}\;\tau\in (\widetilde{s},\widetilde{s}+\sigma_{2}). 
\end{align} 
By $\eqref{eq:secondTF}_{1}$ and $\eqref{eq:secondTF}_{3}$, we choose a cut-off function $\eta\in\hold_{c}^{\infty}(\ball_{2R};[0,1])$ such that
\begin{align}\label{eq:etaest}
\begin{split}
&\mathbbm{1}_{\ball_{\widetilde{s}}\setminus\overline{\ball}_{\widetilde{r}}} \leq \eta \leq \mathbbm{1}_{\ball_{\widetilde{s}+\sigma_{2}}\setminus\overline{\ball}_{\widetilde{r}-\sigma_{1}}}, \\ 
& |\nabla \eta| \leq 100\max\left\{\frac{1}{\widetilde{r}-r},\frac{1}{s-\widetilde{s}}\right\},\;\text{so that}\;|\nabla\eta|\leq \frac{2000}{s-r}.
\end{split}
\end{align}
We now define a new competitor map $v\in\bv_{c}(\ball_{2R};\R^{N})$ by 
\begin{align*}
v:=\begin{cases}
\eta\mathbb{E}_{\widetilde{r}-\sigma_{1},\widetilde{r}}\widetilde{u}&\;\text{on}\;\ball_{\widetilde{r}}\setminus\overline{\ball}_{\widetilde{r}-\sigma_{1}},\\
\widetilde{u}&\;\text{on}\;\ball_{\widetilde{s}}\setminus\overline{\ball}_{\widetilde{r}},\\
\eta\mathbb{E}_{\widetilde{s},\widetilde{s}+\sigma_{2}}\widetilde{u}&\;\text{on}\;\ball_{\widetilde{s}+\sigma_{2}}\setminus \overline{\ball}_{\widetilde{s}},\\
0&\;\text{otherwise}. 
\end{cases}
\end{align*} 
Similarly as in Step 3a, $\overline{\mathscr{F}}_{\nabla a,v}^{*}[v;\ball_{s}]\in (-\infty,\infty)$. Moreover, as a substitute of~\eqref{eq:ukraine}, we find 
\begin{align}\label{eq:koppnick}
\begin{split}
\overline{\mathscr{F}}_{\nabla a,v}^{*}[v;\ball_{s}] & \leq \overline{\mathscr{F}}_{\nabla a,v}^{*}[\eta\mathbb{E}_{\widetilde{r}-\sigma_{1},\widetilde{r}}\widetilde{u};\ball_{\widetilde{r}}\setminus\overline{\ball}_{\widetilde{r}-\sigma_{1}}] + \overline{\mathscr{F}}_{\nabla a,\widetilde{u}}^{*}[\widetilde{u};\ball_{\widetilde{s}}\setminus\overline{\ball}_{\widetilde{r}}] \\ & + \overline{\mathscr{F}}_{\nabla a,v}^{*}[\eta\mathbb{E}_{\widetilde{s},\widetilde{s}+\sigma_{2}}\widetilde{u};\ball_{\widetilde{s}+\sigma_{2}}\setminus\overline{\ball}_{\widetilde{s}}].
\end{split}
\end{align}
By Lemma~\ref{lem:additivityradii}~\ref{item:fin2}, Remark~\ref{rem:Columbo}, cf.~\eqref{eq:AdrianMonkSheronaFleming}, and $\widetilde{r},\widetilde{s}\in E^{\complement}$ we then conclude that
\begin{align}\label{eq:homerun}
\begin{split}
\overline{\mathscr{F}}_{\nabla a,\widetilde{u}}^{*}[\widetilde{u};\ball_{2R}\setminus\overline{\ball}_{\widetilde{s}}] & +\overline{\mathscr{F}}_{\nabla a,\widetilde{u}}^{*}[\widetilde{u};\ball_{\widetilde{s}}\setminus\overline{\ball}_{\widetilde{r}}] + \overline{\mathscr{F}}_{\nabla a,\widetilde{u}}^{*}[\widetilde{u};\ball_{\widetilde{r}}] \\ & = \overline{\mathscr{F}}_{\nabla a,\widetilde{u}}^{*}[\widetilde{u};\ball_{2R}] = \overline{\mathscr{F}}_{\nabla a,\widetilde{u}}^{*}[\widetilde{u};\ball_{2R}\setminus\overline{\ball}_{\widetilde{r}}] + \overline{\mathscr{F}}_{\nabla a,\widetilde{u}}^{*}[\widetilde{u};\ball_{\widetilde{r}}]
\end{split}
\end{align}
whereby 
\begin{align}\label{eq:Cacc2}
\begin{split}
\mathrm{IV}\;\; & \!\!\stackrel{\eqref{eq:homerun}}{=} -\overline{\mathscr{F}}_{\nabla a,\widetilde{u}}^{*}[\widetilde{u};\ball_{\widetilde{s}}\setminus\overline{\ball}_{\widetilde{r}}] \\ 
& \!\!\stackrel{\eqref{eq:koppnick}}{\leq} -\overline{\mathscr{F}}_{\nabla a,v}^{*}[v;\ball_{s}] \\ & + (\overline{\mathscr{F}}_{\nabla a,v}^{*}[\eta\mathbb{E}_{\widetilde{r}-\sigma_{1},\widetilde{r}}\widetilde{u};\ball_{\widetilde{r}}\setminus\overline{\ball}_{r}]+\overline{\mathscr{F}}_{\nabla a,v}^{*}[\eta\mathbb{E}_{\widetilde{s},\widetilde{s}+\sigma_{2}}\widetilde{u};\ball_{s}\setminus\overline{\ball}_{\widetilde{s}}]) \\ 
& =: -\mathrm{IV}_{a} + \mathrm{IV}_{b}.
\end{split}
\end{align}
To give an estimate on $(-\mathrm{IV}_{a})$, we note that by \ref{item:H2} applied to $\nabla a$ with $|\nabla a|\leq m$ and the compact support of $v\in\bv(\ball_{2R'};\R^{N})$, \eqref{eq:Vboundbelow} implies 
\begin{align*}
\ell^{(m)}\int_{\ball_{s}}V(Dv) \leq \overline{\mathscr{F}}_{\nabla a,v}^{*}[v;\ball_{s}]. 
\end{align*}
Therefore, 
\begin{align}\label{eq:mainsign}
-\mathrm{IV}_{a}=-\overline{\mathscr{F}}_{\nabla a,v}^{*}[v;\ball_{s}] & \leq - \ell^{(m)}\int_{\ball_{s}}V(Dv)\leq 0. 
\end{align}
We proceed by estimating the layer term $\mathrm{IV}_{b}$. As in the proofs of Proposition~\ref{prop:goodminseqs} or~\ref{prop:mazur}, using Lemma~\ref{lem:extensionoperator}~\ref{item:extension4}, we conclude that the restriction of $v$ to the annulus $(\ball_{\widetilde{r}}\setminus\overline{\ball}_{\widetilde{r}-\sigma_{1}})\cup(\ball_{\widetilde{s}+\sigma_{2}}\setminus\overline{\ball}_{\widetilde{s}})$ belongs to $\sobo^{1,q}((\ball_{\widetilde{r}}\setminus\overline{\ball}_{\widetilde{r}-\sigma_{1}})\cup(\ball_{\widetilde{s}+\sigma_{2}}\setminus\overline{\ball}_{\widetilde{s}});\R^{N})$. Lemma~\ref{lem:consistency} then gives us
\begin{align*}
\mathrm{IV}_{b} & \leq \int_{\ball_{\widetilde{r}}\setminus\overline{\ball}_{\widetilde{r}-\sigma_{1}}} F_{\nabla a}(\nabla(\eta\mathbb{E}_{\widetilde{r}-\sigma_{1},\widetilde{r}}\widetilde{u}))\dif x + \int_{\ball_{\widetilde{s}+\sigma_{2}}\setminus\overline{\ball}_{\widetilde{s}}} F_{\nabla a}(\nabla(\eta\mathbb{E}_{\widetilde{s},\widetilde{s}+\sigma_{2}}\widetilde{u}))\dif x\\ 
& \!\!\!\!\!\!\!\!\!\stackrel{\text{Lem.}~\ref{lem:shifted}\ref{item:shifted(a)}}{\leq} c\Big(\int_{\ball_{\widetilde{r}}\setminus\overline{\ball}_{\widetilde{r}-\sigma_{1}}} m_{q}(\nabla(\eta\mathbb{E}_{\widetilde{r}-\sigma_{1},\widetilde{r}}\widetilde{u}))\dif x \Big. \\ & \;\;\;\;\;\;\;\;\;\;\;\;\;\;\;\;\;\;\;\;\;\;\;\;\Big. + \int_{\ball_{\widetilde{s}+\sigma_{2}}\setminus\overline{\ball}_{\widetilde{s}}} m_{q}(\nabla(\eta\mathbb{E}_{\widetilde{s},\widetilde{s}+\sigma_{2}}\widetilde{u}))\dif x\Big) \\ 
& \!\!\!\!\stackrel{\eqref{eq:etaest}_{2}}{\leq}  c\left(\int_{\ball_{\widetilde{r}}\setminus\overline{\ball}_{\widetilde{r}-\sigma_{1}}} m_{q}(\nabla\mathbb{E}_{\widetilde{r}-\sigma_{1},\widetilde{r}}\widetilde{u})\dif x + \int_{\ball_{\widetilde{r}}\setminus\overline{\ball}_{\widetilde{r}-\sigma_{1}}} m_{q}\left(\frac{\mathbb{E}_{\widetilde{r}-\sigma_{1},\widetilde{r}}\widetilde{u}}{s-r}\right)\dif x\right) \\ 
& + c\left(\int_{\ball_{\widetilde{s}+\sigma_{2}}\setminus\overline{\ball}_{\widetilde{s}}} m_{q}(\nabla\mathbb{E}_{\widetilde{s},\widetilde{s}+\sigma_{2}}\widetilde{u})\dif x + \int_{\ball_{\widetilde{s}+\sigma_{2}}\setminus\overline{\ball}_{\widetilde{s}}} m_{q}\left(\frac{\mathbb{E}_{\widetilde{s},\widetilde{s}+\sigma_{2}}\widetilde{u}}{s-r}\right)\dif x\right)\\
& =: \mathrm{IV}_{c} + \mathrm{IV}_{d}. 
\end{align*}
To estimate $\mathrm{IV}_{c}$, we employ step 2 with $\varrho_{1}=\widetilde{r}-\sigma_{1}$ and $\varrho_{2}=\widetilde{r}$. By~\eqref{eq:uniformlycomparable} and \eqref{eq:secondTF1}, \eqref{eq:uniformchoose} is satisfied for these particular choices of radii with $\kappa=24000$. Thus \eqref{eq:mainstepCacc} is available for these choices of parameters; the term $\mathrm{IV}_{d}$ can be estimated by analogous means, where we now set $\varrho_{1}=\widetilde{s}$, $\varrho_{2}=\widetilde{s}+\sigma_{2}$ and utilise \eqref{eq:uniformlycomparable} and \eqref{eq:secondTF2} to arrive at the requisite form of \eqref{eq:uniformchoose}. This equally yields \eqref{eq:mainstepCacc} in this situation. In combination with \eqref{eq:Cacc2} and \eqref{eq:mainsign}, we then obtain 
\begin{align}\label{eq:Cacc2A}
\begin{split}
\mathrm{IV} & \leq c\Big(\int_{\ball_{s}\setminus \overline{\ball}_{r}}V\Big(\frac{\widetilde{u}}{s-r}\Big)\mathscr{L}^{n}+ V(D\widetilde{u}) \Big. \\
& \Big. \;\;\;\;\;\;\;\;\;\;\;\;\;\;\;\;+ (s-r)^{n}\Big((s-r)^{-n}\int_{\overline{\ball}_{s}\setminus\overline{\ball}_{r}} V\left(\frac{\widetilde{u}}{s-r}\right)\mathscr{L}^{n}+ V(D\widetilde{u}) ) \Big)^{q}\Big). 
\end{split}
\end{align}
Gathering the estimates from \eqref{eq:TabeaTscherpelTababy}~and~\eqref{eq:Cacc2A}, we obtain 
\begin{align*}
\int_{\overline{\ball}_{r}}V(D\widetilde{u}) & \leq  c\sum_{j\in\{0,1\}}\Big(\int_{\overline{\ball}_{s}\setminus \overline{\ball}_{r}}V((s-r)^{j-1}D^{j}\widetilde{u})\Big. \\
& \Big. \;\;\;\;\;\;\;\;\;\;\;\;\;+ (s-r)^{n}\Big((s-r)^{-n}\int_{\overline{\ball}_{s}\setminus\overline{\ball}_{r}} V((s-r)^{j-1} D^{j}\widetilde{u}) \Big)^{q}\Big), 
\end{align*}
where $c=c(n,N,L,\ell^{(m)},m,q)>0$. Fixing this constant $c$, we fill the hole on the right hand side and add $c\int_{\overline{\ball}_{r}}V(D\widetilde{u})$ to both sides. We thus obtain with $\theta=\frac{c}{c+1}$ 
\begin{align*}
\int_{\overline{\ball}_{r}}V(D\widetilde{u}) & \leq \theta \int_{\overline{\ball}_{s}}V(D\widetilde{u}) + (s-r)^{n}\Big((s-r)^{-n}\int_{\ball_{2R}} V(D\widetilde{u}) \Big)^{q} \\ 
&  + \int_{\overline{\ball}_{s}\setminus\overline{\ball}_{r}}V\left(\frac{\widetilde{u}}{s-r}\right)\dif x + (s-r)^{n}\Big((s-r)^{-n}\int_{\ball_{s}\setminus\overline{\ball}_{r}}V\left(\frac{\widetilde{u}}{s-r}\right)\dif x\Big)^{q}
\end{align*}
Now Lemma~\ref{lem:iteration} yields \eqref{eq:Caccmain}, and the proof is complete. 
\end{proof}
\subsection{The Caccioppoli inequality for $(p,q)$-growth}\label{sec:Caccpq}
We now briefly comment on the requisite modifications in the growth regime $1<p\leq q<\frac{np}{n-1}$ that lead to 
\begin{corollary}\label{cor:Caccplargerthan1}
Let $1<p\leq q<\frac{np}{n-1}$. Assume that $F\colon\R^{N\times n}\to\R$ satisfies \emph{~\ref{item:H1},~\ref{item:H2p} and~\ref{item:H3}} and suppose that $x_{0}\in\Omega$ and $R>0$ are such that $\ball_{2R}(x_{0})\Subset\Omega$. Let $u\in\sobo^{1,p}(\ball_{2R}(x_{0});\R^{N})$ be a \emph{minimizer} of $\overline{\mathscr{F}}_{u}[-;\ball_{2R}(x_{0})]$ for compactly supported variations. Moreover, given $m>0$, let $a\colon\R^{n}\to\R^{N}$ be an affine-linear map with $|\nabla a|\leq m$. 

Then there exists a constant $c=c(n,N,m,L,\ell_{m},p,q)>0$ such that 
\begin{align}\label{eq:CaccmainP}
\begin{split}
\dashint_{\ball_{R}(x_{0})}V_{p}(\nabla(u-a))\dif x & \leq c\left[ \dashint_{\ball_{2R}(x_{0})}V_{p}\left(\frac{u-a}{R}\right) \right. \\ & \;\;\;\;\;\;\;\;\;\;\; \left.+ \sum_{j\in\{0,1\}}\left(\dashint_{\ball_{2R}(x_{0})}V_{p}\left(\frac{\nabla^{j}(u-a)}{R^{1-j}}\right)\dif x\right)^{\frac{q}{p}}\right].
\end{split}
\end{align}
\end{corollary}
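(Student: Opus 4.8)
The plan is to adapt the proof of Theorem~\ref{thm:Cacc} almost verbatim, replacing the reference integrand $V=V_1$ by $V_p$ throughout and using the $p$-analogues of all auxiliary estimates that were already flagged in the excerpt. Concretely, first I would invoke Lemma~\ref{lem:shiftconnect} in its $p>1$ version to reduce to the shifted integrand $F_{\nabla a}$ and to record that $u$ being a minimizer of $\overline{\mathscr{F}}_u[-;\ball_{2R}(x_0)]$ for compactly supported variations entails that $\widetilde u:=u-a$ is a minimizer of $\overline{\mathscr{F}}_{\nabla a,\widetilde u}[-;\ball_{2R}(x_0)]$ for compactly supported variations, together with the coercivity bound $\eqref{eq:Vboundbelow}$ with $V_p$ in place of $V$. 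The good-radii selection in Step~1 is run exactly as before, now with the right-continuous, non-decreasing function $\Theta(t):=\int_{\overline{\ball}_t}V_p\big(\tfrac{\widetilde u}{s-r}\big)\dif x+\int_{\overline{\ball}_t}V_p(\nabla\widetilde u)\dif x$; since $u\in\sobo^{1,p}$ there is no singular part, which only simplifies matters. The maximal-function apparatus (Lemma~\ref{lem:HLW}, Corollary~\ref{cor:Fubini}\ref{item:Fubini2}, Lemma~\ref{lem:extensionoperator}) applies unchanged because $q<\tfrac{np}{n-1}$ is exactly the threshold needed for the geometric series in Lemma~\ref{lem:extensionoperator}\ref{item:extension4}--\ref{item:extension5} to converge.

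The heart of the argument is Step~2, the layer estimate. Here the only change is that the comparison
\[
m_q(|z|)\lesssim_q\, \text{(something)}\leq c\,\dashint_{\Lambda\ball^i}V_p(\nabla\widetilde u)\quad\text{or}\quad c\Big(\dashint_{\Lambda\ball^i}V_p(\nabla\widetilde u)\Big)^{q/p}
\]
must be established in place of $\eqref{eq:Caccest1}$--$\eqref{eq:Caccest2}$. On Whitney balls where $\dashint_{\Lambda\ball^i}|\nabla\widetilde u|\le1$ one uses $m_q(t)\sim t^2$ and $V_p(z)\sim|z|^2$ for $|z|\le1$ (Lemma~\ref{lem:Efunction}\ref{item:EfctA}) plus Jensen to get control by $\dashint_{\Lambda\ball^i}V_p(\nabla\widetilde u)$; on the remaining balls one uses $m_q(t)\sim t^q$ for $t\ge1$ and $V_p(z)\sim|z|^p$ for $|z|\ge1$, so $\big(\dashint_{\Lambda\ball^i}|\nabla\widetilde u|\big)^q\le\big(\dashint_{\Lambda\ball^i}|\nabla\widetilde u|^{p}\big)^{q/p}\sim\big(\dashint_{\Lambda\ball^i}V_p(\nabla\widetilde u)\big)^{q/p}$, again by Jensen (valid since $q\ge p$). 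Summing via $\ell^1\hookrightarrow\ell^{q/p}$ (note $q/p\ge1$) and the annular decomposition $\mathcal I_{>}^m$ as in the original proof, one arrives at the $p$-version of $\eqref{eq:mainstepCacc}$: the $\sum_j\int m_q\big(\tfrac{|\nabla^j\mathbb E\widetilde u|}{(s-r)^{1-j}}\big)$ is bounded by $\int_{\overline\ball_s\setminus\overline\ball_r}\big(V_p(\tfrac{\widetilde u}{s-r})+V_p(\nabla\widetilde u)\big)+(s-r)^n\big((s-r)^{-n}\int_{\overline\ball_s\setminus\overline\ball_r}(V_p(\tfrac{\widetilde u}{s-r})+V_p(\nabla\widetilde u))\big)^{q/p}$. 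The exponent $q/p$, rather than $q$, is what produces the exponent $q/p$ on the right-hand side of $\eqref{eq:CaccmainP}$.

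Step~3 then proceeds identically: one builds the competitor $\psi=(1-\rho)\mathbb T\widetilde u-\widetilde u$, checks finiteness of $\overline{\mathscr{F}}_{\nabla a,\widetilde u}[\widetilde u+\psi;\ball_{2R}]$ via the gluing/additivity lemmas (Lemma~\ref{lem:additivityradii} and Remark~\ref{rem:Columbo}, in their $p>1$ forms), and exploits minimality together with $\eqref{eq:Vboundbelow}$ (with $V_p$) and the Lipschitz-type bound $\eqref{eq:lipschitz}$ and Lemma~\ref{lem:shifted}\ref{item:shifted(a)} (whose conclusion $|F_w(z)|\lesssim m_q(|z|)$ is exactly the $m_q$ appearing above) to dominate the terms $\mathrm{III},\mathrm V$ and, after the second Hardy--Littlewood selection for $\sigma_1,\sigma_2$, the term $\mathrm{IV}$ — the crucial sign being $-\mathrm{IV}_a=-\overline{\mathscr{F}}_{\nabla a,v}[v;\ball_s]\le-\ell^{(m)}\int_{\ball_s}V_p(\nabla v)\le0$. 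Filling the hole and applying the iteration Lemma~\ref{lem:iteration} with $V_p$ and $\gamma_3=q/p$ yields $\eqref{eq:CaccmainP}$. I expect the only genuine point requiring care — as opposed to purely mechanical substitution — to be the correct bookkeeping of the exponent $q/p$ versus $q$ in the Jensen steps and in the $\ell^1\hookrightarrow\ell^{q/p}$ embedding of Step~2, since this is where the subquadratic ($p<2$) and superquadratic ($p>2$) behaviours of $V_p$ and $m_q$ (cf.\ $\eqref{eq:auxMq}$) interact; everything else is a faithful transcription of the $p=1$ argument, and I would simply indicate the modifications rather than rewrite the entire proof.
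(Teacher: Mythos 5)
Your proposal is correct and takes essentially the same approach as the paper's own (brief) proof, which simply replaces $V$ by $V_p$ throughout, records the $\Theta_p$ replacement, takes $\lambda$ to be a weak*-limit of $|\nabla u_j|^p\mathscr{L}^n$, and modifies the key Whitney-ball estimate in Step~2 to produce the exponent $q/p$ via $\ell^1\hookrightarrow\ell^{q/p}$ and the threshold $q<\tfrac{np}{n-1}$. One small point to tighten: your chain $\bigl(\dashint|\nabla\widetilde u|^p\bigr)^{q/p}\sim\bigl(\dashint V_p(\nabla\widetilde u)\bigr)^{q/p}$ is really only a one-sided $\lesssim$, and it needs the constraint $i\in\mathcal{I}_{>}$ to absorb the constant (e.g.\ via $|z|^p\le V_p(z)+1$ together with $\dashint_{\Lambda\!\ball^i}V_p(\nabla\widetilde u)\ge V_p(1)>0$); the paper's route is marginally cleaner, applying Lemma~\ref{lem:Efunction}~\ref{item:EfctA} and Jensen directly to $V_p\bigl(\dashint_{\Lambda\!\ball^i}|\nabla\widetilde u|\bigr)$.
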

\begin{proof}
We only outline the main modifications, where we now systematically work with $V_{p}$ instead of $V=V_{1}$. We keep the set-up of step 1, where we now take 
\begin{align*}
\Theta_{p}(t):=\int_{\ball_{t}}V_{p}\Big(\frac{\widetilde{u}}{s-r}\Big)+V_{p}(\nabla\widetilde{u})\dif x
\end{align*}
instead of~\eqref{eq:ThetaDefCacc}; note that the non-negative measure $\lambda\in\mathrm{RM}_{\mathrm{fin}}(\Omega')$ is now taken to be a weak*-limit of a subsequence of the sequence $(|\nabla u_{j}|^{p}\mathscr{L}^{n}\mres\Omega')$ of the total variation measures for some respective generating sequence $(u_{j})$. The main distinction in the proof enters in step 2. Estimate~\eqref{eq:Caccest1} holds true, whereas~\eqref{eq:Caccest2} is modified by 
\begin{align}\label{eq:Caccest2A1}
\begin{split}
m_{q}(|\nabla\mathbb{E}_{\varrho_{1},\varrho_{2}}\widetilde{u}(x)|) & \leq cm_{q}\Big(\dashint_{\Lambda\!\ball^{i}}|\nabla\widetilde{u}|\dif x \Big) \;\;(\text{by Lemma~\ref{lem:extensionoperator}~\ref{item:extension1}})\\  & \leq c\Big(\dashint_{\Lambda\!\ball^{i}}V_{p}(\nabla\widetilde{u})\dif x\Big)^{\frac{q}{p}} (\text{by Lemma~\ref{lem:Efunction}~\ref{item:EfctA}, $i\in\mathcal{I}_{>}$ and Jensen}).
\end{split}
\end{align}
When we place this estimate into~\eqref{eq:I0bound} and hereafter~\eqref{eq:Ibest}, now using that $\ell^{1}\hookrightarrow\ell^{\frac{q}{p}}$ and $q<\frac{np}{n-1}$, we end up with 
\begin{align}\label{eq:Caccest3A1}
\begin{split}
&\sum_{j\in\{0,1\}}\int_{\ball_{\varrho_{2}}\setminus\overline{\ball}_{\varrho_{1}}} m_{q}\Big(\frac{|\nabla^{j}\mathbb{E}_{\varrho_{1},\varrho_{2}}\widetilde{u}|}{(s-r)^{1-j}} \Big)\dif x  \leq c\Big(\int_{\ball_{s}\setminus\overline{\ball}_{r}}V_{p}\Big(\frac{\widetilde{u}}{s-r}\Big)\dif x \Big.\\ &\Big. + \int_{\ball_{s}\setminus\overline{\ball}_{r}}V_{p}(\nabla\widetilde{u})\dif x+(s-r)^{n}\Big((s-r)^{-n}\int_{\ball_{s}\setminus\overline{\ball}_{r}}V_{p}\Big(\frac{\widetilde{u}}{s-r}\Big)+V_{p}(\nabla\widetilde{u})\dif x \Big)^{\frac{q}{p}}\Big)
\end{split}
\end{align}
as a substitute for~\eqref{eq:mainstepCacc}. Using~\ref{item:H2p} instead of ~\ref{item:H2} in step 3b, the proof now evolves exactly as above. 
\end{proof}
\section{Proof of Theorems~\ref{thm:main1} and \ref{thm:main2}}\label{sec:proofmain}
This final section is devoted to the proof of Theorems~\ref{thm:main1} and~\ref{thm:main2}. Based on the Euler-Lagrange system from Corollary~\ref{cor:EulerLagrange} and the Caccioppoli inequality of the second kind from Corollary~\ref{cor:Caccplargerthan1}, the superlinear growth case from Theorem~\ref{thm:main2} follows by $\mathbb{A}$-harmonic approximation along the same lines as in \cite[Sec.~7.2,~7.3]{SchmidtPR1}; also see Remark~\ref{rem:plarger1excess} below. In the linear growth regime $p=1$, the corresponding comparison systems can only be solved on specific balls, which is why we address this case explicitely and give the detailled proof. 
\subsection{Improved distance estimates to an $\A$-harmonic comparison map}\label{sec:improveddistance} By our above discussion, we now suppose that $1=p\leq q < \frac{n}{n-1}$. Towards the excess decay in Section~\ref{sec:excess}, we require an improved estimate of the local $\bv$-minimizers for compactly supported variations to $\A$-harmonic comparison maps on good balls. Our strategy is inspired by the precursors \cite{Gm20,GK1} of the present paper, now being further systematised by the maximal condition \ref{item:bdry1} below:
\begin{proposition}\label{prop:improvement}
Let $1\leq q < \frac{n}{n-1}$ and $\ball_{R}(x_{0})\Subset\Omega$. Moreover, let $F\colon\R^{N\times n}\to\R$ satisfy \emph{\ref{item:H1}--\ref{item:H3}} and suppose that $u\in\bv_{\locc}(\Omega;\R^{N})$ is a local minimizer of $\overline{\mathscr{F}}^{*}$ for compactly supported variations in $\Omega$. 
Then for any $m>0$ there exists a constant $C=C(m,n,N,q,L,\ell_{m})>0$ (with $\ell_{m}>0$ as in~\emph{\ref{item:H2}})  such that the following holds: If
\begin{enumerate}
\item \label{item:bdry1} $\mathcal{M}Du(x_{0},R)<\infty$, and so   
\item \label{item:bdry2} $\trace_{\partial\!\ball_{R}(x_{0})}(u)\in \sobo^{\frac{1}{2n+1},\frac{2n+1}{2n}}(\partial\!\ball_{R}(x_{0});\R^{N})$ by Corollary~\ref{cor:Fubini},
\end{enumerate}
and $a\colon\R^{n}\to\R^{N}$ is affine-linear with $|\nabla a|\leq m$, then we have with $\widetilde{u}:=u-a$
\begin{align}\label{eq:absolutemain}
\dashint_{\ball_{R}(x_{0})}V\Big(\frac{\widetilde{u}-\widetilde{h}}{R}\Big)\dif x \leq C \Big(\dashint_{\ball_{R}(x_{0})}V(D\widetilde{u}) \Big)^{\frac{2n}{2n-1}}, 
\end{align}
where $\widetilde{h}$ is the unique solution of the Legendre-Hadamard elliptic system
\begin{align}\label{eq:linsysmain}
\begin{cases} 
-\di( F''(\nabla a)\nabla \widetilde{h}) = 0&\;\text{in}\;\ball_{R}(x_{0}),\\
\widetilde{h} = \trace_{\partial\!\ball_{R}(x_{0})}(\widetilde{u}) &\;\text{on}\;\partial\!\ball_{R}(x_{0}). 
\end{cases}
\end{align}
\end{proposition}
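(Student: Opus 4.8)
\textbf{Proof plan for Proposition~\ref{prop:improvement}.}
The plan is to compare $\widetilde u$ on $\ball_{R}(x_{0})$ with the $\A$-harmonic map $\widetilde h$ solving~\eqref{eq:linsysmain}, where $\A=F''(\nabla a)$ is Legendre--Hadamard elliptic with ellipticity constants depending only on $m$ by the discussion preceding Lemma~\ref{lem:linearsystems}. First I would use the maximal condition~\ref{item:bdry1}: by~\eqref{eq:traceequal} the inner and outer traces of $u$ along $\partial\!\ball_{R}(x_{0})$ coincide, and by Corollary~\ref{cor:Fubini} (with the qualitative statement at its end, taking $\vartheta=\frac{2n+1}{2n}<\frac{n}{n-1}$, i.e. $q=\frac{2n+1}{2n}$) we get $\trace_{\partial\!\ball_{R}(x_{0})}(\widetilde u)\in\sobo^{1-1/\vartheta,\vartheta}(\partial\!\ball_{R}(x_{0});\R^{N})$ with the scaled seminorm controlled by $\dashint_{\overline{\ball}_{R}(x_{0})}|D\widetilde u|$. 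Lemma~\ref{lem:linearsystems}~\ref{item:linsys1} then gives existence, uniqueness and the interior estimates for $\widetilde h$, together with the bound $\sum_{0\le i\le 2}R^{i}\big(\dashint_{\ball_{R}(x_{0})}|\nabla^{i}\widetilde h|^{\vartheta}\big)^{1/\vartheta}\lesssim \dashint_{\overline{\ball}_{R}(x_{0})}|D\widetilde u|$.

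The heart of the argument is a duality/test-function estimate for $w:=\widetilde u-\widetilde h$. Since $\widetilde u$ is a local $\bv$-minimizer for compactly supported variations, Corollary~\ref{cor:ELMAIN} (or Corollary~\ref{cor:EulerLagrange}, shifted via Lemma~\ref{lem:shiftconnect}) gives the Euler--Lagrange system $\int \langle F'(\nabla u),\nabla\varphi\rangle\,\dx=0$ for $\varphi\in\sobo_{c}^{1,s}$ with $s=\frac{p}{p-q+1}=\frac{1}{2-q}$; together with $-\di(\A\nabla\widetilde h)=0$ this yields $\int_{\ball_{R}(x_{0})}\langle\A\nabla w,\nabla\varphi\rangle\,\dx = \int_{\ball_{R}(x_{0})}\langle \A\nabla a+F'(\nabla a)-F'(\nabla u),\nabla\varphi\rangle\,\dx$, and the integrand on the right is controlled by $|F''(\nabla a)\nabla\widetilde u-F'_{\nabla a}(\nabla\widetilde u)|\lesssim V_{\max\{1,q-1\}}(\nabla\widetilde u)=V(\nabla\widetilde u)$ by Lemma~\ref{lem:shifted}~\ref{item:shifted(c)} (here $q-1<1$). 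To extract a pointwise-type decay for $w$ I would solve, via Lemma~\ref{lem:linearsystems}~\ref{item:linsys2}, the dual system $-\di(\A^{\top}\nabla g)=T$ with zero boundary data for suitable $T\in\lebe^{s'}$, plug $\varphi=g$ in (after a cut-off/approximation argument to make $g$ admissible as a compactly supported test function, handling the annular layer terms exactly as in the Caccioppoli proof and in~\cite{GK1,Gm20} by picking a good radius slightly inside $R$), and use the regularity estimate for $g$ to convert the right-hand side into $\|T\|_{\lebe^{s'}}\cdot\big(\dashint_{\ball_{R}(x_{0})}V(D\widetilde u)\big)$. Optimising over $T$ in the unit ball of $\lebe^{s'}$ (equivalently, running this for $s'=$ the Sobolev conjugate dictated by $q<\frac{n}{n-1}$) produces a bound on $\dashint_{\ball_{R}(x_{0})}\big|\frac{w}{R}\big|^{s}\,\dx$, hence on $\dashint_{\ball_{R}(x_{0})}V\big(\frac{w}{R}\big)\,\dx$, in terms of a power of $\dashint_{\ball_{R}(x_{0})}V(D\widetilde u)$; tracking the exponent gives $\frac{2n}{2n-1}$, which is exactly the Sobolev gain associated with the threshold $\frac{n}{n-1}$ for $p=1$.

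The main obstacle I anticipate is the low regularity of the interior trace of $\bv$-maps along spheres: the boundary data of~\eqref{eq:linsysmain} is only $\lebe^{1}$ for a generic radius, and second-order elliptic systems with $\lebe^{1}$ boundary data are ill-posed, so the whole argument can only run on a \emph{good} ball where $\mathcal{M}Du(x_{0},R)<\infty$ — this is precisely why hypothesis~\ref{item:bdry1} is imposed and why Corollary~\ref{cor:Fubini} is invoked to upgrade the trace to a genuine Sobolev--Slobodeckii space $\sobo^{1-1/\vartheta,\vartheta}$ with $\vartheta>1$. A secondary technical point is that the dual test function $g$ produced by Lemma~\ref{lem:linearsystems}~\ref{item:linsys2} is not compactly supported in $\ball_{R}(x_{0})$, so it must be multiplied by a cut-off and the resulting commutator/layer terms estimated; this is done by choosing, via Lemma~\ref{lem:HLW} applied to $t\mapsto |D\widetilde u|(\overline{\ball}_{t}(x_{0}))+\|\widetilde u/R\|_{\lebe^{1}(\ball_{t}(x_{0}))}$, a radius $\rho\in(\tfrac{9}{10}R,R)$ on which the layer contribution of $|D\widetilde u|$ is comparably small, working on $\ball_{\rho}(x_{0})$, and then passing back to $\ball_{R}(x_{0})$ using $\tfrac{9}{10}R<\rho<R$; the error terms so generated are again controlled by $\dashint_{\ball_{R}(x_{0})}V(D\widetilde u)$ and its $\frac{2n}{2n-1}$-power, which is absorbed into the right-hand side of~\eqref{eq:absolutemain}.
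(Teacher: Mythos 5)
Your overall scheme—compare $\widetilde u$ with the $\A$-harmonic map $\widetilde h$ via Lemma~\ref{lem:linearsystems}, feed the Euler–Lagrange system from Corollary~\ref{cor:ELMAIN} and the shift Lemma~\ref{lem:shifted}~\ref{item:shifted(c)} into a duality argument with the adjoint system from Lemma~\ref{lem:linearsystems}~\ref{item:linsys2}, and use the good-radius condition \ref{item:bdry1} plus Corollary~\ref{cor:Fubini} to make the boundary value problem~\eqref{eq:linsysmain} well posed—is the right direction, and this is indeed the skeleton of the paper's argument. But there is a genuine gap in the central step, namely how the dual estimate is converted into the $V$-function bound~\eqref{eq:absolutemain}.

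Your plan is to run a classical $\lebe^{s}$--$\lebe^{s'}$ duality, optimising the choice of source $T$ in the unit ball of $\lebe^{s'}$, and then argue that an $\lebe^{s}$-bound on $w/R$ gives the $V$-bound. This does not close: for any $s>1$, $\|w/R\|_{\lebe^{s}}$ neither dominates nor is dominated by $\int V(w/R)$ (which behaves like $\lebe^{2}$ for small values and $\lebe^{1}$ for large ones), so the passage from an $\lebe^{s}$-estimate to~\eqref{eq:absolutemain} is not a formal consequence, and tracking the resulting exponent would be delicate at best. What the paper does instead is pick one \emph{specific, nonlinear} source for the adjoint problem: after rescaling to $\ball_{1}(0)$ and writing $\Psi=(\widetilde u-\widetilde h)/R$, it sets $T=\mathbb{T}(\Psi)$ with $\mathbb{T}(z)=z$ for $|z|\le 1$ and $z/|z|$ otherwise. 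Then $\langle\mathbb{T}(\Psi),\Psi\rangle\sim\min\{|\Psi|,|\Psi|^{2}\}\sim V(\Psi)$ and $|\mathbb{T}(\Psi)|^{2n}\lesssim V(\Psi)$, so testing the adjoint system (solved in $\sobo^{2,2n}$, hence $\nabla\Phi$ Hölder up to the boundary via Morrey) with $\Psi\in\bv_{0}$ and then symmetrising to bring $D\widetilde U$ in, one obtains in one stroke $\int V(\Psi)\lesssim V(D\widetilde U)(\ball_{1}(0))\,\|V(\Psi)\|_{\lebe^{1}}^{1/(2n)}$; dividing and raising to the power $\frac{2n}{2n-1}$ gives the claim. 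The exponent $\frac{2n}{2n-1}$ therefore comes from the arbitrary but convenient choice $s=2n>n$ in the Morrey embedding for the adjoint solution, not from the $(p,q)$-threshold $\frac{n}{n-1}$ as you suggest. Finally, the cut-off/layer construction and the appeal to Lemma~\ref{lem:HLW} that you propose are unnecessary here: the adjoint solution has zero boundary trace on $\partial\ball_{R}(x_{0})$ and is $\sobo^{2,s}$ up to the boundary, so after extension by zero it is directly an admissible compactly supported Lipschitz test map in the Euler–Lagrange system; no inner radius selection or commutator estimates are needed in this particular step (they are, as you rightly observe, essential in the proof of the Caccioppoli inequality, but not here).
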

\begin{proof}
Let $u\in\bv_{\locc}(\Omega;\R^{N})$, $\ball_{R}(x_{0})\Subset\Omega$ and $a\colon\R^{n}\to\R^{N}$ be as in the proposition. By~\eqref{eq:traceequal} and Corollary~\ref{cor:Fubini}, \ref{item:bdry1} implies that the right- and left-sided traces of $u$ along $\partial\!\ball_{R}(x_{0})$ coincide and that \ref{item:bdry2} holds because of $1<\vartheta:=\frac{2n+1}{2n}<\frac{n}{n-1}$. 

Since $\mathbb{A}:=F''(\nabla a)$ is Legendre-Hadamard elliptic by~\ref{item:H2} and the trace $\trace_{\partial\!\ball_{R}(x_{0})}(u)$ belongs to $\sobo^{1-1/\vartheta,\vartheta}(\partial\!\ball_{R}(x_{0});\R^{N})$ by~\ref{item:bdry2}, the system \eqref{eq:linsysmain} has a unique weak solution $\widetilde{h}\in\sobo^{1,\vartheta}(\ball_{R}(x_{0});\R^{N})$ by Lemma~\ref{lem:linearsystems}~\ref{item:linsys1}. For the following, it is convenient to rescale to the unit ball, and so we define for $x\in\ball_{1}(0)$ 
\begin{align*}
\Psi(x):=\frac{\psi(x_{0}+Rx)}{R},\;\;\;\Phi(x):=\frac{\varphi(x_{0}+Rx)}{R}\;\;\;\text{and}\;\;\;\widetilde{U}(x):=\frac{\widetilde{u}(x_{0}+Rx)}{R}, 
\end{align*}
where $\psi:=\widetilde{u}-\widetilde{h}$ and $\varphi\in(\hold_{0}\cap\hold^{1})(\ball_{R}(x_{0});\R^{N})$ is arbitrary. Now define
\begin{align*}
\mathbb{T}\colon \R^{N}\ni z \mapsto \begin{cases} z&\;\text{if}\;|z|\leq 1,\\
\frac{z}{|z|}&\;\text{if}\;|z|>1, 
\end{cases}
\end{align*}
so that $\mathbb{T}(\Psi)\in\lebe^{\infty}(\ball_{1}(0);\R^{N})$. We then consider the auxiliary Legendre-Hadamard elliptic system
\begin{align}\label{eq:linear2}
\begin{cases}
-\di(\A\nabla\Phi)= \mathbb{T}(\Psi)&\;\text{in}\;\ball_{1}(0),\\
\Phi = 0 &\;\text{on}\;\partial\!\ball_{1}(0).
\end{cases}
\end{align}
Since $\mathbb{T}(\Psi)\in\lebe^{s}(\ball_{1}(0);\R^{N})$ for any $1<s<\infty$, we invoke Lemma~\ref{lem:linearsystems}~\ref{item:linsys2} to find that there exists a weak solution $\Phi$ of~\eqref{eq:linear2} that satisfies $\Phi\in(\sobo_{0}^{1,s}\cap\sobo^{2,s})(\ball_{1}(0);\R^{N})$ for all $1<s<\infty$ together with the continuity bounds
\begin{align}\label{eq:contboundsAAA}
\|\Phi\|_{\sobo^{2,s}(\ball_{1}(0))}\leq c\|\mathbb{T}(\Psi)\|_{\lebe^{s}(\ball_{1}(0))}, 
\end{align}
where $c=c(s,m,n,N,L,\ell_{m})>0$. We apply~\eqref{eq:contboundsAAA} to the particular choice $s:=2n$ and use Morrey's embedding $\sobo^{1,2n}(\ball_{1}(0);\R^{N})\hookrightarrow\hold^{0,\frac{1}{2}}(\overline{\ball}_{1}(0);\R^{N})$ to find 
\begin{align}\label{eq:contboundsBBB}
\|\Phi\|_{\hold^{0,\frac{1}{2}}(\ball_{1}(0))}+\|\nabla\Phi\|_{\hold^{0,\frac{1}{2}}(\ball_{1}(0))} & \stackrel{\eqref{eq:contboundsAAA}}{\leq} c\|\mathbb{T}(\Psi)\|_{\lebe^{2n}(\ball_{1}(0))} \leq c\|V(\Psi)\|_{\lebe^{1}(\ball_{1}(0))}^{\frac{1}{2n}}, 
\end{align}
where now $c=c(m,n,N,L,\ell_{m})>0$; here, the second inequality is a consequence of the definition of $\mathbb{T}$ and $V$, cf.~Lemma~\ref{lem:Efunction}~\ref{item:EfctA}. Now, since $u$ is a local $\bv$-minimizer of $\overline{\mathscr{F}}^{*}$ for compactly supported variations in $\Omega$, so is $\widetilde{u}$ for $\overline{\mathscr{F}}_{\nabla a}^{*}$ by Lemma~\ref{lem:shiftconnect}. Thus, with the Lebesgue-Radon-Nikod\'{y}m decomposition $D\widetilde{U}=\nabla\widetilde{U}\mathscr{L}^{n}\mres\ball_{1}(0)+D^{s}U$, we find
\begin{align}\label{eq:oliverkahn}
\begin{split}
\left\vert \int_{\ball_{1}(0)}\langle \A D\widetilde{U},\nabla\Phi\rangle\right\vert & \leq \left\vert \int_{\ball_{1}(0)} \langle\A\nabla\widetilde{U},\nabla\Phi\rangle\dif x\right\vert + \left\vert \int_{\ball_{1}(0)} \langle\A D^{s}U,\nabla\Phi\rangle\right\vert \\
& \leq \left\vert \int_{\ball_{1}(0)}\langle(F''_{\nabla a}(0)\nabla\widetilde{U}-F'_{\nabla a}(\nabla\widetilde{U})),\nabla\Phi\rangle\dif x\right\vert \\ &  + \left\vert\int_{\ball_{1}(0)}\langle F'_{\nabla a}(\nabla\widetilde{U}),\nabla\Phi\rangle\dif x \right\vert + \left\vert \int_{\ball_{1}(0)}\langle\A D^{s}U,\nabla\Phi\rangle\right\vert \\
& =: \mathrm{I} + \mathrm{II} + \mathrm{III}, 
\end{split}
\end{align} 
and since $\nabla\Phi$ is continuous up to the boundary by~\eqref{eq:contboundsBBB}, all terms are meaningful. For term $\mathrm{I}$, we use Lemma~\ref{lem:shifted}~\ref{item:shifted(c)} and $q<\frac{n}{n-1}\leq 2$ to find 
\begin{align}\label{eq:contboundsCCC}
\mathrm{I} \leq c\int_{\ball_{1}(0)}V(\nabla\widetilde{U})\dif x \|\nabla\Phi\|_{\lebe^{\infty}(\ball_{1}(0))} \stackrel{\eqref{eq:contboundsBBB}}{\leq}  cV(D\widetilde{U})(\ball_{1}(0))\|V(\Psi)\|_{\lebe^{1}(\ball_{1}(0))}^{\frac{1}{2n}}
\end{align}
From Lemma~\ref{lem:shiftconnect}~\ref{item:shiftconnect2} and Corollary~\ref{cor:ELMAIN}\footnote{We extend $\varphi\in\sobo_{0}^{1,\infty}(\ball_{R}(x_{0});\R^{N})$ given by $\varphi(x)=R\Phi(\frac{x-x_{0}}{R})$ by zero to $\varphi\in\sobo_{c}^{1,\infty}(\Omega;\R^{N})$, apply Corollary~\ref{cor:ELMAIN} to $F_{\nabla a}$, $\widetilde{u}$ and then scale back to the unit ball.},  we infer $\mathrm{II}=0$. For term $\mathrm{III}$, we recall~\eqref{eq:functionsofmeasures} to find 
\begin{align}\label{eq:contboundsDDD}
\begin{split}
\mathrm{III} & \leq c(\A)|D^{s}U|(\ball_{1}(0))\|\nabla\Phi\|_{\lebe^{\infty}(\ball_{1}(0))} \stackrel{\eqref{eq:contboundsBBB}}{\leq} c(\A)V(D\widetilde{U})(\ball_{1}(0))\|V(\Psi)\|_{\lebe^{1}(\ball_{1}(0))}^{\frac{1}{2n}}.
\end{split}
\end{align}
In consequence,~\eqref{eq:oliverkahn}--\eqref{eq:contboundsDDD} combine to 
\begin{align}\label{eq:contboundsEEE}
\left\vert \int_{\ball_{1}(0)}\langle \A D\widetilde{U},\nabla\Phi\rangle\right\vert \leq cV(D\widetilde{U})(\ball_{1}(0))\|V(\Psi)\|_{\lebe^{1}(\ball_{1}(0))}^{\frac{1}{2n}}.
\end{align}
Since, in particular, $\Phi\in(\hold_{0}\cap\hold^{1})(\ball_{1}(0);\R^{N})$ and $\Psi\in\bv_{0}(\ball_{1}(0);\R^{N})$, \eqref{eq:linear2} gives 
\begin{align}\label{eq:test1}
\int_{\ball_{1}(0)}\langle \A\nabla\Phi, D\Psi\rangle = \int_{\ball_{1}(0)}\langle\mathbb{T}(\Psi),\Psi\rangle\dif x.
\end{align}
Using that $\langle\mathbb{T}(z), z\rangle \leq cV(z)$ (cf.~Lemma~\ref{lem:Efunction}~\ref{item:EfctA}) in the second step, we therefore obtain 
\begin{align*}
\int_{\ball_{1}(0)}V(\Psi)\dif x & \;\stackrel{\text{Lemma~\ref{lem:Efunction}\ref{item:EfctA}}}{\leq} c \int_{\ball_{1}(0)}\min\{|\Psi|,|\Psi|^{2}\}\dif x \\
& \;\;\;\;\;\;\,= c \int_{\ball_{1}(0)}\langle\mathbb{T}(\Psi),\Psi\rangle\dif x\\
& \;\;\;\;\;\stackrel{\eqref{eq:test1}}{=} c \int_{\ball_{1}(0)} \langle\A \nabla\Phi, D\Psi\rangle\\  
& \stackrel{\A\;\text{ is symmetric}}{=} c \int_{\ball_{1}(0)} \langle\A D\Psi,\nabla \Phi\rangle\\ 
& \;\;\;\;\;\,\stackrel{\eqref{eq:linsysmain}}{=} c \int_{\ball_{1}(0)} \langle\A D\widetilde{U},\nabla\Phi\rangle \stackrel{\eqref{eq:contboundsEEE}}{\leq} cV(D\widetilde{U})(\ball_{1}(0))\|V(\Psi)\|_{\lebe^{1}(\ball_{1}(0))}^{\frac{1}{2n}}, 
\end{align*}
where $c=c(m,N,n,q,L,\ell_{m})>0$ is a constant. We may assume that $\|V(\Psi)\|_{\lebe^{1}(\ball_{1}(0))}>0$. Dividing the previous overall inequality by $\|V(\Psi)\|_{\lebe^{1}(\ball_{1}(0))}^{\frac{1}{2n}}$ and raising the resulting inequality to the power $\frac{2n}{2n-1}$ yields 
\begin{align}\label{eq:unscaledComparison}
\int_{\ball_{1}(0)}V(\Psi)\dif x \leq c \Big(\int_{\ball_{1}(0)}V(D\widetilde{U})\Big)^{\frac{2n}{2n-1}}.
\end{align}
Now we scale back to $\ball_{R}(x_{0})$ to obtain \eqref{eq:absolutemain}, and the proof is complete. 
\end{proof}

\subsection{Preliminary excess decay}\label{sec:excess}
Based on Theorem~\ref{thm:Cacc} and Proposition~\ref{prop:improvement}, we are now in position to conclude a preliminary excess decay estimate. This estimate will be subsequently iterated in Corollary~\ref{cor:iterate} and thereby imply Theorem~\ref{thm:main1} in Section~\ref{sec:proof1}. For the remainder of this section, we assume $1=p \leq q < \frac{n}{n-1}$. Given $u\in\bv_{\locc}(\Omega)$ and $\ball_{r}(x_{0})\Subset\Omega$, we define the \emph{excess} via 
\begin{align}\label{eq:excessdef}
\mathbf{E}[u;x_{0},r]:=\dashint_{\ball_{r}(x_{0})}V(Du-(Du)_{\ball_{r}(x_{0})}),\;\text{where}\;\;\;(Du)_{\ball_{r}(x_{0})}:=\frac{Du(\ball_{r}(x_{0}))}{\omega_{n}r^{n}}.
\end{align} 
\begin{proposition}[Preliminary excess decay]\label{prop:dec}
Let $1\leq q <\frac{n}{n-1}$ and suppose that $F\colon\R^{N\times n}\to\R$ satisfies \emph{\ref{item:H1}--\ref{item:H3}}. Moreover, let $\omega\Subset\Omega$ be open with Lipschitz boundary and let $u$ be a $\bv$-minimizer of $\overline{\mathscr{F}}_{u}^{*}[-;\omega]$ for compactly supported variations. Then the following hold for all $x_{0}\in\omega$ and $0<R_{0}\leq 1$ such that $\ball_{R_{0}}(x_{0})\Subset\omega$: 
\begin{enumerate}
\item\label{item:dec1} For any $M_{0}>0$ there exists a constant $c=c(n,N,M_{0},\ell_{M_{0}},L,q)>0$ with the following property: If 
\begin{align}\label{eq:smallness0}
|(Du)_{\ball_{R_{0}}(x_{0})}|\leq M_{0}\;\;\;\text{and}\;\;\;\mathbf{E}[u;x_{0},R_{0}]\leq 1, 
\end{align}
then we have for all $0<\sigma<\frac{1}{10}$ with $\Psi_{q}(t):=t+t^{q}$
\begin{align}\label{eq:prelimdec}
\begin{split}
\mathbf{E}[u;x_{0},\sigma R_{0}] & \leq c\Big[\Psi_{q}\Big(\frac{1}{\sigma^{n+2}}\Big(\mathbf{E}[u;x_{0},R_{0}]\Big)^{\frac{1}{2n-1}}\Big)+\sigma^{2}+ \frac{1}{\sigma^{nq}}\mathbf{E}[u;x_{0},R_{0}]^{q-1}\Big]\times \\ 
& \times \mathbf{E}[u;x_{0},R_{0}].
\end{split}
\end{align}
\item\label{item:dec2} For any $M_{0}>0$ and any $0<\alpha<1$, there exist parameters $\sigma=\sigma(n,N,M_{0},\ell_{M_{0}},L,q)\in (0,\frac{1}{10})$ and $\overline{\varepsilon}=\overline{\varepsilon}(n,N,M_{0},\ell_{M_{0}},L,q)\in (0,1)$ such that 
\begin{align}
|(Du)_{\ball_{R_{0}}(x_{0})}|\leq M_{0}\;\;\;\text{and}\;\;\;\mathbf{E}[u;x_{0},R_{0}]\leq \overline{\varepsilon}^{2}
\end{align}
imply 
\begin{align}\label{eq:prelimexcessdecayA}
\mathbf{E}[u;x_{0},\sigma R_{0}]\leq \sigma^{1+\alpha}\mathbf{E}[u;x_{0},R_{0}].
\end{align}
\end{enumerate}
\end{proposition}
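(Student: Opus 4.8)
The strategy is the classical linearisation/comparison scheme, but carried out on a carefully chosen \emph{good ball} so that the $\mathbb{A}$-harmonic comparison map of Proposition~\ref{prop:improvement} is available. I would first fix $M_{0}>0$, write $a(x):=(u)_{\ball_{R_{0}}(x_{0})} + (Du)_{\ball_{R_{0}}(x_{0})}\cdot(x-x_{0})$ so that $|\nabla a|\leq M_{0}$, and put $\widetilde{u}:=u-a$; note $(D\widetilde{u})_{\ball_{R_{0}}(x_{0})}=0$ and $\mathbf{E}[u;x_{0},R_{0}]=\dashint_{\ball_{R_{0}}(x_{0})}V(D\widetilde{u})$. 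By the Fubini-type result, Corollary~\ref{cor:Fubini}, and the remarks in Section~\ref{sec:HLW}, we may select a radius $R\in(\tfrac{9}{10}R_{0},R_{0})$ with $\mathcal{M}Du(x_{0},R)<\infty$, so that $\trace_{\partial\!\ball_{R}(x_{0})}(u)\in\sobo^{\frac{1}{2n+1},\frac{2n+1}{2n}}(\partial\!\ball_{R}(x_{0});\R^{N})$ and, crucially, $R$ is \emph{also} an additivity radius in the sense needed for the Caccioppoli inequality (Lemma~\ref{lem:additivityradii}, Lemma~\ref{lem:shiftconnect}); comparability $R\sim R_{0}$ means all averages over $\ball_{R}$ and $\ball_{R_{0}}$ differ only by dimensional constants and by the doubling of $V$ (Lemma~\ref{lem:Efunction}\ref{item:EfctC}). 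I will run the argument on $\ball_{R}(x_{0})$ and translate back to $\ball_{R_{0}}(x_{0})$ at the end.

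On the good ball, let $\widetilde{h}$ solve the frozen system~\eqref{eq:linsysmain} with $\mathbb{A}=F''(\nabla a)$ and boundary datum $\trace_{\partial\!\ball_{R}(x_{0})}(\widetilde{u})$. The proof then decomposes the excess at scale $\sigma R$ into three contributions by the triangle inequality for $V$ (using Lemma~\ref{lem:Efunction}\ref{item:EfctC},\ref{item:EfctE}):
\begin{align}\label{eq:decplan}
\mathbf{E}[u;x_{0},\sigma R] \lesssim \dashint_{\ball_{\sigma R}(x_{0})}V(D\widetilde{u}-\nabla\widetilde{h}) + \dashint_{\ball_{\sigma R}(x_{0})}V\big(\nabla\widetilde{h}-(\nabla\widetilde{h})_{\ball_{\sigma R}(x_{0})}\big) + V\big((D\widetilde{u})_{\ball_{\sigma R}(x_{0})}-(\nabla\widetilde{h})_{\ball_{\sigma R}(x_{0})}\big).
\end{align}
For the middle term one uses interior regularity of $\mathbb{A}$-harmonic maps (Lemma~\ref{lem:linearsystems}\ref{item:linsys1}, the Campanato-type estimate~\eqref{eq:inverseest}): $\nabla\widetilde h$ oscillates on $\ball_{\sigma R}$ by at most $c\sigma$ times its $\lebe^{1}$-average on a fixed fraction of $\ball_{R}$, and the latter is controlled by $\dashint_{\ball_{R}}|D\widetilde u|$ via the comparison estimate, which in turn is $\lesssim \mathbf{E}[u;x_{0},R]^{1/2}+\mathbf{E}[u;x_{0},R]$ by Lemma~\ref{lem:Efunction}\ref{item:EfctA} and Jensen (in the regime $\mathbf{E}\leq 1$ this is the source of the $\sigma^{2}$ term in~\eqref{eq:prelimdec} after squaring, up to the quasiconvexity constant). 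The first and third terms in~\eqref{eq:decplan} are both dominated by $\dashint_{\ball_{R}(x_{0})}V\big(\frac{\widetilde u-\widetilde h}{R}\big)$ up to a factor $\sigma^{-n}$ (for the third, via Jensen on $\ball_{\sigma R}$ and $|(D\widetilde u)_{\ball_{\sigma R}}-(\nabla\widetilde h)_{\ball_{\sigma R}}| = |\,\sigma R)^{-n}\omega_n^{-1}\int_{\ball_{\sigma R}}D(\widetilde u-\widetilde h)| \lesssim \sigma^{-n}\dashint_{\ball_{R}}|D(\widetilde u-\widetilde h)|$, noting $\widetilde u-\widetilde h$ has zero boundary trace so its full gradient integrates over $\ball_R$), and then Proposition~\ref{prop:improvement} gives
\begin{align}\label{eq:improveplan}
\dashint_{\ball_{R}(x_{0})}V\Big(\frac{\widetilde u-\widetilde h}{R}\Big)\dif x \leq C\Big(\dashint_{\ball_{R}(x_{0})}V(D\widetilde u)\Big)^{\frac{2n}{2n-1}} = C\,\mathbf{E}[u;x_{0},R]^{1+\frac{1}{2n-1}}.
\end{align}
Collecting the three bounds, using $\Psi_q(t)=t+t^q$ to absorb the superquadratic lower-order terms that arise from $V(z)\sim|z|$ for $|z|\gtrsim 1$ (these are where the $t^q$ and the $\sigma^{-nq}\mathbf{E}^{q-1}$ factors enter, via Lemma~\ref{lem:Efunction}\ref{item:EfctA} and Young's inequality applied to the products of $V$-integrals at exponent $q$), and dividing by $\mathbf{E}[u;x_{0},R]$ yields~\eqref{eq:prelimdec} on the good ball; transferring to $\ball_{R_{0}}(x_{0})$ via $R\sim R_{0}$ and enlarging the constant completes part~\ref{item:dec1}.

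For part~\ref{item:dec2}, given $0<\alpha<1$, I first choose $\sigma$ small so that $c\,\sigma^{2}\leq\tfrac12\sigma^{1+\alpha}$ (possible since $2>1+\alpha$, and $c$ depends only on $n,N,M_{0},\ell_{M_{0}},L,q$ — this fixes $\sigma$ once and for all); then, with $\sigma$ fixed, I choose $\overline{\varepsilon}\in(0,1)$ so small that whenever $\mathbf{E}[u;x_{0},R_{0}]\leq\overline{\varepsilon}^{2}$ the remaining bracketed terms in~\eqref{eq:prelimdec}, namely $c\,\Psi_{q}(\sigma^{-n-2}\mathbf{E}^{1/(2n-1)})$ and $c\,\sigma^{-nq}\mathbf{E}^{q-1}$, are each $\leq\tfrac14\sigma^{1+\alpha}$; this is possible because both tend to $0$ as $\mathbf{E}\to 0$ (the exponents $\tfrac{1}{2n-1}>0$, $q-1\geq 0$, and when $q=1$ the last term is the constant $c\sigma^{-n}$ times $\mathbf{E}^0=1$, so for $q=1$ one instead absorbs it into the choice of $\sigma$ — I will note this case split). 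Summing, the bracket is $\leq\sigma^{1+\alpha}$, so $\mathbf{E}[u;x_{0},\sigma R_{0}]\leq\sigma^{1+\alpha}\mathbf{E}[u;x_{0},R_{0}]$, which is~\eqref{eq:prelimexcessdecayA}. The main obstacle is the bookkeeping in part~\ref{item:dec1}: one must pass freely between $\ball_{R}$ and $\ball_{R_{0}}$ while respecting that $R$ must simultaneously be a maximal-function good radius (for the Fubini embedding feeding Proposition~\ref{prop:improvement}) \emph{and} an additivity radius (so that the Caccioppoli inequality, Theorem~\ref{thm:Cacc}, may be invoked to replace the right-hand side $V$-averages of $\widetilde u/R$ and $D\widetilde u$ by the genuine excess $\mathbf{E}[u;x_{0},R_0]$); the Hardy–Littlewood selection lemma, Lemma~\ref{lem:HLW}, guarantees the intersection of these two full-measure sets is nonempty in $(\tfrac{9}{10}R_{0},R_{0})$, and this is the point where the borderline nature of $p=1$ is genuinely felt.
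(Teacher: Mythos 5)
There is a genuine gap in your decomposition~\eqref{eq:decplan}. You propose to estimate $\dashint_{\ball_{\sigma R}}V(D\widetilde{u}-\nabla\widetilde{h})$ by feeding it, up to a factor $\sigma^{-n}$, into Proposition~\ref{prop:improvement}. But Proposition~\ref{prop:improvement} delivers only a \emph{function-level} comparison, i.e.\ a bound on $\dashint_{\ball_{R}}V\big(\tfrac{\widetilde{u}-\widetilde{h}}{R}\big)$, never on the gradient-level quantity $\dashint V(D\widetilde{u}-\nabla\widetilde{h})$. In $\bv$ there is no mechanism to upgrade a zeroth-order $V$-estimate on $\widetilde{u}-\widetilde{h}$ to a first-order $V$-estimate on $D\widetilde{u}-\nabla\widetilde{h}$; this is precisely the obstruction which makes $p=1$ different from the $p>1$ regime, where $\A$-harmonic approximation lemmas furnish the gradient-level comparison directly. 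Your closing remark that the Caccioppoli inequality must enter is the right instinct, but it is not integrated into~\eqref{eq:decplan}: Theorem~\ref{thm:Cacc} applies to $u-a$ with $a$ \emph{affine}, not to $u-\widetilde{h}$ with $\widetilde{h}$ an $\A$-harmonic (non-affine) map. Your treatment of the third term is also off: $\widetilde{u}-\widetilde{h}$ having zero trace on $\partial\!\ball_{R}$ gives $\int_{\ball_{R}}D(\widetilde{u}-\widetilde{h})=0$ vectorially, but this says nothing about $\int_{\ball_{\sigma R}}D(\widetilde{u}-\widetilde{h})$, and in any case $\dashint_{\ball_{R}}|D(\widetilde{u}-\widetilde{h})|$ is not controlled by $\dashint_{\ball_{R}}V\big(\tfrac{\widetilde{u}-\widetilde{h}}{R}\big)$ either.

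The paper avoids all of this by linearising $\widetilde{h}$ before invoking Caccioppoli: one sets $H_{0}(x):=\widetilde{h}(x_{0})+\nabla\widetilde{h}(x_{0})\cdot(x-x_{0})$ and $H_{1}:=a+H_{0}$, so that $H_{1}$ is affine with $|\nabla H_{1}|\leq m$ for some $m$ controlled by $M_{0}$. Theorem~\ref{thm:Cacc} is then applied to $u-H_{1}$ on a ball $\ball_{2\widetilde{R}}$ with $\widetilde{R}$ chosen near $\sigma R_{0}$ so that $u$ minimises $\overline{\mathscr{F}}_{u}^{*}[-;\ball_{2\widetilde{R}}]$ for compactly supported variations. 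This converts the gradient excess into the function-level quantity $\dashint V\big(\tfrac{u-H_{1}}{\sigma R_{0}}\big)$ plus a $q$-th power term. That function-level quantity is split as $V\big(\tfrac{\widetilde{u}-\widetilde{h}}{\sigma R_{0}}\big)+V\big(\tfrac{\widetilde{h}-H_{0}}{\sigma R_{0}}\big)$: the first is where Proposition~\ref{prop:improvement} and the auxiliary radius $R\in(\tfrac{17}{20}R_{0},\tfrac{19}{20}R_{0})$ enter (yielding the $\mathbf{E}^{1/(2n-1)}$ exponent), the second is where the Taylor expansion of $\widetilde{h}$ combined with the inverse estimates~\eqref{eq:inverseest} gives the $\sigma^{2}$ gain. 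Note also that the paper uses \emph{two} distinct good-radius selections — $R\sim R_{0}$ for the comparison system and $\widetilde{R}\sim\sigma R_{0}$ for the minimality/Caccioppoli step — whereas your proposal conflates them into one $R\in(\tfrac{9}{10}R_{0},R_{0})$. Part~\ref{item:dec2} of your argument is essentially correct conditional on part~\ref{item:dec1}, and the $q=1$ caveat you raise is handled in the paper via a convention rather than a separate case.
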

\begin{proof} 
Let $x_{0}\in\omega$. Ad~\ref{item:dec1}. Let $M_{0}>0$ and let $\ball_{R_{0}}:=\ball_{R_{0}}(x_{0})\Subset\omega$ be such that \eqref{eq:smallness0} holds. By Jensen's inequality, this implies that
\begin{align}\label{eq:smallness1}
|(Du)_{\ball_{R_{0}}}|\leq M_{0}\;\;\;\text{and}\;\;\;\dashint_{\ball_{R_{0}}}|Du-(Du)_{\ball_{R_{0}}}|\leq 2
\end{align}
are both satisfied. We then fix an affine-linear map $a\colon\R^{n}\to\R^{N}$ with $\nabla a=(Du)_{\ball_{R_{0}}}$ and then define $\widetilde{u}:=u-a$. Recalling the accordingly linearised integrand $F_{\nabla a}$ from~\eqref{eq:shifted}, Lemma~\ref{lem:shiftconnect} yields that $\widetilde{u}$ is a $\bv$-minimizer of $\overline{\mathscr{F}}_{\nabla a,\widetilde{u}}^{*}[-;\omega]$ for compactly supported variations. Put $\vartheta:=\frac{2n+1}{2n}$ so that $1<\vartheta<\frac{n}{n-1}$. Corollary~\ref{cor:Fubini} then provides us with $\frac{17}{20}R_{0}<R<\frac{19}{20}R_{0}$ such that
\begin{itemize}
\item[(i)] $\mathcal{M}Du(x_{0},R)<\infty$ and 
\item[(ii)] $\trace_{\partial\!\ball_{R}}(\widetilde{u})\in\sobo^{1-\frac{1}{\vartheta},\vartheta}(\partial\!\ball_{R};\R^{N})$ together with
\begin{align}\label{eq:MorningGlory}
\Big(\dashint_{\partial\!\ball_{R}}\int_{\partial\!\ball_{R}}\frac{|\trace_{\partial\!\ball_{R}}(\widetilde{u})(x)-\trace_{\partial\!\ball_{R}}(\widetilde{u})(y)|^{\vartheta}}{|x-y|^{n-2+\vartheta}}\dif^{n-1}x\dif^{n-1}y\Big)^{\frac{1}{\vartheta}}\leq c(n)R_{0}^{\frac{1}{\vartheta}}\dashint_{\overline{\ball}_{\frac{19}{20}R_{0}}}|D\widetilde{u}|,
\end{align} 
\end{itemize}
Define $\A:=F''(\nabla a)$, so that $\A$ induces a Legendre-Hadamard elliptic bilinear form by virtue of~\ref{item:H1} and~\ref{item:H2} (see~\eqref{eq:LHE}\emph{ff.}). By Lemma~\ref{lem:linearsystems} and (ii), the system 
\begin{align}\label{eq:linsyscompare}
\begin{cases} 
-\mathrm{div}(\A\nabla\widetilde{h})\, = 0&\;\text{in}\;\ball_{R},\\ 
\;\;\;\;\;\;\;\;\;\;\;\;\;\;\;\;\,\widetilde{h}=\trace_{\partial\!\ball_{R}}(\widetilde{u})&\;\text{on}\;\partial\!\ball_{R} 
\end{cases}
\end{align}
has a unique solution $\widetilde{h}\in(\hold^{\infty}\cap\sobo^{1,\vartheta})(\ball_{R};\R^{N})$. Note that, if $\widetilde{h}$ solves \eqref{eq:linsyscompare}, then $H:=\widetilde{h}-(\trace_{\partial\!\ball_{R}}(\widetilde{u}))_{\partial\!\ball_{R}}$ solves
\begin{align}\label{eq:linsyscompare1}
\begin{cases} 
-\mathrm{div}(\A\nabla H)\, = 0&\;\text{in}\;\ball_{R},\\ 
\;\;\;\;\;\;\;\;\;\;\;\;\;\;\;\;\,H=\trace_{\partial\!\ball_{R}}(\widetilde{u}-(\trace_{\partial\!\ball_{R}}(\widetilde{u}))_{\partial\!\ball_{R}})&\;\text{on}\;\partial\!\ball_{R}.
\end{cases}
\end{align}
The continuity bounds from Lemma~\ref{lem:linearsystems}, the fractional Poincar\'{e} inequality \eqref{eq:fracPoinc} and $\eqref{eq:MorningGlory}$ give 
\begin{align}\label{eq:PoincaPoinca}
\begin{split}
R\Big(\dashint_{\ball_{R}}|\nabla\widetilde{h}(x)&|^{\vartheta}\dif x\Big)^{\frac{1}{\vartheta}} = R\Big(\dashint_{\ball_{R}}|\nabla H(x)|^{\vartheta}\dif x\Big)^{\frac{1}{\vartheta}} \\ 
& \!\!\!\!\!\!\!\!\!\!\!\!\!\!\!\!\!\!\!\!\!\!\!\!\stackrel{\text{Lem.~\ref{lem:linearsystems}}}{\leq} c\Big(\Big(\dashint_{\partial\!\ball_{R}}|\trace_{\partial\!\ball_{R}}(\widetilde{u})(x)-(\trace_{\partial\!\ball_{R}}(\widetilde{u}))_{\partial\!\ball_{R}}|^{\vartheta}\dif^{n-1}x \Big)^{\frac{1}{\vartheta}}\Big. \\ &\Big. \!\!\!\!\!\!\!\!\!\!\!\!\!\!\!\!\!\!+R^{1-\frac{1}{\vartheta}}\Big(\dashint_{\partial\!\ball_{R}}\int_{\partial\!\ball_{R}}\frac{|\trace_{\partial\!\ball_{R}}(\widetilde{u})(x)-\trace_{\partial\!\ball_{R}}(\widetilde{u})(y)|^{\vartheta}}{|x-y|^{n-2+\vartheta}}\dif^{n-1}x\dif^{n-1}y\Big)^{\frac{1}{\vartheta}}\Big)\\ 
& \!\!\!\!\!\!\!\!\!\!\!\!\!\!\!\!\!\!\!\stackrel{\eqref{eq:fracPoinc}}{\leq} cR^{1-\frac{1}{\vartheta}}\Big(\dashint_{\partial\!\ball_{R}}\int_{\partial\!\ball_{R}}\frac{|\trace_{\partial\!\ball_{R}}(\widetilde{u})(x)-\trace_{\partial\!\ball_{R}}(\widetilde{u})(y)|^{\vartheta}}{|x-y|^{n-2+\vartheta}}\dif^{n-1}x\dif^{n-1}y\Big)^{\frac{1}{\vartheta}}\\
& \!\!\!\!\!\!\!\!\!\!\!\!\!\!\!\!\!\!\!\!\!\!\!\!\!\!\!\!\!\!\!\!\!\!\!\!\stackrel{\eqref{eq:MorningGlory},\,\frac{17}{20}R_{0}<R<R_{0}}{\leq} cR_{0}\Big(\dashint_{\ball_{R_{0}}}|D\widetilde{u}|\Big)
\end{split}
\end{align}
where $c=c(n,N,L,\ell_{M_{0}},M_{0},q)>0$. We define $H_{0}(x):=\widetilde{h}(x_{0})+ \nabla\widetilde{h}(x_{0})\cdot(x-x_{0})$ and $H_{1}(x):=a(x)+H_{0}(x)$. Then we find with a constant $c=c(n,N,L,\ell_{M_{0}},M_{0},q)>0$
\begin{align}\label{eq:PoincaPoinca1}
\begin{split}
|\nabla H_{1}(x)| & \stackrel{\eqref{eq:smallness1}}{\leq} M_{0} + |\nabla \widetilde{h}(x_{0})| \leq M_{0}+\sup_{x\in\ball_{R/4}}|\nabla\widetilde{h}(x)| \\ 
& \!\!\!\stackrel{\text{Lem.~\ref{lem:linearsystems}}}{\leq} M_{0} + c\Big(\dashint_{\ball_{R/2}}|\nabla\widetilde{h}(x)|^{\vartheta}\dif x\Big)^{\frac{1}{\vartheta}}\\ 
& \stackrel{\eqref{eq:PoincaPoinca}}{\leq} M_{0} + c\Big(\dashint_{\ball_{R_{0}}}|D\widetilde{u}|\Big)\\ 
& \stackrel{\eqref{eq:smallness1}}{\leq} M_{0} + 2c =: m
\end{split}
\end{align}
for all $x\in\ball_{R}(x_{0})$. Let $0<\sigma<\frac{1}{10}$. At this stage, we use the Caccioppoli inequality from Theorem~\ref{thm:Cacc} with $m>0$ as in \eqref{eq:PoincaPoinca1} and the affine-linear map $H_{1}$; this is admissible by~ \eqref{eq:PoincaPoinca1}. We choose a radius $\sigma R_{0}<\widetilde{R}<\frac{3}{2}\sigma R_{0}$ such that $u$ is a $\bv$-minimizer of $\overline{\mathscr{F}}_{u}^{*}[-;\ball_{2\widetilde{R}}(x_{0})]$ for compactly supported variations in $\ball_{2\widetilde{R}}(x_{0})$; $\mathscr{L}^{1}$-a.e. $\widetilde{R}\in(\sigma R_{0},\frac{3}{2}\sigma R_{0})$ will do by Lemma~\ref{lem:additivityradii}~\ref{item:fin3}. Therefore, Jensen's inequality and Lemma~\ref{lem:Efunction} then give us with the convex function $\Psi_{q}(t):=t+t^{q}$
\begin{align}\label{eq:blurparklife}
\begin{split}
\mathbf{E}[u;x_{0},\sigma R_{0}] & = \dashint_{\ball_{\sigma R_{0}}}V(Du-(Du)_{\ball_{\sigma R_{0}}}) \\ & \!\!\!\stackrel{\text{Jensen}}{\leq} c\dashint_{\ball_{\sigma R_{0}}}V(D(u-H_{1})) \leq c\dashint_{\ball_{\widetilde{R}}}V(D(u-H_{1})) \\ 
& \!\!\!\!\!\stackrel{\text{Thm.~\ref{thm:Cacc}}}{\leq}  c\Psi_{q}\Big( \dashint_{\ball_{2\widetilde{R}}}V\Big(\frac{u-H_{1}}{\widetilde{R}}\Big)\dif x \Big)+c\Big(\dashint_{\ball_{2\widetilde{R}}}V(D(u-H_{1})) \Big)^{q} \\ 
& \!\!\!\!\!\!\!\!\!\!\!\!\stackrel{\frac{17}{20}R_{0}<R<R_{0}}{\leq}  c\Psi_{q}\Big( \dashint_{\ball_{2\widetilde{R}}}V\Big(\frac{u-H_{1}}{\sigma R_{0}}\Big)\dif x \Big)+c\Big(\dashint_{\ball_{2\widetilde{R}}}V(D(u-H_{1})) \Big)^{q} \\ 
& =: \mathrm{I}+\mathrm{II},
\end{split}
\end{align}
where $c=c(n,N,m,\ell_{m},L,q)>0$ is a constant. To estimate $\mathrm{I}$, recall that $u$ and $\ball_{R}=\ball_{R}(x_{0})$ satisfy the requirements of Proposition~\ref{prop:improvement}. Therefore, applying Proposition~\ref{prop:improvement} and noting that $\sigma R_{0}<\widetilde{R}<\frac{3}{2}\sigma R_{0}$ together with $\frac{17}{20}R_{0}<R<\frac{19}{20}R_{0}$, we first obtain 
\begin{align}\label{eq:SM1}
\begin{split}
\dashint_{\ball_{2\widetilde{R}}}V\Big(\frac{\widetilde{u}-\widetilde{h}}{\sigma R_{0}}\Big)\dif x & \stackrel{\text{Lem.~\ref{lem:Efunction}~\ref{item:EfctB}
},\,\ball_{2\widetilde{R}}\subset\ball_{3\sigma R_{0}}}{\leq} \frac{c}{\sigma^{2}}\dashint_{\ball_{3\sigma R_{0}}} V\Big(\frac{\widetilde{u}-\widetilde{h}}{R}\Big)\dif x \\ 
& \;\;\;\stackrel{\ball_{3\sigma R_{0}}\subset \ball_{R},\;R\sim R_{0}}{\leq} \frac{c}{\sigma^{n+2}}\dashint_{\ball_{R}}V\Big(\frac{\widetilde{u}-\widetilde{h}}{R}\Big)\dif x \\ 
& \;\;\;\;\;\;\;\;\;\;\,\stackrel{\text{Prop.~\ref{prop:improvement}}}{\leq} \frac{c}{\sigma^{n+2}}\Big(\dashint_{\ball_{R}}V(D\widetilde{u})\Big)^{\frac{2n}{2n-1}} \\ & \;\;\;\;\;\;\;\;\;\;\;\;\;\; \leq  \frac{c}{\sigma^{n+2}}\Big(\dashint_{\ball_{R_{0}}}V(D\widetilde{u})\Big)^{\frac{2n}{2n-1}}.
\end{split}
\end{align}
Moreover, if $x\in\ball_{2\widetilde{R}}\Subset\ball_{R/2}$, a second order Taylor expansion and inverse estimates for $\A$-harmonic maps (cf.~Lemma~\ref{lem:linearsystems}) yield
\begin{align}\label{eq:harmonicFinal1}
\begin{split}
\left\vert\frac{\widetilde{h}(x)-H_{0}(x)}{\sigma R_{0}} \right\vert & \leq c\sigma R\max_{\overline{\ball}_{R/2}}|\nabla^{2}\widetilde{h}| \\ & \!\!\!\!\!\stackrel{\text{Lem.~\ref{lem:linearsystems}}}{\leq} c\sigma\Big(\dashint_{\ball_{R}}|\nabla\widetilde{h}|^{\vartheta}\Big)^{\frac{1}{\vartheta}} \stackrel{\eqref{eq:PoincaPoinca}}{\leq} c\sigma \dashint_{\ball_{R_{0}}}|D\widetilde{u}|, 
\end{split}
\end{align}
again by use of $\frac{17}{20}R_{0}<R<\frac{19}{20}R_{0}$, where $c=c(n,N,L,\ell_{M_{0}},M_{0},q)>0$. Now, by~\eqref{eq:smallness1}, we may apply Lemma~\ref{lem:Efunction}~\ref{item:VpCompa2} with $\mathbf{m}:=2c$. Thus, enlarging the previous constant, there exists $c=c(n,N,L,\ell_{M_{0}},M_{0},q)>0$ such that 
\begin{align}\label{eq:termtwodeal}
\dashint_{\ball_{2\widetilde{R}}}V\Big(\frac{\widetilde{h}-H_{0}}{\sigma R_{0}}\Big)\dif x \leq  c\sigma^{2}\dashint_{\ball_{R_{0}}}V(D\widetilde{u}). 
\end{align}
Therefore, Lemma~\ref{lem:Efunction}~\ref{item:EfctC} and estimates \eqref{eq:SM1} and \eqref{eq:termtwodeal} give
\begin{align}\label{eq:SM3}
\begin{split}
\mathrm{I} & \stackrel{\text{Lem.~\ref{lem:Efunction}~\ref{item:EfctC}}}{\leq} c\Psi_{q}\Big(\dashint_{\ball_{2\widetilde{R}}} V\Big(\frac{\widetilde{u}-\widetilde{h}}{\sigma R_{0}}\Big)\dif x + \dashint_{\ball_{2\widetilde{R}}}V\Big(\frac{\widetilde{h}-H_{0}}{\sigma R_{0}}\Big)\dif x \Big) \\ 
& \!\stackrel{\eqref{eq:SM1},~\eqref{eq:termtwodeal}}{\leq} c\Psi_{q}\Big(\frac{1}{\sigma^{n+2}}\Big(\dashint_{\ball_{R_{0}}}V(D\widetilde{u}) \Big)^{\frac{2n}{2n-1}}\Big)+ c\Psi_{q}\Big(\sigma^{2}\dashint_{\ball_{R_{0}}}V(D\widetilde{u})\Big).
\end{split}
\end{align}
In view of term $\mathrm{II}$, we first note that similarly to~\eqref{eq:harmonicFinal1}, 
\begin{align}\label{eq:bittersweetsymphony}
V(\nabla\widetilde{h}(x_{0})) \leq cV\Big(\Big(\dashint_{\ball_{R/2}}|\nabla\widetilde{h}|^{\vartheta}\dif x\Big)^{\frac{1}{\vartheta}}\Big) \stackrel{\eqref{eq:PoincaPoinca}}{\leq} cV\Big(\dashint_{\ball_{R_{0}}}|D\widetilde{u}|\Big)\leq c\dashint_{\ball_{R_{0}}}V(D\widetilde{u}).
\end{align}
In consequence, by Lemma~\ref{lem:Efunction}~\ref{item:EfctC}, 
\begin{align}\label{eq:SM4}
\begin{split}
\mathrm{II}=c\Big(\dashint_{\ball_{2\widetilde{R}}}V(D(u-H_{1})) \Big)^{q} & \leq c\Big(\dashint_{\ball_{2\widetilde{R}}}V(D\widetilde{u}) \Big)^{q} + c\Big(\dashint_{\ball_{2\widetilde{R}}}V(\nabla \widetilde{h}(x_{0}))\Big)^{q} \\ 
& \leq c\Big(\frac{R_{0}}{\widetilde{R}}\Big)^{nq}\Big(\dashint_{\ball_{R_{0}}}V(D\widetilde{u}) \Big)^{q} + cV(\nabla\widetilde{h}(x_{0}))^{q}\\
& \!\!\!\!\!\!\!\!\!\!\!\!\stackrel{\eqref{eq:bittersweetsymphony},\,0<\sigma<1}{\leq} \frac{c}{\sigma^{nq}}\Big(\dashint_{\ball_{R_{0}}}V(D\widetilde{u})) \Big)^{q}.
\end{split}
\end{align}
Using that 
\begin{align}\label{eq:flashFM}
\Psi_{q}(ab)\leq \Psi_{q}(a)b\qquad\text{for all}\;a\geq 0,\;0\leq b\leq 1, 
\end{align}
we work from \eqref{eq:blurparklife} to successively obtain
\begin{align*}
\mathbf{E}[u;x_{0},\sigma R_{0}] & \stackrel{\eqref{eq:blurparklife},~\eqref{eq:SM3},~\eqref{eq:SM4}}{\leq} c\Psi_{q}\Big(\frac{1}{\sigma^{n+2}}\Big(\dashint_{\ball_{R_{0}}}V(D\widetilde{u}) \Big)^{\frac{2n}{2n-1}}\Big)+ c\Psi_{q}\Big(\sigma^{2}\dashint_{\ball_{R_{0}}}V(D\widetilde{u})\Big) \\ 
& \;\;\;\;\;\;\;\;\;\;+ \frac{c}{\sigma^{nq}}\Big(\dashint_{\ball_{R_{0}}}V(D\widetilde{u})) \Big)^{q}\\ 
& \;\;\;\;\stackrel{\eqref{eq:smallness0},~\eqref{eq:flashFM}}{\leq} c\Big[\Psi_{q}\Big(\frac{1}{\sigma^{n+2}}\Big(\mathbf{E}[u;x_{0},R_{0}]\Big)^{\frac{1}{2n-1}}\Big)+\sigma^{2}+ \frac{1}{\sigma^{nq}}\mathbf{E}[u;x_{0},R_{0}]^{q-1}\Big]\times \\ 
& \;\;\;\;\;\;\;\;\;\; \times \mathbf{E}[u;x_{0},R_{0}]. 
\end{align*}
This is~\eqref{eq:prelimdec}, and the proof of~\ref{item:dec1} is complete.

Ad~\ref{item:dec2}. Let $M_{0}>0$ be given and let $c=c(n,N,M_{0},\ell_{M_{0}},L,q)>0$ be  as in \ref{item:dec1}. We then choose $\sigma\in (0,\frac{1}{10})$ such that 
\begin{align}\label{eq:sigmachoose1}
0<\sigma<\frac{1}{(5c)^{\frac{1}{1-\alpha}}}\;\;\;\text{so that}\;\;\;\sigma^{2}<\frac{\sigma^{1+\alpha}}{5c}.
\end{align}
The function $\Psi_{q}\colon (0,\infty)\to(0,\infty)$ is invertible. With $\sigma$ as in~\eqref{eq:sigmachoose1}, choose $\overline{\varepsilon}>0$ with 
\begin{align*}
0<\overline{\varepsilon}<\min\left\{1,\Big(\sigma^{n+2}\Psi_{q}^{-1}\Big(\frac{\sigma^{1+\alpha}}{5c} \Big)\Big)^{\frac{2n-1}{2}},\Big(\frac{\sigma^{1+\alpha+nq}}{5c}\Big)^{\frac{1}{2(q-1)}} \right\}, 
\end{align*}
where we use the convention $\frac{1}{\infty}=0$ that corresponds to $q=1$. Then \eqref{eq:smallness0} is satisfied, and inserting these choices into \eqref{eq:prelimdec} yields~\eqref{eq:prelimexcessdecayA}. This is~\ref{item:dec2}, and the proof is complete. 
\end{proof}
The following corollary is a straightforward consequence of the previous Proposition~\ref{prop:dec}:
\begin{corollary}[Excess decay]\label{cor:iterate}
Let $1\leq q <\frac{n}{n-1}$ and suppose that $F\colon\R^{N\times n}\to\R$ satisfies \emph{\ref{item:H1}--\ref{item:H3}}. Moreover, let $\omega\Subset\Omega$ be open with Lipschitz boundary and let $u$ be a $\bv$-minimizer of $\overline{\mathscr{F}}_{u}^{*}[-;\omega]$ for compactly supported variations. For any $0<\alpha<1$ and $M_{0}>0$, there exist $\varepsilon=\varepsilon(n,N,M_{0},\ell_{M_{0}},L,q,\alpha)\in (0,1)$ and $R_{0}=R_{0}(n,N,M_{0},\ell_{M_{0}},L,q,\alpha)\in(0,1)$ with the following property: If $x_{0}\in\omega$ and $0<R<R_{0}$ are such that $\ball_{R_{0}}(x_{0})\Subset\omega$, then 
\begin{align}\label{eq:smallinsky1}
\mathbf{E}[u;x_{0},R]<\varepsilon^{2}\;\;\;\text{and}\;\;\;|(Du)_{\ball_{R}(x_{0})}|<\frac{M_{0}}{2}
\end{align}
imply with a constant $c=c(n,N,M_{0},\ell_{M_{0}},L,q,\alpha)>0$ 
\begin{align}\label{eq:smallinsky2}
\mathbf{E}[u;x_{0},r]\leq c\Big(\frac{r}{R}\Big)^{2\alpha}\mathbf{E}[u;x_{0},R]\qquad\text{for all}\; 0<r<R. 
\end{align}
\end{corollary}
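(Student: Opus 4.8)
\textbf{Proof proposal for Corollary~\ref{cor:iterate}.} The plan is to iterate the single-scale decay estimate of Proposition~\ref{prop:dec}~\ref{item:dec2} along the geometric sequence of radii $\sigma^{k}R$, $k\in\mathbb{N}_{0}$, where $\sigma\in(0,\tfrac{1}{10})$ and $\overline{\varepsilon}\in(0,1)$ are the parameters furnished by Proposition~\ref{prop:dec}~\ref{item:dec2} for the given $M_{0}$ and $\alpha$. Any fixed $R_{0}\in(0,1)$ will do, and the smallness threshold $\varepsilon\in(0,\overline{\varepsilon}]$ will be specified in the course of the argument, in dependence on $\sigma$ only (hence on $n,N,M_{0},\ell_{M_{0}},L,q,\alpha$). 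Throughout, all balls are centred at $x_{0}$, and we abbreviate $\mathbf{E}_{k}:=\mathbf{E}[u;x_{0},\sigma^{k}R]$ and $m_{k}:=(Du)_{\ball_{\sigma^{k}R}(x_{0})}$.

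First I would set up the induction. The claim is that, under the hypotheses~\eqref{eq:smallinsky1} with $\varepsilon$ to be chosen, one has for every $k\in\mathbb{N}_{0}$
\begin{align*}
\mathbf{E}_{k}\leq \sigma^{k(1+\alpha)}\mathbf{E}_{0}\qquad\text{and}\qquad |m_{k}|<M_{0}.
\end{align*}
For the inductive step, the first inequality follows from applying Proposition~\ref{prop:dec}~\ref{item:dec2} at the scale $\sigma^{k}R$; this is legitimate since $\ball_{\sigma^{k}R}(x_{0})\Subset\omega$, $|m_{k}|<M_{0}$, and $\mathbf{E}_{k}\leq\varepsilon^{2}\leq\overline{\varepsilon}^{2}$ by the inductive hypotheses and $\varepsilon\leq\overline{\varepsilon}$. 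The bound on the shifted mean gradient is the step where the choice of $\varepsilon$ enters: from $|m_{j+1}-m_{j}|\leq \sigma^{-n}\dashint_{\ball_{\sigma^{j}R}(x_{0})}|Du-m_{j}|$, the pointwise inequality $|z|\leq c(\sqrt{V(z)}+V(z))$ (a consequence of Lemma~\ref{lem:Efunction}~\ref{item:EfctA}), and concavity of $t\mapsto\sqrt{t}$ to pull the average inside the root, one obtains $\dashint_{\ball_{\sigma^{j}R}(x_{0})}|Du-m_{j}|\leq c(\sqrt{\mathbf{E}_{j}}+\mathbf{E}_{j})\leq c\,\sigma^{j(1+\alpha)/2}\varepsilon$ (using $\varepsilon<1$). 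Summing the telescoping series and using $\sigma^{(1+\alpha)/2}<1$ yields
\begin{align*}
|m_{k+1}-m_{0}|\leq \frac{c\,\sigma^{-n}}{1-\sigma^{(1+\alpha)/2}}\,\varepsilon,
\end{align*}
which is $<M_{0}/2$ once $\varepsilon$ is taken small enough in terms of $\sigma$; combined with $|m_{0}|<M_{0}/2$ this gives $|m_{k+1}|<M_{0}$, closing the induction.

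Then I would interpolate between the radii $\sigma^{k}R$. Given $0<r<R$, choose $k\in\mathbb{N}_{0}$ with $\sigma^{k+1}R\leq r<\sigma^{k}R$. The quasi-minimality of the mean with respect to $V$ — namely $\dashint_{\ball_{r}(x_{0})}V(Du-(Du)_{\ball_{r}(x_{0})})\leq c\dashint_{\ball_{r}(x_{0})}V(Du-m_{k})$, which follows from Lemma~\ref{lem:Efunction}~\ref{item:EfctC} together with Jensen's inequality applied to the convex $V$ — and the volume ratio $\mathscr{L}^{n}(\ball_{\sigma^{k}R}(x_{0}))/\mathscr{L}^{n}(\ball_{r}(x_{0}))\leq\sigma^{-n}$ give
\begin{align*}
\mathbf{E}[u;x_{0},r]\leq c\,\sigma^{-n}\,\mathbf{E}_{k}\leq c\,\sigma^{-n}\sigma^{k(1+\alpha)}\mathbf{E}_{0}.
\end{align*}
Since $\sigma^{k}\leq\sigma^{-1}(r/R)$ and $(r/R)^{1+\alpha}\leq(r/R)^{2\alpha}$ for $0<\alpha<1$, this is precisely~\eqref{eq:smallinsky2} with a constant $c=c(n,N,M_{0},\ell_{M_{0}},L,q,\alpha)$.

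The main obstacle is the simultaneous propagation of both smallness properties — the excess remaining below $\overline{\varepsilon}^{2}$ and the mean gradients staying below $M_{0}$ — across all scales $\sigma^{k}R$ at once, since only then is Proposition~\ref{prop:dec}~\ref{item:dec2} applicable at every step; this is what forces the initial threshold $\varepsilon$ to be chosen in dependence on the decay factor $\sigma$ (hence on $M_{0}$ and $\alpha$). The remaining ingredients — the telescoping estimate for the mean gradients and the dyadic interpolation — are routine.
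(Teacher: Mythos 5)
Your proof is correct and fills in exactly the standard iteration that the paper leaves to the reader (the paper simply states that Corollary~\ref{cor:iterate} is "a straightforward consequence of the previous Proposition~\ref{prop:dec}", giving no explicit argument). All the ingredients are in order: the inductive propagation of both smallness conditions, the telescoping bound $|m_{j+1}-m_{j}|\leq\sigma^{-n}\dashint_{\ball_{\sigma^{j}R}(x_{0})}|Du-m_{j}|$ together with the estimate $\dashint|Du-m_{j}|\leq c(\sqrt{\mathbf{E}_{j}}+\mathbf{E}_{j})\leq 2c\sigma^{j(1+\alpha)/2}\varepsilon$ coming from Lemma~\ref{lem:Efunction}~\ref{item:EfctA} and Jensen, and the passage from the dyadic scales to arbitrary $r$ via Lemma~\ref{lem:Efunction}~\ref{item:EfctC}, Jensen's inequality for functions of measures, and the volume ratio $\leq\sigma^{-n}$. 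The final conversion of the exponent $1+\alpha$ to $2\alpha$ via $r/R<1$ and $\alpha<1$ is also correct and matches the paper's phrasing of the decay rate.
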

\begin{remark}\label{rem:plarger1excess}
In the case $1<p\leq q <\min\{\frac{np}{n-1},p+1\}$, the previous corollary follows by standard $\mathbb{A}$-harmonic approximation as in \cite[Lem.~7.10]{SchmidtPR1} where one then works with $\mathbf{E}_{p}[u;x_{0},R]$ which is defined as in~\eqref{eq:excessdef}, now replacing $V=V_{1}$ by $V_{p}$. In fact, the only two ingredients to make the approach of \cite{SchmidtPR1} work out even in the signed case and for the amplified exponent range $1<p<q<\min\{\frac{np}{n-1},p+1\}$ are the validity of the Euler-Lagrange system from Corollary~\ref{cor:EulerLagrange} and the Caccioppoli inequality of the second kind from Corollary~\ref{cor:Caccplargerthan1}; signed integrands do not harm this proof strategy, as already visible in the standard $p$-growth regime considered by \textsc{Duzaar} et al. \cite{Duzaar1A,Duzaar2} within the context of $\omega$-minimizers. Even though \textsc{Schmidt} works with the slightly different auxiliary functions $z\mapsto (1+|z|^{2})^{\frac{p-2}{2}}|z|^{2}$, these are uniformly comparable to our choices $V_{p}$. Still, in the growth regime $1=p\leq q<\frac{n}{n-1}$, the $\mathbb{A}$-harmonic approximation lemma would still require the construction of suitable $\mathbb{A}$-harmonic comparison maps; this is not possible for arbitrary balls as the trace space of $\bv$ along spheres is $\lebe^{1}$. 
\end{remark}
\subsection{Proof of Theorem~\ref{thm:main1}}\label{sec:proof1}
Working from Corollary~\ref{cor:iterate} and Remark~\ref{rem:plarger1excess}, the proof of Theorems~\ref{thm:main1} and~\ref{thm:main2} is routine and we give it here for completeness only. Let $\omega\Subset\Omega$ be open and bounded with Lipschitz boundary such that $u$ is a $\bv$-minimizer of $\overline{\mathscr{F}}_{u}^{*}[-;\omega]$ for compactly supported variations. Let $0<\alpha<1$ and, with $M>0$ as in Theorem~\ref{thm:main1}, put $M_{0}:=10\max\{M,1\}$.  Corollary~\ref{cor:iterate} then provides us with $\varepsilon=\varepsilon(n,N,M,\ell_{M},L,q,\alpha)>0$, $R_{0}=R_{0}(n,N,M,\ell_{M},L,q,\alpha)>0$ and $C=C(n,N,M,\ell_{M},L,q,\alpha)>0$ such that for any $0<R<R_{0}$,~\eqref{eq:smallinsky1} implies~\eqref{eq:smallinsky2}. We define $\varepsilon_{M}:=\varepsilon^{2}/2^{n+6}$ and suppose that~\eqref{eq:MainSmallnessA} and~\eqref{eq:MainSmallnessB} are in action. Since $V(\cdot)\leq|\cdot|$, this implies validity of~$\eqref{eq:smallinsky1}_{1}$ and, since $M<\frac{M_{0}}{2}$,~ $\eqref{eq:smallinsky1}_{2}$ holds as well. We now claim that~\eqref{eq:smallinsky1} is satisfied uniformly on $\ball_{R/2}(x_{0})$. To this end, note that with $R'=\frac{1}{2}R$, we have for all $x\in\ball_{R'}(x_{0})$ that $\ball_{R'}(x)\subset\ball_{R}(x_{0})$, and so
\begin{align*}
|(Du)_{\ball_{R'}(x)}| & \leq \left\vert\dashint_{\ball_{R'}(x)}Du-(Du)_{\ball_{R}(x_{0})}\right\vert + |(Du)_{\ball_{R}(x_{0})}| \\ 
& \leq \Big(\frac{R}{R'} \Big)^{n}\dashint_{\ball_{R}(x_{0})}|Du-(Du)_{\ball_{R}(x_{0})}| + |(Du)_{\ball_{R}(x_{0})}| \\ 
& \!\!\!\!\!\!\!\!\!\stackrel{\eqref{eq:MainSmallnessA},~\eqref{eq:MainSmallnessB}}{\leq} 2^{n} \varepsilon_{M}  + M \stackrel{0<\varepsilon<1}{\leq} M+1 < \frac{M_{0}}{2}. 
\end{align*}
On the other hand, successively applying Lemma~\ref{lem:Efunction}~\ref{item:EfctC}, we obtain by $\ball_{R'}(x)\subset\ball_{R}(x_{0})$ 
\begin{align*}
\mathbf{E}[u;x,R'] & = \dashint_{\ball_{R'}(x)}V(\nabla u(y)-(Du)_{\ball_{R'}(x)})\dif y + \frac{|D^{s}u|(\ball_{R'}(x))}{\omega_{n}(R')^{n}} \\ 
& \!\!\!\!\!\!\!\!\!\stackrel{\text{Lem.~\ref{lem:Efunction}~\ref{item:EfctC}}}{\leq} 2\dashint_{\ball_{R'}(x)}V(\nabla u(y)-(Du)_{\ball_{R}(x_{0})})\dif y +2V((Du)_{\ball_{R}(x_{0})}-(Du)_{\ball_{R'}(x)}) \\ & + \Big(\frac{R}{R'}\Big)^{n}\frac{|D^{s}u|(\ball_{R}(x_{0}))}{\omega_{n}R^{n}} \\ 
& \leq 2\Big(\frac{R}{R'}\Big)^{n}\dashint_{\ball_{R}(x_{0})}V(\nabla u(y)-(Du)_{\ball_{R}(x_{0})})\dif y \\ 
& + 2V\Big(\dashint_{\ball_{R'}(x)}|\nabla u(y) - (Du)_{\ball_{R}(x_{0})}|\dif y + \frac{|D^{s}u|(\ball_{R'}(x))}{\omega_{n}(R')^{n}}\Big) \\ & + \Big(\frac{R}{R'}\Big)^{n}\frac{|D^{s}u|(\ball_{R}(x_{0}))}{\omega_{n}R^{n}} \\ 
& \!\!\!\!\!\!\!\!\!\!\!\!\!\!\!\!\!\!\!\!\!\!\!\!\!\!\!\!\!\!\!\!\stackrel{\text{Lem.~\ref{lem:Efunction}~\ref{item:EfctC},~$V(\cdot)\leq|\cdot|$, $R'=R/2$}}{\leq} 2^{n+5} \mathbf{E}[u;x_{0},R] < \varepsilon^{2}.
\end{align*}
We then conclude by Corollary~\ref{cor:iterate} that we have $\mathbf{E}[u;x,r]\leq c(\frac{r}{R'})^{2\alpha}\mathbf{E}[u;x,R']$ for all $x\in\ball_{R'}(x_{0})$ and all $0<r<R'$, with $c>0$ being independent of $u,x,r$, and $R'$. By the definition of $\mathbf{E}[u;x_{0},r]$, this particularly implies that $|D^{s}u|(\ball_{r}(x))$ is bounded independently of $x\in\ball_{R'}(x_{0})$ and $0<r<R'$. A covering argument then entails that $D^{s}u\equiv 0$ in $\ball_{R'}(x_{0})$. By Lemma~\ref{lem:Efunction}~\ref{item:EfctA}, we have with $\Psi(t):=(\sqrt{2}-1)\min\{t,t^{2}\}$
\begin{align*}
\Psi\Big(\dashint_{\ball_{r}(x)}|\nabla u(y)-(Du)_{\ball_{r}(x)}|\dif y \Big) \leq \mathbf{E}[u;x,r] \leq c\Big(\frac{r}{R'}\Big)^{2\alpha}\mathbf{E}[u;x,R'], 
\end{align*}
the constant $c>0$ still being independent of $u,x,r$ and $R'$; as a consequence, 
\begin{align*}
\frac{1}{r^{\alpha}}\dashint_{\ball_{r}(x)}|\nabla u(y)-(Du)_{\ball_{r}(x)}|\dif y \leq  \frac{c}{(R')^{\alpha}}
\end{align*}
with a constant $c=c(n,N,M,\ell_{M},L,q,\alpha)>0$. Now the classical \textsc{Campanato-Meyers} embedding $\mathscr{L}^{p,\lambda}\simeq \hold^{0,(\lambda-n)/p}$ for $n<\lambda \leq n+p$ then implies that $\nabla u\in\hold^{0,\alpha}(\ball_{R'}(x);\R^{N\times n})$. Once this is established, we may pass to the corresponding Euler-Lagrange system to conclude that $u$ is of class $\hold^{2,\alpha}$ by the usual \textsc{Schauder} theory; cf.~\cite{Gm20,GK1}. This is amenable to bootstrapping, and implies that $u\in\hold^{\infty}(\ball_{R/2}(x_{0});\R^{N})$. This yields the $\varepsilon$-regularity result from Theorem~\ref{thm:main1}. For the partial regularity result it then suffices to realise that, by the Lebesgue differentiation theorem for Radon measures, for $\mathscr{L}^{n}$-a.e. $x_{0}\in\omega$ there exists $a\in\R^{N\times n}$ with
\begin{align*}
\limsup_{r\searrow 0} \Big(\dashint_{\ball_{r}(x_{0})}|\nabla u - a|\dif y + \frac{|D^{s}u|(\ball_{r}(x_{0}))}{\omega_{n}r^{n}}\Big)=0, 
\end{align*}
so that~\eqref{eq:MainSmallnessA} and~\eqref{eq:MainSmallnessB} are satisfied for any such $x_{0}$ with a suitable $M_{0}$ depending on $u$ and $x_{0}$. By the definition of local $\bv$-minimality for compactly supported variations, this implies that any such local minimizer is $\hold^{\infty}$-partially regular in $\Omega$. The proof is complete. 
\begin{remark}[Uniformity of threshold radii]
Usually, $\varepsilon$-regularity results underlying the $\hold^{\infty}$-partial regularity of local ($\bv$-)minimizers are stated in analogy with the $\varepsilon$-regularity result for \emph{minimizers} as in Theorem~\ref{thm:main1}. This particularly concerns the existence of some $R_{0}>$ such that if the smallness assumptions~\eqref{eq:MainSmallnessA} and~\eqref{eq:MainSmallnessB} are fulfilled at some $x_{0}$ for some $0<R<R_{0}$, then $u$ is of class $\hold^{\infty}(\ball_{R/2}(x_{0});\R^{N})$. In the situation of local ($\bv$-)minimizers for compactly supported variations, it is not clear to us whether such a uniform $R_{0}$ exists. In fact, the above argument reduces the partial regularity of local $\bv$-minimizers for compactly supported variations to the minimality of $\overline{\mathscr{F}}_{u}^{*}[-;\omega]$ for compactly supported variations on \emph{some} $\omega\Subset\Omega$ with $x_{0}\in\omega$. However, the $\varepsilon$-regularity result then only applies to balls $\ball_{R}(x_{0})\Subset\omega$ but not all balls $\ball_{R}(x_{0})\Subset\Omega$ for $0<R<R_{0}$ with some fixed $R_{0}>0$. This, in turn, is so because we do not know whether $u$ is a minimizer for compactly supported variations on all balls $\ball_{R}(x_{0})\Subset\Omega$ as is the case in non-relaxed problems; still, this does not affect the $\hold^{\infty}$-partial regularity for local minimizers for compactly supported variations.  
\end{remark}
\subsection{Consequences and extensions of Theorems~\ref{thm:main1} and~\ref{thm:main2}}
\label{sec:extensions} 
We conclude the paper by discussing several extensions that were already alluded to in Section~\ref{sec:keynov}. On the one hand, this comprises integrands of critical Orlicz growth and, on the other hand, variational integrals depending on differential expressions. 
\subsubsection{Quasiconvex functionals with Orlicz growth}\label{sec:Orlicz}
We start by addressing the case of integrands of a certain Orlicz growth that falls outside the realm commonly studied in the literature but appear as a special case of Theorem~\ref{thm:main1}. To this end, we briefly pause and introduce the requisite terminology first; see e.g.~\cite[Chpt.~8]{AdamsFournier} for more background information. 

We say that $\Phi\colon\R_{\geq 0}\to\R_{\geq 0}$ is an \emph{N-function} provided it can be written as 
\begin{align*}
\Phi(t)=\int_{0}^{t}a(s)\dif s,\qquad t\geq 0, 
\end{align*}
where $a\colon [0,\infty)\to[0,\infty)$ is a non-decreasing, right-continuous function such that $a(0)=0$, $a(t)>0$ for $t>0$ and $\lim_{t\to\infty}a(t)=\infty$. An $N$-function $\Phi\colon[0,\infty)\to [0,\infty)$ then is said to be of class $\Delta_{2}$ provided there exists $c>0$ such that $\Phi(2t)\leq c\Phi(t)$ for all $t\geq 0$, and we denote $\Delta_{2}(\Phi)$ the smallest such constant $c$. Moreover, we say that $\Phi$ is of class $\nabla_{2}$ provided the Fenchel conjugate $\Phi^{*}(t):=\sup_{s>0}st-\Phi(s)$ is of class $\Delta_{2}$. If $\Phi$ is both of class $\Delta_{2}$ and $\nabla_{2}$, we write $\Phi\in\Delta_{2}\cap\nabla_{2}$. For future reference, we record the fundamental inequality (cf.~\cite[\S 8.3~(3)]{AdamsFournier})
\begin{align}\label{eq:FenchelInequality}
\Phi^{*}\Big(\frac{\Phi(t)}{t}\Big) \leq \Phi(t)\qquad\text{for all}\;t>0. 
\end{align}
For an $N$-function  function $\Phi$, a measurable set $\Omega\subset\R^{n}$ and a measurable map $v\colon\Omega\to\R^{m}$, we define the associated \emph{Luxemburg norm} by
\begin{align*}
\|v\|_{\lebe^{\Phi}(\Omega)}:=\inf\left\{\lambda>0\colon\;\int_{\Omega}\Phi\left(\frac{|v|}{\lambda}\right)\dif x \leq 1\right\}
\end{align*}
and introduce Lebesgue- or Sobolev-type spaces by $\lebe^{\Phi}(\Omega;\R^{m}):=\{v\colon\Omega\to\R^{m}\colon\;\|v\|_{\lebe^{\Phi}(\Omega)}<\infty\}$ and $\sobo^{1,\Phi}(\Omega;\R^{m}):=\{v\colon\Omega\to\R^{m}\colon\;\|v\|_{\sobo^{1,\Phi}(\Omega)}:=\|v\|_{\lebe^{\Phi}(\Omega)}+\|\nabla v\|_{\lebe^{\Phi}(\Omega)}<\infty\}$. As usual, $\sobo_{0}^{1,\Phi}(\Omega;\R^{m})$ then is defined as the $\|\cdot\|_{\sobo^{1,\Phi}(\Omega)}$-closure of $\hold_{c}^{\infty}(\Omega;\R^{m})$. Finally, we record the following variant of H\"{o}lder's inequality in Orlicz spaces (cf.~\cite[\S 8.11]{AdamsFournier}): 
\begin{align}\label{eq:HoelderOrlicz}
\int_{\Omega}uv\dif x \leq 2 \|u\|_{\lebe^{\Phi^{*}}(\Omega)}\|v\|_{\lebe^{\Phi}(\Omega)}\qquad\text{for all}\;u\in\lebe^{\Phi^{*}}(\Omega;\R^{m}),\;v\in\lebe^{\Phi}(\Omega;\R^{m}). 
\end{align}

The partial regularity results available so far for quasiconvex multiple integrals of Orlicz growth (cf. e.g.~\cite{DLSV,ILV,Irving}) are confined to $N$-functions which are of class $\Delta_{2}\cap\nabla_{2}$. This particularly rules out integrands which have almost linear growth, yet fail to be of class $\nabla_{2}$. The typical example of such integrands that we then have in mind is as follows: 
\begin{example}[Generalised $\mathrm{L}\log\mathrm{L}$-growth]\label{ex:Orlicz}
Let $s_{1},...,s_{m}>0$ and let $\Phi\colon\R_{\geq 0}\to\R_{\geq 0}$ be a twice differentiable convex function that equals 
\begin{align*}
\Phi(t)=t\log(t)^{s_{1}}(\log(\log(t)))^{s_{2}}...(\log(...(\log(t))))^{s_{m}}\qquad\text{for all sufficiently large $t>0$}.
\end{align*}
Then $\Phi$ is a Young function which is of class $\Delta_{2}$ but not of class $\nabla_{2}$. Specifically, $\Phi^{*}$ has exponential growth and thus $\Phi$ fails to qualify as a $\nabla_{2}$-function. 
\end{example}
To state the corresponding partial regularity result in analogy with Theorems~\ref{thm:main1} and~\ref{thm:main2}, we suppose similarly to ~\ref{item:H2} or~\ref{item:H2p} that 
\begin{enumerate}[label={(B\arabic{*})},start=1]
\item\label{item:B1} $\Phi\colon\R_{\geq 0}\to\R_{\geq 0}$ is an $N$-function of class $\Delta_{2}$.   
\item\label{item:B2} $\Phi(t)\leq c(1+|t|^{q})$ holds for all $t\geq 0$ for some fixed $c>0$ and $1<q<\frac{n}{n-1}$. 
\end{enumerate}
Properties~\ref{item:B1} and~\ref{item:B2} are fulfilled for the integrands from Example~\ref{ex:Orlicz}. Next, let $\Phi\colon\R_{\geq 0}\to\R_{\geq 0}$ be an $N$-function of class $\Delta_{2}$. We then define
\begin{align}\label{eq:generalisedVfunction}
V_{\Phi}(z):=\Phi((1+|z|^{2})^{\frac{1}{2}})-\Phi(1). 
\end{align}
and consider an integrand $F\colon\R^{N\times n}\to\R$ which satisfies the following set of hypotheses: 
\begin{enumerate}[label={(A\arabic{*})},start=1]
\item\label{item:A1} There exists $L>0$ such that $|F(z)|\leq L(1+\Phi(|z|))$ holds for all $z\in\R^{N\times n}$. 
\item\label{item:A2} For any $m>0$ there exists $\ell_{m}>0$ such that for all $z_{0}\in\R^{N\times n}$ with $|z_{0}|\leq m$, $F-\ell_{m} V_{\Phi}$ is quasiconvex at $z_{0}$.  
\item\label{item:A3} $F\in\hold^{\infty}(\R^{N\times n})$. 
\end{enumerate}
In analogy with hypotheses~\ref{item:H2} and~\ref{item:H2p} as discussed in Section~\ref{sec:props},~\ref{item:A2} is related to coerciveness of the associated variational integral $\int_{\Omega}F(\nabla u)\dif x$ in Dirichlet subclasses of $\sobo^{1,\Phi}(\Omega;\R^{N})$. Adopting these conventions, the main theorem of the present section then is as follows:
\begin{theorem}\label{thm:Orlicz}
Let $\Omega\subset\R^{n}$ be open and bounded with Lipschitz boundary and let $F\colon\R^{N\times n}\to\R$ satisfy~\emph{\ref{item:A1}--\ref{item:A3}} subject to~ \emph{\ref{item:B1}} and~\emph{\ref{item:B2}}. Moreover, suppose that $u\in\sobo_{\locc}^{1,\Phi}(\Omega;\R^{N})$ is a \emph{local minimizer} of the functional $v\mapsto \int F(Dv)\dif x$ in the sense that we have for all $\ball_{r}(x_{0})\Subset\Omega$
\begin{align}\label{eq:locminOrlicz}
\int_{\ball_{r}(x_{0})}F(\nabla u)\dif x\leq \int_{\ball_{r}(x_{0})}F(\nabla u +\nabla\varphi)\dif x\qquad\text{for all}\;\varphi\in\sobo_{0}^{1,\Phi}(\ball_{r}(x_{0});\R^{N}). 
\end{align}
Then $u$ is $\hold^{\infty}$-partially regular in $\Omega$. 
\end{theorem}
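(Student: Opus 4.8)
\emph{Strategy.} The plan is to verify that $F$ satisfies hypotheses \ref{item:H1}--\ref{item:H3} of Theorem~\ref{thm:main1} with the exponent $q$ supplied by \ref{item:B2}, and to show that a $\sobo^{1,\Phi}$-local minimizer in the sense of \eqref{eq:locminOrlicz} is a local $\bv$-minimizer of $\overline{\mathscr{F}}^{*}$ for compactly supported variations (Definition~\ref{def:locmin}\ref{item:WLM2}); the conclusion then follows directly from Theorem~\ref{thm:main1}. Here \ref{item:H3} is \ref{item:A3}, and \ref{item:H1} follows from \ref{item:A1} together with \ref{item:B2} since $|F(z)|\leq L(1+\Phi(|z|))\leq L'(1+|z|^{q})$. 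The point requiring an argument is \ref{item:H2}. Writing $\Phi(t)=\int_{0}^{t}a(s)\dif s$ with $a$ non-decreasing as in \ref{item:B1} and putting $\ell'_{m}:=\ell_{m}\,a(1)>0$, one has for $1\leq s<t$ that $\ell_{m}\Phi(t)-\ell'_{m}t-(\ell_{m}\Phi(s)-\ell'_{m}s)=\ell_{m}\int_{s}^{t}(a(\tau)-a(1))\dif\tau\geq 0$, so $g(t):=\ell_{m}\Phi(t)-\ell'_{m}t$ is convex on $[0,\infty)$ and non-decreasing on $[1,\infty)$; hence $z\mapsto g((1+|z|^{2})^{1/2})$ is convex on $\R^{N\times n}$. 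Since $\ell_{m}V_{\Phi}(z)-\ell'_{m}V(z)=g((1+|z|^{2})^{1/2})+(\ell'_{m}-\ell_{m}\Phi(1))$ differs from this convex function by a constant, $F-\ell'_{m}V=(F-\ell_{m}V_{\Phi})+(\ell_{m}V_{\Phi}-\ell'_{m}V)$ is quasiconvex at every $z_{0}$ with $|z_{0}|\leq m$ by \ref{item:A2}, which is \ref{item:H2}. Note $1<q<\tfrac{n}{n-1}$, so the hypotheses of Theorem~\ref{thm:main1} are met.

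\emph{From $\sobo^{1,\Phi}$-minimality to $\bv$-minimality of the relaxation.} Fix $x_{0}\in\Omega$. Since $\Phi\in\Delta_{2}$ with $\Phi(t)\lesssim 1+t^{q}$ and $t\lesssim\Phi(t)$ for large $t$, we have the inclusions $\sobo^{1,q}(\ball_{r}(x_{0}))\subset\sobo^{1,\Phi}(\ball_{r}(x_{0}))\subset\sobo^{1,1}(\ball_{r}(x_{0}))\subset\bv(\ball_{r}(x_{0}))$, so in particular $u\in\bv_{\locc}(\Omega;\R^{N})$. By Corollary~\ref{cor:Fubini}\ref{item:Fubini1} with $\vartheta=q$, for a.e.\ $r\in(0,\dist(x_{0},\partial\Omega))$ we have $\mathcal{M}Du(x_{0},r)<\infty$ and $\trace_{\partial\!\ball_{r}(x_{0})}(u)\in\sobo^{1-1/q,q}(\partial\!\ball_{r}(x_{0});\R^{N})$, whence $\overline{\mathscr{F}}_{u}^{*}[-;\ball_{r}(x_{0})]$ is well defined by Corollary~\ref{cor:welldefinedbdryvalues1}. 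Fix such an $r$ and choose $\Omega'\Supset\ball_{r}(x_{0})$ and $u_{0}\in\sobo^{1,q}(\Omega';\R^{N})$ with $\trace_{\partial\!\ball_{r}(x_{0})}(u_{0})=\trace_{\partial\!\ball_{r}(x_{0})}(u)$. For the lower bound, let $w\in\bv(\ball_{r}(x_{0});\R^{N})$ with $\overline{\mathscr{F}}_{u}^{*}[w;\ball_{r}(x_{0})]<\infty$ and take a generating sequence $(w_{j})\subset\mathscr{A}_{u_{0}}^{q}(\ball_{r}(x_{0}),\Omega')$ as in \eqref{eq:bdryrelaxed1}: then $w_{j}|_{\ball_{r}(x_{0})}\in u+\sobo_{0}^{1,q}(\ball_{r}(x_{0});\R^{N})\subset u+\sobo_{0}^{1,\Phi}(\ball_{r}(x_{0});\R^{N})$, so \eqref{eq:locminOrlicz} gives $\int_{\ball_{r}(x_{0})}F(\nabla u)\dif x\leq\int_{\ball_{r}(x_{0})}F(\nabla w_{j})\dif x$; passing to the $\liminf$, using $w_{j}=u_{0}$ off $\overline{\ball}_{r}(x_{0})$ and recalling \eqref{eq:relaxedviaboundaryvalues1} yields $\overline{\mathscr{F}}_{u}^{*}[w;\ball_{r}(x_{0})]\geq\int_{\ball_{r}(x_{0})}F(\nabla u)\dif x$. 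For the matching upper bound, pick $v\in\sobo^{1,q}(\ball_{r}(x_{0});\R^{N})$ with the same trace as $u$ and approximate $u-v\in\sobo_{0}^{1,\Phi}(\ball_{r}(x_{0});\R^{N})$ by $\varphi_{j}\in\hold_{c}^{\infty}(\ball_{r}(x_{0});\R^{N})$ with $\varphi_{j}\to u-v$ in $\sobo^{1,\Phi}$ (possible by definition of $\sobo_{0}^{1,\Phi}$, or by a trace-preserving correction via Lemma~\ref{lem:extensionoperator}); then $u_{j}:=v+\varphi_{j}\in\sobo^{1,q}(\ball_{r}(x_{0});\R^{N})$ keeps the trace of $u$, and $\Delta_{2}$ together with convexity of $\Phi$ and Vitali's theorem gives $F(\nabla u_{j})\to F(\nabla u)$ in $\lebe^{1}(\ball_{r}(x_{0}))$. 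Gluing each $u_{j}$ to $u_{0}$ off $\overline{\ball}_{r}(x_{0})$ produces a sequence admissible in \eqref{eq:bdryrelaxed1} with energies converging to $\int_{\ball_{r}(x_{0})}F(\nabla u)\dif x+\mathscr{F}[u_{0};\Omega'\setminus\overline{\ball}_{r}(x_{0})]$, so $\overline{\mathscr{F}}_{u}^{*}[u;\ball_{r}(x_{0})]\leq\int_{\ball_{r}(x_{0})}F(\nabla u)\dif x<\infty$. Combining the two bounds, $u|_{\ball_{r}(x_{0})}$ is a $\bv$-minimizer of $\overline{\mathscr{F}}_{u}^{*}[-;\ball_{r}(x_{0})]$, in particular for compactly supported variations.

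\emph{Conclusion and main obstacle.} By the previous step every $x_{0}\in\Omega$ has a neighbourhood $\ball_{r}(x_{0})\Subset\Omega$ on which $u$ is a $\bv$-minimizer of $\overline{\mathscr{F}}_{u}^{*}[-;\ball_{r}(x_{0})]$ for compactly supported variations; hence $u$ is a local $\bv$-minimizer of $\overline{\mathscr{F}}^{*}$ in the sense of Definition~\ref{def:locmin}\ref{item:WLM2}. Since $F$ satisfies \ref{item:H1}--\ref{item:H3} with $1<q<\tfrac{n}{n-1}$, Theorem~\ref{thm:main1} applies and yields that $u$ is $\hold^{\infty}$-partially regular in $\Omega$. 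The two steps that genuinely require care are the implication \ref{item:A2}$\Rightarrow$\ref{item:H2}, which rests on $V_{\Phi}-\lambda V$ being convex for a suitable $\lambda>0$ and hence on $\Phi$ being a true $N$-function (so that $a(1)>0$), and the finiteness/no-Lavrentiev-gap estimate $\overline{\mathscr{F}}_{u}^{*}[u;\ball_{r}(x_{0})]\leq\int_{\ball_{r}(x_{0})}F(\nabla u)\dif x$, which relies on the $\Delta_{2}$-approximation of Orlicz--Sobolev functions by $\sobo^{1,q}$ maps with prescribed trace together with an equi-integrability argument for $F(\nabla u_{j})$; if preferred, this last point may instead be handled by the lower semicontinuity theorem for signed Orlicz integrands established in the appendix.
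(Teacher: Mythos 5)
Your proposal is correct and follows essentially the same route as the paper's proof: you verify (H1)–(H3) (with (H2) established via convexity of $\ell_m V_\Phi-\ell'_m V$, which is exactly the content of the paper's Lemma~\ref{lem:Orlicz1}), then show that a $\sobo^{1,\Phi}$-local minimizer in the sense of \eqref{eq:locminOrlicz} is a local $\bv$-minimizer of $\overline{\mathscr{F}}^{*}$ for compactly supported variations via the absence of a Lavrentiev gap, and conclude from Theorem~\ref{thm:main1}. The only presentational difference is that you invoke Vitali's theorem together with $\Delta_2$-equi-integrability for the upper-bound estimate $\overline{\mathscr{F}}_{u}^{*}[u;\ball_{r}(x_{0})]\leq\int F(\nabla u)\dif x$, whereas the paper carries this out more explicitly through the Orlicz Lipschitz bound (Lemma~\ref{lem:OrliczBound}) and a normalized H\"older inequality in $\lebe^{\Phi}$–$\lebe^{\Phi^{*}}$ duality; both yield the same conclusion.
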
 
The preceding theorem is a consequence of Theorem~\ref{thm:main1} and the following three results: 
\begin{lemma}\label{lem:Orlicz1}
Let $\Phi\in\hold([0,\infty))\cap\hold^{1}((0,\infty))$ be a convex function that satisfies  $\Phi'(t_{0})>0$ for some $t_{0}\in(0,1]$. Then there exists a constant $c>0$ such that 
\begin{align}\label{eq:VOrlicz}
\int_{\ball_{1}(0)}V(z+\nabla\varphi)-V(z)\dif x \leq c \int_{\ball_{1}(0)}V_{\Phi}(z+\nabla\varphi)-V_{\Phi}(z)\dif x
\end{align}
holds for all $z\in\R^{N\times n}$ and all $\varphi\in\sobo_{0}^{1,\infty}(\ball_{1}(0);\R^{N})$. 
\end{lemma}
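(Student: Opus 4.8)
The plan is to rewrite both sides of \eqref{eq:VOrlicz} as integrals of Bregman divergences and then to prove a clean pointwise comparison between the two. For $h\in\hold^{1}(\R^{N\times n})$ write $\mathscr{D}_{h}(w,z):=h(w)-h(z)-\langle h'(z),w-z\rangle$ for the associated Bregman divergence, which is nonnegative whenever $h$ is convex. The starting observation is that for any $\varphi\in\sobo_{0}^{1,\infty}(\ball_{1}(0);\R^{N})$ one has $\int_{\ball_{1}(0)}\nabla\varphi\dif x=0$, because $\varphi$ has vanishing boundary trace; hence $\int_{\ball_{1}(0)}\langle h'(z),\nabla\varphi\rangle\dif x=0$ for every fixed $z\in\R^{N\times n}$, so that
\begin{align*}
\int_{\ball_{1}(0)}\big(h(z+\nabla\varphi)-h(z)\big)\dif x=\int_{\ball_{1}(0)}\mathscr{D}_{h}(z+\nabla\varphi,z)\dif x.
\end{align*}
Applying this identity with $h=V$ and with $h=V_{\Phi}$ --- both are of class $\hold^{1}$, the latter because $\Phi\in\hold^{1}((0,\infty))$ and $(1+|\cdot|^{2})^{1/2}\geq 1$ --- it then suffices to establish the pointwise bound $\mathscr{D}_{V}(w,z)\leq c\,\mathscr{D}_{V_{\Phi}}(w,z)$ for all $w,z\in\R^{N\times n}$ and integrate.

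For the pointwise step I would write $g(z):=(1+|z|^{2})^{1/2}$, so that $V=g-1$ and $V_{\Phi}=\Phi\circ g-\Phi(1)$, and record that $g$ is smooth and convex with $g\geq 1$ everywhere. Abbreviating $a:=g(z)\geq 1$, $b:=g(w)\geq 1$ and $\delta:=\langle g'(z),w-z\rangle$, convexity of $g$ gives $\mathscr{D}_{V}(w,z)=\mathscr{D}_{g}(w,z)=b-a-\delta\geq 0$, while a direct computation of $\nabla V_{\Phi}(z)=\Phi'(g(z))g'(z)$ yields $\mathscr{D}_{V_{\Phi}}(w,z)=\Phi(b)-\Phi(a)-\Phi'(a)\delta$. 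The tangent-line inequality for the convex function $\Phi$, namely $\Phi(b)-\Phi(a)\geq\Phi'(a)(b-a)$ (valid for all $a,b$ in the domain), then gives $\mathscr{D}_{V_{\Phi}}(w,z)\geq\Phi'(a)\,(b-a-\delta)=\Phi'(a)\,\mathscr{D}_{g}(w,z)$. Since $\Phi$ is convex its derivative $\Phi'$ is non-decreasing, and because $a=g(z)\geq 1\geq t_{0}$ we obtain $\Phi'(a)\geq\Phi'(1)\geq\Phi'(t_{0})>0$; hence $\mathscr{D}_{V}(w,z)=\mathscr{D}_{g}(w,z)\leq\Phi'(t_{0})^{-1}\mathscr{D}_{V_{\Phi}}(w,z)$. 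Taking $c:=\Phi'(t_{0})^{-1}$ and integrating this pointwise inequality over $\ball_{1}(0)$ against $w=z+\nabla\varphi(x)$, and then re-using the Bregman-divergence identity above in reverse, gives exactly \eqref{eq:VOrlicz}.

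I do not expect a genuine obstacle here: the argument is just the standard ``mean-zero gradient kills the linear term'' identity combined with elementary convexity. The one point to handle with care is that the auxiliary integrand is $V_{\Phi}(z)=\Phi\big((1+|z|^{2})^{1/2}\big)-\Phi(1)$, so that $\Phi$ and $\Phi'$ are only ever evaluated on $[1,\infty)$, where convexity forces $\Phi'\geq\Phi'(1)\geq\Phi'(t_{0})>0$; this is precisely what makes the weak hypothesis ``$\Phi'(t_{0})>0$ for some $t_{0}\in(0,1]$'' sufficient, and is why no monotonicity or $\Delta_{2}$-type assumption on $\Phi$ enters the proof of this particular lemma. (One should also note in passing that the same computation yields the pointwise bound $V_{\Phi}(z)\geq\Phi'(t_{0})V(z)$, consistent with hypothesis~\ref{item:A2} being a strengthening of~\ref{item:H2}; but only the defect comparison \eqref{eq:VOrlicz} is needed downstream.)
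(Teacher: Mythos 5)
Your proof is correct. The approaches are closely related but not identical: the paper proves that $G:=cV_{\Phi}-V$ is \emph{convex} by writing it as the composition of a convex non-decreasing function $g(t)=c\Phi(t)-t$ on $[1,\infty)$ with $\langle|\cdot|\rangle$, and then invokes quasiconvexity of $G$ (a Jensen-type inequality) to obtain \eqref{eq:VOrlicz}; you instead reduce to Bregman divergences using the mean-zero identity $\int_{\ball_{1}(0)}\nabla\varphi\dif x=0$ and verify the pointwise bound $\mathscr{D}_{V}(w,z)\leq \Phi'(t_{0})^{-1}\mathscr{D}_{V_{\Phi}}(w,z)$ directly from the tangent-line inequality for $\Phi$. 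Since $\mathscr{D}_{cV_{\Phi}-V}\geq 0$ for all pairs of points is equivalent (for $\hold^{1}$ integrands) to convexity of $cV_{\Phi}-V$, you are in effect establishing exactly the fact the paper uses, but by a bare-hands computation rather than a compositional one; and since a convex function is quasiconvex precisely because of the mean-zero observation, the final integration steps are also the same. Your version has the small advantage of yielding the sharp admissible constant $c=1/\Phi'(t_{0})$ rather than the strict inequality $c>1/\Phi'(t_{0})$ recorded in the paper, and it isolates the reusable byproduct $V_{\Phi}(z)\geq\Phi'(t_{0})V(z)$ (specialise the Bregman estimate to base point $0$, where $V(0)=V_{\Phi}(0)=0$ and $V'(0)=V_{\Phi}'(0)=0$), which you correctly note. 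One minor presentational caveat: your auxiliary $g$ denotes $(1+|\cdot|^{2})^{1/2}$, whereas the paper's $g$ is $t\mapsto c\Phi(t)-t$; harmless in isolation, but worth renaming if this argument were ever spliced next to the paper's.
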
 
\begin{proof} 
For~\eqref{eq:VOrlicz}, it suffices to establish that $G\colon z\mapsto cV_{\Phi}(z)-V(z)$ is convex for some $c>0$ since then~\eqref{eq:VOrlicz} is nothing but the quasiconvexity of $G$. Write $G=g\circ\langle\cdot\rangle\circ|\cdot| + 1 -c\Phi(1)$ with $g(t)=c\Phi(t)-t$ and $\langle t\rangle:=\sqrt{1+t^{2}}$. Both $g\colon [1,\infty)\to\R$ and $\langle\cdot\rangle\colon \R\to [1,\infty),|\cdot|\colon\R^{N\times n}\to\R$ are convex. Recalling that $\Phi'(t_{0})>0$ for some $t_{0}\in (0,1]$ and $\Phi'$ is non-decreasing by convexity of $\Phi$, we may choose $c>1/\Phi'(t_{0})$ to conclude that $g$ is non-decreasing on $[1,\infty)$. For this choice of $c$, $g\circ\langle\cdot\rangle$ is non-decreasing and convex, and this suffices to conclude that $G$ is convex. This completes the proof. 
\end{proof} 
The second ingredient is a variant of Lemma~\ref{lem:LSC}. Its proof is a variation of the proof of Lemma~\ref{lem:LSC} and is provided for the reader's convenience in the appendix, Section~\ref{sec:OrliczSec}:
\begin{proposition}[Lower semicontinuity]\label{lem:Orlicz2}
Let $F\in\hold(\R^{N\times n})$ be a quasiconvex integrand that satisfies the growth bound~ \emph{\ref{item:A1}} for some $N$-function $\Phi$ of class $\Delta_{2}$. Given an open and bounded set $\Omega\subset\R^{n}$ with Lipschitz boundary and letting $u_{0}\in\sobo^{1,\Phi}(\Omega;\R^{N})$, suppose that $u,u_{1},u_{2},...\in\sobo_{u_{0}}^{1,\Phi}(\Omega;\R^{N}):=u_{0}+\sobo_{0}^{1,\Phi}(\Omega;\R^{N})$ are such that $u_{j}\to u$ strongly in $\lebe^{1}(\Omega;\R^{N})$ and $\sup_{j\in\mathbb{N}}\|\Phi(\nabla u_{j})\|_{\lebe^{1}(\Omega)}<\infty$. Then we have 
\begin{align*}
\int_{\Omega}F(\nabla u)\dif x \leq \liminf_{j\to\infty}\int_{\Omega}F(\nabla u_{j})\dif x.
\end{align*}
\end{proposition}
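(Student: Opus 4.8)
\textbf{Proof strategy for Proposition~\ref{lem:Orlicz2}.}
The plan is to reduce the Orlicz-space lower semicontinuity statement to the already available polynomial case, Lemma~\ref{lem:LSC}, via a truncation argument on the integrand. The key observation is that by~\ref{item:A1} and the fact that $\Phi$ is of class $\Delta_{2}$ with $\Phi(t)\leq c(1+|t|^{q})$ under our standing assumption~\ref{item:B2}, we have $\sup_{j\in\mathbb{N}}\|\nabla u_{j}\|_{\lebe^{q}(\Omega)}<\infty$ whenever $\sup_{j\in\mathbb{N}}\|\Phi(\nabla u_{j})\|_{\lebe^{1}(\Omega)}<\infty$; indeed, $|z|^{q}\lesssim 1+\Phi(|z|)$ away from the origin by~\ref{item:B2} combined with super-linear growth of $\Phi$. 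Hence $(u_{j})$ is bounded in $\sobo^{1,q}(\Omega;\R^{N})$, and after passing to a (non-relabelled) subsequence we may assume $u_{j}\rightharpoonup u$ in $\sobo^{1,q}(\Omega;\R^{N})$ (using $u_{j}\to u$ in $\lebe^{1}$ to identify the limit), and also that $\liminf_{j\to\infty}\int_{\Omega}F(\nabla u_{j})\dif x$ is attained as a limit along the subsequence. Since all $u_{j}$ lie in the fixed Dirichlet class $\sobo^{1,\Phi}_{u_{0}}(\Omega;\R^{N})\subset\sobo^{1,q}_{\overline{u_{0}}}(\Omega;\R^{N})$ for a suitable $\overline{u_{0}}\in\sobo^{1,q}$ sharing the boundary trace of $u_{0}$ (cf.~Lemma~\ref{lem:DirClasses}), Lemma~\ref{lem:LSC} applies \emph{provided} we knew $|F(z)|\leq L(1+|z|^{q})$. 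But this is exactly hypothesis~\ref{item:H1} with the given $q$, which follows from~\ref{item:A1} and~\ref{item:B2}: $|F(z)|\leq L(1+\Phi(|z|))\leq L'(1+|z|^{q})$. Therefore Lemma~\ref{lem:LSC} directly yields $\int_{\Omega}F(\nabla u)\dif x\leq\liminf_{j\to\infty}\int_{\Omega}F(\nabla u_{j})\dif x$, first along the subsequence and then, by a standard subsequence-of-subsequences argument, along the original sequence.

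First I would make the boundedness reduction precise: fix $\delta>0$ and $t_{\delta}>0$ so large that $|z|^{q}\leq \delta^{-1}\Phi(|z|)$ for $|z|\geq t_{\delta}$ (possible since $\Phi(t)/t^{q}\to\infty$ is \emph{false} in general — rather one uses that $\Phi$ is super-linear and $q<\tfrac{n}{n-1}$ is fixed, so actually one argues $|z|^{q}\le c_{q}(1+\Phi(|z|))$ using $\Phi(t)\ge ct$ for large $t$ together with $|z|^{q}$ being dominated by $|z|$ only where $|z|\le 1$); integrating over $\Omega$ and splitting into $\{|\nabla u_{j}|\le t_{\delta}\}$ and its complement gives $\|\nabla u_{j}\|_{\lebe^{q}(\Omega)}^{q}\le c(t_{\delta},\Omega)+\delta^{-1}\sup_{k}\|\Phi(\nabla u_{k})\|_{\lebe^{1}(\Omega)}<\infty$. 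Next I would record that $u_{0}\in\sobo^{1,\Phi}(\Omega;\R^{N})\subset\sobo^{1,q}(\Omega;\R^{N})$ by~\ref{item:B2} and the $\Delta_{2}$-property, so the Dirichlet-class structure persists after re-embedding into $\sobo^{1,q}$; here one invokes Lemma~\ref{lem:DirClasses} with $p=q$ trivially, or directly notes $u_{j}-u_{0}\in\sobo^{1,\Phi}_{0}\subset\sobo^{1,q}_{0}$. Then I would apply Lemma~\ref{lem:LSC} verbatim with the exponent pair $(p,q)$ there taken to be $(q,q)$ (or $(1,q)$ — either works since $1\le q<\tfrac{np}{n-1}=\tfrac{nq}{n-1}$ holds trivially, and weak convergence in $\sobo^{1,q}$ implies the strong $\lebe^{q}$-convergence-up-to-subsequence hypothesis is unnecessary; the weak form suffices). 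Finally I would clean up the subsequence passage.

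The main obstacle — and it is genuinely only a bookkeeping obstacle rather than a conceptual one — is verifying that the hypotheses of Lemma~\ref{lem:LSC} are met \emph{in the fixed Dirichlet class}, i.e. that the re-embedding $\sobo^{1,\Phi}_{u_{0}}\hookrightarrow\sobo^{1,q}_{\overline{u_{0}}}$ is legitimate and that the boundary traces are preserved. This matters because, as the classical $\det$-example recalled after Lemma~\ref{lem:LSC} shows, signed quasiconvex integrands are \emph{not} weakly lower semicontinuous without the fixed-Dirichlet-class constraint; the whole point of the proposition is that the constraint is inherited from the $\sobo^{1,\Phi}$-setting. I would handle this by choosing $\overline{u_{0}}:=u_{0}$ itself (no genuine re-choice is needed since $u_{0}$ is already in $\sobo^{1,q}$), observing $u_{j}-u_{0}\in\sobo^{1,\Phi}_{0}(\Omega;\R^{N})$ and that $\sobo^{1,\Phi}_{0}(\Omega;\R^{N})\subset\sobo^{1,q}_{0}(\Omega;\R^{N})$ because $\hold^{\infty}_{c}(\Omega;\R^{N})$ is dense in the former in the $\sobo^{1,\Phi}$-norm, which dominates the $\sobo^{1,q}$-norm up to a constant on the bounded domain $\Omega$ by~\ref{item:B2} and~\eqref{eq:HoelderOrlicz}. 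Everything else is a direct citation of Lemma~\ref{lem:LSC} and a routine subsequence argument, so the proof is short.
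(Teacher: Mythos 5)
Your whole strategy rests on the claim that
$\sup_{j}\|\Phi(\nabla u_{j})\|_{\lebe^{1}(\Omega)}<\infty$ forces $\sup_{j}\|\nabla u_{j}\|_{\lebe^{q}(\Omega)}<\infty$, so that you can re-embed the problem into $\sobo^{1,q}$ and invoke Lemma~\ref{lem:LSC}. But~\ref{item:B2} only gives the \emph{upper} bound $\Phi(t)\leq c(1+t^{q})$; it does not give a lower bound of the form $t^{q}\leq c'(1+\Phi(t))$, and in general no such bound holds. Take $\Phi(t)=t\log(1+t)$ (a $\Delta_{2}$ $N$-function satisfying~\ref{item:B1}--\ref{item:B2} for any $1<q<\tfrac{n}{n-1}$): then $\Phi(t)/t^{q}\to 0$ as $t\to\infty$ for any $q>1$, so $\Phi(\nabla u_{j})\in\lebe^{1}$ does not confine $\nabla u_{j}$ to $\lebe^{q}$. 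You notice this yourself mid-proof (you concede $\Phi(t)/t^{q}\to\infty$ is false) but then try to rescue the estimate by arguing that $|z|^{q}$ is ``dominated by $|z|$ only where $|z|\leq 1$''; since $q>1$, this is exactly backwards --- $|z|^{q}\geq|z|$ for $|z|\geq 1$, and superlinearity of $\Phi$ gives you nothing in the right direction. This is not a bookkeeping issue: the $\mathrm{L}\log^{\alpha}\mathrm{L}$-type integrands are the ones Proposition~\ref{lem:Orlicz2} is designed to cover precisely because they escape every $\sobo^{1,p}$-coercive framework with $p>1$, as the paper emphasises when motivating Section~\ref{sec:Orlicz}.

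There is a secondary issue of scope. The statement of Proposition~\ref{lem:Orlicz2} does not assume~\ref{item:B2}; it is a standalone Orlicz lower-semicontinuity result for an arbitrary $\Delta_{2}$ $N$-function, and the paper's proof never touches~\ref{item:B2}. You have smuggled~\ref{item:B2} in as a ``standing assumption,'' which changes what is being proved. The paper's actual proof is a direct De Giorgi/Acerbi--Fusco-type argument adapted to the Orlicz scale: smooth approximation of $u$, a uniform triangulation of a neighbourhood of $\Omega$, piecewise-affine Lagrange interpolants $a_{\mathcal{T}}$, a non-tangential shift of the triangulation chosen via a maximal-function condition along the faces, the Whitney-type trace-preserving operator $\widetilde{\mathbb{E}}$ from Section~\ref{sec:Fubini} to construct boundary-fitting competitors, and then the quasiconvexity inequality on each simplex plus the Orlicz-H\"{o}lder/Lipschitz bound of Lemma~\ref{lem:OrliczBound} and $\Delta_{2}$ to pass to the limit. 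None of this detours through Lemma~\ref{lem:LSC}, and indeed it cannot.

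If you want to salvage a reduction argument, the correct direction would be to assume a $\nabla_{2}$-condition (so that $\Phi(t)\geq c t^{p}-c'$ for some $p>1$) and then lose exactly the degenerate cases the proposition is meant to capture; or to prove the Orlicz result directly, as the paper does.
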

Finally, we require a slight generalisation of the Lipschitz-type bound~\eqref{eq:lipschitz}: 
\begin{lemma}\label{lem:OrliczBound}
Let $\Phi$ be an $N$-function of class $\Delta_{2}$ and let $F\in\hold(\R^{N\times n})$ be rank-one convex with~\emph{\ref{item:A1}}. Then there exists a constant $C=C(\Delta_{2}(\Phi),N,n)>0$ such that we have 
\begin{align}\label{eq:lipschitzOrlicz}
|F(z)-F(w)|\leq C \frac{\Phi(1+|z|+|w|)}{1+|z|+|w|}|z-w|\qquad\text{for all}\;z,w\in\R^{N\times n}. 
\end{align}
\end{lemma}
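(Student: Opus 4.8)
The plan is to adapt the classical proof of the Lipschitz-type estimate \eqref{eq:lipschitz} for polynomial $(\lambda,q)$-growth to the Orlicz setting. The two new inputs are the $\Delta_{2}$-condition on $\Phi$, used to compare $\Phi$ at comparable arguments, and a careful choice of the length of the rank-one segments along which rank-one convexity of $F$ is exploited; the latter is what produces the correct homogeneity, i.e.\ the factor $\tfrac{\Phi(M)}{M}$ with $M:=1+|z|+|w|$ rather than merely $\Phi(M)$. First I would fix $z,w\in\R^{N\times n}$ (the case $z=w$ being trivial) and connect $w$ to $z$ by the coordinate path $w=w_{0},w_{1},\dots,w_{Nn}=z$, where $w_{k}$ is obtained from $w_{k-1}$ by replacing the $(\alpha_{k},\beta_{k})$-entry by the corresponding entry of $z$. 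Then $h_{k}:=w_{k}-w_{k-1}$ is a scalar multiple of the rank-one matrix $e_{\alpha_{k}}\otimes e_{\beta_{k}}$, one has $|h_{k}|\leq|z-w|$ and $\sum_{k}|h_{k}|=\|z-w\|_{\ell^{1}}\leq\sqrt{Nn}\,|z-w|$, and every intermediate point satisfies $|w_{k}|^{2}\leq|w|^{2}+|z|^{2}\leq M^{2}$ since each entry of $w_{k}$ is either a $w$-entry or a $z$-entry. By the triangle inequality it then suffices to estimate $|F(w_{k})-F(w_{k-1})|$ for each $k$ and to sum.

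For fixed $k$ with $h_{k}\neq 0$, set $g(t):=F(w_{k-1}+t h_{k})$, which is convex on $\R$ by rank-one convexity of $F$, and put $\lambda:=M/|h_{k}|\geq 1$. Monotonicity of the difference quotients of $g$ gives
\begin{align*}
\frac{g(0)-g(-\lambda)}{\lambda}\;\leq\; g(1)-g(0)\;\leq\;\frac{g(1+\lambda)-g(1)}{\lambda},
\end{align*}
whence $|g(1)-g(0)|\leq\tfrac{1}{\lambda}\bigl(|g(-\lambda)|+|g(0)|+|g(1)|+|g(1+\lambda)|\bigr)$. For $t\in\{-\lambda,0,1,1+\lambda\}$ we have $|w_{k-1}+t h_{k}|\leq|w_{k-1}|+(1+\lambda)|h_{k}|=|w_{k-1}|+|h_{k}|+M\leq 3M$, using $|w_{k-1}|\leq M$ and $|h_{k}|\leq|z-w|\leq|z|+|w|\leq M$. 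Hence by \ref{item:A1} each of the four terms is at most $L\bigl(1+\Phi(3M)\bigr)$, and since $M\geq 1$ and $\Phi\in\Delta_{2}$ with $\Phi(1)>0$ one obtains $L\bigl(1+\Phi(3M)\bigr)\leq c\,\Phi(M)$ with $c=c(\Delta_{2}(\Phi),L)$ (as in \eqref{eq:lipschitz}, lower-order terms are absorbed at the cost of the normalisation of $\Phi$). Therefore
\begin{align*}
|F(w_{k})-F(w_{k-1})|=|g(1)-g(0)|\leq\frac{4c\,\Phi(M)}{\lambda}=\frac{4c\,\Phi(M)}{M}\,|h_{k}|,
\end{align*}
and this bound holds trivially when $h_{k}=0$.

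Summing over $k=1,\dots,Nn$ then yields
\begin{align*}
|F(z)-F(w)|\leq\sum_{k=1}^{Nn}\frac{4c\,\Phi(M)}{M}\,|h_{k}|\leq 4c\sqrt{Nn}\,\frac{\Phi(M)}{M}\,|z-w|,
\end{align*}
which is precisely \eqref{eq:lipschitzOrlicz} with a constant $C=C(\Delta_{2}(\Phi),L,N,n)$. The only genuinely delicate point is the homogeneity of the estimate: a naive use of rank-one convexity with segments of a fixed length would only deliver the factor $\Phi(M)$, which is too weak; choosing the extension length $\lambda$ proportional to $M/|h_{k}|$ is exactly what upgrades the crude bound to the sharp $\tfrac{\Phi(M)}{M}|z-w|$, while keeping all evaluation points inside the ball of radius $3M$ so that \ref{item:A1} together with the $\Delta_{2}$-condition still controls them by a constant multiple of $\Phi(M)$.
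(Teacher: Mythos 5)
Your proof is correct and is exactly the argument the paper has in mind: the lemma is stated in the paper with only a pointer to the standard rank-one-path/difference-quotient proof for power growth (Giusti, Thm.~5.2), and your write-up is the faithful adaptation of that argument, with the key homogeneity gained by taking segment length $\lambda=M/|h_k|$. One small remark: as your estimate shows, the constant in \eqref{eq:lipschitzOrlicz} also depends on $L$ from \ref{item:A1} (and on the normalisation $\Phi(1)$ used to absorb the additive $1$); your version $C=C(\Delta_{2}(\Phi),L,N,n)$ records this correctly, while the lemma as stated in the paper omits the $L$-dependence.
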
 
The previous result follows by direct inspection of the proof of the corresponding result for power functions (e.g. cf.~\cite[Thm.~5.2]{Giusti}). We now may pass to the 
\begin{proof}[Proof of Theorem~\ref{thm:Orlicz}]
We start by proving that any local minimizer of the functional $\mathscr{F}$ in the sense of~\eqref{eq:locminOrlicz} is a local $\bv$-minimizer of $\overline{\mathscr{F}}^{*}$ for compactly supported variations in the sense of Definition~\ref{def:locmin}. Hence let $u\in\sobo_{\locc}^{1,\Phi}(\Omega;\R^{N})$ be a local minimizer in the sense of~\eqref{eq:locminOrlicz}. Then, in particular, $u\in\sobo_{\locc}^{1,1}(\Omega;\R^{N})$ and so for any $x_{0}\in\Omega$ we find a radius $r>0$ such that $\trace_{\partial\!\ball_{r}(x_{0})}(u)\in\sobo^{1-1/q,q}(\partial\!\ball_{r}(x_{0});\R^{N})$ for $1<q<\frac{n}{n-1}$ as in~\ref{item:B2}. In consequence, we find $u_{0}\in\sobo^{1,q}(\Omega;\R^{N})$ such that $\trace_{\partial\!\ball_{r}(x_{0})}(u_{0})=\trace_{\partial\!\ball_{r}(x_{0})}(u)$ $\mathscr{H}^{n-1}$-a.e. on $\partial\!\ball_{r}(x_{0})$. We then have $u-u_{0}\in\sobo_{0}^{1,\Phi}(\ball_{r}(x_{0});\R^{N})$ (cf.~\ref{item:A1}) and hence there exists $(\psi_{j})\subset\hold_{c}^{\infty}(\Omega;\R^{N})$ with $\spt(\psi_{j})\Subset\ball_{r}(x_{0})$ for all $j\in\mathbb{N}$ such that $\|(u-u_{0})-\psi_{j}\|_{\sobo^{1,\Phi}(\ball_{r}(x_{0}))}\to 0$. Passing to a non-relabeled subsequence, we then also have  $u_{0}+\psi_{j}\stackrel{*}{\rightharpoonup} \mathbf{u}$ in $\bv(\Omega;\R^{N})$, where $\mathbf{u}$ is the extension of $u|_{\ball_{r}(x_{0})}$ to $\Omega$ by $u_{0}$. On the one hand, we then have 
\begin{align}\label{eq:Orlizzo1}
\begin{split}
\int_{\ball_{r}(x_{0})}F(\nabla (u_{0}+\psi_{j}))\dif x & \leq \int_{\ball_{r}(x_{0})}|F(\nabla u_{0}+\nabla\psi_{j})-F(\nabla u)|\dif x \\ & + \int_{\ball_{r}(x_{0})}F(\nabla u)\dif x =: \mathrm{I}_{j}+\mathrm{II}. 
\end{split}
\end{align}
Write $\vartheta_{j}:= 1+|\nabla (u_{0}+\psi_{j})|+|\nabla u|$ and $\theta_{j}:=|\nabla u-(\nabla (u_{0}+\psi_{j}))|$, whereby we have $\|\theta_{j}\|_{\lebe^{\Phi}(\ball_{r}(x_{0}))}\to 0$ as $j\to\infty$. Passing to another subsequence, we may assume that either (i) $\|\Phi(\vartheta_{j})\|_{\lebe^{1}(\ball_{r}(x_{0}))}< 1$ or (ii) $\|\Phi(\vartheta_{j})\|_{\lebe^{1}(\ball_{r}(x_{0}))}\geq 1$ for all $j\in\mathbb{N}$. In the first case, we note that for all $j\in\mathbb{N}$ 
\begin{align*}
\left\vert\!\left\vert \Phi^{*}\left(\frac{\Phi(\vartheta_{j})}{\vartheta_{j}}\right)\right\vert\!\right\vert_{\lebe^{1}(\ball_{r}(x_{0}))} \stackrel{\eqref{eq:FenchelInequality}}{\leq} \|\Phi(\vartheta_{j})\|_{\lebe^{1}(\ball_{r}(x_{0}))}\leq 1, 
\end{align*}
so that 
\begin{align}\label{eq:TabeaTscherpelSuperBate}
\left\vert\!\left\vert \frac{\Phi(\vartheta_{j})}{\vartheta_{j}}\right\vert\!\right\vert_{\lebe^{\Phi^{*}}(\ball_{r}(x_{0}))}\leq 1. 
\end{align}
Since $\Phi$ satisfies~\ref{item:B1}, we may use Lemma~\ref{lem:OrliczBound}. Then by H\"{o}lder's inequality for Orlicz spaces~\eqref{eq:HoelderOrlicz} and using that $\|\theta_{j}\|_{\lebe^{\Phi}(\ball_{r}(x_{0}))}\to 0$,  
\begin{align*}
\mathrm{I}_{j} \stackrel{\text{Lem.~\ref{lem:OrliczBound}}}{\leq} C\int_{\ball_{r}(x_{0})} \frac{\Phi(\vartheta_{j})}{\vartheta_{j}}\theta_{j}\dif x \stackrel{\eqref{eq:HoelderOrlicz}}{\leq} 2C\left\vert\!\left\vert \frac{\Phi(\vartheta_{j})}{\vartheta_{j}}\right\vert\!\right\vert_{\lebe^{\Phi^{*}}(\ball_{r}(x_{0}))} \|\theta_{j}\|_{\lebe^{\Phi}(\ball_{r}(x_{0}))}\stackrel{\eqref{eq:TabeaTscherpelSuperBate}}{\longrightarrow} 0. 
\end{align*}
In the second case, we define $a_{j}:=\|\Phi(\vartheta_{j})\|_{\lebe^{1}(\ball_{r}(x_{0}))}^{-1}$ and set $\Phi_{j}:=a_{j}\Phi$. Then we have 
\begin{align}\label{eq:truecrime}
\begin{split}
\mathrm{I}_{j} & \stackrel{\text{Lem.~\ref{lem:OrliczBound}}}{\leq} C\int_{\ball_{r}(x_{0})} \frac{\Phi(\vartheta_{j})}{\vartheta_{j}}\theta_{j}\dif x = Ca_{j}^{-1}\int_{\ball_{r}(x_{0})} \frac{\Phi_{j}(\vartheta_{j})}{\vartheta_{j}}\theta_{j}\dif x \\ 
& \;\;\stackrel{\eqref{eq:HoelderOrlicz}}{\leq} 2Ca_{j}^{-1}\left\vert\!\left\vert \frac{\Phi_{j}(\vartheta_{j})}{\vartheta_{j}}\right\vert\!\right\vert_{\lebe^{\Phi_{j}^{*}}(\ball_{r}(x_{0}))}\|\theta_{j}\|_{\lebe^{\Phi_{j}}(\ball_{r}(x_{0}))} 
\end{split}
\end{align} 
again by H\"{o}lder's inequality for Orlicz spaces. Now note that 
\begin{align*}
\int_{\ball_{r}(x_{0})} \Phi_{j}^{*}\left(\frac{\Phi_{j}(\vartheta_{j})}{\vartheta_{j}} \right)\dif x \stackrel{\eqref{eq:FenchelInequality}}{\leq} \int_{\ball_{r}(x_{0})}\Phi_{j}(\vartheta_{j})\dif x =1, 
\end{align*}
whereby 
\begin{align}\label{eq:tababen}
\left\vert\!\left\vert \frac{\Phi_{j}(\vartheta_{j})}{\vartheta_{j}}\right\vert\!\right\vert_{\lebe^{\Phi_{j}^{*}}(\ball_{r}(x_{0}))} \leq 1. 
\end{align}
Since we are dealing with the second case, we have $a_{j}\leq 1$ for all $j\in\mathbb{N}$. Thus, for all $\lambda>0$, 
\begin{align*}
\int_{\ball_{r}(x_{0})}\Phi_{j}\left(\frac{\theta_{j}}{\lambda}\right)\dif x = a_{j}\int_{\ball_{r}(x_{0})}\Phi\left(\frac{\theta_{j}}{\lambda}\right)\dif x \leq \int_{\ball_{r}(x_{0})}\Phi\left(\frac{\theta_{j}}{\lambda}\right)\dif x
\end{align*}
and therefore
\begin{align}\label{eq:BabenT}
\|\theta_{j}\|_{\lebe^{\Phi_{j}}(\ball_{r}(x_{0}))}\leq \|\theta_{j}\|_{\lebe^{\Phi}(\ball_{r}(x_{0}))}\to 0. 
\end{align}
In view of~\eqref{eq:truecrime},~\eqref{eq:tababen} and~\eqref{eq:BabenT}, it remains to establish that $(a_{j}^{-1})$ is bounded. By~\ref{item:B1}, we have $\Phi(2t)\leq C\Phi(t)$ for all $t\geq 0$. We choose $k\in\mathbb{N}$ with $\|\vartheta_{j}\|_{\lebe^{\Phi}(\ball_{r}(x_{0}))}\leq 2^{k}$ for all $j$, so 
\begin{align*}
a_{j}^{-1} & = \int_{\ball_{r}(x_{0})}\Phi\Big(\|\vartheta_{j}\|_{\lebe^{\Phi}(\ball_{r}(x_{0}))}\frac{\vartheta_{j}}{\|\vartheta_{j}\|_{\lebe^{\Phi}(\ball_{r}(x_{0}))}}\Big)\dif x \\ 
& \leq \Delta_{2}(\Phi)^{k}\int_{\ball_{r}(x_{0})}\Phi\Big(\frac{\vartheta_{j}}{\|\vartheta_{j}\|_{\lebe^{\Phi}(\ball_{r}(x_{0}))}}\Big)\dif x \leq \Delta_{2}(\Phi)^{k}.
\end{align*}
Summarising, we have $\mathrm{I}_{j}\to 0$ as $j\to\infty$, and going back to~\eqref{eq:Orlizzo1} we conclude that 
\begin{align}\label{eq:OrliczMin1}
\overline{\mathscr{F}}_{u}^{*}[u;\ball_{r}(x_{0})]\leq \liminf_{j\to\infty}\int_{\ball_{r}(x_{0})}F(\nabla (u_{0}+\psi_{j}))\dif x \leq \int_{\ball_{r}(x_{0})}F(\nabla u)\dif x.
\end{align}
Next let $\varphi\in\bv_{c}(\ball_{r}(x_{0});\R^{N})$ and let $(v_{j})\subset u_{0}+\sobo_{0}^{1,q}(\ball_{r}(x_{0});\R^{N})$ be a generating sequence for $\overline{\mathscr{F}}_{u_{0}}^{*}[u+\varphi;\ball_{r}(x_{0})]$. Then $v_{j}-u \in\sobo_{0}^{1,\Phi}(\ball_{r}(x_{0});\R^{N})$, whereby the local minimality~\eqref{eq:locminOrlicz} yields 
\begin{align*}
\int_{\ball_{r}(x_{0})}F(\nabla u)\dif x \stackrel{\eqref{eq:locminOrlicz}}{\leq}  \int_{\ball_{r}(x_{0})}F(\nabla u + \nabla(v_{j}-u))\dif x = \int_{\ball_{r}(x_{0})}F(\nabla v_{j})\dif x, 
\end{align*}
and passing to the limit on the right-hand side yields 
\begin{align}\label{eq:OrliczMin2}
\int_{\ball_{r}(x_{0})}F(\nabla u)\dif x \leq \limsup_{j\to\infty}\int_{\ball_{r}(x_{0})}F(\nabla v_{j})\dif x = \overline{\mathscr{F}}_{u}^{*}[u+\varphi;\ball_{r}(x_{0})]. 
\end{align}
Combining~\eqref{eq:OrliczMin1} and~\eqref{eq:OrliczMin2} yields that $u$ is a local $\bv$-minimizer for compactly supported variations. 

Now it suffices to check that $F$ satisfies~\ref{item:H1}--\ref{item:H3}: Condition~\ref{item:H1} is satisfied by~\ref{item:A1} and~\ref{item:B2}, condition~\ref{item:H2} is fulfilled by~\ref{item:A2} and Lemma~\ref{lem:Orlicz1}, and~\ref{item:H3} holds by assumption~\ref{item:A3}. Hence Theorem~\ref{thm:Orlicz} follows from Theorem~\ref{thm:main1}. 
\end{proof}
\begin{remark}
In the setting of the proof of Theorem~\ref{thm:Orlicz} from above, we have 
\begin{align}\label{eq:sandwichdiscuss}
\int_{\ball_{r}(x_{0})}F(\nabla u)\dif x =: \mathscr{F}[u;\ball_{r}(x_{0})] \stackrel{\text{Prop.~\ref{lem:Orlicz2}}}{\leq} \overline{\mathscr{F}}_{u_{0}}^{*}[u;\ball_{r}(x_{0})] \stackrel{\eqref{eq:OrliczMin1}}{\leq} \mathscr{F}[u;\ball_{r}(x_{0})],
\end{align}
so $\mathscr{F}[u;\ball_{r}(x_{0})]  =  \overline{\mathscr{F}}_{u_{0}}^{*}[u;\ball_{r}(x_{0})]$. Note that the second inequality of~\eqref{eq:sandwichdiscuss} necessitates the existence of a sequence $(v_{j})\subset \sobo_{u_{0}}^{1,q}(\Omega;\R^{n})$ and a convergence $v_{j}\rightsquigarrow u$, at least as strong as weak*-convergence on $\bv$, for which the functional $\mathscr{F}[-;\ball_{r}(x_{0})]$ proves upper semicontinuous at $u$: $\limsup_{j\to\infty}\mathscr{F}[v_{j};\ball_{r}(x_{0})]\leq \mathscr{F}[u;\ball_{r}(x_{0})]$. The proof of Theorem~\ref{thm:Orlicz} then shows that one might take the strong convergence in $\sobo^{1,\Phi}(\ball_{r}(x_{0});\R^{N})$ as a potential choice for '$\rightsquigarrow$'. 

As established by \textsc{Schmidt} \cite[Cor.~4.4]{SchmidtPR1}, in the setting where $1<p<q<\frac{np}{n-1}$ and $F\in\hold^{\infty}(\R^{N\times n})$ is quasiconvex with $c|z|^{p}\leq F(z) \leq C(1+|z|^{q})$, for any $u_{0}\in\sobo^{1,p}(\Omega;\R^{N})$ there exists $u\in \sobo_{u_{0}}^{1,p}(\Omega;\R^{N})$ such that 
\begin{align}\label{eq:TooWeakMinimality}
\int_{\Omega}F(\nabla u)\dif x \leq \int_{\Omega}F(\nabla v)\dif x\qquad\text{for all}\;v\in (\sobo_{u_{0}}^{1,p}\cap\sobo_{\locc}^{1,q})(\Omega;\R^{N}). 
\end{align}
This is a consequence of semicontinuity results by \textsc{Fonseca \& Mal\'{y}}~\cite{FM} or the second named author~\cite{Kristensen97}. In view of the above proof of Theorem~\ref{thm:Orlicz}, at a first glance it seems that the notion of minimality embodied in~\eqref{eq:TooWeakMinimality} implies that $u$ necessarily is a local minimizer of $\overline{\mathscr{F}}$ and thus the partial regularity for local minimizers follows by reduction to Theorem~\ref{thm:main1} or~\ref{thm:main2} as above. However, this is not so: Whereas the corresponding variant of~\eqref{eq:OrliczMin2} persists by virtue of~\eqref{eq:TooWeakMinimality} and $\sobo_{u_{0}}^{1,q}(\ball_{r}(x_{0});\R^{N})\subset (\sobo_{u}^{1,p}\cap\sobo_{\locc}^{1,q})(\ball_{r}(x_{0});\R^{N})$, it is inequality~\eqref{eq:OrliczMin1} for which there is no reason to hold in this new context. In fact, incorporating the obvious modifications, the term corresponding to $\mathrm{I}_{j}$ as in~\eqref{eq:Orlizzo1} does not need to vanish as $j\to\infty$. Namely, the essentially only available tool to arrive at this implication is the Lipschitz bound~\eqref{eq:lipschitz} in combination with \emph{strong} convergence in $\sobo^{1,p}(\ball_{r}(x_{0});\R^{N})$ as a choice for a potential convergence '$\rightsquigarrow$'. Still, we then require $q\leq p$ to conclude $\limsup_{j\to\infty}\mathrm{I}_{j}=0$, but then the problem trivialises as we enter the realm of standard growth conditions. 
\end{remark}
\subsubsection{Differential conditions}\label{sec:diffcond}
Theorem~\ref{thm:main2} directly admits a generalisation to scenarios where the gradient is replaced by differential operators such as the (trace-free) symmetric gradient. Such operators play an important role in the context of nonlinear elasticity, cf.~e.g.~\cite{FonMul1,FuchsSeregin} and the references therein. This procedure works by a reduction to the full gradient case as introduced by the first author in~\cite{Gm20} for the symmetric gradient and in~\cite{CG} for general elliptic operators. Henceforth, let $\mathbb{A}$ be a first order, constant coefficient, linear and homogeneous differential operator on $\R^{n}$ from $\R^{m_{1}}$ to $\R^{m_{2}}$; especially, $\A$ has a representation 
\begin{align}\label{eq:diffopform}
\A u= \sum_{j=1}^{n}\A_{j}\partial_{j}u\;\;\;\text{for}\;\;\;u\colon\R^{n}\to\R^{m_{1}}
\end{align}  
with linear maps $\A_{j}\colon\R^{m_{1}}\to\R^{m_{2}}$. Following \textsc{H\"{o}rmander}~\cite{Hoermander} or \textsc{Spencer}~\cite{Spencer}, we then say that $\A$ is \emph{elliptic} provided its Fourier symbol $\A[\xi]:=\sum_{j=1}^{n}\A_{j}\xi_{j}\colon\R^{m_{1}}\to\R^{m_{2}}$ is injective for any $\xi=(\xi_{1},...,\xi_{n})\in\R^{n}\setminus\{0\}$. Let $1< p \leq q <\frac{np}{n-1}$. For open and bounded Lipschitz domains $\Omega\Subset\Omega'\subset\R^{n}$ and $u_{0}\in\sobo_{0}^{1,q}(\Omega';\R^{m_{1}})$, we then put for $G\colon\R^{m_{2}}\to\R$ 
\begin{align*}
\overline{\mathscr{G}}_{u_{0}}[\mathbf{u};\Omega,\Omega'] = \inf\left\{ \liminf_{j\to\infty}\int_{\Omega'}G(\mathbb{A}u)\dif x\colon\;\begin{array}{c} (u_{j})\subset\mathscr{A}_{u_{0}}^{q}(\Omega,\Omega'),\\ u_{j}\rightharpoonup \mathbf{u}\;\text{in}\;\sobo^{1,p}(\Omega';\R^{m_{1}})\end{array} \right\}
\end{align*}
and introduce $\overline{\mathscr{G}}_{u_{0}}[u;\Omega]$, $\overline{\mathscr{G}}_{u}[u;\ball_{r}(x_{0})]$ as in Section~\ref{sec:props}; the notion of local minimality for compactly supported variations then is analogous to the one from Definition~\ref{def:locmin}. In this context, the corresponding variant of the usual quasiconvexity is the $\A$-\emph{quasiconvexity} of $G$ at $z_{0}\in\R^{m_{2}}$: 
\begin{align}\label{eq:strongQCdiffcond}
G(z_{0})\leq \dashint_{\ball_{1}(0)}G(z_{0}+\A\varphi)\dif x\qquad\text{for all}\;\varphi\in\hold_{c}^{\infty}(\ball_{1}(0);\R^{m_{1}}),
\end{align} 
and we refer the reader to~\cite{Dac82a,FonMul1} for more on the underlying terminology. The counterpart of Theorem~\ref{thm:main2} then is as follows: 
\begin{theorem}\label{thm:diffops}
Let $\Omega\subset\R^{n}$ be open and bounded with Lipschitz boundary, $1<p\leq q<\min\{\frac{np}{n-1},p+1\}$, $\A$ be an elliptic operator of the form~\eqref{eq:diffopform} and let $G\in\hold^{\infty}(\R^{m_{2}})$ satisfy the growth bound $|G(z)|\leq c(1+|z|^{q})$ for all $z\in\R^{m_{2}}$ for some $c>0$. If for any $m>0$ there exists $\ell_{m}>0$ such that $z\mapsto G-\ell_{m}V_{p}$ is $\A$-quasiconvex at every $z_{0}\in\R^{m_{2}}$ with $|z_{0}|\leq m$, then every local minimizer of $\overline{\mathscr{G}}$ for compactly supported variations is $\hold^{\infty}$-partially regular. 
\end{theorem}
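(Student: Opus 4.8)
The plan is to prove Theorem~\ref{thm:diffops} by reducing it to Theorem~\ref{thm:main2} via the exactness/potential-operator machinery that is classical in the theory of $\A$-quasiconvexity. First I would recall that every elliptic first-order homogeneous operator $\A$ of the form~\eqref{eq:diffopform} from $\R^{m_{1}}$ to $\R^{m_{2}}$ admits a homogeneous \emph{potential} operator: by the results in~\cite{Gm20,CG} (building on the algebraic considerations of \textsc{Raita} and others, and on \textsc{H\"{o}rmander}'s and \textsc{Spencer}'s ellipticity framework), there exists a linear map $\pi\colon\R^{m_{2}}\to\R^{m_{1}\otimes\R^{n}}\cong\R^{m_{1}n}$ and a constant-coefficient operator such that, on each ball $\ball_{r}(x_{0})$, solving $\A u=w$ with appropriate boundary conditions is equivalent to working with full gradients. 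More precisely, the reduction exploits the following: for $u\in\sobo^{1,p}(\Omega;\R^{m_{1}})$ one has $\A u=\sum_{j}\A_{j}\partial_{j}u=\widetilde{\A}(\nabla u)$ for the linear \emph{pointwise} map $\widetilde{\A}\colon\R^{m_{1}\times n}\to\R^{m_{2}}$, $\widetilde{\A}(\xi)=\sum_{j}\A_{j}\xi_{\cdot j}$. Ellipticity of $\A$ means precisely that $\widetilde{\A}$ restricted to rank-one matrices $a\otimes b$ is injective (equivalently $\widetilde{\A}(a\otimes b)=\A[b]a\neq 0$ for $a\neq 0$, $b\neq 0$), so $\widetilde{\A}$ has a left inverse on its image and one can define an auxiliary integrand on full gradients.

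The second step is to define $F\colon\R^{N\times n}\to\R$ with $N:=m_{1}$ by $F(\xi):=G(\widetilde{\A}(\xi))$ and to verify that $F$ inherits hypotheses~\ref{item:H1},~\ref{item:H2p},~\ref{item:H3}. Hypothesis~\ref{item:H1} is immediate since $|F(\xi)|=|G(\widetilde{\A}\xi)|\le c(1+|\widetilde{\A}\xi|^{q})\le c(1+\|\widetilde{\A}\|^{q}|\xi|^{q})$, and smoothness~\ref{item:H3} is clear. For~\ref{item:H2p} one must check that $F-\ell_{m}V_{p}$ is quasiconvex at every $\xi_{0}$ with $|\xi_{0}|\le m$; here the key point is that composition with the linear \emph{surjective-onto-image, rank-one-injective} map $\widetilde{\A}$ transports $\A$-quasiconvexity of $G-\ell_{m}V_{p}$ into ordinary quasiconvexity of the composition, because test fields $\varphi\in\hold_{c}^{\infty}(\ball_{1}(0);\R^{m_{1}})$ give $\widetilde{\A}(\nabla\varphi)=\A\varphi$, and because $V_{p}(\widetilde{\A}\xi)$ is comparable to $V_{p}(\xi)$ \emph{modulo the kernel directions}, which is exactly where ellipticity is needed — one uses a \textsc{G\aa rding}-type inequality $\int_{\ball_{1}(0)}|\nabla\varphi|^{2}\dif x\le C\int_{\ball_{1}(0)}|\A\varphi|^{2}\dif x$ for $\varphi\in\hold_{c}^{\infty}(\ball_{1}(0);\R^{m_{1}})$ (valid precisely because $\A$ is elliptic, see~\cite{Gm20,CG,Hoermander}) to absorb a slightly smaller multiple of $V_{p}$ into the $\A$-strong quasiconvexity of $G$. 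A careful bookkeeping of the ellipticity constants then produces, for each $m>0$, a modified threshold $\ell'_{m}>0$ for which $F-\ell'_{m}V_{p}$ is quasiconvex at all $|\xi_{0}|\le m$, giving~\ref{item:H2p}.

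The third step is to match up the relaxed functionals: I claim $\overline{\mathscr{G}}_{u_{0}}[\mathbf{u};\Omega,\Omega']=\overline{\mathscr{F}}_{u_{0}}[\mathbf{u};\Omega,\Omega']$ whenever the left-hand side is built from $G\circ\A$ and the right-hand side from $F=G\circ\widetilde{\A}$, since both infima run over the identical class $\mathscr{A}_{u_{0}}^{q}(\Omega,\Omega')$ of Sobolev competitors with the identical convergence $u_{j}\rightharpoonup\mathbf{u}$ in $\sobo^{1,p}$, and for Sobolev maps $G(\A u_{j})=F(\nabla u_{j})$ pointwise. Consequently the two notions of local minimality for compactly supported variations (Definition~\ref{def:locmin}~\ref{item:WLM1} for $\overline{\mathscr{F}}$ versus its analogue for $\overline{\mathscr{G}}$) coincide, so every local $\overline{\mathscr{G}}$-minimizer $u\in\sobo_{\locc}^{1,p}(\Omega;\R^{m_{1}})$ for compactly supported variations is a local $\overline{\mathscr{F}}$-minimizer for compactly supported variations. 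Theorem~\ref{thm:main2} applied to $F$ then yields that $u$ is $\hold^{\infty}$-partially regular, which is the assertion.

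I expect the main obstacle to be the \textbf{second step}, namely verifying that $\A$-strong quasiconvexity of $G$ genuinely implies ordinary strong quasiconvexity of $F=G\circ\widetilde{\A}$ with controlled constants. The subtlety is that $\widetilde{\A}$ annihilates the "non-$\A$" part of a gradient field, so $F$ is constant in those directions and $F-\ell V_{p}$ can never be quasiconvex with $V_{p}$ the \emph{full} auxiliary integrand on $\R^{N\times n}$ — the correct statement must use the \textsc{G\aa rding} inequality to show that along gradient \emph{test fields} the missing directions are controlled by $\A\varphi$, so the loss is only in the constant and not in the qualitative property. Making this precise requires the quantitative form of the potential construction from~\cite{Gm20,CG} and a short argument that a quasiconvex function that is affine in a fixed complementary subspace, when perturbed by the "elliptic part" of $V_{p}$, becomes quasiconvex; I would isolate this as a lemma, prove it by testing the definition against arbitrary $\hold_{c}^{\infty}$-fields and splitting $\nabla\varphi$ according to the symbol decomposition, and then the rest of the proof is the routine identification of functionals described above.
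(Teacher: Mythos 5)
Your approach is the same as the paper's: define $F:=G\circ\widetilde{\A}$ (the paper's $\Pi_{\A}$), check that~\ref{item:H1},~\ref{item:H3} are immediate, verify~\ref{item:H2p} by transporting the $\A$-strong quasiconvexity of $G$ to $p$-strong quasiconvexity of $F$, observe that $\overline{\mathscr{F}}$ and $\overline{\mathscr{G}}$ have the same local minimizers for compactly supported variations, and conclude from Theorem~\ref{thm:main2}. You also correctly identify the second step as the only non-trivial one and point to~\cite{Gm20,CG} for the required potential/Korn machinery, which is exactly where the paper goes.

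The one place where your sketch is imprecise is the form of the inequality you invoke. You propose the plain $L^{2}$ G\aa rding-type bound $\int_{\ball_{1}(0)}|\nabla\varphi|^{2}\dif x\leq C\int_{\ball_{1}(0)}|\A\varphi|^{2}\dif x$ and say you will ``absorb a slightly smaller multiple of $V_{p}$.'' For $p\neq 2$ an $L^{2}$ bound does not yield an integral comparison for $V_{p}$; worse, since~\ref{item:H2p} involves quasiconvexity \emph{at $z_{0}$}, the comparison must be a \emph{shifted} $V_{p}$ inequality. What the paper actually cites from~\cite[Proof of Thm.~1.1, Eqs.~(4.9)--(4.10)]{CG} is precisely this: for $p>1$ there is $c=c(\A,p)>0$ with
\begin{align*}
\int_{\ball_{1}(0)}V_{p}(z+\nabla\varphi)-V_{p}(z)\dif x\leq c\int_{\ball_{1}(0)}V_{p}(\Pi_{\A}z+\Pi_{\A}(\nabla\varphi))-V_{p}(\Pi_{\A}z)\dif x
\end{align*}
for all $\varphi\in\hold_{c}^{\infty}(\ball_{1}(0);\R^{m_{1}})$. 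This is an $L^{p}$-scale, shifted Korn inequality obtained from Calder\'{o}n--Zygmund estimates on the potential operator; the restriction $p>1$ is exactly where it enters (and is why, as Remark~\ref{rem:diffcondsp1} notes, the argument collapses at $p=1$ by Ornstein's non-inequality). Once you write the needed ingredient in this form, your argument and the paper's coincide verbatim: chain the inequality with the assumed $\A$-strong quasiconvexity of $G-\ell_{m}V_{p}$ at $\Pi_{\A}z_{0}$ (with $|\Pi_{\A}z_{0}|\leq\|\Pi_{\A}\|m$), giving~\ref{item:H2p} for $F$ with constant $\ell_{m}/c$, and finish by Theorem~\ref{thm:main2}. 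Also, your parenthetical remark that ``$F-\ell V_{p}$ can never be quasiconvex'' is false as stated: it can be, and the Korn inequality is exactly what makes this so; you effectively retract this in the following clause, but I would delete it to avoid confusion.
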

\begin{proof}
We follow~\cite{CG} and pick $\Pi_{\A}\colon \R^{m_{1}\times n}\to \R^{m_{2}}$ such that $\mathbb{A}u=\Pi_{\A}[\nabla u]$ for any smooth $u\colon\R^{n}\to\R^{m_{1}}$. Then $F:=G\circ\Pi_{\A}$ is of class $\hold^{\infty}(\R^{m_{1}\times n})$, satisfies the growth bound $|F(z)|\leq C(1+|z|^{q})$ for all $z\in\R^{m_{1}\times n}$ and satisfies~\ref{item:H2p}. In fact, by \cite[Proof of Thm.~1.1,~Eq.~(4.9)--(4.10)]{CG}, we use $p>1$ to conclude the existence of a constant $c=c(\A,p)>0$ such that 
\begin{align}\label{eq:BAUteam}
\int_{\ball_{1}(0)}V_{p}(z+\nabla\varphi)-V_{p}(z)\dif x \leq c\int_{\ball_{1}(0)}V_{p}(\Pi_{\A}(z)+\Pi_{\A}(\nabla\varphi))-V_{p}(\Pi_{\A}(z))\dif x
\end{align}
holds for all $\varphi\in\hold_{c}^{\infty}(\ball_{1}(0);\R^{m_{1}})$. From here, we see that the assumptions of the present theorem imply~\ref{item:H2p} for $F$. Since $\overline{\mathscr{F}}$ and $\overline{\mathscr{G}}$ have the same local minimizers for compactly supported variations, the claim now follows from Theorem~\ref{thm:main2}. The proof is complete. 
\end{proof} 
\begin{remark}\label{rem:diffcondsp1}
The reader will notice that~\eqref{eq:BAUteam}, and hence Theorem~\ref{thm:diffops}, does not directly generalise to $p=1$. This is because of \textsc{Ornstein}'s Non-Inequality, cf.~\cite{Ornstein,KirchheimKristensen}. Still, as established in the purely linear growth case $p=q=1$, partial regularity persists in the $1$-strongly symmetric quasiconvex context~\cite{Gm20} (i.e., the $1$-strong $\mathbb{A}$-quasiconvexity variant of~\eqref{eq:strongQCdiffcond} with $\mathbb{A}u:=\frac{1}{2}(Du+Du^{\top})$ being the symmetric gradient). For the class of so-called $\mathbb{C}$\emph{-elliptic} operators $\mathbb{A}$, this result has been generalised in~\cite{BaerlinKessler}. Being equivalent to having finite dimensional nullspaces, $\mathbb{C}$-ellipticity is strongly intertwined with the existence of $\lebe^{1}$-trace operators for the underlying function spaces \cite{BDG}. As the method of the proof of Theorem~\ref{thm:main1} is strongly based on the specific trace-preserving operators from Section~\ref{sec:Fubini}, it would be interesting to know whether the method of the proof of Theorem~\ref{thm:main1} also generalises to this case. 
\end{remark}

\section{Appendix}\label{sec:appendix}
In this section we provide the proofs of some auxiliary results that appeared in the main part of the paper. 
\subsection{Proof of the Hardy-Littlewood-type selection lemma}
We begin with the results on the selection of good points as gathered in Section~\ref{sec:HLW}:
\begin{proof}[Proof of Lemma~\ref{lem:HLW}]
Since $f\colon [r,s]\to\R$ is monotonously increasing, right-continuous and bounded, $f$ is of bounded variation on $[r,s]$. In particular, there exists a finite, positive Borel measure (the Lebesgue-Stieltjes measure) $f'$ on $[r,s]$ such that $f'((t_{1},t_{2}])=f(t_{2})-f(t_{1})$ for all $r<t_{1}<t_{2}\leq s$; as usual, $f'$ is obtained as the Carath\'{e}odory extension of the set function $\eta$ defined by $\eta((t_{1},t_{2}]):=f(t_{2})-f(t_{1})$ for $r\leq t_{1}< t_{2}\leq s$ and $\eta(\{r\})=0$. In particular, we have $f'([r,s])\leq f(s)-f(r)$. 

We define upper and lower maximal functions for $t\in (r,s)$ by 
\begin{align*}
(f')_{\mathrm{u}}^{*}(t):=\sup_{\tau \in (t,s]}\frac{1}{\tau-t}\int_{[t,\tau]}f'\;\;\text{and}\;\;(f')_{\mathrm{l}}^{*}(t):=\sup_{\tau \in [r,t)}\frac{1}{t-\tau}\int_{[\tau,t]}f'. 
\end{align*}
We now briefly derive the requisite weak-$(1,1)$-type estimate for $(f')_{\mathrm{u}}^{*}$. Given $\lambda>0$, let $\mathcal{O}_{\lambda}:=\{t\in(r,s)\colon\;(f')_{\mathrm{u}}^{*}(t)>\lambda\}$. For each $t\in\mathcal{O}_{\lambda}$, we find $\tau_{t}\in (t,s]$ such that 
\begin{align*}
\int_{[t,\tau_{t}]}f' \geq \lambda (\tau_{t}-t). 
\end{align*}
By use of the Vitali covering lemma, we then find a sequence $(t_{j})\subset[r,s]$ and a corresponding sequence $(\tau_{t_{j}})\subset [r,s]$ such that for each $j\in\mathbb{N}$ there holds $t_{j}<\tau_{t_{j}}\leq s$, the intervals $[t_{j},\tau_{t_{j}}]$ are mutually disjoint and we have
\begin{align*}
\mathcal{O}_{\lambda} \subset\bigcup_{j}5[t_{j},\tau_{t_{j}}]\;\;\text{and}\;\;\int_{[t_{j},\tau_{t_{j}}]}f' \geq \lambda (\tau_{t_{j}}-t_{j})\;\;\text{for all}\;j\in\mathbb{N}. 
\end{align*}
Here, given an interval $[x,y]\subset[r,s]$, we write $[x,y]=[z-\vartheta,z+\vartheta]$ and then define $5[x,y]:=[z-5\vartheta,z+5\vartheta]\cap[r,s]$. 
Therefore, 
\begin{align*}
\mathscr{L}^{1}(\mathcal{O}_{\lambda})& \leq 5\sum_{j}(\tau_{t_{j}}-t_{j}) \leq \frac{5}{\lambda}\sum_{j}\int_{[t_{j},\tau_{t_{j}}]}f' \\ 
& \leq \frac{5}{\lambda}\sum_{j}\int_{(t_{j},\tau_{t_{j}}]}f' + \frac{5}{\lambda}\sum_{j}f'(\{t_{j}\}) \\ & \leq \frac{10}{\lambda} \int_{[r,s]}f' \leq  \frac{10}{\lambda}(f(s)-f(r)). 
\end{align*}
We may argue analogously for $(f')_{\mathrm{l}}^{*}$. We then obtain  
\begin{align*}
\mathscr{L}^{1}(\mathcal{O}_{\lambda}^{\mathrm{u}}):=\mathscr{L}^{1}(\{t\in (r,s)\colon\;(f')_{\mathrm{u}}^{*}(t)>\lambda\})\leq \frac{10}{\lambda}(f(s)-f(r)), \\ 
\mathscr{L}^{1}(\mathcal{O}_{\lambda}^{\mathrm{l}}):=\mathscr{L}^{1}(\{t\in (r,s)\colon\;(f')_{\mathrm{l}}^{*}(t)>\lambda\})\leq \frac{10}{\lambda}(f(s)-f(r)).
\end{align*}
We may assume without loss of generality that $f(r)<f(s)$ as otherwise there is nothing to prove. For $c>0$ to be fixed later, put $\lambda^{*}:=c\frac{f(s)-f(r)}{s-r}$. Then 
\begin{align*}
\mathscr{L}^{1}(\mathcal{O}_{\lambda^{*}}^{\mathrm{u}}):=\mathscr{L}^{1}(\{t\in (r,s)\colon\;(f')_{\mathrm{u}}^{*}(t)>c\frac{f(s)-f(r)}{s-r}\})\leq \frac{10}{c}(s-r), \\ 
\mathscr{L}^{1}(\mathcal{O}_{\lambda^{*}}^{\mathrm{l}}):=\mathscr{L}^{1}(\{t\in (r,s)\colon\;(f')_{\mathrm{l}}^{*}(t)>c\frac{f(s)-f(r)}{s-r}\})\leq \frac{10}{c}(s-r). 
\end{align*}
In consequence, $\mathscr{L}^{1}((\mathcal{O}_{\lambda^{*}}^{\mathrm{u}})^{\complement}),\mathscr{L}^{1}((\mathcal{O}_{\lambda^{*}}^{\mathrm{l}})^{\complement})>(1-\frac{10}{c})(s-r)$ and therefore 
\begin{align}\label{eq:Korn}
\begin{split}
2\Big(1-\frac{10}{c}\Big)(s-r) & \leq \mathscr{L}^{1}((\mathcal{O}_{\lambda^{*}}^{\mathrm{u}})^{\complement})+\mathscr{L}^{1}((\mathcal{O}_{\lambda^{*}}^{\mathrm{l}})^{\complement}) \\ 
& \leq \mathscr{L}^{1}((\mathcal{O}_{\lambda^{*}}^{\mathrm{u}})^{\complement}\cup (\mathcal{O}_{\lambda^{*}}^{\mathrm{l}})^{\complement}) + \mathscr{L}^{1}((\mathcal{O}_{\lambda^{*}}^{\mathrm{u}})^{\complement}\cap (\mathcal{O}_{\lambda^{*}}^{\mathrm{l}})^{\complement}) \\ &  \leq (s-r) + \mathscr{L}^{1}((\mathcal{O}_{\lambda^{*}}^{\mathrm{u}})^{\complement}\cap ((\mathcal{O}_{\lambda^{*}}^{\mathrm{l}})^{\complement})
\end{split}
\end{align}
yields 
\begin{align}\label{eq:lowerboundmeasure}
\Big(1-\frac{20}{c}\Big)(s-r) \leq \mathscr{L}^{1}((\mathcal{O}_{\lambda^{*}}^{\mathrm{u}})^{\complement}\cap (\mathcal{O}_{\lambda^{*}}^{\mathrm{l}})^{\complement}).
\end{align}
Hence \eqref{eq:constructssr} follows; indeed, if $t\in 
(\mathcal{O}_{\lambda^{*}}^{\mathrm{u}})^{\complement}\cap ((\mathcal{O}_{\lambda^{*}}^{\mathrm{l}})^{\complement}$, then 
\begin{align*}
&c\frac{f(s)-f(r)}{s-r}\geq (f')_{\mathrm{u}}^{*}(t) \geq \frac{f(\tau)-f(t)}{\tau-t}\qquad \text{for all}\;\tau\in (t,s]\;\;\text{and}\\
& c\frac{f(s)-f(r)}{s-r}\geq (f')_{\mathrm{l}}^{*}(t) \geq \frac{f(t)-f(\tau)}{t-\tau}\qquad\text{for all}\; \tau \in [r,t). 
\end{align*}
In fact, since $f'((a,b])=f(b)-f(a)$ for any semi-open interval, we have e.g. for all $\tau\in(t,s]$:
\begin{align*}
\frac{f(\tau)-f(t)}{\tau-t} = \frac{f'((t,\tau])}{\tau-t}\leq \frac{f'([t,\tau])}{\tau-t}\leq (f')_{\mathrm{u}}^{*}(t) \leq c\frac{f(s)-f(r)}{s-r}. 
\end{align*}
We now argue that this can be achieved for $\widetilde{r}<\widetilde{s}<\widetilde{t}$ such that \eqref{eq:comparessr}  holds.
Recalling that $\mathscr{L}^{1}(E)<\theta(s-r)$, we deduce similarly as in~\eqref{eq:Korn} but now using~\eqref{eq:lowerboundmeasure}  and $\mathscr{L}^{1}(E^{\complement})\geq (1-\theta)(s-r)$
\begin{align}\label{eq:semiHLW}
\Big(1-\frac{20}{c}-\theta\Big)(s-r) \leq \mathscr{L}^{1}((\mathcal{O}_{\lambda^{*}}^{\mathrm{u}})^{\complement}\cap(\mathcal{O}_{\lambda^{*}}^{\mathrm{l}})^{\complement}\cap E^{\complement}). 
\end{align}
We then conclude from \eqref{eq:semiHLW} that 
\begin{align}\label{eq:cappchose}
c:=\frac{800}{1-8\theta}\Longrightarrow \frac{7}{8}(s-r)<\mathscr{L}^{1}((\mathcal{O}_{\lambda^{*}}^{\mathrm{u}})^{\complement}\cap(\mathcal{O}_{\lambda^{*}}^{\mathrm{l}})^{\complement}\cap E^{\complement}). 
\end{align}
Namely, note that we have 
\begin{align*}
\frac{7}{8}(s-r) < \Big(1-\frac{20}{c}-\theta\Big)(s-r) \Leftrightarrow \frac{20}{c} < \frac{1-8\theta}{8}\Leftrightarrow \frac{160}{1-8\theta}<c, 
\end{align*}
and the last condition is certainly fulfilled for our choice of $c$. Therefore, whenever a measurable subset $\mathfrak{A}\subset(r,s)$ is such that $\mathscr{L}^{1}(\mathfrak{A})\geq \tfrac{1}{8}(s-r)$, the intersection $(\mathcal{O}_{\lambda^{*}}^{\mathrm{u}})^{\complement}\cap(\mathcal{O}_{\lambda^{*}}^{\mathrm{l}})^{\complement}\cap E^{\complement}\cap\mathfrak{A}$ must be non-empty. Now consider the intervals 
\begin{align*}
&I_{1}=(\tfrac{3}{4}r+\tfrac{s}{4}-\tfrac{1}{16}(s-r),\tfrac{3}{4}r+\tfrac{s}{4}+\tfrac{1}{16}(s-r)), \\ & I_{2}=(\tfrac{3}{4}s+\tfrac{r}{4}-\tfrac{1}{16}(s-r),\tfrac{3}{4}s+\tfrac{r}{4}+\tfrac{1}{16}(s-r)), 
\end{align*}
their intersections with $(\mathcal{O}_{\lambda^{*}}^{\mathrm{u}})^{\complement}\cap(\mathcal{O}_{\lambda^{*}}^{\mathrm{l}})^{\complement}\cap E^{\complement}$ must be non-empty each, and so we may choose the requisite $\widetilde{r}$ to belong to $I_{1}\cap(\mathcal{O}_{\lambda^{*}}^{\mathrm{u}})^{\complement}\cap(\mathcal{O}_{\lambda^{*}}^{\mathrm{l}})^{\complement}\cap E^{\complement}$ and $\widetilde{s}$ to belong to $I_{2}\cap(\mathcal{O}_{\lambda^{*}}^{\mathrm{u}})^{\complement}\cap(\mathcal{O}_{\lambda^{*}}^{\mathrm{l}})^{\complement}\cap E^{\complement}$. Then, by definition of $I_{1},I_{2}$, we especially have 
\begin{align*}
&\frac{1}{8}(s-r)\leq (\widetilde{s}-\widetilde{r})\leq s-r,
\end{align*}
and so \eqref{eq:comparessr} follows\footnote{Clearly, for different choices of $\widetilde{r},\widetilde{s}$ the constant $\frac{1}{8}$ can be improved to $\frac{3}{4}$, but for us it is only important that there exists some  constant.}. The proof is complete. 
\end{proof} 
\subsection{$\lebe^{p}$-approach to relaxations}\label{sec:LpApproach} 
In the main part of the paper we directly extended $\mathscr{F}$ from a subclass of $\sobo^{1,q}$ to subclasses of $\sobo^{1,p}$ by semicontinuity. It is equally natural to  first extend $\mathscr{F}$ from subclasses of $\sobo^{1,q}$ to larger spaces $X$ by infinity and then pass to the lower semicontinuous envelopes. Here we execute this task for $X=\lebe^{p}$ endowed with the strong $\lebe^{p}$-topology and compare the resulting functionals, leading to their equality and $\Gamma$-approximability by standard growth functionals under natural assumptions. For simplicity, we here focus on the analogues of~\eqref{eq:bdryrelaxed1}; results for functionals in the spirit of~\eqref{eq:bdryrelaxed2} can be obtained with the obvious modifications. 

To this end, fix exponents $1 \leq p < q<\infty$ and assume that $F \colon \R^{N\times n} \to \R$ is a continuous integrand satisfying the natural $q$-growth condition~\ref{item:H1}, so $
|F(z)| \leq L ( 1 + |z|^{q})$ for all $ z \in \R^{N\times n}$, where $L > 0$ is a constant.  Let $\Omega$ be a non-empty, bounded and open subset of $\R^n$
and $g \in \sobo^{1,q}( \R^{n} ; \R^{N} )$. Fix $\Omega'$ satisfying $\Omega \Subset \Omega^{\prime} \Subset \R^n$
and define the functional
\begin{align}
\mathscr{F}_{g}^{\lebe^{p}}[u;\Omega,\Omega'] := \left\{
\begin{array}{ll}
  \displaystyle{\int_{\Omega'} \! F( \nabla u) \, \dif x} & \mbox{ if } u \in \mathscr{A}_{g}^{q}(\Omega,\Omega'),\\
  +\infty & \mbox{ if } u \in \lebe^{p}( \Omega';\R^N ) \setminus \mathscr{A}_{g}^{q}(\Omega,\Omega'),
\end{array}
\right.
\end{align}
where the class of admissible maps $\mathscr{A}_{g}^{q}(\Omega,\Omega')$ is defined as in~\eqref{eq:SolidClass}. The lower semicontinuous envelope of $\mathscr{F}_{g}^{\lebe^{p}}[-;\Omega,\Omega']$ in the strong topology of $\lebe^{p}( \Omega';\R^N )$ is denoted by
$\overline{\mathscr{F}}_{g}^{\lebe^{p}}[u;\Omega,\Omega']$. Thus we have for each $u \in \lebe^{p}( \Omega';\R^N )$ the formula
\begin{align*}
\overline{\mathscr{F}}_{g}^{\lebe^{p}}[u;\Omega,\Omega'] = \inf \biggl\{ \liminf_{j \to \infty} \mathscr{F}_{g}^{\lebe^{p}}[u_{j};\Omega,\Omega']\colon \begin{array}{c} (u_{j})\subset \lebe^{p}(\Omega';\R^{N}),\\ u_{j} \to u\; \text{strongly in } \lebe^{p}( \Omega^{\prime} , \R^N )\end{array} \biggr\} .
\end{align*}
We begin with the following dichotomy on the boundedness of $\overline{\mathscr{F}}_{g}^{\lebe^{p}}$ from below, being reflected by a $q$-growth condition on the quasiconvexification of $F$:
\begin{lemma}\label{lem:AppendixLemma1} Under the above assumptions we have either $\overline{\mathscr{F}}_{g}^{\lebe^{p}}[-;\Omega,\Omega'] \equiv -\infty$ on $\lebe^{p}( \Omega';\R^N )$ or
$$
\inf_{u \in \lebe^{p}( \Omega';\R^N )} \overline{\mathscr{F}}_{g}^{\lebe^{p}}[u;\Omega,\Omega'] > -\infty .
$$
The latter is in turn equivalent to $F$ having \emph{real-valued quasiconvex envelope $F^{\qc}$ satisfying a natural $q$-growth condition}, so $|F^{\qc}(z)|\leq L'(1+|z|^{q})$ for all $z\in\R^{N\times n}$ and some $L'>0$.
\end{lemma}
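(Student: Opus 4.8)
The statement is a dichotomy, so the plan is to establish a clean chain of equivalences. First I would show that the two options in the dichotomy are the only ones, by exploiting the homogeneity-type scaling available in the relaxed functional. The key observation is that $\overline{\mathscr{F}}_{g}^{\lebe^{p}}[-;\Omega,\Omega']$ is finite on a dense subset of $\lebe^{p}(\Omega';\R^{N})$ (namely on $\mathscr{A}_{g}^{q}(\Omega,\Omega')$, which is dense since e.g. $g+\hold_{c}^{\infty}(\Omega;\R^{N})$ already is), and that it is lower semicontinuous for strong $\lebe^{p}$-convergence by construction. I would then argue: if $\inf_{u}\overline{\mathscr{F}}_{g}^{\lebe^{p}}[u;\Omega,\Omega']=-\infty$, pick a minimising sequence $(u_{j})\subset\mathscr{A}_{g}^{q}(\Omega,\Omega')$ with $\int_{\Omega'}F(\nabla u_{j})\dif x\to-\infty$; for arbitrary $w\in\mathscr{A}_{g}^{q}(\Omega,\Omega')$ and any ball $\ball\Subset\Omega$, one can graft a rescaled copy of $u_{j}-g$ into $w$ on shrinking balls (a standard Vitali-type covering and rescaling argument, using only that $F$ has $q$-growth and the $-\infty$ on the model sequence to drive the energy down) to produce a new admissible sequence converging in $\lebe^{p}$ to $w$ with energy tending to $-\infty$. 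This forces $\overline{\mathscr{F}}_{g}^{\lebe^{p}}[w;\Omega,\Omega']=-\infty$ for all such $w$, and then by lower semicontinuity and density $\overline{\mathscr{F}}_{g}^{\lebe^{p}}\equiv-\infty$ on all of $\lebe^{p}(\Omega';\R^{N})$.

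Next I would identify the second alternative with the $q$-growth bound on $F^{\qc}$. For the implication ``$F^{\qc}$ real-valued with $q$-growth $\Rightarrow$ bounded below'': since $F^{\qc}\leq F$ and $F^{\qc}$ is quasiconvex with $|F^{\qc}(z)|\leq L'(1+|z|^{q})$, for any $u\in\mathscr{A}_{g}^{q}(\Omega,\Omega')$ one has $\int_{\Omega'}F(\nabla u)\dif x\geq\int_{\Omega'}F^{\qc}(\nabla u)\dif x\geq\int_{\Omega'}F^{\qc}(\nabla g)\dif x$ by quasiconvexity applied in the Dirichlet class $g+\sobo_{0}^{1,q}(\Omega;\R^{N})$ (Jensen/quasiconvexity for $\sobo^{1,q}$-test functions, which is legitimate as $q<\infty$); the right-hand side is a finite constant depending only on $g$, $\Omega$, $\Omega'$ and $F^{\qc}$, hence $\mathscr{F}_{g}^{\lebe^{p}}$ is bounded below on $\mathscr{A}_{g}^{q}(\Omega,\Omega')$, and this bound descends to the relaxation $\overline{\mathscr{F}}_{g}^{\lebe^{p}}$. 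For the converse, if $F^{\qc}$ fails to satisfy a $q$-growth bound from below, then (recalling from the discussion around~\eqref{eq:defQCenvelope} that for real-valued $F$ one has $F^{\qc}\equiv-\infty$ or $F^{\qc}>-\infty$ everywhere, and invoking the implication~\eqref{eq:KristImplication} on rank-one convex functions together with~\eqref{eq:QCenvelope}) one finds either $F^{\qc}\equiv-\infty$, or a sequence $z_{j}$ with $F^{\qc}(z_{j})/|z_{j}|^{q}\to-\infty$; in either case Dacorogna's formula~\eqref{eq:QCenvelope} produces $\sobo^{1,\infty}_{0}$-competitors realising arbitrarily negative averages of $F$, which after affine adjustment to match the boundary datum $g$ and a covering/rescaling argument drive $\overline{\mathscr{F}}_{g}^{\lebe^{p}}$ to $-\infty$.

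Combining the two parts yields the stated dichotomy and the characterisation. \textbf{The main obstacle} I anticipate is the grafting/rescaling argument in the first paragraph: one must carefully insert shrinking rescaled copies of $(u_{j}-g)$ (or of the Dacorogna competitors) into a fixed admissible map so that (i) the resulting maps stay in $\mathscr{A}_{g}^{q}(\Omega,\Omega')$, i.e. agree with $g$ on $\Omega'\setminus\overline{\Omega}$, (ii) they converge strongly in $\lebe^{p}$ to the target map, which needs the rescaling to contract supports, and (iii) the contribution on the complement of the grafted balls stays controlled via the $q$-growth bound~\ref{item:H1} while the grafted part carries the $-\infty$. This is routine in spirit but requires the bookkeeping to be arranged so that the negative part genuinely dominates; I would model it on the standard non-lower-semicontinuity constructions (cf. the discussion after Lemma~\ref{lem:LSC} and Example~\ref{ex:introduction}) and on the covering scheme already used in the proof of Proposition~\ref{prop:mazur}.
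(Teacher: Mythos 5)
Your overall architecture — prove the dichotomy first, then characterise the good case via $F^{\qc}$ — is feasible in principle, and your second part is essentially the paper's argument (with a cosmetic issue: once $F^{\qc}$ is real-valued it is automatically rank-one convex and dominated above by $F\leq L(1+|\cdot|^{q})$, so~\eqref{eq:KristImplication} rules out the intermediate case with $F^{\qc}(z_{j})/|z_{j}|^{q}\to-\infty$; your case split there is vacuous). But the first paragraph — the grafting/rescaling argument showing that $\inf=-\infty$ forces $\overline{\mathscr{F}}_{g}^{\lebe^{p}}\equiv-\infty$ — has a genuine gap.

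The issue is that inserting a rescaled copy of $u_{j}-g$ into a target map $w$ does not transport the negative energy. After rescaling into a ball $\ball_{\lambda}\Subset\Omega$, the grafted map has gradient $\nabla w + \nabla(u_{j}-g)(\cdot/\lambda)$ there, so the energy contribution reads $\lambda^{n}\int F(\nabla w(\lambda\,\cdot)+\nabla(u_{j}-g))\dif y$ — not the original $\lambda^{n}\int F(\nabla u_{j})\dif y$. Comparing $F(a+b)$ with $F(b)$ requires a Lipschitz-type bound such as~\eqref{eq:lipschitz}, which is only available for rank-one convex integrands, and in this lemma $F$ itself has no convexity structure (only $F^{\qc}$ does). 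Without such control the grafted energy cannot be shown to tend to $-\infty$. Moreover, if instead you try to cut off $u_{j}$ on a transition layer rather than using its zero boundary trace, the layer cost involves $\|\nabla u_{j}\|_{\lebe^{q}(\text{layer})}$, which is unbounded precisely when the energies diverge, so De Giorgi slicing alone does not rescue the estimate either.

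The paper sidesteps this entirely: it proves (bounded below at one point) $\Rightarrow$ $F^{\qc}(0)>-\infty$ by grafting \emph{compactly supported} test maps $r_{j}\varphi((\cdot-x_{0})/r_{j})$, $\varphi\in\sobo_{0}^{1,\infty}(\ball_{1}(0);\R^{N})$, onto a cut-off version of $u$ that is identically zero inside $\ball_{r_{j}}(x_{0})$. The two contributions then live on \emph{disjoint} sets, so the integral splits cleanly and no comparison of $F$ at shifted arguments is ever needed. Because the $\lebe^{p}$-convergence to $u$ is uniform over all admissible $\varphi$, the resulting lower bound on $\fint_{\ball_{1}(0)}F(\nabla\varphi)\dif x$ is uniform, giving $F^{\qc}(0)>-\infty$, hence $F^{\qc}>-\infty$ everywhere, hence the bilateral $q$-growth via~\eqref{eq:KristImplication}. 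The boundedness of $\overline{\mathscr{F}}_{g}^{\lebe^{p}}$ from below then follows from quasiconvexity of $F^{\qc}$ at $0$ by a single cut-off estimate — no diagonalisation over grafted sequences is required. If you wish to keep your dichotomy-first structure, replace the graft of $u_{j}-g$ by a graft of a near-optimal $\sobo_{0}^{1,\infty}(\ball_{1}(0))$-competitor from Dacorogna's formula~\eqref{eq:QCenvelope} — then your construction collapses to the paper's, which is the step that actually carries the argument.
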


\begin{proof}
Assume that there exists $u \in \lebe^{p}( \Omega';\R^N )$ such that $\overline{\mathscr{F}}_{g}^{\lebe^{p}}[u;\Omega,\Omega'] > -\infty$. Then we have in particular
for each sequence $( u_{j})\subset\mathscr{A}_{g}^{q}(\Omega,\Omega')$ with $u_{j} \to u$ in $\lebe^{p}( \Omega';\R^N )$ that
\begin{align*}
\liminf_{j \to \infty} \int_{\Omega^{\prime}} \! F( \nabla u_{j}) \, \dd x \geq \overline{\mathscr{F}}_{g}^{\lebe^{p}}[u;\Omega,\Omega'].
\end{align*}
Our first aim is to establish 
\begin{align}\label{eq:DacClaim} 
\inf\left\{\int_{\ball_{1}(0)}F(\nabla\varphi)\dif x\colon\; \varphi\in\sobo_{0}^{1,\infty}(\ball_{1}(0);\R^{N}),\;\|\varphi\|_{\lebe^{\infty}(\ball_{1}(0))}\leq 1\right\}>-\infty, 
\end{align}
from where we then deduce $F^{\qc}(0)>-\infty$ by~\eqref{eq:QCenvelope}\emph{ff.}. 

Towards~\eqref{eq:DacClaim}, take an ascending sequence of open sets $\Omega_{j} \Subset \Omega_{j+1} \Subset \Omega$ and corresponding Lipschitz cut-offs $\rho_{j}\colon \Omega \to [0,1]$,
$\mathbbm{1}_{\Omega_{j}} \leq \rho_{j} \leq \mathbbm{1}_{\Omega_{j+1}}$ and 
\begin{align*}
| \nabla \rho_{j} | \leq d_{j} := \frac{4}{\mathrm{dist}( \Omega_{j},\partial \Omega_{j+1})}.\end{align*}
Let $( \Phi_{\varepsilon} )_{\varepsilon > 0}$ be a family of $\varepsilon$-rescaled standard mollifiers on $\R^n$. For a descending null sequence $\varepsilon_{j} \searrow 0$ put
$v_{j} := ( 1-\rho_{j})g + \rho_{j} \Phi_{\varepsilon_{j}}*u$. Then $v_j \in \mathscr{A}_{g}^{q}(\Omega,\Omega')$ and $v_j \to u$ in $\lebe^{p}( \Omega'; \R^N )$.
Fix $\ball_{r_0}(x_{0}) \Subset \Omega$ and put $r_{j} =2^{-j}r_0$. Take Lipschitz cut-offs $\mu_{j} \colon \R^n \to [0,1]$, $\mathbbm{1}_{\ball_{r_j}(x_{0})} \leq \mu_{j}
\leq \mathbbm{1}_{\ball_{r_{j-1}}(x_{0})}$ and $| \nabla \mu_{j}| \leq \frac{4}{r_{j-1}-r_{j}}=\frac{2^{j+2}}{r_{0}}$ for $j \in \N$. For $\varphi \in \sobo^{1,\infty}_{0}(\ball_{1}(0);\R^N )$
with $\| \varphi \|_{\lebe^{\infty}(\ball_{1}(0))} \leq 1$, we put
$$
u_{j}(x) = \bigl( 1-\mu_{j}(x) \bigr)v_{j}(x) + r_{j}\varphi \left( \frac{x-x_{0}}{r_{j}} \right),\qquad x \in \Omega'.
$$
Then $u_{j} \in \mathscr{A}_{g}^{q}(\Omega,\Omega')$, $u_{j} \to u$ in $\lebe^{p}( \Omega';\R^N )$ uniformly in $\varphi$ as above. Thus we can find $j_{0} \in \N$ such that
$$
\mathscr{F}_{g}^{\lebe^{p}}[u_{j};\Omega,\Omega'] = \int_{\Omega'} \! F( \nabla u_{j}) \, \dif x > \overline{\mathscr{F}}_{g}^{\lebe^{p}}[u;\Omega,\Omega'] -1
$$
for $j \geq j_0$. In particular,
$$
\int_{\Omega^{\prime}\setminus \ball_{r_{j_0}}(x_{0})} \! F\bigl( \nabla ((1-\mu_{j_0})v_{j_0}) \bigr) \, \dd x + r_{j_0}^{n}\int_{\ball_{1}(0)} \! F( \nabla \varphi ) \, \dd x
> \overline{\mathscr{F}}_{g}^{\lebe^{p}}[u;\Omega,\Omega'] -1
$$
holds for all $\varphi \in \sobo^{1,\infty}_{0}(\ball_{1}(0);\R^N )$ with $\| \varphi \|_{\lebe^{\infty}(\ball_{1}(0))} \leq 1$. 

Hence we have $F^{\qc}(0)>-\infty$ and therefore that $F^{\qc} > -\infty$ everywhere (see~\eqref{eq:defQCenvelope}\emph{ff.}). Since in particular $F^{\qc} \leq F \leq L\bigl(1+ | \cdot |^{q}\bigr)$ on $\R^{N\times n}$ we deduce from~\eqref{eq:KristImplication} that
$| F^{\qc}(z) | \leq cL \bigl( |z|^{q}+1 \bigr)$ for all $z \in \R^{N\times n}$ for some constant $c \geq 1$; the envelope $F^{\qc}$ is then also
locally Lipschitz. Now take a Lipschitz cut-off between $\Omega$ and $\Omega'$, $\rho \colon \R^n \to [0,1]$, such that $\mathbbm{1}_{\Omega} \leq \rho \leq \mathbbm{1}_{\Omega^\prime}$ and $| \nabla \rho | \leq \frac{4}{d}$ with $d:= \mathrm{dist }( \Omega , \partial \Omega^{\prime})$. Since
$$
\inf_{u \in \lebe^{p}( \Omega^{\prime};\R^N )} \overline{\mathscr{F}}_{g}^{\lebe^{p}}[u;\Omega,\Omega'] = \inf_{u \in \mathscr{A}_{g}^{q}(\Omega,\Omega')} \int_{\Omega^{\prime}}\! F( \nabla u) \, \dd x
$$
and for $u \in \mathscr{A}_{g}^{q}(\Omega,\Omega')$ we estimate (recall that $|F^{\qc}(z)|\leq cL(1+|z|^{q})$ for all $z\in\R^{N\times n}$)
\begin{eqnarray*}
  \int_{\Omega^{\prime}} \! F( \nabla u) \, \dd x &=& \int_{\Omega^\prime} \! F\bigl( \nabla (\rho u) \bigr) \, \dd x
  + \int_{\Omega^{\prime}\setminus \Omega} \! \bigl( F(\nabla g) - F ( \nabla (\rho g)) \bigr) \, \dd x\\
  &\geq& \mathscr{L}^{n}( \Omega^{\prime})F^{\qc}(0) + \int_{\Omega^{\prime}\setminus \Omega} \! \bigl( F(\nabla g) - F ( \nabla (\rho g)) \bigr) \, \dd x
\end{eqnarray*}
we conclude the proof of the first part of the lemma. Because the second part of the lemma is obvious from the above proof we
are done.
\end{proof}
\begin{proposition}\label{prop:AppendixMain} Fix an exponent $r \in [p,q]$. Then under the assumptions of Lemma~\ref{lem:AppendixLemma1}, the following are
equivalent:
\begin{enumerate}
\item\label{item:AppendixA1} For each $t \in \R$ the sublevel set $\bigl\{ u \in \lebe^{p}( \Omega^{\prime}; \R^N )\colon\; \overline{\mathscr{F}}_{g}^{\lebe^{p}}[u;\Omega,\Omega'] \leq t \bigr\}$ is 
contained and bounded in $\sobo^{1,r}( \Omega^{\prime};\R^{N})$ if $r>1$ and in $\bv( \Omega^{\prime};\R^N )$ if $r=1$.
\item\label{item:AppendixA3} There exist $c>0$, $z_{0} \in \R^{N\times n}$ such that $F-c \langle \cdot \rangle^{r}$ is quasiconvex at $z_0$.
\item\label{item:AppendixA2} There exist constants $c_{1}>0$, $c_{2} \in \R$ such that
$$
  \overline{\mathscr{F}}_{g}^{\lebe^{p}}[u;\Omega,\Omega'] \geq \left\{
\begin{array}{ll}
  c_{1}\| u \|_{\sobo^{1,r}(\Omega')}^{r} + c_{2} & \mbox{ if } r>1,\\
  c_{1}\| u \|_{\bv(\Omega')} + c_{2} & \mbox{ if } r=1
\end{array}
\right.
$$
holds for all $u \in \lebe^{p}( \Omega^{\prime};\R^N )$.
\end{enumerate}
\end{proposition}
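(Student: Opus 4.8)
The plan is to prove the equivalence of the three statements in Proposition~\ref{prop:AppendixMain} following a cyclic implication scheme \ref{item:AppendixA3}$\Rightarrow$\ref{item:AppendixA2}$\Rightarrow$\ref{item:AppendixA1}$\Rightarrow$\ref{item:AppendixA3}, invoking Proposition~\ref{qc-coerciv} (the \textsc{Chen--Kristensen} result) wherever possible so as to reduce everything to properties of the integrand $F$. Throughout we work under the standing assumption of Lemma~\ref{lem:AppendixLemma1}, i.e. that $\overline{\mathscr{F}}_{g}^{\lebe^{p}}[-;\Omega,\Omega']\not\equiv-\infty$, so by that lemma $F^{\qc}$ is real-valued with natural $q$-growth.

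\textbf{Step 1: \ref{item:AppendixA3}$\Rightarrow$\ref{item:AppendixA2}.} Assume $F-c\langle\cdot\rangle^{r}$ is quasiconvex at some $z_{0}$. By Proposition~\ref{qc-coerciv} (applied with the exponent $p$ there replaced by $r$, noting $r\in[p,q]$ and that $F$ has the required $q$-growth), there are $c_{1}>0$, $c_{2}\in\R$ with $\int_{\Omega'}F(\nabla u)\dif x\geq c_{1}\int_{\Omega'}|\nabla u|^{r}\dif x + c_{2}$ for all $u\in\sobo_{\overline{g}}^{1,q}(\Omega';\R^{N})$, where $\overline{g}$ is any $\sobo^{1,q}$-extension of the relevant boundary data; here one must pass through the solid boundary value class $\mathscr{A}_{g}^{q}(\Omega,\Omega')$, writing $u=\rho u + (1-\rho)g$ with a cut-off $\rho$ between $\Omega$ and $\Omega'$ and absorbing the error on $\Omega'\setminus\Omega$ into $c_{2}$, exactly as in the last display of the proof of Lemma~\ref{lem:AppendixLemma1}. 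Combining this pointwise-in-$u$ lower bound on $\mathscr{F}_{g}^{\lebe^{p}}$ with the Poincar\'{e} inequality on the fixed Dirichlet class gives $\mathscr{F}_{g}^{\lebe^{p}}[u;\Omega,\Omega']\geq c_{1}'\|u\|_{\sobo^{1,r}(\Omega')}^{r}+c_{2}'$ (resp. the $\bv$-version if $r=1$). Since $u\mapsto c_{1}'\|u\|_{\sobo^{1,r}(\Omega')}^{r}+c_{2}'$ is sequentially lower semicontinuous with respect to strong $\lebe^{p}$-convergence (weak $\sobo^{1,r}$-lower semicontinuity of the norm, resp. \textsc{Reshetnyak}-type lower semicontinuity of $|D\cdot|$ via Lemma~\ref{lem:reshetnyak} for $r=1$), it is dominated by the lower semicontinuous envelope $\overline{\mathscr{F}}_{g}^{\lebe^{p}}$, which is \ref{item:AppendixA2}.

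\textbf{Step 2: \ref{item:AppendixA2}$\Rightarrow$\ref{item:AppendixA1}.} This is essentially immediate. If $\overline{\mathscr{F}}_{g}^{\lebe^{p}}[u;\Omega,\Omega']\leq t$, then $c_{1}\|u\|_{\sobo^{1,r}(\Omega')}^{r}+c_{2}\leq t$ (resp. $c_{1}\|u\|_{\bv(\Omega')}+c_{2}\leq t$), so $u$ lies in $\sobo^{1,r}(\Omega';\R^{N})$ (resp. $\bv(\Omega';\R^{N})$) with norm bounded by $((t-c_{2})/c_{1})^{1/r}$ (resp. $(t-c_{2})/c_{1}$). That sublevel sets are \emph{contained} in the stated space is part of the conclusion and follows at once.

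\textbf{Step 3: \ref{item:AppendixA1}$\Rightarrow$\ref{item:AppendixA3}.} This is the step I expect to be the main obstacle. Suppose \ref{item:AppendixA3} fails, i.e. $F-c\langle\cdot\rangle^{r}$ is quasiconvex at \emph{no} point for \emph{any} $c>0$; by the contrapositive of Proposition~\ref{qc-coerciv} this means $F$ is not $\sobo^{1,r}$-coercive on Dirichlet classes $\sobo_{g}^{1,q}$, so there is a sequence $(w_{k})\subset\sobo_{g}^{1,q}(\Omega';\R^{N})$ (equivalently, using Lemma~\ref{lem:DirClasses}, in $\mathscr{A}_{g}^{q}(\Omega,\Omega')$ up to adjusting the extension) with $\int_{\Omega'}F(\nabla w_{k})\dif x\leq C$ uniformly but $\int_{\Omega'}|\nabla w_{k}|^{r}\dif x\to\infty$. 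The task is to upgrade this to a sequence witnessing the failure of \ref{item:AppendixA1}, i.e. to produce maps $u_{k}$ with $\overline{\mathscr{F}}_{g}^{\lebe^{p}}[u_{k};\Omega,\Omega']\leq t$ uniformly but $\|u_{k}\|_{\sobo^{1,r}(\Omega')}\to\infty$ (resp. unbounded in $\bv$). The subtlety is that $\overline{\mathscr{F}}_{g}^{\lebe^{p}}$ is the $\lebe^{p}$-lower semicontinuous envelope, so a priori it could be much smaller than $\mathscr{F}_{g}^{\lebe^{p}}$ along $(w_{k})$ or, conversely, the $\sobo^{1,r}$-norm could collapse in the $\lebe^{p}$-limit; one must therefore argue directly that $\overline{\mathscr{F}}_{g}^{\lebe^{p}}[w_{k};\Omega,\Omega']\leq\mathscr{F}_{g}^{\lebe^{p}}[w_{k};\Omega,\Omega']\leq C$ (which is trivial, as $w_{k}$ is itself a competitor), and then invoke Step~1 together with the (assumed) \ref{item:AppendixA1} to derive a contradiction: if \ref{item:AppendixA1} held, the sublevel set $\{\overline{\mathscr{F}}_{g}^{\lebe^{p}}\leq C\}$ would be bounded in $\sobo^{1,r}$ (resp. $\bv$), forcing $\sup_{k}\int_{\Omega'}|\nabla w_{k}|^{r}\dif x<\infty$ (resp. $\sup_{k}|Dw_{k}|(\Omega')<\infty$), contradicting the choice of $(w_{k})$. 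The genuinely delicate point is the $r=1$ sub-case, where unboundedness of $\int|\nabla w_{k}|\dif x$ in $\sobo^{1,1}$ must be converted to unboundedness of $\|w_{k}\|_{\bv}$, which is automatic, and one must also be sure that $w_{k}\in\mathscr{A}_{g}^{q}\subset\lebe^{p}(\Omega';\R^{N})$ so that $\overline{\mathscr{F}}_{g}^{\lebe^{p}}[w_{k};\Omega,\Omega']$ is meaningful and $\leq C$; this is fine since $q\geq p$ and $\Omega'$ is bounded. Finally, I would note explicitly that the equivalence of \ref{item:AppendixA2} with the real-valuedness and $q$-growth of $F^{\qc}$ stated in Lemma~\ref{lem:AppendixLemma1} is \emph{not} being claimed here — the three conditions \ref{item:AppendixA1}--\ref{item:AppendixA3} are strictly stronger, quantified by the exponent $r$ — so no circularity arises, and the cycle \ref{item:AppendixA3}$\Rightarrow$\ref{item:AppendixA2}$\Rightarrow$\ref{item:AppendixA1}$\Rightarrow$\ref{item:AppendixA3} closes the proof.
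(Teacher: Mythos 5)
Your proposal is correct and follows essentially the same cyclic scheme as the paper: (b)$\Rightarrow$(c) via Proposition~\ref{qc-coerciv} (with exponent $r$) together with lower semicontinuity of the $\sobo^{1,r}$- resp.\ $\bv$-norm under strong $\lebe^{p}$-convergence, (c)$\Rightarrow$(a) trivially, and (a)$\Rightarrow$(b) using the elementary inclusion of sublevel sets of $\mathscr{F}_{g}^{\lebe^{p}}$ into those of its envelope and invoking Proposition~\ref{qc-coerciv} once more. The only cosmetic difference is that you run the implication (a)$\Rightarrow$(b) by contradiction (with a non-coercive sequence $(w_k)$), whereas the paper argues it directly and more cleanly from the inclusion of sublevel sets; the ``subtlety'' you flag there is, as you yourself note, a non-issue since $\overline{\mathscr{F}}_{g}^{\lebe^{p}}\leq\mathscr{F}_{g}^{\lebe^{p}}$ holds by definition of the lower semicontinuous envelope.
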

In the display of~\ref{item:AppendixA2} we use the convention that the norm of $u$ is defined to be $+\infty$ when $u$ does not belong to the corresponding space.

\begin{proof}
Note that when $u \in \mathscr{A}_{g}^{q}(\Omega,\Omega')$, then $\overline{\mathscr{F}}_{g}^{\lebe^{p}}[u;\Omega,\Omega'] \leq \int_{\Omega'} \! F( \nabla u) \, \dd x$. Consequently, for all $t\in\R$ we have the inclusion
$$  
\bigl\{ u \in \mathscr{A}_{g}^{q}(\Omega,\Omega')\colon \, \int_{\Omega^{\prime}} \! F( \nabla u) \, \dd x \leq t \bigr\}\subset \bigl\{ u \in \mathscr{A}_{g}^{q}(\Omega,\Omega') \colon \, \overline{\mathscr{F}}_{g}^{\lebe^{p}}[u;\Omega,\Omega'] \leq t \bigr\}
$$
and so condition~\ref{item:AppendixA1} implies in particular that the sublevel sets on the left-hand side are bounded in $\sobo^{1,r}( \Omega';\R^N )$ if $r>1$ and in $\bv(\Omega';\R^{N})$ if $r=1$. We hereby conclude~\ref{item:AppendixA3} by Proposition~\ref{qc-coerciv}. In turn,~\ref{item:AppendixA3} implies that
$$
\int_{\Omega'} \! F( \nabla u) \, \dd x \geq c_{1}\int_{\Omega^\prime} \! | \nabla u|^{r} \, \dd x +c_2\qquad\text{for all}\;u\in\mathscr{A}_{g}^{q}(\Omega,\Omega') 
$$
by Proposition~\ref{qc-coerciv}. By lower semicontinuity of the $\sobo^{1,r}$- or $\bv$-norms for strong convergence of $\sobo^{1,q}$-maps in $\lebe^{r}(\Omega';\R^{N})$ and our above convention of e.g. $\|u\|_{\sobo^{1,r}(\Omega')}=\infty$ if $u\notin\sobo^{1,r}(\Omega';\R^{N})$, we conclude~\ref{item:AppendixA2}. Since~\ref{item:AppendixA2} obviously implies~\ref{item:AppendixA1}, the proof is complete. 
\end{proof}
\begin{corollary} 
Under the mutually equivalent conditions of the preceding proposition, we have 
\begin{align}\label{eq:equalityenvelopes}
\begin{split}
&\overline{\mathscr{F}}_{g}^{\lebe^{p}}[u;\Omega,\Omega']=\overline{\mathscr{F}}_{g}[u;\Omega,\Omega']\qquad\text{for all}\;u\in\sobo^{1,p}(\Omega';\R^{N}),\\
&\overline{\mathscr{F}}_{g}^{\lebe^{1}}[u;\Omega,\Omega']=\overline{\mathscr{F}}_{g}^{*}[u;\Omega,\Omega']\qquad\text{for all}\;u\in\bv(\Omega';\R^{N}).
\end{split}
\end{align}
\end{corollary}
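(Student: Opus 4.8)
The plan is to prove the two equalities in~\eqref{eq:equalityenvelopes} by establishing the two inequalities '$\geq$' and '$\leq$' separately, exploiting that under the equivalent conditions of Proposition~\ref{prop:AppendixMain} the sublevel sets of $\overline{\mathscr{F}}_{g}^{\lebe^{p}}$ are weakly sequentially compact in $\sobo^{1,p}$ (for $p>1$) or weak*-sequentially compact in $\bv$ (for $p=1$). I shall focus on the linear growth case $p=1$, the case $p>1$ being identical with the obvious modifications ($\sobo^{1,p}$-weak convergence in place of $\bv$-weak*-convergence, and using that the trace operator is weakly continuous so no solid boundary value subtleties arise). The observation that makes everything run is that $\lebe^{1}$-convergence is \emph{weaker} than weak*-convergence in $\bv$ on bounded sets, so recovery sequences for $\overline{\mathscr{F}}^{*}_{g}$ are automatically admissible for $\overline{\mathscr{F}}_{g}^{\lebe^{1}}$, while conversely coercivity forces any $\lebe^{1}$-recovery sequence with bounded energy to be $\bv$-bounded and hence (up to a subsequence) weak*-convergent.

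\textbf{The inequality $\overline{\mathscr{F}}_{g}^{\lebe^{1}}\leq\overline{\mathscr{F}}_{g}^{*}$.} Fix $u\in\bv(\Omega';\R^{N})$ and let $(u_{j})\subset\mathscr{A}_{g}^{q}(\Omega,\Omega')$ be a generating sequence for $\overline{\mathscr{F}}_{g}^{*}[u;\Omega,\Omega']$, so $u_{j}\stackrel{*}{\rightharpoonup}u$ in $\bv(\Omega';\R^{N})$ and $\mathscr{F}[u_{j};\Omega']\to\overline{\mathscr{F}}_{g}^{*}[u;\Omega,\Omega']$. By the Rellich--Kondrachov theorem, $u_{j}\to u$ strongly in $\lebe^{1}(\Omega';\R^{N})$, hence $(u_{j})$ is an admissible competitor in the definition of $\overline{\mathscr{F}}_{g}^{\lebe^{1}}[u;\Omega,\Omega']$, giving $\overline{\mathscr{F}}_{g}^{\lebe^{1}}[u;\Omega,\Omega']\leq \liminf_{j}\mathscr{F}_{g}^{\lebe^{1}}[u_{j};\Omega,\Omega']=\liminf_{j}\mathscr{F}[u_{j};\Omega']=\overline{\mathscr{F}}_{g}^{*}[u;\Omega,\Omega']$. (If $\overline{\mathscr{F}}_{g}^{*}[u;\Omega,\Omega']=\infty$ there is nothing to prove.)

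\textbf{The inequality $\overline{\mathscr{F}}_{g}^{\lebe^{1}}\geq\overline{\mathscr{F}}_{g}^{*}$.} Fix $u\in\bv(\Omega';\R^{N})$; we may assume $\overline{\mathscr{F}}_{g}^{\lebe^{1}}[u;\Omega,\Omega']<\infty$. Let $(u_{j})\subset\lebe^{1}(\Omega';\R^{N})$ with $u_{j}\to u$ in $\lebe^{1}$ and $\liminf_{j}\mathscr{F}_{g}^{\lebe^{1}}[u_{j};\Omega,\Omega']=\overline{\mathscr{F}}_{g}^{\lebe^{1}}[u;\Omega,\Omega']$; passing to a subsequence we may assume this is a genuine limit and that $\mathscr{F}_{g}^{\lebe^{1}}[u_{j};\Omega,\Omega']<\infty$ for all $j$, so $u_{j}\in\mathscr{A}_{g}^{q}(\Omega,\Omega')$ and $\mathscr{F}_{g}^{\lebe^{1}}[u_{j};\Omega,\Omega']=\mathscr{F}[u_{j};\Omega']$. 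By Proposition~\ref{prop:AppendixMain}~\ref{item:AppendixA2} with $r=1$ we have $c_{1}\|u_{j}\|_{\bv(\Omega')}+c_{2}\leq \mathscr{F}[u_{j};\Omega']$, so $\sup_{j}\|u_{j}\|_{\bv(\Omega')}<\infty$; by weak*-compactness in $\bv$ and uniqueness of the $\lebe^{1}$-limit, $u_{j}\stackrel{*}{\rightharpoonup}u$ in $\bv(\Omega';\R^{N})$ along a further subsequence. Hence $(u_{j})$ is admissible in the definition of $\overline{\mathscr{F}}_{g}^{*}[u;\Omega,\Omega']$, yielding $\overline{\mathscr{F}}_{g}^{*}[u;\Omega,\Omega']\leq\liminf_{j}\mathscr{F}[u_{j};\Omega']=\overline{\mathscr{F}}_{g}^{\lebe^{1}}[u;\Omega,\Omega']$. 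Combining the two inequalities gives the second line of~\eqref{eq:equalityenvelopes}; the first line follows by the same argument using Proposition~\ref{prop:AppendixMain}~\ref{item:AppendixA2} with $r=p>1$, the Rellich--Kondrachov embedding $\sobo^{1,q}\hookrightarrow\hookrightarrow\lebe^{p}$ (valid since $q\geq p$ and $\Omega'$ is bounded Lipschitz), and the weak sequential compactness of bounded sets in $\sobo^{1,p}$ for $1<p<\infty$.

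\textbf{Main obstacle.} The only genuinely delicate point is the \emph{identification of the weak(*)-limit}: one must argue that if $u_{j}\to u$ strongly in $\lebe^{p}$ (resp.\ $\lebe^{1}$) and, after extracting a subsequence, $u_{j}\rightharpoonup \tilde u$ weakly in $\sobo^{1,p}$ (resp.\ weak* in $\bv$), then $\tilde u=u$. This is immediate since strong $\lebe^{p}$-convergence implies weak $\lebe^{p}$-convergence, and the weak (resp.\ weak*) limit in the finer topology projects to the weak $\lebe^{p}$-limit, which is unique. A secondary subtlety, already handled by the coercivity assumption, is that along the $\lebe^{p}$-minimizing sequence the energies $\mathscr{F}[u_{j};\Omega']$ stay bounded, so the sublevel-set boundedness of Proposition~\ref{prop:AppendixMain} applies; without one of the equivalent conditions of that proposition the statement~\eqref{eq:equalityenvelopes} is false in general, which is why the hypothesis cannot be dropped.
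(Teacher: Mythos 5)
Your proof is correct and follows essentially the same route as the paper's: for the nontrivial ``$\geq$'' inequality you take an $\lebe^{p}$-recovery sequence, use the coercivity estimate from Proposition~\ref{prop:AppendixMain}~\ref{item:AppendixA2} to conclude it is bounded in $\sobo^{1,p}$ (respectively $\bv$), extract a weakly (resp.\ weak*-)convergent subsequence, and identify the limit with $u$ by uniqueness of the $\lebe^{p}$-limit. The only cosmetic difference is that the paper dismisses the ``$\leq$'' inequality as immediate, whereas you make it explicit via Rellich--Kondrachov.
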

\begin{proof} 
We focus on the case $p>1$, the case $p=1$ being analogous. We only need to establish '$\geq$' in~\eqref{eq:equalityenvelopes}, and then may assume that $\overline{\mathscr{F}}_{g}^{\lebe^{p}}[u;\Omega,\Omega']<\infty$ as otherwise there is nothing to prove. By Proposition~\ref{prop:AppendixMain}~\ref{item:AppendixA2}, we then have $\overline{\mathscr{F}}_{g}^{\lebe^{p}}[u;\Omega,\Omega']>-\infty$. Let $(u_{j})\subset\mathscr{A}_{g}^{q}(\Omega,\Omega')$ be such that $u_{j}\to u$ strongly in $\lebe^{p}(\Omega';\R^{N})$ and $\overline{\mathscr{F}}_{g}^{\lebe^{p}}[u;\Omega,\Omega']=\lim_{j\to\infty}\mathscr{F}_{g}^{\lebe^{p}}[u_{j};\Omega,\Omega']$. Again by Proposition~\ref{prop:AppendixMain}, $(u_{j})$ is bounded in $\sobo^{1,p}(\Omega';\R^{N})$. Hence, $(u_{j})$ possesses a subsequence $(u_{j_{k}})$ such that $u_{j_{k}}\rightharpoonup v$ weakly in $\sobo^{1,p}(\Omega';\R^{N})$ for some $v\in\sobo^{1,p}(\Omega';\R^{N})$. Since $u_{j}\to u$ strongly in $\lebe^{p}(\Omega';\R^{N})$, we have $u=v$ and thus 
\begin{align*}
\overline{\mathscr{F}}_{g}^{\lebe^{p}}[u;\Omega,\Omega'] = \liminf_{k\to\infty}\mathscr{F}_{g}^{\lebe^{p}}[u_{j_{k}};\Omega,\Omega'] \geq \overline{\mathscr{F}}_{g}[u;\Omega,\Omega']. 
\end{align*}
The proof is complete. 
\end{proof} 
We finally address a $\Gamma$-approximability result. Note that, if we choose $s>q$ in the following, this allows to $\Gamma$-approximate the relaxed functionals by multiple integrals with integrands of pointwise standard growth (i.e.,~\eqref{eq:prowth} with the exponent $s$), whereas if $F$ is quasiconvex and $s=q$, the integrand $F+\varepsilon\langle\cdot\rangle^{q}$ still has $q$-growth and is $q$-strongly quasiconvex.  
\begin{remark}
Fix an exponent $s \geq q$, assume $g \in \sobo^{1,s}( \R^n; \R^N )$ and define for $\varepsilon > 0$ the functional
$$
\mathscr{F}_{s,\varepsilon,g}^{\lebe^{p}}[u;\Omega,\Omega'] := \left\{
\begin{array}{ll}
  \displaystyle{\int_{\Omega^{\prime}} \! \biggl( F( \nabla u) + \varepsilon \langle \nabla u \rangle^{s} \biggr) \, \dd x} & \mbox{ if } u \in \mathscr{A}^{s}_{g}(\Omega,\Omega')\\
  +\infty & \mbox{ if } u \in \lebe^{p}( \Omega^{\prime};\R^N ) \setminus \mathscr{A}^{s}_{g}(\Omega,\Omega'),
\end{array}
\right.
$$
where we recall $\langle z \rangle = \sqrt{|z|^{2}+1}$. We then have that $\mathscr{F}_{s,\varepsilon,g}^{\lebe^{p}}[-;\Omega,\Omega']\to \overline{\mathscr{F}}_{g}^{\lebe^{p}}[-;\Omega,\Omega']$ in the sense of $\Gamma$-convergence
on $\lebe^{p}( \Omega^{\prime};\R^N )$ with the strong $\lebe^{p}$-topology as $\varepsilon \searrow 0$. 

Indeed, let $u_{\varepsilon} \to u$ in $\lebe^{p}( \Omega^{\prime};\R^N )$. We aim to show that $\liminf_{\varepsilon \searrow 0} \mathscr{F}_{s,\varepsilon,g}^{\lebe^{p}}[u_{\varepsilon};\Omega,\Omega'] \geq \overline{\mathscr{F}}_{g}^{\lebe^{p}}[u;\Omega,\Omega']$ 
and may without loss of generality assume that the left-hand side is smaller than $+\infty$. We then extract a subsequence (say $\varepsilon_{j} \searrow 0$) such that
$$
\mathscr{F}_{s,\varepsilon_{j},g}^{\lebe^{p}}[u_{\varepsilon_{j}};\Omega,\Omega'] \to \liminf_{\varepsilon \searrow 0} \mathscr{F}_{s,\varepsilon,g}^{\lebe^{p}}[u_{\varepsilon};\Omega,\Omega'] < +\infty
$$
From the definitions we then have $u_{\varepsilon_j} \in \mathscr{A}^{s}_{g}(\Omega,\Omega')$ and since $u_{\varepsilon_j} \to u$ in $\lebe^{p}( \Omega^{\prime};\R^N )$ that
$$
\lim_{j \to \infty} \int_{\Omega^{\prime}} \! \biggl( F( \nabla u_{\varepsilon_j}) + \varepsilon_{j} \langle \nabla u_{\varepsilon_j}\rangle^{s} \biggr) \, \dd x
\geq \liminf_{j\to\infty}\mathscr{F}_{g}^{\lebe^{p}}[u_{\varepsilon_{j}};\Omega,\Omega'] \geq \overline{\mathscr{F}}_{g}^{\lebe^{p}}[u;\Omega,\Omega']. 
$$
Next, we must show the limsup inequality, namely given $u \in \lebe^{p}( \Omega^{\prime}; \R^N )$ that we can find $( u_{\varepsilon} )_{\varepsilon > 0}$ in
$\lebe^{p}( \Omega^{\prime};\R^N )$ such that $u_{\varepsilon} \to u$ strongly in $\lebe^{p}( \Omega^{\prime};\R^N )$ together with
$$\limsup_{\varepsilon \searrow 0} \mathscr{F}_{s,\varepsilon,g}^{\lebe^{p}}[u_{\varepsilon};\Omega,\Omega'] \leq \overline{\mathscr{F}}_{g}^{\lebe^{p}}[u;\Omega,\Omega'].$$ Clearly we may assume that $\overline{\mathscr{F}}_{g}^{\lebe^{p}}[u;\Omega,\Omega']< +\infty$
and can then by definition find a sequence $( u_{j})$ in $\mathscr{A}_{g}^{s}(\Omega,\Omega')$ such that $u_{j} \to u$ in $\lebe^{p}( \Omega^{\prime};\R^N )$ and
$\int_{\Omega^{\prime}} \! F( \nabla u_{j}) \, \dd x \to \overline{\mathscr{F}}_{g}^{\lebe^{p}}[u;\Omega,\Omega']$. Take a descending null sequence $( \varepsilon_j )$ such that
$\varepsilon_{j}\int_{\Omega^{\prime}} \! \langle \nabla u_{j} \rangle^{s} \, \dd x \to 0$. Define $u_{\varepsilon} = u_{1}$ for $\varepsilon > \varepsilon_{1}$
and for $j > 1$, $u_{\varepsilon} = u_{j}$ for $\varepsilon \in ( \varepsilon_{j},\varepsilon_{j-1}]$. Then $u_{\varepsilon} \to u$ in $\lebe^{p}( \Omega^{\prime}; \R^N )$
and $ \mathscr{F}_{s,\varepsilon,g}^{\lebe^{p}}[u_{\varepsilon};\Omega,\Omega'] \to \overline{\mathscr{F}}_{g}^{\lebe^{p}}[u;\Omega,\Omega']$, and the statement follows. 
\end{remark}
\subsection{Proof of Lemma~\ref{lem:shiftconnect}}\label{sec:proofshiftconnect}
In this section, we provide the elementary 
\begin{proof}[Proof of Lemma~\ref{lem:shiftconnect}]
For any open and bounded Lipschitz domain $\omega'\subset\R^{n}$ and any map $w\in\sobo^{1,q}(\omega';\R^{N})$, we have with $\widetilde{w}:=w-a$
\begin{align}\label{eq:GaussAppendix}
\begin{split}
\int_{\omega'}F_{\nabla a}(\nabla\widetilde{w})\dif x  & = \int_{\omega'}F(\nabla w)-F(\nabla a)-\langle F'(\nabla a),\nabla (w-a)\rangle\dif x \\ 
&  =  \int_{\omega'}F(\nabla w)-F(\nabla a)\dif x \\ & + \int_{\partial\omega'}\langle F'(\nabla a),(a-\trace_{\partial\omega'}(w))\otimes\nu_{\partial\omega'}\rangle\dif\mathscr{H}^{n-1},
\end{split}
\end{align}
where $\nu_{\partial\omega'}$ is the outer unit normal to $\partial\omega'$. Identity~\ref{eq:GaussAppendix} implies that $(u_{j})\subset\mathscr{A}_{u_{0}}^{q}(\Omega,\Omega')$ is generating for $\overline{\mathscr{F}}_{u_{0}}^{*}[\mathbf{u};\Omega,\Omega']$ if and only if $(\widetilde{u}_{j})\subset\mathscr{A}_{\widetilde{u}_{0}}^{q}(\Omega,\Omega')$ is generating for $\overline{\mathscr{F}}_{\nabla a,\widetilde{u}_{0}}^{*}[\widetilde{\mathbf{u}};\Omega,\Omega']$ with $\widetilde{\mathbf{u}}:=\mathbbm{1}_{\Omega}\widetilde{u}+\mathbbm{1}_{\Omega'\setminus\overline{\Omega}}\widetilde{u}_{0}$. Specifically, $\overline{\mathscr{F}}_{v}^{*}[u;\Omega]<\infty$ implies that $\overline{\mathscr{F}}_{\nabla a,\widetilde{v}}^{*}[\widetilde{u};\Omega]<\infty$, and by Remark~\ref{rem:Columbo} we then infer Lemma~\ref{lem:shiftconnect}~\ref{item:shiftconnect1}: Let $x_{0}\in\Omega$ and $r>0$ are such that $\ball_{r}(x_{0})\Subset\Omega$ and $\mathcal{M}Du(x_{0},r)+\mathcal{M}\lambda(x_{0},r)<\infty$, where $\lambda\in\mathrm{RM}_{\mathrm{fin}}(\Omega')$ is the weak*-limit of a suitable subsequence $(|Du_{j_{k}}|)$. From above we deduce that $(\widetilde{u}_{j_{k}})$ is generating for $\overline{\mathscr{F}}_{\nabla a,\widetilde{u}_{0}}^{*}[\widetilde{\mathbf{u}};\Omega,\Omega']$, and $\mathcal{M}D\widetilde{u}(x_{0},r)<\infty$ holds trivially. Let $\mu\in\mathrm{RM}_{\mathrm{fin}}(\Omega')$ be a weak*-limit of a subsequence $(|D\widetilde{u}_{j_{k}(l)}|)$ of $(|D\widetilde{u}_{j_{k}}|)$. If $\varepsilon>0$ is sufficiently small such that $\ball_{r+2\varepsilon}(x_{0})\Subset\Omega$, we then have 
\begin{align*}
\int_{\ball_{r+\varepsilon}(x_{0})\setminus\overline{\ball}_{r-\varepsilon}(x_{0})}\dif\mu & \leq \liminf_{l\to\infty}\int_{\ball_{r+\varepsilon}(x_{0})\setminus\overline{\ball}_{r}(x_{0})}\dif|D\widetilde{u}_{j_{k}(l)}| \\ 
& \leq \limsup_{l\to\infty}\int_{\overline{\ball}_{r+2\varepsilon}(x_{0})\setminus\ball_{r-2\varepsilon}(x_{0})}\dif\,(|Du_{j_{k}(l)}|+|\nabla a|\mathscr{L}^{n})\\
& \leq |\lambda|(\overline{\ball}_{r+2\varepsilon}\setminus\ball_{r-2\varepsilon})+ \mathscr{L}^{n}(\overline{\ball}_{r+2\varepsilon}(x_{0})\setminus\ball_{r-2\varepsilon}(x_{0}))|\nabla a|\\ 
& \leq 4\varepsilon\mathcal{M}\lambda(x_{0},r) + \mathscr{L}^{n}(\overline{\ball}_{r+2\varepsilon}(x_{0})\setminus\ball_{r-2\varepsilon}(x_{0}))|\nabla a|
\end{align*}
for $\overline{\ball}_{r+2\varepsilon}(x_{0})\setminus\ball_{r-2\varepsilon}(x_{0}))$ is compact. Dividing the previous inequality by $2\varepsilon$, we obtain $\mathcal{M}D\widetilde{u}(x_{0},r)+\mathcal{M}\mu(x_{0},r)<\infty$, and from here Remark~\ref{rem:Columbo} implies~\ref{item:shiftconnect1}. Based on~\eqref{eq:GaussAppendix}, Lemma~\ref{lem:shiftconnect}~\ref{item:shiftconnect2} then follows by similar means. To see~Lemma~\ref{lem:shiftconnect}~\ref{item:shiftconnect3}, $\ell_{m}>0$ be the constant from~\ref{item:H2}. Using that $|\nabla a|\leq m$,~\ref{item:H2} then yields with $\ell^{(m)}:=\ell_{m}/(16(1+m^{2})^{3/2})$ for all test maps $\varphi\in\hold_{c}^{\infty}(\ball_{s}(x_{0});\R^{N})$
\begin{align*}
\ell^{(m)}\int_{\ball_{s}(x_{0})}V(\nabla\varphi)\dif x & \stackrel{\text{Lem.~\ref{lem:Efunction}~\ref{item:VpCompa1}}}{\leq} \ell_{m}\int_{\ball_{s}(x_{0})}V(\nabla a + \nabla\varphi)-V(\nabla a)-\langle V'(\nabla a),\nabla\varphi\rangle\dif x  \\ 
& \;\;\;\;\;= \ell_{m}\int_{\ball_{s}(x_{0})}V(\nabla a + \nabla\varphi)-V(\nabla a)\dif x \\ & \;\;\;\; \stackrel{\text{\ref{item:H2}}}{\leq} \int_{\ball_{s}(x_{0})}F(\nabla a +\nabla\varphi)-F(\nabla a)\dif x = \int_{\ball_{s}(x_{0})}F_{\nabla a}(\nabla\varphi)\dif x. 
\end{align*}
The case of general maps $\varphi\in\sobo_{0}^{1,q}(\ball_{s}(x_{0});\R^{N})$ then follows from quasiconvexity and \ref{item:H1}, hereafter~\eqref{eq:lipschitz}, and smooth approximation. The proof is complete. 
\end{proof} 
\begin{figure}
\begin{center}
\begin{tikzpicture}[scale = 0.5,rotate=30]
\node at (13.5,0.8) {{\Huge $\Omega$}};
\path [fill=black!15!white, fill opacity = .5] (2,0) to [out=270,in=150] (3,-2.5) to [out=-10,in=15] (5,-2) to [out=10, in =160]  (10,-2.5) to [out=-20,in=280] (12.5,2) to [out=120, in=0] (8,1) to [out=180, in =0] (4,3) to [out=170, in = 90] (2,0);
\draw[-, blue!40!white, fill =black!40!white , thick] (4,-0.02) -- (5,1) -- (6,-0.02) -- (4,-0.02);
\draw [-] (13,1) -- (11.75,1.4);
\draw[-,blue!30!white, dotted, ultra thick] (2,4) -- (2.5,3.5);
\draw[-,blue!30!white, ultra thick] (3,3) -- (2.5,3.5);
\draw[-,blue!30!white,ultra thick] (3,3) -- (4,4);
\draw[-,blue!30!white, dotted, ultra thick] (4,4) -- (4.5,4.5);
\draw[-,blue!30!white, dotted, ultra thick] (4,4) -- (3.5,4.5);
\draw[-,blue!30!white, dotted, ultra thick] (6,4) -- (5.5,4.5);
\draw[-,blue!30!white, dotted, ultra thick] (6,4) -- (6.5,4.5);
\draw[-,blue!30!white, dotted, ultra thick] (8,4) -- (7.5,4.5);
\draw[-,blue!30!white, dotted, ultra thick] (8,4) -- (8.5,4.5);
\draw[-,blue!30!white, dotted, ultra thick] (10,4) -- (9.5,4.5);
\draw[-,blue!30!white, dotted, ultra thick] (10,4) -- (10.5,4.5);
\draw[-,blue!30!white,ultra thick] (4,4) -- (5,3);
\draw[-,blue!30!white,ultra thick] (5,3) -- (6,4);
\draw[-,blue!30!white,ultra thick] (6,4) -- (7,3);
\draw[-,blue!30!white,ultra thick] (7,3) -- (8,4);
\draw[-,blue!30!white,ultra thick] (8,4) -- (9,3);
\draw[-,blue!30!white,ultra thick] (9,3) -- (10,4);
\draw[-,blue!30!white,ultra thick] (10,4) -- (11,3);
\draw[-,blue!30!white,ultra thick] (11,3) -- (11.5,3.5);
\draw[-,blue!30!white, dotted, ultra thick] (11.5,3.5) -- (12,4);
\draw[-,blue!30!white,ultra thick] (3,3) -- (11,3);
\draw[-,blue!30!white,dotted, ultra thick] (3,3) -- (2,3);
\draw[-,blue!30!white, dotted, ultra thick] (11,3) -- (12,3);
\draw[-,blue!30!white,dotted, ultra thick] (1.5,2.5) -- (2,2);
\draw[-,blue!30!white,dotted, ultra thick] (1.5,1.5) -- (2,2);
\draw[-,blue!30!white,dotted, ultra thick] (1.5,0.5) -- (2,0);
\draw[-,blue!30!white,dotted, ultra thick] (1.5,-0.5) -- (2,0);
\draw[-,blue!30!white,dotted, ultra thick] (1.5,-1.5) -- (2,-2);
\draw[-,blue!30!white,dotted, ultra thick] (1.5,-2.5) -- (2,-2);
\draw[-,blue!30!white,ultra thick] (3,-3) -- (2,-2);
\draw[-,blue!30!white,dotted, ultra thick] (3,-3) -- (3.5,-3.5);
\draw[-,blue!30!white,dotted, ultra thick] (3,-3) -- (2.5,-3.5);
\draw[-,blue!30!white,dotted, ultra thick] (5,-3) -- (5.5,-3.5);
\draw[-,blue!30!white,dotted, ultra thick] (5,-3) -- (4.5,-3.5);
\draw[-,blue!30!white,dotted, ultra thick] (7,-3) -- (7.5,-3.5);
\draw[-,blue!30!white,dotted, ultra thick] (7,-3) -- (6.5,-3.5);
\draw[-,blue!30!white,dotted, ultra thick] (9,-3) -- (9.5,-3.5);
\draw[-,blue!30!white,dotted, ultra thick] (9,-3) -- (8.5,-3.5);
\draw[-,blue!30!white,dotted, ultra thick] (11,-3) -- (11.5,-3.5);
\draw[-,blue!30!white,dotted, ultra thick] (11,-3) -- (10.5,-3.5);
\draw[-,blue!30!white, ultra thick] (3,-3) -- (11,-3);
\draw[-,blue!30!white, ultra thick, dotted] (3,-3) -- (2,-3);
\draw[-,blue!30!white, ultra thick, dotted] (11,-3) -- (12,-3);
\draw[-,blue!30!white, ultra thick, dotted] (12,-2) -- (12.5,-1.5);
\draw[-,blue!30!white, ultra thick, dotted] (12,-2) -- (12.5,-2.5);
\draw[-,blue!30!white, ultra thick, dotted] (12,0) -- (12.5,0.5);
\draw[-,blue!30!white, ultra thick, dotted] (12,0) -- (12.5,-0.5);
\draw[-,blue!30!white, ultra thick, dotted] (12,2) -- (12.5,2.5);
\draw[-,blue!30!white, ultra thick, dotted] (12,2) -- (12.5,1.5);
\draw[-,blue!30!white, ultra thick, dotted] (12,1) -- (12.5,1);
\draw[-,blue!30!white, ultra thick, dotted] (2,1) -- (1.5,1);
\draw[-,blue!30!white, ultra thick, dotted] (12,-1) -- (12.5,-1);
\draw[-,blue!30!white, ultra thick, dotted] (2,-1) -- (1.5,-1);
\draw[-,blue!30!white, ultra thick] (3,-1) -- (2,0);
\draw[-,blue!30!white,ultra thick] (2,2) -- (3,1);
\draw[-,blue!30!white,ultra thick] (2,2) -- (3,3);
\draw[-,blue!30!white,ultra thick] (3,3) -- (4,2);
\draw[-,blue!30!white,ultra thick] (4,2) -- (5,3);
\draw[-,blue!30!white,ultra thick] (5,3) -- (6,2);
\draw[-,blue!30!white,ultra thick] (6,2) -- (7,3);
\draw[-,blue!30!white,ultra thick] (7,3) -- (8,2);
\draw[-,blue!30!white,ultra thick] (8,2) -- (9,3);
\draw[-,blue!30!white,ultra thick] (9,3) -- (10,2);
\draw[-,blue!30!white,ultra thick] (10,2) -- (11,3);
\draw[-,blue!30!white,ultra thick] (11,3) -- (12,2);
\draw[-,blue!30!white,ultra thick] (2,2) -- (12,2);
\draw[-,blue!30!white,ultra thick] (3,1) -- (4,2);
\draw[-,blue!30!white,ultra thick] (4,2) -- (5,1);
\draw[-,blue!30!white,ultra thick] (5,1) -- (6,2);
\draw[-,blue!30!white,ultra thick] (6,2) -- (7,1);
\draw[-,blue!30!white,ultra thick] (7,1) -- (8,2);
\draw[-,blue!30!white,ultra thick] (8,2) -- (9,1);
\draw[-,blue!30!white,ultra thick] (9,1) -- (10,2);
\draw[-,blue!30!white,ultra thick] (10,2) -- (11,1);
\draw[-,blue!30!white,ultra thick] (11,1) -- (12,2);
\draw[-,blue!30!white,ultra thick] (2,1) -- (12,1);
\draw[-,blue!30!white,ultra thick] (2,0) -- (3,1);
\draw[-,blue!30!white,ultra thick] (3,1) -- (4,0);
\draw[-,blue!30!white,ultra thick] (4,0) -- (5,1);
\draw[-,blue!30!white,ultra thick] (5,1) -- (6,0);
\draw[-,blue!30!white,ultra thick] (6,0) -- (7,1);
\draw[-,blue!30!white,ultra thick] (7,1) -- (8,0);
\draw[-,blue!30!white,ultra thick] (8,0) -- (9,1);
\draw[-,blue!30!white,ultra thick] (9,1) -- (10,0);
\draw[-,blue!30!white,ultra thick] (10,0) -- (11,1);
\draw[-,blue!30!white,ultra thick] (11,1) -- (12,0);
\draw[-,blue!30!white,ultra thick] (2,0) -- (12,0);
\draw[-,blue!30!white,ultra thick] (3,-1) -- (4,0);
\draw[-,blue!30!white,ultra thick] (4,0) -- (5,-1);
\draw[-,blue!30!white,ultra thick] (5,-1) -- (6,0);
\draw[-,blue!30!white,ultra thick] (6,0) -- (7,-1);
\draw[-,blue!30!white,ultra thick] (7,-1) -- (8,0);
\draw[-,blue!30!white,ultra thick] (8,0) -- (9,-1);
\draw[-,blue!30!white,ultra thick] (9,-1) -- (10,0);
\draw[-,blue!30!white,ultra thick] (10,0) -- (11,-1);
\draw[-,blue!30!white,ultra thick] (11,-1) -- (12,0);
\draw[-,blue!30!white,ultra thick] (2,-1) -- (12,-1);
\draw[-,blue!30!white,ultra thick] (2,-2) -- (3,-1);
\draw[-,blue!30!white,ultra thick] (3,-1) -- (4,-2);
\draw[-,blue!30!white,ultra thick] (4,-2) -- (5,-1);
\draw[-,blue!30!white,ultra thick] (5,-1) -- (6,-2);
\draw[-,blue!30!white,ultra thick] (6,-2) -- (7,-1);
\draw[-,blue!30!white,ultra thick] (7,-1) -- (8,-2);
\draw[-,blue!30!white,ultra thick] (8,-2) -- (9,-1);
\draw[-,blue!30!white,ultra thick] (9,-1) -- (10,-2);
\draw[-,blue!30!white,ultra thick] (10,-2) -- (11,-1);
\draw[-,blue!30!white,ultra thick] (11,-1) -- (12,-2);
\draw[-,blue!30!white,ultra thick] (2,-2) -- (12,-2);
\draw[-,blue!30!white,ultra thick] (3,-3) -- (4,-2);
\draw[-,blue!30!white,ultra thick] (4,-2) -- (5,-3);
\draw[-,blue!30!white,ultra thick] (5,-3) -- (6,-2);
\draw[-,blue!30!white,ultra thick] (6,-2) -- (7,-3);
\draw[-,blue!30!white,ultra thick] (7,-3) -- (8,-2);
\draw[-,blue!30!white,ultra thick] (8,-2) -- (9,-3);
\draw[-,blue!30!white,ultra thick] (9,-3) -- (10,-2);
\draw[-,blue!30!white,ultra thick] (10,-2) -- (11,-3);
\draw[-,blue!30!white,ultra thick] (11,-3) -- (12,-2);
\node at (8,-6) {\text{(a)}};
\node at (5.5,-4.3) {\text{$\triangle$}};
\draw[-] (5.5,-3.8) -- (5.1,0.4);
\end{tikzpicture}
\hspace{0.7cm}
\begin{tikzpicture}[scale = 1.5, rotate=35]
\draw[-, black,dotted, fill=black!10!white] (3.5,-0.21) -- (5,1.25) -- (6.5,-0.21) -- (3.5,-0.21);
\node at (4.9,-0.75) {$\triangle$};
\draw[-] (5,-0.6) -- (5.5,0.35);
\draw[->] (5,1.25) -- (5,2);
\node at (5.4,2) {$e\in\mathbb{S}^{n-1}$};
\draw[rounded corners=1.8mm,orange!60!white,  fill=orange!60!white, opacity=0.2] (3.4,-0.21) -- (5,1.35) -- (5.1,1.25) -- (3.5,-0.31) -- cycle;
\draw[rounded corners=1.8mm,orange!60!white, fill=orange!60!white, opacity=0.2] (3.4,-0.10) -- (5,1.46) -- (5.1,1.36) -- (3.5,-0.20) -- cycle;
\draw[rounded corners=1.8mm,orange!60!white, fill=orange!60!white, opacity=0.2] (3.4,0.01) -- (5,1.57) -- (5.1,1.47) -- (3.5,-0.09) -- cycle;
\draw[rounded corners=1.8mm,orange!60!white, fill=orange!60!white, opacity=0.2] (3.4,0.12) -- (5,1.68) -- (5.1,1.58) -- (3.5,0.02) -- cycle;
\draw[rounded corners=1.8mm,orange!60!white, fill=orange!60!white, opacity=0.2] (3.4,0.23) -- (5,1.79) -- (5.1,1.69) -- (3.5,0.13) -- cycle;
\draw[rounded corners=1.8mm,red!60!white,  fill=red!60!white, opacity=0.2] (3.45,-0.28) -- (6.55,-0.28) -- (6.55,-0.14) -- (3.45,-0.14) -- cycle;
\draw[rounded corners=1.8mm,red!60!white,  fill=red!60!white, opacity=0.2] (3.45,-0.17) -- (6.55,-0.17) -- (6.55,-0.03) -- (3.45,-0.03) -- cycle;
\draw[rounded corners=1.8mm,red!60!white,  fill=red!60!white, opacity=0.2] (3.45,-0.06) -- (6.55,-0.06) -- (6.55,0.08) -- (3.45,0.08) -- cycle;
\draw[rounded corners=1.8mm,red!60!white,  fill=red!60!white, opacity=0.2] (3.45,0.05) -- (6.55,0.05) -- (6.55,0.19) -- (3.45,0.19) -- cycle;
\draw[rounded corners=1.8mm,red!60!white,  fill=red!60!white, opacity=0.2] (3.45,0.16) -- (6.55,0.16) -- (6.55,0.3) -- (3.45,0.3) -- cycle;
\draw[rounded corners=1.8mm,yellow!60!white,  fill=yellow!60!white, opacity=0.2] (6.47,-0.28) -- (6.58,-0.20) -- (5.025,1.33) -- (4.92,1.24) -- cycle;
\draw[rounded corners=1.8mm,yellow!60!white,  fill=yellow!60!white, opacity=0.2] (6.47,-0.17) -- (6.58,-0.09) -- (5.025,1.44) -- (4.92,1.35) -- cycle;
\draw[rounded corners=1.8mm,yellow!60!white,  fill=yellow!60!white, opacity=0.2] (6.47,-0.06) -- (6.58,0.02) -- (5.025,1.55) -- (4.92,1.46) -- cycle;
\draw[rounded corners=1.8mm,yellow!60!white,  fill=yellow!60!white, opacity=0.2] (6.47,0.05) -- (6.58,0.13) -- (5.025,1.66) -- (4.92,1.57) -- cycle;
\draw[rounded corners=1.8mm,yellow!60!white,  fill=yellow!60!white, opacity=0.2] (6.47,0.16) -- (6.58,0.24) -- (5.025,1.77) -- (4.92,1.68) -- cycle;
\draw[<->] (6.6,-0.28) -- (6.6,0.27);
\node at (6.8,0) {$\delta$};
\node at (4,-0.8) {\text{(b)}};
\end{tikzpicture}
\caption{The grid-covering argument underlying the proof of Proposition~\ref{lem:Orlicz2}. For a triangulation as depicted in (a), the triangles $\triangle$ are shifted by a certain multiple of a vector not tangent to the faces of $\triangle$, see (b). Since $e$ is non-tangent to the faces of any $\triangle$, one may run the usual argument in the proof of the Hardy-Littlewood maximal theorem to conclude that $\mathcal{M}_{\triangle}(\mu)<\infty$ $\mathscr{L}^{1}$-a.e.; note that, if $e$ were tangent to a face of $\triangle$, the usual disjointness provided by Vitali's covering lemma would become vacuous and a strictly positive integral could potentially be counted infinitely often. In this case, the requisite finiteness would not follow anymore.}
\label{fig:grid}
\end{center}
\end{figure}
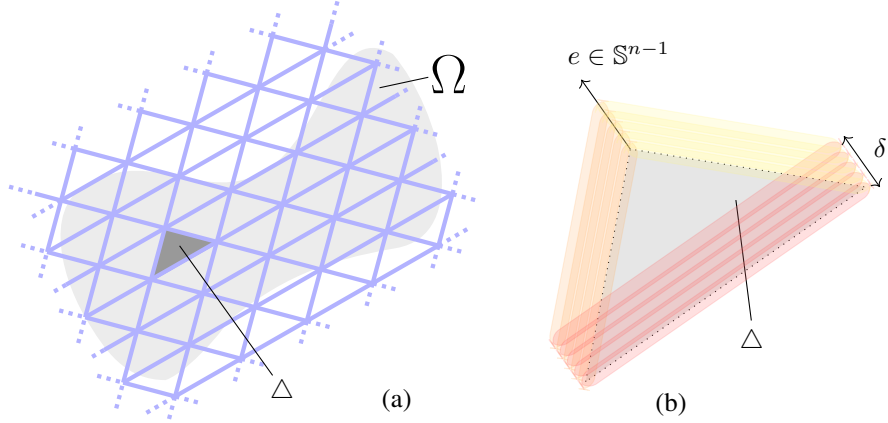
\subsection{Proof of Proposition~\ref{lem:Orlicz2}}\label{sec:OrliczSec}
In this concluding section, we provide the proof of the lower semicontinuity result from Proposition~\ref{lem:Orlicz2} for which we adapt the approach by \textsc{Chen} and the second author, cf.~\cite[\S 5]{CK}. Related results are for instance due to \textsc{Focardi}~\cite{Focardi}, however, work subject to different hypotheses than ours.  
\begin{proof}[Proof of Proposition~\ref{lem:Orlicz2}]
Let $u,u_{1},u_{2},...\in\sobo_{u_{0}}^{1,\Phi}(\Omega;\R^{N})$ as in Proposition~\ref{lem:Orlicz2}, where it is no loss of generality to assume  
\begin{align}\label{eq:subsequenceOrlicz}
\liminf_{j\to\infty}\int_{\Omega}F(\nabla u_{j})\dif x = \lim_{j\to\infty}\int_{\Omega}F(\nabla u_{j})\dif x
\end{align}
by passing to a subsequence if necessary. Moreover, we may suppose that the extended Dirichlet datum satisfies $u_{0}\in(\sobo^{1,1}\cap\sobo^{1,\Phi})(\R^{n};\R^{N})$ and so, tacitly thinking of $u,u_{j}$ to be extended by $u_{0}$ to $\R^{n}$, we may assume that that $u,u_{j}\in\sobo^{1,\Phi}(\R^{n};\R^{N})$. Given $\varepsilon>0$, we then find $\widetilde{u}\in(\hold^{\infty}\cap\sobo^{1,\Phi})(\R^{n};\R^{N})$ such that $\|u-\widetilde{u}\|_{\sobo^{1,\Phi}(\R^{n})}<\frac{\varepsilon}{2}$. Let $\mathcal{T}$ be a triangulation of $\R^{n}$ which is uniform and regular, and denote $\mathcal{T}_{\Omega}$ the subcollection of triangles $\triangle\in\mathcal{T}$ with $\triangle\cap\Omega\neq\emptyset$. Then there exists $0<\delta<1$ such that, if $\mathrm{diam}(\triangle)<\delta$ for all $\triangle\in\mathcal{T}$ and $a_{\mathcal{T}}\colon\R^{n}\to\R^{N}$ is the piecewise affine-linear map arising as the Lagrange interpolation of $\widetilde{u}$ on the nodes of the triangle $\triangle\in\mathcal{T}$, we have $\|\widetilde{u}-a_{\mathcal{T}}\|_{\sobo^{1,\Phi}(\R^{n})}<\frac{\varepsilon}{2}$. We then set $a_{\triangle}:=a_{\mathcal{T}}|_{\triangle}$. As $\mathcal{T}$ is assumed uniform and regular, we find a unit vector $e\in\mathbb{S}^{n-1}$ not being tangent to any of the faces of any triangle $\triangle\in\mathcal{T}$; this is so because $\mathcal{T}$ being uniform and regular forces all the triangles $\triangle\in\mathcal{T}$ to be congruent to a finite number of simplices. For non-negative Radon measures $\mu$ on $\R^{n}$, we define a maximal-type function 
\begin{align*}
\mathcal{M}_{\triangle}(\mu)(t):=\sup_{\substack{0<\varepsilon<t\\ t+\varepsilon\leq\delta}}\frac{1}{2\varepsilon}\int_{te+\partial\triangle+\ball_{\varepsilon}(0)}\dif\mu,\qquad t\in [0,\delta].
\end{align*}
Since $e$ is not tangent to the faces of $\triangle$, we conclude that for $\mathscr{L}^{1}$-a.e. $t\in(0,\delta)$ we have that $\mathcal{M}_{\triangle}(\mu)(t)<\infty$; see Figure~\ref{fig:grid}. 

In the following, we write $\Phi=\Phi(|\cdot|)$ with slight abuse of notation. Passing to another subsequence if necessary, we may assume that $\Phi(u_{j})\mathscr{L}^{n}+\Phi(\nabla u_{j})\mathscr{L}^{n}\stackrel{*}{\rightharpoonup} \lambda\in\mathrm{RM}_{\mathrm{fin}}(\R^{n})$. Since $\mathcal{T}_{\Omega}$ consists of finitely many simplices, the set of all 
\begin{align}\label{eq:spick}
s\in \bigcap_{\triangle\in\mathcal{T}_{\Omega}}\{t\in (0,\delta)\colon\;\mathcal{M}_{\triangle}(\lambda)(t)<\infty\;\text{and}\;\mathcal{M}_{\triangle}(\Phi(u)\mathscr{L}^{n}+\Phi(\nabla u)\mathscr{L}^{n})(t)<\infty\}
\end{align}
still has full $\mathscr{L}^{1}$-measure in $(0,\delta)$. Especially, we may pick an $s$ such that~\eqref{eq:spick} holds and, with $\triangle':=se+\triangle$, $\mathcal{T}'_{\Omega}:=\{\triangle'\colon\;\triangle\in\mathcal{T}_{\Omega}\}$ is still a uniform and regular triangulation of a neighbourhood of $\Omega$.

Now let $\tau>1$. Denoting the center of $\triangle'$ by $x_{\triangle'}$, we set $\tau\triangle':=x_{\triangle'}+\tau(\triangle'-x_{\triangle'})$.  We then let $\mathscr{J}_{\triangle'}\colon\sobo^{1,\Phi}(\triangle';\R^{N})\to\sobo^{1,\Phi}(2\triangle';\R^{N})$ be a bounded linear extension operator satisfying 
\begin{align}\label{eq:OrliczExtension}
\int_{2\triangle'}\Phi(\mathscr{J}_{\triangle'}v)+\Phi(\nabla\mathscr{J}_{\triangle'}v)\dif x \leq c \int_{\triangle'}\Phi(v)+\Phi(\nabla v)\dif x,\; v\in\sobo^{1,\Phi}(\triangle';\R^{N})
\end{align}
with a constant $c=c(\triangle',n,N,\Phi)>0$. Since $\Phi$ is of class $\Delta_{2}$ and $\mathcal{T}'_{\Omega}$ is still uniform and regular, such an operator can be obtained by routine means for the unit simplex $\triangle^{1}:=\mathrm{co}(\{0,e_{1},...,e_{n}\})$ and then employing an affine change of variables. We set, with the trace-preserving operator $\widetilde{\mathbb{E}}$ from Section~\ref{sec:Fubini}, 
\begin{align*}
w:=\mathscr{J}_{\triangle'}\widetilde{\mathbb{E}}_{\triangle'}(u)\;\;\;\text{and}\;\;\;w_{j}:=
\mathscr{J}_{\triangle'}\widetilde{\mathbb{E}}_{\triangle'}(u_{j})
\end{align*}
and claim that 
\begin{align}\label{eq:extensionOrlicz}
\int_{2\triangle'}\Phi(w-w_{j})+\Phi(\nabla w-\nabla w_{j})\dif x\to 0,\;\;\;j\to\infty.
\end{align}
For future reference, we then remark that an argument analogous to that underlying proof of Lemma~\ref{lem:extensionoperator}~\ref{item:extension2} (in fact, a combination of Lemma~\ref{lem:extensionoperator}~\ref{item:extension1} with Jensen's inequality) yields 
\begin{align}\label{eq:Orliczgradstab}
\int_{\triangle'}\Phi(\nabla^{l}\widetilde{\mathbb{E}}_{\triangle'}v)\dif x \leq c\int_{\triangle'}\Phi(\nabla^{l}v)\dif x\qquad\text{for all}\;v\in\sobo^{1,\Phi}(\triangle';\R^{N}),\;l\in\{0,1\}.
\end{align}
In view of~\eqref{eq:OrliczExtension}, it then suffices to establish~\eqref{eq:extensionOrlicz} with the domain of integration changed to $\triangle'$. To this aim, let $\varepsilon'>0$ be given. Denote $(\ball^{i})$ the Whitney cover of $\triangle'$ as displayed in Section~\ref{sec:Fubini}, cf.~\ref{item:Whitney1}--\ref{item:Whitney3} and pick a partition of unity $(\rho_{i})$ subject to $(\ball^{i})$ satisfying~\ref{item:Whitney4}--\ref{item:Whitney6}. For each $i\in\mathbb{N}$, the definition of the averaged Taylor polynomials (cf.~\eqref{eq:ATP0}) implies
\begin{align}\label{eq:stottle}
\nabla \Pi_{\ball^{i}}^{1}u_{j}\to \nabla\Pi_{\ball^{i}}^{1}u\;\;\;\text{in}\;\R^{N\times n}\;\text{as}\;j\to\infty
\end{align}
because of $u_{j}\to u$ strongly in $\lebe^{1}(\R^{n};\R^{N})$. From Section~\ref{sec:Fubini}, we recall the notation $U_{\kappa}^{\complement}:=\{x\in U\colon\;\mathrm{dist}(x,\partial U)\leq\kappa\}$. Using a similar argument as for Lemma~\ref{lem:extensionoperator}~\ref{item:extension1}--\ref{item:extension3a} in conjunction with the convexity and the $\Delta_{2}$-property of $\Phi$, we conclude that there exist constants $C_{1},C_{2}>1$ (solely depending on $\Phi$ and $n$) such that, with $\vartheta>1$ fixed and close to $1$ and $\kappa>0$ sufficiently small
\begin{align*}
\limsup_{j\to\infty}\mathrm{I}_{j}^{\triangle',\kappa} & := \limsup_{j\to\infty}\int_{{\triangle'}{_{\kappa}^{\complement}}} \Phi(\widetilde{\mathbb{E}}_{\triangle'}u_{j})+\Phi(\nabla\widetilde{\mathbb{E}}_{\triangle'} u_{j})+\Phi(\widetilde{\mathbb{E}}_{\triangle'}u)+\Phi(\nabla\widetilde{\mathbb{E}}_{\triangle'} u)\dif x  \\ 
& \leq C_{1} \limsup_{j\to\infty}\int_{{\triangle'}{_{C_{2}\times\kappa}^{\complement}}}\Phi(u_{j})+\Phi(\nabla u_{j})+\Phi(u)+\Phi(\nabla u)\dif x \\ 
& \leq C_{1}\int_{\partial\triangle'+\ball_{C_{2}\times\kappa\times\vartheta}(0)}\dif\lambda + C_{1}\int_{\partial\triangle'+\ball_{C_{2}\times\kappa\times\vartheta}(0)}\Phi(u)+\Phi(\nabla u)\dif x \\ 
& \leq 2C_{1}C_{2}\times\kappa\times\vartheta(\mathcal{M}_{\triangle}(\lambda)(s)+\mathcal{M}_{\triangle}(\Phi(u)\mathscr{L}^{n}+\Phi(\nabla u)\mathscr{L}^{n})(s)), 
\end{align*}
and because of~\eqref{eq:spick}, there exist $\kappa=\kappa(\triangle)>0$ and $j_{0}=j_{0}(\triangle)$ such that $j\geq j_{0}$ implies 
\begin{align}\label{eq:monkfix}
\mathrm{I}_{j}^{\triangle',\kappa}<\frac{\varepsilon'}{\Delta_{2}(\Phi)}.
\end{align}
Fix such $\kappa$ and $j_{0}$. We then pick the minimal index set $\mathcal{I}\subset\mathbb{N}_{0}$ such that $\{x\in\triangle'\colon\sum_{i\in\mathcal{I}}\rho_{i}(x)=1\}\cup{\triangle'}{_{\kappa}^{\complement}}=\triangle'$; in particular, $\mathcal{I}$ is finite. On the one hand, we have by the local uniformly finite overlap of the balls $\ball^{i}$ and the $\Delta_{2}$-property of $\Phi$
\begin{align}\label{eq:weaktostrong1}
\begin{split}
\int_{\triangle'}\Phi\Big(\sum_{i\in\mathcal{I}}\rho_{i}(\Pi_{\ball^{i}}^{1}u-\Pi_{\ball^{i}}^{1}u_{j})\Big)\dif x & \leq c\sum_{i\in\mathcal{I}}\int_{\ball^{i}}\Phi(\Pi_{\ball^{i}}^{1}u-\Pi_{\ball^{i}}^{1}u_{j})\dif x  \\ &  \leq c\sum_{i\in\mathcal{I}}\int_{\ball^{i}}\Phi(\|\Pi_{\ball^{i}}^{1}u-\Pi_{\ball^{i}}^{1}u_{j}\|_{\lebe^{\infty}(\ball^{i})})\dif x \\ 
& \!\!\!\!\stackrel{\eqref{eq:ATP}_{1}}{\leq} c\sum_{i\in\mathcal{I}}r(\ball^{i})^{n}\Phi\Big(\dashint_{\ball^{i}}|u-u_{j}|\dif x\Big)\to 0
\end{split}
\end{align}
as $j\to\infty$, since $u_{j}\to u$ strongly in $\lebe^{1}(\Omega;\R^{N})$ by assumption and $\mathcal{I}$ is finite. On the other hand, by virtue of $\Phi$ being of class $\Delta_{2}$, 
\begin{align*}
\int_{\triangle'}\Phi\Big(\nabla\sum_{i\in\mathcal{I}}\rho_{i}(\Pi_{\ball^{i}}^{1}u-\Pi_{\ball^{i}}^{1}u_{j})\Big)\dif x & \leq c \int_{\triangle'}\Phi\Big(\sum_{i\in\mathcal{I}}\rho_{i}(\nabla (\Pi_{\ball^{i}}^{1}u-\Pi_{\ball^{i}}^{1}u_{j}))\Big)\dif x \\ 
& \!\!\!\!\!\!\!\! + c \int_{\triangle'}\Phi\Big(\sum_{i\in\mathcal{I}}(\Pi_{\ball^{i}}^{1}u-\Pi_{\ball^{i}}^{1}u_{j})\otimes\nabla\rho_{i}\Big)\dif x  =: \mathrm{II}_{j} + \mathrm{III}_{j}. 
\end{align*}
Since $\mathcal{I}$ is finite, we have $\lim_{j\to\infty}\mathrm{II}_{j}=0$ because of~\eqref{eq:stottle}, and $\lim_{j\to\infty}\mathrm{III}_{j}=0$ by an argument similar to that underlying~\eqref{eq:weaktostrong1}. Specifically, there exists $j_{1}\in\mathbb{N}$ such that
\begin{align}\label{eq:NTeeger}
\sum_{l\in\{0,1\}}\int_{\triangle'}\Phi\Big(\nabla^{l}\sum_{i\in\mathcal{I}}\rho_{i}(\Pi_{\ball^{i}}^{1}u-\Pi_{\ball^{i}}^{1}u_{j})\Big)\dif x < \frac{\varepsilon'}{2}\qquad\text{for all}\;j\geq j_{1}.
\end{align}
Using the convexity and $\Phi$ being of class $\Delta_{2}(\Phi)$, we obtain for all $j\geq \max\{j_{0},j_{1}\}$
\begin{align*}
\int_{\triangle'}\Phi(w-w_{j})+\Phi(\nabla w-\nabla w_{j})\dif x & \leq \frac{\Delta_{2}(\Phi)}{2}\int_{{\triangle'}{_{\kappa}^{\complement}}}\Phi(w)+\Phi(w_{j})+\Phi(\nabla w)+\Phi(\nabla w_{j})\dif x \\ 
& + \int_{\bigcup_{i\in\mathcal{I}}\ball^{i}}\Phi(w-w_{j})+\Phi(\nabla w-\nabla w_{j})\dif x <\varepsilon'
\end{align*}
by~\eqref{eq:monkfix} and~\eqref{eq:NTeeger}. As indicated above, this implies~\eqref{eq:extensionOrlicz}.

We then pick $b_{j,\triangle'}\in\R^{N}$ such that $w_{j}-\mathscr{J}_{\triangle'}a_{\triangle'}+b_{j,\triangle'}$ has zero mean on $\tau\triangle'$ and put, with $\rho_{\triangle'}\in\hold_{c}^{\infty}(\tau\triangle';[0,1])$ satisfying $\mathbbm{1}_{\triangle'}\leq\rho_{\triangle'}\leq\mathbbm{1}_{\tau\triangle'}$ and $|\nabla\rho_{\triangle'}|\leq \frac{c}{(\tau-1)\mathrm{diam}(\triangle')}$, 
\begin{align*}
v_{j}:=\begin{cases} 
u_{j}+b_{j,\triangle'}&\text{in}\;\triangle', \\ 
a_{\triangle'} + \rho_{\triangle'}(w_{j}-\mathscr{J}_{\triangle'}a_{\triangle'}+b_{j,\triangle'})&\;\text{in}\;\tau\triangle'\setminus\triangle'. 
\end{cases} 
\end{align*}
Since $\widetilde{\mathbb{E}}$ is trace-preserving and the traces of $a_{\triangle'}$ and $\mathscr{J}_{\triangle'}a_{\triangle'}$ coincide along $\partial\triangle'$, we conclude that $v_{j}\in\sobo^{1,\Phi}(\tau\triangle';\R^{N})$. On the other hand, by the growth condition on $F$, $v_{j}-a_{\triangle'}\in\sobo_{0}^{1,\Phi}(\tau\triangle';\R^{N})$ is admissible in the definition of quasiconvexity and therefore 
\begin{align}\label{eq:neutralest}
\tau^{n}\mathscr{L}^{n}(\triangle')F(\nabla a_{\triangle'}) & \leq \int_{\tau\triangle'\setminus\triangle'}F(\nabla v_{j})\dif x + \int_{\triangle'}F(\nabla u_{j})\dif x =: \mathrm{IV}_{j}+\mathrm{V}_{j}. 
\end{align}
Next note that by the convexity and the $\Delta_{2}$-assumption on $\Phi$, we have 
\begin{align}\label{eq:wordsup}
\begin{split}
\int_{\tau\triangle'}\Phi(\nabla (w_{j}-\mathscr{J}_{\triangle'}a_{\triangle'}))\dif x & \leq c\int_{\tau\triangle'}\Phi(\nabla (w_{j}-w))\dif x  \\ & \!\!\!\!\!\!\!\!\!\!\!\!\!\!\!\!\!\!\!\!+ c\int_{\tau\triangle'}\Phi(\nabla (w-\mathscr{J}_{\triangle'}a_{\triangle'}))\dif x \\
& \!\!\!\!\!\!\!\!\!\!\!\!\!\!\!\!\!\!\!\!\!\!\!\stackrel{\eqref{eq:extensionOrlicz}}{\to} c\int_{\tau\triangle'}\Phi(\nabla (w-\mathscr{J}_{\triangle'}a_{\triangle'}))\dif x \\ 
& \!\!\!\!\!\!\!\!\!\!\!\!\!\!\!\!\!\!\!\!\!\!\!\stackrel{\eqref{eq:OrliczExtension}}{\leq} c\int_{\triangle'}\Phi(w-a_{\triangle'})\dif x + c\int_{\triangle'}\Phi(\nabla (w-a_{\triangle'}))\dif x\\
& \!\!\!\!\!\!\!\!\!\!\!\!\!\!\!\!\!\!\!\!\!\!\!\!\!\!\!\!\!\!\!\!\!\!\!\!\stackrel{\text{Lem.~\ref{lem:extensionoperator}~\ref{item:extension3a},\;\eqref{eq:Orliczgradstab}}}{\leq}c\int_{\triangle'}\Phi(u-a_{\triangle'})\dif x + \int_{\triangle'}\Phi(\nabla (u-a_{\triangle'}))\dif x
\end{split}
\end{align}
as $j\to\infty$. By the growth condition on $F$ and the $\Delta_{2}$-condition of $\Phi$, we have with constants $\mathtt{C}=\mathtt{C}(\tau-1,\Phi)>0$, $\mathtt{C}'=\mathtt{C}'(\tau-1,\Phi)>0$ (that satisfy $\limsup_{\tau\searrow 1} \mathtt{C}(\tau-1,\Phi),\mathtt{C}'(\tau-1,\Phi)=\infty$) by Poincar\'{e}'s inequality in $\sobo^{1,\Phi}(\tau\triangle';\R^{N})$
\begin{align*}
\mathrm{IV}_{j} & \leq C\int_{\tau\triangle'\setminus\triangle'}1+\Phi(\nabla a_{\triangle'})+\Phi\Big(\frac{w_{j}-\mathscr{J}_{\triangle'}a_{\triangle'}+b_{j,\triangle'}}{\tau-1}\Big)+\Phi\big(\nabla (w_{j}-\mathscr{J}_{\triangle'}a_{\triangle'}) \big)\dif x \\
& \leq C(\tau^{n}-1)\mathscr{L}^{n}(\triangle')(1+\Phi(\nabla a_{\triangle'})) + \mathtt{C}\int_{\tau\triangle'}\Phi(\nabla (w_{j}-\mathscr{J}_{\triangle'}a_{\triangle'}))\dif x \\ 
& \!\!\stackrel{\eqref{eq:wordsup}}{\to :} \mathrm{IV} \leq C(\tau^{n}-1)\mathscr{L}^{n}(\triangle')(1+\Phi(\nabla a_{\triangle'})) \\ & + \mathtt{C}'\Big(\int_{\triangle'}\Phi(u-a_{\triangle'})\dif x + c\int_{\triangle'}\Phi(\nabla (u-a_{\triangle'}))\dif x\Big)
\end{align*}
as $j\to\infty$, which in conjunction with~\eqref{eq:neutralest} yields 
\begin{align}\label{eq:OrliczCombine}
\begin{split}
\tau^{n}\mathscr{L}^{n}(\triangle')F(\nabla a_{\triangle'}) & \leq ( \liminf_{j\to\infty}\int_{\triangle'}F(\nabla u_{j})\dif x\Big)  \\ & + C(\tau^{n}-1)\mathscr{L}^{n}(\triangle')(1+\Phi(\nabla a_{\triangle'}))\\ 
&  + \mathtt{C}'\Big(\int_{\triangle'}\Phi(u-a_{\triangle'})\dif x + \int_{\triangle'}\Phi(\nabla (u-a_{\triangle'}))\dif x\Big). 
\end{split}
\end{align}
Summing inequality~\eqref{eq:OrliczCombine} over the finite set of all simplices $\triangle'\in\mathcal{T}'_{\Omega}$, we find
\begin{align*}
\mathrm{VI} & := \tau^{n} \int_{\bigcup_{\triangle'\in\mathcal{T}'_{\Omega}}\triangle'}F(\nabla a_{\mathcal{T}'})\dif x \leq  \liminf_{j\to\infty} \int_{\bigcup_{\triangle'\in\mathcal{T}'_{\Omega}}\triangle'}F(\nabla u_{j})\dif x + C(\tau^{n}-1)\times \\ & \times\int_{\bigcup_{\triangle'\in\mathcal{T}'_{\Omega}}\triangle'}(1+\Phi(\nabla a_{\mathcal{T}'}))\dif x + \mathtt{C}'\int_{\bigcup_{\triangle'\in\mathcal{T}'_{\Omega}}\triangle'}\Phi((u-a_{\triangle'}))+\Phi(\nabla (u-a_{\triangle'}))\dif x \\ & =:\mathrm{VII}+\mathrm{VIII}+\mathrm{IX}. 
\end{align*}
We then have 
\begin{align*}
\mathrm{VI} & = \tau^{n} \int_{\bigcup_{\triangle'\in\mathcal{T}'_{\Omega}}\triangle'}F(\nabla a_{\mathcal{T}'})-F(\nabla u)\dif x + \tau^{n} \int_{\bigcup_{\triangle'\in\mathcal{T}'_{\Omega}}\triangle'}F(\nabla u)\dif x \\ 
& \geq - \tau^{n} \int_{\bigcup_{\triangle'\in\mathcal{T}'_{\Omega}}\triangle'}|F(\nabla a_{\mathcal{T}'})-F(\nabla u)|\dif x + \tau^{n} \int_{\bigcup_{\triangle'\in\mathcal{T}'_{\Omega}}\triangle'}F(\nabla u)\dif x \\ 
& \!\!\!\!\!\!\stackrel{\text{Lem.~\ref{lem:OrliczBound}}}{\geq} - C\tau^{n} \int_{\bigcup_{\triangle'\in\mathcal{T}'_{\Omega}}\triangle'}\frac{\Phi(1+|\nabla u|+|\nabla a_{\mathcal{T}'}|)}{1+|\nabla u|+|\nabla a_{\mathcal{T}'}|}|\nabla u-\nabla a_{\mathcal{T}'}|\dif x \\ & + \tau^{n} \int_{\bigcup_{\triangle'\in\mathcal{T}'_{\Omega}}\triangle'\setminus\Omega}F(\nabla u)\dif x + \tau^{n} \int_{\Omega}F(\nabla u)\dif x =: \mathrm{VI}_{1}+\mathrm{VI}_{2}+\mathrm{VI}_{3}. 
\end{align*}
We then first send $\varepsilon\searrow 0$ and hereafter $\delta\searrow 0$. Recalling how $a_{\mathcal{T}'}$ is constructed and so $\lim_{\varepsilon\searrow 0}\|\Phi(\nabla (u-a_{\mathcal{T}'}))\|_{\lebe^{1}(\R^{n})}=0$, using a similar argument as in~\eqref{eq:Orlizzo1}\emph{ff}. we find that $\lim_{\varepsilon\searrow 0}\mathrm{VI}_{1}=0$. On the other hand, as $F(\nabla u)\in\lebe^{1}(\R^{n})$, we have $\lim_{\varepsilon\searrow 0}\mathrm{VI}_{2}=0$. Similarly, we obtain that $\lim_{\varepsilon\searrow 0}\mathrm{IX}=0$. Since $u_{j}=u_{0}$ outside $\Omega$, we equally have 
\begin{align*}
\mathrm{VII} = \liminf_{j\to\infty} \int_{\Omega}F(\nabla u_{j})\dif x + \int_{(\bigcup_{\triangle'\in\mathcal{T}'_{\Omega}}\triangle')\setminus\Omega}F(\nabla u_{0})\dif x \stackrel{\varepsilon\searrow 0}{\longrightarrow} \liminf_{j\to\infty} \int_{\Omega}F(\nabla u_{j})\dif x. 
\end{align*}
Finally, since by construction $\|1+\Phi(\nabla a_{\mathcal{T}'})\|_{\lebe^{1}(\R^{n})}$ is bounded as $\varepsilon\searrow 0$, we may send $\tau\searrow 1$ to obtain 
\begin{align*}
\int_{\Omega}F(\nabla u)\dif x \leq \liminf_{j\to\infty}F(\nabla u_{j})\dif x,
\end{align*}
and the proof is complete. 
\end{proof} 

\begin{center}
\vspace{0.5cm}
\textbf{Declaration}
\end{center}
The authors hereby declare that there are no conflicts of interest. 

\begin{thebibliography}{99}
\bibitem{AcerbiDalMaso} Acerbi, E.; Dal Maso, G.: New lower semicontinuity results for polyconvex integrals. Calc. Var. Partial Differ. Equ. \textbf{2}, 329--371 (1994).
\bibitem{AcerbiFusco}  Acerbi, E.; Fusco, N.: Semicontinuity problems in the calculus of variations. Arch. Rational Mech. Anal. \textbf{86} (1984), no. 2, 125--145.
\bibitem{AcFu1} Acerbi, E.; Fusco, N.: A regularity theorem for minimizers of quasiconvex integrals. Arch. Rational
Mech. Anal. \textbf{99} (1987), no. 3, 261--281.
\bibitem{AcFu2} Acerbi, E.; Fusco, N.: Partial regularity under anisotropic ($p,q)$ growth conditions. J. Differential Equations \textbf{107} (1994), 46--67. 
\bibitem{AdamsFournier} Adams, R.A.; Fournier, J.J.F.: \emph{Sobolev Spaces}. Second Edition. Pure and Applied Mathematics, Elsevier, 2003.
\bibitem{Alberti1} Alberti, G.: A Lusin type property of gradients. J. Funct. Anal. \textbf{100} (1991), 110--118. 
\bibitem{Allard} Allard, W.K.: On the first variation of a varifold. Ann. of Math. (2) \textbf{95} (1972), 417--491.
\bibitem{Almgren} Almgren, F.J. Jr.: Existence and regularity almost everywhere of solutions to elliptic variational
problems among surfaces of varying topological type and singularity structure. Ann. of Math. (2)
 \textbf{87} (1968), 321--391.
\bibitem{AD} Ambrosio, L.; Dal Maso, G.: On the relaxation in $\bv(\Omega;\R^{m})$ of quasi--convex integrals. J. Funct. Anal. \textbf{109} (1992), 76--97.
\bibitem{AFP} Ambrosio, L.; Fusco, N.; Pallara, D.: \emph{Functions of bounded variation and free discontinuity problems.} Oxford Mathematical Monographs. The Clarendon Press, Oxford University Press, New York, 2000. xviii+434 pp. 
\bibitem{AnGi} Anzellotti, G.; Giaquinta, M.: Convex functionals and partial regularity. Arch. Ration. Mech. Anal.
 \textbf{102} (1988), no. 3, 243--272.
\bibitem{Ball1} Ball, J.: Convexity conditions and existence theorems in nonlinear elasticity, Arch. Rational
Mech. Anal. \textbf{63} (1977), pp. 337--403.
\bibitem{Ball2} Ball, J.M.: Discontinuous equilibrium solutions and cavitation in nonlinear elasticity. Phil. Trans. R. Soc. Lond. \textbf{306}, 557--611, (1982).
\bibitem{BM} Ball, J.M.; Murat, F.:  $\sobo^{1,p}$-quasiconvexity and variational problems for multiple
integrals, J. Funct. Anal. \textbf{58} (1984), 225--253.
\bibitem{BKK} Ball, J.M.; Kirchheim, B.; Kristensen, J.: Regularity of quasiconvex
envelopes. Calc. Var. Partial Differ. Equ. \textbf{11}(4), 333--359 (2000).
\bibitem{BaerlinKessler} B\"{a}rlin, M.; Ke\ss ler, K.: Partial Regularity for $\mathbb{A}$-quasiconvex Functionals. ArXiv preprint, arXiv:2203.00153. 
\bibitem{BGIK} B\"{a}rlin, M.; Gmeineder, F.; Irving C.; Kristensen, J.: $\mathcal{A}$-harmonic approximation and partial regularity, revisited. Preprint, 2022.
\bibitem{BeckSchmidt} Beck, L.; Schmidt, T.: On the Dirichlet problem for variational integrals in BV. J. Reine Angew. Math. \textbf{674} (2013), 113--194.
\bibitem{BeckSchmidt1} Beck, L.; Schmidt, T.: Interior gradient regularity for BV minimizers of singular variational problems. Nonlinear Analysis: Theory, Methods \& Applications \textbf{120}, 86--106 (2015). 
\bibitem{BeckBulicekMaringova} Beck, L.; Bul\'{i}\v{c}ek, M; Maringov\'{a}, E.: 
Globally Lipschitz minimizers for variational problems with linear growth
ESAIM: Control, Optimisation and Calculus of Variations \textbf{24}, 1395--1413 (2018).
\bibitem{BeckBulicekGmeineder} Beck, L.; Bul\'{i}\v{c}ek, M; Gmeineder, F.: 
On a Neumann problem for variational functionals of linear growth. 
Annali della Scuola Normale Superiore di Pisa - Classe di Scienze \textbf{XXI}, 695--737 (2020).
\bibitem{BellaSchaeffner1} Bella, P.; Sch\"{a}ffner, M.: On the regularity of minimizers for scalar integral functionals with $(p,q)$-growth. Anal. \& PDE \textbf{13}, 2241--2257, (2020).
\bibitem{BellaSchaeffner2} Bella, P.; Sch\"{a}ffner, M.: Lipschitz bounds for integral functionals with $(p,q)$-growth conditions. ArXiv Preprint, arXiv:2202.12999.
\bibitem{BildFuchs1} Bildhauer, M.; Fuchs, M.: Partial regularity for variational integrals with $(s,\mu,q)$-growth. Calc. Var.  \textbf{13} (2001), 537--560.
\bibitem{Bild2002a} Bildhauer, M.: A priori gradient estimates for bounded generalized solutions of a class of variational problems with linear growth. J. Convex Anal. \textbf{9} (2002), no. 1, 117--137. 
\bibitem{Bild2002} Bildhauer, M.: Convex variational integrals with a wide range of anisotropy. Part II: mixed linear/superlinear growth conditions. Preprint \textbf{55}, Department of Mathematics, Saarland University, 2002. 
\bibitem{Bild} Bildhauer, M.: \emph{Convex variational problems. Linear, nearly linear and anisotropic growth conditions.} Lecture Notes in Mathematics, 1818. Springer-Verlag, Berlin, 2003. x+217 pp.
\bibitem{BildFuchs2} Bildhauer, M.; Fuchs, M.: $\hold^{1,\alpha}$-solutions to non-autonomous anisotropic variational problems. Calc. Var. Partial Differ. Equ. \textbf{24}, 309--340 (2005).
\bibitem{Boegelein} B\"{o}gelein, V., Duzaar, F.; Marcellini, P.; Scheven, C.: Boundary regularity for elliptic systems with $(p,q)$-growth. J. Math. Pures Appl. \textbf{159}, 250--293, (2022).
\bibitem{BFM} Bouchitt\'{e}, G.; Fonseca, I.; Mal\'{y}, J.: The effective bulk energy of the relaxed energy of multiple integrals below the growth exponent. Proc. R. Soc. Edinb., Sect. A, Math. \textbf{128}, 463--479 (1998). 
\bibitem{Breit} Breit, D.: New regularity theorems for non-autonomous variational integrals with $(p,q)$-growth. Calc.Var. \textbf{44}, 101--129 (2012).
\bibitem{BDG} Breit, D.; Diening, L.; Gmeineder, F.: On the Trace Operator for Functions of bounded $\mathbb{A}$-Variation. Anal. PDE. \textbf{13} (2020), pp. 559--594.
\bibitem{BuckleyKoskela} Buckley, S.M.; Koskela, P.: Sobolev-Poincar\'{e} inequalities for $p<1$. Indiana Univ. Math. J. \textbf{43} (1994), no. 1,
221--240.
\bibitem{ButtazzoMizel} Buttazzo, G.; Mizel, V.J.: Interpretation of the Lavrentiev phenomenon by relaxation. J. Funct. Anal. \textbf{110}, 434--460 (1992).
\bibitem{CKK} Campos Cordero, J.; Kalayanamit, P.; Kristensen, J.: Growth conditions and quasiconvexity. In preparation.
\bibitem{CFM} Carozza, M.; Fusco, N.; Mingione, G.: Partial regularity of minimizers of quasiconvex integrals with subquadratic growth. Ann. Mat. Pura Appl. \textbf{IV}, Ser.~175, 141--164 (1998). 
\bibitem{CPDN} Carozza, M.; Passarelli di Napoli, A.: Partial regularity of local minimizers of quasiconvex integrals with sub-quadratic
growth. Proc. Roy. Soc. Edinburgh Sect. A \textbf{133} (1994), no. 6, 1249--1262.
\bibitem{CPK} Carozza, M.; Kristensen, J.; Passarelli di Napoli, A.: A trace preserving operator and applications. J. Math. Anal. Appl., 124170--124170 (2020).  
\bibitem{CPK1} Carozza, M., Kristensen, J., Passarelli di Napoli, A.: Regularity of minimisers of autonomous convex variational integrals. Ann. della Scu. Norm. Sup. di Pisa \textbf{13}, 4 (2013).
\bibitem{CDM} Celada, P.; Dal Maso, G.: Further remarks on the lower semicontinuity of polyconvex
integrals. Ann. Inst. Henri Poincar\'{e}, Ann. Non Lin. \textbf{11} (1994), 661--691.
\bibitem{CK} Chen, C.-Y., Kristensen, J.: On coercive variational integrals.
Nonlinear Analysis \textbf{153} (2017) 213-229.
\bibitem{CG} Conti, S.; Gmeineder, F.: $\mathscr{A}$-quasiconvexity and partial regularity. ArXiv preprint, arXiv:2009.13820.
\bibitem{Dac82} Dacorogna, B.: Quasiconvexity and relaxation of non convex problems in the calculus of variations. J. Funct. Anal. \textbf{46}, 102--118, 1982.
\bibitem{Dac82a} Dacorogna, B.: \emph{Weak continuity and weak lower semicontinuity of non linear functionals.} Lecture Notes in Math., Vol. \textbf{922}. Springer, Berlin, 1982.
\bibitem{Dacorogna} Dacorogna, B.: \emph{Direct Methods in the Calculus of Variations}. Second Edition. Applied Mathematical Sciences \textbf{78}, Springer, 2008. 
\bibitem{DFLM} Dal Maso, G.; Fonseca I.; Leoni G.; Morini M.: Higher order quasiconvexity reduces to
quasiconvexity. Arch. Ration. Mech. Anal. \textbf{171} (2004), 55--81.
\bibitem{DF1} De Filippis, C.: Quasiconvexity and partial regularity via nonlinear potentials. J. Math. Pures App. \textbf{163}, 11--82, (2022).
\bibitem{DF2} De Filippis, C.; Stroffolini, B.: Singular multiple integrals and nonlinear potentials. ArXiv preprint, arXiv:2203.05519. 
\bibitem{DeGiorgi0} De Giorgi, E.: Sulla differenziabilit\`{a} e l'analiticit\`{a} delle estremali degli integrali multipli regolari, Mem. Accad. Sci. Torino Ser. \textbf{III} 3 (1957), 25--43.
\bibitem{DeGiorgi} De Giorgi, E.: Un esempio di estremali discontinue per un problema variazionale di tipo ellittico, Boll. Unione Mat. Ital., \textbf{IV}. 1 (1968), 135--137.
\bibitem{DLSV} Diening, L.; Lengeler, D.; Stroffolini, B.; Verde, A.: Partial regularity for minimizers of quasi-convex functionals with general growth. SIAM J. Math. Anal. \textbf{44} (2012), no. 5, 3594--3616.
\bibitem{Duzaar1} Duzaar, F.; Grotowski, F.: Optimal interior partial regularity for nonlinear elliptic systems: the
method of A-harmonic approximation. Manuscripta Math. \textbf{103} (2000), no. 3, 267--298.
\bibitem{Duzaar1A} Duzaar, F.; Gastel, A.; Grotowski, J.F.: Partial regularity for almost minimizers of
quasiconvex functionals. SIAM J. Math. Anal. \textbf{32}, 665--687 (2000).
\bibitem{Duzaar2} Duzaar, F.; Grotowski, J.F.; Kronz, M.: Regularity of almost minimizers of quasi-convex variational
integrals with subquadratic growth. Ann. Mat. Pura Appl. (4) \textbf{184} (2005), no. 4, 421--448.
\bibitem{DuzaarMingione0} Duzaar, F.; Mingione, G.: The $p$-harmonic approximation and the regularity of $p$-harmonic maps, Calc. Var. Partial Differential Equations \textbf{20} (2004), pp. 235--256.
\bibitem{DuzaarMingione1} Duzaar, F.; Mingione, G.: Regularity for degenerate elliptic problems via $p$-harmonic approximation. Ann. I. H. Poincar\'{e} \textbf{21} (2004) 735--766. 
\bibitem{DuzaarMingione2} Duzaar, F.; Mingione, G.: Harmonic type approximation lemmas, J. Math. Anal. Appl. \textbf{352} (2009), pp. 301--335.
\bibitem{EspositoLeonettiMingione} Esposito, L.; Leonetti, F.; Mingione, G.: Higher integrability for minimizers of integral functionals
with $(p,q)$-growth. J. Differ. Equations \textbf{157} (1999), 414--438.
\bibitem{EspositoLeonettiMingione1} Esposito, L.; Leonetti, F.; Mingione, G.: Sharp regularity for functionals with $(p,q)$ growth. J. Diff. Equ. \textbf{204}, 5--55, (2004).
\bibitem{EspositoMingione1} Esposito, L.; Mingione, G.: Relaxation results for higher order integrals below the natural growth exponent. Differential Integral Equations \textbf{15} (2002), 671--696.
\bibitem{Ev1} Evans, L.C.: Quasiconvexity and partial regularity in the calculus of variations. Arch. Rational Mech. Anal. \textbf{95}  (1986), no. 3, 227--252.
\bibitem{EvGa} Evans, L.C.; Gariepy, R.F.: \emph{Measure theory and fine properties of functions.} CRC Press, Boca Raton, 1992. 
\bibitem{Finn} Finn, R.: Remarks relevant to minimal surfaces and to surfaces of prescribed mean curvature, J.
Anal. Math. \textbf{14} (1965), 139--160.
\bibitem{Focardi} Focardi, M.: Semicontinuity of vectorial functionals in Orlicz-
Sobolev spaces. Rend. Istit. Mat. Univ. Trieste \textbf{29} (1997), no. 1--2,
141--161 (1998).
\bibitem{FM} Fonseca, I.; Mal\'{y}, J.: Relaxation of multiple integrals below the growth exponent. Ann. Inst. Henri Poincar\'{e}, Anal. Non Lin\'{e}aire \textbf{14}, 309--338 (1997).
\bibitem{FMParma} Fonseca, I.; Mal\'{y}, J.: From Jacobian to Hessian: distributional form and relaxation. Riv. Mat. Univ. Parma (\textbf{7}) 4* (2005), 45--74. 
\bibitem{FonMar} Fonseca, I.; Marcellini, P.: Relaxation of multiple integrals in subcritical Sobolev spaces, J. Geom. Anal. \textbf{7}, 57--81 (1997).
\bibitem{FonMul0} Fonseca, I.; M\"{u}ller, S.: Quasiconvex integrands and lower semicontinuity in $\lebe^{1}$, SIAM J.
Math. Anal. \textbf{23} (1992), pp. 1081--1098.
\bibitem{FonMul} Fonseca, I.; M\"{u}ller, S.: Relaxation of quasiconvex functionals in $\bv(\Omega,\R^{p})$ for integrands $f(x,u,\nabla u)$. Arch. Ration. Mech. Anal. \textbf{123} (1993), no. 1, 1--49.
\bibitem{FonMul1} Fonseca, I.; M\"{u}ller, S.: $\mathscr{A}$-quasiconvexity, lower semicontinuity, and Young measures.
SIAM J. Math. Anal. \textbf{30} (1999), no. 6, 1355--1390.
\bibitem{FuchsSeregin}  Fuchs, M.; Seregin, G.: \emph{Variational methods for problems from plasticity theory and for generalized
Newtonian fluids.} Lecture Notes in Mathematics \textbf{1749}. Springer-Verlag, Berlin, 2000. vi+269 pp.
\bibitem{FuHo1} Fusco, N.; Hutchinson, J.E.: $\hold^{1,\alpha}$-partial regularity of functions minimising quasiconvex integrals. Manuscripta Math.
 \textbf{54} (1985), no. 1--2, 121--143.
\bibitem{FuHo2} Fusco, N.; Hutchinson, J.E.: Partial regularity in problems motivated by nonlinear elasticity. SIAM J. Math. Anal. \textbf{22}
(1991), no. 6, 1516--1551.
\bibitem{FuHo3} Fusco, N.; Hutchinson, J.E.: Partial regularity and everywhere continuity for a model problem from non-linear elasticity. J. Austral. Math. Soc. (Series A) \textbf{57} (1994), 158--169.
\bibitem{Giaquinta} Giaquinta, M.: \emph{Multiple Integrals in the Calculus of Variations and Nonlinear Elliptic Systems.} Princeton University Press, Princeton (1983).
\bibitem{GiaquintaModica}  Giaquinta, M.; Modica, G.: Partial regularity of minimizers of quasiconvex integrals, Ann. Inst. Henri Poincar\'{e}, Anal. Non Lin\'{e}aire \textbf{3} (1986), 185--208.
\bibitem{GiaModSouc} Giaquinta, M.; Modica, G.; Sou\v{c}ek, J.: Functionals with linear growth in the calculus of variations. I, II. Comment. Math. Univ. Carolin. \textbf{20} (1979), no. 1, 143--156, 157--172.
\bibitem{Giusti}  Giusti, E.: \emph{Direct methods in the calculus of variations.} World Scientific Publishing Co., Inc., River Edge, NJ, 2003.
\bibitem{GiustiMiranda} Giusti, E.; Miranda, M.: Un esempio di soluzioni discontinue per un problema di minimo relativo ad un integrale regolare del calcolo delle variazioni, Boll. Unione Mat. Ital., \textbf{IV}. Ser. 1 (1968), 219--226.
\bibitem{Gm20a} Gmeineder, F.: The regularity of minima for the Dirichlet problem on BD. Arch. Ration. Mech. Anal. \textbf{237}, June 2020, Issue 3, pp. 1099 -- 1171.
\bibitem{Gm20} Gmeineder, F.: Partial regularity for symmetric quasiconvex functionals on BD. J. Math. Pures Appl. (9) \textbf{145} (2021), 83--129.
\bibitem{GK1} Gmeineder, F.; Kristensen, J.: Partial Regularity for BV Minimizers. Arch. Rational Mech. Anal. \textbf{232} (2019) 1429--1473.
\bibitem{GK1A} Gmeineder, F.; Kristensen, J.: Sobolev regularity for convex functionals on BD. J. Calc. Var. (2019) \textbf{58}:56.
\bibitem{GK2} Gmeineder, F.; Kristensen, J.: On $k$-th order quasiconvex functionals with linear growth from below. Announcement. ArXiv Preprint, arXiv:1903.08124.
\bibitem{GoffmanSerrin} Goffman, C.; Serrin, J.: Sublinear functions of measures and variational integrals, Duke Math. J. \textbf{31} (1964), 159--178.
\bibitem{HaoLN}  Hao, W.; Leonardi, S.; Ne\v{c}as, J.: An example of irregular solution to a nonlinear Euler-Lagrange
elliptic system with real analytic coefficients, Ann. Sc. Norm. Super. Pisa, Cl. Sci., IV. Ser. \textbf{23} 
(1996), 57--67.
\bibitem{Hoermander} H\"{o}rmander, L.: Differentiability properties of solutions of systems of diferential equations,
Ark. Mat. \textbf{3} (1958), 527--535.
\bibitem{Irving} Irving, C.: Partial regularity for minima of higher-order quasiconvex integrands with natural Orlicz growth. ArXiv preprint,  arXiv:2111.14740. 
\bibitem{ILV} Isernia, T.; Leone, C.; Verde, A.: Partial regularity for asymptotic quasiconvex functionals with general growth. Ann. Acad. Scient. Fenn. Math., Vol.~\textbf{41}, 2016, 816--844.
\bibitem{KinderlehrerPedregal} Kinderlehrer, D.; Pedregal, P.: Characterizations of Young  measures generated by gradients. Arch. Rational Mech. Anal. \textbf{115} (1991), 329--365. 
\bibitem{KirchheimKristensen}  Kirchheim, B.; Kristensen, J.: On rank one convex functions that are homogeneous of degree one. Arch. Ration. Mech. Anal. \textbf{221} (2016), no. 1, 527--558.
\bibitem{Koch} Koch, L.: Global higher integrability for minimisers of convex obstacle problems with $(p,q)$-growth. Calc. Var. PDE~\textbf{61}:88 (2022). 
\bibitem{Kristensen97} Kristensen, J.: Lower semicontinuity in Sobolev spaces below the growth exponent of the integrand. Proc. R. Soc. Edinb., Sect. A, Math. \textbf{127}, 797--817 (1997)
\bibitem{Kristensen98} Kristensen, J.: Lower semicontinuity of quasi-convex integrals in BV. Calc. Var. \textbf{7}, 249--261 (1998). 
\bibitem{Kristensen99} Kristensen, J.: Lower semicontinuity in spaces of weakly differentiable functions. Math. Ann. \textbf{313} (1999), no. 4, 653--710.
\bibitem{KR} Kristensen, J.; Rindler, F.: Relaxation of signed integral functionals in BV, Calc. Var. Partial Differential Equations
 \textbf{37} (2010), no. 1-2, 29--62.
\bibitem{KT} Kristensen, J.; Taheri, A.: Partial regularity of strong local minimizers in the multi-dimensional calculus of variations. Arch. Ration. Mech. Anal. \textbf{170} (2003), no. 1, 63--89.
\bibitem{KuusiMingione} Kuusi, T.; Mingione, G.: Partial regularity and potentials. Journal de L'Ecole polytechnique - Math\'{e}matiques \textbf{3} (2016) 309--363.
\bibitem{Li} Li, Z.: Partial regularity for $\bv$ $\omega$-minimizers of quasiconvex functionals. Calc. Var. \textbf{61}, 178 (2022). 
\bibitem{Maly1} Mal\'{y}, J.: Weak lower semicontinuity of polyconvex integrals. Proc. Roy. Soc. Edinburgh \textbf{123}A (1993), 681--691.
\bibitem{Maly2} Mal\'{y}, J.: Lower semicontinuity of quasiconvex integrals. Manuscripta Math.~\textbf{85}, 419--428 (1994). 
\bibitem{Marcellini0} Marcellini, P.: Approximation of quasiconvex functions, and lower semicontinuity of multiple integrals. Manuscripta Math. \textbf{51}, 1--3, (1985).
\bibitem{Marcellini1} Marcellini, P.: On the definition and the lower semicontinuity of certain quasiconvex integrals. Ann. Inst. Henri Poincar\'{e}, Anal. Non Lin\'{e}aire \textbf{3}, 391--409 (1986).
\bibitem{Marcellini2} Marcellini, P.: Regularity of minimizers of integrals of the calculus of variation with non-standard growth conditions. Arch. Ration. Mech. Anal. \textbf{105}, 267--284 (1989).
\bibitem{Marcellini3} Marcellini, P.: Regularity and existence of solutions of elliptic equations with $p,q$-growth conditions, J. Differ. Equations \textbf{90}, 1--30 (1991).
\bibitem{Mazya} Maz'ya, V.: \emph{Sobolev spaces with applications to elliptic partial differential equations}. Second, revised and augmented edition. Grundlehren der Mathematischen Wissenschaften [Fundamental Principles of Mathematical Sciences] \textbf{342}. Springer, Heidelberg, 2011. xxviii+866 pp.
\bibitem{MazSha} Maz'ya, V.G.; Shaposhnikova, T.: \emph{Theory of Sobolev Multipliers}, Grundlehren der mathematischen Wissenschaften \textbf{307}, Springer, 2009.
\bibitem{Meyers} Meyers , N.G.: Quasi-convexity and lower semi-continuity of multiple variational integrals of any order. Trans. Am. Math. Soc. \textbf{119}, 125--149 (1965) 
\bibitem{Mingione1} Mingione, G.: Regularity of minima: an invitation to the dark side of the calculus of variations. Appl. Math. \textbf{51}
(2006), no. 4, 355--426.
\bibitem{Mingione2} Mingione, G.: Singularities of minima: a walk on the wild side of the calculus of variations. (English
summary) J. Global Optim. \textbf{40} (2008), no. 1-3, 209-223.
\bibitem{Morrey1} Morrey, Jr., C.B.: Quasi-convexity and the lower semicontinuity of multiple integrals. Pacific J. Math. \textbf{2}, (1952), 25--53.
\bibitem{Moser} Moser, J.: A new proof of De Giorgi's theorem concerning the regularity problem for elliptic differential equations, Commun. Pure Appl. Math. \textbf{13} (1960), 457--468.
\bibitem{Nash} Nash, J.: Continuity of solutions of parabolic and elliptic equations, Amer. J. Math. \textbf{80} (1958),
931--954.
\bibitem{Necas1} Ne\v{c}as, J.: Example of an irregular solution to a nonlinear elliptic system with analytic coefficients
and conditions for regularity. Theory of nonlinear operators (Proc. Fourth Internat. Summer School,
Acad. Sci., Berlin, 1975), Akademie-Verlag, Berlin, 1977, pp. 197--206.
\bibitem{Necas2} Ne\v{c}as, J.; John, O.; Star\'{a}, J.: Counterexample to the regularity of weak solution of elliptic
systems, Commentat. Math. Univ. Carol. \textbf{21} (1980), 145--154.
\bibitem{Ornstein} Ornstein, D.: A non-equality for diferential operators in the $\lebe_{1}$-norm, Arch. Rational Mech. Anal. \textbf{11} (1962), 40--49.
\bibitem{PasSie} Passarelli Di Napoli, A.; Siepe, F.: A regularity result for a class of
anisotropic systems. Rend. Ist. Mat. Univ. Trieste \textbf{28}, 13--31 (1996).
\bibitem{Reshetnyak} Reshetnyak, Y.G.: Weak convergence of completely additive vector functions on a set, Sib. Math. J. \textbf{9} (1968),
1039--1045.
\bibitem{Santi} Santi, E.: Sul problema al contorno per l'equazione delle superfici di area minima su domini limitati
qualqunque, Ann. Univ. Ferrara, N. Ser., Sez. VII \textbf{17} (1972), 13-26.
\bibitem{SchemmSchmidt} Schemm, S.; Schmidt, T.: Partial regularity of strong local minimizers of quasiconvex integrals with $(p,q)$-growth. Proc. R. Soc. Edinb., Sect. A, Math. \textbf{139}, 595--621 (2009).
\bibitem{SchmidtPR0} Schmidt, T.: \emph{Zur Existenz und Regularit\"{a}t von Minimierern quasikonvexer Variationsintegrale mit $(p,q)$-Wachstum.} Inaugural-Dissertation, Heinrich-Heine Universit\"{a}t D\"{u}sseldorf, Mathematisch-Naturwissenschaftliche Fakult\"{a}t, 2006.
\bibitem{SchmidtPR1} Schmidt, T.: Regularity of minimizers of $\sobo^{1,p}$-quasiconvex variational integrals with $(p,q)$-growth. Calc. Var. Partial Differ. Equ. \textbf{32}, 1--24 (2008).
\bibitem{SchmidtPR2a} Schmidt, T.: Regularity theorems for degenerate quasiconvex energies with $(p,q)$-growth. Adv. Calc. Var. \textbf{1}, 241--270 (2008).
\bibitem{SchmidtPR} Schmidt, T.: Regularity of relaxed minimizers of quasiconvex variational integrals with $(p,q)$-growth. Arch. Ration. Mech. Anal. \textbf{193}, 311--337 (2009). 
\bibitem{SchmidtPR3} Schmidt, T.: Partial regularity for degenerate variational problems and image restoration models in BV. Indiana Univ. Math. J. \textbf{63}, 213--279 (2014). 
\bibitem{SchmidtPR4} Schmidt, T.:  \emph{BV Minimizers of Variational Integrals: Existence, Uniqueness, Regularity.} Habilitationsschrift, Friedrich-Alexander Universit\"{a}t, Erlangen-N\"{u}rnberg, 2015.
\bibitem{Soneji1} Soneji, P.: Lower semicontinuity in BV of quasiconvex integrals with subquadratic growth. ESAIM: COCV \textbf{19} (2013) 555--573.
\bibitem{Soneji2} Soneji, P.: Relaxation in BV of integrals with superlinear growth. ESAIM: COCV \textbf{20} (2014) 1078--1122. 
\bibitem{Spencer} Spencer, D.C.: Overdetermined systems of linear partial diferential equations, Bull. Amer.
Math. Soc. \textbf{75} (1969), 179--239.
\bibitem{Whitney} Whitney, H.: Analytic Extensions of Differentiable Functions Defined in Closed Sets. Trans. Am. Math. Soc., \textbf{36}:63--89, 1934.
\bibitem{Widman} Widman, K-O.: H\"{o}lder continuity of solutions of elliptic systems, Manuscr. Math. \textbf{5} (1971), 299--308.
\end{thebibliography}
\end{document}